\newcommand{\commentAlt}[1]{\ignorespaces}
\newcommand{\commentLongAlt}[1]{\ignorespaces}
  \newcommand{\diagramspath}{figures/diagrams_grayscale/}%
  \definecolor{theoremcolor}{HTML}{BBBBBB}
  \newcommand{\diagramspath}{figures/diagrams/}%
  \definecolor{theoremcolor}{HTML}{EBB900}
\newtheorem{theorem}{Theorem}
\numberwithin{theorem}{chapter}
\newtheorem{lemma}[theorem]{Lemma}
\newtheorem{corollary}[theorem]{Corollary}
\newtheorem{proposition}[theorem]{Proposition}
\newtheorem{definition}[theorem]{Definition}
\newtheorem{example}[theorem]{Example}
\newtheorem{fact}[theorem]{Fact}
\newtheorem{algbox}[theorem]{Algorithm}
\newtheorem{blankbox}[theorem]{Blank}
\newcommand{\argmin}{\mathop{\mathrm{argmin}}}
\newcommand{\minimize}{\mathop{\mathrm{minimize}}}
\newcommand{\st}{\mathop{\mathrm{subject\,\,to}}}
\def\R{\mathbb{R}}
\def\E{\mathbb{E}}
\def\P{\mathbb{P}}
\def\Cov{\mathrm{Cov}}
\def\Var{\mathrm{Var}}
\def\indep{\perp\!\!\!\perp}
\def\hf{\hat{f}}
\def\cA{\mathcal{A}}
\def\cC{\mathcal{C}}
\def\cD{\mathcal{D}}
\def\cE{\mathcal{E}}
\def\cG{\mathcal{G}}
\def\cH{\mathcal{H}}
\def\cI{\mathcal{I}}
\def\cM{\mathcal{M}}
\def\cN{\mathcal{N}}
\def\cP{\mathcal{P}}
\def\cR{\mathcal{R}}
\def\cS{\mathcal{S}}
\def\cW{\mathcal{W}}
\def\cX{\mathcal{X}}
\def\cY{\mathcal{Y}}
\def\cZ{\mathcal{Z}}
\def\bfW{\mathbf{W}}
\def\bfX{\mathbf{X}}
\def\bfY{\mathbf{Y}}
\def\bfw{\mathbf{w}}
\def\bfx{\mathbf{x}}
\def\bfy{\mathbf{y}}
\def\bigo{\mathcal{O}}
\def\littleo{\mathrm{o}}
\newcommand{\err}{\mathrm{err}}
\newcommand{\quantile}{\mathrm{Quantile}}
\newcommand{\eqd}{\stackrel{\textnormal{d}}{=}}
\newcommand{\asto}{\stackrel{\textnormal{a.s.}}{\to}}
\newcommand{\asleq}{\stackrel{\textnormal{a.s.}}{\leq}}
\newcommand{\iidsim}{\stackrel{\textnormal{i.i.d.}}{\sim}}
\newcommand{\dtv}{\textnormal{d}_{\textnormal{TV}}}
\def\ind#1{\mathbbm{1}\left\{#1\right\}}
\def\indsub#1{\mathbbm{1}_{#1}}
\def\blfootnote{\xdef\@thefnmark{}\@footnotetext}
\newtcolorbox{proofframe}[2][]{
    enhanced,
    breakable=true,
    sharp corners=all,
    top=0mm, 
    left=4mm,
    enlarge top by=2em,
    enlarge bottom by=0.5em,
    left skip=1.5em,
    right skip=1.5em,
    borderline={0.2em}{0pt}{gray!20}, 
    boxrule=0pt,
    colback=gray!5,
    coltitle=black,
    colbacktitle=gray!5,
    colframe=gray!5,
    parbox=false,
    fonttitle=\bfseries,
    before upper={\vskip -0cm},
    title={{\begin{tikzpicture}[baseline=(title.base)]
            \node[rectangle, fill=gray!20, sharp corners, inner sep=1.5mm] (title) {#2};
          \end{tikzpicture}}
          },
    toptitle=-0.4cm,
}
\renewenvironment{proof}[1][]{%
    \begin{proofframe}{#1}\relax%
    }{\end{proofframe}}
\renewenvironment{fact}[1][]{%
   \refstepcounter{theorem}
    \ifstrempty{#1}% 
    {\mdfsetup{% 
    frametitle={% 
        \tikz
        \node[anchor=east,rectangle,fill=theoremcolor!70]{Fact~\thetheorem};}} 
    }%
    {\mdfsetup{%
    frametitle={% 
        \tikz
        \node[anchor=east,rectangle,fill=theoremcolor!70] {Fact~\thetheorem:~#1};}}% 
    }% 
    \mdfsetup{innertopmargin=0.2em,linecolor=theoremcolor!70, linewidth=0.2em, nobreak=true,
    topline=true, frametitleaboveskip=-0.9em,}
    \begin{mdframed}[]\relax% 
    }{\end{mdframed}
}
\renewenvironment{theorem}[1][]{%
   \refstepcounter{theorem}
    \ifstrempty{#1}% 
    {\mdfsetup{% 
    frametitle={% 
        \tikz
        \node[anchor=east,rectangle,fill=theoremcolor!70]{Theorem~\thetheorem};}} 
    }%
    {\mdfsetup{%
    frametitle={% 
        \tikz
        \node[anchor=east,rectangle,fill=theoremcolor!70] {Theorem~\thetheorem:~#1};}}% 
    }% 
    \mdfsetup{innertopmargin=0.2em,linecolor=theoremcolor!70, linewidth=0.2em, nobreak=true,
    topline=true, frametitleaboveskip=-0.9em,}
    \begin{mdframed}[]\relax% 
    }{\end{mdframed}
}
\renewenvironment{lemma}[1][]{%
    \refstepcounter{theorem}
    \ifstrempty{#1}% 
    {\mdfsetup{% 
    frametitle={% 
        \tikz
        \node[anchor=east,rectangle,fill=theoremcolor!70]{Lemma~\thetheorem};}} 
    }%
    {\mdfsetup{%
    frametitle={% 
        \tikz
        \node[anchor=east,rectangle,fill=theoremcolor!70] {Lemma~\thetheorem:~#1};}}% 
    }% 
    \mdfsetup{innertopmargin=0.2em,linecolor=theoremcolor!70, linewidth=0.2em, nobreak=true,
    topline=true, frametitleaboveskip=-0.9em,}
    \begin{mdframed}[]\relax% 
    }{\end{mdframed}
}
\renewenvironment{blankbox}[1][]{%
    \ifstrempty{#1}% 
    {\mdfsetup{% 
    frametitle={% 
        \tikz
        \node[anchor=east,rectangle,fill=theoremcolor!70]{};}} 
    }%
    {\mdfsetup{%
    frametitle={% 
        \tikz
        \node[anchor=east,rectangle,fill=theoremcolor!70] {#1};}}% 
    }% 
    \mdfsetup{innertopmargin=0.2em,linecolor=theoremcolor!70, linewidth=0.2em, nobreak=true,
    topline=true, frametitleaboveskip=-0.9em,}
    \begin{mdframed}[]\relax% 
    }{\end{mdframed}
}
\renewenvironment{example}[1][]{%
    \refstepcounter{theorem}
    \ifstrempty{#1}% 
    {\mdfsetup{% 
    frametitle={% 
        \tikz
        \node[anchor=east,rectangle,fill=theoremcolor!70]{Example~\thetheorem};}} 
    }%
    {\mdfsetup{%
    frametitle={% 
        \tikz
        \node[anchor=east,rectangle,fill=theoremcolor!70] {Example~\thetheorem:~#1};}}% 
    }% 
    \mdfsetup{innertopmargin=0.2em,linecolor=theoremcolor!70, linewidth=0.2em, nobreak=true,
    topline=true, frametitleaboveskip=-0.9em,}
    \begin{mdframed}[]\relax% 
    }{\end{mdframed}
}
\renewenvironment{definition}[1][]{%
    \refstepcounter{theorem}
    \ifstrempty{#1}% 
    {\mdfsetup{% 
    frametitle={% 
        \tikz
        \node[anchor=east,rectangle,fill=theoremcolor!70]{Definition~\thetheorem};}} 
    }%
    {\mdfsetup{%
    frametitle={% 
        \tikz
        \node[anchor=east,rectangle,fill=theoremcolor!70] {Definition~\thetheorem:~#1};}}% 
    }% 
    \mdfsetup{innertopmargin=0.2em,linecolor=theoremcolor!70, linewidth=0.2em, nobreak=true,
    topline=true, frametitleaboveskip=-0.9em,}
    \begin{mdframed}[]\relax% 
    }{\end{mdframed}
}
\renewenvironment{corollary}[1][]{%
    \refstepcounter{theorem}
\ifstrempty{#1}% 
    {\mdfsetup{% 
    frametitle={% 
        \tikz
        \node[anchor=east,rectangle,fill=theoremcolor!70]{Corollary~\thetheorem};}} 
    }%
    {\mdfsetup{%
    frametitle={% 
        \tikz
        \node[anchor=east,rectangle,fill=theoremcolor!70] {Corollary~\thetheorem:~#1};}}% 
    }% 
    \mdfsetup{innertopmargin=0.2em,linecolor=theoremcolor!70, linewidth=0.2em, nobreak=true,
    topline=true, frametitleaboveskip=-0.9em,}
    \begin{mdframed}[]\relax% 
    }{\end{mdframed}
}
\renewenvironment{proposition}[1][]{%
    \refstepcounter{theorem}
    \ifstrempty{#1}% 
    {\mdfsetup{% 
    frametitle={% 
        \tikz
        \node[anchor=east,rectangle,fill=theoremcolor!70]{Proposition~\thetheorem};}} 
    }%
    {\mdfsetup{%
    frametitle={% 
        \tikz
        \node[anchor=east,rectangle,fill=theoremcolor!70] {Proposition~\thetheorem:~#1};}}% 
    }% 
    \mdfsetup{innertopmargin=0.2em,linecolor=theoremcolor!70, linewidth=0.2em, nobreak=true,
    topline=true, frametitleaboveskip=-0.9em,}
    \begin{mdframed}[]\relax% 
    }{\end{mdframed}
}
\renewenvironment{algbox}[1][]{%
    \refstepcounter{theorem}
    \ifstrempty{#1}% 
    {\mdfsetup{% 
    frametitle={% 
        \tikz
        \node[anchor=east,rectangle,fill=theoremcolor!70]{Algorithm~\thetheorem};}} 
    }%
    {\mdfsetup{%
    frametitle={% 
        \tikz
        \node[anchor=east,rectangle,fill=theoremcolor!70] {Algorithm~\thetheorem:~#1};}}% 
    }% 
    \mdfsetup{innertopmargin=0.2em,linecolor=theoremcolor!70, linewidth=0.2em, nobreak=true,
    topline=true, frametitleaboveskip=-0.9em,}
    \begin{mdframed}[]\relax% 
    }{\end{mdframed}
}
\title{Theoretical Foundations of Conformal Prediction
}
\author{Anastasios N.\ Angelopoulos\thanks{Department of Electrical Engineering and Computer Science, University of California at Berkeley}, Rina Foygel Barber\thanks{Department of Statistics, University of Chicago}, Stephen Bates\thanks{Department of Electrical Engineering and Computer Science, Massachusetts Institute of Technology}}
\date{}
\begin{document}

\frontmatter
\maketitle
\tableofcontents

\mainmatter

\chapter*{Preface}

\begin{blankbox}[A note to the reader]
This draft is a preliminary version of a forthcoming textbook. We welcome all feedback from readers---please contact the authors at \texttt{tfcpbook@gmail.com} with any typos, corrections, clarifications, or suggestions.
Since this manuscript is a work-in-progress, if referencing any of the results in the book please be aware that later updates may lead to changes in the statements and the numbering of the results.
\end{blankbox}

This book is about conformal prediction and related inferential techniques that build on permutation tests and exchangeability.
These techniques are useful in a diverse array of tasks, including hypothesis testing and providing uncertainty quantification guarantees for machine learning systems.
Much of the current interest in conformal prediction is due to its ability to integrate into complex machine learning workflows, solving the problem of forming prediction sets without any assumptions on the form of the data generating distribution. 
Since contemporary machine learning algorithms have generally proven difficult to analyze directly, conformal prediction's main appeal is its ability to provide formal, finite-sample guarantees when paired with such methods. 

The goal of this book is to teach the reader about the fundamental technical arguments that arise when researching conformal prediction and related questions in distribution-free inference.
Many of these proof strategies, especially the more recent ones, are scattered among research papers, making it difficult for researchers to understand where to look, which results are important, and how exactly the proofs work.
To bridge this gap, in this book we present what we believe to be some of the most important results in the literature.
We develop their proofs in a unified language, with illustrations, and with an eye towards pedagogy.
We note that this book does not focus on the question of how to apply conformal prediction in practice---for a more practical and application-oriented introduction to conformal prediction, the reader may instead prefer to read \emph{Conformal Prediction: A Gentle Introduction} \citep{angelopoulos2023conformal}.

This book is meant for those working on the development of statistical theory and methodology, broadly speaking.
We envision our audience including everyone from classically trained statisticians interested in finite-sample model-agnostic bounds, to machine learning researchers looking for a modular theory that applies to the ever-changing landscape of machine learning algorithms. The background required is generally at the level of first-year graduate coursework in theoretical statistics; some measure theory will be used occasionally, but the vast majority of results in this book do not require it. 
We hope that this book will provide the reader with a deep understanding of the theoretical underpinnings of the field, so that they may themselves contribute to the ongoing theoretical development of conformal prediction and other areas within distribution-free inference.

\section*{Acknowledgments}
We would like to give our deep thanks to the many friends, collaborators, mentors, students, and reviewers who gave us feedback on this work, including
Emmanuel Cand{\`e}s, Tiffany Ding, Alexander Gammerman, Anders Hoel, Michael I. Jordan, Ilmun Kim, Kallia Kleisarchaki, Hung Le, Gyumin Lee, Yonghoon Lee, Elena Yutong Li, Yushuo Li, David Liang, Ruiting Liang, Chenjia Lin, Lu\'is Marques, Peter McCullagh, Shawn Meng, Ethan Naegele, Theo Olausson, Drew Prinster, Andi Qu, Aaditya Ramdas, Ryan Tibshirani, Vladimir Vovk, Lekun (Bill) Wang, Ziyang Wei, Eric Weine, Andrew Yao, Christopher Yeh, Shangkai Zhu, and Tijana Zrni\'c.
We thank Natalie Tomlinson, Anna Scriven, and the Cambridge University Press for their support.

Lastly, we thank our families for their long-standing support in this effort---this book would not be possible without them.

\part{Background}
\label{part:background}

\chapter{Introduction}
\label{chapter:introduction}

Conformal prediction is a statistical technique that quantifies uncertainty in predictive models, without any assumptions at all on the model and with minimal assumptions on the distribution of the data.
Predictive models can be prone to unexpected inaccuracies and errors, complicating their practical usage.
Conformal prediction guards against these issues, giving rigorous error bounds on predictions. 
This book presents the foundational statistical theory of conformal prediction and related methods.

\section{Uncertainty quantification for prediction}

We now describe the problem of uncertainty quantification for prediction.
Consider a sequence of data points $(X_i, Y_i) \in \cX \times \cY$, for $i=1,\dots,n$. Here, $X_i$ is the feature vector and $Y_i$ is the response variable. 
We are then given a new feature vector  $X_{n+1}$, with the task of predicting its corresponding response value $Y_{n+1}$ (which is unobserved). 
Given a predictive model $\hf$, we can return a prediction $ \hf(X_{n+1})$. 
To communicate our uncertainty in this prediction, we can provide a margin of error around our prediction $\hf(X_{n+1})$, or more generally, a \emph{prediction set} $\cC(X_{n+1})\subseteq\cY$. A common aim for this set is the property of \emph{marginal coverage}, \index{coverage!marginal}
\begin{equation}
\label{eq:marg_coverage}
\P\Big(Y_{n+1} \in \cC(X_{n+1})\Big) \ge 1 - \alpha,
\end{equation}
where $\alpha\in(0,1)$ is a user-specified error level (e.g., $\alpha = 0.1$ for 90\% coverage). If the size of the set $\cC(X_{n+1})$ is large, this indicates high uncertainty in the prediction.

Conformal prediction is a technique for constructing sets $\cC(X_{n+1})$ that satisfy~\eqref{eq:marg_coverage} under no assumptions on $\hf$ or the form of the data distribution.
In particular, if the underlying predictive model $\hf$ is a poor fit to the data, then the accompanying set $\cC(X_{n+1})$ will be large---potentially even infinite.
On the other hand, accurate models will result in smaller sets, and under additional conditions can lead to stronger notions of coverage than that in~\eqref{eq:marg_coverage}.
In either case, the role of conformal prediction is to accurately quantify the level of uncertainty present when using a predictive model on the current data distribution.

\section{Preview of split conformal prediction}
\label{sec:conformal-preview}
\index{split conformal prediction|(}

We begin by presenting a version of the \emph{split conformal prediction} algorithm.
As before, suppose we have training data points $(X_i,Y_i)$ for $i=1,\dots,n$, and a test point $(X_{n+1}, Y_{n+1})$.
Taking $n$ to be an even number for simplicity, the training data will be split into $n/2$ points used for model fitting, and $n/2$ points used for calibration.

We consider the setting $\cY=\R$---that is, a regression problem with a real-valued response.We can construct prediction intervals for $Y_{n+1}$ via the following algorithm (which is one specific case of the more general split conformal algorithm, presented in Section~\ref{sec:intro-scores} below).

\begin{algbox}[Split conformal prediction, special case]
\label{alg:split-cp-preview-residual}
\begin{enumerate}
    \item Use data $(X_i,Y_i)$ for $i=1,\dots,n/2$ to fit a predictive model $\hf : \cX \to \R$.
    \item For $i=n/2 + 1,\dots,n$, compute the absolute residual $S_i = |Y_i - \hf(X_i)|$.
    \item Sort $S_{n/2 + 1},\dots, S_n$ in increasing order, and let $\hat{q}$ be the $\lceil (1-\alpha)(\tfrac{n}{2} + 1)\rceil$-th element in the sorted list.
    \item Return the prediction interval $\cC(X_{n+1}) = [\hf(X_{n+1}) - \hat{q}, \hf(X_{n+1}) + \hat{q}]$.
\end{enumerate}
\end{algbox}
This algorithm's output is the prediction interval $\cC(X_{n+1}) = [\hf(X_{n+1}) - \hat{q}, \hf(X_{n+1}) + \hat{q}]$, which represents our uncertainty about the prediction $\hf(X_{n+1})$ for the target value $Y_{n+1}$.

The predictive model $\hf$ in the first step of Algorithm~\ref{alg:split-cp-preview-residual} can be any function that is based only on $(X_1, Y_1),\dots,(X_{n/2}, Y_{n/2})$. For example, it might be a linear model fitted via least-squares regression.
The final set is an interval centered at the model's prediction, $\hf(X_{n+1})\pm \hat{q}$.
To interpret this choice of $\hat{q}$, we observe $\hat q$ is chosen such that the intervals $[\hf(X_i) - \hat{q}, \hf(X_i) + \hat{q}]$ contain the response variable $Y_i$ for approximately a $(1-\alpha)$ fraction of the points $i=n/2+1,\dots,n$ (i.e., the data points that were not used for training the model $\hf$). 

If the data points are independent and identically distributed (i.i.d.), then for any choice of the predictive model $\hf$, the prediction set above satisfies marginal coverage:
\begin{theorem}[Split conformal coverage guarantee, special case]
\label{thm:conformal_calibration_residual}
Suppose $(X_1,Y_1),\dots,(X_{n+1},Y_{n+1})$ are i.i.d., and let $\cC(X_{n+1})$ be the output of Algorithm~\ref{alg:split-cp-preview-residual}. Then the marginal coverage property~\eqref{eq:marg_coverage} holds.
\end{theorem}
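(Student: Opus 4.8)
The plan is to reduce the coverage statement to a one–dimensional statement about the residual scores, and then to recover the exchangeability that the scores lose because $\hf$ was fitted on part of the data. Write $m = n/2$, and extend the definition of the scores to the test point by setting $S_{n+1} = |Y_{n+1} - \hf(X_{n+1})|$, using exactly the same formula as for $i = m+1,\dots,n$. Since $\cC(X_{n+1}) = [\hf(X_{n+1}) - \hat q,\ \hf(X_{n+1}) + \hat q]$, we have $Y_{n+1}\in\cC(X_{n+1})$ if and only if $S_{n+1}\le\hat q$, so it suffices to prove $\P(S_{n+1}\le\hat q)\ge 1-\alpha$. Set $k := \lceil(1-\alpha)(m+1)\rceil$; note $k\le m+1$ always. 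If $k>m$ the algorithm returns $\hat q=+\infty$ and coverage is trivial, so I may assume $k\le m$, i.e.\ $\hat q$ is a genuine order statistic of the $m$ calibration scores.

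Next I would condition on the training fold $\cD_{\mathrm{tr}} = \{(X_i,Y_i): 1\le i\le m\}$. Given $\cD_{\mathrm{tr}}$, the map $\hf$ is a fixed measurable function, and the remaining points $(X_{m+1},Y_{m+1}),\dots,(X_{n+1},Y_{n+1})$ are still i.i.d.\ (they are independent of $\cD_{\mathrm{tr}}$), so the scores $S_{m+1},\dots,S_{n+1}$ are i.i.d.\ conditional on $\cD_{\mathrm{tr}}$, and in particular exchangeable. I will establish $\P(S_{n+1}\le\hat q\mid\cD_{\mathrm{tr}})\ge 1-\alpha$ for every value of $\cD_{\mathrm{tr}}$; taking expectations then gives the marginal bound. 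From here on everything is conditional on $\cD_{\mathrm{tr}}$, and the only property used is that $S_{m+1},\dots,S_{n+1}$ are exchangeable.

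The core is a leave-one-out counting argument. For each $i\in\{m+1,\dots,n+1\}$ let $\hat q_i$ be the $k$-th smallest value among the $m$ scores $\{S_j : m+1\le j\le n+1,\ j\ne i\}$, so that $\hat q_{n+1}=\hat q$. I would first prove the deterministic bound $\#\{i : S_i > \hat q_i\}\le m+1-k$: if $S_i>\hat q_i$ then, as $\hat q_i$ is the $k$-th smallest of the other $m$ scores, at least $k$ of those scores are $\le\hat q_i<S_i$, so at least $k$ of the full collection of $m+1$ scores lie strictly below $S_i$ (and none of them equals $S_i$ as an index); if $t$ indices had this property, taking $v^\star$ to be the smallest of the corresponding scores forces at least $k$ scores strictly below $v^\star$, disjoint in index from those $t$ indices, whence $t+k\le m+1$. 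Since $\hat q_i$ is a symmetric function of $\{S_j:j\ne i\}$, exchangeability gives $(S_i,\hat q_i)\eqd(S_{n+1},\hat q_{n+1})$ for every $i$, so
\[
(m+1)\,\P(S_{n+1}>\hat q) = \sum_{i=m+1}^{n+1}\P(S_i>\hat q_i) = \E\big[\#\{i:S_i>\hat q_i\}\big] \le m+1-k,
\]
and therefore $\P(S_{n+1}>\hat q)\le 1-\tfrac{k}{m+1}\le 1-(1-\alpha)=\alpha$, using $k\ge(1-\alpha)(m+1)$. This is exactly the conditional bound needed.

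The main obstacle is conceptual rather than computational: it is the observation in the second step that $S_{m+1},\dots,S_{n+1}$ are \emph{not} exchangeable unconditionally, because $\hf$ depends on the first half of the data, and that exchangeability is recovered precisely by conditioning on that fold — this is the reason the sample-splitting structure of Algorithm~\ref{alg:split-cp-preview-residual} is essential. The only genuine technical care is in the counting step, where the bound $\#\{i:S_i>\hat q_i\}\le m+1-k$ must be made robust to ties among the scores; passing to the minimum $v^\star$ rather than assigning ranks, as above, handles this cleanly.
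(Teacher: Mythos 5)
Your proof is correct, and it takes a genuinely different route from the paper's. The paper treats Theorem~\ref{thm:conformal_calibration_residual} as a special case of the full conformal result (Theorem~\ref{thm:full-conformal}), whose proof (i) uses the Replacement Lemma (Lemma~\ref{lem:n+1-to-n-reduction}) to convert the comparison $S_{n+1}\le\hat q$ into a comparison against the $(1-\alpha)$-quantile of the \emph{augmented} score list $(S_{m+1},\dots,S_{n+1})$, and then (ii) invokes the order-statistic bound for exchangeable vectors (Fact~\ref{fact:exchangeable-properties}\ref{fact:exchangeable-properties_part2}). Your argument fuses these two steps into a single leave-one-out count: instead of passing to a quantile of all $m+1$ scores, you keep the split conformal threshold $\hat q_i$ at each held-out index and bound $\#\{i:S_i>\hat q_i\}$ deterministically, then average over the exchangeable pairs $(S_i,\hat q_i)$. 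The deterministic count $\#\{i:S_i>\hat q_i\}\le m+1-k$ is in fact equal to the count $\#\{i:S_i>Z_{(k)}\}$ underlying the paper's Fact~\ref{fact:exchangeable-properties} (removing an index strictly above the $k$-th order statistic does not change it, so $\hat q_i$ coincides with the $k$-th order statistic of the full list on the event $S_i>\hat q_i$), so the two bookkeeping schemes are secretly the same; your presentation just avoids the detour through the Replacement Lemma. What this buys you is a fully self-contained, elementary proof that establishes the needed probability inequality from scratch rather than citing a stated-but-unproved fact; the cost is that it is tailored to the split setting and does not directly generalize to full conformal, where the leave-one-out threshold would require retraining and the Replacement Lemma reformulation becomes more convenient. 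Your handling of ties via the minimum $v^\star$ and of the degenerate case $k>m$ is careful and correct, and you correctly identify conditioning on the training fold as the step that restores exchangeability of the scores.
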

In other words, if the data points are drawn i.i.d.\ from any distribution, then split conformal prediction offers marginal coverage---even if the fitted model $\hf$ is an extremely poor fit to the data, and even if the sample size $n$ is small. In fact, while the i.i.d.\ assumption is sufficient here, it is stronger than necessary---a weaker property known as exchangeability, which will be introduced in Chapter~\ref{chapter:exchangeability}, is the fundamental property required for conformal prediction.

\section{Conformal scores}
\label{sec:intro-scores}
\index{score function|(}

The example above builds intuition for valid coverage with any predictive model and dataset, but the exact form of the algorithm above is rather constrained: by construction, the prediction set will always be of the form $\hf(X_{n+1})\pm \hat{q}$, i.e., a band of constant width around the fitted predictive model $\hf$; see Figure~\ref{fig:score-shape}. Fortunately, the conformal framework is much more flexible than the above example, allowing for nearly unlimited choice in how $\cC(X_{n+1})$ is constructed. The key concept is the \emph{conformal score function}, which is a function $s(x,y)$ such that larger values indicate that the data point $(x,y)$ does \emph{not} agree with (does not `conform' to) the trends observed in the training data. For instance, given a fitted model $\hf$, a common choice for $s$ is the residual score, $s(x,y) = |y - \hf(x)|$, since a large value of the residual indicates that $(x,y)$ does \emph{not} appear to agree with the model trained on the available data.
We will give additional examples of score functions shortly. 

We now state a more general version of the split conformal prediction algorithm, using a generic conformal score function. As before, we again assume $n$ is even for simplicity.
\begin{algbox}[Split conformal prediction, general case]
\label{alg:split-cp-preview-general}
\begin{enumerate}
    \item Use data $(X_i,Y_i)$ for $i=1,\dots,n/2$ to construct a conformal score function $s : \cX\times \cY \to \R$, with the intuition that $s(x,y)$ measures how unusual $(x,y)$ is based on a model fit on data from this split.
    \item For $i=n/2 + 1,\dots,n$, compute the score $S_i = s(X_i, Y_i)$.
    \item Sort $S_{n/2 + 1},\dots, S_n$ in increasing order, and let $\hat{q}$ be the $\lceil (1-\alpha)(\tfrac{n}{2}+1)\rceil$-th element in the sorted list.
    \item Return the prediction set $\cC(X_{n+1}) = \{y\in\cY :  s(X_{n+1}, y ) \le \hat{q} \}$.
\end{enumerate}
\end{algbox}
It may not be immediately clear that the set $\cC(X_{n+1})$ can be computed efficiently, since it nominally requires iterating through all $y \in \cY$.
However, in many cases, it simplifies to an interval that can be computed explicitly, just as in Algorithm~\ref{alg:split-cp-preview-residual}---we will see some examples below.

While we are referring to split conformal prediction as `an algorithm', the flexibility in choosing the score function $s$ means that we should actually think of this as a \emph{family} of algorithms---any given choice of the conformal score function $s$ specifies a particular algorithm.
For example, by choosing the residual score function $s(x,y) = |y - \hf(x)|$, we can obtain
Algorithm~\ref{alg:split-cp-preview-residual} as a special case of Algorithm~\ref{alg:split-cp-preview-general}.

The following result states that the marginal coverage guarantee holds with any score function.
\begin{theorem}[Split conformal coverage guarantee, general case]
\label{thm:conformal_calibration}
Suppose $(X_1,Y_1),\dots,(X_{n+1},Y_{n+1})$ are i.i.d., and let $\cC(X_{n+1})$ be the output of Algorithm~\ref{alg:split-cp-preview-general}. Then the marginal coverage property~\eqref{eq:marg_coverage} holds.
\end{theorem}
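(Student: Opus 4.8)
The plan is to condition on the first half of the data --- the part used to build $s$ --- after which the statement reduces to the classic conformal quantile argument for exchangeable scalars. Concretely: writing $S_{n+1}=s(X_{n+1},Y_{n+1})$, step~4 of Algorithm~\ref{alg:split-cp-preview-general} gives $Y_{n+1}\in\cC(X_{n+1})\iff S_{n+1}\le\hat q$, so it suffices to lower-bound $\P(S_{n+1}\le\hat q)$. Let $\cD_1=\{(X_i,Y_i):1\le i\le n/2\}$. Conditionally on $\cD_1$, the score function $s$ is deterministic and the points $(X_i,Y_i)$ for $i\in\{n/2+1,\dots,n,n+1\}$ remain i.i.d.\ (hence exchangeable) and independent of $\cD_1$; so the scores $S_{n/2+1},\dots,S_n,S_{n+1}$ are exchangeable given $\cD_1$, and by the tower property it is enough to show $\P(S_{n+1}\le\hat q\mid\cD_1)\ge 1-\alpha$ almost surely.

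Set $m=\tfrac n2+1$ and $k=\lceil(1-\alpha)(\tfrac n2+1)\rceil=\lceil(1-\alpha)m\rceil$, and relabel the calibration scores as $Z_1,\dots,Z_{m-1}$ and the test score as $Z_m$, so that $\hat q$ is the $k$-th smallest of $Z_1,\dots,Z_{m-1}$ (with the convention $\hat q=+\infty$ when $k>m-1$). I would then establish the general quantile fact: if $Z_1,\dots,Z_m$ are exchangeable then $\P(Z_m\le\hat q)\ge k/m$. To prove it, condition further on the sorted values $z_{(1)}\le\dots\le z_{(m)}$ of the multiset $\{Z_1,\dots,Z_m\}$; by exchangeability $Z_m$ is then a uniformly random one of these $m$ entries. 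Deleting whichever entry $Z_m$ occupies leaves $m-1$ values whose $k$-th smallest is at least $z_{(k)}$, so $\hat q\ge z_{(k)}$ no matter what; hence $\{Z_m\le z_{(k)}\}\subseteq\{Z_m\le\hat q\}$, and since at least $k$ of the $m$ entries are $\le z_{(k)}$ we get $\P(Z_m\le\hat q\mid\text{values})\ge\P(Z_m\le z_{(k)}\mid\text{values})\ge k/m$. Averaging over the values (and then over $\cD_1$) and using $k\ge(1-\alpha)m$ yields $\P(S_{n+1}\le\hat q)\ge 1-\alpha$, which is the claim.

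The one genuine obstacle is ties among the scores: since the theorem assumes nothing about continuity of the data distribution, $Z_m$ need not have an exactly-uniform rank among $Z_1,\dots,Z_m$, so the argument must be run in terms of the sorted \emph{values} rather than ranks --- this is what keeps it valid, at the cost of an inequality in place of an equality, which is all that is required. A smaller point to dispatch is the degenerate regime $\alpha<1/(\tfrac n2+1)$, i.e.\ $k>m-1$, where the prescribed index runs past the end of the sorted calibration list; the natural convention there is $\hat q=+\infty$, giving $\cC(X_{n+1})=\cY$ and trivial coverage, consistent with the convention above. Everything else --- the conditioning on $\cD_1$, the tower property --- is routine, and ``i.i.d.'' may be weakened to ``exchangeable'' throughout, matching the remark after Theorem~\ref{thm:conformal_calibration_residual}.
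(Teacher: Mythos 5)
Your proof is correct and follows essentially the same route the book takes: condition on the pretraining fold so the score function becomes deterministic, observe that the calibration and test scores are then exchangeable, and reduce coverage to an order-statistic inequality. The one key quantile fact you prove inline is exactly the combination of the book's Replacement Lemma (Lemma~\ref{lem:n+1-to-n-reduction}, in the one direction needed: removing an element cannot decrease the $k$-th order statistic) with Fact~\ref{fact:exchangeable-properties}\ref{fact:exchangeable-properties_part1} (the exchangeable rank bound $\P(Z_m\le z_{(k)})\ge k/m$), and your handling of ties by working with sorted values rather than ranks matches how the book's order-statistic machinery treats them; the only small caveat is that when weakening i.i.d.\ to exchangeability your intermediate claim ``independent of $\cD_1$'' should be dropped in favor of conditional exchangeability alone (cf.\ Fact~\ref{fact:conditional-exch-1}), which is all the argument actually uses.
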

Although any score function results in marginal coverage, in practice the choice of the conformal score function $s$ is the single most important decision when implementing conformal prediction: different choices can lead to very different procedures, and a poorly chosen conformal score function $s$ can lead to uninformative or overly large prediction sets. 
We next outline several of the most common conformal score functions and give intuition for the properties of the resulting conformal prediction method. 
Figure~\ref{fig:score-shape} gives an illustration of the sets resulting from some of these conformal score functions.

\index{split conformal prediction|)}

\paragraph{The residual score.} \index{score function!residual score}
We first return to the \emph{residual score}, $s(x,y) = |y - \hf(x)|$, as used in Algorithm~\ref{alg:split-cp-preview-residual} above, where $\hf$ is fitted on the data points $(X_1,Y_1),\dots,(X_{n/2},Y_{n/2})$. This construction will always return a prediction set $\cC(X_{n+1})$ of the same form: a symmetric interval, centered around the point prediction $\hf(X_{n+1})$ with the same width for all values of $X_{n+1}$. This is a simple and natural choice. 
However, the width of the interval does not adapt to $X$, making it far from ideal in many settings---for instance, if the response variable $Y$ has higher or lower noise variance depending on the value of $X$. 

\begin{figure}[t]
    \centering
    \includegraphics[width=\textwidth]{\diagramspath score-shape.pdf}
    \caption{\textbf{The conformal score function} determines the shape of the sets. The shaded band is a visualization of the prediction set $\cC(X_{n+1})\subseteq\cY$ as a function of $X_{n+1}\in\cX$. On the left, the residual score gives a fixed-width band around a fitted model $\hat{f}$. In the middle, the scaled residual score gives a symmetric band that adapts to the non-constant noise variance. On the right, the CQR score gives an asymmetric band that follows the quantiles of the distribution.}
    \commentAlt{Three scatterplots showing the same data points and fitted regression function, with different shaded regions. The first shaded band is constant-width, the second is varying-width but centered around the fitted function, and the third is asymmetric.}
    \label{fig:score-shape}
\end{figure}

\paragraph{The scaled residual score.} \index{score function!scaled residual score}
We can modify the residual score to result in intervals of different width (e.g., in settings where the variance of $Y$ is different at different values of $X$). This construction will again use a trained predictive model $\hf$, and will also require an estimate $\hat{\sigma}(x)$ of the scale of the noise in $Y$ given $X = x$ (e.g., an estimate of the standard deviation), where $\hf$ and $\hat{\sigma}$ are both fitted using data points $(X_1,Y_1),\dots,(X_{n/2},Y_{n/2})$. The \emph{scaled residual score} is then defined as
\[s(x,y) = \frac{|y - \hf(x)|}{\hat{\sigma}(x)},\]
and results in the prediction set \[\cC(X_{n+1}) = \hf(X_{n+1}) \pm \hat q \cdot \hat \sigma (X_{n+1}).\] This can lead to prediction intervals that are a better fit to the data as compared to the residual score, since the function $\hat\sigma(x)$ can capture the nonconstant variance of the noise in $Y$. 

\paragraph{The CQR score.} \index{score function!conformalized quantile regression (CQR)} \index{quantile regression}
Both the residual score and the scaled residual score will return symmetric intervals, which may be a poor fit for certain data distributions. This motivates a more nonparametric approach towards choosing the score. Suppose we use the data points $(X_1,Y_1),\dots,(X_{n/2},Y_{n/2})$ to obtain an estimate $\hat{\tau}(x; \alpha/2)$ of the $\alpha/2$ quantile of the distribution of $Y$ given $X = x$, and an estimate $\hat{\tau}(x; 1-\alpha/2)$ of the $1-\alpha/2$ quantile---for instance, we might fit these models by running a quantile regression method. A straightforward way to use these estimates when confronted with a test point $X_{n+1}$ would be to output the interval  $[\hat{\tau}(X_{n+1} ; \alpha/2), \hat{\tau}(X_{n+1}; 1-\alpha/2)]$, using the estimated quantiles---but this may not give a coverage level of $1-\alpha$ if the estimates $\hat \tau$ are imperfect. Instead, the \emph{conformalized quantile regression} (CQR) method uses these initial quantile estimates to construct a conformal score, so that the resulting prediction set is an adjusted version of the initial interval. Specifically, the CQR score is given by
\begin{equation}
s(x,y) = \max\left\{ \hat{\tau}(x; \alpha/2) - y , y - \hat{\tau}(x; 1-\alpha/2)\right\}
\end{equation}
which is the signed distance of $y$ to the interval $[\hat{\tau}(x ; \alpha/2), \hat{\tau}(x; 1-\alpha)]$. With this choice of $s$, the resulting conformal prediction set takes the form
\begin{equation}
    \cC(X_{n+1}) = [\hat{\tau}(X_{n+1} ; \alpha/2) - \hat q, \hat{\tau}(X_{n+1}; 1-\alpha/2) + \hat q].
\end{equation}
That is, conformal prediction with this score function takes the prediction interval from the initial quantile estimates, and then either inflates it if $\hat{q}$ is positive or shrinks it if $\hat{q}$ is negative.

\paragraph{The high-probability score.} \index{score function!high-probability score}
The split conformal prediction algorithm can also be applied to classification problems, i.e., when the response variable takes values in a discrete set. Suppose that $\cY = \{1,\dots,K\}$ is the set of possible labels, and suppose we have an estimate $\hat{\pi}(y\mid x)$ of the probability of label $Y=y$ given features $X=x$, which was trained on the data points $(X_1,Y_1),\dots,(X_{n/2},Y_{n/2})$. The \emph{high-probability score} is then given by
\[s(x,y) = -\hat{\pi}(y\mid x).\]
It is important to note that the score is the negative of the estimated probability; this is because a conformal score is intended to return larger values when the data point $(x,y)$ appears more unlikely, i.e., when $\hat{\pi}(y\mid x)$ is small.
If we use this score for split conformal prediction, the resulting prediction set is given by
\begin{equation}
    \cC(X_{n+1}) = \{y : \hat{\pi}(y \mid X_{n+1}) \ge -\hat{q}\} \subseteq \{1,\dots, K\},
\end{equation}
which is the set of all labels $y\in\{1,\dots,K\}$ with a sufficiently high estimated probability, given the test point features $X_{n+1}$. Of course, this type of score function may also be used in the case of a continuous response $Y$, if we use an estimated conditional density in place of the estimated conditional probability.

\index{score function|)}

\bigskip

At this point, we have seen that conformal prediction can provide a marginal coverage guarantee with only weak assumptions, and can leverage the power of arbitrary predictive models---the better the predictive model, the more precise the prediction set will be. Nonetheless, there are many natural questions at this stage. Is it necessary to split the data into two parts, one for model fitting and one for the calibration of confidence intervals, as in split conformal prediction---or can the data splitting step be avoided? Which conformal score functions are optimal? Can the method be extended to cases where the data are not i.i.d.? Can distribution-free guarantees be extended to address statistical problems beyond predictive coverage? We will address these questions throughout the book as we develop conformal prediction in full generality.

\section{The conformal prediction framework in context}

This book discusses the statistical theory underlying conformal prediction and related techniques for  providing uncertainty quantification for our predictive models---but of course, this question has long been studied in the statistics literature. What distinguishes the conformal prediction framework from other methods?

As we have seen above, conformal prediction guarantees a marginal coverage property for data drawn i.i.d.\ from \emph{any} distribution. We do not need to place conditions on the distribution (such as smoothness, or, a parametric model)---because of this, conformal prediction is often described as a \emph{distribution-free} approach to inference. Moreover, we do not need to place conditions on the underlying model fitted to the data (such as assuming that $\hf$ is a consistent estimator of the true association between $Y$ and $X$), and the result is finite-sample, in the sense that it holds at any value of $n$ rather than offering only an asymptotic guarantee.

Conformal prediction is closely related to the field of nonparametric statistics, which has also aimed to provide statistical methods that can flexibly handle data distributions that do not fall within some simple parametric model. However, there are some fundamental differences between these fields. In nonparametric and semiparametric statistics, most methods and results rely on regularity conditions that, while weaker than a parametric model assumption, are nonetheless much stronger than the minimal assumptions required by conformal prediction. For example, in nonparametric statistics it is common to assume smoothness conditions on the distribution of the data. With these types of assumptions, it is often possible to provide guarantees not only for predictive inference but also for estimation (for instance, estimating the mean of $Y$ given $X$). By contrast, exchangeability is a far weaker assumption than what is usually considered in nonparametric statistics. Without regularity conditions, it is still possible to provide useful and powerful methods for predictive inference, as we have seen with the marginal coverage guarantee for split conformal, above---but, as we will see in some of the hardness results presented in Chapter~\ref{chapter:conditional} and in Part~\ref{part:beyond-predictive-coverage} of this book, other types of inference questions become more challenging or even impossible.

Conformal prediction is intimately connected with permutation testing---we will soon see that it can be formulated as the inversion of a particular permutation test. 
It is also closely connected to quantile estimation and distribution estimation: 
a natural use of quantile estimation is to give prediction intervals for test points, although such a procedure would again require assumptions on the regularity of the distribution in order to have guaranteed coverage. 
Farther afield are resampling approaches, such as the bootstrap and cross-validation.
Unlike conformal prediction, these are most commonly applied for confidence intervals on functionals of the distribution, and require regularity conditions for validity.
However, we will explore distribution-free variants of cross-validation for the purpose of predictive inference later on in the book.

\section{Scope of this book}
After this introductory chapter, the remainder of Part~\ref{part:background} is an introduction to exchangeability, with a glossary of facts and properties that will be useful for the statistical results developed later in the book. We pay special attention to permutation tests, since conformal prediction can be reframed as inverting a permutation test. These tools will be critical to many of the proofs and intuitions in the remainder of the book.

Part~\ref{part:conformal} of the book then turns to the conformal prediction framework. In particular,  
we discuss full conformal prediction, a generalization of the split conformal prediction method we have already introduced, which reveals the basic statistical logic at play. We then describe stronger properties than marginal coverage, with a mix of positive results for various methods, and hardness results that show the limits of what is possible without more assumptions. We also examine conformal prediction from a model-based perspective, to see how prior knowledge about the distribution of the data can be incorporated into the workflow of the conformal prediction framework.

Part~\ref{part:extensions-conformal} of the book focuses on a broad range of different extensions to the conformal prediction methodology, including cross-validation based methods within the conformal framework, weighted versions of conformal prediction that allow us to move beyond the i.i.d.\ setting, online versions of conformal methods that are designed for streaming data, and computational shortcuts for conformal prediction. We also briefly cover additional topics such as variants of conformal prediction that can handle broader notions of risk, and connections with selective inference, multiple testing, and model aggregation---these topics are a sample of some recent work in the field, and are suggestive of the many directions for continued study. 

Finally, in Part~\ref{part:beyond-predictive-coverage}, we depart from our focus on predictive coverage, and study the problem of distribution-free inference for a range of other questions: estimating a regression function, calibrating probability estimates, and testing conditional independence.

\section*{Bibliographic notes}
\addcontentsline{toc}{section}{\protect\numberline{}\textnormal{\hspace{-0.8cm}Bibliographic notes}}

We refer the reader to the bibliographic notes in Chapter~\ref{chapter:conformal-exchangeability} for references about the most common variants of conformal prediction and a detailed history of the field. Here, we will briefly mention some other textbooks and tutorials on conformal prediction. The first such book was~\emph{Algorithmic Learning in a Random World} by~\citet{vovk2005algorithmic}, which introduced the mathematical framework behind conformal prediction. More recent textbooks and tutorials include the works of~\citet{shafer2008tutorial},~\citet{balasubramanian2014conformal}, and~\citet{angelopoulos2023conformal}. Turning to the specific algorithms in this section, split conformal prediction is first described in~\citet{papadopoulos2002inductive}, with the residual score function as a canonical example. The same paper also introduced the scaled residual score function as an alternative; see also \citet{lei2018distribution}. Lastly, the CQR score function is due to~\citet{romano2019conformalized}, while the high-probability score is studied in~\citet{Sadinle2016LeastAS} (see also earlier work by \citet{papadopoulos2008inductive}, which studies a related score).

\section*{Exercises}
\addcontentsline{toc}{section}{\protect\numberline{}\textnormal{\hspace{-0.8cm}Exercises}}
\begin{enumerate}[font=\bfseries, label={\thechapter.\arabic*}, labelsep=1em, itemsep=1em]
\item Consider split conformal prediction with $\cY = \R$. Suppose we are given an initial model $\hf :\cX \to \R$. Construct a score function $s(x,y)$ that would yield intervals that are twice as wide above $\hf$ as below $\hf$, i.e., intervals of the form $\cC(x) = [\hf (x)- \gamma, \hf(x) + 2 \gamma]$ for some $\gamma$.
\item Consider split conformal prediction in the setting of a multivariate response, $\cY=\R^d$. Suppose we are given an initial model $\hf:\cX\to\R^d$, which returns predictions $\hf(x) = (\hf(x)_1,\dots,\hf(x)_d)$. Construct a score function $s(x,y)$ that would return prediction sets of the form
\[\cC(x) = \big[\hf(x)_1 - c, \hf(x)_1 + c\big] \times\dots \times \big[\hf(x)_d - c, \hf(x)_d + c\big] \]
for some $c$.
\item Suppose we have a regression problem with $\cY = \R$ and two models $\hf_1 : \cX \to \R$ and $\hf_2 : \cX \to \R$. Consider split conformal prediction with the score function $s(x,y) = \min\{|y - \hf_1(x)|, |y - \hf_2 (x)|\}$. Derive an explicit expression for the set $\cC(X_{n+1})$ in terms of $\hf_1$, $\hf_2$, and $\hat q$. What is the shape of the prediction set when $\hat q$ is small, or when $\hat q$ is large? Briefly explain why this might be attractive if we expect the distribution of $Y$ given $X = x$ to be bimodal for some values of $x$.
\end{enumerate}

\chapter{Exchangeability and Permutations}
\label{chapter:exchangeability}

\index{exchangeability|(}
In this chapter, we provide an introduction to the idea of exchangeability, laying the core mathematical foundation of conformal prediction and many related methodologies. 
Exchangeability is a property
of a sequence of random variables---informally, it expresses the idea that the sequence is equally likely to appear
in any order.
A formal definition is as follows:
\begin{definition}[Exchangeability]
\label{def:exchangeability}
Let $Z_1,\dots,Z_n\in\cZ$ be random variables with a joint distribution. We say that the random vector $(Z_1, \ldots, Z_n)$ is \emph{exchangeable} if, for every permutation $\sigma\in\cS_n$,
\[(Z_1,\dots,Z_n)\eqd (Z_{\sigma(1)},\dots,Z_{\sigma(n)}),\]
where $\eqd$ denotes equality in distribution, and $\cS_n$ is the set of all permutations  on $[n]:=\{1,\dots,n\}$.
\end{definition}
Similarly, we say that an infinite sequence $Z_1,Z_2,\dots\in\cZ$ is exchangeable if $(Z_1,\dots,Z_n)$ is exchangeable
for every $n\geq 1$.

The elements of an exchangeable sequence are identically distributed, but not necessarily independent.
Exchangeability constrains the dependence structure so that all permutations are equally likely. Throughout this book, we might interchangeably say that a random vector $(Z_1,\dots,Z_n)$ is exchangeable, or that the random variables $Z_1,\dots,Z_n$ are exchangeable.

Exchangeability can arise in a broad range of scenarios. In particular,
exchangeability of a sequence $Z_1,\dots,Z_n$ arises in the following important special cases:
\begin{itemize}
\item $Z_1,\dots,Z_n$ are sampled uniformly without replacement from a finite set $\{z_1,\dots,z_N\}\subseteq\cZ$.
\item $Z_1,\dots,Z_n$ are drawn i.i.d.\ from a distribution $P$ on $\cZ$.
\end{itemize}
However, these common scenarios are far from exhaustive. As an intuitive example, consider the distribution on $(Z_1,Z_2)\in\{0,1\}^2$ given by 
\[\frac{1}{8}\delta_{(0,0)} + \frac{3}{8}\delta_{(0,1)} + \frac{3}{8}\delta_{(1,0)} + \frac{1}{8}\delta_{(1,1)}.\]
(Here and throughout the book, we will use the notation $\delta_z$ to denote the \emph{point mass} at a value $z$, i.e., the probability distribution that places probability $1$ on the value $z$. This is sometimes referred to as the \emph{Dirac delta function}.)
In other words, the joint distribution of $(Z_1,Z_2)$ is defined by
the probability mass function
\[p(0,0) = p(1,1) = \frac{1}{8}, \quad p(0,1) = p(1,0) = \frac{3}{8}.\]
This joint distribution is exchangeable, but cannot be expressed either via sampling without replacement or via i.i.d.\ sampling.
It instead illustrates a different way that exchangeability can arise: any mixture of exchangeable distributions is itself
an exchangeable distribution. Indeed, the above example can be derived as a mixture, where with probability $\frac{1}{2}$ we sample $Z_1,Z_2$ uniformly without replacement from the set $\{0,1\}$, and with probability $\frac{1}{2}$ we draw $Z_1,Z_2$ i.i.d.\ from the  $\textnormal{Bernoulli}(0.5)$ distribution.

As a technical note, here and throughout the remainder of the book, wherever needed we will assume mild regularity conditions on the underlying measure spaces, without comment: for instance, the assumption that $\sigma$-algebras are countably generated, and the existence of regular conditional probabilities (i.e., existence of measurable functions such as $x\mapsto \P(Y\in A\mid X=x)$), both of which hold in most common settings, such as $\R^d$ or any standard Borel space.

\section{Alternative characterizations of exchangeability}
\label{sec:exchangeability-alternative-characterizations}
Exchangeability can be formally described in a number of different ways. Here, we give several characterizations and properties of exchangeability of a random vector $(Z_1,\dots,Z_n)$, to help build intuition.

\paragraph{Symmetry of the joint density.}
Exchangeability has a simple characterization if the random vector $(Z_1,\dots,Z_n)$ is either discrete, or has a joint density.
First, supposing $\cZ$ is a countable space so that the $Z_i$'s are discrete, let $p:\cZ^n\rightarrow[0,1]$ be the probability
mass function for the joint distribution of $(Z_1,\dots,Z_n)$. Then this joint distribution is exchangeable if and only if 
\[p(z_1,\dots,z_n) = p(z_{\sigma(1)},\dots,z_{\sigma(n)})\textnormal{ for all $z_1,\dots,z_n\in\cZ$ and for all $\sigma\in\cS_n$}.\]
Analogously, if $\cZ = \R$ and the random vector $(Z_1,\dots,Z_n)$ has a joint density $f$ (with respect to Lebesgue measure on $\R^n$), then this joint distribution is exchangeable if and only if 
\begin{equation}
    f(z_1,\dots,z_n) = f(z_{\sigma(1)},\dots,z_{\sigma(n)})\textnormal{ for almost every $(z_1,\dots,z_n)\in\R^n$ and for all $\sigma\in\cS_n$}.
\end{equation}

\paragraph{Conditioning on the order statistics.}
For this next interpretation, we will consider the special case of real-valued random variables, $\cZ=\R$. In this setting, we define the order statistics $Z_{(1)}\leq \dots \leq Z_{(n)}$ as the sorted values of the vector $(Z_1,\dots,Z_n)$. In the simple setting where these $n$ values are distinct almost surely, exchangeability implies that, conditioning on the order statistics, the random vector $(Z_1,\dots,Z_n)$ is equally likely to be any one of the $n!$ unique permutations of the order statistics. More generally, without assuming that the $Z_i$'s are necessarily distinct, we can calculate the distribution of $(Z_1,\dots,Z_n)$, conditional on the order statistics, as
\begin{equation}
    \label{eq:exchangeability-uniform-ranks}
    (Z_1,\dots,Z_n)\mid (Z_{(1)},\dots,Z_{(n)}) \sim \frac{1}{n!}\sum_{\sigma\in\cS_n}\delta_{(Z_{(\sigma(1))},\dots,Z_{(\sigma(n))})},
\end{equation}
placing mass $\frac{1}{n!}$ on each of the $n!$ (potentially non-unique) possible orderings of the order statistics $(Z_{(1)},\dots,Z_{(n)})$.
To put it more simply, we can say that conditional on the unordered collection of values in the sequence, the order
in which they appear is simply a random shuffle.

We next highlight an important consequence of this equivalent characterization: under exchangeability of $Z_1,\dots,Z_n$, it must hold that
\begin{equation}\label{eqn:empirical-distr-order-statistic}Z_i \mid (Z_{(1)},\dots,Z_{(n)}) \sim \frac{1}{n}\sum_{j=1}^n \delta_{Z_{(j)}}\end{equation}
(i.e., each individual entry $Z_i$ is equally likely to be any one of the $n$ order statistics), and consequently,
\begin{equation}
    \label{eq:uniform-cdf-order-statistic}
    \P(Z_i \leq Z_{(k)}) \geq k/n,
\end{equation}
for each index $i\in[n]$ and each rank $k\in[n]$ (see Fact~\ref{fact:exchangeable-properties} below). Moreover, if the values $Z_1,\dots,Z_n$ are distinct almost surely, then this becomes an equality.

\paragraph{Conditioning on the empirical distribution.}
In the real-valued setting discussed above, we saw that each entry $Z_i$ can be viewed as a random draw from the values $Z_{(1)},\dots,Z_{(n)}$. In fact, this intuition can be extended to the general setting, beyond the case $\cZ=\R$. Define 
\[\widehat{P}_n = \frac{1}{n}\sum_{i=1}^n \delta_{Z_i},\]
which is the empirical distribution of the random vector $(Z_1,\dots,Z_n)$.
The following proposition tells us an important implication of exchangeability: essentially, each $Z_i$ is a draw from this empirical distribution $\widehat{P}_n$. This is simply an extension of the result~\eqref{eqn:empirical-distr-order-statistic} to the case of a general space $\cZ$.
\begin{proposition}[Exchangeability and the empirical distribution]\label{prop:empirical-distrib-exch}
    Let $(Z_1,\dots,Z_n)\in\cZ^{n}$ be an exchangeable random vector, and let $\widehat{P}_{n}$ be the empirical distribution of this vector. Then for all $i \in [n]$,
    \[Z_{i}\mid \widehat{P}_{n} \ \sim \widehat{P}_{n},\]
    i.e., if we condition on $\widehat{P}_{n}$, then $\widehat{P}_{n}$ is itself the conditional distribution of $Z_{i}$.
\end{proposition}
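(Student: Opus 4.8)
The plan is to pin down the regular conditional distribution of $Z_i$ given $\widehat{P}_n$ by computing its conditional expectations against bounded test functions, and the single fact that makes this work is that $\widehat{P}_n$ is a \emph{symmetric} function of $(Z_1,\dots,Z_n)$. Concretely, the map $(z_1,\dots,z_n)\mapsto\frac{1}{n}\sum_{k=1}^n\delta_{z_k}$ is invariant under permuting its arguments, so applying the measurable map $(z_1,\dots,z_n)\mapsto\big(z_i,\,\frac{1}{n}\sum_k\delta_{z_k}\big)$ to both sides of the identity $(Z_1,\dots,Z_n)\eqd(Z_{\sigma(1)},\dots,Z_{\sigma(n)})$ from Definition~\ref{def:exchangeability}, with $\sigma$ the transposition swapping $i$ and $j$, yields
\[
\big(Z_i,\,\widehat{P}_n\big)\ \eqd\ \big(Z_j,\,\widehat{P}_n\big)\qquad\text{for all }i,j\in[n].
\]
In particular, for every bounded measurable $g:\cZ\to\R$, the random variable $\E[g(Z_i)\mid\widehat{P}_n]$ is, viewed as a function of $\widehat{P}_n$, a.s.\ the same for each $i$.

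The second step is to average this over $i$ and use that $\widehat{P}_n$ already encodes the empirical average:
\begin{align*}
\E\big[g(Z_i)\mid\widehat{P}_n\big]
&=\frac{1}{n}\sum_{j=1}^n\E\big[g(Z_j)\mid\widehat{P}_n\big]
=\E\!\left[\frac{1}{n}\sum_{j=1}^n g(Z_j)\ \Big|\ \widehat{P}_n\right]\\
&=\E\!\left[\textstyle\int g\,\mathrm{d}\widehat{P}_n\ \Big|\ \widehat{P}_n\right]
=\int g\,\mathrm{d}\widehat{P}_n,
\end{align*}
the last equality because $\int g\,\mathrm{d}\widehat{P}_n$ is $\sigma(\widehat{P}_n)$-measurable. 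So if $\mu(\cdot,\omega)$ denotes a regular conditional distribution of $Z_i$ given $\sigma(\widehat{P}_n)$, then for each fixed bounded measurable $g$ we have $\int g\,\mathrm{d}\mu(\cdot,\omega)=\int g\,\mathrm{d}\widehat{P}_n(\omega)$ for $\P$-almost every $\omega$.

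The last step---and the only subtle one---is to promote this ``for each $g$, almost surely'' statement to ``almost surely, $\mu(\cdot,\omega)=\widehat{P}_n(\omega)$ as measures,'' which is exactly where the hypothesis that $\cZ$ carries a countably generated $\sigma$-algebra enters. Let $\{A_k\}_{k\ge1}$ be generators; the algebra they generate is countable and is a $\pi$-system generating $\mathcal{B}(\cZ)$. Applying the previous display with $g=\mathbbm{1}_A$ for each $A$ in this countable algebra and intersecting the countably many resulting full-probability events produces a single event of probability one on which $\mu(A,\omega)=\widehat{P}_n(\omega)(A)$ simultaneously for all such $A$; since two probability measures agreeing on a generating $\pi$-system coincide, $\mu(\cdot,\omega)=\widehat{P}_n(\omega)$ on that event, which is precisely the assertion $Z_i\mid\widehat{P}_n\sim\widehat{P}_n$. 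I expect this measure-theoretic bookkeeping---controlling the dependence of the exceptional null set on the test function---to be the main obstacle; the probabilistic content of the proof is entirely contained in the symmetrization identity of the first step.
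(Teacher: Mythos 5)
Your proof is correct and follows essentially the same route as the paper's: establish that, conditional on $\widehat{P}_n$, each $Z_i$ has the same conditional distribution (via the symmetry of the empirical-measure map together with exchangeability), then average over $i$ and use the fact that the empirical average $\frac{1}{n}\sum_j g(Z_j) = \int g\,\mathrm{d}\widehat{P}_n$ is $\sigma(\widehat{P}_n)$-measurable. The only cosmetic difference is that the paper routes the first step through Lemma~\ref{lem:conditional_exchangeability} (conditional exchangeability given a symmetric function), whereas you derive the needed equality in law $(Z_i,\widehat{P}_n)\eqd(Z_j,\widehat{P}_n)$ directly from a transposition; and you spell out the measure-theoretic upgrade from ``for each $g$, a.s.'' to ``a.s., as measures'' using the countably generated $\sigma$-algebra, a step the paper leaves to its blanket regularity assumption stated earlier in the chapter.
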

\begin{proof}[Proof of Proposition~\ref{prop:empirical-distrib-exch}]
Since $\widehat{P}_n$ is a symmetric function of the random variables $Z_1,\dots,Z_n$, by Lemma~\ref{lem:conditional_exchangeability} below it holds almost surely that $Z_1,\dots,Z_n$ are exchangeable conditional on $\widehat{P}_n$, and consequently, it holds almost surely that
\begin{equation}\label{eqn:exch_for_empir_distrib}\P(Z_i\in A \mid \widehat{P}_{n}) = \P(Z_j\in A \mid \widehat{P}_{n}),\end{equation}
for each $j\in[n]$ and every (measurable) $A\subseteq\cZ$. Assuming this holds, we then have
\begin{multline*}
\P(Z_i\in A \mid \widehat{P}_{n})
= \frac{1}{n}\sum_{j=1}^{n}\P(Z_j\in A \mid \widehat{P}_{n}) = \E\left[\frac{1}{n}\sum_{j=1}^{n}\ind{Z_j\in A}\, \middle| \, \widehat{P}_{n}\right]\\ = \E\left[\widehat{P}_{n}(A) \mid \widehat{P}_{n}\right] = \widehat{P}_{n}(A), \end{multline*}
which proves the desired claim.
\end{proof}
The proof of Proposition~\ref{prop:empirical-distrib-exch} relies
on the fact that $Z_1,\dots,Z_n$ are exchangeable even after conditioning on $\widehat{P}_n$. In fact, this is a special case of the following lemma, which verifies that exchangeability continues to hold after conditioning on any symmetric function of $Z_1,\dots,Z_n$.
\begin{lemma}[Conditional exchangeability given a symmetric function]\label{lem:conditional_exchangeability}
    Let $Z_1,\dots,Z_n\in\cZ$ be exchangeable, and let $f:\cZ^n\to\cW$ be a symmetric function, i.e., $f(z_1,\dots,z_n) = f(z_{\sigma(1)},\dots,z_{\sigma(n)})$ for all $z_1,\dots,z_n\in\cZ$ and all $\sigma\in\cS_n$. Then $(Z_1,\dots,Z_n)$ is conditionally exchangeable given $f(Z_1,\dots,Z_n)$, in the sense that the conditional distribution
    \[(Z_1,\dots,Z_n) \mid f(Z_1,\dots,Z_n)\]
    is, almost surely, an exchangeable distribution.
\end{lemma}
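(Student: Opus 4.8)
The plan is to reduce the statement to the unconditional exchangeability of $(Z_1,\dots,Z_n)$ by packaging $W := f(Z_1,\dots,Z_n)$ together with the vector, and then transfer the resulting distributional symmetry into the conditional distribution given $W$. Fix a permutation $\sigma\in\cS_n$. The key observation is that, by symmetry of $f$, we have $f(Z_{\sigma(1)},\dots,Z_{\sigma(n)}) = f(Z_1,\dots,Z_n) = W$ identically (not merely in distribution). Since the pair $\big((Z_1,\dots,Z_n),\,f(Z_1,\dots,Z_n)\big)$ is a fixed measurable function applied to $(Z_1,\dots,Z_n)$, and the same function applied to $(Z_{\sigma(1)},\dots,Z_{\sigma(n)})$ yields $\big((Z_{\sigma(1)},\dots,Z_{\sigma(n)}),\,W\big)$, exchangeability of $(Z_1,\dots,Z_n)$ gives
\[
\big((Z_1,\dots,Z_n),\,W\big) \eqd \big((Z_{\sigma(1)},\dots,Z_{\sigma(n)}),\,W\big).
\]

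Next I would convert this equality of joint laws into a statement about conditional expectations given $W$. For any bounded measurable $g:\cZ^{n}\to\R$ and any bounded measurable $h:\cW\to\R$, the function $(z,w)\mapsto g(z)h(w)$ is bounded and measurable, so the displayed equality yields $\E\big[g(Z_1,\dots,Z_n)\,h(W)\big] = \E\big[g(Z_{\sigma(1)},\dots,Z_{\sigma(n)})\,h(W)\big]$. Since this holds for all such $h$, the defining property of conditional expectation gives $\E\big[g(Z_1,\dots,Z_n)\mid W\big] = \E\big[g(Z_{\sigma(1)},\dots,Z_{\sigma(n)})\mid W\big]$ almost surely. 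To upgrade this from a fixed $g$ (with an exceptional null set depending on $g$) to a genuine statement about conditional \emph{distributions}, I would invoke the regularity assumption noted in the text (existence of regular conditional probabilities, with a countably generated $\sigma$-algebra on $\cZ$, hence on $\cZ^{n}$): take a fixed countable algebra $\cA$ generating the $\sigma$-algebra on $\cZ^{n}$, apply the previous identity with $g = \mathbbm{1}_A$ for each $A\in\cA$, and intersect the countably many exceptional null sets. Because $\cS_n$ is finite, we may also intersect over all $\sigma\in\cS_n$, obtaining a single $\P_W$-null set off of which the regular conditional distribution $\mu_W$ of $(Z_1,\dots,Z_n)$ given $W$ satisfies $\mu_W(A) = \mu_W(\sigma^{-1}A)$ for every $A\in\cA$ and every $\sigma$, where $\sigma$ acts on $\cZ^{n}$ by permuting coordinates. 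A monotone-class / $\pi$-$\lambda$ argument then extends this invariance from $\cA$ to the whole $\sigma$-algebra, so $\mu_W$ is almost surely a permutation-invariant, i.e.\ exchangeable, distribution.

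I expect the main obstacle to be purely measure-theoretic bookkeeping in this last step: making precise that "equality of conditional expectations for every bounded $g$" implies "equality of the regular conditional distributions as measures, almost surely," which genuinely requires the countably-generated/standard-Borel hypothesis (without it, one only controls each event separately on its own null set, and the uncountable union of null sets need not be null). Everything before that — reducing to the unconditional exchangeability via the identity $f(Z_{\sigma(1)},\dots,Z_{\sigma(n)}) = f(Z_1,\dots,Z_n)$ — is short and is really the conceptual heart of the argument; the finiteness of $\cS_n$ conveniently removes any concern about handling permutations simultaneously.
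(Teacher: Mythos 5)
Your proposal is correct and uses essentially the same argument as the paper: establish $\big((Z_1,\dots,Z_n),\,W\big) \eqd \big((Z_{\sigma(1)},\dots,Z_{\sigma(n)}),\,W\big)$ via exchangeability of the data plus symmetry of $f$, then translate this into a statement about the conditional law given $W$. You spell out the measure-theoretic upgrade (from equality of conditional expectations for each fixed test function to almost-sure equality of regular conditional distributions) a bit more explicitly than the paper does, but the conceptual core is identical.
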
 \index{exchangeability!conditional}
\begin{proof}[Proof of Lemma~\ref{lem:conditional_exchangeability}]
    By definition of exchangeability, it suffices to verify that for any $\sigma\in\cS_n$ and any measurable set $A$, the following statement holds almost surely:
    \[\P\big((Z_1,\dots,Z_n)\in A \mid f(Z_1,\dots,Z_n)\big) = \P\big((Z_{\sigma(1)},\dots,Z_{\sigma(n)})\in A \mid f(Z_1,\dots,Z_n)\big).\]
    Equivalently, we need to show that
    \[\P\big((Z_1,\dots,Z_n)\in A , f(Z_1,\dots,Z_n) \in B \big) = \P\big((Z_{\sigma(1)},\dots,Z_{\sigma(n)})\in A, f(Z_1,\dots,Z_n) \in B\big)\]
    for all measurable $A\subseteq\cZ^n, B\subseteq\cW$. This holds because
    \begin{multline*}
        \P\big((Z_1,\dots,Z_n)\in A , f(Z_1,\dots,Z_n) \in B \big)
        \\=\P\big((Z_{\sigma(1)},\dots,Z_{\sigma(n)})\in A , f(Z_{\sigma(1)},\dots,Z_{\sigma(n)}) \in B\big)\\
        =\P\big((Z_{\sigma(1)},\dots,Z_{\sigma(n)})\in A , f(Z_1,\dots,Z_n) \in B\big),
    \end{multline*}
    where the first step holds since $(Z_1,\dots,Z_n)$ is exchangeable, while the second step holds by symmetry of $f$.    
\end{proof}

\index{exchangeability|)}

\section{Permutation tests}
\label{sec:perm_test}
\index{permutation test|(}
\index{exchangeability!testing}

Permutation tests are used in statistics for a wide range of different inference tasks. 
In fact, permutation
tests can be viewed as testing the null hypothesis of exchangeability.
We turn to this next, and then 
examine two concrete examples of commonly used permutation tests within the framework of exchangeability.

Let $\cP$ be the set of all distributions on $\cZ^n$,
and let $\cP_{\textnormal{exch}}\subseteq\cP$ be the subset of distributions for which 
exchangeability is satisfied. Consider a random vector $(Z_1,\dots,Z_n)$ drawn from some joint distribution $P$.
We would like to perform a hypothesis test of 
\[H_0: \ P\in\cP_{\textnormal{exch}} \textnormal{\quad versus \quad} H_1: \ P\in\cP\backslash\cP_{\textnormal{exch}}.\]
Before observing the data, we fix any function $T:\cZ^n\rightarrow\R$, with the intuition that a large value of our test statistic $T(Z_1,\dots,Z_n)$
will indicate evidence against exchangeability. Then we define the quantity
\begin{equation}
\label{eq:perm-pvalue}
p = \frac{\sum_{\sigma\in\cS_n}\ind{T(Z_{\sigma(1)},\dots,Z_{\sigma(n)}) \geq T(Z_1,\dots,Z_n)}}{n!}
\end{equation}
which compares the observed value of the test statistic, $T(Z_1,\dots,Z_n)$, against all possible values obtained via permutations of the data.
(Note that the identity permutation, $\sigma = \textnormal{Id}$, is one of the $n!$ many permutations included in the sum, and thus
it is not possible for $p$ to be smaller than $\frac{1}{n!}$.)
The following well-known result shows that the quantity defined in~\eqref{eq:perm-pvalue} is a valid p-value for testing the null hypothesis of exchangeability. 
\begin{theorem}[Validity of the permutation test]
\label{thm:perm-test}
For any function $T:\cZ^n\rightarrow\R$, the p-value $p$ defined in~\eqref{eq:perm-pvalue} satisfies $\P_P(p\leq \tau)\leq \tau$ for all $\tau\in[0,1]$ and all $P\in\cP_{\textnormal{exch}}$.
\end{theorem}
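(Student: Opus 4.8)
The plan is to show that under exchangeability, the permutation p-value $p$ defined in~\eqref{eq:perm-pvalue} is stochastically larger than (or equal to) a uniform random variable on $[0,1]$, which is exactly the statement $\P_P(p \le \tau) \le \tau$. The key observation is that $p$ is itself a symmetric function of $(Z_1,\dots,Z_n)$: permuting the inputs $Z_1,\dots,Z_n$ by some $\pi\in\cS_n$ merely reindexes the sum over $\cS_n$ in the numerator of~\eqref{eq:perm-pvalue} (since $\{\sigma\circ\pi : \sigma\in\cS_n\} = \cS_n$) and leaves the comparison value $T(Z_1,\dots,Z_n)$ replaced by $T(Z_{\pi(1)},\dots,Z_{\pi(n)})$ — so in fact the \emph{multiset} of values $\{T(Z_{\sigma(1)},\dots,Z_{\sigma(n)}) : \sigma\in\cS_n\}$ is invariant, and $p$ takes a form that only depends on this multiset together with which element is "the observed one." This suggests conditioning on the unordered collection of permuted test-statistic values.

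Concretely, I would argue as follows. First, fix the data and consider the $n!$ numbers $t_\sigma := T(Z_{\sigma(1)},\dots,Z_{\sigma(n)})$ for $\sigma\in\cS_n$. By Lemma~\ref{lem:conditional_exchangeability} (applied with the symmetric function $f$ that maps $(z_1,\dots,z_n)$ to the multiset $\{t_\sigma\}_{\sigma\in\cS_n}$, or more carefully to the empirical distribution of these $n!$ values), conditional on this multiset the vector $(Z_1,\dots,Z_n)$ remains exchangeable, so the "observed" statistic $T(Z_1,\dots,Z_n) = t_{\mathrm{Id}}$ is equally likely to be any of the $n!$ values $t_\sigma$ (with multiplicity). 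Then, conditional on the multiset, $p = \frac{1}{n!}\#\{\sigma : t_\sigma \ge t_{\mathrm{Id}}\}$ is a draw from the set of values $\left\{\frac{1}{n!}\#\{\sigma : t_\sigma \ge t_{\sigma'}\} : \sigma'\in\cS_n\right\}$, each with probability $\frac{1}{n!}$. It is an elementary deterministic fact about any list of $N$ real numbers that if $R$ denotes the (upper) rank of a uniformly random element, then $\P(R/N \le \tau) \le \tau$ for every $\tau$; applying this with $N = n!$ gives $\P(p \le \tau \mid \text{multiset}) \le \tau$, and taking expectation over the multiset yields the claim.

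The main obstacle — really the only subtlety — is handling \textbf{ties} among the values $t_\sigma$ correctly, since the p-value uses $\ge$ rather than $>$. With ties, the conditional distribution of $p$ is not uniform on $\{1/n!, 2/n!, \dots, 1\}$, but the deterministic ranking lemma still goes through: for any finite list, the fraction of elements whose upper-rank-fraction is at most $\tau$ is itself at most $\tau$ (the $\ge$ convention makes $p$ \emph{larger}, which only helps). I would state and prove this little combinatorial lemma first — e.g., group the list into blocks of equal values, note that within a block all elements share the same value of $\#\{\sigma : t_\sigma \ge \cdot\}$, and sum up — so that the probabilistic part reduces to a clean one-line application. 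A secondary technical point is the measurability/regular-conditional-probability justification for conditioning on the multiset, which is exactly the mild countably-generated $\sigma$-algebra assumption already invoked for Lemma~\ref{lem:conditional_exchangeability}; I would simply cite that lemma rather than re-prove it. An alternative, slicker route avoids explicit conditioning: generate $\sigma^\star \sim \mathrm{Unif}(\cS_n)$ independently of the data, observe that $(Z_{\sigma^\star(1)},\dots,Z_{\sigma^\star(n)}) \eqd (Z_1,\dots,Z_n)$ by exchangeability, and that $p$ computed from the permuted data has the same distribution as $p$; then $\P(p \le \tau) = \P_{\sigma^\star}\!\left(\frac{1}{n!}\#\{\sigma : t_\sigma \ge t_{\sigma^\star}\} \le \tau\right)$ pointwise in the data, which is $\le \tau$ by the ranking lemma — I would likely present this version as the main proof since it makes the exchangeability step completely transparent.
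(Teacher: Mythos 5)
Your proposal is correct, and the ``slicker'' route you say you would present as the main proof is essentially the paper's own argument: generate a uniform $\sigma^\star$ independent of the data, use exchangeability to conclude that the permuted p-value has the same marginal distribution as $p$, condition on the data, and invoke a deterministic superuniformity fact about the finite list $(t_\sigma)_{\sigma\in\cS_n}$. Your ``ranking lemma'' is exactly the paper's Step 1 --- the paper phrases it as the CDF fact $\P(F(X)\le\tau)\le\tau$ applied to the empirical distribution of $-T(z_\sigma)$ over uniform $\sigma$ (the negation turns your upper-rank count into a CDF evaluation), and your observation that $\ge$ in the p-value only helps with ties is precisely why that inequality holds in the presence of atoms. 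Your first route (conditioning on the multiset $\{t_\sigma\}$ via Lemma~\ref{lem:conditional_exchangeability}) is a valid minor variant and is closer in spirit to the book's alternative proof of Theorem~\ref{thm:full-conformal} via the empirical distribution (Proposition~\ref{prop:empirical-distrib-exch}); it buys you a slightly cleaner conditional-uniformity statement at the cost of an extra step to show that conditional exchangeability of $Z$ forces $T(Z)$ to be distributed according to the relative multiplicities in the multiset, which you gloss over but which does go through by averaging $t_{\mathrm{Id}}\eqd t_\pi$ over $\pi$.
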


\begin{figure}[t]
    \centering
    \includegraphics[width=\textwidth]{\diagramspath perm-test.pdf}
    \caption{\textbf{Illustration of a permutation test for the equality of two real-valued distributions}, where the test statistic used is the difference in means between two groups of data points, as in~\eqref{eqn:perm_test_diff_means}. In each plot, these two group means are shown as two dashed lines. In the left plot, we show the values computed on the real ordering of the data $Z$. The middle and right plots show the values for two typical permutations $Z_\sigma$. The difference in means on the real data is far more extreme than on the permuted data, indicating evidence against the null hypothesis of exchangeability.}
    \commentAlt{The left plot shows a dataset where the first half of the values are low and the second half are high. The middle and right plots show different permutations of this data. In each plot, dashed lines indicate the sample means within the two groups.}
    \label{fig:perm-tests}
\end{figure}

In many settings, it is common to avoid the computational burden of computing all $n!$ permutations by instead sampling a smaller number $M$ of permutations $\sigma_1,\dots,\sigma_M\in\cS_n$
uniformly at random, to obtain the p-value \index{permutation test!p-value}
\begin{equation}
    \label{eq:perm-pval-random}
    p = \frac{1+ \sum_{m=1}^M\ind{T(Z_{\sigma_m(1)},\dots,Z_{\sigma_m(n)}) \geq T(Z_1,\dots,Z_n)}}{1+M}.
\end{equation}
This p-value is again valid against the null hypothesis of exchangeability:
\begin{theorem}[Validity of the permutation test with random permutations]\label{thm:perm-test-random}
    For any function $T:\cZ^n\rightarrow\R$, the p-value $p$ defined in~\eqref{eq:perm-pval-random} satisfies $\P_P(p\leq \tau)\leq \tau$ for all $\tau\in[0,1]$ and all $P\in\cP_{\textnormal{exch}}$, where the probability is now taken with respect to both the random draw of $(Z_1,\dots,Z_n)\sim P$, and the permutations $\sigma_1,\dots,\sigma_M$ sampled uniformly at random (with replacement) from $\cS_n$.
\end{theorem}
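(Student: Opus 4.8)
The plan is to condition on the data vector $(Z_1,\dots,Z_n)$ and reduce the claim to the already-established non-randomized result, Theorem~\ref{thm:perm-test}, together with an elementary property of the Beta--Binomial mixture. First I would observe that, conditionally on $(Z_1,\dots,Z_n) = (z_1,\dots,z_n)$, the value $T(z_1,\dots,z_n)$ is a fixed number, and since $\sigma_1,\dots,\sigma_M$ are i.i.d.\ uniform on $\cS_n$ and independent of the data, the indicators $\ind{T(z_{\sigma_m(1)},\dots,z_{\sigma_m(n)}) \geq T(z_1,\dots,z_n)}$, $m=1,\dots,M$, are i.i.d.\ $\mathrm{Bernoulli}(q)$ with
\[q = q(z_1,\dots,z_n) = \frac{\sum_{\sigma\in\cS_n}\ind{T(z_{\sigma(1)},\dots,z_{\sigma(n)}) \geq T(z_1,\dots,z_n)}}{n!},\]
which is exactly the deterministic full permutation p-value of~\eqref{eq:perm-pvalue} evaluated at the observed data. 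Hence, writing $B$ for $\sum_{m=1}^M \ind{\cdots}$, we have $B \mid (Z_1,\dots,Z_n) \sim \mathrm{Binomial}\big(M, Q\big)$ with $Q := q(Z_1,\dots,Z_n)$, and the randomized p-value is $p = \frac{1+B}{1+M}$.

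Next I would record two facts. (i) By Theorem~\ref{thm:perm-test}, $Q$ is \emph{super-uniform}, i.e.\ $\P(Q\le\tau)\le\tau$ for all $\tau\in[0,1]$; in particular $Q$ is stochastically at least as large as a $\mathrm{Uniform}[0,1]$ variable $U$. (ii) If $Q$ were replaced by such a $U$, then $\mathrm{Binomial}(M,U)$ is marginally \emph{uniform} on $\{0,1,\dots,M\}$: this is the standard computation $\P(\mathrm{Binomial}(M,U)=j) = \binom{M}{j}\int_0^1 u^j(1-u)^{M-j}\,du = \tfrac{1}{M+1}$. Consequently $\tfrac{1+\mathrm{Binomial}(M,U)}{1+M}$ is uniform on the grid $\{\tfrac{1}{M+1},\dots,\tfrac{M+1}{M+1}\}$, and hence super-uniform.

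To finish, I would write $\P(p \le \tau) = \P\big(B \le (1+M)\tau - 1\big) = \E\big[\psi(Q)\big]$, where $\psi(q) := \P\big(\mathrm{Binomial}(M,q) \le (1+M)\tau - 1\big)$. Since $\mathrm{Binomial}(M,q)$ is stochastically increasing in $q$, the function $\psi$ is non-increasing; combining this with fact (i) (a non-increasing function of a stochastically larger variable has smaller expectation) gives $\E[\psi(Q)] \le \E[\psi(U)] = \P\big(\tfrac{1+\mathrm{Binomial}(M,U)}{1+M} \le \tau\big) = \tfrac{\lfloor (M+1)\tau\rfloor}{M+1} \le \tau$ by fact (ii), which is the desired bound.

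I expect the only real subtleties to be bookkeeping: making the conditioning argument rigorous when $\cZ$ is a general (standard Borel) space, keeping track of the floor when passing between the event $\{B \le (1+M)\tau - 1\}$ and the grid of attainable p-values, and checking that $\psi$ is measurable and monotone so the stochastic-dominance step is legitimate. An alternative route, which avoids the Beta--Binomial identity, is a re-randomization argument: adjoin $\sigma_0 := \mathrm{Id}$ to the list, show that replacing $\sigma_0$ by an independent uniform permutation does not change the joint law of $(Z_{\sigma_0},\dots,Z_{\sigma_M})$ (using exchangeability of $Z$ and invariance of the uniform law on $\cS_n$ under composition), conclude that $(T(Z_{\sigma_0}),\dots,T(Z_{\sigma_M}))$ is an exchangeable real-valued tuple, and then invoke the order-statistics characterization~\eqref{eq:uniform-cdf-order-statistic}; there the main nuisance is handling ties under the ``$\geq$'' convention, which only makes $p$ conservative.
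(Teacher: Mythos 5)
Your main argument is correct, and it takes a genuinely different route from what the paper appears to intend (the paper states that the proof of Theorem~\ref{thm:perm-test-random} is ``similar'' to that of Theorem~\ref{thm:perm-test} and omits it, so implicitly an exchangeability / CDF argument). You instead condition on the data, identify the conditional law $B\mid Z\sim\textnormal{Binomial}(M,Q)$ with $Q$ the full permutation p-value of~\eqref{eq:perm-pvalue}, invoke Theorem~\ref{thm:perm-test} as a black box to get super-uniformity of $Q$, and close with stochastic dominance plus the Beta--Binomial mixture identity (that $\textnormal{Binomial}(M,U)$ is uniform on $\{0,\dots,M\}$ when $U\sim\textnormal{Unif}[0,1]$). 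All the steps check out: $\psi(q)=\P(\textnormal{Binomial}(M,q)\leq (M+1)\tau-1)$ is non-increasing and measurable, super-uniformity of $Q$ is exactly first-order stochastic dominance over $\textnormal{Unif}[0,1]$, and $\E[\psi(U)]=\lfloor(M+1)\tau\rfloor/(M+1)\leq\tau$. The appeal of your route is modularity --- the randomized result drops out of the deterministic one with no need to re-derive exchangeability of anything --- and you also get the sharp finite-$M$ bound $\P(p\leq\tau)\leq\lfloor(M+1)\tau\rfloor/(M+1)$ for free, making the discreteness of the attainable p-value grid explicit. The cost is one ingredient (the Beta--Binomial identity) that is not otherwise in the paper's toolkit. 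Your sketched alternative --- adjoin $\sigma_0=\textnormal{Id}$, use group invariance of the uniform law on $\cS_n$ together with exchangeability of $Z$ to conclude that $(T(Z_{\sigma_0}),\dots,T(Z_{\sigma_M}))$ is exchangeable (in fact, conditionally on the orbit of $Z$, these are i.i.d.), and then apply Corollary~\ref{cor:perm_test_pval} --- is the argument that actually parallels the paper's proof of Theorem~\ref{thm:perm-test} and is the one I believe they have in mind; your observation that the $\geq$ convention handles ties conservatively is also correct. Both routes are sound.
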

The `$+1$' term appearing in the numerator and denominator of the p-value $p$ constructed in~\eqref{eq:perm-pval-random} is necessary for obtaining this validity result---indeed, without this correction, the event $p=0$ could have nonzero probability under the null hypothesis.

\subsection{Examples}
\label{sec:permtest-examples}

To apply the permutation test, we need to specify a choice of the test statistic $T$.
If the statistic captures the deviations from exchangeability that we expect may occur, then it will lead to a powerful test.
We illustrate this with several common examples in the case of real-valued data, $\cZ = \R$---namely, testing for equality of distributions, and testing for outliers. We will study another common application of permutation tests, testing independence between two random variables, in Chapter~\ref{chapter:conditional-independence-testing}.

\paragraph{Testing equality of distributions.} 
Suppose that we have two independent samples from two potentially different distributions,
with $n_0$ draws from $P_0$ and $n_1=n-n_0$ draws from $P_1$.
Without loss of generality, we can take $Z_1,\dots,Z_{n_0}\iidsim P_0$ and $Z_{n_0+1},\dots,Z_n\iidsim P_1$.
If $P_0=P_1$, then the $Z_i$'s are i.i.d.\ from a single shared distribution $P_0=P_1$, and therefore exchangeability holds.
If we conjecture that any potential difference between $P_0$ and $P_1$ would likely lead to a difference of means, we might choose
the test statistic
\begin{equation}\label{eqn:perm_test_diff_means}T(z_1,\dots,z_n) = \left|\frac{1}{n_0}\sum_{i=1}^{n_0} z_i - \frac{1}{n_1}\sum_{i=n_0+1}^n z_i\right|,\end{equation}
the difference in the sample means. Alternatively, we might make a choice that is more agnostic to the type of difference
between the two distributions: the Kolmogorov--Smirnov statistic,
\[T(z_1,\dots,z_n) = \sup_{v\in\R} \left|\frac{1}{n_0}\sum_{i=1}^{n_0} \ind{z_i\leq v} - \frac{1}{n_1}\sum_{i=n_0+1}^n \ind{z_i\leq v}\right|,\]
which measures the maximum difference between the two empirical cumulative distribution functions (CDFs).

\paragraph{Testing if a new data point is an outlier.}
Next, suppose that we would like to test whether a particular data point---say, the last data point $Z_n$---is an outlier
relative to the rest of the sequence. 
In fact, as we will see later on, this use of the permutation test is central to the development of conformal prediction.

For example, we might conjecture that $Z_n$ is more likely to be unusually large
relative to the other $Z_i$'s. In this case, we could consider the test statistic
\[T(z_1,\dots,z_n) = \sum_{i=1}^n \ind{z_n > z_i},\]
which is large when the rank of the last value is large among the rest of the list.
For this particular test statistic, the permutation test p-value can be simplified.
Observe that
\[T(Z_{\sigma(1)},\dots,Z_{\sigma(n)}) = \sum_{i=1}^n \ind{Z_{\sigma(n)} > Z_{\sigma(i)}}= \sum_{i=1}^n \ind{Z_{\sigma(n)} > Z_i},\]
which simply captures the position of $Z_{\sigma(n)}$ relative to the original (unpermuted) sequence $Z_1,\dots,Z_n$.
Examining this quantity, we can then see that $T(Z_{\sigma(1)},\dots,Z_{\sigma(n)}) \geq T(Z_1,\dots,Z_n)$ if and only if $Z_{\sigma(n)}\geq Z_n$, 
and therefore,
the p-value can be simplified as 
\begin{multline*}p = \frac{\sum_{\sigma\in\cS_n}\ind{T(Z_{\sigma(1)},\dots,Z_{\sigma(n)}) \geq T(Z_1,\dots,Z_n)}}{n!}=
 \frac{1}{n!}\sum_{\sigma\in\cS_n}\ind{Z_{\sigma(n)} \geq Z_n} \\=
 \frac{1}{n!}\sum_{i=1}^n\sum_{\sigma\in\cS_n,\sigma(n)=i}\ind{Z_i \geq Z_n} =\frac{1}{n}\sum_{i=1}^n \ind{Z_i \geq Z_n}.
\end{multline*}
Here the last step holds since, for each $i\in[n]$, there are exactly $(n-1)! = \frac{n!}{n}$ permutations $\sigma\in\cS_n$ for which $\sigma(n)=i$. 

In fact, this example can be derived in a simpler way, without the terminology of permutation tests. By definition of $p$, we can verify that, for any $\tau\in[0,1)$,
\[p = \frac{1}{n}\sum_{i=1}^n \ind{Z_i \geq Z_n} \leq \tau \quad \Longleftrightarrow \quad  Z_n > Z_{(k)}\textnormal{ for $k=\lceil (1-\tau) n\rceil$.}\]
By~\eqref{eq:uniform-cdf-order-statistic} we know that exchangeability of $Z_1,\dots,Z_n$ implies that $\P(Z_n > Z_{(k)}) \leq 1 - k/n \leq \tau$, which directly verifies the validity of the p-value $p$. We state this result formally in the following corollary:
\begin{corollary}\label{cor:perm_test_pval}
Let $Z_1,\dots,Z_n\in\R$ be exchangeable. Then 
$p = \frac{\sum_{i=1}^n \ind{Z_i \geq Z_n}}{n}$
satisfies $\P(p\leq \tau)\leq \tau$ for all $\tau\in[0,1]$.
\end{corollary}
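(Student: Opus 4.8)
The plan is to follow the direct route sketched just above the statement: rewrite the event $\{p \le \tau\}$ as a statement about where $Z_n$ sits relative to the order statistics of $(Z_1,\dots,Z_n)$, and then apply the rank inequality~\eqref{eq:uniform-cdf-order-statistic}. One could alternatively observe that $p$ is exactly the permutation-test p-value from Theorem~\ref{thm:perm-test} associated with the test statistic $T(z_1,\dots,z_n) = \sum_{i=1}^n \ind{z_n > z_i}$ (this identity was established in the preceding discussion), and quote that theorem; but the self-contained argument below is shorter and avoids invoking the permutation-test framework.

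First I would record two elementary observations: $np = \sum_{i=1}^n \ind{Z_i \ge Z_n}$ is a nonnegative integer, and the $i = n$ summand forces $np \ge 1$. Hence $\{p \le \tau\} = \{np \le \lfloor \tau n \rfloor\}$. The heart of the argument is then the equivalence
\[ p \le \tau \quad\Longleftrightarrow\quad Z_n > Z_{(k)}, \qquad k := \lceil (1-\tau) n\rceil. \]
This follows from a short counting step: $np \le \lfloor \tau n\rfloor$ says $\#\{i : Z_i \ge Z_n\} \le \lfloor \tau n\rfloor$, equivalently $\#\{i : Z_i < Z_n\} \ge n - \lfloor \tau n\rfloor = \lceil (1-\tau) n\rceil = k$ (using the integer identity $n - \lfloor x\rfloor = \lceil n - x\rceil$), and ``at least $k$ of the $Z_i$ lie strictly below $Z_n$'' is in turn equivalent to $Z_{(k)} < Z_n$ (monotonicity of the order statistics makes this robust to ties, both among the $Z_i$ and between some $Z_i$ and $Z_n$). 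Given the equivalence, I would finish with
\[ \P(p \le \tau) = \P\big(Z_n > Z_{(k)}\big) = 1 - \P\big(Z_n \le Z_{(k)}\big) \le 1 - \frac{k}{n} \le 1 - (1-\tau) = \tau, \]
where the first inequality is~\eqref{eq:uniform-cdf-order-statistic} applied with $i = n$, and the last inequality uses $k = \lceil (1-\tau) n\rceil \ge (1-\tau) n$.

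I expect the only friction here to be bookkeeping rather than mathematics: confirming that the ``$k$ values below $Z_n$ iff $Z_{(k)} < Z_n$'' step survives ties, and checking the floor--ceiling identity $n - \lfloor \tau n\rfloor = \lceil (1-\tau) n\rceil$. It is also worth a quick look at the boundary case $\tau = 0$: then $k = n$, the event $Z_n > Z_{(n)}$ is deterministically empty, and correspondingly $p \ge 1/n > 0$, so both sides of the asserted inequality vanish; the case $\tau = 1$ is trivial since a probability is at most $1$.
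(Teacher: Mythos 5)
Your proof is correct and follows essentially the same route the paper takes: rewrite $\{p \le \tau\}$ as $\{Z_n > Z_{(k)}\}$ with $k = \lceil (1-\tau)n\rceil$, then invoke the order-statistic bound~\eqref{eq:uniform-cdf-order-statistic} (Fact~\ref{fact:exchangeable-properties}\ref{fact:exchangeable-properties_part1}). The only difference is that you spell out the counting step, the floor--ceiling identity, and the boundary cases $\tau\in\{0,1\}$ more explicitly than the paper, which simply states the equivalence; the substance is identical.
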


\section{Proving validity of permutation tests}
\label{sec:perm_test_proof}

We next turn to building a theoretical understanding of permutation tests, within the framework of exchangeability. While the intuition behind permutation tests is very natural,
here we will dive into the details that underlie their validity.
The proofs of Theorem~\ref{thm:perm-test} and Theorem~\ref{thm:perm-test-random} are similar, so we only give the proof of the first.

\begin{proof}[Proof of Theorem~\ref{thm:perm-test}] 

\textbf{Step 1: a CDF inequality.} 
First, for any $z\in\cZ^n$, write $z_\sigma = (z_{\sigma(1)},\dots,z_{\sigma(n)})$ to denote the permuted vector for any permutation $\sigma\in\cS_n$. We
define \[F(v;z) = \frac{\sum_{\sigma\in\cS_n}\ind{-T(z_\sigma) \leq v}}{n!}.\]
We can observe that $F(\cdot;z)$
is the cumulative distribution function (CDF) for the distribution of the quantity $-T(z_\sigma)$, when $\sigma\in\cS_n$ is drawn uniformly at random (while $z$ is treated as fixed). 
We therefore have
\begin{equation}
    \label{eq:permutation-pval-CDF-inequality}
    \P_\sigma\left(F(-T(z_\sigma);z)\leq \tau\right)\leq \tau,
\end{equation}
where the probability is taken with respect to $\sigma\in\cS_n$ drawn uniformly at random,
which is implied directly from the following basic property of CDFs: \begin{equation}\label{eq:CDF_basic_fact}
\textnormal{If the random variable $X$ has CDF $F$, then $\P(F(X)\leq \tau)\leq \tau$ for all $\tau\in[0,1]$}.
\end{equation}

\textbf{Step 2: using exchangeability.} 
Now, we incorporate the exchangeability assumption on $Z=(Z_1,\dots,Z_n)$. For any \emph{fixed} permutation $\sigma\in\cS_n$, $Z 
\eqd Z_\sigma$ by exchangeability; therefore it also holds that $Z 
\eqd Z_\sigma$ when $\sigma\in\cS_n$ is drawn uniformly at random (independently of $Z$).

Next, we observe that the p-value $p$ defined in~\eqref{eq:perm-pvalue} is equal to $p = F(-T(Z); Z)$.
Therefore, 
\begin{equation}
    \P(p\leq \tau) = \P(F(-T(Z); Z) \leq \tau) 
= \P(F(-T(Z_{\sigma}); Z_{\sigma})\leq \tau)= \P(F(-T(Z_\sigma); Z)\leq \tau),
\end{equation}
where the last two probabilities are calculated with respect to the distribution of both $Z$ and the randomly drawn $\sigma$. Here the second equality holds since $Z 
\eqd Z_\sigma$, while the last step holds since $F(v;z) = F(v; z_\sigma)$ for any $v$, $z$, and $\sigma$, by construction.
Finally, we know that $\P(F(-T(Z_\sigma); Z )\leq\tau\mid Z)\leq \tau$, almost surely, by Step 1, which implies that $\P(F(-T(Z_\sigma); Z)\leq \tau) \leq \tau$ by the tower law.
\end{proof}

The key tool in this proof is the CDF of the \emph{negative} values of the test statistic, i.e., $-T(Z)$ (and its permuted version, $-T(Z_\sigma)$). The reason for taking the negative is simply that the permutation test in~\eqref{eq:perm-pvalue} has a small p-value when $T(Z)$ is sufficiently large, while the CDF measures the probability of observing a value that is sufficiently small; by taking the negative, we can express the p-value as a CDF.
\index{permutation test|)}

\section{Appendix: order statistics, quantiles, and CDFs}
\label{sec:permutations_key_ingredients}

In this section, we will give some additional background on some technical details in order to build a strong foundation for theoretical results later in this work. This section will contain definitions and key facts about quantities such as quantiles and CDFs, which provide  some of the basic ingredients for studying exchangeability and developing conformal prediction methods.

The structure of the section is as follows. In Section~\ref{sec:deterministic-properties-quantiles-cdfs}, we state properties of order statistics, quantiles, and CDFs of lists that hold \emph{deterministically}---i.e., for any list of numbers.
In Section~\ref{sec:exchangeability-properties-quantiles-cdfs}, we state implications of these properties under the assumption of exchangeability.
Finally, in Section~\ref{sec:distributional-properties-quantiles-cdfs}, we look at the quantile and CDF function again from a distributional perspective, as opposed to operating over finite lists.

\subsection{Deterministic properties}
\label{sec:deterministic-properties-quantiles-cdfs}

\textbf{Definitions.} First, though we have referred to order statistics, CDFs, and quantiles informally throughout this chapter, we now define them formally for a list of real numbers, $z = (z_1, \ldots, z_n)$. We emphasize that, for the moment, we are treating the $z_i$'s as fixed values rather than random variables.
\begin{definition}[Order statistics of a finite list]
    \label{def:order-statistics}
    Let $k \in [n]$. Then the $k$th order statistic of $z\in\R^n$, written as $z_{(k)}$, is defined as
    \begin{equation}
        z_{(k)} = \inf\left\{ v: \sum\limits_{i=1}^n \ind{z_i \leq v } \geq k\right\}.
    \end{equation}
\end{definition} \index{order statistics}
The order statistics $z_{(1)}\leq \dots \leq z_{(n)}$ simply rearrange the values of $z$ into nondecreasing order. The $k$th order statistic of $z$ is always uniquely defined, although its value may occur more than once in the list in the case of ties. For example, in the vector $z=(3, 2, 1, 2)$, the order statistics are $z_{(1)}=1$, $z_{(2)} = z_{(3)} = 2$, and $z_{(4)} = 3$.

\begin{definition}[Cumulative distribution function (CDF) of a finite list]
    \label{def:CDF}
    The CDF of $z\in\R^n$ is the function $\widehat{F}_z:\R\rightarrow[0,1]$ defined as 
    \begin{equation}
        \widehat{F}_z(v) = \frac{1}{n}\sum_{i=1}^n \ind{z_i \leq v}.
    \end{equation}
\end{definition} \index{cumulative distribution function (CDF)}
The CDF evaluated at $v$ is the fraction of data points $z_1,\dots,z_n$ that lie at or below the value $v$. For convenience, we also define $\hat{F}_z(-\infty) = 0$ and $\hat{F}_z(+\infty) = 1$.

\begin{definition}[Quantile of a finite list]
    \label{def:quantile}
    For any $\tau\in[0,1]$, the $\tau$-quantile of $z\in\R^n$ is defined as 
    \begin{equation}
        \quantile\left(z; \tau \right) = \inf\left\{ v : \widehat{F}_z(v) \geq \tau \right\}.
    \end{equation}
\end{definition} \index{quantile}
In other words, the $\tau$-quantile is the smallest value $v$ such that a fraction $\tau$ of the data points lie at or below the value $v$. (Note that, at $\tau=0$, the quantile is given by $-\infty$.)

\textbf{Conversions between order statistics, quantiles, and CDFs.}
Order statistics, quantiles, and CDFs are intimately related. 
One can see them intuitively as different parameterizations of the exact same core concept: counting the number of data points falling below some value.
As we will see below, the order statistics and quantiles of a finite list are essentially equivalent.
Furthermore, the CDF is approximately the inverse of the quantile function when $z$ does not contain repeated values (and when it does, it provides a bound on the inverse).
We describe these conversions here.

\begin{fact}[Conversion between order statistics and quantiles.]
    \label{fact:conversion-order-stats-quantiles}
   For any $z\in\R^n$, for all $\tau \in (0,1]$,
    \begin{equation}
        z_{(\lceil \tau n \rceil)} = \quantile\left( z ; \tau \right).
    \end{equation}
\end{fact}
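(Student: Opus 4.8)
The plan is to simply unfold both sides of the claimed identity and reduce them to the same infimum, using the fact that a count of data points is integer-valued. Concretely, I would start from the definition of the quantile and rewrite
\[
\quantile(z;\tau) = \inf\left\{v : \widehat F_z(v) \geq \tau\right\} = \inf\left\{v : \frac{1}{n}\sum_{i=1}^n \ind{z_i \leq v} \geq \tau\right\} = \inf\left\{v : \sum_{i=1}^n \ind{z_i \leq v} \geq \tau n\right\},
\]
which already looks almost identical to the definition $z_{(k)} = \inf\{v : \sum_{i=1}^n \ind{z_i\leq v}\geq k\}$ of an order statistic, except that the threshold $\tau n$ need not be an integer.

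The one substantive observation is this: for a fixed $v$, the quantity $m(v) := \sum_{i=1}^n \ind{z_i\leq v}$ is a nonnegative integer, and for any integer $m$ and any real $t$ we have $m \geq t \iff m \geq \lceil t\rceil$. Applying this with $t = \tau n$ shows that the set $\{v : m(v)\geq \tau n\}$ is exactly the set $\{v : m(v) \geq \lceil \tau n\rceil\}$, and hence their infima agree:
\[
\quantile(z;\tau) = \inf\left\{v : \sum_{i=1}^n \ind{z_i\leq v} \geq \lceil \tau n\rceil\right\}.
\]

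To conclude by matching this with Definition~\ref{def:order-statistics}, I just need $\lceil \tau n\rceil$ to be a valid index, i.e., $\lceil\tau n\rceil \in [n]$. This is where the hypothesis $\tau\in(0,1]$ is used: $\tau > 0$ gives $\tau n > 0$ so $\lceil\tau n\rceil \geq 1$, and $\tau \leq 1$ gives $\tau n \leq n$ so $\lceil\tau n\rceil \leq n$ (the endpoint $\tau = 1$ giving $\lceil n\rceil = n$). Then the right-hand side above is by definition $z_{(\lceil\tau n\rceil)}$, completing the proof.

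There is no real obstacle here — the only thing to be careful about is not to slip on the integrality step (the equivalence $m\geq t \iff m\geq\lceil t\rceil$ for integer $m$) and to record why $\lceil \tau n\rceil$ lands in $\{1,\dots,n\}$ so that the order statistic on the right-hand side is even well-defined. It may also be worth a one-line remark that the case $\tau=0$ is genuinely excluded, since then the quantile is $-\infty$ while $\lceil 0\rceil = 0$ is not a legal index.
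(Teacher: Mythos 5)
Your proof is correct, and it is the natural argument: unfold both definitions to infima over sets of the form $\{v : \sum_i \ind{z_i \le v} \ge t\}$, invoke the integrality of the count to replace the threshold $\tau n$ by $\lceil \tau n \rceil$, and check that $\lceil \tau n \rceil \in [n]$ so the order statistic is well-defined. The paper states this as a Fact without proof (deferring to a standard reference), so there is no in-paper argument to compare against; your reasoning is exactly the one-step unfolding one would supply.
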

\begin{fact}[Conversion between order statistics and CDFs.]
    \label{fact:conversion-order-statistics-cdfs}
For any $z\in\R^n$, for all $k\in[n]$, 
\[\widehat{F}_z(z_{(k)}) \geq \frac{k}{n},\]
with equality in the case that all elements of $z$ are distinct.
\end{fact}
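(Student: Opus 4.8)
The plan is to reduce everything to the behaviour of the counting function $g(v) := \sum_{i=1}^n \ind{z_i \le v}$. By Definition~\ref{def:CDF} we have $\widehat{F}_z(v) = g(v)/n$, so the claimed inequality $\widehat{F}_z(z_{(k)}) \ge k/n$ is literally the statement $g(z_{(k)}) \ge k$, and the equality claim is $g(z_{(k)}) = k$; everything else is bookkeeping.

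For the inequality, I would note that $g$ is a finite sum of the indicators $v \mapsto \ind{z_i \le v}$, hence nondecreasing, right-continuous, integer-valued, equal to $0$ for $v < \min_i z_i$ and to $n$ for $v \ge \max_i z_i$. Therefore, for each $k \in [n]$, the super-level set $A_k := \{v : g(v) \ge k\}$ is nonempty (it contains $\max_i z_i$), and being the super-level set of a nondecreasing right-continuous function it is a closed half-line $[a_k, \infty)$. By Definition~\ref{def:order-statistics}, $z_{(k)} = \inf A_k = a_k$, and since $A_k$ is closed, $z_{(k)} \in A_k$, i.e.\ $g(z_{(k)}) \ge k$, which gives $\widehat{F}_z(z_{(k)}) \ge k/n$. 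The one point that calls for care — and the main, if modest, obstacle — is precisely this: one must invoke right-continuity of $g$ to conclude that the infimum in the definition of the order statistic is attained; without that observation the argument does not close. (Alternatively one could cite Fact~\ref{fact:conversion-order-stats-quantiles} with $\tau = k/n$, which gives $z_{(k)} = \quantile(z; k/n) = \inf\{v : \widehat{F}_z(v) \ge k/n\}$, and then use right-continuity of $\widehat{F}_z$ instead — but this merely relocates the same remark.)

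For the equality case, I would relabel the entries so that $z_{\pi(1)} \le z_{\pi(2)} \le \dots \le z_{\pi(n)}$ for some permutation $\pi \in \cS_n$, and first show $z_{(k)} = z_{\pi(k)}$: on one hand $g(z_{\pi(k)}) \ge k$ since $z_{\pi(1)}, \dots, z_{\pi(k)}$ all lie at or below $z_{\pi(k)}$, so $z_{(k)} \le z_{\pi(k)}$; on the other hand, for any $v < z_{\pi(k)}$ we have $z_{\pi(j)} \ge z_{\pi(k)} > v$ for every $j \ge k$, so only $z_{\pi(1)}, \dots, z_{\pi(k-1)}$ can contribute to $g(v)$, whence $g(v) \le k-1 < k$ and $z_{(k)} \ge z_{\pi(k)}$. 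Thus $g(z_{(k)}) = g(z_{\pi(k)}) = \lvert\{ j : z_{\pi(j)} \le z_{\pi(k)} \}\rvert$, and when the $z_i$ are all distinct this index set is exactly $\{1, \dots, k\}$ (since $z_{\pi(j)} > z_{\pi(k)}$ strictly for $j > k$), giving $g(z_{(k)}) = k$ and hence $\widehat{F}_z(z_{(k)}) = k/n$.
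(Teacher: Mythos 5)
Your proof is correct. The paper states Fact~\ref{fact:conversion-order-statistics-cdfs} without proof (it is listed as a standard background fact with a reference to Shorack), so there is no paper proof to compare against; your argument — observing that $g(v)=n\widehat{F}_z(v)$ is nondecreasing, integer-valued, and right-continuous, so that the super-level set $\{v:g(v)\geq k\}$ is a closed half-line containing its infimum $z_{(k)}$, and then identifying $z_{(k)}$ with the $k$th sorted value to settle the equality case — is the natural elementary route and is sound, including the correct emphasis that right-continuity is what makes the infimum in Definition~\ref{def:order-statistics} attained.
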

\begin{fact}[Conversion between quantiles and CDFs.]
    \label{fact:conversion-quantiles-cdfs}
    The following equivalences hold for any $z\in\R^n$:
    \begin{enumerate}[label=(\roman*)]
    \item\label{fact:conversion-quantiles-cdfs_part1} $\widehat{F}_z(v) = \sup\{ \tau : \quantile\left(z ; \tau \right) \leq v \}$ for all $v\in\R$;
    \item\label{fact:conversion-quantiles-cdfs_part2}  $\quantile\left(z ; \widehat{F}_z\left(  v \right) \right) \leq v$ for all $v\in\R$;
    \item\label{fact:conversion-quantiles-cdfs_part3}  $\widehat{F}_z\left( \quantile\left(z ; \tau \right) \right) \geq \tau$ for all $\tau\in[0,1]$;
    \end{enumerate}
    and furthermore, as a special case of~\ref{fact:conversion-quantiles-cdfs_part3}, 
    \begin{enumerate}[label=(\roman*)]
      \setcounter{enumi}{3}
      \item\label{fact:conversion-quantiles-cdfs_part4}   If all elements of $z$ are distinct, $\widehat{F}_z\left( \quantile\left(z ; \tau \right) \right) = \frac{\lceil \tau n \rceil}{n}$ for all $\tau\in[0,1]$.
    \end{enumerate}    
\end{fact}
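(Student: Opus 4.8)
The plan is to treat parts~\ref{fact:conversion-quantiles-cdfs_part2} and~\ref{fact:conversion-quantiles-cdfs_part3} as the two workhorses, deduce part~\ref{fact:conversion-quantiles-cdfs_part1} from them together with the monotonicity of $\widehat{F}_z$, and obtain part~\ref{fact:conversion-quantiles-cdfs_part4} as a direct consequence of the equality case in Fact~\ref{fact:conversion-order-statistics-cdfs}. Throughout, $\widehat{F}_z$ is manifestly nondecreasing (it is an average of the indicators $v\mapsto\ind{z_i\le v}$), which is the only structural property of it I will need.

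First I would dispatch part~\ref{fact:conversion-quantiles-cdfs_part2}, which is essentially immediate from the definitions: setting $\tau=\widehat{F}_z(v)$, the value $v$ itself lies in the set $\{v':\widehat{F}_z(v')\ge\tau\}$ since $\widehat{F}_z(v)=\tau$, so the infimum defining $\quantile(z;\tau)$ is at most $v$. For part~\ref{fact:conversion-quantiles-cdfs_part3} I would split on $\tau$: the case $\tau=0$ is trivial because $\quantile(z;0)=-\infty$ and $\widehat{F}_z(-\infty)=0$; for $\tau\in(0,1]$, Fact~\ref{fact:conversion-order-stats-quantiles} identifies $\quantile(z;\tau)=z_{(\lceil\tau n\rceil)}$, and then Fact~\ref{fact:conversion-order-statistics-cdfs} gives $\widehat{F}_z(z_{(\lceil\tau n\rceil)})\ge\lceil\tau n\rceil/n\ge\tau$. (Alternatively part~\ref{fact:conversion-quantiles-cdfs_part3} can be proved directly from right-continuity of $v\mapsto\widehat{F}_z(v)$, which makes $\{v':\widehat{F}_z(v')\ge\tau\}$ a closed half-line whose infimum belongs to it; either route works.)

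With these in hand, part~\ref{fact:conversion-quantiles-cdfs_part1} is a short argument about the set $S(v):=\{\tau\in[0,1]:\quantile(z;\tau)\le v\}$. Part~\ref{fact:conversion-quantiles-cdfs_part2} shows $\widehat{F}_z(v)\in S(v)$, hence $\sup S(v)\ge\widehat{F}_z(v)$. Conversely, for any $\tau\in S(v)$, monotonicity of $\widehat{F}_z$ together with $\quantile(z;\tau)\le v$ gives $\widehat{F}_z(v)\ge\widehat{F}_z(\quantile(z;\tau))$, and part~\ref{fact:conversion-quantiles-cdfs_part3} bounds the right-hand side below by $\tau$; so every element of $S(v)$ is at most $\widehat{F}_z(v)$, giving $\sup S(v)\le\widehat{F}_z(v)$. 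Combining the two inequalities yields the identity (and in fact shows the supremum is attained).

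Finally, part~\ref{fact:conversion-quantiles-cdfs_part4} follows by rerunning the computation from part~\ref{fact:conversion-quantiles-cdfs_part3}, now invoking the \emph{equality} case of Fact~\ref{fact:conversion-order-statistics-cdfs}, which applies because the entries of $z$ are distinct: for $\tau\in(0,1]$ we get $\widehat{F}_z(\quantile(z;\tau))=\widehat{F}_z(z_{(\lceil\tau n\rceil)})=\lceil\tau n\rceil/n$, and the case $\tau=0$ is again trivial since both sides equal $0$. I do not anticipate a serious obstacle; the only things to watch are the boundary conventions at $\tau=0$ (where $\quantile(z;0)=-\infty$) and the fact that Fact~\ref{fact:conversion-order-stats-quantiles} is stated only for $\tau\in(0,1]$, so the $\tau=0$ corner has to be treated by hand each time it arises.
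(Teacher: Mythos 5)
Your proof is correct and complete. The paper does not actually supply a proof of this Fact — it is stated without argument as standard background in the appendix on order statistics, quantiles, and CDFs, with a pointer to \citet[Chapter 7.3]{shorack2000probability} in the bibliographic notes — so there is no paper argument to compare against. Your decomposition is the natural one: part~\ref{fact:conversion-quantiles-cdfs_part2} is immediate from the definition of $\quantile(z;\cdot)$ as an infimum, part~\ref{fact:conversion-quantiles-cdfs_part3} reduces via Fact~\ref{fact:conversion-order-stats-quantiles} to Fact~\ref{fact:conversion-order-statistics-cdfs}, part~\ref{fact:conversion-quantiles-cdfs_part1} is the sandwich built from~\ref{fact:conversion-quantiles-cdfs_part2},~\ref{fact:conversion-quantiles-cdfs_part3}, and monotonicity of $\widehat{F}_z$, and part~\ref{fact:conversion-quantiles-cdfs_part4} is the equality case of the same chain. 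You are also right to flag the $\tau=0$ corner, which is handled correctly by the paper's conventions $\quantile(z;0)=-\infty$ and $\widehat{F}_z(-\infty)=0$.
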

Figure~\ref{fig:cdf-ties} illustrates parts~\ref{fact:conversion-quantiles-cdfs_part3} and~\ref{fact:conversion-quantiles-cdfs_part4} of this last fact. When there are no ties between values of $z$, Fact~\ref{fact:conversion-quantiles-cdfs}\ref{fact:conversion-quantiles-cdfs_part4} ensures that the CDF is approximately equal to $\tau$ at the $\tau$-quantile---namely, $\tau\leq \hat{F}_z(\quantile(z;\tau)) < \tau + 1/n$. In contrast, in the presence of ties, the value of the CDF at the $\tau$-quantile may be substantially larger than $\tau$. 

\begin{figure}[t]
    \centering
    \includegraphics[width=0.55\textwidth]{\diagramspath quantiles-cdfs.pdf}
    \caption{\textbf{An illustration of two quantiles chosen on the CDF.} The figure illustrates the empirical CDF of the vector $z=(1,1,2,3,4)$, and the calculation of $\quantile(z;\tau)$, at $\tau = 0.5$ and $\tau=0.1$. This vector has quantiles $\quantile(z;0.5) = 2$ and $\quantile(z;0.1) = 1$. We can see that $\widehat{F}_z(2) = 0.6$ (which is slightly larger than $\tau = 0.5$, due to discreteness), and $\widehat{F}_z(1) = 0.4$ (which is much larger than $\tau = 0.1$, due to the fact that the random vector has a tie at the value $1$).}
    \commentAlt{A plot of the empirical CDF of $z=(1,1,2,3,4)$. This is a step function, with a larger vertical jump at the value $1$ and a smaller vertical jump at values $2,3,4$. Arrows indicate the calculation of the quantile level $\tau=0.1$ and $\tau=0.5$.}
    \label{fig:cdf-ties}
\end{figure}

\textbf{Translations and transformations.}
What happens to the order statistics, quantiles, and CDFs when we take transformations of $z$? 
Here we show what happens when we take monotone transformations and when we permute.
We begin with monotone transformations.
\begin{fact}[Order statistics, quantiles, and CDFs under transformations]
    \label{fact:monotone-invariance-quantiles}
    Let $f:\R\to\R$ be a monotone nondecreasing function.
    Furthermore, for $z\in\R^n$, let $f(z)\in\R^n$ denote the elementwise application of $f$ to $z$.
    Then,
    \begin{enumerate}[label=(\roman*)]
        \item\label{fact:monotone-invariance-quantiles_part1} For all $k\in[n]$, $f(z)_{(k)} = f(z_{(k)})$.
        \item\label{fact:monotone-invariance-quantiles_part2} For all $\tau\in(0,1]$, $\quantile(f(z) ; \tau) = f(\quantile(z ; \tau))$.
        \item\label{fact:monotone-invariance-quantiles_part3} If additionally $f$ is a strictly increasing function, then for all $v\in\R$, $\widehat{F}_{f(z)}(f(v)) = \widehat{F}_z(v)$.
    \end{enumerate}
\end{fact}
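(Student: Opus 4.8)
The plan is to handle the three parts in order, reducing each to an elementary observation and reusing the conversion facts already established so as not to repeat work.

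For part~\ref{fact:monotone-invariance-quantiles_part1}, I would invoke the characterization of the order statistics as sorted entries (stated just after Definition~\ref{def:order-statistics}): $z_{(1)}\leq\dots\leq z_{(n)}$ is a nondecreasing rearrangement of $z_1,\dots,z_n$. Applying the nondecreasing map $f$ preserves this ordering, so $f(z_{(1)})\leq\dots\leq f(z_{(n)})$ is a nondecreasing rearrangement of $f(z_1),\dots,f(z_n)$, i.e.\ of the entries of the list $f(z)$. Since the order statistics of $f(z)$ are, by that same characterization, precisely its sorted entries, this forces $f(z)_{(k)} = f(z_{(k)})$ for every $k\in[n]$. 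No distinctness is used, so ties cause no trouble; and I would deliberately avoid arguing from the infimum in Definition~\ref{def:order-statistics} directly, since the sorted-entries description is both cleaner and already available.

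Part~\ref{fact:monotone-invariance-quantiles_part2} then follows by chaining part~\ref{fact:monotone-invariance-quantiles_part1} with Fact~\ref{fact:conversion-order-stats-quantiles}: for $\tau\in(0,1]$ the index $\lceil \tau n\rceil$ lies in $[n]$, and
\[
\quantile(f(z);\tau) = f(z)_{(\lceil \tau n\rceil)} = f\big(z_{(\lceil \tau n\rceil)}\big) = f\big(\quantile(z;\tau)\big).
\]
For part~\ref{fact:monotone-invariance-quantiles_part3} I would compute directly from Definition~\ref{def:CDF}. The only place strict monotonicity enters is the equivalence $f(z_i)\leq f(v) \iff z_i\leq v$, valid for each $i$: the forward implication needs only monotonicity, while the converse uses that $z_i>v$ forces $f(z_i)>f(v)$ by strictness. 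Summing the indicators gives
\[
\widehat{F}_{f(z)}(f(v)) = \frac{1}{n}\sum_{i=1}^n \ind{f(z_i)\leq f(v)} = \frac{1}{n}\sum_{i=1}^n \ind{z_i\leq v} = \widehat{F}_z(v).
\]

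There is no genuine obstacle in this Fact; the only subtlety worth flagging is that strictness is really required in part~\ref{fact:monotone-invariance-quantiles_part3} --- a nondecreasing but non-injective $f$ can merge an entry below $v$ with one above it, breaking the count --- whereas parts~\ref{fact:monotone-invariance-quantiles_part1} and~\ref{fact:monotone-invariance-quantiles_part2} hold for any nondecreasing $f$.
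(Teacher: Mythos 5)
The paper states Fact~\ref{fact:monotone-invariance-quantiles} without proof (it is listed among the ``standard tools'' in Section~\ref{sec:permutations_key_ingredients}, with a pointer to the literature), so there is no paper argument to compare against; your proof is a self-contained and correct supply of the missing details. The argument for part~\ref{fact:monotone-invariance-quantiles_part1} via the sorted-rearrangement characterization, the reduction of part~\ref{fact:monotone-invariance-quantiles_part2} to part~\ref{fact:monotone-invariance-quantiles_part1} through Fact~\ref{fact:conversion-order-stats-quantiles}, and the indicator-matching computation in part~\ref{fact:monotone-invariance-quantiles_part3} are all sound, and your closing remark correctly isolates where strictness is indispensable. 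One small slip in exposition: in part~\ref{fact:monotone-invariance-quantiles_part3} you have the labels ``forward'' and ``converse'' reversed for the equivalence as you wrote it, $f(z_i)\leq f(v) \iff z_i\leq v$ --- the direction $z_i\leq v \Rightarrow f(z_i)\leq f(v)$ is what follows from monotonicity alone, while $f(z_i)\leq f(v) \Rightarrow z_i\leq v$ is the one whose contrapositive ($z_i>v \Rightarrow f(z_i)>f(v)$) uses strictness; the two arguments you give are each correct, only attached to the wrong names.
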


Next, we consider permutations of $z$.
From the perspective of this book, this is the most important property of quantiles and CDFs.
\begin{fact}[Properties of order statistics, quantiles, and CDFs under permutations]
    \label{fact:permutation-invariance-quantiles}
    For any vector $z\in\R^n$ and any permutation $\sigma$ on $[n]$, let $z_{\sigma}=(z_{\sigma(1)}, \ldots, z_{\sigma(n)})$, i.e., the entries of $z$ are permuted according to $\sigma$.
    We have that
    \begin{enumerate}[label=(\roman*)]
        \item\label{fact:permutation-invariance-quantiles_part1} For all $k\in[n]$, $(z_{\sigma})_{(k)} = z_{(k)}$.
        \item\label{fact:permutation-invariance-quantiles_part2} For all $\tau\in[0,1]$, $\quantile(z_{\sigma} ; \tau) = \quantile\left(z ; \tau\right)$.
        \item\label{fact:permutation-invariance-quantiles_part3} For all $v\in\R$, $\widehat{F}_{z_\sigma}(v) = \widehat{F}_z(v)$.
    \end{enumerate}
\end{fact}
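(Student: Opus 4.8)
The plan is to reduce all three claims to a single elementary counting identity: for any $v\in\R$ and any permutation $\sigma$ on $[n]$,
\[\sum_{i=1}^n \ind{(z_\sigma)_i \leq v} = \sum_{i=1}^n \ind{z_i \leq v}.\]
This holds because $(z_\sigma)_i = z_{\sigma(i)}$ by definition, so the left-hand side equals $\sum_{i=1}^n \ind{z_{\sigma(i)} \leq v}$, and since $\sigma:[n]\to[n]$ is a bijection, reindexing the sum by $j = \sigma(i)$ recovers $\sum_{j=1}^n \ind{z_j \leq v}$. Equivalently, both sides count $\#\{i : z_i \leq v\}$, a quantity depending only on the multiset of entries of $z$, not on their order.

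Given this identity, part~\ref{fact:permutation-invariance-quantiles_part3} is immediate: dividing both sides by $n$ gives $\widehat{F}_{z_\sigma}(v) = \widehat{F}_z(v)$ for every $v$, which is exactly the assertion that the two CDFs (Definition~\ref{def:CDF}) coincide. For part~\ref{fact:permutation-invariance-quantiles_part1}, I would appeal to Definition~\ref{def:order-statistics}: $(z_\sigma)_{(k)} = \inf\{v : \sum_{i=1}^n \ind{(z_\sigma)_i \leq v} \geq k\}$, and by the counting identity the set over which the infimum is taken is literally the same as $\{v : \sum_{i=1}^n \ind{z_i \leq v} \geq k\}$, whose infimum is $z_{(k)}$; hence $(z_\sigma)_{(k)} = z_{(k)}$.

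For part~\ref{fact:permutation-invariance-quantiles_part2}, I would use Definition~\ref{def:quantile} together with part~\ref{fact:permutation-invariance-quantiles_part3}: $\quantile(z_\sigma;\tau) = \inf\{v : \widehat{F}_{z_\sigma}(v) \geq \tau\} = \inf\{v : \widehat{F}_z(v) \geq \tau\} = \quantile(z;\tau)$. Alternatively, for $\tau\in(0,1]$ part~\ref{fact:permutation-invariance-quantiles_part2} follows from part~\ref{fact:permutation-invariance-quantiles_part1} via Fact~\ref{fact:conversion-order-stats-quantiles} ($\quantile(z;\tau) = z_{(\lceil \tau n\rceil)}$), with $\tau = 0$ handled separately since both sides equal $-\infty$.

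There is no substantive obstacle here: the entire content is the observation that order statistics, CDFs, and quantiles are all functions of the unordered collection of values. The only step warranting a moment's care is the reindexing $j = \sigma(i)$, which is valid precisely because $\sigma$ is a bijection of $[n]$ onto itself; were $\sigma$ merely an arbitrary map, the identity would fail.
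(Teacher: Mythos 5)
The paper states this as a Fact without providing a proof (the facts in Section~\ref{sec:permutations_key_ingredients} are stated as a reference glossary), so there is no paper argument to compare against. Your proof is correct and is the natural argument: the single counting identity $\sum_i \ind{(z_\sigma)_i \le v} = \sum_i \ind{z_i \le v}$, valid because $\sigma$ is a bijection, immediately yields part~\ref{fact:permutation-invariance-quantiles_part3} after dividing by $n$, yields part~\ref{fact:permutation-invariance-quantiles_part1} by plugging into Definition~\ref{def:order-statistics}, and yields part~\ref{fact:permutation-invariance-quantiles_part2} via Definition~\ref{def:quantile} (or via part~\ref{fact:permutation-invariance-quantiles_part1} together with Fact~\ref{fact:conversion-order-stats-quantiles}, with $\tau=0$ handled separately). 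Nothing is missing.
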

In other words, this fact tells us that the order statistics, quantiles, and CDF of a vector $z$ are all unchanged when we permute the vector.

\subsection{Properties under exchangeability}
\label{sec:exchangeability-properties-quantiles-cdfs}

The deterministic properties above can be directly translated into probabilistic equalities and inequalities, when applied to an exchangeable random vector.
We will state the most important of these here.
\begin{fact}[Order statistics, quantiles, and CDFs under exchangeability]
    \label{fact:exchangeable-properties}
    Assume $Z\in\R^n$ is exchangeable, and fix any $i\in[n]$.
    Then we have that
    \begin{enumerate}[label=(\roman*)]
        \item\label{fact:exchangeable-properties_part1} For any $k\in[n]$, $\P(Z_i \leq Z_{(k)}) \geq k/n$ and $\P(Z_i < Z_{(k)})\leq (k-1)/n$.
        \item\label{fact:exchangeable-properties_part2} For all $\tau\in[0,1]$, $\P\left(Z_i \leq \quantile(Z; \tau)\right) \geq \tau$ and, if $\tau>0$, $\P\left(Z_i < \quantile(Z; \tau)\right) < \tau$.
        \item\label{fact:exchangeable-properties_part3} For all $\tau\in[0,1]$, $\P\left(\widehat{F}_Z(Z_i) \leq \tau\right) \leq \tau$ and $\P\left(\widehat{F}_Z(Z_i)\geq \tau\right)\geq 1-\tau$.
    \end{enumerate}
    Furthermore, if all elements of $Z$ are distinct almost surely,
    \begin{enumerate}[label=(\roman*)]
      \setcounter{enumi}{3}
      \item\label{fact:exchangeable-properties_part4} For any $k\in [n]$, $\P(Z_i\leq Z_{(k)}) = k/n$.
      \item\label{fact:exchangeable-properties_part5} For all $\tau\in[0,1]$, $\P(Z_i\leq \quantile(Z;\tau)) = \frac{\lceil n \tau\rceil}{n}$.
    \item\label{fact:exchangeable-properties_part6} For all $\tau\in[0,1]$, $\P\left(\widehat{F}_Z(Z_i) \leq \tau\right) =  \frac{\lfloor n\tau\rfloor}{n}$.
      \end{enumerate}
      \end{fact}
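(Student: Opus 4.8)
The plan is to deduce all six parts from one structural fact: under exchangeability, $Z_i$ conditioned on the order statistics is a uniform draw from the (multiset of) order statistics---that is, \eqref{eqn:empirical-distr-order-statistic}, or equivalently Proposition~\ref{prop:empirical-distrib-exch} applied to $\widehat{P}_n$, since for real-valued data $\widehat{P}_n$ and $(Z_{(1)},\dots,Z_{(n)})$ generate the same $\sigma$-algebra and $\widehat{P}_n = \frac1n\sum_{j=1}^n\delta_{Z_{(j)}}$. Write $\cG$ for the $\sigma$-algebra generated by the order statistics. The crucial point is that $Z_{(k)}$, $\quantile(Z;\tau)$, and the whole function $\widehat{F}_Z$ are $\cG$-measurable, so I can pull them inside conditional expectations: for instance $\P(Z_i \le Z_{(k)} \mid \cG) = \widehat{F}_Z(Z_{(k)})$ almost surely, because the regular conditional distribution of $Z_i$ given $\cG$ is $\widehat{P}_n$. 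After that, each claim is just one of the deterministic conversion facts of Section~\ref{sec:deterministic-properties-quantiles-cdfs}, evaluated along the random order statistics, followed by the tower property.

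Carrying this out: for part~\ref{fact:exchangeable-properties_part1}, $\P(Z_i \le Z_{(k)} \mid \cG) = \widehat{F}_Z(Z_{(k)}) \ge k/n$ by Fact~\ref{fact:conversion-order-statistics-cdfs}, and $\P(Z_i < Z_{(k)} \mid \cG) = \frac1n\#\{j : Z_{(j)} < Z_{(k)}\} \le (k-1)/n$ since $Z_{(k)} \le Z_{(k+1)} \le \dots \le Z_{(n)}$ forces at least $n-k+1$ of the order statistics to be $\ge Z_{(k)}$; taking expectations finishes it. Part~\ref{fact:exchangeable-properties_part2} follows by substituting $\quantile(Z;\tau) = Z_{(\lceil\tau n\rceil)}$ (Fact~\ref{fact:conversion-order-stats-quantiles}, for $\tau>0$; the case $\tau=0$ is immediate since $\quantile(Z;0)=-\infty$) into part~\ref{fact:exchangeable-properties_part1}, together with $\lceil\tau n\rceil/n \ge \tau$ and $(\lceil\tau n\rceil-1)/n < \tau$. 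For part~\ref{fact:exchangeable-properties_part3}, condition on $\cG$: then $\widehat{F}_Z(Z_i)$ takes the value $\widehat{F}_Z(Z_{(j)})$ with conditional probability $1/n$ for each $j\in[n]$, and since $\widehat{F}_Z(Z_{(j)}) \ge j/n$ (Fact~\ref{fact:conversion-order-statistics-cdfs}), the event $\{\widehat{F}_Z(Z_{(j)}) \le \tau\}$ forces $j \le \tau n$ while $\{\widehat{F}_Z(Z_{(j)}) < \tau\}$ forces $j < \tau n$; this yields $\P(\widehat{F}_Z(Z_i) \le \tau \mid \cG) \le \lfloor\tau n\rfloor/n \le \tau$ and $\P(\widehat{F}_Z(Z_i) < \tau \mid \cG) \le (\lceil\tau n\rceil-1)/n \le \tau$, and the latter rearranges to the $\ge 1-\tau$ bound. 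Parts~\ref{fact:exchangeable-properties_part4}--\ref{fact:exchangeable-properties_part6} are obtained by rerunning exactly these computations but invoking the equality case of Fact~\ref{fact:conversion-order-statistics-cdfs} ($\widehat{F}_Z(Z_{(k)}) = k/n$ when the entries are distinct), so that $\widehat{F}_Z(Z_i)$ is conditionally uniform on $\{1/n,\dots,n/n\}$; every inequality above then becomes the stated equality, using $\#\{j\in[n] : j \le \tau n\} = \lfloor\tau n\rfloor$ for part~\ref{fact:exchangeable-properties_part6}.

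The one place I would slow down is the measure-theoretic bookkeeping in the conditioning step---justifying rigorously that for a $\cG$-measurable random threshold $v = Z_{(k)}$ (or $v=\quantile(Z;\tau)$) one genuinely has $\P(Z_i \le v \mid \cG) = \widehat{F}_Z(v)$ almost surely---which is where the standing assumption on existence of regular conditional probabilities (and the countably-generated $\sigma$-algebra, already used for Proposition~\ref{prop:empirical-distrib-exch}) is needed. Everything else is routine; the only thing to stay attentive to is how ties degrade the CDF--order-statistic correspondence of Fact~\ref{fact:conversion-order-statistics-cdfs} from an equality to an inequality, which is precisely why parts~\ref{fact:exchangeable-properties_part1}--\ref{fact:exchangeable-properties_part3} are stated as inequalities and parts~\ref{fact:exchangeable-properties_part4}--\ref{fact:exchangeable-properties_part6} require distinctness.
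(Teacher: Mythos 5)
Your proof is correct and follows exactly the route the paper implicitly intends: the paper never proves this Fact explicitly, but Section~2.1 (see equation~\eqref{eqn:empirical-distr-order-statistic} and the surrounding text, and Proposition~\ref{prop:empirical-distrib-exch}) sets up precisely the conditioning-on-order-statistics machinery for this purpose, and even derives the first inequality of part~\ref{fact:exchangeable-properties_part1} as an illustration. Your reduction of all six parts to the deterministic conversions of Section~\ref{sec:deterministic-properties-quantiles-cdfs} via the conditional law $Z_i \mid \cG \sim \widehat{P}_n$, with the measure-theoretic caveat flagged, is exactly the intended argument.
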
 \index{exchangeability}
Note in particular that Fact~\ref{fact:exchangeable-properties} is closely connected to the result of Corollary~\ref{cor:perm_test_pval} above, which establishes that $p = \frac{\sum_{i=1}^n \ind{Z_i\geq Z_n}}{n}$ satisfies $\P(p\leq \tau)\leq \tau$ when $Z\in\R^n$ is exchangeable. This result can be derived as a consequence of the first part of Fact~\ref{fact:exchangeable-properties}\ref{fact:exchangeable-properties_part1} (applied with $i=n$), since $p> \tau$ holds if and only if $Z_n\leq Z_{(k)}$, for $k = \lceil (1-\tau)n\rceil$.

\subsection{A distributional view} 
\label{sec:distributional-properties-quantiles-cdfs}
It is more common to define quantiles of distributions, rather than of lists or sequences. For a distribution $P$ on $\R$, for any $\tau\in[0,1]$ the $\tau$-quantile of the distribution $P$ is defined as
\begin{equation}
   \quantile(P;\tau) = \inf\{x\in\R: \P_P\left(X\leq x\right) \geq \tau\}. 
\end{equation}
If $F$ is the CDF of the distribution $P$, the quantile is therefore equal to 
$\quantile(P;\tau) = \inf\{x\in\R : F(x)\geq \tau\}$, and we will sometimes equivalently write $\quantile(F;\tau)$.
Note that the quantile at $\tau=0$ will always be $-\infty$; on the other hand, the quantile at $\tau=1$ will be finite if the support of $P$ is bounded from above, or will be $+\infty$ otherwise.

In fact, in the setting of a finite list, this distributional definition of the quantile is equivalent to the earlier Definition~\ref{def:quantile}: if we consider the  empirical distribution of the vector $z=(z_1,\dots,z_n)$, 
\[\frac{1}{n}\sum_{i=1}^n \delta_{z_i},\]
then 
\begin{equation}
    \quantile\big(z;\tau\big) = \quantile\left(\frac{1}{n}\sum_{i=1}^n \delta_{z_i};\tau\right).
\end{equation}
Similarly,  $\widehat{F}_z$ is the CDF of the empirical distribution $\frac{1}{n}\sum_{i=1}^n \delta_{z_i}$.
This distribution-based representation will be important throughout later chapters.

\section*{Bibliographic notes}
\addcontentsline{toc}{section}{\protect\numberline{}\textnormal{\hspace{-0.8cm}Bibliographic notes}}
Exchangeability as a statistical tool was first studied by \cite{de1929funzione}, although some sources attribute its roots to early philosophical work in logic~\citep{johnson1924logical}.
It is a standard fact that any mixture of exchangeable distributions is exchangeable. The most well-known result of de Finetti is the converse of this fact---namely, that any \emph{infinite} exchangeable distribution can be represented as a mixture of i.i.d.\ distributions.
This was first proved by de Finetti for the binary setting $\cZ=\{0,1\}$. 
Many authors have extended this result to broader settings (e.g., any standard Borel space $\cZ$), most notably by \cite{hewitt1955symmetric}, but with many recent advances in the literature as well. 
While the theorem does not hold in the finite setting, approximate versions of the result can be established, namely, \cite{diaconis1980finite}'s well-known result showing that, if $Z_1,\dots,Z_n$ can be embedded into a longer exchangeable sequence $Z_1,\dots,Z_m$ for $m\gg n$, then its distribution can be approximated as a mixture of i.i.d.\ distributions. See also \citet{aldous1985exchangeability,kingman1978uses} for additional classical background on exchangeability. Conditional properties of exchangeable distributions (as in Proposition~\ref{prop:empirical-distrib-exch} and Lemma~\ref{lem:conditional_exchangeability}) are standard in the literature; see, e.g., \citet[Appendix A.5]{vovk2005algorithmic} for background. A classical example of an exchangeable (but not i.i.d.) distribution is P{\'o}lya's urn model \citep{polya1930quelques}---see the Chapter~\ref{chapter:exchangeability} exercises.

Permutation tests have been core objects of study in statistics since at least the time of Fisher's randomized experiments, including the famous `Lady Tasting Tea'~\citep{fisher1956mathematics}.
The permutation test as we know it today, as a valid significance test for arbitrary distributions (i.e., the result of Theorem~\ref{thm:perm-test}), was formalized by \cite{pitman1937significance}; see \cite{lehmann1986testing} for a comprehensive classical reference on permutation tests. A common application of the permutation test (in addition to the examples studied in this chapter) is the problem of marginal independence testing, which was formalized by \cite{pitman1937significanceII}: to test whether random variables $X,Y$ are independent given i.i.d.\ draws $(X_1,Y_1),\dots,(X_n,Y_n)$, we compare against permuted datasets, i.e., $(X_1,Y_{\sigma(1)}),\dots,(X_n,Y_{\sigma(n)})$ for $\sigma\in\cS_n$. We will return to the problem of independence testing in Chapter~\ref{chapter:conditional-independence-testing}.

Since the advent of powerful computers, permutation tests have become a practical solution for testing nonparametric null hypotheses, and thus their popularity has grown~\citep{ernst2004permutation}.
Critical to the practical use of permutation tests was the development of the randomized test by \cite{dwass1957modified}.
In the work of Dwass and throughout the late 20th century, the randomized permutation p-value was simply seen as an estimate of the quantity computed on all $n!$ permutations.
Thus, it was common to calculate estimates of the p-value that are valid asymptotically---essentially, this amounts to removing the `$+1$' in~\eqref{eq:perm-pval-random}.
However, it was pointed out by~\cite{phipson2010permutation} that this strategy can lead to extreme violations of the Type~I error rate in finite samples, and the exact test (i.e., adding the `$+1$') has much better practical performance (this was likely known to some beforehand as well, as the `$+1$' does also appear in~\cite{lehmann1986testing}).
The literature on permutation tests and other randomization tests has since developed significantly, and several generalizations have been proposed, e.g., to arbitrary groups~\citep{hemerik2021another, besag1989generalized}, to dependent data generated by Markov chains~\citep{besag1989generalized}, and to incorporate sampling weights~\citep{harrison2012conservative}. See also \cite{kuchibhotla2020exchangeability,zhang2023randomization} for further discussion of the connections between exchangeability and permutation testing.
Other work in the area includes efficient subroutines for the computation of permutation tests~\citep{koning2022faster,domingo2025cheap}, and the study of the optimality of the permutation test, e.g., in the minimax framework~\citep{kim2022minimax}.
A proof of the validity of the sampled permutation p-value (Theorem~\ref{thm:perm-test-random}) is available in~\cite{hemerik2018exact}, although the result itself was well-known beforehand.

The connections between order statistics, quantiles, and CDFs (Section~\ref{sec:permutations_key_ingredients}) are standard tools; for a detailed reference, see \citet[Chapter 7.3]{shorack2000probability}. For background on the regularity conditions we assume throughout the book (such as existence of regular conditional probability distributions, as mentioned at the beginning of the chapter), and other background in measure theory and probability, see \citet{durrett2019probability}.

\section*{Exercises}
\addcontentsline{toc}{section}{\protect\numberline{}\textnormal{\hspace{-0.8cm}Exercises}}
\begin{enumerate}[font=\bfseries, label={\thechapter.\arabic*}, labelsep=1em, itemsep=1em]
\item Let $n\geq 2$ be arbitrary. Construct a joint distribution of a random vector $(X_1,\dots,X_n)$, where for each $i\neq j$ it holds that $(X_i,X_j)$ is exchangeable, but $(X_1,\dots,X_n)$ is not exchangeable.
\item In this exercise we will examine the possible dependence that might exist between elements of an exchangeable vector.\begin{enumerate}
    \item Let $n\geq 2$ be arbitrary. Construct a finite exchangeable sequence $(X_1,\dots,X_n)$ such that $\Cov(X_1,X_2)<0$.
    \item Prove that if $(X_1,X_2,\dots)$ is an infinite exchangeable sequence then $\Cov(X_1,X_2)\geq 0$. (Assume that $\Var(X_1)<\infty$, so that this covariance is well-defined.)
\end{enumerate}    
\item Fix any $n\geq 1$ and $\tau\in(0,1)$. Under what conditions is it true that, for all $v\in\R^n$, $\quantile(v;1-\tau) = -\quantile(-v;\tau)$? (For instance, for $\tau = 0.1$, does the $90$th percentile of $v_1,\dots,v_n$ correspond to the $10$th percentile of $-v_1,\dots,-v_n$, up to sign?)
\item In this exercise, we will consider \emph{P{\'o}lya's urn}, a classical model that is constructed as follows. We begin with a finite collection of marbles of two colors, which here we will denote as $0$ and $1$, with $a$ many $0$'s and $b$ many $1$'s in the urn at the start of the process. At each time, we randomly sample one marble from the urn; it is removed, and replaced with $c$ many marbles from the same color. Here $a,b,c$ are all positive integers. (See Figure~\ref{fig:polyas_urn} for an illustration.)

    Let $X_1,\dots,X_n$ denote the sequence of draws from P{\'o}lya's urn, i.e., $X_i\in\{0,1\}$ denotes the color of the marble drawn at the $i$th iteration of the process described above.
    \begin{enumerate}
        \item Prove that, if $c>1$, then $X_1, \ldots, X_n$ are \emph{not} independent.
        \item Prove that $X_1, \ldots, X_n$ \emph{are} identically distributed (marginally).
        \item Prove that the vector $(X_1, \ldots, X_n)$ is exchangeable.
    \end{enumerate}

        \begin{figure}[t]
            \centering
            \includegraphics[width=0.65\textwidth]{\diagramspath urns.pdf}
            \caption{This figure depicts a single time step of P{\'o}lya's urn model, for the case $c=2$. We begin with an urn containing two colors of marbles, corresponding to the numbers $0$ and $1$.
            We then sample one at random, drawing $X_1 = 1$. This removed marble is then replaced by $c=2$ new marbles of the same color, and the urn is now ready for sampling the second draw $X_2$.}
            \commentAlt{The figure illustrates how data is sampled from P{\'o}lya’s urn model, when we choose the parameter value $c=2$. Four urns are shown in sequence to illustrate one stage of the sampling procedure. See long description.}
            \commentLongAlt{The figure illustrates how data is sampled from P{\'o}lya’s urn model, when we choose the parameter value $c=2$. Four urns are shown in sequence to illustrate one stage of the sampling procedure. The first urn contains $5$ balls: $3$ light gray and $2$ dark gray. The second urn shows $1$ light gray ball removed from the urn, and now there are $2$ remaining light gray balls and $2$ dark gray balls inside the urn. The third urn shows that the removed ball has now been replaced by $2$ light gray balls outside the urn, and there are still $2$ remaining light gray balls and $2$ dark gray balls inside the urn. The final, fourth urn shows that the balls have now all been placed back into the urn, so that there are now $4$ light gray balls and $2$ dark gray balls inside the urn.}
            \label{fig:polyas_urn}
        \end{figure}
    \item Prove all parts of Fact~\ref{fact:conversion-quantiles-cdfs}, using the definitions of the quantile and the CDF of a finite list.
    \item Prove all parts of Fact~\ref{fact:monotone-invariance-quantiles}, using the definitions of the quantile, the CDF, and the order statistics of a finite list.
    \item The key step in proving Theorem~\ref{thm:perm-test} (the validity of the permutation test) is establishing the inequality in~\eqref{eq:permutation-pval-CDF-inequality}, restated here for convenience: for any function $T:\cZ\to\R$,
    \[\P_\sigma\left(F(-T(z_\sigma);z)\leq \tau\right)\leq \tau,\]
    where $z\in\R^n$ is fixed, while $\sigma$ is a permutation sampled uniformly at random from $\cS_n$, and where we recall the definition
    \[F(v;z) = \frac{\sum_{\sigma\in\cS_n}\ind{-T(z_\sigma) \leq v}}{n!}.\] In the setting of randomly sampled permutations, the analogous inequality is the following:
    \begin{equation}\label{eq:permutation-pval-CDF-inequality-sample}\P(F(-T(z_{\sigma_0});z,\sigma_{1:M}) \leq \tau) \leq \tau,\end{equation}
    where we define
    \[F(v;z,\sigma_{1:M}) = \frac{1+\sum_{m=1}^M\ind{-T(z_{\sigma_m}) \leq v}}{1+M}.\]
    This bound is a key step in verifying Theorem~\ref{thm:perm-test-random}. Prove that~\eqref{eq:permutation-pval-CDF-inequality-sample} holds for any fixed $z\in\R^n$, where the probability is taken with respect to permutations $\sigma_0,\sigma_1,\dots,\sigma_M$ sampled uniformly at random (with replacement) from $\cS_n$.
\end{enumerate}

\part{Conformal Prediction}
\label{part:conformal}

\chapter{Conformal Prediction Under Exchangeability}
\label{chapter:conformal-exchangeability}

This chapter begins Part~\ref{part:conformal} of the book, where we introduce conformal prediction, examine the ideas underlying its construction, and study its theoretical properties---its statistical guarantees, both under exchangeability and under stronger assumptions, and the hardness results that capture its limitations.

The goal of this first chapter is to provide the reader with a theoretical understanding of the conformal prediction method.
We will build on the foundation of exchangeability given in the previous chapter.
This chapter aims to provide both an understanding of how conformal prediction works and why it is able to provide distribution-free guarantees.

\section{Setting: data points, datasets, and scores}
We begin with an exchangeable sequence of data points, $(X_1, Y_1), \ldots,  (X_{n+1}, Y_{n+1})$. As before, we will refer to $X_i\in\cX$ as the feature (or covariate) and $Y_i\in\cY$ as the response for the $i$th data point $(X_i,Y_i)\in\cX\times\cY$. In the setting of a prediction problem, the final response value $Y_{n+1}$ is unobserved---this is the value that we are trying to predict using the training data points $(X_1,Y_1),\dots,(X_n,Y_n)$ and the test feature $X_{n+1}$.

Conformal prediction constructs prediction sets $\cC(X_{n+1}) \subseteq \cY$ that satisfy marginal predictive coverage at level $1-\alpha$, that is, $\P(Y_{n+1}\in\cC(X_{n+1}))\geq 1-\alpha$.
The set will be constructed using a score function $s$---this is a function that maps a data point $(x,y)\in\cX\times\cY$ and a dataset $\cD\in(\cX\times\cY)^k$ (for any number $k$ of data points) to a real value, $s((x,y);\cD)\in\R$. 
The reader should think of the score  $s( (x,y) ; \cD)$ as a measure of a model's error on a single test point $(x,y)$ after training the model on the dataset $\cD$.
As in Chapter~\ref{chapter:introduction}, a motivating example of a score function is the residual score, \index{score function!residual score}
\begin{equation}
    \label{eq:absolute-residual}
    s( (x,y);\cD) = |y - \hf(x ; \cD)|,
\end{equation}
where $\hf(x ; \cD)$ is the prediction of a model trained on $\cD$ when given a feature $x$ as input.
Note that this score is large when the model is badly wrong in its prediction. 
This will be true for all scores used in this book: a high value of the score $s((x,y);\cD)$ indicates that the value $y$ is far from what the model would have predicted given $x$, after training on the data contained in $\cD$. While in this example we have $\cY = \R$, we highlight that the conformal prediction framework also applies to classification (where $\cY$ is a discrete set) and cases where $\cY$ is multidimensional. \index{score function}

Throughout Part~\ref{part:conformal} of the book, we require that the score function is \emph{symmetric}, meaning it is invariant to permutations of the dataset $\cD$:
\begin{definition}[Symmetric score function]\label{def:symmetric_score}
    A score function $s$ is \emph{symmetric} if for any data point $(x,y)\in\cX\times\cY$, any dataset $\cD \in (\cX\times\cY)^k$, and any permutation $\sigma$ on $[k]$, 
    \begin{equation}
        s( (x,y);\cD) = s( (x,y);\cD_{\sigma}).
    \end{equation}
\end{definition} \index{score function!symmetry}
Here, given a permutation $\sigma$, we use the notation $\cD_{\sigma}$ to refer to the dataset whose elements are permuted by $\sigma$, i.e., the $i$th element of $\cD_\sigma$ is given by the $\sigma(i)$th element of $\cD$.
In the case of the residual score in~\eqref{eq:absolute-residual}, symmetry implies that the fitted model $\hf(\cdot; \cD)$ is trained using some learning algorithm that is itself symmetric, i.e., it does not depend on the ordering of the data points in $\cD$.

\section{The full conformal prediction procedure}\label{sec:define_conformal}
\index{full conformal prediction|(}

We next describe the \emph{full conformal prediction} procedure, which generalizes the split conformal procedure from Section~\ref{sec:conformal-preview}.
At a high level, the prediction sets are constructed by inverting the score function $s$ to identify possible values $y\in\cY$ for the response $Y_{n+1}$ that agree (or \emph{conform}) with the trends observed in the available data. 

To make this precise, we need a few definitions. Let $\cD_n = ((X_1,Y_1),\dots,(X_n,Y_n))\in(\cX\times\cY)^n$ be the dataset containing the $n$ training points, and let $\cD_{n+1} = ((X_1,Y_1),\dots,(X_n,Y_n),(X_{n+1},Y_{n+1}))\in(\cX\times\cY)^{n+1}$ be the same with the test point $(X_{n+1},Y_{n+1})$ also included. For each $y\in\cY$, we also define the \emph{augmented dataset} $\cD^y_{n+1} = \left( (X_1,Y_1), \ldots, (X_n,Y_n), (X_{n+1}, y) \right)\in(\cX\times\cY)^{n+1}$, consisting of the $n$ training data points together with an additional point $(X_{n+1},y)$.
This last element of $\cD^y_{n+1}$ is the \emph{hypothesized test point}, where we substitute a hypothesized value $y$ in place of the unknown test response value $Y_{n+1}$. In the special case $y=Y_{n+1}$, this simply reduces to the combined training and test dataset $\cD_{n+1}$---that is, $\cD_{n+1}^{Y_{n+1}} = \cD_{n+1}$.

Next, as shorthand, we use the notation $S_{i}^y$ to refer to
the score for the $i$th data point within the augmented dataset $\cD^y_{n+1}$, using our score function $s$---that is, for the $n$ training points $(X_1,Y_1),\dots,(X_n,Y_n)$, we have scores
\[S_i^y = s( (X_i, Y_i) ; \cD^y_{n+1}), \ i=1,\dots,n,\]
while for the hypothesized test point $(X_{n+1},y)$ we have the score
\[S_{n+1}^y = s((X_{n+1},y);\cD^y_{n+1}).\]
Note that all these scores are computed with a model trained on the augmented dataset $\cD^y_{n+1}$---the hypothesized test point $(X_{n+1},y)$ has been included in the training process.
See Figure~\ref{fig:full-cp-panel} for a visual illustration of all these quantities in the context of the residual score~\eqref{eq:absolute-residual}.

\begin{figure}[t]
    \centering
    \includegraphics[width=0.8\textwidth]{\diagramspath full-conformal-panel-stretched.pdf}
    \caption{\textbf{Illustration of notation for a single hypothesized response $y$.} This figure illustrates the definitions of Section~\ref{sec:define_conformal}. The many small dots represent the training data points, $(X_i, Y_i)$. The larger, labeled dot is the hypothesized test point $(X_{n+1}, y)$. The regression model $\hf(x ; \cD^y_{n+1})$ is shown as a curve. Each score, $S_i^y$, is shown as a dotted line, representing the residual score as defined in~\eqref{eq:absolute-residual}---i.e., the absolute residual of the model $\hf(x ; \cD^y_{n+1})$ on the point $(X_i, Y_i)$ (or $(X_{n+1},y)$, for the case $i=n+1$). The quantile $\hat{q}^y$ is defined as in~\eqref{eq:full-cp-quantile}.}
    \commentAlt{A plot illustrating one step of the calculation of the full conformal prediction set. See long description.}
    \commentLongAlt{A plot illustrating one step of the calculation of the full conformal prediction set. The figure shows a scatterplot of $n$ data points $(X_i,Y_i)$, which comprise the training set, along with one additional data point representing the hypothesized test point, which is illustrated with a larger point and labeled $(X_{n+1},y)$. A fitted regression curve is shown, and is labeled as $\hat{f}(x;\cD^y_{n+1})$. Each of the $n+1$ data points is accompanied by a dashed line indicating its vertical distance to the curve, which is labeled as the score $S^y_i$ (for training points $i=1,\dots,n$) or $S^y_{n+1}$ (for the hypothesized test point $(X_{n+1},y)$). There is a shaded region of constant width, centered around the fitted regression curve, reaching to distance $\hat{q}^y$ from the fitted regression curve.}
    \label{fig:full-cp-panel}
\end{figure}

With this notation in place, we are now ready to define the full conformal prediction set. At a high level, when the conformal score $S^y_{n+1}$ is large compared to $S^y_{1}, \ldots, S^y_{n}$, then the hypothesized response $y$ is inconsistent with the data, and should therefore be excluded from the prediction set $\cC(X_{n+1})$.
To construct $\cC(X_{n+1})$, then, we simply take the set of all $y$ which \emph{are} consistent with the data---that is, those that yield sufficiently small scores:
\begin{equation}
    \label{eq:full-cp-set-construction}
    \cC(X_{n+1}) = \{ y : S^y_{n+1} \leq \hat{q}^y \},
\end{equation}
where
\begin{equation}
    \label{eq:full-cp-quantile}
    \hat{q}^y = \quantile\left(S_1^y, \ldots, S_n^y \; ; \; (1-\alpha)(1+1/n)\right).
\end{equation}
When $\cY$ is discrete and finite, we can iterate through each value of $y$ to construct the set in~\eqref{eq:full-cp-set-construction}.
On the other hand, when $\cY$ is infinite (such as $\cY=\R$), it may seem at first that~\eqref{eq:full-cp-set-construction} is impossible to construct in practice. We will return to the question of computation in Section~\ref{sec:computational-shortcuts}, where we will give efficient algorithms for computing (or approximating) full conformal prediction sets.

The quantity $\hat{q}^y$ is often referred to as the \emph{conformal quantile}---it is the threshold below which $S^y_{n+1}$ is considered small enough to `conform' with the given data.
Collecting all such values $y$ into the set $\cC(X_{n+1})$ results in a guarantee of coverage:
\begin{theorem}[Marginal coverage guarantee of conformal prediction]
    \label{thm:full-conformal}
    Suppose that $(X_1,Y_1),...,(X_{n+1},Y_{n+1})$ are exchangeable and that $s$ is a symmetric score function.
    Then the prediction set $\cC(X_{n+1})$ defined  in~\eqref{eq:full-cp-set-construction} satisfies the marginal coverage guarantee,
    \begin{equation}
        \P\left( Y_{n + 1} \in \cC(X_{n + 1}) \right) \geq 1-\alpha.
    \end{equation}
\end{theorem}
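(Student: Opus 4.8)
The plan is to reduce the coverage statement to a rank condition on an exchangeable family of scores, and then invoke the tools already developed for permutation tests and exchangeable order statistics.

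First I would specialize the construction to the true response by taking $y = Y_{n+1}$. The key observation is that $\cD_{n+1}^{Y_{n+1}} = \cD_{n+1}$, so the augmented scores collapse to $S_i := S_i^{Y_{n+1}} = s\big((X_i,Y_i)\,;\,\cD_{n+1}\big)$ for every $i\in[n+1]$, with the \emph{same} dataset $\cD_{n+1}$ used to compute all $n+1$ of them. Now consider the map that sends the data points $z_1,\dots,z_{n+1}$ (where $z_i=(X_i,Y_i)$) to the vector of scores each point receives under the model trained on the full collection. Symmetry of $s$ in its dataset argument (Definition~\ref{def:symmetric_score}) makes this map \emph{equivariant}: if the data points are permuted by $\sigma$, the resulting score vector is the $\sigma$-permutation of the original score vector, since $s\big((X_{\sigma(i)},Y_{\sigma(i)})\,;\,(\cD_{n+1})_\sigma\big) = s\big((X_{\sigma(i)},Y_{\sigma(i)})\,;\,\cD_{n+1}\big)$. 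Combining this with exchangeability of $(z_1,\dots,z_{n+1})$ shows that $(S_1,\dots,S_{n+1})$ is an exchangeable random vector in $\R^{n+1}$.

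Next I would translate set membership into a statement about ranks. By~\eqref{eq:full-cp-set-construction}, $Y_{n+1}\in\cC(X_{n+1})$ iff $S_{n+1}\le \hat q^{Y_{n+1}}$, and by~\eqref{eq:full-cp-quantile} together with Fact~\ref{fact:conversion-order-stats-quantiles}, $\hat q^{Y_{n+1}}$ is the $k$-th order statistic of the length-$n$ list $(S_1,\dots,S_n)$ with $k=\lceil(1-\alpha)(n+1)\rceil$ (if $\alpha<1/(n+1)$ then $k>n$, the quantile is $+\infty$, $\cC(X_{n+1})=\cY$, and the claim is trivial, so I may assume $k\le n$). Here I need the elementary ``leave-one-out'' monotonicity: removing one entry from a list of length $n+1$ can only raise its $k$-th order statistic, so the $k$-th order statistic of $(S_1,\dots,S_n)$ is at least the $k$-th order statistic $S_{(k)}$ of the full list $(S_1,\dots,S_{n+1})$. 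Consequently $S_{n+1}\le S_{(k)}$ already implies $Y_{n+1}\in\cC(X_{n+1})$.

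Finally I would apply Fact~\ref{fact:exchangeable-properties}\ref{fact:exchangeable-properties_part1} to the exchangeable vector $(S_1,\dots,S_{n+1})\in\R^{n+1}$ with index $i=n+1$ and rank $k$, giving $\P(S_{n+1}\le S_{(k)})\ge k/(n+1)\ge 1-\alpha$, where the last inequality is the ceiling arithmetic $\lceil(1-\alpha)(n+1)\rceil/(n+1)\ge 1-\alpha$; the previous step upgrades this to $\P(Y_{n+1}\in\cC(X_{n+1}))\ge 1-\alpha$. (Equivalently, the last two steps can be packaged via Corollary~\ref{cor:perm_test_pval} applied to $p=\frac1{n+1}\sum_{i=1}^{n+1}\ind{S_i\ge S_{n+1}}$, after checking that $Y_{n+1}\notin\cC(X_{n+1})$ forces $p\le\alpha$.) I expect the only subtle points to be conceptual rather than computational: first, recognizing that symmetry of $s$ is needed precisely so that the score \emph{vector}, not merely its multiset of values, is exchangeable; and second, the off-by-one bookkeeping relating the length-$n$ conformal quantile to the length-$(n+1)$ order statistic — i.e., the leave-one-out monotonicity and the ceiling inequality — together with cleanly disposing of the edge case $\alpha<1/(n+1)$.
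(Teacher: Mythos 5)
Your proof is correct and follows essentially the same route as the paper's: establish exchangeability of the vector $(S_1,\dots,S_{n+1})$ from symmetry of $s$ plus exchangeability of the data, then invoke the order-statistics/quantile fact for exchangeable vectors (Fact~\ref{fact:exchangeable-properties}). The one small streamlining you make — and it is a genuine, if minor, economy — is observing that since you only need a \emph{lower} bound on coverage, the one-sided ``leave-one-out monotonicity'' $v_{(n+1;k)} \le v_{(n;k)}$ suffices, whereas the paper's Replacement Lemma (Lemma~\ref{lem:n+1-to-n-reduction}) proves the full two-sided equivalence $v_{n+1}\le\quantile(v_{1:n+1};t)\Leftrightarrow v_{n+1}\le\quantile(v_{1:n};t(1+1/n))$; the paper wants that equivalence because it is reused verbatim in the upper-bound result (Theorem~\ref{thm:upper-bound}) and elsewhere, but for Theorem~\ref{thm:full-conformal} alone your leaner argument is enough.
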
 \index{coverage!marginal}
We will prove Theorem~\ref{thm:full-conformal} in multiple ways in the subsequent sections of this chapter.
The coverage property above is $\emph{marginal}$, in the sense that it holds on average over the distribution of the entire dataset---both the training set $\cD_n$ (which is used to construct the prediction set) and the test point $(X_{n+1},Y_{n+1})$. See Chapter~\ref{chapter:conditional} for a detailed discussion of this point.

For concreteness, we also give an algorithmic description of full conformal prediction in Algorithm~\ref{alg:full-cp}, along with an illustration of the procedure in Figure~\ref{fig:full-cp}.
\begin{algbox}[Full conformal prediction]
    \label{alg:full-cp}
    \begin{enumerate}
        \item Input training data $(X_1, Y_1), ..., (X_n, Y_n)$, test point $X_{n+1}$, target coverage level $1-\alpha$, conformal score function $s$.
        \item For each possible response value $y \in \cY$,\begin{enumerate}
            \item Compute the score $S_i^y = s((X_i, Y_i); \cD^y_{n+1})$, for all $i \in [n]$.
            \item Compute the score for the hypothesized test point, $S_{n+1}^y = s((X_{n+1}, y);\cD^y_{n+1})$.
            \item Compute the quantile $\hat{q}^y = \quantile\left(S_1^y, \ldots, S_n^y ; (1-\alpha)(1+1/n)\right)$.
        \end{enumerate}
        \item Return the prediction set $\cC(X_{n+1}) = \{ y \in \cY : S_{n+1}^y \leq \hat{q}^y \}$.
    \end{enumerate}
\end{algbox}
Note that, for a small sample size $n$, the quantile level $(1-\alpha)(1+1/n)$ appearing in Algorithm~\ref{alg:full-cp} may in fact be $>1$; here and throughout the book, we will take the convention that, for any $\tau>1$, $\quantile(z;\tau)=+\infty$ for any vector $z$, and similarly $\quantile(P;\tau)=+\infty$ for any distribution $P$. In the case where $(1-\alpha)(1+1/n) > 1$, the conformal prediction algorithm would therefore return the full response space, $\cC(X_{n+1})=\cY$, for any test point $X_{n+1}$.

\begin{figure}[p]
    \centering
    \includegraphics[width=\textwidth]{\diagramspath full-conformal.pdf}
    \caption{\textbf{An illustration of full conformal prediction} with the residual score function. On the left-hand side are four hypothesized response values $y$, i.e., four distinct iterations of the `For' loop in Algorithm~\ref{alg:full-cp}.
    In the center, for each possible value $y$ of the response, we display a smaller version of the plot in Figure~\ref{fig:full-cp-panel}.
    Note that each center figure has a different fitted function $\hf(\cdot; \cD^y_{n+1})$, since  changing the value $y$ has an effect on the regression function.
    For each value of $y$, the width of the light gray shaded band indicates the conformal quantile $\hat{q}^y$, as in Figure~\ref{fig:full-cp-panel}.
    Finally, the right-hand side shows the final prediction set $\cC(X_{n+1})$.
    The set contains all hypothesized response values $y$ whose residuals are no larger than the conformal quantile $\hat{q}^y$---that is, all values of $y$ for which the larger data point, which represents $(X_{n+1},y)$, lies within the gray band.
    }
    \commentAlt{A plot illustrating the entire process of calculating the full conformal prediction set, with multiple panels representing the calculation for multiple values $y$ for the hypothesized test point. See long description.}
    \commentLongAlt{A multi-panel plot illustrating the entire process of calculating the full conformal prediction set, with each panel representing the calculation of the conformal score for a single $y$. On the left, the figure shows an axis labeled as the response space $\cY$, with multiple possible values of $y$ indicated by points along this axis. In the center, there are four panels, each showing the dataset with different values of $y$ used for the hypothesized test point. The four panels therefore all have the same $n$ training points shown, but the test point is different in each of the four panels, and is indicated by a larger point. Each of these four panels shows the fitted regression function, which differs slightly between the four panels, and dashed lines from each data point to the regression function indicate the residual scores. Each of the four panels shows a shaded region to indicate the conformal quantile $\hat{q}^y$. For the first and last of the four panels, the hypothesized test point falls outside this shaded region. For the middle two of the four panels, the hypothesized test point falls inside the shaded region. Finally, on the right, another axis is shown, with a highlighted interval indicating the conformal prediction set $\cC(X_{n+1})$. Among the four considered values $y$ for the hypothesized test point, the first and last are shown as lying outside this prediction set, while the middle two lie inside the prediction set.}
    \label{fig:full-cp}
\end{figure}

\subsection{Choosing the score function}\label{sec:score-preview}\index{score function}

Although marginal coverage will hold for any symmetric score function as stated in Theorem~\ref{thm:full-conformal}, the choice of score function affects the shape and size of the prediction sets; for example, we saw in Section~\ref{sec:intro-scores} that the residual score, the scaled residual score, and the CQR score all resulted in prediction sets with different shapes for the setting of regression with a real-valued response. Moreover, a score function constructed using a more accurate model will naturally result in smaller, and therefore more informative, prediction sets.
Thus, the choice of score function can strongly affect the quality of the prediction sets. In Chapter~\ref{chapter:model-based}, we will return to this point in more detail, examining the choice of the score function from a model-based theoretical perspective.
\index{full conformal prediction|)}

\section{Why does conformal prediction guarantee coverage?}\label{sec:full_conformal_first_proof}
In this section, we will give intuition for the marginal coverage guarantee of full conformal prediction, stated in Theorem~\ref{thm:full-conformal}, as well as a formal proof.

To build an understanding of the idea behind the proof, let us first consider a naive procedure that does \emph{not} guarantee coverage. 
Imagine that we have a model trained on data $\cD_n = ((X_1,Y_1), \ldots, (X_n,Y_n))$, and we want to predict a new response $Y_{n+1}$ based on the covariate $X_{n+1}$ for this test point.
The scores $s((X_1,Y_1);\cD_n), \ldots, s((X_n,Y_n);\cD_n)$ on the training data points might not be directly comparable to the test point's score, $s((X_{n+1},Y_{n+1});\cD_n)$, since our model will likely overfit to the training data---that is, the test point score $s((X_{n+1},Y_{n+1});\cD_n)$ would likely be higher than the others.
Thus, if we were to use the $(1-\alpha)$-quantile of the training scores, it would likely be too low to ensure the desired coverage probability for the test point.

However, if we could somehow include $(X_{n+1},Y_{n+1})$ in the training set, we could avoid this issue---the score for the test point would have the same distribution as the calibration scores.
This is where full conformal prediction comes into play: it trains on \emph{all the data}, including the test point.
This means the test point is treated exactly the same as the training instances, ensuring exchangeability: that is,
the scores
\begin{equation}\label{eqn:define_n+1_scores}S_i = S_i^{Y_{n+1}} = s((X_i,Y_i);\cD_{n+1}),\end{equation}
for training points $i=1,\dots, n$ and test point $i=n+1$, are exchangeable, as we will verify formally below.
Of course, computing these scores cannot be done in practice, since the test point's response value $Y_{n+1}$ is unknown, and so we cannot train our model on the larger dataset $\cD_{n+1}$. Instead, we consider some provisional value of $y \in \cY$ and add the hypothesized test point $(X_{n+1},y)$ into the dataset, repeating this process for \emph{each} possible value $y\in\cY$. This is the core idea of conformal prediction.

With this intuition in place, we are now ready to present our first proof of the marginal coverage property of full conformal prediction. We will give some alternative proofs and perspectives later on in Section~\ref{sec:reinterpret-conformal}.
\begin{proof}[Proof of Theorem~\ref{thm:full-conformal}]  
    \textbf{Step 1: A reformulation of the prediction set.}
    Our first step will derive an equivalent definition of the prediction set $\cC(X_{n+1})$.  To do this, we will need a lemma:
    \begin{lemma}[Replacement lemma] \index{replacement lemma}
        \label{lem:n+1-to-n-reduction}
        Let $v_1, \ldots, v_{n+1}\in\R$.
        Then for any $t\in[0,1]$,
        \[
            v_{n+1} \leq \quantile\left( v_1, \ldots, v_{n+1};t\right) \Longleftrightarrow v_{n+1} \leq \quantile\left( v_1, \ldots, v_n;t (1+1/n) \right).
        \]
    \end{lemma}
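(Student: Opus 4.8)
The plan is to reduce both inequalities appearing in the claimed equivalence to one and the same elementary condition on the integer $c := \#\{i \in [n] : v_i < v_{n+1}\}$, the number of the first $n$ values that lie strictly below $v_{n+1}$. The bridge is a single deterministic identity relating the event $\{v^* \le \quantile(z;\tau)\}$ to the left limit at $v^*$ of the empirical CDF of $z$, which I would establish first as a standalone observation and then apply once to the list $(v_1,\dots,v_{n+1})$ and once to the list $(v_1,\dots,v_n)$.

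\textbf{The key identity.} I claim that for any list $z = (z_1,\dots,z_m) \in \R^m$, any $v^* \in \R$, and any $\tau \in [0,1]$,
\[
v^* \le \quantile(z;\tau) \quad\Longleftrightarrow\quad \tfrac1m\,\#\{i \in [m] : z_i < v^*\} < \tau .
\]
To prove it, recall from Definition~\ref{def:quantile} that $\quantile(z;\tau) = \inf\{v : \widehat F_z(v) \ge \tau\}$. Negating the left side, $v^* > \quantile(z;\tau)$ holds iff some $v < v^*$ satisfies $\widehat F_z(v) \ge \tau$; since $\widehat F_z$ is nondecreasing, this is equivalent to $\widehat F_z(v^{*-}) \ge \tau$, where $\widehat F_z(v^{*-}) = \lim_{v \uparrow v^*} \widehat F_z(v) = \tfrac1m\#\{i : z_i < v^*\}$, the last step because $\widehat F_z(v) = \tfrac1m\sum_i \ind{z_i \le v}$ and the indicator $\ind{z_i \le v}$ tends to $\ind{z_i < v^*}$ as $v \uparrow v^*$. (One could instead derive this from Fact~\ref{fact:conversion-order-stats-quantiles}, rewriting $\quantile(z;\tau)$ as the $\lceil \tau m\rceil$-th order statistic and counting, but the CDF route is shorter and also covers $\tau \in \{0,1\}$ uniformly.)

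\textbf{Applying it.} For the left-hand side of the lemma, apply the identity with $z = (v_1,\dots,v_{n+1})$, $m = n+1$, $v^* = v_{n+1}$, and $\tau = t$; since the $(n+1)$-st entry never satisfies $v_{n+1} < v_{n+1}$, the count is exactly $c$, so the left side is equivalent to $\tfrac{c}{n+1} < t$. For the right-hand side I split into two cases. If $t(1+1/n) \le 1$, apply the identity with $z = (v_1,\dots,v_n)$, $m = n$, $v^* = v_{n+1}$, $\tau = t(1+1/n) \in [0,1]$, obtaining the equivalent condition $\tfrac{c}{n} < t(1+1/n)$, i.e., $c < t(n+1)$, i.e., $\tfrac{c}{n+1} < t$---exactly the condition obtained from the left side. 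If instead $t(1+1/n) > 1$, then $\quantile(v_1,\dots,v_n; t(1+1/n)) = +\infty$ by the convention stated above that $\quantile(\cdot;\tau) = +\infty$ for $\tau > 1$, so the right side holds trivially; and $t(1+1/n) > 1$ forces $t > \tfrac{n}{n+1} \ge \tfrac{c}{n+1}$, so the left side holds as well. In every case the two sides are equivalent, which proves the lemma.

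\textbf{Main obstacle.} There is no deep difficulty here---the statement is essentially bookkeeping---but two points require care. First, keeping the strict-versus-nonstrict inequalities straight: the empirical CDF is defined with ``$\le$'', so its left limit counts the \emph{strictly} smaller entries, and it is precisely this strictness that makes the final step $\tfrac{c}{n} < t(1+1/n) \Leftrightarrow \tfrac{c}{n+1} < t$ a genuine equivalence rather than a one-sided implication. Second, the boundary regime $t(1+1/n) > 1$ is not covered by Fact~\ref{fact:conversion-order-stats-quantiles} and must be handled separately via the $+\infty$ convention; forgetting this case would leave the lemma unproven exactly in the range of $t$ that later yields the trivial prediction set $\cC(X_{n+1}) = \cY$.
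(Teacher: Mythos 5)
Your proof is correct, and it takes a genuinely different route from the paper's. The paper handles the trivial boundary case $t > n/(n+1)$ separately, then uses Fact~\ref{fact:conversion-order-stats-quantiles} to write both quantiles as order statistics at the common index $k = \lceil t(n+1)\rceil$, and compares $v_{(n+1;k)}$ with $v_{(n;k)}$ directly: the forward implication follows because inserting a new value can only pull the $k$th order statistic down, and the reverse because once $v_{n+1}$ lies strictly above $v_{(n+1;k)}$, removing $v_{n+1}$ from the list leaves the first $k$ order statistics unchanged. You instead isolate a single deterministic identity, $v^* \le \quantile(z;\tau) \Leftrightarrow \widehat F_z(v^{*-}) < \tau$, and apply it twice so that both sides of the lemma collapse to the same condition $c/(n+1) < t$ on the count $c$ of entries strictly below $v_{n+1}$. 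This is a different parameterization of the same bookkeeping---the paper tracks the position of $v_{n+1}$ among the order statistics, while you track its rank---but your version never compares the two sides to each other, only to a shared quantity, which makes the equivalence manifest without a forward/reverse case split. As a small bonus, your identity handles $t=0$ cleanly, whereas the paper's claim that $k = \lceil t(n+1)\rceil \in [n]$ quietly fails there (the conclusion is still trivially true, since both quantiles are $-\infty$, but the argument as written doesn't cover it). Both proofs treat the regime $t(1+1/n) > 1$ via the same $+\infty$ convention. The paper's approach is arguably more visual and avoids reasoning about left-limits of the empirical CDF.
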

    Since by definition we have
    \begin{equation}
        \hat{q}^y = \quantile\left(S_1^y,\dots,S_n^y;(1-\alpha)(1+1/n)\right),
    \end{equation}
    for each $y\in\cY$, this lemma implies that
    \begin{equation}\label{eqn:standard-proof-coverage-equiv}
        y \in \cC(X_{n+1}) \Longleftrightarrow S_{n+1}^y \leq \hat{q}^y \Longleftrightarrow S_{n+1}^y\leq \quantile\left(S_1^y,\dots,S_{n+1}^y;1-\alpha\right).
    \end{equation}
In other words, the prediction set $\cC(X_{n+1}) $ can be constructed by comparing the test point score, $S_{n+1}^y$, against the $(1-\alpha)$-quantile of the \emph{augmented} list of scores, $S_1^y,\dots,S_{n+1}^y$ (i.e., this list includes the test point score $S_{n+1}^y$ itself).

This therefore implies that the coverage event can equivalently be written as
\begin{equation}\label{eqn:standard-proof-coverage-equiv_Yn+1}Y_{n+1}\in\cC(X_{n+1})\Longleftrightarrow S_{n+1}\leq \quantile\left(S_1,\dots,S_{n+1};1-\alpha\right),\end{equation}
where the scores $S_1,\dots,S_{n+1}$ are defined as in~\eqref{eqn:define_n+1_scores} above.
From this point on, then, to establish the coverage guarantee, we only need to show that the event $S_{n+1}\leq \quantile\left(S_1,\dots,S_{n+1};1-\alpha\right)$
holds with $\geq 1-\alpha$ probability.

This characterization leads right away to an important insight: the coverage event depends only on these scores $S_1,\dots,S_{n+1}$, which are values obtained when the model is fitted with $y=Y_{n+1}$, i.e., on the dataset $\cD_{n+1} = \big((X_1,Y_1),\dots,(X_n,Y_n),(X_{n+1},Y_{n+1})\big)$. In other words, while running full conformal prediction requires us to train the model using values $y\neq Y_{n+1}$ (since we must train using each possible value $y\in\cY$), for the theoretical coverage guarantee these resulting scores are irrelevant.\bigskip

    \textbf{Step 2: Exchangeability of scores when $y = Y_{n+1}$.}
    Next, we show that the scores $S_1,\dots,S_{n+1}$ are exchangeable, i.e., that they satisfy Definition~\ref{def:exchangeability}.
    Towards that end, let $\sigma$ be a permutation on $\{1,\dots,n+1\}$.
    Expanding our shorthand, recall that
    \begin{equation}
        \label{eq:expanded-score}
        S_{i} = s\left((X_i, Y_i); \cD_{n+1} \right).
    \end{equation}
    First, note that by the symmetry of the score function (Definition~\ref{def:symmetric_score}), it is deterministically true that
    $S_i = s\left((X_i,Y_i);\cD_{n+1}\right)=s\left((X_i,Y_i);(\cD_{n+1})_\sigma\right)$ for each $i$. In particular, this also holds for the index $\sigma(i)$, i.e.,
    \[S_{\sigma(i)} = s\left((X_{\sigma(i)},Y_{\sigma(i)});(\cD_{n+1})_\sigma\right).\]
    Therefore, to prove that
    \begin{equation}\label{eqn:scores_exchangeable_n+1}\left(S_1,\dots,S_{n+1}\right) \eqd \left(S_{\sigma(1)},\dots,S_{\sigma(n+1)}\right)\end{equation}
    it is equivalent to verify that
    \begin{equation}\label{eqn:standard-proof-eqd-scores}\Big(s\left((X_i, Y_i);\cD_{n+1}\right)\Big)_{i\in[n+1]}\eqd \Big(s\left((X_{\sigma(i)}, Y_{\sigma(i)});(\cD_{n+1})_{\sigma} \right)\Big)_{i\in[n+1]}.\end{equation}
But this last claim follows simply from exchangeability of the data: we have
\[\cD_{n+1}\eqd (\cD_{n+1})_\sigma,\]
and the two vectors of scores in~\eqref{eqn:standard-proof-eqd-scores} are derived by applying the \emph{same} function to $\cD_{n+1}$ (on the left-hand side) and to $(\cD_{n+1})_\sigma$ (on the right-hand side).
 \bigskip
 
    \textbf{Step 3: Completing the proof.}
By Step 2, we know that the scores $S_1,\dots,S_{n+1}$ are exchangeable. Applying Fact~\ref{fact:exchangeable-properties}\ref{fact:exchangeable-properties_part2}, this immediately implies
\[\P\left(S_{n+1}\leq \quantile(S_1,\dots,S_{n+1};\tau)\right)\geq \tau\]
for any $\tau\in[0,1]$. Choosing $\tau = 1-\alpha$, by the equivalent characterization of coverage given in~\eqref{eqn:standard-proof-coverage-equiv_Yn+1}, this completes the proof.
\end{proof}

\begin{proof}[Proof of Lemma~\ref{lem:n+1-to-n-reduction}]
First, if $t > \frac{n}{n+1}$, the result holds trivially since $\quantile\left( v_1, \ldots, v_{n+1};t  \right) = \max_{i\in[n+1]} v_i \geq v_{n+1}$, while $\quantile\left( v_1, \ldots, v_n;t (1+1/n) \right) = +\infty$. Furthermore if $t=0$ then again the result is trivial since both quantiles are equal to $-\infty$. From this point on, then, we will assume $0<t\leq \frac{n}{n+1}$ to avoid these trivial cases.

Let $v_{(n;1)}\leq \dots \leq v_{(n;n)}$ be the order statistics of $v_1,\dots,v_n$, and let $v_{(n+1;1)}\leq \dots \leq v_{(n+1;n+1)}$ be the order statistics of $v_1,\dots,v_{n+1}.$
Let $k = \lceil t(n+1)\rceil\in[n]$. Examining the definition of the quantile (see Section~\ref{sec:permutations_key_ingredients}), we see that 
\[\quantile\left( v_1, \ldots, v_{n+1};t  \right) = v_{(n+1;k)}\]
and 
\[\quantile\left( v_1, \ldots, v_n;t (1+1/n) \right) = v_{(n;k)},\]
by definition. So, we need to verify that 
$
v_{n+1}\leq v_{(n+1;k)}$ holds if and only if $v_{n+1}\leq v_{(n;k)}$.

First, by definition of the order statistics, we must have $v_{(n+1;k)} \leq v_{(n;k)}$ (i.e., the $k$th smallest entry in the list cannot increase if we add a new value to the list). Therefore,
\[v_{n+1}\leq v_{(n+1;k)} \Longrightarrow v_{n+1}\leq v_{(n;k)}.\]
To verify the converse, suppose that
$v_{n+1}> v_{(n+1;k)}$.
In this case, we must have $v_{(n+1;k)}=v_{(n;k)}$, again by definition of the order statistics (i.e., the $k$th smallest entry does not decrease if we add a large value $v_{n+1}$ to the list). Therefore we have
\[v_{n+1}> v_{(n+1;k)} \Longrightarrow v_{n+1}> v_{(n;k)}.\]
    \end{proof}
    
\section{Split conformal prediction as a special case}
\label{sec:split-conformal}
\index{split conformal prediction|(}

Split conformal prediction, which we previewed earlier in Chapter~\ref{chapter:introduction}, is a variant of conformal prediction that uses data splitting to avoid the need to retrain the conformal score function $s$ for each possible response value $y\in\cY$.
It  differs from full conformal prediction in two important ways, as summarized in Table~\ref{tab:conformal}.
\begin{table}[ht]
\centering
\begin{tabular}{c|c}
\textbf{Full Conformal} & \textbf{Split Conformal} \\ 
\hline
All data used for training and calibration & Disjoint datasets for training and calibration  \\ 
Retrains the model for each value $y\in\cY$ & No model retraining (requires only one model fit) \\ 
Requires a symmetric score function $s$ & Works for any (pretrained) score function $s$\\ 
\end{tabular}
\caption{\textbf{Comparison of full and split conformal methods.}}
\label{tab:conformal}
\end{table}

Despite the large operational differences in the algorithms, split conformal prediction is a special case of the full conformal method presented in Algorithm~\ref{alg:full-cp}.
We formally describe the specialization now.
Let us consider a \emph{pretraining set} $\cD_{\rm pre}$, which is disjoint from $\cD_n=((X_1,Y_1),\dots,(X_n,Y_n))$.
$\cD_{\rm pre}$ is used exclusively for model training, while $\cD_n$ is used exclusively for calibrating a threshold for scores (i.e., the conformal quantile $\hat{q}$) to define the prediction set $\cC(X_{n+1})$---consequently, in the context of split conformal, we will refer to $\cD_n$ as a \emph{calibration set}, rather than a training set. For example, in the context of a residual score~\eqref{eq:absolute-residual}, we can imagine a scenario where the predictive model $\hf$ was pretrained on earlier data, $\cD_{\rm pre}$, and we now have a calibration (or holdout) set $\cD_n$ consisting of $n$ \emph{new} data points that we may use to quantify our uncertainty around $\hf$. (Note that this differs from our earlier notation in Chapter~\ref{chapter:introduction} where we partitioned the $n$ total available data points into a pretraining set of size $n/2$ and a calibration set of size $n/2$.)

More formally, continuing with the residual score as a concrete example, the residual score in the context of split conformal prediction is given by
\begin{equation}\label{eq:pretrained_residual_score}
    s((x,y); \cD) = \left| y - \hf(x; \cD_{\rm pre}) \right|,
\end{equation}
Since this does not depend on $\cD$, we can simply write
\[s((x,y); \cD) = s(x,y)\]
for any data point $(x,y)$, suppressing the unused argument $\cD$ in the notation. (Here we are treating the pretraining set $\cD_{\rm pre}$, and consequently also the prefitted regression function $\hf(\cdot; \cD_{\rm pre})$, as fixed.) 
More generally, we can apply \emph{any} method to the pretraining set to produce a pretrained score function---that is, we can use $\cD_{\rm pre}$ arbitrarily to construct a score function $s:\cX\times\cY\to\R$. In particular, since $s((x,y);\cD) = s(x,y)$ does not depend on $\cD$, trivially this is a symmetric score function.

\begin{definition}[Split conformal prediction]
    Henceforth, \emph{split conformal prediction} refers to the case where the score function $s((x,y); \cD)$ does not depend on $\cD$. In this case, we write $s(x,y)$ as shorthand for $s( (x,y) ; \cD )$.
\end{definition}

With all this in mind, we can then construct the \emph{split conformal prediction} set as
\begin{equation}
    \label{eq:split-conformal-sets}
    \cC(X_{n+1}) = \{ y : s(X_{n+1},y) \leq \hat{q} \},
\end{equation}
where
\begin{equation}
    \label{eq:split-conformal-quantile}
    \hat{q} = \quantile\left( S_1, \ldots, S_n; \; (1-\alpha)(1+1/n)  \right),
\end{equation}
and where we use the notation $S_i$ as shorthand for $s(X_i, Y_i)$. Note that $\hat{q}$ is equivalent to the conformal quantile $\hat{q}^y$ defined for full conformal in~\eqref{eq:full-cp-quantile} (since the score function does not depend on the dataset, there is no dependence on $y$---i.e., $S_i = S_i^y$ for all $y$ and all $i\in[n]$, and consequently, $\hat{q} = \hat{q}^y$ for all $y$). 

An explicit algorithm for split conformal prediction is given in Algorithm~\ref{alg:split-cp}.
Note that it is simpler and more computationally efficient than Algorithm~\ref{alg:full-cp}, and therefore used more often in practice.
\begin{algbox}[Split conformal prediction]
    \label{alg:split-cp}
    \begin{enumerate}
        \item Input pretraining dataset $\cD_{\rm pre}$, 
    calibration data $(X_1, Y_1), ..., (X_n, Y_n)$, test point $X_{n+1}$, target coverage level $1-\alpha$.
    \item Using the pretraining dataset $\cD_{\rm pre}$, construct a score function $s:\cX\times\cY\to\R$.
    \item Compute the scores on the calibration set, $S_i = s(X_i, Y_i)$ for $i\in[n]$.
    \item Compute the conformal quantile $\hat{q} = \quantile\left(S_1, \ldots, S_n ; (1-\alpha)(1+1/n)\right)$.
    \item Return prediction set $\cC(X_{n+1}) = \{ y \in \cY : s(X_{n+1},y) \leq \hat{q} \}$.
    \end{enumerate}
\end{algbox}
Since split conformal prediction (Algorithm~\ref{alg:split-cp}) is a special case of full conformal prediction (Algorithm~\ref{alg:full-cp}), it inherits the marginal coverage guarantee of Theorem~\ref{thm:full-conformal}. 

\subsection{Exchangeability and split conformal: a closer look}
There is a subtle difference in the interpretation of the conformal score in split versus full conformal prediction.
In full conformal prediction, the scores $S_1^y,\dots,S_{n+1}^y$ are computed on the augmented dataset, $\cD^y_{n+1}$, using a score function trained on this same augmented dataset. 
Thus, in full conformal, verifying the exchangeability of $(S_1,\dots,S_{n+1}) = (S_1^{Y_{n+1}},\dots,S_{n+1}^{Y_{n+1}})$ only relies on the exchangeability of $\cD_{n+1} = D^{Y_{n+1}}_{n+1}$.
In contrast, for split conformal prediction,
there are two datasets at play: the calibration set $\cD_n$ and the pretraining set $\cD_{\rm pre}$.
The vector of scores $(S_1,\dots,S_{n+1})$ depends on \emph{both} datasets. 
So, to formally establish exchangeability of the scores, we need to consider the additional randomness coming from $\cD_{\rm pre}$.

The most straightforward formal argument to justify split conformal prediction is to assume that $\cD_{\rm pre}$ is independent of the calibration data and test point, i.e.,
\[((X_1,Y_1),\dots,(X_{n+1},Y_{n+1})) \indep \cD_{\rm pre}\textnormal{ (and $(X_1,Y_1),\dots,(X_{n+1},Y_{n+1})$ are exchangeable)}.\]
In that case, by conditioning on $\cD_{\rm pre}$, we can think of the score function $s:\cX\times\cY\to\R$ (which was constructed based on $\cD_{\rm pre}$) as a \emph{fixed} function---and therefore, exchangeability of $(X_1,Y_1),\dots,(X_{n+1},Y_{n+1})$ directly implies exchangeability of the scores $s(X_1,Y_1),\dots,s(X_{n+1},Y_{n+1})$. 

More generally, however, to achieve marginal coverage it is sufficient to assume a strictly weaker condition:
\begin{equation}
    \label{eq:split-training-conditional-exchangeability}
    ((X_1,Y_1),\dots,(X_{n+1},Y_{n+1}))\mid \cD_{\rm pre}\textnormal{ is exchangeable,}
\end{equation}
i.e., $((X_1,Y_1),\dots,(X_{n+1},Y_{n+1}))$ has an exchangeable conditional distribution (when conditioning on $\cD_{\rm pre}$).
For instance, this assumption holds whenever 
the pretraining, calibration, and test data (i.e., the entire dataset ($\cD_{\rm pre}, \cD_n , (X_{n+1},Y_{n+1}))$, containing $n_{\rm pre}+n+1$ data points) satisfies exchangeability. This is due to the following fact:
\begin{fact}[Conditional exchangeability of a subvector]\label{fact:conditional-exch-1}
    Fix any $1 \leq k < m$. If $(Z_1,\dots,Z_m)$ is exchangeable, then $(Z_{k+1},\dots,Z_m)$ is conditionally exchangeable given $(Z_1,\dots,Z_k)$---i.e., it holds almost surely that the conditional distribution
    \[(Z_{k+1},\dots,Z_m)\mid (Z_1,\dots,Z_k)\]
    is an exchangeable distribution.
\end{fact}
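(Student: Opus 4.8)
The plan is to follow the same template as the proof of Lemma~\ref{lem:conditional_exchangeability}: reduce the almost-sure conditional statement to a family of equalities between ordinary (unconditional) probabilities, and then verify each of them directly from exchangeability of the full vector $(Z_1,\dots,Z_m)$.

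First I would fix a permutation $\tau$ of the index set $\{k+1,\dots,m\}$ --- equivalently, a permutation on $\{1,\dots,m-k\}$ acting on the coordinates of $(Z_{k+1},\dots,Z_m)$ --- and reduce the claim to showing that, for all measurable $A\subseteq\cZ^{m-k}$ and $B\subseteq\cZ^k$,
\[
\P\big((Z_1,\dots,Z_k)\in B,\ (Z_{k+1},\dots,Z_m)\in A\big)
= \P\big((Z_1,\dots,Z_k)\in B,\ (Z_{\tau(k+1)},\dots,Z_{\tau(m)})\in A\big).
\]
Exactly as in Lemma~\ref{lem:conditional_exchangeability}, this family of identities suffices to conclude that the regular conditional distributions of $(Z_{k+1},\dots,Z_m)$ and of its $\tau$-permutation, given $(Z_1,\dots,Z_k)$, agree almost surely; here I invoke the standing assumptions that $\cZ$ (hence $\cZ^{m-k}$) has a countably-generated $\sigma$-algebra and that regular conditional probabilities exist. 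Since there are only finitely many permutations $\tau$, intersecting the corresponding almost-sure events gives a single almost-sure event on which the conditional distribution of $(Z_{k+1},\dots,Z_m)$ given $(Z_1,\dots,Z_k)$ is invariant under every permutation, i.e.\ is exchangeable, which is the assertion of the fact.

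To verify the displayed identity, extend $\tau$ to a permutation $\sigma\in\cS_m$ by setting $\sigma(i)=i$ for $i\in\{1,\dots,k\}$ and $\sigma(i)=\tau(i)$ for $i\in\{k+1,\dots,m\}$. Exchangeability of $(Z_1,\dots,Z_m)$ gives $(Z_1,\dots,Z_m)\eqd(Z_{\sigma(1)},\dots,Z_{\sigma(m)})$; applying this equality in distribution to the event $\{(Z_{\sigma(1)},\dots,Z_{\sigma(k)})\in B\}\cap\{(Z_{\sigma(k+1)},\dots,Z_{\sigma(m)})\in A\}$ and using that $(Z_{\sigma(1)},\dots,Z_{\sigma(k)})=(Z_1,\dots,Z_k)$ while $(Z_{\sigma(k+1)},\dots,Z_{\sigma(m)})=(Z_{\tau(k+1)},\dots,Z_{\tau(m)})$ yields the identity immediately.

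The only genuine obstacle is the measure-theoretic bookkeeping in the reduction step --- passing from equality of joint laws to almost-sure equality of regular conditional distributions --- but this is precisely the device already used and justified in the proof of Lemma~\ref{lem:conditional_exchangeability}, so it can be imported essentially verbatim (with $f$ there replaced by the projection onto the first $k$ coordinates, which need not be symmetric here since we only require exchangeability among the remaining $m-k$ coordinates). Everything else is a one-line consequence of the exchangeability of the full vector together with the observation that $\sigma$ fixes the first $k$ indices.
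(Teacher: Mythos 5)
The paper states Fact~\ref{fact:conditional-exch-1} without proof, so there is no ``paper's own proof'' to compare against; it is presented as a basic fact. Your argument is correct and is the natural one: you adapt the template from the proof of Lemma~\ref{lem:conditional_exchangeability}, replacing the conditioning on a symmetric function $f$ by conditioning on the projection onto the first $k$ coordinates, and observing that what matters is not symmetry of the conditioning map under \emph{all} permutations but only its invariance under permutations $\sigma\in\cS_m$ that fix $\{1,\dots,k\}$ pointwise --- which is exactly the class needed to permute the tail block. The extension of $\tau$ to $\sigma$, the application of exchangeability to the event $B\times A$, and the finite intersection of almost-sure events over the $(m-k)!$ choices of $\tau$ are all exactly right, as is the appeal to the paper's standing measure-theoretic conventions (countably-generated $\sigma$-algebra, existence of regular conditional probabilities) for passing from the family of joint-law identities to the almost-sure statement about conditional distributions.
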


\subsection{Statistical versus computational efficiency for split and full conformal}

In general, split conformal is far more computationally efficient than full conformal---for example, for the residual score~\eqref{eq:absolute-residual}, split conformal only requires training one model $\hf(\cdot;\cD_{\rm pre})$  (i.e., train on $\cD_{\rm pre}$), but full conformal requires training $\hf(\cdot;\cD^y_{n+1})$ for every possible value $y\in\cY$. On the other hand, as we will now explain, full conformal is more  efficient in a statistical sense. 

In practical implementations of these methods, it is common to use $n$ to denote the \emph{total} sample size of available data---that is, for split conformal, the pretraining set and the calibration set are taken to be disjoint subsets of the available dataset, and thus have a \emph{combined} size of $n$.
This means that the greatly improved computational efficiency of Algorithm~\ref{alg:split-cp} (split conformal) comes with a statistical cost: the pretraining set and the calibration set each have sample size smaller than $n$ (say, $n/2$ data points each---as in our earlier notation, in Chapter~\ref{chapter:introduction}), potentially leading to wider prediction intervals. In contrast, in Algorithm~\ref{alg:full-cp}, full conformal prediction uses all $n$ data points both for training (i.e., fitting a score function) and for calibration.

In this book, for consistency of notation across both split and full conformal prediction, we will generally continue to use $n$ to denote the sample size of the calibration set in the case of split conformal, as in Algorithm~\ref{alg:split-cp}. This notation reveals the way in which split conformal prediction can be viewed as a special case of full conformal prediction, and enables a more unified presentation of the theoretical results across these different variants of the method.
\index{split conformal prediction|)}

\section{Reinterpretations of conformal prediction}\label{sec:reinterpret-conformal}

Next, we give some different interpretations of conformal prediction, and different proofs of its marginal coverage guarantee (Theorem~\ref{thm:full-conformal}), to give intuition about the procedure and connect it to standard ideas in statistics, such as permutation tests. 

\subsection{Conformal prediction as a permutation test}\label{sec:conformal_as_perm} \index{permutation test}

In this section, we formulate conformal prediction as a permutation test. In particular, it can be viewed as a test of whether $(X_{n+1},y)$ is an outlier relative to the other data points, $(X_1,Y_1),\dots,(X_n,Y_n)$, as in the last example from Section~\ref{sec:permtest-examples}.
We will do this by defining the \emph{conformal p-value}, which lets us build an explicit connection between constructing prediction sets and testing hypotheses.
\begin{definition}[The conformal p-value]\label{def:conformal-pvalue}
    Given training data $(X_1,Y_1),\dots,(X_n,Y_n)$, a test feature $X_{n+1}$, and a score function $s$, the conformal p-value is defined as
    \[p^y = \frac{1 + \sum_{i=1}^n \ind{S^y_i\geq S^y_{n+1}}}{n+1}\]
    for each $y\in\cY$,
    where as before,
    \[S^y_i = s((X_i,Y_i);\cD^y_{n+1}),\ i\in[n], \quad S^y_{n+1} = s((X_{n+1},y);\cD^y_{n+1}).\]
\end{definition} \index{conformal p-value}
This p-value is asking whether the hypothesized test point $(X_{n+1},y)$ appears to follow the same distribution as the training data $(X_1,Y_1),\dots,(X_n,Y_n)$. If not, its score $s((X_{n+1},y);\cD^y_{n+1})$ might be substantially larger than the other scores, and consequently its p-value $p^y$ will likely be small. Informally, we can interpret $p^y$ as a p-value testing the hypothesis of exchangeability of the data points $(X_1,Y_1),\dots,(X_n,Y_n),(X_{n+1},y)$. 
We can then repeat this reasoning for every possible value $y \in \cY$, and collect all the plausible values (i.e., values $y$ for which $p^y$ is not too small) into a prediction set. In fact, conformal prediction is exactly equivalent to this procedure:
\begin{proposition}[Full conformal prediction via the conformal p-value]\label{prop:conformal-via-pvalues}
The full conformal prediction set $\cC(X_{n+1})$ defined in~\eqref{eq:full-cp-set-construction} satisfies
\[\cC(X_{n+1}) = \left\{y\in\cY: p^y > \alpha\right\},\]
where $p^y$ is the conformal p-value (Definition~\ref{def:conformal-pvalue}).
\end{proposition}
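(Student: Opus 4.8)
The proposition is a set equality, so it suffices to fix $y\in\cY$ and prove that $y\in\cC(X_{n+1})$ if and only if $p^y>\alpha$. The whole argument is a translation of both conditions into a statement about the rank of the test score $S^y_{n+1}$ within the augmented list $S^y_1,\dots,S^y_{n+1}$, followed by a floor/ceiling computation. The starting point is the reformulation~\eqref{eqn:standard-proof-coverage-equiv}, already established in the proof of Theorem~\ref{thm:full-conformal} via the Replacement Lemma: $y\in\cC(X_{n+1})$ if and only if $S^y_{n+1}\le \quantile(S^y_1,\dots,S^y_{n+1};1-\alpha)$.

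Next I would rewrite this quantile as an order statistic. By Fact~\ref{fact:conversion-order-stats-quantiles} (applicable since $1-\alpha\in(0,1]$), setting $k=\lceil(1-\alpha)(n+1)\rceil$ gives $\quantile(S^y_1,\dots,S^y_{n+1};1-\alpha)=S^y_{(k)}$, the $k$th order statistic of the $(n+1)$-element list; note $k\in[n+1]$ because $\alpha\in(0,1)$. The key elementary fact, read off directly from Definition~\ref{def:order-statistics}, is that for any reals $v_1,\dots,v_{n+1}$ one has $v_{n+1}\le v_{(k)}$ if and only if $\#\{i\in[n+1]:v_i\ge v_{n+1}\}\ge n+2-k$ (equivalently, at most $k-1$ of the entries lie strictly below $v_{n+1}$). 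Applying this with $v_i=S^y_i$, and noting that the index $i=n+1$ always contributes to the count, $\#\{i\in[n+1]:S^y_i\ge S^y_{n+1}\}=1+\sum_{i=1}^n\ind{S^y_i\ge S^y_{n+1}}=(n+1)p^y$, so the coverage event becomes $(n+1)p^y\ge n+2-k$.

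It remains to simplify the threshold and invoke integrality. Using $\lceil m-x\rceil = m-\lfloor x\rfloor$ for integer $m$, we get $k=(n+1)-\lfloor\alpha(n+1)\rfloor$, hence $n+2-k = 1+\lfloor\alpha(n+1)\rfloor$, so the event is $(n+1)p^y\ge 1+\lfloor\alpha(n+1)\rfloor$. Finally, $(n+1)p^y$ is an integer in $\{1,\dots,n+1\}$, and for any real $x$ and integer $j$ one has $j\ge \lfloor x\rfloor +1$ if and only if $j>x$; taking $j=(n+1)p^y$ and $x=\alpha(n+1)$ shows that $(n+1)p^y\ge 1+\lfloor\alpha(n+1)\rfloor$ is equivalent to $(n+1)p^y>\alpha(n+1)$, i.e. $p^y>\alpha$. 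This closes the chain of equivalences.

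I do not expect a genuine obstacle: every step is a standard conversion among the quantile, order-statistic, and rank-count descriptions of one and the same event. The only place requiring care is the interplay of the ceiling identity with the integrality of $(n+1)p^y$---this is exactly what produces the \emph{strict} inequality $p^y>\alpha$ rather than $p^y\ge\alpha$, and it is also what makes the degenerate case $(1-\alpha)(1+1/n)>1$ (where $\cC(X_{n+1})=\cY$ by the stated convention, while simultaneously $p^y\ge\frac{1}{n+1}>\alpha$) fall out automatically, with $k=n+1$ and $n+2-k=1$, requiring no separate handling.
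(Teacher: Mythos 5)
Your proof is correct. But it takes a detour the paper avoids. You start from the Replacement Lemma reformulation $y\in\cC(X_{n+1})\Leftrightarrow S^y_{n+1}\le\quantile(S^y_1,\dots,S^y_{n+1};1-\alpha)$, then convert that $(n+1)$-element quantile to an order statistic via Fact~\ref{fact:conversion-order-stats-quantiles}, then pass to a rank count, and finally close the loop with a ceiling/floor identity plus the observation that $(n+1)p^y$ is an integer. The paper instead works directly with the definition, keeping the quantile over the \emph{un}augmented list $S^y_1,\dots,S^y_n$ at level $(1-\alpha)(1+1/n)$, and uses the single equivalence
\[
\quantile(S^y_1,\dots,S^y_n;\tau)<t \ \Longleftrightarrow\ \sum_{i=1}^n\ind{S^y_i<t}\ \geq\ n\tau,
\]
applied at $t=S^y_{n+1}$. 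Substituting $\tau=(1-\alpha)(1+1/n)$ gives $y\notin\cC(X_{n+1})\Leftrightarrow\sum_{i=1}^n\ind{S^y_i<S^y_{n+1}}\ge(1-\alpha)(n+1)$, and then $p^y=1-\tfrac{1}{n+1}\sum_{i=1}^n\ind{S^y_i<S^y_{n+1}}$ turns this directly into $p^y\le\alpha$ with no ceiling, no floor, no integrality argument, and no appeal to the Replacement Lemma. Your route is a valid alternative and does make explicit where the strict inequality $p^y>\alpha$ comes from (integrality of $(n+1)p^y$); the paper's route is shorter because it never leaves the real-number inequality world. Both handle the degenerate small-$n$ case automatically, as you observe.
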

\begin{proof}[Proof of Proposition~\ref{prop:conformal-via-pvalues}]
   First, by definition of the full conformal prediction set,
    \[y\not\in\cC(X_{n+1}) \Longleftrightarrow S^y_{n+1} > \quantile\left(S^y_1,\dots,S^y_n;(1-\alpha)(1+1/n)\right).\]
    Next, by definition of the quantile of a finite list, for any $\tau\in[0,1]$ and any $t\in\R$ it holds that
    \[\quantile\left(S^y_1,\dots,S^y_n;\tau\right)< t  \Longleftrightarrow \sum_{i=1}^n \ind{S^y_i < t} \geq n\tau.\]
    Choosing $\tau = (1-\alpha)(1+1/n)$, then, we have
    \[y\not\in\cC(X_{n+1}) \Longleftrightarrow 
    \sum_{i=1}^n \ind{S^y_i < S^y_{n+1}} \geq (1-\alpha)(n+1).\]
    Finally, we return to the p-value: we calculate
    \[p^y = \frac{1 + \sum_{i=1}^n \ind{S^y_i\geq S^y_{n+1}}}{n+1}  = 1 - \frac{\sum_{i=1}^n \ind{S^y_i<S^y_{n+1}}}{n+1},\]
    i.e., $p^y\leq \alpha$ if and only if $y\not\in\cC(X_{n+1})$, as desired.
\end{proof}
So far, we have only established a deterministic result: for any dataset, the full conformal set  can equivalently be constructed via the conformal p-value. Now we are ready to turn to the question of coverage. We will see that $p^y$ can be reinterpreted as the p-value for a permutation test, and then validity of permutation tests will imply the marginal coverage property of full conformal prediction.

\begin{proof}[Proof of Theorem~\ref{thm:full-conformal} via permutation tests]
For convenience, throughout this proof we will write $Z_i = (X_i,Y_i)$ for the $i$th data point, and will write $\cZ = \cX\times\cY$.

    First we recall the permutation test p-value, developed in Section~\ref{sec:perm_test}: given any test function $T:\cZ^{n+1}\to\R$, we define
    \[p_{\rm perm} = \frac{\sum_{\sigma\in\cS_{n+1}}\ind{T(Z_{\sigma(1)},\dots,Z_{\sigma(n+1)})\geq T(Z_1,\dots,Z_{n+1})}}{(n+1)!},\]
    where, as before, $\cS_{n+1}$ denotes the set of permutations on $[n+1]=\{1,\dots,n+1\}$.
    (Note that here, we are working with a vector of length $n+1$, rather than $n$ as in the original definition of this p-value in~\eqref{eq:perm-pvalue}.)
    If the combined training and test dataset $(Z_1,\dots,Z_{n+1})$ is indeed exchangeable, then we have $\P(p_{\rm perm}\leq \alpha)\leq \alpha$, as established in Theorem~\ref{thm:perm-test}.

    To complete the proof, then, we will verify that
    \[Y_{n+1}\in\cC(X_{n+1})\Longleftrightarrow p_{\rm perm}> \alpha,\]
    when the test function $T$ is chosen appropriately.
    In particular, in light of Proposition~\ref{prop:conformal-via-pvalues}, it suffices to show that
    \begin{equation}\label{eqn:full-conformal-perm-pvalue}p_{\rm perm} = p^{Y_{n+1}}.\end{equation}
    
    We will choose the test function $T$ as
    \[T(z_1,\dots,z_{n+1}) = s\left(z_{n+1};(z_1,\dots,z_{n+1})\right),\]
    i.e., the score for the last data point $z_{n+1}$ when trained on a dataset $(z_1,\dots,z_{n+1})$. 
    In particular, we can calculate the value of this test function on the permuted dataset $(Z_{\sigma(1)},\dots,Z_{\sigma(n+1)})$ as 
    \[T(Z_{\sigma(1)},\dots,Z_{\sigma(n+1)}) = s(Z_{\sigma(n+1)}; (\cD_{n+1})_\sigma) = s(Z_{\sigma(n+1)};\cD_{n+1}) = S_{\sigma(n+1)},\]
    where the first step holds since $(\cD_{n+1})_{\sigma}=(Z_{\sigma(1)},\dots,Z_{\sigma(n+1)})$ by definition, and the second step holds since $s$ is assumed to be symmetric. Therefore,
    \[p_{\rm perm} = \frac{\sum_{\sigma\in\cS_{n+1}} \ind{S_{\sigma(n+1)} \geq S_{n+1}}}{(n+1)!} = \sum_{i=1}^{n+1} n! \cdot \frac{ \ind{S_i \geq S_{n+1}}}{(n+1)!},\]
    where the last step holds since, for each $i\in[n+1]$, there are exactly $n!$ many permutations $\sigma\in\cS_{n+1}$ for which $\sigma(n+1)=i$. After simplifying, we then have
    \[p_{\rm perm} =   \frac{\sum_{i=1}^{n+1} \ind{S_i \geq S_{n+1}}}{n+1} = \frac{1 + \sum_{i=1}^n \ind{S_i \geq S_{n+1}}}{n+1} = p^{Y_{n+1}},\]
    with the last step holding since $S_i = S_i^{Y_{n+1}}$ for all $i$ by definition. This establishes~\eqref{eqn:full-conformal-perm-pvalue} as desired.
\end{proof}

\subsection{Conditioning on the empirical distribution of the data}
\label{sec:conformal-conditioning-bag}
Another interpretation of conformal prediction is given by treating the dataset as an unordered collection of data points, and then using exchangeability to argue that the test point $(X_{n+1},Y_{n+1})$ is equally likely to be any one of these points. In particular, if each data point $(X_1,Y_1),\dots,(X_{n+1},Y_{n+1})$ is distinct, and if we observe the unordered set of $n+1$ values (i.e., we observe the unordered set of data points $\{(X_1,Y_1),\dots,(X_{n+1},Y_{n+1})\}$, but do not know which data value corresponds to which index $i\in[n+1]$), then the test point $(X_{n+1},Y_{n+1})$ is equally likely to be any one of these $n+1$ values.

To formalize this intuition, given the dataset $\cD_{n+1} = ((X_1,Y_1),\dots,(X_{n+1},Y_{n+1}))$ containing the training data and test point, recalling our notation from Chapter~\ref{chapter:exchangeability} we define its empirical distribution as
\[\widehat{P}_{n+1} = \frac{1}{n+1}\sum_{i=1}^{n+1}\delta_{(X_i,Y_i)}.\]
This is a discrete distribution on $\cX\times\cY$, 
and in the case that the $n+1$ data points are distinct it can be simply defined by placing mass $\frac{1}{n+1}$ on each value that appears in the dataset. More generally, values that appear multiple times in the dataset will be given higher probability under this empirical distribution.
The following proof makes critical use of Proposition~\ref{prop:empirical-distrib-exch}, which tells us that due to the exchangeability of the dataset $\cD_{n+1}$, after conditioning on the empirical distribution $\widehat{P}_{n+1}$ the distribution of the test point $(X_{n+1},Y_{n+1})$ is equal to $\widehat{P}_{n+1}$ itself.

\begin{proof}[Proof of Theorem~\ref{thm:full-conformal} via the empirical distribution of the data]
First, since the score function $s$ is assumed to be symmetric, note that $s(\cdot;\cD_{n+1})$ depends on the dataset $\cD_{n+1}$ only via the empirical distribution $\widehat{P}_{n+1}$. To emphasize this point, we will use the notation $s(\cdot ;\widehat{P}_{n+1})$ in place of $s(\cdot;\cD_{n+1})$ throughout the remainder of this proof. We will also write $Z_i = (X_i,Y_i)$ for the $i$th data point, and $\cZ = \cX\times\cY$.

Let $h:\cZ\to\R$ be any function. Conditioning on $\widehat{P}_{n+1}$, by Proposition~\ref{prop:empirical-distrib-exch}, the test point $Z_{n+1}$ has distribution $\widehat{P}_{n+1}$.
This implies that, again conditioning on $\widehat{P}_{n+1}$, the random variable $h(Z_{n+1})$ has distribution $\frac{1}{n+1}\sum_{i=1}^{n+1}\delta_{h(Z_i)}$.
In particular, 
\[\P\left(h(Z_{n+1})\leq \quantile\big((h(Z_i))_{i\in[n+1]};1-\alpha\big)\,\middle|\,\widehat{P}_{n+1}\right) \geq 1-\alpha,\]
by definition of the quantile.
Next, observe that since this calculation is carried out conditionally on $\widehat{P}_{n+1}$, the same argument holds for a function $h$ that depends on the empirical distribution $\widehat{P}_{n+1}$ as well as on a data point $z\in\cZ$---in particular, we can take $h(z) = s(z;\widehat{P}_{n+1})$, to obtain
\[\P\left(s(Z_{n+1};\widehat{P}_{n+1}) \leq \quantile\Big((s(Z_i;\widehat{P}_{n+1}))_{i\in[n+1]};1-\alpha\Big)\,\middle|\,\widehat{P}_{n+1}\right) \geq 1-\alpha.\]
Since $S_i = s(Z_i;\cD_{n+1}) = s(Z_i;\widehat{P}_{n+1})$ for each $i\in[n+1]$, by definition, we then have
\[\P(Y_{n+1}\in\cC(X_{n+1})\mid \widehat{P}_{n+1}) = \P\left(S_{n+1}\leq \quantile(S_1,\dots,S_{n+1};1-\alpha)\,\middle|\,\widehat{P}_{n+1}\right) \geq 1-\alpha,\]
where the first step applies~\eqref{eqn:standard-proof-coverage-equiv_Yn+1}.
Marginalizing over $\widehat{P}_{n+1}$, we have proved the claim.
\end{proof}

\subsection{Tuning based on a plug-in estimate of the error rate}\label{sec:CP_plugin}
\index{risk control|(}

We next consider an especially simple and intuitive interpretation of split conformal prediction. Recall that in split conformal prediction, we work with a pretrained score function $s(x,y)$ that does not depend on the calibration set. With this in hand, consider prediction sets of the form
\begin{equation}
\label{eq:conformal_sets_with_lambda}
    \cC_\lambda(X_{n+1}) = \{y : s(X_{n+1}, y) \le \lambda\},
\end{equation}
where $\lambda \in \R $ is a parameter controlling the size of the set. Split conformal prediction can be understood as selecting $\lambda = \hat{q} = \quantile\left(S_1, \ldots, S_n ; (1-\alpha)(1+1/n)\right)$. We now explain this choice of $\lambda$.

A natural way to choose the parameter $\lambda$ is by choosing it to control the coverage on an available dataset. More formally, let 
\[
   \hat{R}(\lambda) = \frac{1}{n} \sum_{i=1}^n \ind{Y_i \notin \cC_\lambda(X_i)} \\
   =  \frac{1}{n} \sum_{i=1}^n \ind{s(X_i, Y_i) > \lambda} = \frac{1}{n}\sum_{i=1}^n\ind{S_i>\lambda}
\]
be the empirical miscoverage on the calibration data. If the data points $(X_i,Y_i)$ are i.i.d.\ draws from some distribution $P$ on $\cX\times\cY$, then $\hat{R}(\lambda)$ is a noisy estimate of the population miscoverage, 
\begin{equation}
   R(\lambda) = \E_P[\ind{Y \notin \cC_\lambda(X)}] = 1 - \P_P(Y \in \cC_\lambda(X)).
\end{equation}
If we had knowledge of $R(\lambda)$, we would simply set $\lambda$ to be the smallest value such that $R(\lambda) \le \alpha$. In words, we take the smallest sets that have the desired coverage level.

\begin{figure}[t]
    \centering
    \includegraphics[height = 2.25in]{\diagramspath split-conformal-quantile.pdf}
    \caption{\textbf{A visualization of the split conformal quantile $\hat{q}$} (as defined in Algorithm~\ref{alg:split-cp}), with the interpretation discussed in Section~\ref{sec:CP_plugin}. Empirical coverage at level $\geq (1-\alpha)(1+1/n)$, as shown in this figure, is exactly equivalent to the criterion $\hat{R}(\lambda)\leq\alpha'$, as in~\eqref{eq:conformal-plugin-estimate-lambdahat}.
    }
    \commentAlt{A piecewise constant function shows the empirical CDF of the scores. The level $(1-\alpha)(1+1/n)$ is shown with a horizontal dashed line, and the score $\hat{q}$ (where the empirical CDF reaches this level) is shown with a vertical dashed line.}
    \label{fig:split-CP-qhat}
\end{figure}

Since we do not know $R(\lambda)$, one reasonable approach is a plug-in method: we could choose $\hat\lambda$ to be the smallest value such that $\hat{R}(\lambda)\leq \alpha$. This plug-in method is appealing, but it does not quite guarantee a bound on the true risk $R(\hat\lambda)$, since $\hat{R}$ is a noisy estimate of the true risk function. 

To correct for this issue, we will now see that we should instead find a value $\lambda$ such that the coverage is at least $(1-\alpha)(1+1/n)$ on the calibration data. That is, we should choose $\hat\lambda$ as
\begin{equation}
    \label{eq:conformal-plugin-estimate-lambdahat}
    \hat\lambda = \inf\left\{\lambda \in \R : \hat{R}(\lambda) \leq \alpha'\right\},
\end{equation}
where $\alpha' = \alpha - \frac{1-\alpha}{n}$.
The risk threshold $\alpha'$ is slightly lower than the naive value of $\alpha$, indicating that we must be slightly more conservative than the naive plug-in approach to guarantee coverage.
This choice of $\hat\lambda$ is exactly equivalent to the split conformal prediction algorithm; the next proposition formalizes this connection.  See Figure~\ref{fig:split-CP-qhat} for a visualization.

\begin{proposition}[Split conformal prediction via a plug-in estimate of risk]\label{prop:conformal-equivalent-plugin}
    The split conformal prediction set $\cC(X_{n+1})$ defined in~\eqref{eq:split-conformal-sets} satisfies
    \begin{equation}
        \cC(X_{n+1}) = \left\{y\in\cY: s(X_{n+1}, y) \leq \hat\lambda \right\},
    \end{equation}
    where $\hat\lambda$ is defined in~\eqref{eq:conformal-plugin-estimate-lambdahat}.
\end{proposition}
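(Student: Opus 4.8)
The plan is to reduce the statement to the single scalar identity $\hat\lambda = \hat q$, where $\hat q = \quantile\left(S_1,\dots,S_n;(1-\alpha)(1+1/n)\right)$ is the split conformal quantile from~\eqref{eq:split-conformal-quantile}. Once that identity is established, the equality of the two prediction sets is immediate: the set in~\eqref{eq:split-conformal-sets} is $\{y : s(X_{n+1},y)\le\hat q\}$ and the claimed set is $\{y : s(X_{n+1},y)\le\hat\lambda\}$, so they coincide when $\hat q = \hat\lambda$.

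The heart of the argument is a deterministic rewriting of the feasibility condition $\hat R(\lambda)\le\alpha'$. Since $\hat R(\lambda) = \frac1n\sum_{i=1}^n\ind{S_i>\lambda} = 1 - \frac1n\sum_{i=1}^n\ind{S_i\le\lambda}$, the inequality $\hat R(\lambda)\le\alpha'$ is equivalent to $\sum_{i=1}^n\ind{S_i\le\lambda}\ge n(1-\alpha')$. Substituting $\alpha' = \alpha - \frac{1-\alpha}{n}$ and simplifying gives $n(1-\alpha') = n - n\alpha + (1-\alpha) = (n+1)(1-\alpha)$, so that $\hat R(\lambda)\le\alpha'$ holds if and only if $\frac1n\sum_{i=1}^n\ind{S_i\le\lambda}\ge \frac{(n+1)(1-\alpha)}{n} = (1-\alpha)(1+1/n)$. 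Recognizing the left-hand side as the empirical CDF $\widehat F$ of the list $(S_1,\dots,S_n)$ (Definition~\ref{def:CDF}), we conclude that the feasible set $\{\lambda : \hat R(\lambda)\le\alpha'\}$ is exactly $\{\lambda : \widehat F(\lambda)\ge(1-\alpha)(1+1/n)\}$.

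Taking the infimum of these two identical sets and applying the definition of the quantile of a finite list (Definition~\ref{def:quantile}) then yields $\hat\lambda = \inf\{\lambda : \widehat F(\lambda)\ge(1-\alpha)(1+1/n)\} = \quantile\left(S_1,\dots,S_n;(1-\alpha)(1+1/n)\right) = \hat q$, which finishes the proof. The only edge case needing a remark is $(1-\alpha)(1+1/n)>1$: then $(n+1)(1-\alpha)>n$, the inequality $\sum_{i=1}^n\ind{S_i\le\lambda}\ge(n+1)(1-\alpha)$ is unsatisfiable, so the feasible set is empty and $\hat\lambda = \inf\emptyset = +\infty = \hat q$ under the stated convention, with both sides returning $\cC(X_{n+1}) = \cY$. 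I do not expect any genuine obstacle here — the entire content is the arithmetic identity $n(1-\alpha') = (n+1)(1-\alpha)$, which is precisely what motivates the corrected threshold $\alpha' = \alpha - \frac{1-\alpha}{n}$.
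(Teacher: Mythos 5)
Your proof is correct and takes essentially the same route as the paper: both reduce to the scalar identity $\hat\lambda = \hat q$, rewrite the feasibility condition $\hat R(\lambda)\le\alpha'$ as the empirical-CDF threshold $\widehat F_S(\lambda)\ge(1-\alpha)(1+1/n)$, and invoke the definition of the quantile. Your added remark on the edge case $(1-\alpha)(1+1/n)>1$ is a nice bit of extra care not spelled out in the paper's proof, but the core argument is the same.
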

\begin{proof}[Proof of Proposition~\ref{prop:conformal-equivalent-plugin}]
    It suffices to prove that $\hat\lambda$ is equal to the conformal quantile $\hat{q}$, i.e., the $(1-\alpha)(1+1/n)$ quantile of the calibration scores $s(X_1,Y_1),\dots,s(X_n,Y_n)$.
    By definition of $\hat{R}$ and of $\alpha'$, we calculate
    \begin{align}
        \hat{R}(\lambda) \leq \alpha' & \Longleftrightarrow \frac{1}{n}\sum\limits_{i=1}^n \ind{S_i > \lambda} \leq \alpha-\frac{1-\alpha}{n} \\
        & \Longleftrightarrow \frac{1}{n}\sum\limits_{i=1}^n \ind{s(X_i, Y_i) \leq \lambda} \geq 1 - \left(\alpha-\frac{1-\alpha}{n}\right) =  (1-\alpha)(1+1/n).
    \end{align}
    Plugging this into the definition of $\hat\lambda$, we have
    \begin{equation}
        \hat\lambda = \inf\left\{ \lambda \in \R : \frac{1}{n}\sum\limits_{i=1}^n \ind{s(X_i, Y_i) \leq \lambda} \geq (1-\alpha)(1+1/n) \right\},
    \end{equation}
    which is exactly the definition of the conformal quantile  $\hat{q}$ for split conformal.
\end{proof}

Finally, we give a proof of Theorem~\ref{thm:full-conformal} (for the special case of split conformal) from the perspective of plug-in estimates of error rates.
(Later on, in Section~\ref{sec:conformal_risk_control}, we will extend this argument to other notions of error besides miscoverage.)
\begin{proof}[Proof of Theorem~\ref{thm:full-conformal} (split conformal case) via tuning an estimate of error]
    We will begin by defining an oracle threshold:
    \begin{equation}
        \lambda^* = \inf\left\{\lambda \in \R : \frac{1}{n+1}\sum\limits_{i=1}^{n+1} \ind{S_i > \lambda} \leq \alpha \right\}.
    \end{equation}
    The quantity $\lambda^*$ is an oracle version of $\hat\lambda$ that depends on the full dataset $\cD_{n+1}$, including both the calibration and test data (while, in contrast, $\hat\lambda$ depends only on the calibration data $\cD_n$). Since $\lambda^*$ is a symmetric function of the $n+1$ data points, exchangeability of the data implies that
    \begin{equation}\label{eqn:lambda_tilde_exchangeability}
    \P(S_{n+1} > \lambda^*) = \P(S_i > \lambda^*),
    \end{equation}
    for all $i\in[n+1]$ (recall Lemma~\ref{lem:conditional_exchangeability}).
    
    Below, we will show that $\lambda^* \leq \hat\lambda$ must always hold. This is enough to prove the result, since we then have
    \begin{align}
    \P(Y_{n+1}\not\in\cC(X_{n+1}))
    &=\P(S_{n+1})>\hat\lambda)\textnormal{ \ by Proposition~\ref{prop:conformal-equivalent-plugin}}\\
        &\leq \P(S_{n+1} > \lambda^*) \textnormal{ \ since $\lambda^* \leq \hat\lambda$}\\
        &= \frac{1}{n+1}\sum_{i=1}^{n+1}\P(S_i>\lambda^*)\\
        &= \E\left[\frac{1}{n+1}\sum\limits_{i=1}^{n+1}  \ind{S_i > \lambda^*} \right] \leq \alpha,
    \end{align}
    where the third step holds by~\eqref{eqn:lambda_tilde_exchangeability}, while the last step applies the definition of $\lambda^*$.

    To conclude, we show that $\lambda^* \leq \hat\lambda$.
    For all $\lambda \in \R$,
    \begin{align}
        \frac{1}{n}\sum\limits_{i=1}^{n} \ind{S_i > \lambda} \leq \alpha' &\Longleftrightarrow \frac{1}{n+1}\sum\limits_{i=1}^{n} \ind{S_i > \lambda} + \frac{1}{n+1} \leq \alpha \\
        & \implies \frac{1}{n+1}\sum\limits_{i=1}^{n+1} \ind{S_i > \lambda} \leq \alpha.
    \end{align}
    This means the infimum in the definition of $\lambda^*$ is taken over a larger set than the infimum in the definition of $\hat\lambda$, proving the desired claim.
\end{proof}
\index{risk control|)}

\section{Can conformal prediction be overly conservative?}\label{sec:conformal-overcover}
\index{coverage!upper bound|(}
So far, we have discussed lower bounds on coverage, which ensure that the prediction sets will cover the ground truth with \emph{at least} probability $1-\alpha$.
But this leaves open the possibility that the sets are unnecessarily large.
This section gives a distribution-free bound on exactly how conservative conformal prediction can be---i.e., how much the coverage probability could potentially exceed the target level $1-\alpha$.
\begin{theorem}[Bounding the overcoverage of conformal prediction]
    \label{thm:upper-bound}
    Under the conditions of Theorem~\ref{thm:full-conformal}, we have that
    \begin{equation}
        \P(Y_{n+1} \in \cC(X_{n+1})) \leq \frac{\lceil (1-\alpha)(n+1)\rceil}{n+1} + \epsilon_{\rm tie} \leq 1-\alpha+ \frac{1}{n+1}+\epsilon_{\rm tie},
    \end{equation}
    where $\epsilon_{\rm tie}$ captures the likelihood of the score of the $(n+1)$st data point being tied with any other data point,
    \begin{equation}
        \epsilon_{\rm tie} = \P\left(\exists j\in[n], \ S_{n+1} = S_j\right).
    \end{equation}
\end{theorem}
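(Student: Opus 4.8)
The plan is to reduce the coverage event to a statement about order statistics of the augmented score vector, and then control the contribution of ties directly. First I would recall from the proof of Theorem~\ref{thm:full-conformal} that the scores $S_1,\dots,S_{n+1}$ defined in~\eqref{eqn:define_n+1_scores} are exchangeable, and that by~\eqref{eqn:standard-proof-coverage-equiv_Yn+1} the coverage event is exactly $\{S_{n+1}\le\quantile(S_1,\dots,S_{n+1};1-\alpha)\}$. By Fact~\ref{fact:conversion-order-stats-quantiles}, this quantile equals the order statistic $S_{(k)}$ for $k=\lceil(1-\alpha)(n+1)\rceil$, and since $\alpha\in(0,1)$ we have $k\in[n+1]$. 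So it suffices to upper bound $\P(S_{n+1}\le S_{(k)})$.

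Next I would apply~\eqref{eqn:empirical-distr-order-statistic} with $n$ replaced by $n+1$ and index $i=n+1$: conditionally on the order statistics, $S_{n+1}$ is uniform on the multiset $\{S_{(1)},\dots,S_{(n+1)}\}$, so
\[\P\big(S_{n+1}\le S_{(k)}\big)=\frac{1}{n+1}\,\E[R],\qquad R:=\#\{i\in[n+1]:S_i\le S_{(k)}\}.\]
Write $M:=\#\{i\in[n+1]:S_i=S_{(k)}\}$ for the multiplicity of the value $S_{(k)}$. Deterministically $M\ge 1$, and the scores strictly below $S_{(k)}$ occupy order-statistic positions $1,\dots,k-1$, so $\#\{i:S_i<S_{(k)}\}\le k-1$; hence $R\le (k-1)+M$.

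The remaining task is to show $\E[M]\le 1+(n+1)\epsilon_{\rm tie}$, which is where the tie term enters. Let $T_i=\ind{\exists j\ne i:\ S_j=S_i}$ be the indicator that score $i$ is tied with another score; since $T_i$ applied to a permuted vector equals $T_{\sigma(i)}$ applied to the original, exchangeability gives $\E[T_i]=\E[T_{n+1}]=\epsilon_{\rm tie}$ for every $i$. Deterministically $M-1\le\sum_{i=1}^{n+1}T_i$: if $M=1$ the left side is $0$, while if $M\ge 2$ every one of the $M$ points attaining the value $S_{(k)}$ is tied, contributing at least $M\ge M-1$ to the sum. Taking expectations gives $\E[M]\le 1+(n+1)\epsilon_{\rm tie}$, and therefore
\[\P\big(S_{n+1}\le S_{(k)}\big)\le\frac{(k-1)+1+(n+1)\epsilon_{\rm tie}}{n+1}=\frac{k}{n+1}+\epsilon_{\rm tie}=\frac{\lceil(1-\alpha)(n+1)\rceil}{n+1}+\epsilon_{\rm tie}.\]
The second inequality in the theorem then follows from $\lceil(1-\alpha)(n+1)\rceil\le(1-\alpha)(n+1)+1$.

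I expect the only real obstacle to be the bookkeeping around ties in the last step: $\epsilon_{\rm tie}$ refers only to collisions involving the $(n+1)$st score, whereas the slack in $R$ is governed by the full multiplicity $M$ of the value $S_{(k)}$, which could in principle be produced entirely among the first $n$ scores. The deterministic bound $M-1\le\sum_i T_i$ together with the symmetrization $\E[T_i]=\epsilon_{\rm tie}$ is what bridges this gap. An equivalent alternative is to split $\P(S_{n+1}=S_{(k)})$ into the case where $S_{(k)}$ is a unique value (bounded by $1/(n+1)$, using that the events $\{S_i=S_{(k)},\ S_{(k)}\text{ unique}\}$, $i\in[n+1]$, are disjoint and equiprobable) and the case where it is not (which forces $S_{n+1}$ to be tied with some $S_j$, $j\in[n]$, hence has probability at most $\epsilon_{\rm tie}$), combined with $\P(S_{n+1}<S_{(k)})\le(k-1)/(n+1)$ from Fact~\ref{fact:exchangeable-properties}\ref{fact:exchangeable-properties_part1}.
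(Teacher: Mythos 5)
Your main proof is correct but takes a genuinely different route from the paper's. Both first reduce the coverage event to $\{S_{n+1}\le S_{(k)}\}$ for $k=\lceil(1-\alpha)(n+1)\rceil$; they then diverge. The paper decomposes this event disjointly as $\{S_{n+1}<S_{(k+1)}\}\cup\{S_{n+1}=S_{(k)}=S_{(k+1)}\}$, bounds the first probability by $k/(n+1)$ via Fact~\ref{fact:exchangeable-properties}\ref{fact:exchangeable-properties_part1}, and observes that the second event forces $S_{n+1}$ to be tied with some $S_j$, $j\in[n]$, so its probability is at most $\epsilon_{\rm tie}$. You instead condition on the order statistics to write $\P(S_{n+1}\le S_{(k)})=\frac{1}{n+1}\E[R]$ with $R=\#\{i:S_i\le S_{(k)}\}\le(k-1)+M$, where $M$ is the multiplicity of $S_{(k)}$, and then bound $\E[M]\le 1+(n+1)\epsilon_{\rm tie}$ via the symmetrized tie indicators $T_i$. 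Both arguments give the identical bound. The paper's decomposition is shorter and introduces no auxiliary random variables; yours absorbs the edge case $k=n+1$ without a side remark (the paper dispatches it separately) and makes the excess contribution of ties, $\E[M]-1$, explicit. Your alternative at the end---splitting $\P(S_{n+1}=S_{(k)})$ on whether the value $S_{(k)}$ is unique---is the closest of the three to the paper's argument, differing only in whether one reasons through uniqueness of $S_{(k)}$ or shifts to $S_{(k+1)}$ and the equality $S_{(k)}=S_{(k+1)}$.
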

This theorem says that the coverage of conformal prediction is not too far from $1-\alpha$ as long as the distribution of the scores is unlikely to produce ties.
Without making further assumptions on the model, however, this does not directly translate to a bound on the \emph{size} of the prediction set $\cC(X_{n+1})$.
Instead, the bound says that this set is not conservative on the scale of coverage.
For example, for a residual score $s$ of the form $s((x,y);\cD) = |y - \hf(x;\cD)|$, 
a model $\hf(\cdot;\cD)$ that is a very poor fit to the data distribution will necessarily lead to wide prediction intervals. However, this result is telling us that the prediction intervals are no wider than is needed to compensate for the errors in $\hf(\cdot;\cD)$.

\begin{proof}[Proof of Theorem~\ref{thm:upper-bound}]
In our first proof of Theorem~\ref{thm:full-conformal} (given in Section~\ref{sec:full_conformal_first_proof}), we derived an equivalent characterization of the coverage event (see~\eqref{eqn:standard-proof-coverage-equiv_Yn+1}):
    \begin{equation}
        Y_{n+1} \in \cC(X_{n+1}) \Longleftrightarrow S_{n+1} \leq \quantile\left(S_1,\dots,S_{n+1};1-\alpha\right) .
    \end{equation}
    Writing $S_{(1)}\leq \dots\leq S_{(n+1)}$ as the order statistics of the scores $S_1,\dots,S_{n+1}$, we can equivalently write
    \begin{equation}
    \label{eq:cvg_score_proof_helper_eq}
    Y_{n+1} \in \cC(X_{n+1}) \Longleftrightarrow S_{n+1}\leq S_{(k)} \textnormal{ where $k=\lceil (1-\alpha)(n+1)\rceil$,}
    \end{equation}
    by the equivalence between quantiles and order statistics (Fact~\ref{fact:conversion-order-stats-quantiles}). We can assume $k \in[n]$ to avoid the trivial case.

    Next, by definition of the order statistics,
    \[S_{n+1}\leq S_{(k)} \Longleftrightarrow\textnormal{ either $S_{n+1} < S_{(k+1)}$ or $S_{n+1} = S_{(k)} = S_{(k+1)}$}.\]
    If $S_{n+1} = S_{(k)} = S_{(k+1)}$ then we must have $S_{n+1}=S_j$ for some $j\in[n]$, and therefore,
    \begin{multline*}\P(Y_{n+1}\in\cC(X_{n+1})) = \P(S_{n+1} < S_{(k+1)}) + \P(S_{n+1} = S_{(k)} = S_{(k+1)})\\ \leq \P(S_{n+1} < S_{(k+1)}) +\epsilon_{\rm tie}.\end{multline*}
    Finally, to complete the proof, we have 
    \[\P(S_{n+1} < S_{(k+1)}) \leq \frac{(k+1)-1}{n+1} = \frac{\lceil (1-\alpha)(n+1)\rceil}{n+1}\]
    by Fact~\ref{fact:exchangeable-properties}\ref{fact:exchangeable-properties_part1}.
\end{proof}
An important special case is when the scores are distinct almost surely (e.g., if the joint distribution of the scores is continuous). In this case, we can compute the coverage exactly.
\begin{proposition}
    \label{prop:upper-bound}
    Under the conditions of Theorem~\ref{thm:full-conformal}, if the scores $S_1,\dots,S_{n+1}$ are distinct almost surely, then
    \begin{equation}
 \P(Y_{n+1} \in \cC(X_{n+1})) = \frac{\lceil (1-\alpha)(n+1)\rceil}{n+1} \leq 1-\alpha + \frac{1}{n+1}.
\end{equation}
\end{proposition}
\begin{proof}[Proof of Proposition~\ref{prop:upper-bound}]
    We return to the calculation~\eqref{eq:cvg_score_proof_helper_eq}. When the scores are distinct with probability $1$, exchangeability implies that $P(S_{n+1} \le S_{(k)}) = \frac{k}{n+1} = \frac{\lceil (1-\alpha)(n+1)\rceil}{n+1}$ (by Fact~\ref{fact:exchangeable-properties}\ref{fact:exchangeable-properties_part4}), as desired. 
\end{proof}
It should be noted that the requirement of a continuous joint distribution, though not entirely distribution-free, is not very restrictive; for instance, this often holds for continuously distributed data (e.g., using the residual score, $s((x,y);\cD) = |y - \hf(x;\cD)|$).
Finally, we close by noting that a strengthening of Theorem~\ref{thm:upper-bound} is available via a randomized version of conformal, which avoids the issues of ties---we develop this randomized algorithm in Section~\ref{sec:cp-random}.
\index{coverage!upper bound|)}

\section*{Bibliographic notes}
\addcontentsline{toc}{section}{\protect\numberline{}\textnormal{\hspace{-0.8cm}Bibliographic notes}}

The earliest works describing a form of conformal prediction were by Gammerman, Vovk, and Vapnik~\citep{gammerman1998learning} and by Vovk, Gammerman, and Saunders~\citep{vovk1999machine}.
The work was extended into what we now know as conformal prediction by Vovk, Gammerman, and their students and collaborators: Saunders, Nouretdinov, Papadopoulos,
and Proedrou~\citep{saunders1999transduction,nouretdinov2001ridge, papadopoulos2002inductive,proedrou2002transductive,gammerman2007hedging}.
Importantly,~\cite{papadopoulos2002inductive} introduces split conformal prediction.
This framework is related to earlier ideas in the statistics literature, including tolerance regions \citep{wilks1941,wilks1942} and rank tests \citep{mann1947test,lehmann1953power}, and builds on early definitions of randomness developed by Kolmogorov \citep{kolmogorov1965three,kolmogorov1968logical,kolmogorov1983combinatorial}.
The book \emph{Algorithmic Learning in a Random World}, by Vovk, Gammerman, and Shafer, summarizes and extends upon these early contributions, and was the first book on the topic~\citep{vovk2005algorithmic}.
The book also chronicles the early history of conformal.
A tutorial on conformal prediction is available in~\cite{shafer2008tutorial}.
These references describe conformal prediction and many early applications, e.g., to support vector machines; they are the main references for the core ideas underlying full conformal prediction. 

More recently, there has been great interest in conformal prediction across the statistics and machine learning research communities. 
Our presentation in Sections~\ref{sec:define_conformal}--\ref{alg:split-cp} more closely follows the notation and style developed in more recent statistical literature---in particular, the seminal paper of Lei, G'Sell, Rinaldo, Tibshirani, and Wasserman~\citep{lei2018distribution}.
The problem of defining a score function to ensure good performance, which we preview in Section~\ref{sec:score-preview}, has emerged as a central practical question in the field; some popular choices are proposed and examined by \citet{vovk2005algorithmic,lei2013conformal,lei2018distribution,romano2019conformalized,chernozhukov2021distributional,Sadinle2016LeastAS}, among many others. The Replacement Lemma (Lemma~\ref{lem:n+1-to-n-reduction}) appears implicitly across many works in the literature; the closest explicit statement that we are aware of can be found in the proof of \citet[Lemma 1]{tibshirani2019conformal}. Our illustration of the implementation of full conformal prediction, shown in Figure~\ref{fig:full-cp}, is inspired by illustrations developed by Ryan Tibshirani in various lectures and tutorials. 

The various interpretations and proofs of conformal prediction presented in Section~\ref{sec:reinterpret-conformal} have been emerging since the aforementioned early works by Vovk, Gammerman, and colleagues.
The equivalence to permutation tests via the formulation of a conformal p-value (Section~\ref{sec:conformal_as_perm}) is developed in~\cite{vovk2005algorithmic} and~\cite{vovk2013transductive}; see also \cite{kuchibhotla2020exchangeability}.
The reinterpretation via conditioning on the empirical distribution of the data (Section~\ref{sec:conformal-conditioning-bag}) has also been discussed by early works such as \cite{vovk2005algorithmic,vovk2013transductive} (often described equivalently in the language of conditioning on a multiset or `bag' of data points). The reinterpretation via plug-in estimation (Section~\ref{sec:CP_plugin}) is due to \cite{angelopoulos2022conformal}. See also \cite{gupta2020distribution} for an additional reinterpretation of conformal prediction via families of nested sets. \index{bag of data points}

The version of the upper bound in Proposition~\ref{prop:upper-bound} is proven by \cite{lei2018distribution}.
The randomized version of conformal, which we mentioned in Section~\ref{sec:conformal-overcover} and will cover in more detail later on  in Section~\ref{sec:cp-random}, is discussed in \citet{vovk2005algorithmic,lei2018distribution}.

\section*{Exercises}
\addcontentsline{toc}{section}{\protect\numberline{}\textnormal{\hspace{-0.8cm}Exercises}}
\begin{enumerate}[font=\bfseries, label={\thechapter.\arabic*}, labelsep=1em, itemsep=1em]
\item \label{exercise:asymm_split_CP}
Consider the following prediction interval, for the setting of a real-valued response: given a pretrained model $\hf$, and calibration points $(X_1,Y_1),\dots,(X_n,Y_n)$, define
\[\cC(X_{n+1}) = [\hf(X_{n+1}) - \hat{q}_L, \hf(X_{n+1}) + \hat{q}_R]\]
where $\hat{q}_L = \quantile( -Y_i + \hf(X_i);(1-\alpha/2)(1+1/n))$, $\hat{q}_R = \quantile(Y_i - \hf(X_i);(1-\alpha/2)(1+1/n))$. 
Prove that, if data points $(X_1,Y_1),\dots,(X_{n+1},Y_{n+1})$ are exchangeable, then this interval offers an equal-tailed coverage guarantee: the probability of miscoverage on each of the two sides of the prediction interval is bounded as
\[\P(Y_{n+1} < \hf(X_{n+1}) - \hat{q}_L) \leq \alpha/2, \quad \P(Y_{n+1} > \hf(X_{n+1}) + \hat{q}_R) \leq \alpha/2.\]
(Note that this is a strictly stronger result than the usual marginal predictive coverage guarantee for split conformal prediction,
$\P(Y_{n+1}\in\cC(X_{n+1}))\geq 1-\alpha$.)
\item 
    This problem considers conformal prediction when the feature is constant. This can be understood as the case where there is no feature, but we include a constant feature so that this fits our notation from the chapter. Formally, consider an exchangeable sequence $(X_1, Y_1), \dots (X_{n+1}, Y_{n+1})$ where $X_i = 1$ for all $i$ with probability 1.
    Let $(Y_1,\dots,Y_{n+1})$ be multivariate Gaussian, where $Y_i \sim \mathcal N(0,\sigma^2)$ marginally and $\Cov(Y_i, Y_j) = \rho$ for $i \ne j$, for some $\rho\in[0,\sigma^2)$. These random variables are exchangeable but not independent when $\rho \ne 0$. The purpose of this question is to compare prediction intervals from conformal prediction with a different, model-based approach.
    \begin{enumerate}
    \item Suppose that $\rho$ and $\sigma^2$ are known. Give a prediction interval for $Y_{n+1}$ that has exactly $1-\alpha$ coverage via the conditional distribution of $Y_{n+1}\mid (Y_1,\dots,Y_n)$.
    \item Next, consider a split conformal approach. Here we will use half of the $n$ data points for training a score function, and the remaining half for calibration. Taking $n$ to be even for simplicity, let $\bar{Y}_{n/2} = \frac{1}{n/2}\sum_{i=1}^{n/2}Y_i$ be the sample mean of the first half of the data, and define the score function $s(x,y) = |y - \bar{Y}_{n/2}|$. Give the prediction set obtained by running split conformal prediction using this score function, with data points $i=n/2+1,\dots,n$ as the calibration set.
    \item Discuss how the two approaches above relate: if the equicorrelated Gaussian model for the data is true, how do we expect these two prediction sets to compare to each other? You may assume $n$ is large and $\rho>0$.
    \item If instead the equicorrelated Gaussian model for the data is incorrect, 
     how do we expect these two prediction sets to compare to each other? You may assume that $n$ is large and the data satisfies exchangeability.
\end{enumerate}
\item\label{exercise:split_CP_two_test_points} Consider split conformal prediction with the fixed score function $s(x,y)$. Suppose we have an exchangeable data set $((X_i, Y_i))_{i\in[n+2]}$, and assume the scores are distinct almost surely. Let $\cC$ be the split conformal prediction set~\eqref{eq:split-conformal-sets} when using $((X_i, Y_i))_{i\in[n]}$ as the calibration data. Compute
    \begin{equation}
        \P\left(Y_{n+1} \in \cC(X_{n+1}) \textnormal{ and } Y_{n+2} \in \cC(X_{n+2})\right).
    \end{equation}
\item In this exercise, we will examine the role of the symmetry assumption on the score function $s$ in full conformal prediction.
Construct an example of a score function $s((x,y);\cD)$ that is \emph{not} symmetric in $\cD$, 
such that for some sample size $n$ and for data $(X_1,Y_1),\dots,(X_{n+1},Y_{n+1})\iidsim P$ for some distribution $P$, the full conformal prediction interval exhibits undercoverage, $\P(Y_{n+1}\in\cC(X_{n+1})) < 1-\alpha$.
\item Consider split conformal prediction with a fixed score function $s(x,y)$. Suppose we have data $((X_i, Y_i))_{i\in[n]}$ that are i.i.d. draws from a distribution $P$. Let $P_s$ denote the distribution of $s(X_i, Y_i)$, and let $q^* = \quantile(P_s; 1-\alpha)$. Assume that $P_s$ has a positive density in a neighborhood of $q^*$. Writing $\hat{q}$ to denote the conformal quantile as in~\eqref{eq:split-conformal-quantile}, prove that as $n \to \infty$, $\hat{q}$ converges to $q^*$ in probability. 
\item In Theorem~\ref{thm:upper-bound}, we proved that the coverage of conformal prediction can exceed $1-\alpha$ by, at most, $\frac{1}{n+1} + \epsilon_{\textnormal{tie}}$, the probability that the test score is tied with any other score. 
\begin{enumerate}
    \item Prove an alternative upper bound: under the same conditions and notation as in Theorem~\ref{thm:upper-bound}, 
    \[\P(Y_{n+1}\in\cC(X_{n+1})) \leq 1-\alpha + \E\left[\frac{\sum_{i=1}^{n+1}\ind{S_i = \hat{q}}}{n+1}\right],\]
    where $\hat{q}=\quantile(S_1,\dots,S_{n+1};1-\alpha)$.
    \item Give an example where the upper bound above is much tighter than the result of Theorem~\ref{thm:upper-bound}.
    \end{enumerate}
\item In the setting of a real-valued response $\cY=\R$, suppose we have two pretrained functions, $\hf_1$ and $\hf_2$. Given calibration points $(X_1,Y_1),\dots,(X_n,Y_n)$ and a test point $X_{n+1}$, we can construct two different split conformal prediction sets:
\[\cC_j(X_{n+1}) = \{y\in\R : s_j(X_{n+1},y)\leq \hat{q}_j\} \textnormal{ where }\hat{q}_j = \quantile((s_j(X_i,Y_i))_{i\in[n]};(1-\alpha)(1+1/n)),\]
for each $j=1,2$, where $s_j(x,y) = |y-\hf_j(x)|$ is the residual score for fitted model $\hf_j$.
Suppose we choose to use the pretrained model that has lower error on the calibration set,
    \[\textnormal{Choose $\cC_J(X_{n+1})$ for }J = \argmin_{j\in\{1,2\}}\frac{1}{n}\sum_{i=1}^n (Y_i - \hf_j(X_i))^2.\]
    Explain why the marginal predictive coverage of this procedure,
    \[\P(Y_{n+1} \in \cC_J(X_{n+1})),\]
    is no longer guaranteed to be $\geq 1-\alpha$, even when we assume exchangeability of the data.
\end{enumerate}

\chapter{Conditional Coverage}\label{chapter:conditional}

Conformal prediction guarantees marginal coverage---on average over all the data---but in practice, we often want to ensure stronger guarantees.
A marginal coverage guarantee does not rule out the possibility of a scenario where we might observe an unusual batch of training data, leading us to miscover on nearly all future data points. Nor does it prevent issues of uneven coverage across different parts of the population: for example, we might have two subgroups in our data (say, higher values of $X$ and lower values of $X$), where the prediction intervals returned by conformal prediction are likely to systematically undercover for test points in one group while overcovering for test points in the other. (See Figure~\ref{fig:conditional-coverage} for an illustration of this phenomenon.)
If we want to ensure that such issues are unlikely to arise, we need to examine the \emph{conditional coverage} properties of conformal prediction.

\begin{figure}[t]
    \centering
    \includegraphics{\diagramspath conditional-coverage.pdf}
    \caption{\textbf{An illustration of marginal and conditional coverage.} The interval on the left satisfies marginal coverage, but does not satisfy coverage conditional on the value of the test point $X_{n+1}$---there is undercoverage for larger values of $X_{n+1}$. In contrast, the interval on the right satisfies both properties.}
    \commentAlt{Two scatterplots of the same dataset of $(X,Y)$ pairs, with an increasing trend and higher variability at larger $X$ values. The left panel shows a shaded region of constant width at all $X$, and the right shows a shaded region with increasing width.}
    \label{fig:conditional-coverage}
\end{figure}

In this chapter, we study conditional coverage in its various forms and whether it can be achieved distribution-free.
In the simplest cases, such as achieving coverage conditional on the event $X_{n+1} \in \cX_0$ for some large region $\cX_0\subseteq\cX$, we will provide procedures that achieve conditional guarantees, but in other contexts, we will establish hardness results---that is, results showing that achieving some types of conditional coverage is impossible without further assumptions.

\section{Training-conditional coverage for split conformal prediction}
\index{coverage!training-conditional|(}
We first examine the coverage of split conformal prediction conditional on the training set:
\begin{equation}
\P(Y_{n+1} \in \cC(X_{n+1}) \mid \cD_n).
\end{equation}
This is the empirical coverage that we would obtain if we used the points $\cD_n = ((X_1,Y_1),\dots,(X_n,Y_n))$ for calibration, and then evaluated our coverage over an infinite test set of i.i.d.\ data.
This quantity is a random variable, since it depends on the calibration set $\cD_n$. 

If the conformal prediction set satisfies marginal coverage at level $1-\alpha$, then the training-conditional coverage must be $\geq 1-\alpha$ in expectation: 
\[ \E\left[\P(Y_{n+1} \in \cC(X_{n+1}) \mid \cD_n)\right] = \P(Y_{n+1} \in \cC(X_{n+1})) \geq 1-\alpha.\]
However, a marginal coverage guarantee does not ensure that training-conditional coverage will be close to $1-\alpha$ \emph{with high probability}; for this, we need a stronger concentration-type statement.

In this section, we will consider split conformal prediction, where $s((x,y); \cD)$ has no dependence on $\cD$ (see Section~\ref{sec:split-conformal}). Throughout this section, we will assume the data points $(X_i,Y_i)$ are i.i.d., rather than the more general assumption of exchangeability. We will see that, for i.i.d.\ data, the training-conditional coverage will indeed concentrate near (or above) the nominal level $1-\alpha$, for the split conformal method.

In fact, there is a closed-form expression that characterizes the training-conditional coverage in this setting. For intuition, let us first consider a simplified setting where the score distribution is continuous (so that there are no ties, almost surely), and where $(1-\alpha)(n+1)$ is an integer. In that case, training-conditional coverage is exactly distributed as a Beta distribution:
\begin{equation}
    \label{eq:beta-training-conditional}
     \P\left(Y_{n+1} \in \cC\left(X_{n+1}\right) \big| \: \cD_n\right) \sim \textrm{Beta}\left((1-\alpha)(n+1),\alpha(n+1)\right).
\end{equation}
The Beta distribution arises from the order statistics of the uniform distribution---conformal prediction relies only on the ranks of the scores, so if there are no ties one can without loss of generality 
analyze conformal scores that are uniformly distributed.
More generally, without these assumptions, the Beta distribution provides a lower bound on the training-conditional coverage, as we see in the following result:

\begin{theorem}[Training-conditional coverage for split conformal prediction]\label{thm:training-conditional-split-CP}
Suppose the data points $(X_i,Y_i)$ are i.i.d., and let $\cC(X_{n+1})$ be constructed via split conformal prediction (Algorithm~\ref{alg:split-cp}) using any pretrained score function $s$. Then the training-conditional coverage
$\P\left(Y_{n+1} \in \cC\left(X_{n+1}\right) \big| \: \cD_n\right)$ stochastically dominates the $\textrm{Beta}\left((1-\alpha)(n+1),\alpha(n+1)\right)$ distribution, and in particular, for any $\Delta\geq 0$,
\[\P\Bigg( \P\left(Y_{n+1} \in \cC\left(X_{n+1}\right) \big| \: \cD_n\right) \, \leq \,   1-\alpha-\Delta \Bigg) \leq F_{\textrm{Beta}\left((1-\alpha)(n+1),\alpha(n+1)\right)}(1-\alpha-\Delta)  \, \leq \,  e^{-2n\Delta^2},\]
where $F_{\textrm{Beta}(a,b)}$ denotes the CDF of the $\textrm{Beta}(a,b)$ distribution.
\end{theorem}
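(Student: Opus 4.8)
The plan is to reduce the training-conditional coverage to a uniform order statistic and then compare with the Beta distribution. Since $s$ is a pretrained score function, $\hat q = \quantile(S_1,\dots,S_n;(1-\alpha)(1+1/n))$ is a deterministic function of the calibration set $\cD_n$, while $S_{n+1} = s(X_{n+1},Y_{n+1})$ is independent of $\cD_n$ and has the same marginal law as each $S_i$. Writing $G$ for that common score CDF, $G(t) = \P(S_1\le t)$, the coverage event is $\{Y_{n+1}\in\cC(X_{n+1})\} = \{S_{n+1}\le \hat q\}$, so
\[\P\big(Y_{n+1}\in\cC(X_{n+1})\mid \cD_n\big) = G(\hat q).\]
By the conversion between quantiles and order statistics (Fact~\ref{fact:conversion-order-stats-quantiles}), $\hat q = S_{(k)}$ where $k = \lceil (1-\alpha)(n+1)\rceil$; if $(1-\alpha)(n+1) > n$ then $\hat q = +\infty$ and the coverage equals $1$, which trivially dominates any distribution on $[0,1]$, so from here on I assume $k\in[n]$.

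The heart of the argument is then to show $G(S_{(k)})$ stochastically dominates $\textrm{Beta}((1-\alpha)(n+1),\alpha(n+1))$. The transparent case is $G$ continuous: then $U_i := G(S_i)$ are i.i.d.\ $\textrm{Uniform}[0,1]$, and monotone invariance (Fact~\ref{fact:monotone-invariance-quantiles}) gives $G(S_{(k)}) = U_{(k)} \sim \textrm{Beta}(k,n+1-k)$ exactly, recovering~\eqref{eq:beta-training-conditional} when $(1-\alpha)(n+1)\in\mathbb{Z}$. In general I would use the distributional transform: take auxiliary i.i.d.\ $\textrm{Uniform}[0,1]$ variables $V_1,\dots,V_n$ independent of the data and set $W_i = \P(S_1 < S_i) + V_i\,\P(S_1 = S_i)$. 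Then $W_1,\dots,W_n$ are i.i.d.\ $\textrm{Uniform}[0,1]$, and $W_i \le \P(S_1\le S_i) = G(S_i)$ always. The key point is that sorting indices by $W_i$ (breaking ties by increasing $S_i$) simultaneously sorts the $S_i$: if $S_i < S_j$ then $W_i \le \P(S_1\le S_i) \le \P(S_1< S_j) \le W_j$. Hence the index attaining $W_{(k)}$ has score exactly $S_{(k)}$, so $W_{(k)} \le G(S_{(k)})$ and therefore $G(\hat q) = G(S_{(k)})$ stochastically dominates $W_{(k)} \sim \textrm{Beta}(k,n+1-k)$. Finally, since $k \ge (1-\alpha)(n+1)$ and the parameters of $\textrm{Beta}(k,n+1-k)$ and $\textrm{Beta}((1-\alpha)(n+1),\alpha(n+1))$ both sum to $n+1$, the former stochastically dominates the latter by monotonicity of the Beta family in its first shape parameter along $\{a+b = n+1\}$. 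Chaining, $G(\hat q) \succeq \textrm{Beta}(k,n+1-k) \succeq \textrm{Beta}((1-\alpha)(n+1),\alpha(n+1))$, which is the stochastic dominance claim and gives the first inequality of the displayed bound.

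It remains to establish the deterministic inequality $F_{\textrm{Beta}((1-\alpha)(n+1),\alpha(n+1))}(1-\alpha-\Delta) \le e^{-2n\Delta^2}$. When $a := (1-\alpha)(n+1)$ is an integer, I would use the Beta--binomial identity $F_{\textrm{Beta}(a,n+1-a)}(x) = \P(\textrm{Bin}(n,x)\ge a)$: with $x = 1-\alpha-\Delta$ the binomial has mean $n(1-\alpha) - n\Delta \le a - n\Delta$, so $\P(\textrm{Bin}(n,x)\ge a) \le \P(\textrm{Bin}(n,x) - nx \ge n\Delta) \le e^{-2n\Delta^2}$ by Hoeffding's inequality. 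For non-integer shape parameters one can instead invoke the sub-Gaussian tail bound for the Beta distribution (variance proxy $\tfrac{1}{4(n+2)}$) or run a direct Chernoff argument on the $\textrm{Gamma}/(\textrm{Gamma}+\textrm{Gamma})$ representation.

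I expect the main obstacle to be the tie handling in the non-continuous case, i.e., making the distributional-transform coupling precise enough to guarantee $W_{(k)} \le G(S_{(k)})$ and, in particular, verifying that ordering by the $W_i$ is consistent with ordering by the $S_i$. The continuous-score reduction and the final Hoeffding-type bound are comparatively routine, the latter needing only a little extra care to cover non-integer shape parameters.
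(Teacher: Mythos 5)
Your proof is correct, but handles the non-continuous score case by a genuinely different device than the paper. The paper's argument is more parsimonious: it observes that $U_i := F(S_i)$ are i.i.d.\ and superuniform ($\P(U_i\le\tau)\le\tau$), notes $F(S_{(k)}) = U_{(k)}$ by monotone invariance, and then applies the general fact that order statistics of i.i.d.\ stochastically larger variables are stochastically larger---i.e., $\P(U_{(k)}\le\tau)\le\P(U^*_{(k)}\le\tau)$ for truly uniform $U^*_i$, since the order-statistic CDF $\sum_{j\ge k}\binom{n}{j}F(t)^j(1-F(t))^{n-j}$ is monotone in $F(t)$. You instead build an explicit almost-sure coupling via the distributional transform $W_i = \P(S_1<S_i)+V_i\P(S_1=S_i)$ with auxiliary independent uniforms $V_i$, verify the order-consistency of sorting by $W$ with sorting by $S$, and conclude $W_{(k)}\le G(S_{(k)})$ pathwise with $W_{(k)}\sim\textrm{Beta}(k,n+1-k)$ exactly. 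Your route is slightly heavier (it requires constructing the auxiliary randomness and checking the order-preservation claim, which, as you note, is the delicate step), but it yields a clean pathwise coupling rather than only a CDF comparison. For the final tail bound, you give the Beta--binomial--Hoeffding argument for integer shape parameters and defer to sub-Gaussianity of the Beta otherwise; the paper simply cites that $\textrm{Beta}((1-\alpha)(n+1),\alpha(n+1))$ is $\tfrac{1}{4n}$-subgaussian with mean $1-\alpha$. Both give $e^{-2n\Delta^2}$.
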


One consequence of this result is an answer to the question, how large should the calibration set be for split conformal prediction? We know that the marginal coverage will be at least $1-\alpha$ for any $n$, but the above theorem tells us that a larger calibration set is still better because it leads to coverage that lies closer to $1-\alpha$ conditionally: 
since this Beta distribution has expected value $1-\alpha$ and variance $\frac{\alpha(1-\alpha)}{n+2} = \bigo(\frac{1}{n})$, the theorem implies that training-conditional coverage for split conformal can be lower-bounded as
\[\P(Y_{n+1}\in\cC(X_{n+1})\mid \cD_n) \geq 1- \alpha - \epsilon_n,\]
for an error term $\epsilon_n$ that has standard deviation scaling as $n^{-1/2}$.
See Figure~\ref{fig:beta} for a visualization of the distribution of conditional coverage for different values of $n$.

\begin{figure}[t]
    \centering
    \includegraphics[width=0.7\linewidth]{\diagramspath training-conditional-coverage-distribution.pdf}
    \caption{\textbf{The distribution of coverage of split conformal prediction conditional on the training data}, when the scores have no ties. The distribution concentrates around $1-\alpha$ with rate $n^{-1/2}$.}
    \commentAlt{The plot shows three curves, plotted against a horizontal axis ranging from $0.8$ to $1.0$. The three curves are all approximately bell-curve shaped, though slightly asymmetric. See long description.}
    \commentLongAlt{The plot shows three curves, plotted against a horizontal axis ranging from $0.8$ to $1.0$. The three curves are all approximately bell-curve shaped, though slightly skewed towards higher values. The curves correspond to the distribution of the training-conditional coverage level at $n=100$, $n=1000$, and $n=10000$. The target coverage level $0.9$ indicated by a vertical dashed line. The curve for $n=10000$ is very narrow, with nearly all mass placed extremely close to the target value of $0.9$. The curve for $n=1000$ is wider, and the curve for $n=100$ is extremely wide compared to the others.}
    \label{fig:beta}
\end{figure}

\begin{proof}[Proof of Theorem~\ref{thm:training-conditional-split-CP}]
Throughout this proof, we will treat the pretrained score function as fixed---formally, recalling Algorithm~\ref{alg:split-cp}, we are conditioning on the pretraining set $\cD_{\rm pre}$. 

Let $F$ be the CDF of the distribution of $s(X,Y)$, when the data point is sampled as $(X,Y)\sim P$. Define $S_i = s(X_i,Y_i)$ for $i\in[n+1]$, which are i.i.d.\ draws from the distribution with CDF $F$.
Next, let $S_{(1)}\leq \dots \leq S_{(n)}$ be the order statistics of $S_1,\dots,S_n$. By definition of split conformal, we have $Y_{n+1}\in\cC(X_{n+1})$ if and only if $S_{n+1}\leq S_{(k)}$ for $k = \lceil (1-\alpha)(n+1)\rceil$, and so we have
\[\P(Y_{n+1}\in\cC(X_{n+1})\mid \cD_n) = \P(S_{n+1}\leq S_{(k)}\mid \cD_n) = F(S_{(k)}),\]
where the last step holds since $S_{n+1}$ is independent of $\cD_n$ and has CDF $F$, while $S_{(k)}$ is a function of $\cD_n$. Therefore,
\[\P\big(\P(Y_{n+1}\in\cC(X_{n+1})\mid \cD_n) \leq 1-\alpha-\Delta\big) = \P\big(F(S_{(k)}) \leq1-\alpha-\Delta\big). \]

Next let $U_i = F(S_i)$ for all $i\in[n]$. Then the $U_i$'s are i.i.d., and by the basic property of CDFs~\eqref{eq:CDF_basic_fact}, their shared distribution is superuniform---that is, $\P(U_i\leq\tau)\leq\tau$ for all $\tau\in[0,1]$. Note also that, by monotonicity of $F$, the order statistics $U_{(1)}\leq\dots\leq U_{(n)}$ of $U_1,\dots,U_n$ satisfy $U_{(k)}=F(S_{(k)})$ (see Fact~\ref{fact:monotone-invariance-quantiles}\ref{fact:monotone-invariance-quantiles_part1}). Then
\[\P\big(F(S_{(k)})\leq 1-\alpha-\Delta\big) = \P\big(U_{(k)} \leq 1-\alpha-\Delta\big) \leq \P\big(U^*_{(k)}\leq 1-\alpha-\Delta\big),\]
where $U^*_1,\dots,U^*_n$ are i.i.d.\ $\textnormal{Unif}[0,1]$ random variables, with order statistics $U^*_{(1)}\leq \dots\leq U^*_{(n)}$. Finally, by definition of the Beta distribution, the $k$th order statistic from $n$ i.i.d.\ uniform random variables has distribution
\[U^*_{(k)}\sim\textnormal{Beta}(k,n+1-k).\]
This distribution stochastically dominates the $\textrm{Beta}\left((1-\alpha)(n+1),\alpha(n+1)\right)$ distribution, since $k\geq (1-\alpha)(n+1)$.
This proves the first inequality claimed in the theorem.

The second inequality (i.e., the bound  on the CDF of the Beta distribution) is due to the fact that the $\textrm{Beta}\left((1-\alpha)(n+1),\alpha(n+1)\right)$ distribution has mean $1-\alpha$, and is $\frac{1}{4n}$-subgaussian.
\end{proof}

\subsection{High-probability coverage}
\label{sec:training-conditional-pac}
As described above, the marginal coverage property is equivalent to a bound on the \emph{expected} training-conditional miscoverage,
\begin{equation}
    \P\Big(Y_{n+1} \in \cC(X_{n+1})\Big) \ge 1 - \alpha \iff \E\left[\P\left(Y_{n+1} \not\in \cC\left(X_{n+1}\right) \big| \: \cD_n\right)\right] \leq  \alpha,
\end{equation}
where the expectation is over the calibration data $\cD_n$. 
We can also consider a different approach: instead of bounding the expected error, we might wish to bound the error with some desired probability---that is, in this setting, we enforce a bound on the chance that we end up with poor training-conditional coverage. Concretely, for some $\delta \in (0,1)$, we could ask for a prediction set $\cC(X_{n+1})$ that satisfies
\begin{equation}
\label{eq:high-prob-control}
    \P\left( \ \P\left(Y_{n+1} \not\in \cC\left(X_{n+1}\right) \big| \: \cD_n\right)\le \alpha \ \right) \ge 1- \delta.
\end{equation}
In other words, there is only a $\delta$ chance that we draw a calibration set $\cD_n$ leading to a miscoverage rate on future data that is higher than $\alpha$.

In the special case of i.i.d.\ data, we can produce sets $\cC(X_{n+1})$ that satisfy the error-control property in~\eqref{eq:high-prob-control}, with a slight modification of split conformal prediction. Taking as a starting point the distribution of training-conditional coverage given in Theorem~\ref{thm:training-conditional-split-CP}, we see that by defining
\[\cC(X_{n+1}) = \left\{y\in\cY : s(X_{n+1},y)\leq \quantile\left(S_1,\dots,S_n;(1-\alpha')(1+1/n)\right)\right\}\]
(i.e., split conformal run at a nominal level $1-\alpha'$), then
we will achieve the desired high probability bound~\eqref{eq:high-prob-control} if we choose the appropriate nominal level $1-\alpha'$. In particular, given a fixed $\delta>0$, we can take $\alpha'$ to be the unique value that satisfies
\[F_{\textnormal{Beta}((1-\alpha')(n+1),\alpha'(n+1))}(1-\alpha)  = \delta.\]
By our calculations on the mean and variance of the Beta distribution, above, we can see that $\alpha' = \alpha - \bigo(n^{-1/2})$---that is, this is simply a slightly more conservative version of split conformal prediction.

\subsection{Exchangeable data or i.i.d.\ data?}
It is important to note that for this chapter, our positive results will primarily be in the setting of i.i.d.\ data. In contrast, for the marginal results of Chapter~\ref{chapter:conformal-exchangeability} (and for related results later on in the book as well), an assumption of exchangeability is sufficient. 
The reason for the distinction, in the case of training-conditional coverage for split conformal, is that we need the sample quantiles of the empirical distribution of scores in the calibration set, $(S_i)_{i\in[n]}$, to concentrate around their corresponding true quantiles. If we only assume exchangeability rather an i.i.d.\ assumption, then this concentration phenomenon is no longer guaranteed to hold, making training-conditional coverage impossible to guarantee.

\section{A hardness result for training-conditional coverage}
\label{sec:training-conditional-full-CP}

We have seen in Theorem~\ref{thm:training-conditional-split-CP} above
that split conformal prediction automatically satisfies a training-conditional coverage guarantee (as long as the data points are assumed to be i.i.d., rather than merely exchangeable). 
The following result shows that, without further assumptions, training-conditional coverage is \emph{not} guaranteed for full conformal prediction. 
Essentially, the reason is that, for split conformal in the i.i.d.\ data setting, the scores $S_i=s(X_i,Y_i)$ are i.i.d.\ (when we treat the pretrained conformal score function $s$ as fixed); for full conformal, on the other hand, the scores $S_i=s((X_i,Y_i);\cD_{n+1})$ are \emph{not} i.i.d.\ and thus may not exhibit the same favorable concentration type properties.

To make this more precise, we first introduce the terminology of a \emph{nonatomic} distribution.

\begin{definition}[Atoms and nonatomic distributions]\label{def:nonatomic}
For a distribution $P$ on $\cZ$, the \emph{atoms} of $P$ are all points $z\in\cZ$ with positive probability,
\[\textnormal{atom}(P) = \{z\in\cZ : \P_P(Z=z)>0\}.\]
The distribution $P$ is called \emph{nonatomic} if it has no atoms, i.e., $\textnormal{atom}(P)=\varnothing$ (the empty set).
\end{definition}
In the case of a real-valued random variable, $\cZ=\R$, a nonatomic distribution $P$ is also called \emph{continuous}, which includes any distribution that has a density with respect to Lebesgue measure.
In $\R^d$ when $d>1$, a distribution that has a density with respect to Lebesgue measure must again be nonatomic, but many common nonatomic distribution do not have a density: for example, the uniform distribution on the unit sphere is nonatomic.

We are now ready to state the theorem, which establishes the impossibility of guaranteeing training-conditional coverage for full conformal prediction in the setting of a nonatomic feature distribution.

\begin{theorem}[Failure of training-conditional coverage for full conformal prediction]\label{thm:training-conditional-full-CP}
    Let $P$ be any distribution on $\cX\times\cY$ such that the marginal $P_X$ is nonatomic. Then there exists a symmetric conformal score function $s$ such that, when running full conformal prediction with this choice of $s$, the training-conditional coverage probability  satisfies
    \[\P\bigg( \P(Y_{n+1} \in \cC(X_{n+1}) \mid \cD_n) = 0\bigg)\geq \alpha - \bigo\left(\sqrt{\frac{\log n}{n}}\right),\]
    where the probability is taken with respect to the training set $\cD_n=((X_1,Y_1),\dots,(X_n,Y_n))$ and test point $(X_{n+1},Y_{n+1})$ drawn i.i.d.\ from $P$.
\end{theorem}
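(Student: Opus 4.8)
The plan is to reduce, via the first proof of Theorem~\ref{thm:full-conformal}, to a purely deterministic statement about the augmented scores, and then to engineer a symmetric score function that makes this statement hold on a large event. Write $S_i^y = s((X_i,Y_i);\cD_{n+1}^y)$ for $i\le n$ and $S_{n+1}^y = s((X_{n+1},y);\cD_{n+1}^y)$. By the Replacement Lemma (Lemma~\ref{lem:n+1-to-n-reduction} with $t=1-\alpha$), $y\notin\cC(X_{n+1})$ exactly when $S_{n+1}^y > \quantile(S_1^y,\dots,S_{n+1}^y;1-\alpha)$, i.e.\ when $S_{n+1}^y$ strictly exceeds the $\lceil(1-\alpha)(n+1)\rceil$-th smallest of the $n+1$ augmented scores. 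Since $\cC(X_{n+1})=\varnothing$ forces $Y_{n+1}\notin\cC(X_{n+1})$ irrespective of $Y_{n+1}$, it suffices to produce a symmetric $s$ and an event $E$, measurable with respect to $\cD_n$, with $\P(E)\ge\alpha-\bigo(\sqrt{\log n/n})$, such that on $E$ the strict inequality above holds simultaneously for $P_X$-almost every test feature $X_{n+1}$ and every $y\in\cY$. (When $(1-\alpha)(1+1/n)>1$ the set is all of $\cY$ and nothing can work, but then $\alpha=\bigo(1/n)$ and the claimed bound is vacuous, so we assume $\alpha$ is not that small.)

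The construction exploits the feature distinguishing full from split conformal: the calibration scores depend on the test point through the augmented dataset. Since $P_X$ is nonatomic we may pass through a quantile transform $g:\cX\to[0,1]$, assume the features are i.i.d.\ uniform and a.s.\ distinct, and let $s$ ignore the responses. The device is a \emph{leave-one-out} score $s((x,y);\cD)=\Lambda\big(\cD\setminus\{(x,y)\}\big)$, where $\Lambda$ is a permutation-invariant ``regularity functional'' on size-$n$ datasets; this is symmetric, and, crucially, $S_{n+1}^y=\Lambda(\cD_n)$ does not depend on $y$, while $S_i^y=\Lambda\big(\cD_n\text{ with its $i$-th point replaced by }(X_{n+1},y)\big)$, which (as $\Lambda$ sees only features) also does not depend on $y$. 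Hence $\cC(X_{n+1})=\varnothing$ holds precisely when $\Lambda(\cD_n)$ strictly exceeds the $\lceil(1-\alpha)(n+1)\rceil$-th smallest of the $n+1$ numbers $\{\Lambda(\cD_n)\}\cup\{\Lambda(\cD_n \text{ with $i$-th point}\to(X_{n+1},\cdot))\}_{i\in[n]}$, i.e.\ when, for all but at most roughly $\alpha n$ indices $i$, swapping the $i$-th calibration point for the (arbitrary) test feature \emph{strictly decreases} $\Lambda$ --- automatically uniformly in $y$. One then picks $\Lambda$ to be a functional of the empirical feature distribution calibrated to the $\sqrt{\log n/n}$ scale (e.g.\ a one-sided Kolmogorov--Smirnov statistic, or an extreme spacing), and takes $E$ to be the event that $\Lambda(\cD_n)$ exceeds its own approximate $(1-\alpha)$-quantile, on which one verifies the required ``near local maximum'' behavior: replacing a single calibration feature by any fresh value decreases $\Lambda$ along all but an $\alpha$-fraction of the coordinate directions.

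Finally one estimates $\P(E)$. Using the exact $(1-\alpha)$-quantile of $\Lambda(\cD_n)$ as threshold gives an event of probability exactly $\alpha$; to make the local-maximum property robust to the $\bigo(1/n)$ perturbation caused by swapping one point, one shrinks this event by a boundary layer, and to control the empirical suprema defining $\Lambda$ one invokes a Dvoretzky--Kiefer--Wolfowitz / Hoeffding-type concentration bound (this is exactly the kind of inequality already used in the $e^{-2n\Delta^2}$ bound of Theorem~\ref{thm:training-conditional-split-CP}). Because a boundary layer of width $\delta$ in the uniformized coordinate carries $P_X$-mass exactly $\delta$, and the relevant scale is $\delta$ of order $\sqrt{\log n/n}$ (the logarithm absorbing a union bound over the location of the supremum and over the $n$ calibration indices), the loss is only $\bigo(\sqrt{\log n/n})$, giving $\P(E)\ge\alpha-\bigo(\sqrt{\log n/n})$.

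The genuine difficulty is the choice of $\Lambda$ together with the verification that, on $E$, inserting \emph{any} new feature value decreases $\Lambda$ along all but an $\alpha$-fraction of coordinate directions. A Fubini argument shows this cannot hold along \emph{all} $n$ directions for a positive-probability set of $\cD_n$, so the construction must be tuned so that precisely an $\alpha$-fraction of directions is permitted to fail; it is this tight budget, interacting with the $\bigo(1/n)$ effect of a single point swap, that both forces the $\sqrt{\log n/n}$ loss and constitutes the technical heart of the argument.
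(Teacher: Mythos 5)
Your leave-one-out reformulation is exactly right, and it correctly exposes the mechanism the paper uses: with $s((x,y);\cD)=\Lambda(\cD\setminus\{(x,y)\})$, the test point's own augmented score $S_{n+1}^y=\Lambda(\cD_n)$ is a function of $\cD_n$ alone (independent of both $y$ and $X_{n+1}$), and $\cC(X_{n+1})=\varnothing$ iff strictly fewer than $\lceil(1-\alpha)(n+1)\rceil$ of the single-point replacements $\Lambda(\cD_n^{(i\to X_{n+1})})$ fail to drop below $\Lambda(\cD_n)$. Indeed the paper's score can be written in precisely this form, with $\Lambda(\cD')=\ind{\textnormal{mod}(\sum_j a(x_j),n)<N}$ for $a:\cX\to\{0,\dots,n-1\}$ chosen so that $a(X)$ is uniform. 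The DKW/Hoeffding flavor of the concentration step also matches.

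The genuine gap is the choice of $\Lambda$, which you explicitly leave open and which is the entire content of the proof. Neither of your floated candidates works. A one-sided KS statistic $\sup_t(F_{\cD'}(t)-t)$ fails uniformity over the test feature: for $X_{n+1}$ to the left of all calibration features, replacing any $x_i$ by $X_{n+1}$ can only raise $F$ and thus $\Lambda$, so no replacement decreases the score, and this pathological region of $X_{n+1}$ has positive $P_X$-mass ($\approx 1/n$) conditional on $\cD_n$; the two-sided version has the symmetric problem on the other tail, and an extreme spacing is a local feature that most single-point swaps leave unchanged (not a strict decrease). The paper's $\Lambda$ avoids all this because the cyclic $\textnormal{mod}\ n$ structure makes the construction exactly translation-invariant: $\P(\Lambda(\cD_n)=1)=N/n$ exactly (no ``boundary layer,'' contrary to your accounting), and the event $\cE_{\textnormal{unif}}$ — each of the $n$ cyclic sliding windows $W_k$ captures at least $(1-\alpha)(n+1)$ of the $a(X_i)$'s — guarantees that, on $\cE_{\textnormal{mod}}\cap\cE_{\textnormal{unif}}$, for \emph{every} $X_{n+1}$ at least $(1-\alpha)(n+1)$ replacements send $\Lambda$ to $0<1$. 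The $\sqrt{\log n/n}$ loss is then purely from choosing $N=\alpha n-\bigo(\sqrt{n\log n})$ so that a union bound over the $n$ windows of a $\textnormal{Binomial}(n,1-N/n)$ lower tail is $\bigo(1/n)$; it is not the width of a soft boundary in $[0,1]$. Without a replacement for the wraparound idea, the ``near local maximum along all but an $\alpha$-fraction of directions, uniformly in the new point'' property cannot be obtained from generic regularity functionals, and your proposal does not close this.
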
 \index{hardness result!training-conditional coverage}

As we will see below, this theorem is proved by constructing an explicit counterexample, whose score function $s((x,y);\cD)$ is highly degenerate.
Of course, it will often be the case that, for the types of score functions we might encounter in practice, the prediction sets returned by full conformal may exhibit training-conditional coverage; this result is simply telling us that this type of property cannot be guaranteed to hold universally across all possible score functions. Likewise, if $X$ is instead discrete, then it may be possible to ensure training-conditional coverage.

\begin{proof}[Proof of Theorem~\ref{thm:training-conditional-full-CP}]
Let $\alpha_P(\cD_n) = \P(Y_{n+1} \notin \cC(X_{n+1}) \mid \cD_n)$.
Fix a large integer $N\approx \alpha n$---the exact value will be specified later. First, since $P_X$ is nonatomic, we can define a function $a:\cX\rightarrow\{0,\dots,n-1\}$ such that $a(X)\sim \textnormal{Unif}\{0,\dots,n-1\}$ when $X\sim P_X$. 
Consider the following score function: for dataset $\cD = ((x_1,y_1),\dots,(x_k,y_k))$ and an additional data point $(x,y)$, define
\[s\big((x,y);\cD\big) = \ind{\textnormal{mod}\left( -a(x) + \sum_{j=1}^k a(x_j)  , n \right) < N}.\]

Next consider applying this score to a test data point $(X_{n+1},Y_{n+1})$ within the dataset $\cD_{n+1} = ((X_1,Y_1),\dots,(X_n,Y_n),(X_{n+1},Y_{n+1}))$. 
Recall from the proof of Theorem~\ref{thm:full-conformal}  that the coverage event $Y_{n+1}\in\cC(X_{n+1})$ is equal to the event that $S_{n+1}\leq \quantile\left(S_1,\dots,S_{n+1};1-\alpha\right)$, where the scores are defined as $S_i = s((X_i,Y_i);\cD_{n+1})$ (see~\eqref{eqn:standard-proof-coverage-equiv_Yn+1}). Since the scores always take values in $\{0,1\}$ in this construction, coverage can fail only when we simultaneously have $S_{n+1}=1$ and $\quantile\left(S_1,\dots,S_{n+1};1-\alpha\right)=0$. In other words, we have
\[\alpha_P(\cD_n) = \P\left(S_{n+1}=1, \quantile\left(S_1,\dots,S_{n+1};1-\alpha\right)=0\mid \cD_n\right).\]
From this point on, the main idea of the proof will be as follows: we will define an event $\cE_{\textnormal{mod}}$, such that if $\cE_{\textnormal{mod}}$ occurs (which has probability $\approx \alpha$), then we will likely have high training-conditional miscoverage $\alpha_P(\cD_n)$.

Formally, we define
$\cE_{\textnormal{mod}}$ as the event that 
$\textnormal{mod}\left(\sum_{i=1}^n a(X_i)  , n \right) < N$. 
With our particular definition of the score function, we have
\[S_{n+1} = s\big((X_{n+1},Y_{n+1});\cD_{n+1}\big) = \ind{\textnormal{mod}\left(  \sum_{i=1}^n a(X_i)  , n \right) < N} = \indsub{\cE_{\textnormal{mod}}}.\]
Since this event only depends on the training data $\cD_n$ and not on the test point  $(X_{n+1},Y_{n+1})$, we can rewrite the conditional probability of noncoverage as
\[\alpha_P(\cD_n) = \indsub{\cE_{\textnormal{mod}}}\cdot \P\left( \quantile\left(S_1,\dots,S_{n+1};1-\alpha\right)=0\mid \cD_n\right).\]
In order to show that the event $\cE_{\textnormal{mod}}$ leads to high $\alpha_P(\cD_n)$, then, we now need to verify that the above conditional probability is likely to be high.

To do so, we need to define another event, $\cE_{\textnormal{unif}}$. First, we define a series of sliding windows: for any integer $k$, define
\[W_k = \big\{i\in \{0,\dots,n-1\} : \textnormal{mod}(-i + k - 1 , n )\geq N\big\}.\]
For example, we have $W_0=\{0,\dots,n-N-1\}$, $W_1 = \{1,\dots,n-N\}$, and so on.
Then, let $\cE_{\textnormal{unif}}$ be the event that
\begin{equation}\label{eqn:Ecal_unif}\sum_{i=1}^n\ind{a(X_i)\in W_k} \geq (1-\alpha)(n+1)\textnormal{ for all integers $k$},\end{equation}
i.e., each window $W_k$ contains a sufficient fraction of the sample.
This event is illustrated in Figure~\ref{fig:E_unif_illustration}.

For each training point $(X_i,Y_i)$, we can calculate
\[S_i = s\big((X_i,Y_i);\cD_{n+1}\big) = \ind{\textnormal{mod}\left( -a(X_i) + \sum_{j=1}^{n+1} a(X_j)  , n \right) < N}.\] 
By taking $k = 1 + \sum_{j=1}^{n+1} a(X_j)$, we have $S_i=\ind{a(X_i)\not\in W_k}$. We can see that if $\cE_{\textnormal{unif}}$ holds, then $\sum_{i=1}^n\ind{S_i=0} \geq (1-\alpha)(n+1)$ and therefore $\quantile\left(S_1,\dots,S_{n+1};1-\alpha\right) = 0$.
Therefore,
\[\alpha_P(\cD_n) \geq \indsub{\cE_{\textnormal{mod}}}\cdot \indsub{\cE_{\textnormal{unif}}}.\]
In other words, we have shown that
\[\P\left(\alpha_P(\cD_n)=1\right) \geq \P\left(\cE_{\textnormal{mod}}\cap \cE_{\textnormal{unif}}\right) \geq \P\left(\cE_{\textnormal{mod}}\right) -  \P\left(\cE_{\textnormal{unif}}^c\right)= \frac{N}{n} - \P\left(\cE_{\textnormal{unif}}^c\right).\]
Here the last step holds since $a(X_i)\iidsim \textnormal{Unif}\{0,\dots,n-1\}$ for each $i=1,\dots,n$, and therefore $\textnormal{mod}\left(\sum_{i=1}^n a(X_i),n\right)$ also follows this uniform distribution, i.e., $\P(\cE_{\textnormal{mod}}) = N/n$. 

Our last step is to bound $\P\left(\cE_{\textnormal{unif}}^c\right)$. First, we observe that, by definition of the mod function, $\cE_{\textnormal{unif}}$ holds if and only if $\sum_{i=1}^n\ind{\textnormal{mod}(-a(X_i)+k-1,n) \geq N} \geq (1-\alpha)(n+1)$ for all $k=0,\dots,n-1$, and so
\[\P\left(\cE_{\textnormal{unif}}^c\right) \leq \sum_{k=0}^{n-1} \P\left( \sum_{i=1}^n\ind{\textnormal{mod}(-a(X_i)+k-1,n) \geq N} < (1-\alpha)(n+1)\right).\]
Moreover, since $a(X_i)\iidsim \textnormal{Unif}\{0,\dots,n-1\}$, we see that
\[\sum_{i=1}^n\ind{\textnormal{mod}(-a(X_i)+k-1,n) \geq N}\sim \textnormal{Binomial}\left(n,1-\frac{N}{n}\right)\]
for each fixed $k$, and so
\[\P\left(\cE_{\textnormal{unif}}^c\right) \leq n \cdot \P\left(\textnormal{Binomial}\left(n,1-\frac{N}{n}\right) < (1-\alpha)(n+1)\right).\]
Taking $N = \alpha n - \sqrt{n\log n} - (1-\alpha)$, by Hoeffding's inequality,
\begin{multline*}\P\left(\textnormal{Binomial}\left(n,1-\frac{N}{n}\right) < (1-\alpha)(n+1)\right)
\leq \exp\left\{ -\frac{2}{n} \left[ n-N - (1-\alpha)(n+1)\right]^2\right\}\\
= \exp\left\{ -\frac{2}{n} \left[ \sqrt{n\log n}\right]^2\right\} = \frac{1}{n^2}.
\end{multline*}
Therefore, $\P\left(\cE_{\textnormal{unif}}^c\right)\leq 1/n$, and we then have
\[\P\left(\alpha_P(\cD_n)=1\right) \geq  \frac{N}{n} - \P\left(\cE_{\textnormal{unif}}^c\right) = \frac{\alpha n - \sqrt{n \log n} + (1-\alpha)}{n} - \frac{1}{n} = \alpha - \bigo\left(\sqrt{\frac{\log n}{n}}\right),\]
which completes the proof.
\end{proof}
\index{coverage!training-conditional|)}

\begin{figure}[t]
\centering
\includegraphics[width=0.9\textwidth]{\diagramspath E_unif.pdf}
\caption{\textbf{Illustration of the event $\cE_{\textnormal{unif}}$}, which is defined in the proof of Theorem~\ref{thm:training-conditional-full-CP}. Each plot shows a different random draw of the histogram of the values $a(X_i)\in\{0,\dots,n-1\}$, for training points $i\in[n]$, where $n=50$. Since we have $n$ data points and $n$ values, we have 1 observation per value on average, but the random draws are quite noisy. However, setting $N=20$, a sliding window of length $n-N=30$ typically contains approximately $n-N$ many data points. For each draw of the data, we highlight two examples of the sliding window of indices $W_k$, at $k=0$ and $k=15$. For the left-hand plot, the event $\cE_{\rm unif}$ holds---the window $W_k$ contains a sufficient fraction of the observed $a(X_i)$ values, for every $k\in\{0,\dots,n-1\}$. For the right-hand plot, the event $\cE_{\rm unif}$ fails---in particular, the window $W_{0}$ contains too few of the $a(X_i)$ values.}
\commentAlt{Each of the two panels shows a histogram with a large number of bins, with the first bin labeled as $0$ and the last bin labeled as $n-1$. The vertical axis is labeled as the `count'. See long description.}
\commentLongAlt{Each of the two panels shows a histogram with a large number of bins, with the first bin labeled as $0$ and the last bin labeled as $n-1$. The vertical axis is labeled as the `count', and the histogram bars show several different heights, including many bars that are absent (i.e., a count of zero). In both panels, a curly bracket highlighting a stretch of bins beginning with bin $0$ is labeled as `Window $W_0$', and another curly bracket highlighting a stretch of bins farther along the sequence is labeled as `Window $W_{15}$'. In the left panel, the histogram is fairly uniformly spread over the $n$ bins, and in particular the two highlighted windows each contain roughly equal counts. In the right panel, the histogram is less uniformly spread out, with a long stretch of consecutive bins that have a zero count, and consequently one of the two windows has a much lower total count than the other.}
\label{fig:E_unif_illustration}
\end{figure}

\section{Conditioning on the test point features}\label{sec:condition_test_point}
\index{coverage!test-conditional|(}

Next we turn to the problem of conditioning on the test features $X_{n+1}$, rather than the training data.
We call this `test-conditional coverage', meaning coverage conditional on the test \emph{feature} $X_{n+1}$ only, as is common in the literature.
In its strongest possible version, achieving test-conditional coverage would require that our construction of the prediction sets satisfies the following property:
for data drawn i.i.d.\ from any distribution, 
\begin{equation}
    \P\big(Y_{n+1}\in\cC(X_{n+1})\mid X_{n+1}\big)\geq 1-\alpha
\end{equation}
holds almost surely. (In particular, a guarantee of this type would ensure that we avoid the problem of uneven coverage illustrated in Figure~\ref{fig:conditional-coverage}.)
    
In this section, we will explore this version of the test-conditional coverage property, and will study its challenges.
Afterward, in later sections, we will examine relaxations of the property.

\subsection{The discrete setting}\label{sec:test_conditional_discrete}

First, we will consider a setting where test-conditional coverage can indeed be achieved: the discrete setting. To take a simple case, suppose that $\cX = \{x_1,\dots,x_K\}$ is a finite set---effectively, the feature $X_i$ for data point $i$ simply identifies which of $K$ many groups this data point belongs to. If the sample size $n$ is far larger than the number of feature groups $K$, we might expect that test-conditional coverage is easy to achieve. Indeed, we will now see that while conformal prediction may not itself achieve test-conditional coverage, a simple modification will enable this property. 

First, why doesn't conformal prediction already satisfy this property?  Conformal prediction is guaranteed to satisfy a marginal coverage guarantee, which we can rewrite for this setting as
\[1-\alpha \leq \P\big(Y_{n+1}\in\cC(X_{n+1})\big) = \sum_{k=1}^K \P(X_{n+1}=x_k)\cdot  \P\big(Y_{n+1}\in\cC(X_{n+1})\mid X_{n+1}=x_k\big).\]
This means that the test-conditional coverage values, $\P\big(Y_{n+1}\in\cC(X_{n+1})\mid X_{n+1}=x_k\big)$ for $k\in[K]$, are guaranteed to be $\geq 1-\alpha$ in the (weighted) average, but some may be lower than this threshold. 

Now we will see how to modify conformal prediction to achieve test-conditional coverage in this setting.
For simplicity, let us consider split conformal prediction, as defined earlier in Algorithm~\ref{alg:split-cp}. We recall that the split conformal prediction interval is given by the construction
\[\cC(x) = \{y : s(x,y) \leq \hat{q}\},\]
where $s:\cX\times\cY\to\R$ is the pretrained conformal score function, and where
\[\hat{q} = \quantile\left(S_1, \ldots, S_n ; (1-\alpha)(1+1/n)\right)\]
computes the quantile of the conformal scores on the calibration set. To achieve test-conditional coverage in the finite setting $\cX = \{x_1,\dots,x_K\}$, we will instead define
\begin{equation}\label{eqn:test_conditional_coverage_discrete_X}\cC(x_k) = \{y : s(x_k,y) \leq \hat{q}_k\},\end{equation}
for each $k\in[K]$, where
\[\hat{q}_k = \quantile\left((S_i)_{i\in [n], X_i = x_k} ; (1-\alpha)(1+1/n_k)\right)\]
instead computes the quantile of scores among only those data points $i$ for which $X_i = x_k$, and where $n_k = \sum_{i\in [n]} \indsub{X_i = x_k}$ counts the number of such points.
(To account for the case where a value $x_k$ might not appear in the calibration set, we will define the quantile of an empty list to be $+\infty$---that is, if $X_i\neq x_k$ for all $i\in[n]$, then $\hat{q}_k = +\infty$.)
This modified procedure will now satisfy the following coverage guarantee:
\begin{theorem}[Discrete test-conditional coverage guarantee]\label{thm:discrete_conditional}
    Suppose $\cX = \{x_1,\dots,x_K\}$, and $(X_1,Y_1),...,(X_{n+1},Y_{n+1})$ are exchangeable. Then the prediction interval $\cC(X_{n+1})$ defined in~\eqref{eqn:test_conditional_coverage_discrete_X} satisfies\\\[\P(Y_{n+1}\in\cC(X_{n+1})\mid X_{n+1} = x_k) \geq 1-\alpha,\]
     for all $k\in[K]$ with $\P(X_{n+1} = x_k)>0$.
\end{theorem}
We defer a proof of this claim to Section~\ref{sec:test-conditional-binning}, where we will see that this is a special case of a more general result.

We note that, if the distribution of $s(X,Y)$ conditional on $X=x_k$ is fairly similar for each $k\in[K]$, then the group-wise quantiles $\hat{q}_k$ will be fairly similar as well; in this setting, the original split conformal method may already be nearly achieving test-conditional coverage. On the other hand, if these $K$ distributions are substantially different, then the $\hat{q}_k$'s may be quite different; a single conformal quantile $\hat{q}$ that guarantees marginal coverage might lead to uneven coverage across the $K$ groups.

\subsection{Impossibility in the continuous setting}\label{sec:test-conditional-continuous-impossible}

In the previous section, we considered the discrete setting, where the feature $X$ takes only a small number of possible values. In this section, we will instead consider the other extreme: what if $X$ is continuously distributed? Can test-conditional coverage be guaranteed via any method, whether conformal prediction or some other procedure? To answer this question, we will now write $\cC$ to denote \emph{any} procedure that uses a training dataset of size $n$ to construct prediction sets. Formally, $\cC$ is a map from $(\cX\times\cY)^n\times \cX$ to the set of subsets of $\cY$, but we will continue to write $\cC(X_{n+1})$ for the resulting prediction set, i.e., we suppress the dependence on the training data $(X_1,Y_1),\dots,(X_n,Y_n)$ in the notation.

We will now consider the setting where, under the joint distribution $P$ of the pair $(X,Y)$, the marginal distribution $P_X$ of $X$ is nonatomic (recall Definition~\ref{def:nonatomic}). In this setting, we will see that test-conditional inference is impossible.

What does `impossible' mean in this context? For example, if we simply return $\cC(x) = \cY$ for any test feature $x$ (e.g., returning $(-\infty,\infty)$ as our prediction interval in the case of a real-valued response), then any desired coverage guarantee will hold. 
We can even be slightly less conservative by producing a randomized answer: for any $x$, return 
\begin{equation}\label{eqn:trivial_test_conditional}\cC(x) = \begin{cases}\cY,&\textnormal{ with probability $1-\alpha$},\\  \varnothing, & \textnormal{ with probability $\alpha$}.\end{cases}\end{equation} 
This solution is clearly still unsatisfactory, however, since the resulting prediction interval is completely uninformative---it does not even depend on the training data.
However, the following result will demonstrate that this trivial solution is the best that we can do.

\begin{theorem}[Hardness of test-conditional coverage for nonatomic distributions]\label{thm:conditional_infinite}
    Suppose $\cC$ is any procedure  that satisfies distribution-free conditional coverage, i.e., for any distribution $P$ on $\cX\times\cY$, 
    \[\P\big(Y_{n+1}\in\cC(X_{n+1})\mid X_{n+1}\big)\geq 1-\alpha\]
    holds almost surely, where the probability is taken with respect to $(X_1,Y_1),\cdots,(X_{n+1},Y_{n+1})\iidsim P$, and where $\cC$ implicitly depends on $(X_1,Y_1),\dots,(X_n,Y_n)$. Then, for any distribution $P$ on $\cX\times\cY$ for which the marginal $P_X$ is nonatomic, 
    \[\P\big(y\in \cC(x)\big)\geq 1-\alpha\textnormal{ for every $(x,y)\in\cX\times\cY$}.\]
\end{theorem}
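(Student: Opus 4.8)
The plan is to fix a target point $(x^*, y^*) \in \cX \times \cY$ and show that $\P(y^* \in \cC(x^*)) \geq 1-\alpha$ by exploiting the fact that $P_X$ is nonatomic, which lets us ``hide'' the point $x^*$ among the training features. Concretely, I would construct an auxiliary distribution $Q$ on $\cX \times \cY$ as a small perturbation of $P$: with probability $1-\epsilon$ draw $(X,Y) \sim P$, and with probability $\epsilon$ set $(X,Y) = (x^*, y^*)$ deterministically (a point mass). Under $Q$, the conditional coverage hypothesis applied at the test feature $x^*$ gives $\P_Q(Y_{n+1} \in \cC(X_{n+1}) \mid X_{n+1} = x^*) \geq 1-\alpha$, and since $Q$ assigns positive mass to $\{X_{n+1} = x^*\}$ this conditional event is genuinely nonvacuous. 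The key point is then to transfer this statement back to $P$: because $P_X$ is nonatomic, the event that none of the $n$ training features $X_1,\dots,X_n$ equals $x^*$ has probability $1$ under $P^n$, so on a $(1-\epsilon)^n$-probability event the entire training sample $\cD_n$ is drawn i.i.d.\ from $P$ while the test point is the atom $(x^*,y^*)$.

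The main steps, in order, would be: (1) define $Q = (1-\epsilon)P + \epsilon \delta_{(x^*,y^*)}$ and verify $Q_X$ may have an atom at $x^*$ but $Q$ is a legitimate distribution on $\cX\times\cY$ to which the hypothesis of the theorem applies; (2) write out $\P_Q(Y_{n+1}\in\cC(X_{n+1})\mid X_{n+1}=x^*)$ by conditioning on how many of the $n+1$ draws landed on the atom — the dominant contribution, of order $(1-\epsilon)^n \cdot \epsilon$ (test point on the atom, all training points from $P$), corresponds exactly to ``$\cD_n \sim P^n$ and $(X_{n+1},Y_{n+1})=(x^*,y^*)$''; (3) bound the remaining terms (at least one training point equal to the atom) crudely by something that is $o(1)$ as $\epsilon \to 0$ relative to the main term, and also use that on the event $X_{n+1}=x^*$ but some $X_i = x^*$ too, coverage is still at worst trivially bounded; (4) divide through by $\P_Q(X_{n+1}=x^*) = \epsilon$-order normalization, extract the inequality
\[
\P_{\cD_n \sim P^n}\big(y^* \in \cC(x^*)\big) \cdot \frac{(1-\epsilon)^n \epsilon}{(\text{normalizer})} + (\text{error terms}) \geq 1-\alpha,
\]
and (5) let $\epsilon \to 0$ so that $(1-\epsilon)^n \to 1$, the error terms vanish, and the normalizer-to-main-term ratio tends to $1$, yielding $\P(y^*\in\cC(x^*)) \geq 1-\alpha$ under $P$.

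A cleaner way to organize steps (2)--(5), which I would probably prefer, is to note that under $Q$, conditional on the test point being the atom, $\cD_n$ is an i.i.d.\ sample from $Q$ itself (not $P$), and then argue separately that as $\epsilon \to 0$ the law of $\cD_n \sim Q^n$ converges (in total variation on the relevant event, using nonatomicity to say the atom is a.s.\ absent) to $P^n$; since $y^*\in\cC(x^*)$ is a fixed measurable event in $\cD_n$, the probability is continuous along this limit. This avoids messy multinomial bookkeeping. The hard part is making the limiting argument rigorous: one has to be careful that $\cC$ is an arbitrary (possibly randomized, possibly wildly discontinuous) procedure, so I cannot appeal to any regularity of $\cC$ — only to continuity of the \emph{measure} $\cD_n \mapsto \P(\cD_n \in \cdot)$ as the sampling distribution varies, which is where nonatomicity of $P_X$ is essential (it guarantees the perturbation $Q \rightsquigarrow P$ is ``invisible'' to an i.i.d.\ sample with probability tending to $1$). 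I also need to double-check the edge case where $\cC$'s internal randomization could depend on the data in a way that interacts with the atom; conditioning on the randomization seed and applying the argument pointwise handles this.
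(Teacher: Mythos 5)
Your proposal is correct and, in the ``cleaner way to organize'' you describe, is essentially identical to the paper's proof: the same mixture $P'=(1-\epsilon)P+\epsilon\,\delta_{(x^*,y^*)}$, the same observation that nonatomicity of $P_X$ forces $Y_{n+1}=y^*$ on the event $X_{n+1}=x^*$ under $P'$, and the same total-variation bound $\dtv(P^n,(P')^n)\le n\epsilon$ followed by letting $\epsilon\to 0$. The multinomial bookkeeping in your first organization is unnecessary and the TV argument makes your concern about $\cC$'s internal randomization moot, since the coverage event is a fixed measurable function of the training data (and independent auxiliary randomness), which is exactly what TV distance controls.
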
 \index{hardness result!test-conditional coverage}
To see more concretely how this result establishes the impossibility of nontrivial test-conditional inference, in the case that $\cY = \R$, we can consider its implications for the  \emph{length} (or more formally, the Lebesgue measure) of the prediction interval:
\begin{corollary}\label{cor:infinite-lebesgue}
    Under the assumptions of Theorem~\ref{thm:conditional_infinite}, suppose also that $\cY=\R$. Then, for any distribution $P$ on $\cX\times\cY$ for which the marginal $P_X$ is nonatomic, and for every $x\in\cX$, it holds that
    \[\P\big(\textnormal{Leb}(\cC(x)) = \infty\big)\geq 1-\alpha,\]
    where $\textnormal{Leb}(\cdot)$ denotes the Lebesgue measure. 
\end{corollary}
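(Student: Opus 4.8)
The plan is to derive Corollary~\ref{cor:infinite-lebesgue} directly from the pointwise coverage conclusion of Theorem~\ref{thm:conditional_infinite}: under its hypotheses (which include that $P_X$ is nonatomic), $\P(y\in\cC(x))\ge 1-\alpha$ for every $(x,y)\in\cX\times\cY$. Integrating this inequality over $y$ against Lebesgue measure gives a lower bound on the expected length of $\cC(x)$, and the remaining work is to upgrade that to a lower bound on the probability that the length is \emph{infinite}.

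Fix a distribution $P$ as in the corollary and a point $x\in\cX$, and write $L=\textnormal{Leb}(\cC(x))$, which is a well-defined $[0,\infty]$-valued random variable (a function of the training data), using that the procedure $\cC$ is measurable. For each $M>0$, Tonelli's theorem gives
\[\E\big[\textnormal{Leb}\big(\cC(x)^c\cap[-M,M]\big)\big]=\int_{-M}^{M}\P\big(y\notin\cC(x)\big)\,dy\le 2M\alpha,\]
where the final inequality is exactly Theorem~\ref{thm:conditional_infinite}. On the other hand, $\cC(x)\cap[-M,M]$ is contained in both $\cC(x)$ and $[-M,M]$, so its measure is at most $\min(2M,L)$ deterministically, and hence $\textnormal{Leb}(\cC(x)^c\cap[-M,M])=2M-\textnormal{Leb}(\cC(x)\cap[-M,M])\ge(2M-L)^+$. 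Taking expectations and dividing by $2M$ yields
\[\E\Big[\big(1-\tfrac{L}{2M}\big)^+\Big]\le\alpha\qquad\textnormal{for all }M>0.\]

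The last step is to let $M\to\infty$. For each realization, $(1-\tfrac{L}{2M})^+$ is nondecreasing in $M$ and converges to $\ind{L<\infty}$ --- it equals $0$ for every $M$ when $L=\infty$, and tends to $1$ once $2M>L$ when $L<\infty$ --- and it is bounded by $1$; so monotone convergence gives $\P(L<\infty)=\E[\ind{L<\infty}]\le\alpha$, i.e.\ $\P(\textnormal{Leb}(\cC(x))=\infty)\ge 1-\alpha$, as claimed. The main thing to get right is the truncation: applying the bound on the whole of $\R$ only gives $\E[\textnormal{Leb}(\cC(x))]=\infty$, which is too weak, so one must intersect with $[-M,M]$, use Theorem~\ref{thm:conditional_infinite} on this bounded window, and then pass to the limit so as to recover the sharp constant $1-\alpha$ rather than a Markov-type bound such as $\alpha/c$. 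The only other routine point is measurability of $(\omega,y)\mapsto\ind{y\notin\cC(x)}$ --- needed for Tonelli and for $L$ to be a random variable --- which follows from the standing measurability assumption on $\cC$.
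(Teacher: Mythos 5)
Your proof is correct and takes essentially the same route as the paper's: truncate to a bounded window, apply Tonelli together with the pointwise bound from Theorem~\ref{thm:conditional_infinite}, and pass to the limit. The only difference is cosmetic---the paper bounds $\P(\textnormal{Leb}(\cC(x))\leq a)$ via Markov's inequality on the complementary measure over $[0,a+b]$ and then takes two sequential limits ($b\to\infty$ then $a\to\infty$), whereas you compute $\E[(1-L/2M)^+]\le\alpha$ directly and conclude in a single step by monotone convergence.
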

In particular, this implies that the prediction set
constructed at the test point $X_{n+1}$ must have infinite expected length, $\E[\textnormal{Leb}(\cC(X_{n+1}))] = \infty$.

Now let's compare this to the trivial solution described above in~\eqref{eqn:trivial_test_conditional}, where, regardless of the training data, we return $\cC(x) = \cY$ with probability $1-\alpha$, and $\cC(x)=\varnothing$ otherwise. In that case, we have $\P(y\in\cC(x)) = 1-\alpha$ for all $(x,y)$. The result of Theorem~\ref{thm:conditional_infinite} can therefore be interpreted as saying that, in the nonatomic setting, \emph{any} procedure with distribution-free test-conditional coverage will return an output that is just as uninformative as this trivial construction.

Before proceeding, we pause to point out an important aspect of Theorem~\ref{thm:conditional_infinite} (and its corollary): the bounds apply to \emph{every} distribution $P$ (with nonatomic $P_X$), and we are not simply proving that nontrivial test-conditional coverage is impossible for \emph{some} particularly challenging distribution $P$. That is, we might have expected that we could construct some valid $\cC$ that returns reasonably narrow intervals for `nice' distributions $P$, and is excessively wide only for distributions $P$ that fail to satisfy some needed conditions (e.g., smoothness)---but instead we see from this theorem that, if $\cC$ achieves test-conditional coverage for every $P$, then it also returns a completely uninformative solution for every (nonatomic) $P$.

We are now ready to prove these results. For Theorem~\ref{thm:conditional_infinite}, the key idea of the proof is to construct a perturbation of the distribution $P$: we consider the mixture distribution
\[P' = (1-\epsilon)\cdot P + \epsilon \cdot \delta_{(x,y)},\]
where as before, $\delta_{(x,y)}$ denotes the point mass at $(x,y)$. That is, to sample a point from the distribution $P'$, we either draw from $P$ (with probability $1-\epsilon$), or we simply return the point $(x,y)$ (with probability $\epsilon$).
See Figure~\ref{fig:hardness-test-conditional} for an illustration.
The proof will use the fact that $\cC$ is assumed to satisfy test-conditional coverage with respect to any distribution of the data---and in particular, must therefore offer test-conditional coverage relative to the perturbed distribution $P'$.

\begin{figure}[t]
    \centering
    \includegraphics[width=0.75\linewidth]{\diagramspath hardness-test-conditional.pdf}
    \caption{\textbf{Illustration of the hardness of conditional coverage,} and the idea of the proof of Theorem~\ref{thm:conditional_infinite}. The two panels show the same observed dataset of $(X,Y)$ pairs, with two different possibilities for the regression function that determines the distribution of $Y\mid X$. In one panel, the regression function has a large spike at the value $x$, while in the other, the regression function is smooth. Since $X_i = x$ is an unlikely event, this spike would not be observed in the training data, which suggests that it will be challenging to guarantee coverage conditional on $X_{n+1}=x$.}
    \commentAlt{Two panels show the same scatterplot of $(X,Y)$ pairs, overlaid with two different curves. In the left panel, the curve is smooth. In the right panel, the curve has been altered by adding a sharp spike at a value $x$ indicated on the horizontal axis.}
    \label{fig:hardness-test-conditional}
\end{figure}

\begin{proof}[Proof of Theorem~\ref{thm:conditional_infinite}]
Fix any $(x,y)\in\cX\times\cY$, and any $\epsilon>0$. Define a mixture distribution,
\[P' = (1-\epsilon)\cdot P + \epsilon \cdot \delta_{(x,y)}.\]
Since $\cC$ offers distribution-free conditional coverage, this means that $\cC$ must satisfy the coverage guarantee with respect to data drawn from the distribution $P'$---that is, 
\[\P_{P'}\big(Y_{n+1}\in\cC(X_{n+1})\mid X_{n+1}\big)\geq 1-\alpha\]
holds almost surely, where the  notation $\P_{P'}(\dots)$ indicates
that we are calculating probability with respect to training and test data
 $(X_1,Y_1),\dots,(X_{n+1},Y_{n+1})\iidsim P'$. Since the event $X_{n+1}=x$ has positive probability under $P'$, in particular this means that this bound must hold on the event $X_{n+1}=x$:
 \[\P_{P'}(Y_{n+1}\in\cC(X_{n+1})\mid X_{n+1}=x)\geq 1-\alpha.\]Moreover, by definition of $P'$ together with the assumption that $P_X$ is nonatomic, we can see that
$\P_{P'}\big(Y_{n+1}=y\mid X_{n+1}=x\big) = 1$, and so 
combining these facts, we must have
\[\P_{P'}\big(y\in\cC(X_{n+1})\mid X_{n+1}=x\big)\geq 1-\alpha,\]
or equivalently,
\[\P_{P'}\big(y\in\cC(x)\big)\geq 1-\alpha.\]
Note that the event no longer depends on the test point, and so we are calculating probability with respect to the draw of the training data only, i.e., with respect to $(X_1,Y_1),\dots,(X_n,Y_n)\iidsim P'$.

Next, observe that the result above holds for an arbitrarily small $\epsilon>0$. By taking $\epsilon\rightarrow0$, we have shown that the same bound must hold for data drawn from $P$, rather than from $P'$. To formalize this argument, we can compute
\[\P_{P}\big(y\in\cC(x)\big) \geq \P_{P'}\big(y\in\cC(x)\big) - \dtv(P^n,P'{}^n)\geq 1-\alpha  - \dtv(P^n,P'{}^n),\]
where $\dtv$ denotes the total variation distance, and where $P^n$ and $P'{}^n$ denote the corresponding product distributions, i.e., the distribution of an i.i.d.\ sample of size $n$ drawn from $P$ or from $P'$, respectively. 
Since $\dtv(P^n,P'{}^n) \leq n\dtv(P,P') = n\epsilon$, we therefore have
\[\P_{P}\big(y\in\cC(x)\big)  \geq 1-\alpha - n\epsilon.\]
Since this holds for any $\epsilon>0$, this completes the proof.
\end{proof}

Finally, to complete this section, we present a proof of the corollary, which verifies that in this setting, the interval $\cC(X_{n+1})$ must often have infinite length.
\begin{proof}[Proof of Corollary~\ref{cor:infinite-lebesgue}]
Fix any $x\in\cX$ and fix any constants $a,b>0$. Then, deterministically, it holds that
\[\textnormal{Leb}(\cC(x)) =\int_{y\in\R}\ind{y\in\cC(x)}\;\mathsf{d}y \geq \int_{y=0}^{a+b}\ind{y\in\cC(x)}\;\mathsf{d}y ,\]
and so
\[\textnormal{Leb}(\cC(x))\leq a \Longrightarrow \int_{y=0}^{a+b}\ind{y\in\cC(x)}\;\mathsf{d}y\leq a \Longleftrightarrow \int_{y=0}^{a+b}\ind{y\not\in\cC(x)}\;\mathsf{d}y\geq b.\]
By Markov's inequality,
\begin{multline*}
\P\left(\textnormal{Leb}(\cC(x))\leq a\right) \leq \P\left(\int_{y=0}^{a+b}\ind{y\not\in\cC(x)}\;\mathsf{d}y\geq b\right) \\ \leq \frac{\E\left[\int_{y=0}^{a+b}\ind{y\not\in\cC(x)}\;\mathsf{d}y\right]}{b} = \frac{\int_{y=0}^{a+b}\P(y\not\in\cC(x))\;\mathsf{d}y}{b} \leq \frac{(a+b)\alpha}{b},\end{multline*}
where the last step holds by Theorem~\ref{thm:conditional_infinite}, while the next-to-last step applies the Fubini--Tonelli theorem to swap order of integration. Taking $b\to\infty$, then, we have shown that
\[\P\left(\textnormal{Leb}(\cC(x))\leq a\right)\leq \alpha.\]
Since this holds for all finite $a>0$, we therefore have
\[\P\left(\textnormal{Leb}(\cC(x))<\infty\right)\leq \alpha.\]
\end{proof}

\section{Relaxations of test-conditional coverage: a binning-based approach}
\label{sec:test-conditional-binning}

So far, we have seen that it is impossible to achieve pointwise test-conditional coverage, i.e., conditional on $X_{n+1}=x$, in the nonatomic setting, but straightforward to achieve in the discrete setting. This discrepancy arises from the fact that the event $X=x$ might be observed many times in the training set in the discrete case, but (for any prespecified value $x$) will never occur in the training set for the nonatomic case. This suggests a way to relax the goal of conditional coverage in order to avoid the hardness result: we should condition on events that will be observed many times in a sample of size $n$. Specifically, we can consider partitioning the feature space into $K$ pre-specified bins, $\cX = \cX_1 \cup \dots \cup \cX_K$, and then requiring a bin-wise notion of conditional coverage:
\begin{equation}\label{eqn:test_cond_coverage_by_bin}\P(Y_{n+1}\in\cC(X_{n+1})\mid X_{n+1}\in\cX_k)\geq 1-\alpha \textnormal{ for all $k\in[K]$}.\end{equation}

Just like in the discrete case, the split conformal procedure can be modified to achieve this guarantee, by simply calculating a conformal quantile separately within each bin $k$: defining $k(x)\in[K]$ as the index of the bin containing $x$, we construct the prediction set
\begin{equation}\cC(X_{n+1}) = \{y : s(X_{n+1},y) \leq \hat{q}_{k(X_{n+1})}\},\end{equation}
where the score function $s$ is pretrained (i.e., this method is a variant of split conformal prediction), and where for each $k$,
\[\hat{q}_k = \quantile\left((S_i)_{i\in [n], X_i \in\cX_k} ; (1-\alpha)(1+1/n_k)\right)\]
for $n_k = \sum_{i\in [n]} \ind{X_i \in \cX_k}$ denoting the sample size for the $k$th bin.

Implicitly, in this split conformal version of the construction, we are assuming that the bins are fixed---that is, the bins are chosen independently of the data (or are chosen using the pretraining set). This type of construction can be extended to full conformal prediction as well, by defining
\begin{equation}
    \label{eqn:test_cond_coverage_by_bin_C}
    \cC(X_{n+1}) = \{y : S_{n+1}^y \leq \hat{q}_{k(X_{n+1})}^y\},
\end{equation}
where now the quantile is given by
\[\hat{q}^y_k = \quantile\left((S_i^y)_{i\in [n], X_i \in\cX_k} ; (1-\alpha)(1+1/n_k)\right),\]
for scores $S_i^y$ defined exactly as in Chapter~\ref{chapter:conformal-exchangeability}. Of course, as usual, the bin-wise split conformal method defined above is simply a special case of this procedure.

The following result verifies that this modified full conformal method achieves bin-conditional coverage:
\begin{theorem}[Bin-wise test-conditional coverage guarantee]\label{thm:bin_conditional}
    Suppose $(X_1,Y_1),...,(X_{n+1},Y_{n+1})$ are exchangeable. Then the bin-wise prediction interval $\cC(X_{n+1})$ defined in~\eqref{eqn:test_cond_coverage_by_bin_C} satisfies\[\P(Y_{n+1}\in\cC(X_{n+1})\mid X_{n+1}\in\cX_k)\geq 1-\alpha \]
     for all $k\in[K]$ with $\P(X_{n+1}\in\cX_k)>0$.
\end{theorem}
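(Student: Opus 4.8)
The plan is to mimic the first proof of Theorem~\ref{thm:full-conformal}, but localized to a single bin, using a conditioning argument to recover exchangeability \emph{within} that bin. Fix $k\in[K]$ with $\P(X_{n+1}\in\cX_k)>0$ and work on the event $\{X_{n+1}\in\cX_k\}$, so that $k(X_{n+1})=k$. As in the proof of Theorem~\ref{thm:full-conformal}, set $S_i = s((X_i,Y_i);\cD_{n+1})$ for $i\in[n+1]$ (so that $S_i = S_i^{Y_{n+1}}$), and let $n_k = \sum_{i\in[n]}\ind{X_i\in\cX_k}$. Applying the Replacement Lemma (Lemma~\ref{lem:n+1-to-n-reduction}) with its ``$n$'' taken to be $n_k$, to the list of calibration scores $(S_i)_{i\in[n],\,X_i\in\cX_k}$ augmented by $S_{n+1}$ --- the point $(X_{n+1},Y_{n+1})$ belongs to bin $k$, so $S_{n+1}$ legitimately joins the bin-$k$ scores --- I would obtain the equivalence
\[
Y_{n+1}\in\cC(X_{n+1}) \ \Longleftrightarrow\ S_{n+1}\le\hat{q}_k^{Y_{n+1}} \ \Longleftrightarrow\ S_{n+1}\le\quantile\big((S_i)_{i\in[n+1],\,X_i\in\cX_k}\,;\,1-\alpha\big).
\]
It then suffices to show that the last event has conditional probability at least $1-\alpha$ given $\{X_{n+1}\in\cX_k\}$.

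For this, I would condition additionally on the empirical distribution $\widehat{P}_{n+1}=\frac1{n+1}\sum_{i=1}^{n+1}\delta_{(X_i,Y_i)}$. Because $s$ is symmetric, each score is $S_i = g(X_i,Y_i)$ for the $\widehat{P}_{n+1}$-measurable function $g(\cdot)=s(\cdot\,;\widehat{P}_{n+1})$, and since bin membership is a fixed function of a data point, the list $(S_i)_{i\in[n+1],\,X_i\in\cX_k}$ and its $(1-\alpha)$-quantile $Q$ are $\widehat{P}_{n+1}$-measurable. By Proposition~\ref{prop:empirical-distrib-exch}, $(X_{n+1},Y_{n+1})\mid\widehat{P}_{n+1}\sim\widehat{P}_{n+1}$; conditioning this further on $\cX_k\times\cY$ (which carries positive $\widehat{P}_{n+1}$-mass on the event $\{X_{n+1}\in\cX_k\}$, since the test point is itself one of its atoms) shows that, given $\widehat{P}_{n+1}$ and $\{X_{n+1}\in\cX_k\}$, the test point is uniform over the data points lying in bin $k$. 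Hence $S_{n+1}=g(X_{n+1},Y_{n+1})$, given $\widehat{P}_{n+1}$ and $\{X_{n+1}\in\cX_k\}$, is distributed as the empirical distribution of the list $(S_i)_{i\in[n+1],\,X_i\in\cX_k}$, so by the defining property of the quantile (e.g.\ Fact~\ref{fact:conversion-quantiles-cdfs}\ref{fact:conversion-quantiles-cdfs_part3}), $\P\big(S_{n+1}\le Q\mid\widehat{P}_{n+1},\,X_{n+1}\in\cX_k\big)\ge1-\alpha$. Averaging over $\widehat{P}_{n+1}$ conditionally on $\{X_{n+1}\in\cX_k\}$ and combining with the reformulation above yields $\P(Y_{n+1}\in\cC(X_{n+1})\mid X_{n+1}\in\cX_k)\ge1-\alpha$, as desired.

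The step requiring the most care is the conditioning in the second paragraph: one must check that conditioning on the $\widehat{P}_{n+1}$-measurable statistic does not disturb the ``test point is a uniform draw from the bag'' property, and that further restricting to $\cX_k\times\cY$ keeps it uniform over the \emph{sub-bag} of bin-$k$ data points --- this is precisely why we condition on the empirical distribution $\widehat{P}_{n+1}$ (which records multiplicities) rather than on an unordered set of distinct values, so that ties among scores are handled automatically. The remaining bookkeeping is routine: the cases $n_k=0$ (a bin with no calibration points, where $\hat{q}_k^{Y_{n+1}}=+\infty$ and coverage is trivial) and $(1-\alpha)(1+1/n_k)>1$ both fall into the trivial branch of the Replacement Lemma, and the split conformal version of the statement follows as the special case where $s$ does not depend on its dataset argument.
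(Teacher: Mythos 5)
Your proof is correct, and it takes a genuinely different route from the paper's. The paper reformulates the coverage event as a bin-wise conformal p-value being $>\alpha$ (analogous to Proposition~\ref{prop:conformal-via-pvalues}), and then deduces its validity from a dedicated conditional-exchangeability result, Lemma~\ref{lem:exch_conditional_subvector}: it conditions on the \emph{index set} $I=\{i\in[n+1]:(X_i,Y_i)\in\cX_k\times\cY\}$ and shows, by a permutation-matching argument, that the subvector $((X_i,Y_i))_{i\in I}$ is exchangeable given the event $\cE_I$; Corollary~\ref{cor:pvalues_due_to_lem:exch_conditional_subvector} then finishes. Your argument instead mirrors the ``conditioning on the empirical distribution'' proof of Theorem~\ref{thm:full-conformal} from Section~\ref{sec:conformal-conditioning-bag}: after the Replacement-Lemma rewrite $Y_{n+1}\in\cC(X_{n+1})\iff S_{n+1}\le\quantile\big((S_i)_{i\in[n+1],\,X_i\in\cX_k};1-\alpha\big)$, you condition on $\widehat{P}_{n+1}$ and invoke Proposition~\ref{prop:empirical-distrib-exch} to get that $(X_{n+1},Y_{n+1})\mid\widehat{P}_{n+1}\sim\widehat{P}_{n+1}$, then restrict this law to $\cX_k\times\cY$ to make the test point uniform over the bin-$k$ data points (with multiplicities), from which the quantile inequality is deterministic. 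Both are valid; the trade-off is that the paper's Lemma~\ref{lem:exch_conditional_subvector} is a reusable tool (it is invoked again for Mondrian and selective coverage), whereas your $\widehat{P}_{n+1}$-conditioning argument is more self-contained and makes the ``test point is a uniform draw from the bag, localized to the bin'' intuition explicit. Your treatment of the edge case $n_k=0$ (trivial coverage via the $+\infty$ quantile convention) and of ties (handled automatically by conditioning on the empirical measure rather than on a set of distinct values) is also correct.
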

Pointwise coverage in the discrete case (i.e., the guarantee that $\P(Y_{n+1}\in\cC(X_{n+1})\mid X_{n+1}=x_k)\geq 1-\alpha$, for each $k$, which we covered in Section~\ref{sec:test_conditional_discrete} above) is actually a special case of this result: in the setting where $|\cX|=K$, we can simply take each bin $\cX_k$ to be a singleton set.

The key idea for the proof of this theorem lies in the following lemma:

\begin{lemma}[Conditional exchangeability within a bin]\label{lem:exch_conditional_subvector}
Suppose $(X_1,Y_1),\dots,(X_{n+1},Y_{n+1})$ are exchangeable. Fix any subset $\cZ_0\subseteq\cX\times\cY$, and for any fixed nonempty subset $I\subseteq[n+1]$, let $\cE_I$ be the event that $\{i \in[n+1]: (X_i,Y_i)\in\cZ_0\} = I$. If $\cE_I$ has positive probability, then $((X_i,Y_i))_{i\in I}$ is exchangeable conditional on $\cE_I$.
\end{lemma}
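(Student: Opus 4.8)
The plan is to reduce this to the already-established Lemma~\ref{lem:conditional_exchangeability} (conditional exchangeability given a symmetric function). First I would introduce the membership indicator vector: define $M = (M_1,\dots,M_{n+1})$ where $M_j = \ind{(X_j,Y_j)\in\cZ_0}$, so that the event $\cE_I$ is precisely the event $\{M = \mathbbm{1}_I\}$ (the indicator vector of the set $I$). The vector $M$ is a function of the data $(X_1,Y_1),\dots,(X_{n+1},Y_{n+1})$, but it is \emph{not} symmetric, so I cannot directly invoke Lemma~\ref{lem:conditional_exchangeability}. Instead I would condition on the \emph{unordered} collection of membership labels — equivalently, on the symmetric statistic $f(Z_1,\dots,Z_{n+1}) = (|I|, \text{empirical distribution of } Z_1,\dots,Z_{n+1})$, or more simply, I would argue directly.

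The cleanest route: fix $I$ with $\P(\cE_I)>0$, let $m = |I|$, and let $\cS_{n+1}^{(I)} = \{\sigma\in\cS_{n+1} : \sigma(I) = I\}$ be the subgroup of permutations fixing the set $I$ setwise. I want to show that for any permutation $\pi$ of $I$ (extended arbitrarily, or rather restricted to $I$), and any measurable rectangle event, the conditional law of $((X_i,Y_i))_{i\in I}$ given $\cE_I$ is invariant. Concretely, pick $\sigma\in\cS_{n+1}$ that permutes the indices within $I$ according to the desired permutation and acts as the identity on $[n+1]\setminus I$; such a $\sigma$ lies in $\cS_{n+1}^{(I)}$. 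By exchangeability of the full data vector, $(Z_1,\dots,Z_{n+1}) \eqd (Z_{\sigma(1)},\dots,Z_{\sigma(n+1)})$. Crucially, since $\sigma$ fixes $I$ setwise, we have the deterministic equivalence $\cE_I(Z_1,\dots,Z_{n+1}) \Leftrightarrow \cE_I(Z_{\sigma(1)},\dots,Z_{\sigma(n+1)})$ — permuting within $I$ (and trivially outside $I$) does not change which indices land in $\cZ_0$. Therefore, for any measurable $A\subseteq(\cX\times\cY)^{|I|}$, writing the entries of $I$ in increasing order as $i_1<\dots<i_m$,
\[\P\big((Z_{i_1},\dots,Z_{i_m})\in A,\ \cE_I\big) = \P\big((Z_{\sigma(i_1)},\dots,Z_{\sigma(i_m)})\in A,\ \cE_I\big),\]
which, after dividing by $\P(\cE_I)>0$, says exactly that the conditional distribution of $((X_i,Y_i))_{i\in I}$ given $\cE_I$ is invariant under the chosen permutation. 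Ranging over all permutations of the $m$ positions in $I$ gives conditional exchangeability. (If $\cZ$ is required to have a countably-generated $\sigma$-algebra for the rectangle-events argument to suffice, I would note that, exactly as in the proof of Lemma~\ref{lem:conditional_exchangeability}.)

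The main obstacle — and it is a mild one — is bookkeeping with the permutation: one must be careful that the permutation $\sigma$ chosen to realize a given rearrangement of the entries indexed by $I$ genuinely lies in the stabilizer $\cS_{n+1}^{(I)}$, so that conditioning on $\cE_I$ is preserved, and that the event $\cE_I$ is indeed a deterministic function that is invariant under $\cS_{n+1}^{(I)}$. Everything else is a direct transfer of the two-line argument used in Lemma~\ref{lem:conditional_exchangeability}: apply full exchangeability to move $\sigma$ onto the data, then use that the conditioning event is unchanged by $\sigma$. An alternative, essentially equivalent, framing would be to condition first on the symmetric statistic $(Z_{(1)},\dots,Z_{(n+1)})$ (the order statistics / empirical distribution), invoke Lemma~\ref{lem:conditional_exchangeability} to get exchangeability conditional on that, and then observe that $\cE_I$ is determined by the order statistics together with the permutation realizing the data, so a further conditioning on $\cE_I$ still leaves a uniform shuffle among the positions in $I$; I would likely present the direct stabilizer-subgroup argument as it is shorter.
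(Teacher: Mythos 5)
Your proof is correct and uses the same argument as the paper's: extend a permutation of $I$ to a permutation of $[n+1]$ that is the identity off $I$, apply exchangeability of the full data vector, and observe that the event $\cE_I$ is invariant under any such permutation (since it stabilizes $I$ setwise), so that dividing by $\P(\cE_I)$ gives the claim. The only cosmetic difference is notation — the paper writes the extension explicitly as $\tilde\sigma$ and spells out the chain of equalities step by step, while you phrase it via the stabilizer subgroup $\cS_{n+1}^{(I)}$ — and your mention of the membership vector $M$ and the order-statistics detour are discarded asides rather than part of the argument.
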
 \index{exchangeability!conditional}
In particular, this result has a useful interpretation in terms of conformal p-values:
\begin{corollary}\label{cor:pvalues_due_to_lem:exch_conditional_subvector}
    In the setting of Lemma~\ref{lem:exch_conditional_subvector}, define
    \[p = \frac{1 + \sum_{i\in[n]}\ind{(X_i,Y_i)\in\cZ_0,\ S_i\geq S_{n+1}}}{1 + \sum_{i\in[n]}\ind{(X_i,Y_i)\in\cZ_0}},\]
where $S_i = s((X_i,Y_i);\cD_{n+1})$ for some symmetric score function $s$. If $\P((X_{n+1},Y_{n+1})\in\cZ_0)>0$, then
\[\P(p\leq\alpha\mid (X_{n+1},Y_{n+1})\in\cZ_0)\leq \alpha\]
for any $\alpha\in[0,1]$.
\end{corollary}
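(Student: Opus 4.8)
The plan is to condition on which data points fall in $\cZ_0$ and thereby reduce to the ordinary conformal p-value guarantee. Write $Z_i=(X_i,Y_i)$, and for $I\subseteq[n+1]$ let $\cE_I$ be the event $\{i\in[n+1]:Z_i\in\cZ_0\}=I$ as in Lemma~\ref{lem:exch_conditional_subvector}. Since $\{Z_{n+1}\in\cZ_0\}$ is the disjoint union of the events $\cE_I$ over $I\ni n+1$, it suffices to show $\P(p\leq\alpha\mid\cE_I)\leq\alpha$ for every such $I$ with $\P(\cE_I)>0$ and then average over $I$. On $\cE_I$ we have $\ind{Z_i\in\cZ_0}=\ind{i\in I}$ for all $i$, so the denominator of $p$ equals $1+|I\setminus\{n+1\}|=|I|$, and the numerator equals $1+\sum_{i\in I\setminus\{n+1\}}\ind{S_i\geq S_{n+1}}=\sum_{i\in I}\ind{S_i\geq S_{n+1}}$ after absorbing the trivial term $\ind{S_{n+1}\geq S_{n+1}}=1$; hence on $\cE_I$,
\[p=\frac{\sum_{i\in I}\ind{S_i\geq S_{n+1}}}{|I|}.\]
This is precisely the conformal-style p-value built from the scores indexed by $I$, with $S_{n+1}$ in the role of the last coordinate (note $n+1=\max I$).

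Next I would show that, conditionally on $\cE_I$, the vector $(S_i)_{i\in I}$ is exchangeable. The only subtlety is that $S_i=s(Z_i;\cD_{n+1})$ depends on the whole augmented dataset, not just on the points indexed by $I$, so Lemma~\ref{lem:exch_conditional_subvector} does not apply verbatim to the $Z_i$'s and then push forward to the scores. To get around this, I would work with the lifted sequence $W_i=(S_i,Z_i)\in\R\times(\cX\times\cY)$. By symmetry of $s$, the map $z\mapsto(s(z_i;z),z_i)_{i\in[n+1]}$ is equivariant---applying a permutation $\sigma$ to the input permutes the output by $\sigma$, exactly as in Step 2 of the proof of Theorem~\ref{thm:full-conformal}---so $(W_1,\dots,W_{n+1})$ is exchangeable. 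Applying Lemma~\ref{lem:exch_conditional_subvector} to $(W_i)$ with the subset $\cZ_0'=\R\times\cZ_0$, and noting that $W_i\in\cZ_0'\iff Z_i\in\cZ_0$ so that the relevant event is again exactly $\cE_I$, we conclude that $(W_i)_{i\in I}$ is exchangeable conditional on $\cE_I$, and reading off the first coordinate, so is $(S_i)_{i\in I}$.

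Finally, since $n+1=\max I$, I would list $I$ in increasing order and apply Corollary~\ref{cor:perm_test_pval} conditionally on $\cE_I$ to the exchangeable vector $(S_i)_{i\in I}$ with last entry $S_{n+1}$: this gives $\P(p\leq\alpha\mid\cE_I)\leq\alpha$, and averaging over all $I$ with $n+1\in I$ yields $\P(p\leq\alpha\mid Z_{n+1}\in\cZ_0)\leq\alpha$. The main obstacle I anticipate is the bookkeeping in the middle paragraph---making precise that conditioning on $\cE_I$ leaves $(S_i)_{i\in I}$ exchangeable despite the scores' dependence on the data points outside $\cZ_0$---and the lifting trick $W_i=(S_i,Z_i)$ is exactly what lets Lemma~\ref{lem:exch_conditional_subvector} be invoked cleanly. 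The remaining ingredients (disjointifying over $I$, the cancellation of the two ``$+1$'' terms on $\cE_I$, and the super-uniformity of the conformal p-value) are routine.
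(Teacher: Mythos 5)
Your proof is correct and reaches the same conclusion as the paper's, but via a cleaner route at the one nontrivial step. Both proofs begin identically: decompose the conditioning event over $\cE_I$ for $I\ni n+1$, observe that on $\cE_I$ the two $+1$'s cancel so that $p = \sum_{i\in I}\ind{S_i\geq S_{n+1}}/|I|$, invoke conditional exchangeability of $(S_i)_{i\in I}$ to apply Corollary~\ref{cor:perm_test_pval} conditionally, and average. The difference is in how conditional exchangeability of the scores is established. The paper acknowledges that Lemma~\ref{lem:exch_conditional_subvector} cannot be applied directly to the $Z_i$'s and then pushed forward to the scores (since each $S_i$ depends on the entire dataset), and instead re-runs the full computation of the lemma---expanding $\P((S_i)_{i\in I}\in A,\cE_I)$ and applying the data's exchangeability, the symmetry of $s$, and the structure of $\tilde\sigma$ by hand. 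Your lifting trick $W_i=(S_i,Z_i)$ sidesteps that repetition: equivariance of $z\mapsto(s(z_i;z),z_i)_i$ under permutations (exactly the Step 2 observation from the proof of Theorem~\ref{thm:full-conformal}) makes $(W_1,\dots,W_{n+1})$ exchangeable, and then Lemma~\ref{lem:exch_conditional_subvector} applies verbatim with $\cZ_0'=\R\times\cZ_0$, since $\{i:W_i\in\cZ_0'\}=\{i:Z_i\in\cZ_0\}$ so the conditioning event is unchanged; projecting to the first coordinate finishes it. This buys a genuinely shorter argument that reuses the lemma as a black box rather than re-deriving its guts; the paper's version is a bit more self-contained but strictly more repetitive. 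One minor presentational note: when you say ``note $n+1=\max I$'' and then invoke Corollary~\ref{cor:perm_test_pval} on $I$ listed in increasing order, you are implicitly relying on the fact that the corollary's super-uniformity is stated for the last coordinate; since $\max I=n+1$ always holds here, this is fine, but it is worth saying explicitly that exchangeability makes the choice of distinguished index immaterial in any case.
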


We will use these results here to prove the bin-wise conditional coverage guarantee of Theorem~\ref{thm:bin_conditional}, and we will also use them again for several related results later on in the chapter.

\begin{proof}[Proof of Theorem~\ref{thm:bin_conditional}]
First, by construction of the set $\cC(X_{n+1})$, we can see that on the event $X_{n+1}\in\cX_k$, 
\begin{equation}\label{eqn:conformal-via-pvalues_for_test_conditional}Y_{n+1}\in \cC(X_{n+1}) \Longleftrightarrow \frac{1+\sum_{i\in[n],X_i\in\cX_k}\ind{S_i\geq S_{n+1}}}{1+n_k} >  \alpha\end{equation}
(the proof of this statement is analogous to the proof of Proposition~\ref{prop:conformal-via-pvalues}).

Next, fix any bin $k$ with $\P(X_{n+1}\in \cX_k)>0$, and let $\cZ_0 = \cX_k \times\cY \subseteq \cX\times \cY$. By Corollary~\ref{cor:pvalues_due_to_lem:exch_conditional_subvector}, the quantity
\[p = \frac{1+\sum_{i\in[n]}\ind{(X_i,Y_i)\in\cZ_0, \ S_i \geq S_{n+1}}}{1+\sum_{i\in[n]}\ind{(X_i,Y_i)\in\cZ_0}}\]
satisfies $\P(p\leq \alpha\mid X_{n+1}\in\cX_k) = \P(p\leq \alpha\mid (X_{n+1},Y_{n+1})\in\cZ_0)\leq \alpha$. But by definition of $\cZ_0$ and of $n_k$, we can rewrite this quantity as 
\[p = \frac{1+\sum_{i\in[n],X_i\in\cX_k}\ind{S_i\geq S_{n+1}}}{1+n_k}.\]
By~\eqref{eqn:conformal-via-pvalues_for_test_conditional}, this completes the proof.
\end{proof}

Finally, we prove the lemma and its corollary.
\begin{proof}[Proof of Lemma~\ref{lem:exch_conditional_subvector}]
For convenience, throughout this proof we will write $Z_i = (X_i,Y_i)$, and $Z=(Z_1,\dots,Z_{n+1})$.
Fix any nonempty $I\subseteq[n+1]$ such that $\P(\cE_I)>0$. Fix any permutation $\sigma$ on $I$, and let $\tilde{\sigma}$ be the permutation on $[n+1]$ defined as
\[\tilde{\sigma}(i) = \begin{cases}\sigma(i), & i\in I,\\ i, & i\not\in I.\end{cases}\]
As before, we will write $Z_{\tilde\sigma} = (Z_{\tilde\sigma(1)},\dots,Z_{\tilde\sigma(n+1)})$, and note that $Z\eqd Z_{\tilde\sigma}$ by exchangeability of the data.

For any $A\subseteq(\cX\times\cY)^{|I|}$, we have
\begin{align*}
    &\P((Z_i)_{i\in I}\in A, \ \cE_I)\\
    &=\P\big((Z_i)_{i\in I}\in A, \ \{i\in[n+1] :  Z_i\in\cZ_0\} = I\big)\\
    &=\P\big((Z_{\tilde{\sigma}(i)})_{i\in I}\in A, \ \{i\in[n+1] :  Z_{\tilde{\sigma}(i)}\in\cZ_0\} = I\big)\textnormal{\quad since $Z\eqd Z_{\tilde\sigma}$}\\
    &=\P\big((Z_{\tilde{\sigma}(i)})_{i\in I}\in A, \ \{i\in[n+1] :  Z_i\in\cZ_0\} = I\big)\textnormal{\quad  since $\tilde\sigma(i)\in I$ if and only if $i\in I$}\\ 
    &=\P((Z_{\tilde{\sigma}(i)})_{i\in I}\in A, \ \cE_I)\\
    &=\P((Z_{\sigma(i)})_{i\in I}\in A, \ \cE_I),
\end{align*}
where the last step holds by definition of $\tilde\sigma$. Therefore,
\begin{multline*}\P((Z_i)_{i\in I}\in A\mid \cE_I) = \frac{\P((Z_i)_{i\in I}\in A, \cE_I)}{\P(\cE_I)} \\=\frac{\P((Z_{\sigma(i)})_{i\in I}\in A, \cE_I)}{\P(\cE_I)} =  \P((Z_{\sigma(i)})_{i\in I}\in A\mid \cE_I).\end{multline*}
Since this holds for every $A$, this proves that $(Z_i)_{i\in I}$ and $(Z_{\sigma(i)})_{i\in I}$ are equal in distribution conditional on $\cE_I$, for any permutation $\sigma$ on $I$. This verifies exchangeability of $(Z_i)_{i\in I}$ conditional on $\cE_I$, as desired.
\end{proof}
\begin{proof}[Proof of Corollary~\ref{cor:pvalues_due_to_lem:exch_conditional_subvector}]
Fix any $I$ with $\P(\cE_I)>0$ and with $n+1\in I$.
Lemma~\ref{lem:exch_conditional_subvector} verifies that, conditional on $\cE_I$, $(Z_i)_{i\in I}$ is exchangeable. It consequently holds that the corresponding scores, $(S_i)_{i\in I}$, are also exchangeable conditional on $\cE_I$. To verify this formally, following the same steps as in the proof of Lemma~\ref{lem:exch_conditional_subvector}, for any $A\subseteq\R^{|I|}$ we calculate
\begin{align*}
    &\P((S_i)_{i\in I}\in A, \ \cE_I)\\
    &=\P\big((s(Z_i;\cD_{n+1}))_{i\in I}\in A, \{i\in[n+1] :  Z_i\in\cZ_0\} = I\big)\\
    &= \P\big((s(Z_{\tilde\sigma(i)};(\cD_{n+1})_{\tilde\sigma}))_{i\in I}\in A, \{i\in[n+1] :  Z_{\tilde\sigma(i)}\in\cZ_0\} = I\big)\textnormal{ since $\cD_{n+1}\eqd (\cD_{n+1})_{\tilde\sigma}$}\\
    &= \P\big((s(Z_{\tilde\sigma(i)};\cD_{n+1}))_{i\in I}\in A, \{i\in[n+1] :  Z_{\tilde\sigma(i)}\in\cZ_0\} = I\big)\textnormal{ since $s$ is symmetric}\\    &=\P\big((s(Z_{\tilde\sigma(i)};\cD_{n+1}))_{i\in I}\in A, \{i\in[n+1] :  Z_i\in\cZ_0\} = I\big)\textnormal{ since $\tilde\sigma(i)\in I$ $\Leftrightarrow$ $i\in I$}\\
    &=\P\big((s(Z_{\tilde\sigma(i)};\cD_{n+1}))_{i\in I}\in A, \ \cE_I\big)\\
    &=\P\big((S_{\tilde\sigma(i)})_{i\in I}\in A, \ \cE_I\big)\\
    &=\P\big((S_{\sigma(i)})_{i\in I}\in A, \ \cE_I\big),
\end{align*}
where $\sigma,\tilde\sigma$ are the same as in the proof of Lemma~\ref{lem:exch_conditional_subvector}.

Next, since $(S_i)_{i\in I}$ is exchangeable conditional on $\cE_I$, the quantity
\[p_I = \frac{\sum_{i\in I}\ind{S_i\geq S_{n+1}}}{|I|}\]
is a valid p-value conditional on $\cE_I$, i.e., $\P(p_I\leq \alpha\mid \cE_I)\leq \alpha$ (see Corollary~\ref{cor:perm_test_pval}). But on the event $\cE_I$, we have $p_I = p$ by construction. Since $Z_{n+1}\in\cZ_0$ holds if and only if $\cE_I$ holds for some $I\ni n+1$, we therefore have
\begin{align*}
    \P(p\leq \alpha, Z_{n+1}\in \cZ_0)
    &=\sum_{I\ni n+1}\P(p\leq \alpha, \cE_I)\\
    &=\sum_{I\ni n+1}\P(p_I\leq \alpha, \cE_I)\\
    &=\sum_{I\ni n+1}\P(\cE_I)\cdot \P(p_I\leq \alpha\mid \cE_I)\\
    &\leq\sum_{I\ni n+1}\P(\cE_I)\cdot \alpha\\
    &=\alpha\cdot\P(Z_{n+1}\in\cZ_0),
\end{align*}
which proves that $\P(p\leq \alpha\mid Z_{n+1}\in \cZ_0)\leq \alpha$, as desired.
\end{proof}
\index{coverage!test-conditional|)}

\section{Label-conditional coverage}\label{sec:lab-conditional}
\index{coverage!label-conditional}

In classification problems, another concept of validity that is stronger than marginal validity is label-conditional coverage. This term refers to coverage conditional on the response $Y_{n+1}$, in settings where the response is categorical (and is thus commonly referred to as the `label' of the data point). 

Formally, if $\cY = \{1,\dots,K\}$, we seek prediction sets $\cC$ such that
\begin{equation}
\label{eq:label-cond-cvg}
    \P\left(Y_{n+1} \in \cC(X_{n+1}) \mid Y_{n+1} = y\right) \ge 1-\alpha
\end{equation}
for all $y = 1,\dots,K$.  We will see that this is tractable with sufficient data, just like for test-conditional coverage in the setting where $|\cX|$ is finite. Moreover, the underlying proof of validity closely follows those from the previous two sections.

We now show how to achieve~\eqref{eq:label-cond-cvg} with a modification of the full conformal algorithm. Essentially, we split the data into classes, and then carry out conformal prediction within each class separately. However, since we do not know the class of the test data point, to implement this we will need to use a hypothesized test point value $y$ in place of the unknown $Y_{n+1}$. 

To be more precise, in order to achieve label-conditional coverage, we will compare the score $S_{n+1}^y = s((X_{n+1},y); \cD^y_{n+1})$ against  scores $S_i^y$ for training points $i$ for which $Y_i=y$.  To this end, let $\cI_y   = \{i\in[n] : Y_i = y\}$ and let $n_y = |\cI_y|$, and define the quantile
\[\hat{q}^y = \quantile\left((S_i^y)_{i \in \cI_y} ; (1-\alpha)(1+1/n_y)\right).\] If $S_{n+1}^y \le \hat q^y$, then $Y_{n+1}$ taking a value of $y$ is judged to be consistent with previous data. Repeating this for each value of $y$, we arrive at the following prediction set:
\begin{equation}\label{eqn:label_cond_coverage_by_bin_C}
    \cC(X_{n+1}) = \left\{y : S_{n+1}^y \le \hat{q}^y \right\}.
\end{equation}
This closely parallels the prediction set constructions for test-conditional coverage when the feature vector $X_{n+1}$ is discrete, as in~\eqref{eqn:test_conditional_coverage_discrete_X}, or when we relax our goal to bin-wise conditional coverage, as in~\eqref{eqn:test_cond_coverage_by_bin_C}. Just like for those two methods, we partition the training points over the different possible values $y\in\cY$, and calculate a different value of the conformal quantile for each one.

\begin{theorem}[Label-conditional coverage guarantee]
\label{thm:lab-cond-cvg}
Suppose $(X_1,Y_1),...,(X_{n+1},Y_{n+1})$ are exchangeable and that $s$ is a symmetric score function. Then the label-conditional prediction set $\cC(X_{n+1})$ defined in~\eqref{eqn:label_cond_coverage_by_bin_C} satisfies
\begin{equation}
    \P(Y_{n+1} \in \cC(X_{n+1}) \mid Y_{n+1}=y) \ge 1-\alpha
\end{equation}
for all $y \in \cY$ such that $\P(Y_{n+1} = y) > 0$.
\end{theorem}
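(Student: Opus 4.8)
The plan is to mirror the proof of Theorem~\ref{thm:bin_conditional}, except that the ``bins'' now index values of the \emph{response} rather than regions of the feature space, so the bin containing the test point is exactly the hypothesized value $y$. The one extra wrinkle, compared to the test-conditional case, is that the prediction set in~\eqref{eqn:label_cond_coverage_by_bin_C} is built from the augmented scores $S_i^y$; but since only the ``diagonal'' case $y = Y_{n+1}$ is relevant for coverage, these can be collapsed onto the oracle scores $S_i = s((X_i,Y_i);\cD_{n+1})$, exactly as in the first proof of Theorem~\ref{thm:full-conformal}.

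First I would fix $y\in\cY$ with $\P(Y_{n+1}=y)>0$ and set $\cZ_0 = \cX\times\{y\}\subseteq\cX\times\cY$. On the event $\{Y_{n+1}=y\}$ we have $\cD^y_{n+1} = \cD_{n+1}$, hence $S_i^y = S_i$ for every $i\in[n+1]$; moreover $\cI_y = \{i\in[n] : (X_i,Y_i)\in\cZ_0\}$, so $n_y = \sum_{i\in[n]}\ind{(X_i,Y_i)\in\cZ_0}$. Applying the Replacement Lemma (Lemma~\ref{lem:n+1-to-n-reduction}) to the sublist $(S_i^y)_{i\in\cI_y\cup\{n+1\}}$ of length $n_y+1$ --- just as in the proof of Proposition~\ref{prop:conformal-via-pvalues} --- and then substituting $S_i^y = S_i$, I would obtain that, on the event $\{Y_{n+1}=y\}$,
\[
Y_{n+1}\in\cC(X_{n+1}) \iff S^y_{n+1}\le\hat{q}^y \iff \frac{1+\sum_{i\in[n]}\ind{(X_i,Y_i)\in\cZ_0,\ S_i\ge S_{n+1}}}{1+n_y} > \alpha .
\]
(The degenerate case $n_y=0$ is handled by the convention that the quantile of the empty list is $+\infty$: then the fraction equals $1>\alpha$ and $\hat{q}^y=+\infty$, so both sides hold.)

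The right-hand side is exactly $p>\alpha$, where $p$ is the conditional conformal p-value of Corollary~\ref{cor:pvalues_due_to_lem:exch_conditional_subvector} for this $\cZ_0$. Since $\P\big((X_{n+1},Y_{n+1})\in\cZ_0\big)=\P(Y_{n+1}=y)>0$, that corollary gives $\P\big(p\le\alpha\mid Y_{n+1}=y\big)\le\alpha$, and combining with the displayed equivalence yields
\[
\P\big(Y_{n+1}\notin\cC(X_{n+1})\mid Y_{n+1}=y\big) = \P\big(p\le\alpha\mid Y_{n+1}=y\big)\le\alpha,
\]
which is the claim.

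The main (and essentially only) thing to be careful about is the interplay between the hypothesized scores $S_i^y$ used to \emph{define} $\cC(X_{n+1})$ and the exchangeable oracle scores $S_i$ used to \emph{analyze} it: the equivalence in the second paragraph is a deterministic identity that holds only on $\{Y_{n+1}=y\}$, and it is precisely on that event that $S_i^y=S_i$. Once this collapse is set up, the conditional exchangeability of the scores within the label-$y$ sub-sample --- i.e., Lemma~\ref{lem:exch_conditional_subvector} and its corollary with $\cZ_0=\cX\times\{y\}$ --- does all the probabilistic work, and no genuinely new argument beyond what was used for Theorem~\ref{thm:bin_conditional} is required.
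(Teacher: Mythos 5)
Your proof is correct, and it is essentially the specialization to $g(x,y)=y$ of the argument the paper gives for Theorem~\ref{thm:mondrian-cvg}, from which the paper derives Theorem~\ref{thm:lab-cond-cvg} as a special case: the same reduction to the on-event identity $S_i^y=S_i$, the same p-value reformulation of the coverage event, and the same appeal to Lemma~\ref{lem:exch_conditional_subvector} and Corollary~\ref{cor:pvalues_due_to_lem:exch_conditional_subvector} with $\cZ_0=\cX\times\{y\}$. (One tiny nit: the Replacement Lemma appears in the first proof of Theorem~\ref{thm:full-conformal}, not in the proof of Proposition~\ref{prop:conformal-via-pvalues}, but either route gives the displayed equivalence, so this does not affect correctness.)
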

This result is a special case of Theorem~\ref{thm:mondrian-cvg} below, so we omit the proof.

We point out that the prediction sets always include a label $y$ if there are insufficient examples with that value of $y$ in the calibration data. 
Thus, the prediction sets are more useful when there are at least $\approx 1/\alpha$ examples of each class in the data; if this is not the case for some $y$, then this candidate label value $y$ will \emph{always} be included in the prediction set.
This is analogous to the fact that in marginal conformal prediction, when $n$ is smaller than $\approx 1/\alpha$, the sets are uninformative---but
in the label-conditional setting, the relevant sample size is now the number of times we observed the value $y$, rather than the larger sample size $n$.

\section{Mondrian conformal prediction}\label{sec:mondrian} \index{Mondrian conformal prediction}
Binned conditional coverage and label-conditional coverage are both special cases of \emph{Mondrian conformal prediction}, a more general framework that enables us to enforce conditional coverage relative to events that capture information about the feature $X$, about the label $Y$, or both. To define this general method, the space $\cX \times \cY$ is first partitioned into a finite number of groups: formally, we let $g : \cX \times \cY \to \{1,\dots, K\}$ be the function that maps a data point to its group identifier. Mondrian conformal prediction is an algorithm that guarantees coverage at least level $1 - \alpha$ for each group, i.e.,
\begin{equation*}
    \P\left(Y_{n+1} \in \cC(X_{n+1}) \mid g(X_{n+1}, Y_{n+1}) = k\right) \ge 1-\alpha
\end{equation*}
for all $k \in \{1,\dots,K\}$.

The idea behind the algorithm is similar to that of the binning-based or label-conditional approaches described earlier in this chapter. For each hypothesized value of $y \in \cY$, we compute the group identifier $g(X_{n+1}, y)$. Then, we compare the conformal score $S_{n+1}^y = s((X_{n+1}, y); \cD^y_{n+1})$ to the conformal scores $S_{i}^y = s((X_{i}, Y_{i}); \cD^y_{n+1})$ for all $i\in[n]$ such that $g(X_i, Y_i) = g(X_{n+1}, y)$---that is, we compare to all those data points that lie in the same group as the hypothesized test point $(X_{n+1},y)$. 
To formalize this, defining $\cI_k = \{i \in [n] : g(X_i, Y_i) = k\}$ for each group $k\in[K]$, we compute the quantile
\[\hat{q}^y = \quantile((S_i^y)_{i \in \cI_{g(X_{n+1},y)}}; (1-\alpha)(1+1/|\cI_{g(X_{n+1},y)}|)).\] This leads us to the proposed prediction set
\begin{equation}\label{eqn:mondrian_cC}
    \cC(X_{n+1}) = \left\{y : S_{n+1}^y \le \hat{q}^y \right\}.
\end{equation}

\begin{theorem}[Mondrian coverage guarantee]
\label{thm:mondrian-cvg}
Suppose $(X_1,Y_1),...,(X_{n+1},Y_{n+1})$ are exchangeable and that $s$ is a symmetric score function. 
The Mondrian prediction set $\cC(X_{n+1})$ defined in~\eqref{eqn:mondrian_cC} satisfies the conditional coverage guarantee
\begin{equation}
    \P(Y_{n+1} \in \cC(X_{n+1}) \mid g(X_{n+1}, Y_{n+1}) =k) \ge 1-\alpha
\end{equation}
for all $k \in \{1,\dots,K\}$ such that $\P(g(X_{n+1},Y_{n+1}) = k)> 0$.
\end{theorem}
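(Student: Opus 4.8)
This result generalizes Theorem~\ref{thm:bin_conditional} and Theorem~\ref{thm:lab-cond-cvg}, and its proof should follow the same template, with $\cZ_0$ now being the preimage $g^{-1}(k)\subseteq\cX\times\cY$ of a fixed group identifier rather than a product set $\cX_k\times\cY$ or a label fiber $\cX\times\{y\}$. The key point is that the machinery of Lemma~\ref{lem:exch_conditional_subvector} and Corollary~\ref{cor:pvalues_due_to_lem:exch_conditional_subvector} was stated for an \emph{arbitrary} subset $\cZ_0\subseteq\cX\times\cY$, so it already covers this case directly.

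\begin{proof}[Proof of Theorem~\ref{thm:mondrian-cvg}]
Fix any group $k\in[K]$ with $\P(g(X_{n+1},Y_{n+1})=k)>0$, and set $\cZ_0 = \{(x,y)\in\cX\times\cY : g(x,y)=k\}$.

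\textbf{Step 1: A p-value reformulation of the coverage event.} We first argue that on the event $g(X_{n+1},Y_{n+1})=k$ (equivalently, $(X_{n+1},Y_{n+1})\in\cZ_0$, equivalently $g(X_{n+1},Y_{n+1})=k$ with the hypothesized value $y=Y_{n+1}$), the coverage event $Y_{n+1}\in\cC(X_{n+1})$ is equivalent to a conformal p-value exceeding $\alpha$. Specifically, plugging $y=Y_{n+1}$ into the definition~\eqref{eqn:mondrian_cC}, we have $\cI_{g(X_{n+1},Y_{n+1})} = \cI_k = \{i\in[n] : g(X_i,Y_i)=k\}$, and writing $S_i = s((X_i,Y_i);\cD_{n+1})$ (so that $S_i = S_i^{Y_{n+1}}$ for all $i\in[n+1]$ by definition), the argument of Proposition~\ref{prop:conformal-via-pvalues} gives, on this event,
\[Y_{n+1}\in\cC(X_{n+1}) \Longleftrightarrow \frac{1+\sum_{i\in[n],\, g(X_i,Y_i)=k}\ind{S_i\geq S_{n+1}}}{1+|\cI_k|} > \alpha.\]
The right-hand side is exactly the quantity $p$ from Corollary~\ref{cor:pvalues_due_to_lem:exch_conditional_subvector}, instantiated with this choice of $\cZ_0$, since $\{i\in[n] : (X_i,Y_i)\in\cZ_0\} = \{i\in[n] : g(X_i,Y_i)=k\}$ and $\sum_{i\in[n]}\ind{(X_i,Y_i)\in\cZ_0} = |\cI_k|$.

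\textbf{Step 2: Applying the conditional p-value bound.} Since $(X_1,Y_1),\dots,(X_{n+1},Y_{n+1})$ are exchangeable, $s$ is symmetric, and $\P((X_{n+1},Y_{n+1})\in\cZ_0) = \P(g(X_{n+1},Y_{n+1})=k)>0$, Corollary~\ref{cor:pvalues_due_to_lem:exch_conditional_subvector} applies and yields $\P(p\leq\alpha\mid (X_{n+1},Y_{n+1})\in\cZ_0)\leq\alpha$. Combining with Step 1,
\[\P\big(Y_{n+1}\notin\cC(X_{n+1})\mid g(X_{n+1},Y_{n+1})=k\big) = \P\big(p\leq\alpha \mid (X_{n+1},Y_{n+1})\in\cZ_0\big) \leq \alpha,\]
which is the claimed bound.
\end{proof}

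\emph{Remark on where the work lies.} The only genuinely new content relative to the earlier theorems is the observation that $g^{-1}(k)$ is an admissible choice of $\cZ_0$ — everything else is a bookkeeping translation of the p-value reformulation (the analogue of equation~\eqref{eqn:conformal-via-pvalues_for_test_conditional}) and a direct invocation of Corollary~\ref{cor:pvalues_due_to_lem:exch_conditional_subvector}. Since that corollary was already stated for arbitrary $\cZ_0$, there is no obstacle to overcome; the main thing to be careful about is the convention for empty groups (if $\cI_k=\varnothing$ among the calibration points, the quantile of an empty list is $+\infty$, so every $y$ with $g(X_{n+1},y)=k$ is included, and coverage holds trivially for that draw), and the fact that the conditioning set $\{g(X_{n+1},Y_{n+1})=k\}$ depends on $Y_{n+1}$ rather than just $X_{n+1}$ — but this is exactly the situation Lemma~\ref{lem:exch_conditional_subvector} was designed to handle, since it conditions on the event $\cE_I$ recording \emph{which} indices land in $\cZ_0$.
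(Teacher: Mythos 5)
Your proof is correct and follows essentially the same approach as the paper's: set $\cZ_0 = g^{-1}(k)$, establish the p-value reformulation of the coverage event (the paper's equation~\eqref{eqn:conformal-via-pvalues_for_mondrian}), and invoke Corollary~\ref{cor:pvalues_due_to_lem:exch_conditional_subvector}. Your remark correctly identifies that the only new content is recognizing $g^{-1}(k)$ as an admissible $\cZ_0$, which is exactly the point the paper makes by calling this a generalization of the proof of Theorem~\ref{thm:bin_conditional}.
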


\begin{proof}[Proof of Theorem~\ref{thm:mondrian-cvg}]
\index{exchangeability!conditional}
This proof generalizes the proof of Theorem~\ref{thm:bin_conditional}, and follows an identical structure.
First, by construction of the set $\cC(X_{n+1})$, we can see that for any $k\in[K]$, on the event $g(X_{n+1},Y_{n+1})=k$, 
\begin{equation}\label{eqn:conformal-via-pvalues_for_mondrian}Y_{n+1}\in \cC(X_{n+1}) \Longleftrightarrow \frac{1+\sum_{i\in[n],g(X_i,Y_i) = k}\ind{S_i\geq S_{n+1}}}{1+|\cI_k|} >  \alpha\end{equation}
(the proof of this statement is analogous to the proof of Proposition~\ref{prop:conformal-via-pvalues}).

Next, fix any label $k\in[K]$ with $\P(g(X_{n+1},Y_{n+1})=k)>0$, and let $\cZ_0 = \{(x,y)\in\cX \times\cY: g(x,y)=k\} \subseteq \cX\times \cY$. By Corollary~\ref{cor:pvalues_due_to_lem:exch_conditional_subvector}, the quantity
\[p = \frac{1+\sum_{i\in[n]}\ind{(X_i,Y_i)\in\cZ_0,S_i\geq S_{n+1}}}{1+\sum_{i\in[n]}\ind{(X_i,Y_i)\in\cZ_0}}= \frac{1+\sum_{i\in[n],g(X_i,Y_i) = k}\ind{S_i\geq S_{n+1}}}{1+|\cI_k|}\]
satisfies $\P(p\leq \alpha\mid g(X_{n+1},Y_{n+1})=k)\leq \alpha$. 
By~\eqref{eqn:conformal-via-pvalues_for_mondrian}, this completes the proof.
\end{proof}

\section{Relaxations of test-conditional coverage: beyond binning}\label{sec:test-conditional-relax}

We now return to the question of relaxations for test-conditional coverage, which was the focus of Sections~\ref{sec:condition_test_point} and~\ref{sec:test-conditional-binning}. In particular, in Section~\ref{sec:test-conditional-continuous-impossible} we learned that the strongest form of the test-conditional coverage property, which requires that 
\begin{equation}\label{eqn:test-conditional-at-Xn+1}\P\big(Y_{n+1}\in\cC(X_{n+1})\mid X_{n+1}\big)\geq 1-\alpha\textnormal{ holds almost surely over $X_{n+1}$},\end{equation}
is in general too strong---for a nonatomic feature distribution $P_X$, it is essentially impossible to improve upon the trivial algorithm: outputting $\cY$ with probability $1-\alpha$ and $\varnothing$ otherwise. On the other hand, if we bin the space $\cX$ into a finite number of bins, this would could achieve a much weaker version of test-conditional coverage. In this section, we consider other possible relaxations, to see if it might be possible to obtain guarantees that are stronger than those obtained via binning.

\subsection{For all \texorpdfstring{$X$}{X} or for most \texorpdfstring{$X$}{X}?}
Instead of requiring test-conditional coverage to hold for all values of $X_{n+1}$, as in~\eqref{eqn:test-conditional-at-Xn+1}, we can consider relaxing the condition by requiring it to hold only for `most' values of $X_{n+1}$, to avoid a scenario where some nonnegligible fraction of $X_{n+1}$ values, say some set $\cX_0\subseteq\cX$, has a coverage level substantially lower than $1-\alpha$. To quantify this, we consider the following relaxation of the goal of test-conditional coverage:
\begin{equation}\label{eqn:test_conditional_relax_alpha_delta}\P\big(Y_{n+1}\in\cC(X_{n+1})\mid X_{n+1}\in\cX_0\big)\geq 1-\alpha\textnormal{ for all $P$, and all $\cX_0\subseteq\cX$ with $\P_P(X\in\cX_0)\geq \delta$}.\end{equation}
Here probability is taken with respect to data $(X_1,Y_1),\dots,(X_{n+1},Y_{n+1})$ sampled i.i.d.\ from $P$.
We can observe that if this relaxed condition holds for all values $\delta>0$, then we would simply recover the original condition~\eqref{eqn:test-conditional-at-Xn+1}, which we know to be impossible in the nonatomic setting. Instead, we will ask whether it is possible to ensure that this relaxed condition holds for some fixed and positive $\delta$.

Of course, even the original definition~\eqref{eqn:test-conditional-at-Xn+1} of test-conditional coverage can be satisfied if we are willing to accept the trivial algorithm of~\eqref{eqn:trivial_test_conditional}, which often returns $\cC(X_{n+1}) =\cY$. The impossibility result derived above in Theorem~\ref{thm:conditional_infinite} and Corollary~\ref{cor:infinite-lebesgue} can be interpreted as telling us that this trivial solution is a baseline that cannot be improved upon.

When we instead aim for the relaxed condition~\eqref{eqn:test_conditional_relax_alpha_delta}, a different trivial solution gives us a baseline for this setting: we can simply run any distribution-free method (e.g., split or full conformal) with target coverage level $1-\alpha\delta$ in place of $1-\alpha$. 
This strategy is trivial because in no way is it required to adapt to the difficulty of the prediction task as a function of $X_{n+1}$---it simply achieves the guarantee by being more conservative overall.
To verify that this would lead to the desired guarantee, we can observe that, for any $\cX_0$ with $\P_P(X\in\cX_0)\geq \delta$,
\begin{multline}\label{eqn:trivial_solution_alpha_delta_calculate}\P(Y_{n+1}\not\in \cC(X_{n+1})\mid X_{n+1}\in\cX_0) \leq \delta^{-1} \P(Y_{n+1}\not\in \cC(X_{n+1})\cdot\indsub{X_{n+1}\in \cX_0})\\
\leq \delta^{-1} \P(Y_{n+1}\not\in \cC(X_{n+1}))\leq \delta^{-1}\cdot \alpha\delta = \alpha,\end{multline}
where the last inequality holds if $\cC$ is constructed via some method guaranteeing marginal coverage at level $1-\alpha\delta$.
In fact, if we allow randomization in our construction, this trivial solution can be generalized to a family of solutions, parameterized by $c\in[0,1]$:
\begin{equation}\label{eqn:alpha_delta_trivial_solution}\begin{cases}
\textnormal{Construct $\cC'(X_{n+1})$, using any method that guarantees marginal coverage at level $1-c\alpha\delta$.}\\
\textnormal{With probability $\frac{1-\alpha}{1-c\alpha}$, return $\cC(X_{n+1}) = \cC'(X_{n+1})$; otherwise, return $\cC(X_{n+1}) = \varnothing$.}\end{cases}\end{equation}
\begin{proposition}\label{prop:relaxed-test-conditional-trivial-solution}
    For any $c\in[0,1]$, the procedure defined in~\eqref{eqn:alpha_delta_trivial_solution} satisfies the relaxed test-conditional coverage condition~\eqref{eqn:test_conditional_relax_alpha_delta}. 
\end{proposition}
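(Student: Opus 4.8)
The plan is to isolate the external coin flip and reduce everything to the one-line Markov argument already carried out in~\eqref{eqn:trivial_solution_alpha_delta_calculate}. First I would introduce a Bernoulli random variable $B$, independent of the data $(X_1,Y_1),\dots,(X_{n+1},Y_{n+1})$ and of the internal randomness used to construct $\cC'$, with $\P(B=1)=\frac{1-\alpha}{1-c\alpha}$; by construction $\cC(X_{n+1}) = \cC'(X_{n+1})$ on $\{B=1\}$ and $\cC(X_{n+1})=\varnothing$ on $\{B=0\}$. A preliminary sanity check is that this is a legitimate probability and the quantities below are well defined: since $c\in[0,1]$ and $\alpha\in(0,1)$ we have $0<1-\alpha\leq 1-c\alpha$, so $\frac{1-\alpha}{1-c\alpha}\in(0,1]$; moreover $1-c\alpha>0$ and $1-c\alpha\delta>0$ (as $c,\delta\le 1$), so dividing by $1-c\alpha$ below is harmless and asking $\cC'$ for marginal coverage at level $1-c\alpha\delta$ is meaningful. (As a reassurance, the endpoints $c=0$ and $c=1$ recover the two baselines in~\eqref{eqn:trivial_test_conditional} and~\eqref{eqn:trivial_solution_alpha_delta_calculate}.)

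Next, fix any $P$ and any $\cX_0\subseteq\cX$ with $P_X(\cX_0)\geq\delta$; in particular $P_X(\cX_0)>0$, so the conditional probability in~\eqref{eqn:test_conditional_relax_alpha_delta} is defined. Since the empty set covers nothing, the miscoverage event decomposes as the disjoint union $\{Y_{n+1}\notin\cC(X_{n+1})\} = \{B=0\}\cup\big(\{B=1\}\cap\{Y_{n+1}\notin\cC'(X_{n+1})\}\big)$. Conditioning on $\{X_{n+1}\in\cX_0\}$ and using that $B$ is independent of the data gives
\[\P\big(Y_{n+1}\notin\cC(X_{n+1})\mid X_{n+1}\in\cX_0\big) = \P(B=0) + \P(B=1)\cdot\P\big(Y_{n+1}\notin\cC'(X_{n+1})\mid X_{n+1}\in\cX_0\big).\]
The last conditional probability I would bound exactly as in~\eqref{eqn:trivial_solution_alpha_delta_calculate}: using $P_X(\cX_0)\geq\delta$ and the marginal guarantee for $\cC'$,
\[\P\big(Y_{n+1}\notin\cC'(X_{n+1})\mid X_{n+1}\in\cX_0\big)\leq \frac{\P\big(Y_{n+1}\notin\cC'(X_{n+1})\big)}{P_X(\cX_0)}\leq \frac{c\alpha\delta}{\delta}=c\alpha.\]

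Finally I would substitute $\P(B=0)=\frac{\alpha(1-c)}{1-c\alpha}$ and $\P(B=1)=\frac{1-\alpha}{1-c\alpha}$ and simplify, obtaining
\[\P\big(Y_{n+1}\notin\cC(X_{n+1})\mid X_{n+1}\in\cX_0\big) \leq \frac{\alpha(1-c)+c\alpha(1-\alpha)}{1-c\alpha} = \frac{\alpha(1-c\alpha)}{1-c\alpha} = \alpha,\]
since the numerator collapses to $\alpha(1-c\alpha)$. This is precisely the relaxed test-conditional coverage condition~\eqref{eqn:test_conditional_relax_alpha_delta}, and since $P$, $\cX_0$ were arbitrary (with $P_X(\cX_0)\ge\delta$), the proposition follows. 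There is no genuine obstacle here: the only points requiring care are confirming that $\frac{1-\alpha}{1-c\alpha}$ is a valid probability and checking that the algebra produces exactly $\alpha$ rather than a slightly larger bound (so that this family is a tight baseline for the relaxed guarantee); the Markov-type step is identical to the one already used in~\eqref{eqn:trivial_solution_alpha_delta_calculate}.
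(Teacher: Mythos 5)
Your proof is correct and takes essentially the same route as the paper's: isolate the independent randomization, apply the Markov-type bound from~\eqref{eqn:trivial_solution_alpha_delta_calculate} to $\cC'$ to get $\P(Y_{n+1}\notin\cC'(X_{n+1})\mid X_{n+1}\in\cX_0)\leq c\alpha$, and combine. The paper factorizes the coverage probability directly (so the final algebra is just $\frac{1-\alpha}{1-c\alpha}\cdot(1-c\alpha)=1-\alpha$), whereas you decompose the miscoverage event and simplify $\frac{\alpha(1-c)+c\alpha(1-\alpha)}{1-c\alpha}=\alpha$; these are the same argument with slightly different bookkeeping.
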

\begin{proof}[Proof of Proposition~\ref{prop:relaxed-test-conditional-trivial-solution}]
By construction of the method, we have
\[\P(Y_{n+1}\in\cC(X_{n+1})\mid X_{n+1}\in \cX_0) = \frac{1-\alpha}{1-c\alpha}\cdot \P(Y_{n+1}\in\cC'(X_{n+1})\mid X_{n+1}\in \cX_0).\]
Next, following the same steps as in the calculations~\eqref{eqn:trivial_solution_alpha_delta_calculate} above,
\[\P(Y_{n+1}\not\in\cC'(X_{n+1})\mid X_{n+1}\in\cX_0) \leq \delta^{-1} \cdot c\alpha \delta = c\alpha.\]
Combining these two calculations proves the result.
\end{proof}

Consequently, in the setting of relaxed (rather than almost sure) test-conditional coverage, our question is somewhat different from before. Again working in the setting $\cY=\R$, while before we asked whether it is possible for a procedure satisfying the original condition~\eqref{eqn:test-conditional-at-Xn+1} to return an interval of \emph{finite} length, now we ask whether a procedure satisfying the relaxed condition~\eqref{eqn:test_conditional_relax_alpha_delta} can return an interval of \emph{shorter} length than the trivial solution obtained via~\eqref{eqn:alpha_delta_trivial_solution} above. For example, we may ask, for any $\cC$ that achieves the relaxed test-conditional coverage condition~\eqref{eqn:test_conditional_relax_alpha_delta}, 
\begin{equation}\label{eqn:hardness-test-conditional-coverage-relaxed-ideal-lower-bound}\textnormal{Is it true that }\E[\textnormal{Leb}(\cC(X_{n+1}))] \geq \inf_{c\in[0,1]} \left\{\frac{1-\alpha}{1-c\alpha} \cdot \E[\textnormal{Leb}(\cC_{\textnormal{split}; 1-c\alpha\delta}(X_{n+1}))]\right\} \ ? \end{equation}
Here $\cC_{\textnormal{split}; 1-c\alpha\delta}$ is the interval returned by running split conformal prediction at coverage level $1-c\alpha\delta$. Thus, the right-hand side of~\eqref{eqn:hardness-test-conditional-coverage-relaxed-ideal-lower-bound} can be interpreted as the expected length of the prediction set returned by the trivial solution defined in~\eqref{eqn:alpha_delta_trivial_solution}, when we use split conformal prediction to construct the initial set $\cC'(X_{n+1})$ (and minimize expected length over all possible choices of the parameter $c$).

It turns out that, while we cannot prove this exact bound, the lower bound established in the following theorem tells a very similar story. In order to present the theorem we first need to define a new quantity: for $t\in[0,1]$, 
let $L_P(1-t)$ be the minimum length of any \emph{oracle} prediction interval $\cC^P_{1-t}$,
which is constructed given knowledge of the distribution $P$ (i.e., not a distribution-free method), and has coverage level $1-t$:
\begin{equation}\label{eqn:L_P_def}L_P(1-t) = \inf\left\{ \E_P[\textnormal{Leb}(\cC^P_{1-t}(X))] \ : \textnormal{ $\cC^P_{1-t}$ satisfies }\P_P(Y\in \cC^P_{1-t}(X))\geq 1-t\right\}.\end{equation}
For example, if under the true distribution $P$ it holds that $Y=f(X) + \cN(0,1)$ for some function $f$, then the minimum-length interval is given by $\cC^P_{1-t}(X) = f(X) \pm z^*_{1-t/2}$, where $z^*_{1-t/2}$ is the $(1-t/2)$-quantile of the standard normal distribution, and so $L_P(1-t) = 2z^*_{1-t/2}$. For a general distribution $P$, we emphasize that this minimum length is \emph{not} over methods that have distribution-free validity; we are instead considering prediction intervals that are constructed using knowledge of $P$, which might be substantially narrower. These oracle lengths satisfy a natural property, which we state without proof:
\begin{fact}[Convexity of the oracle prediction interval length]\label{fact:L_P_decr_convex}
For any nonatomic distribution $P$, the oracle length $L_P(1-t)$ is a convex and nonincreasing function of $t$.
\end{fact}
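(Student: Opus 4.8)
The plan is to verify the two stated properties separately, directly from the infimum defining $L_P$. Monotonicity is immediate: as $t$ increases, the coverage requirement $\P_P\!\big(Y\in\cC^P_{1-t}(X)\big)\geq 1-t$ becomes weaker, so the feasible set of (set-valued) oracle maps over which we infimize only grows, and an infimum of $\E_P[\textnormal{Leb}(\cC^P_{1-t}(X))]$ over a larger set can only decrease. Concretely, for $t_1\geq t_0$ any map feasible at level $1-t_0$ is feasible at level $1-t_1$, so $L_P(1-t_1)\leq L_P(1-t_0)$, i.e.\ $t\mapsto L_P(1-t)$ is nonincreasing.

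For convexity the plan is a mixing argument. Fix $t_0,t_1\in[0,1]$, $\lambda\in[0,1]$, and set $t_\lambda=(1-\lambda)t_0+\lambda t_1$. Given $\epsilon>0$, pick near-optimal oracle maps $\cC_0,\cC_1$ with $\P_P(Y\in\cC_j(X))\geq 1-t_j$ and $\E_P[\textnormal{Leb}(\cC_j(X))]\leq L_P(1-t_j)+\epsilon$ for $j=0,1$. Form the randomized rule $\cC_\lambda$ that, using an independent coin, returns $\cC_1(X)$ with probability $\lambda$ and $\cC_0(X)$ with probability $1-\lambda$. Both coverage and expected Lebesgue measure are \emph{linear} under this mixing:
\[\P_P(Y\in\cC_\lambda(X))=(1-\lambda)\P_P(Y\in\cC_0(X))+\lambda\,\P_P(Y\in\cC_1(X))\geq (1-\lambda)(1-t_0)+\lambda(1-t_1)=1-t_\lambda,\]
\[\E_P[\textnormal{Leb}(\cC_\lambda(X))]=(1-\lambda)\E_P[\textnormal{Leb}(\cC_0(X))]+\lambda\,\E_P[\textnormal{Leb}(\cC_1(X))]\leq (1-\lambda)L_P(1-t_0)+\lambda L_P(1-t_1)+\epsilon.\]
Thus $\cC_\lambda$ is feasible at level $1-t_\lambda$, so $L_P(1-t_\lambda)\leq (1-\lambda)L_P(1-t_0)+\lambda L_P(1-t_1)+\epsilon$; letting $\epsilon\to0$ yields convexity.

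The one point that needs care — and the main obstacle — is whether the infimum in the definition of $L_P$ is taken over possibly randomized prediction sets, since the mixture $\cC_\lambda$ is randomized even when $\cC_0,\cC_1$ are not. If randomization is permitted (as it is for the other constructions in this section), the argument above is complete. If one insists on deterministic set-valued maps, the same conclusion can be recovered from the explicit form of the length-minimizing oracle set: it is a highest-conditional-density (water-filling) region $\{(x,y):\text{the conditional ``density'' of }Y\text{ given }X=x\text{ exceeds a common threshold }\lambda_t\}$, and a direct computation gives $\tfrac{d}{dt}L_P(1-t)=-1/\lambda_t$, which is negative and increasing in $t$ (as $t$ grows the required coverage $1-t$ shrinks and $\lambda_t$ rises), hence $L_P(1-\cdot)$ is nonincreasing and convex. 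Making this last route fully rigorous for an arbitrary $P$, with no assumption of conditional densities so that atoms must be handled first, is where some measure-theoretic bookkeeping enters; the randomized-mixture proof avoids it entirely, which is why I would present that as the main argument.
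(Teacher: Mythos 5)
The paper states this fact explicitly ``without proof'' (see the sentence immediately preceding the fact), so there is no paper argument to compare against. Your proof is a correct, self-contained verification.

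Both parts of your argument are sound. The monotonicity step is exactly right: the feasible set in the definition of $L_P$ is monotone in $t$, so the infimum is nonincreasing. Your convexity argument via a randomized mixture is the cleanest route, and you are right to flag the one genuine subtlety — whether the infimum in~\eqref{eqn:L_P_def} is taken over randomized set-valued maps. In the only place the paper actually uses Fact~\ref{fact:L_P_decr_convex} (the proof of Theorem~\ref{thm:hardness-test-conditional-coverage-relaxed}), $L_P(1-\alpha_t)$ appears only as a \emph{lower bound} on the expected length of the particular deterministic sets $\cC_t$, followed by Jensen's inequality applied to $L_P$. Enlarging the class of oracle maps to include randomized ones can only make $L_P$ smaller, which preserves that lower bound and is exactly what your mixture argument needs; and the surrounding text (e.g.\ the trivial construction in~\eqref{eqn:trivial_test_conditional} and the family~\eqref{eqn:alpha_delta_trivial_solution}) already works with randomized prediction sets, so this reading is consistent with the paper's conventions. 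Your second route — differentiating the water-filling oracle and observing that the marginal length per unit coverage is $1/\lambda_t$ with $\lambda_t$ increasing — is morally the right picture but, as you note, would require real bookkeeping to handle atoms and absence of conditional densities; presenting the mixture argument as primary is the right call.
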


Now we present the theorem:
\begin{theorem}[Hardness of relaxed test-conditional coverage (nonatomic setting)]\label{thm:hardness-test-conditional-coverage-relaxed}
Suppose $\cC$ satisfies the distribution-free relaxed test-conditional coverage condition~\eqref{eqn:test_conditional_relax_alpha_delta}, and let $\cY=\R$. Then, for any distribution $P$ on $\cX\times\R$ for which the marginal $P_X$ is nonatomic,
\[\E[\textnormal{Leb}(\cC(X_{n+1}))] \geq \inf_{c\in[0,1]} \left\{\frac{1-\alpha}{1-c\alpha} \cdot L_P(1-c\alpha\delta)\right\}.\] 
\end{theorem}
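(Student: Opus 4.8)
The plan is to treat this as a distribution-free hardness statement, so the argument should exploit that $\cC$ obeys~\eqref{eqn:test_conditional_relax_alpha_delta} for \emph{every} sampling distribution, not just for $P$. Using the guarantee at $P$ alone only forces the $\cD_n$-averaged conditional coverage $x\mapsto\P(Y_{n+1}\in\cC(X_{n+1})\mid X_{n+1}=x)$ to average $\ge 1-\alpha$ over every feature set of $P_X$-mass $\ge\delta$, and (as the homogeneous examples behind~\eqref{eqn:alpha_delta_trivial_solution} reveal) that is not enough to pin down $\E[\textnormal{Leb}(\cC(X_{n+1}))]$. So I would, for a fixed $c\in[0,1]$ and small $\eta>0$, pick a near-oracle set $C^\star$ with $\P_P(Y\in C^\star(X))\ge 1-c\alpha\delta$ and $\E_P[\textnormal{Leb}(C^\star(X))]\le L_P(1-c\alpha\delta)+\eta$ (using only the definition~\eqref{eqn:L_P_def} and Fact~\ref{fact:L_P_decr_convex}), and build a perturbation of $P$ around it.

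The perturbation: using nonatomicity of $P_X$, choose a feature set $\cX_0$ of $P_X$-mass $\delta$ on which $C^\star$ carries almost all of its residual miscoverage, so that $\E_P[\textnormal{Leb}(C^\star(X))\mid X\in\cX_0]$ is comparable to $\tfrac1\delta L_P(1-c\alpha\delta)$; then let $P'$ equal $P$ off $\cX_0$, and on $\cX_0$ replace the conditional law of $Y$ given $X=x$ by a two-part law $\nu_x$: a ``cheap to cover'' component of mass $1-c\alpha$ concentrated near $C^\star(x)$, plus a component of mass $c\alpha$ spread \emph{uniformly} over an interval whose length is a fixed multiple of $\textnormal{Leb}(C^\star(x))$. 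For such a law the minimal Lebesgue length needed to capture a $p$-fraction is $0$ for $p\le 1-c\alpha$ and then affine in $p$ with slope proportional to $\textnormal{Leb}(C^\star(x))/(c\alpha)$. Since $P'_X(\cX_0)=\delta$, applying~\eqref{eqn:test_conditional_relax_alpha_delta} to $P'$ and $\cX_0$, then running a layer-cake / reverse-Markov computation on the $[0,1]$-valued random variable $V=\nu_{X_{n+1}}(\cC(X_{n+1}))$ (which satisfies $\E[V]\ge 1-\alpha$) and multiplying back by $\delta=P_X(\cX_0)$, yields
\[\E_{P'}\big[\textnormal{Leb}(\cC(X_{n+1}))\big]\ \geq\ \frac{1-\alpha}{1-c\alpha}\,L_P(1-c\alpha\delta)-o(1),\]
where the error vanishes as $\eta\downarrow 0$ and the approximations are tightened; the multiplier $\tfrac{1-\alpha}{1-c\alpha}$ is exactly what falls out of comparing the ``free budget'' $1-c\alpha$ against the required average coverage $1-\alpha$, matching the achievable scheme~\eqref{eqn:alpha_delta_trivial_solution} and the ideal bound~\eqref{eqn:hardness-test-conditional-coverage-relaxed-ideal-lower-bound}.

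The main obstacle is transferring this inequality from $P'$-distributed to $P$-distributed data: the training laws $(P')^{n}$ and $P^{n}$ differ, and a crude estimate loses $n\,\dtv(P,P')=\Theta(n\delta)$, which is not small. Here I would exploit that $\cC$ is distribution-free and $P_X$ is nonatomic, so $\cC$ cannot ``recognize'' the feature locations inside $\cX_0$: averaging $P'$ over measure-preserving relabelings of $\cX_0$ (equivalently, decoupling feature from response within $\cX_0$) makes the perturbed training sample interchangeable with one drawn from $P$ for the purpose of evaluating $\E[\textnormal{Leb}(\cC(X_{n+1}))]$, with the leftover contributions nonnegative and hence droppable from a lower bound --- this is the relaxed-setting analogue of the limiting step in the proof of Theorem~\ref{thm:conditional_infinite}. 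Making that symmetrization rigorous, and checking that the ``$C^\star$ dumps its miscoverage onto $\cX_0$'' and ``affine cover-length'' steps cost only $o(1)$, is the delicate part; once it is done, letting $\eta\downarrow 0$ and taking the infimum over $c\in[0,1]$ gives the stated bound.
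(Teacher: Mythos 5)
Your overall strategy---perturb $P$, force $\cC$ to cover a specially-designed conditional law on a feature set $\cX_0$ of mass $\delta$, then transfer the resulting length bound back to $P$---is genuinely different from what the paper does, but it contains a gap I don't see how to close, precisely at the step you flag as ``delicate.''

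The transfer from $P'$ to $P$ is not just delicate; as stated it does not work. Your $P'$ agrees with $P$ on features outside $\cX_0$ and changes the conditional law of $Y\mid X$ on $\cX_0$, so $\dtv(P,P')$ is of order $\delta$ (fixed), and hence $\dtv(P^n,(P')^n)$ can be of order $1$ --- you acknowledge this. Your proposed fix, ``averaging over measure-preserving relabelings of $\cX_0$'' or ``decoupling feature from response within $\cX_0$,'' does not repair it: any bijection of $\cX_0$ applied to the feature coordinate acts identically on samples from $P$ and from $P'$, and decoupling $X$ from $Y$ inside $\cX_0$ produces a product law, not $P$. Neither operation changes the fact that the \emph{conditional law of $Y$ given $X$} differs between $P$ and $P'$ on a $\delta$-mass region, which is exactly what your construction needs to be different. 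Since $\cC$ is an arbitrary (possibly highly non-symmetric) function of the full training sample $\cD_n$, there is no a priori reason for $\E_{P'}[\textnormal{Leb}(\cC(X_{n+1}))]$ to lower-bound $\E_{P}[\textnormal{Leb}(\cC(X_{n+1}))]$, and the asymmetry cannot be washed out by permuting feature labels. A secondary issue: you also want $\cX_0$ to simultaneously carry ``almost all of $C^\star$'s residual miscoverage'' and have $P_X$-mass exactly $\delta$; the oracle's miscoverage set lives in $\cX\times\cY$, and its $X$-projection has no reason to have mass $\delta$, so this localisation step also needs an argument.

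The paper avoids both problems by a different decomposition. It first proves a key lemma (Lemma~\ref{lemma_for_thm:hardness-test-conditional-coverage-relaxed}): any $\cC$ satisfying~\eqref{eqn:test_conditional_relax_alpha_delta} must in fact satisfy $\P_P(Y_{n+1}\in\cC(X_{n+1})\mid (X_{n+1},Y_{n+1})\in B)\geq 1-\alpha$ for \emph{every} joint set $B$ with $P(B)\geq\delta$, not just feature slabs $\cX_0\times\cY$. The transfer issue you ran into is confined to the proof of that lemma, where it is handled by the sample--resample construction (Lemma~\ref{lem:sample-resample}): one samples $M\gg n^2$ points from $P$, passes to the empirical distribution $\widehat P_M$, exploits nonatomicity of $P_X$ to make $X\mapsto Y$ deterministic under $\widehat P_M$, and then pays only $\dtv(P^{n+1},Q)\leq n(n+1)/(2M)\to 0$. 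The main proof of the theorem then never perturbs $P$ at all: it introduces the deterministic nested sets $\cC_t(x)=\{y:\P(y\in\cC(x))\geq t\}$, writes $\E[\textnormal{Leb}(\cC(X_{n+1}))]=\int_0^1\E[\textnormal{Leb}(\cC_t(X_{n+1}))]\,\mathsf{d}t\geq\int_0^1 L_P(1-\alpha_t)\,\mathsf{d}t$ via Fubini, applies Jensen using Fact~\ref{fact:L_P_decr_convex}, and finally uses the key lemma (with $B$ a sub-level set of $p(x,y)=\P(y\in\cC(x))$ at its $\delta$-quantile) to control $\int_0^{(1-\alpha)/(1-c\alpha)}\alpha_t\,\mathsf{d}t$, with $c$ chosen so that $(1-\alpha)/(1-c\alpha)$ equals that quantile. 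Your ``affine cover-length'' intuition is morally pointing at the same place the key lemma lands---coverage forced on a rare slice of $Y$-values---but the route through a genuine change of $P$ reintroduces the very training-distribution mismatch that the sample--resample device was invented to dodge, and your symmetrization does not supply a substitute.
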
 \index{hardness result!test-conditional coverage}
We can see that this theorem establishes a lower bound that, at first glance, appears similar to the goal laid out in~\eqref{eqn:hardness-test-conditional-coverage-relaxed-ideal-lower-bound}, with a key difference: instead of the length of a split conformal interval, $\E[\textnormal{Leb}(\cC_{\textnormal{split}; 1-c\alpha\delta}(X_{n+1}))]$, we instead have the length of an oracle interval, $L_P(1-c\alpha\delta)$.
Of course, it may be the case that this oracle length $L_P(1-c\alpha\delta)$ is much shorter than the expected length of the split conformal interval---naturally we expect to pay a price for distribution-free validity. But as we will see in Chapter~\ref{chapter:model-based},  
in certain settings where we are able to find a score function that captures the true distribution of the data, the split conformal interval is asymptotically equivalent to the oracle prediction interval---that is, 
$\E[\textnormal{Leb}(\cC_{\textnormal{split},1-c\alpha\delta}(X_{n+1}))]\approx L_P(1-c\alpha\delta)$, and so
the lower bound in the theorem can be interpreted as implying that the bound~\eqref{eqn:hardness-test-conditional-coverage-relaxed-ideal-lower-bound} approximately holds.

The proof of the hardness result in Theorem~\ref{thm:hardness-test-conditional-coverage-relaxed} relies primarily on the following key lemma:
\begin{lemma}\label{lemma_for_thm:hardness-test-conditional-coverage-relaxed}
Suppose $\cC$ satisfies the distribution-free relaxed test-conditional coverage condition~\eqref{eqn:test_conditional_relax_alpha_delta}. Let $P$ be any distribution on $\cX\times\cY$ such that the marginal $P_X$ is nonatomic. Then 
\begin{equation}\label{eqn:conditional_cov_key_lemma_bound}\P_P\big(Y_{n+1}\in \cC(X_{n+1})\mid (X_{n+1},Y_{n+1})\in B\big) \geq 1-\alpha\textnormal{ for any $B\subseteq\cX\times\cY$ with $P(B)\geq \delta$}.\end{equation}
\end{lemma}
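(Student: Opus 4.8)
The goal is to deduce a statement about conditioning on a \emph{joint} event $\{(X_{n+1},Y_{n+1})\in B\}$ from the hypothesis \eqref{eqn:test_conditional_relax_alpha_delta}, which only constrains conditioning on \emph{feature} events $\{X_{n+1}\in\cX_0\}$. The strategy is to engineer an auxiliary data distribution under which membership in $B$ is almost surely detectable from the feature alone, so that conditioning on a feature event of probability at least $\delta$ forces the test pair into $B$; this plays the role of the point-mass perturbation in the proof of Theorem~\ref{thm:conditional_infinite}, except that the perturbation must now carry a full $\delta$ worth of probability. As a warm-up, the claim is immediate for cylinders $B=\cX_0\times\cY$: then $P(B)=P_X(\cX_0)\geq\delta$ and $\P_P(Y_{n+1}\in\cC(X_{n+1})\mid (X_{n+1},Y_{n+1})\in B)=\P_P(Y_{n+1}\in\cC(X_{n+1})\mid X_{n+1}\in\cX_0)\geq 1-\alpha$ directly by \eqref{eqn:test_conditional_relax_alpha_delta}. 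The content is in handling a general $B$.

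A first reduction lets us assume $P(B)=\delta$ exactly. Since $P_X$ is nonatomic, the conditional law $P(\cdot\mid B)$ is a nonatomic measure on a standard Borel space, so there is a measurable $R:\cX\times\cY\to[0,1]$ pushing $P(\cdot\mid B)$ to the uniform law on $[0,1]$; setting $B^{(t)}=\{(x,y)\in B: R(x,y)\in[t,t+\delta/P(B)]\bmod 1\}$ gives measurable subsets of $B$ with $P(B^{(t)})=\delta$ and nonatomic feature marginals, and a short computation shows
\[
\int_0^1 \P_P\big(Y_{n+1}\in\cC(X_{n+1})\mid (X_{n+1},Y_{n+1})\in B^{(t)}\big)\,dt = \P_P\big(Y_{n+1}\in\cC(X_{n+1})\mid (X_{n+1},Y_{n+1})\in B\big),
\]
so the general case follows from the case of $P$-measure exactly $\delta$ by averaging.

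Now I would build the auxiliary distribution. Writing $\pi_X(B)=\{x:\exists\,y,\ (x,y)\in B\}$, the identity $P(B)=\int P\big((x,Y)\in B\mid X=x\big)\,dP_X(x)$ gives $P_X(\pi_X(B))\geq P(B)=\delta$. Using a measurable selection (valid under the standing standard-Borel assumption) I fix $y_0:\pi_X(B)\to\cY$ with $(x,y_0(x))\in B$, and define $\tilde P$ to: keep the feature marginal of $P$ on $\pi_X(B)^c$; place mass exactly $\delta$ on $\pi_X(B)$ distributed according to the $X$-marginal of $P(\cdot\mid B)$; and use the conditional law $P(\cdot\mid X=x,\,(x,Y)\in B)$ for $Y$ given $X=x\in\pi_X(B)$ (and $P(\cdot\mid X=x)$ otherwise). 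This is arranged so that $\tilde P_X(\pi_X(B))=\delta$ and $\tilde P$ conditioned on $\{X\in\pi_X(B)\}$ reproduces $P(\cdot\mid B)$ exactly. Applying \eqref{eqn:test_conditional_relax_alpha_delta} to $\tilde P$ with $\cX_0=\pi_X(B)$ yields $\P_{\tilde P}(Y_{n+1}\in\cC(X_{n+1})\mid X_{n+1}\in\pi_X(B))\geq 1-\alpha$, and by construction the left-hand side equals $\P_{\,\textnormal{train}\sim\tilde P,\ (X_{n+1},Y_{n+1})\sim P(\cdot\mid B)}(Y_{n+1}\in\cC(X_{n+1}))$.

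The last step, and the main obstacle, is to replace the training distribution $\tilde P$ by the original $P$. In Theorem~\ref{thm:conditional_infinite} the analogous transfer was handled by sending the perturbation size $\epsilon$ to $0$ so that $\dtv(\tilde P^{\otimes n},P^{\otimes n})\leq n\epsilon$ vanished; here $\tilde P$ and $P$ differ by a reallocation of feature mass of order $P_X(\pi_X(B))-\delta$, which need not be small, so a naive total-variation comparison is insufficient. Closing this gap is the delicate part of the argument --- it calls for a more careful construction of $\tilde P$ (and likely an iteration or limiting scheme) that couples the training laws under $\tilde P$ and $P$ closely enough that the $\geq 1-\alpha$ bound survives; once that is done, the conclusion $\P_P(Y_{n+1}\in\cC(X_{n+1})\mid (X_{n+1},Y_{n+1})\in B)\geq 1-\alpha$ follows.
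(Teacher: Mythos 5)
Your strategy is on the right track up to the last step, and you are right to flag the training-distribution transfer as the gap --- but it is not a loose end that a ``more careful construction'' of the fixed $\tilde P$ or an iterative coupling will close. The issue is structural: to make the conditional coverage bound in~\eqref{eqn:test_conditional_relax_alpha_delta} produce information about $B$, you need an auxiliary distribution under which conditioning on a feature event of mass $\delta$ is equivalent to conditioning on $B$; but any such distribution that supports the original $B$ must reallocate roughly $P_X(\pi_X(B))-\delta$ worth of feature mass, which is $\Omega(1)$. Since $\cC$ is an arbitrary procedure with no stability assumption, the only available comparison between its behavior under two training laws is total variation on the product over $n$ samples, so a deterministic $\tilde P$ at constant TV distance from $P$ gives you nothing. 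This is precisely why the analogue of the $\epsilon\to 0$ trick from Theorem~\ref{thm:conditional_infinite} fails here, as you observed. (As a secondary issue, your $\tilde P$ as written is not a probability measure --- the mass on $\pi_X(B)^c$ plus $\delta$ on $\pi_X(B)$ need not sum to one --- but fixing that does not affect the conclusion.)

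The missing ingredient is the sample--resample construction of Lemma~\ref{lem:sample-resample}: replace your deterministic $\tilde P$ by the \emph{random} empirical distribution $\widehat{P}_M=\frac1M\sum_{m=1}^M\delta_{(X^{(m)},Y^{(m)})}$ of a large i.i.d.\ sample from $P$. Nonatomicity of $P_X$ makes the $X^{(m)}$ almost surely distinct, so under $\widehat{P}_M$ each feature value determines its label; taking $\cX_0=\{X^{(m)}: (X^{(m)},Y^{(m)})\in B\}$, conditioning on $X_{n+1}\in\cX_0$ is literally the same event as conditioning on $(X_{n+1},Y_{n+1})\in B$ --- the decoupling you were trying to engineer with $\tilde P$, but now with no mass reallocation at all. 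Applying~\eqref{eqn:test_conditional_relax_alpha_delta} to $\widehat{P}_M$ when $P_{\widehat{P}_M}(\cX_0)=|\{m:(X^{(m)},Y^{(m)})\in B\}|/M\geq\delta$ (which holds with high probability by a binomial tail bound once $P(B)>\delta$) gives the $1-\alpha$ bound under $\widehat{P}_M$, and then Lemma~\ref{lem:sample-resample} gives $\dtv(P^{n+1},Q)\leq n(n+1)/(2M)$, where $Q$ is the law of $n+1$ points resampled from $\widehat{P}_M$; sending $M\to\infty$ transfers the bound back to $P$. The case $P(B)=\delta$ exactly is then handled by a monotone limit over $B_i\supseteq B$ with $P(B_i)>\delta$, rather than by the averaging reduction you propose (which is correct but unnecessary). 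The lesson is that randomizing the auxiliary distribution lets the TV gap vanish with $M$ even though no single deterministic surrogate would do.
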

\begin{figure}[t]
    \centering
    \includegraphics[width=0.7\textwidth]{\diagramspath impossibility-lemma.pdf}
    \caption{\textbf{An illustration of the hardness result in Lemma~\ref{lemma_for_thm:hardness-test-conditional-coverage-relaxed}}. The lemma says that, if we satisfy conditional coverage over \emph{all} $\cX_0\subseteq\cX$ with measure at least $\delta$, then we must \emph{also} have coverage for all regions $B\subseteq \cX\times\cY$ with measure $\delta$. This leads to the lower bound in Theorem~\ref{thm:hardness-test-conditional-coverage-relaxed}, because a predictive inference procedure that has high coverage conditional on $(X,Y)\in B$ (for every possible $B$) must necessarily return very wide intervals.
    On the left is drawn an example set $\cX_0$, and on the right an example $B$.}
    \commentAlt{Two panels show the same heatmap for a conditional density of $Y$ given $X$. The left panel has a shaded region labeled $\cX_0$ consisting of several narrow vertical bands. The right panel has a shaded region $B$ consisting of two irregular shapes.}
    \label{fig:lemma-hardness-test-conditional-coverage-relaxed}
\end{figure}
The significance of this lemma may not be immediately evident. Indeed, its conclusion, the bound given in~\eqref{eqn:conditional_cov_key_lemma_bound}, looks extremely similar to the relaxed definition~\eqref{eqn:test_conditional_relax_alpha_delta} of conditional coverage that we have assumed to hold; the only difference lies in whether we condition on an event that depends on the test feature $X_{n+1}$ only, or on the entire test point $(X_{n+1},Y_{n+1})$. In fact, these two statements have very different implications. 

To illustrate this difference, let's consider a very simple example, without the distribution-free context. Suppose again that $Y = f(X) + \cN(0,1)$, i.e., we assume a Gaussian model on the noise. Then, with oracle knowledge of this conditional distribution, the prediction interval $\cC(x) = f(x) \pm 1.96$ offers 95\% coverage for the conditional distribution of $Y$ given $X=x$---and in particular, would satisfy a guarantee of the type~\eqref{eqn:test_conditional_relax_alpha_delta} for arbitrarily small $\delta$. On the other hand, a bound of the type~\eqref{eqn:conditional_cov_key_lemma_bound} would not be satisfied---for instance, choosing any $B \subseteq \{(x,y): |y - f(x)| > 1.96\}$, we can have $P(B)$ potentially as large as $\P(|Y - f(X)| > 1.96) = 0.05$, but our interval $\cC(x)$ would lead to \emph{zero} coverage conditional on the event $(X,Y)\in B$. See also Figure~\ref{fig:lemma-hardness-test-conditional-coverage-relaxed} for a visualization.

In other words, the difference is that the definition~\eqref{eqn:test_conditional_relax_alpha_delta} requires a high level of predictive coverage even at rare values of $X$, but the conclusion of Lemma~\ref{lemma_for_thm:hardness-test-conditional-coverage-relaxed} tells us that we therefore also have a high level of predictive coverage even at rare values of $Y$. While the first statement seems reasonable, the second statement inherently conflicts with the goal of accurate predictive inference, where we typically want to return an interval containing only plausible values of $Y$.

Next, we prove the key lemma. 
The main idea of the proof relies on the following strategy: when sampling $m$ i.i.d.\ draws from any distribution $P$, it is nearly equivalent to first sample $M\geq m$ i.i.d.\ draws from $P$, then sample $m$ times with replacement from this list, as long as $M$ is sufficiently large (specifically, $M\gg m^2$).
We will see this construction used again for other hardness results proved later in the book, in other contexts---we refer to it as the \emph{sample--resample} construction, and we include it as its own lemma:
\begin{lemma}[The sample--resample construction]\label{lem:sample-resample}
Let $P$ be a distribution on $\cZ$, and let $m,M\geq 1$. Let $P^m$ denote the corresponding product distribution on $\cZ^m$---that is, the distribution of $(Z_1,\dots,Z_m)$, where $Z_1,\dots,Z_m\iidsim P$. Moreover, let $Q$ denote the distribution on $\cZ^m$ obtained by the following process to generate $(Z_1,\dots,Z_m)$:
\begin{enumerate}
    \item Sample $Z^{(1)},\dots,Z^{(M)}\iidsim P$, and define the empirical distribution $\widehat{P}_M = \frac{1}{M}\sum_{i=1}^M \delta_{Z^{(i)}}$;
    \item Sample $Z_1,\dots,Z_m\iidsim \widehat{P}_M$.
\end{enumerate}
Then
\[\dtv\big(P^m,Q\big) \leq \frac{m(m-1)}{2M},\]
where $\dtv$ denotes the total variation distance between distributions.
\end{lemma}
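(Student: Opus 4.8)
The plan is to prove the bound by a direct coupling that compares sampling \emph{with} replacement against sampling \emph{without} replacement from a list of $M$ i.i.d.\ draws. The starting observation is that both $P^m$ and $Q$ can be generated from the \emph{same} i.i.d.\ sequence $Z^{(1)},\dots,Z^{(M)}\iidsim P$. For $Q$, draw indices $J_1,\dots,J_m$ i.i.d.\ $\textnormal{Unif}\{1,\dots,M\}$ (independently of the $Z^{(i)}$'s) and output $(Z^{(J_1)},\dots,Z^{(J_m)})$. For $P^m$, instead draw $(J'_1,\dots,J'_m)$ uniformly among all ordered $m$-tuples of \emph{distinct} indices in $\{1,\dots,M\}$ (again independently of the $Z^{(i)}$'s) and output $(Z^{(J'_1)},\dots,Z^{(J'_m)})$; this really does have law $P^m$, since for every fixed distinct-coordinate tuple $j$ the subvector $(Z^{(j_1)},\dots,Z^{(j_m)})$ has law $P^m$, and mixing over $j$ preserves this. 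So it suffices to couple the two index vectors.

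Next I would construct a coupling of $(J_1,\dots,J_m)$ and $(J'_1,\dots,J'_m)$ that makes them coincide whenever the with-replacement draw happens to have all distinct coordinates. Let $E$ be the event that $J_1,\dots,J_m$ are pairwise distinct. Conditionally on $E$, the vector $(J_1,\dots,J_m)$ is exactly uniform over ordered $m$-tuples of distinct indices --- i.e.\ it has the law of $(J'_1,\dots,J'_m)$ --- so on $E$ we may simply set $J'=J$. On $E^c$ we draw $J'$ from whatever conditional law restores its correct marginal; this is possible because each distinct-coordinate tuple carries probability $M^{-m}$ under $J$ but the \emph{larger} probability $1/\big(M(M-1)\cdots(M-m+1)\big)$ under $J'$, so the sub-probability measure ``$J$ restricted to $E$'' is dominated by the law of $J'$, and the leftover masses on $E^c$ match up. Pushing the common $Z^{(1)},\dots,Z^{(M)}$ through these coupled index vectors realizes $Q$ and $P^m$ on one probability space, with identical outputs on $E$.

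The bound then follows immediately: by the coupling characterization of total variation distance,
\[
\dtv\big(P^m,Q\big)\ \le\ \P(J\neq J')\ \le\ \P(E^c)\ =\ \P\big(\exists\, 1\le i<j\le m:\ J_i=J_j\big)\ \le\ \sum_{1\le i<j\le m}\P(J_i=J_j)\ =\ \binom{m}{2}\cdot\frac{1}{M}\ =\ \frac{m(m-1)}{2M},
\]
using a union bound together with $\P(J_i=J_j)=1/M$ for $i\neq j$. (An essentially equivalent route skips the explicit coupling: $P^m$ and $Q$ are pushforwards of $P^M\otimes\mu_{\mathrm{wo}}$ and $P^M\otimes\mu_{\mathrm{w}}$ under the same map, where $\mu_{\mathrm{wo}},\mu_{\mathrm{w}}$ denote the without- and with-replacement index laws, so the data-processing inequality for total variation gives $\dtv(P^m,Q)\le\dtv(\mu_{\mathrm{wo}},\mu_{\mathrm{w}})=1-\tfrac{M(M-1)\cdots(M-m+1)}{M^m}\le\binom{m}{2}/M$, the last step by the Weierstrass product inequality.)

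I do not expect a genuine obstacle here; this is a routine coupling argument. The two points that need a little care are the domination check that makes the partial coupling on $E^c$ legitimate --- namely $M^{-m}\le 1/\big(M(M-1)\cdots(M-m+1)\big)$ --- and the conceptual crux that an \emph{ordered without-replacement subsample of i.i.d.\ draws is again i.i.d.}, which is exactly what lets us realize $P^m$ and $Q$ from the same underlying list of length $M$.
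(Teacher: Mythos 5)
Your proof is correct and follows essentially the same route as the paper's: both realize $P^m$ as without-replacement sampling and $Q$ as with-replacement sampling of indices from a common list of $M$ i.i.d.\ draws, and then bound the total variation distance by the probability of a collision among the sampled indices. The only cosmetic differences are that you make the coupling argument explicit (the paper states it informally) and bound the collision probability by a union bound over pairs, $\binom{m}{2}/M$, whereas the paper bounds it sequentially as $\sum_{i=1}^m \frac{i-1}{M}$; these give the same number.
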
 \index{sample-resample lemma}
With this technique in place, we are now ready to prove Lemma~\ref{lemma_for_thm:hardness-test-conditional-coverage-relaxed}.
\begin{proof}[Proof of Lemma~\ref{lemma_for_thm:hardness-test-conditional-coverage-relaxed}]
First we prove the result for any set $B$ with $\P_P((X,Y)\in B)>\delta$. 
Fix a large integer $M$, let $(X^{(1)},Y^{(1)}),\dots,(X^{(M)},Y^{(M)})$ be an arbitrary sequence of $M$ data points, and let $\widehat{P}_M = \frac{1}{M}\sum_{i=1}^M\delta_{(X^{(i)},Y^{(i)})}$ denote the empirical distribution of this sequence. Define
\[\cM = \big\{m\in\{1,\dots,M\} : (X^{(m)},Y^{(m)})\in B\big\}\] as the set of indices whose points lie in $B$, and let
 $\cX_0 =\{X^{(m)} : m\in\cM\}$ be the corresponding set of $X$ values.
Then $\P_{\widehat{P}_M}(X\in\cX_0)\geq |\cM|/M $ by construction.  Therefore, 
since $\cC$ satisfies $(1-\alpha,\delta)$-conditional coverage, if $|\cM| \geq \delta M$ (and so $\P_{\widehat{P}_M}(X\in \cX_0)\geq \delta$) then we must have
\[\P_{\widehat{P}_M}\left(Y_{n+1}\in\cC(X_{n+1})\mid X_{n+1}\in\cX_0\right)\geq 1-\alpha.\]
Therefore,
\[\P_{\widehat{P}_M}\left(Y_{n+1}\in\cC(X_{n+1}), X_{n+1}\in\cX_0\right)\geq (1-\alpha) \cdot \frac{|\cM|}{M} \cdot\indsub{|\cM|\geq \delta M}.\]
Moreover, if the values $X^{(1)},\dots,X^{(M)}$ are all distinct, then under the distribution $\widehat{P}_M$, $X_{n+1}\in\cX_0$ if and only if $(X_{n+1},Y_{n+1})\in B$. In other words, we have shown that for any sequence $(X^{(1)},Y^{(1)}),\dots,(X^{(M)},Y^{(M)})$ where the values $X^{(1)},\dots,X^{(M)}$ are all distinct, it must hold that
\begin{equation}\label{eqn:Phat_M_bound_for_using_sample_resample}\P_{\widehat{P}_M}\left(Y_{n+1}\in\cC(X_{n+1}), (X_{n+1},Y_{n+1})\in B\right)\geq (1-\alpha) \cdot \frac{|\cM|}{M} \cdot\indsub{|\cM|\geq \delta M}.\end{equation}

Next, consider the following construction:
\begin{enumerate}
    \item Sample $(X^{(1)},Y^{(1)}),\dots,(X^{(M)},Y^{(M)})\iidsim P$, and let $\widehat{P}_M = \frac{1}{M}\sum_{i=1}^M \delta_{(X^{(i)},Y^{(i)})}$ be the corresponding empirical distribution;
    \item Sample $(X_1,Y_1),\dots,(X_{n+1},Y_{n+1})\iidsim \widehat{P}_M$.
\end{enumerate}
Since $P_X$ is assumed to be nonatomic, this means that $X^{(1)},\dots,X^{(M)}$ are  distinct almost surely in this construction, and therefore, the inequality~\eqref{eqn:Phat_M_bound_for_using_sample_resample} holds almost surely with respect to the draw of $(X^{(1)},Y^{(1)}),\dots,(X^{(M)},Y^{(M)})$. Now define a distribution $Q$ on $(\cX\times\cY)^{n+1}$, given by the marginal distribution of $(X_1,Y_1),\dots,(X_{n+1},Y_{n+1})$ under the construction above.  Marginalizing~\eqref{eqn:Phat_M_bound_for_using_sample_resample} over the random draw of the data points $(X^{(1)},Y^{(1)}),\dots,(X^{(M)},Y^{(M)})$, then, we have shown that
\begin{multline*}\P_{((X_1,Y_1),\dots,(X_{n+1},Y_{n+1}))\sim Q}\Big(Y_{n+1}\in\cC(X_{n+1}), (X_{n+1},Y_{n+1})\in B\Big)\\\geq (1-\alpha)  \cdot \E\left[\frac{|\cM|}{M} \cdot\indsub{|\cM|\geq \delta M}\right]. \end{multline*}

Next, we also have $|\cM| \sim\textnormal{Binomial}(M,\delta')$ by construction, where we define $\delta' = \P_P((X,Y)\in B)>\delta$. We then have
\begin{align*}
    \E\left[\frac{|\cM|}{M} \cdot\indsub{|\cM|\geq \delta M}\right]
    &\geq \E\left[\frac{|\cM|}{M}\right] - \P(|\cM|<\delta M) \\
    &=\delta' - \P\big(\textnormal{Binomial}(M,\delta') < M\delta\big)\\
    &\geq \delta' -e^{ - 2M(\delta'-\delta)^2},
\end{align*}
where the last step holds by Hoeffding's inequality. Moreover, by Lemma~\ref{lem:sample-resample}, we have
\[\dtv(P^{n+1},Q)\leq \frac{n(n+1)}{2M}.\]
Therefore, combining all these calculations, we have shown that
\begin{multline*}
    \P_{(X_i,Y_i)\iidsim P}\Big(Y_{n+1}\in\cC(X_{n+1}), (X_{n+1},Y_{n+1})\in B\Big)\\
    \geq \P_{((X_1,Y_1),\dots,(X_{n+1},Y_{n+1}))\sim Q}\Big(Y_{n+1}\in\cC(X_{n+1}), (X_{n+1},Y_{n+1})\in B\Big) - \frac{n(n+1)}{2M}\\\geq (1-\alpha)\left[\delta' - e^{ - 2M(\delta'-\delta)^2}\right] - \frac{n(n+1)}{2M}.
\end{multline*}
Since $M$ can be taken to be arbitrarily large, therefore, this proves that
\[\P_P\left(Y_{n+1}\in\cC(X_{n+1}),(X_{n+1},Y_{n+1})\in B\right) \geq (1-\alpha)\delta' = (1-\alpha)\P_P\big((X_{n+1},Y_{n+1})\in B\big),\]
which establishes that $\P_P\left(Y_{n+1}\in\cC(X_{n+1})\mid (X_{n+1},Y_{n+1})\in B\right) \geq 1-\alpha$, as desired.

This completes the proof for the case $\P_{P}((X,Y)\in B)>\delta$. Now suppose instead that $\P_{P}((X,Y)\in B)=\delta$. Since $P_X$ (and therefore, also $P$) is nonatomic, we can construct a nested sequence of sets $B_1 \supseteq B_2 \supseteq \dots \supseteq B$, for which $P(B_i)>\delta$ for all $i$, and $P(B_i\backslash B)\rightarrow0$. By the work above, then, it holds that 
\[\P_{P}\left(Y_{n+1}\in \cC(X_{n+1})\mid (X_{n+1},Y_{n+1})\in B_i\right) \geq 1-\alpha\]
for all $i$. Taking a limit proves the desired result: we have
\begin{multline*}\P_{P}\left(Y_{n+1}\in \cC(X_{n+1})\mid (X_{n+1},Y_{n+1})\in B\right)  = \frac{\P_{P}\left(Y_{n+1}\in \cC(X_{n+1}), (X_{n+1},Y_{n+1})\in B\right)}{\delta} \\\geq \frac{\P_{P}\left(Y_{n+1}\in \cC(X_{n+1}), (X_{n+1},Y_{n+1})\in B_i\right) - \P_{P}\left((X_{n+1},Y_{n+1})\in B_i\backslash B\right)}{\delta}
\\\geq \frac{(1-\alpha) \cdot \left(\delta + P(B_i\backslash B)\right)- P(B_i\backslash B)}{\delta},\end{multline*}
for each $i$, and taking $i\rightarrow\infty$ completes the proof.
\end{proof}

Next, we need to prove the sample--resample lemma.
\begin{proof}[Proof of Lemma~\ref{lem:sample-resample}]
This proof follows by simply bounding the difference between sampling with and without replacement. First, if $M<m$ then the result holds trivially, so we can assume $M\geq m$ from this point on. Observe that drawing $m$ i.i.d.\ samples from $P$ is equivalent to sampling $m$ times without replacement from the list $Z^{(1)},\dots,Z^{(M)}$---or more formally,
\begin{enumerate}
    \item Sample $Z^{(1)},\dots,Z^{(M)}\iidsim P$;
    \item Sample indices $k_1,\dots,k_m\in\{1,\dots,M\}$, uniformly without replacement;
    \item Define $Z_i = Z^{(k_i)}$ for each $i\in[m]$.
\end{enumerate}
On the other hand, by definition of $Q$, sampling a dataset from $Q$ is equivalent to the following:
\begin{enumerate}
   \item Sample $Z^{(1)},\dots,Z^{(M)}\iidsim P$;
    \item Sample indices $k_1,\dots,k_m\in\{1,\dots,M\}$, uniformly with replacement;
    \item Define $Z_i = Z^{(k_i)}$ for each $i\in[m]$.
\end{enumerate}
The only difference between these two constructions lies in whether the indices $k_1,\dots,k_m$ are sampled without replacement (in the first procedure, while constructing a draw from $P^m$), or with replacement (in the second procedure, while constructing a draw from $Q$). Therefore, the total variation distance between these two procedures is bounded by the total variation distance for sampling with versus without replacement (when drawing $m$ samples from a finite population of size $M$). This is given by the probability of observing any duplicate values when sampling with replacement, which can be bounded as
\[\dtv(P^m,\tilde{Q})\leq 0 + \frac{1}{M} + \dots + \frac{m-1}{M} = \frac{m(m-1)}{2M},\]
where we are using the fact that, for the $i$th draw, the probability of drawing an index $k_i$ that we have seen before is at most $\frac{i-1}{M}$.
\end{proof}

We conclude this section with a proof of the hardness result of Theorem~\ref{thm:hardness-test-conditional-coverage-relaxed}. This proof is essentially a technical calculation that leverages the result of Lemma~\ref{lemma_for_thm:hardness-test-conditional-coverage-relaxed} above; we include it for completeness.
\begin{proof}[Proof of Theorem~\ref{thm:hardness-test-conditional-coverage-relaxed}]
We begin by defining a probability
\[p(x,y) = \P\left(y\in\cC(x)\right),\]
where $\cC$ is the prediction set in the theorem, i.e., $\cC$ is assumed to satisfy the relaxed test-conditional coverage condition~\eqref{eqn:test_conditional_relax_alpha_delta}. (Since this prediction set is fitted to the training data $((X_i,Y_i))_{i\in[n]}$, we are computing the probability with respect to a draw of this training dataset.)  For each $t\in[0,1]$, define also a new prediction set $\cC_t$ as
\[\cC_t(x) = \{y\in\R: p(x,y) \geq t\},\]
or equivalently,
\[\cC_t(x) = \{y\in\R: \P(y\in\cC(x)) \geq t\}.\]
Note that $\cC_t$ is no longer data-dependent---$\cC_t(x)$ depends on \emph{the distribution of} the random prediction set $\cC(x)$, not the actual $\cC(x)$ itself.

By construction, we can observe that the $\cC_t$'s are nested: for $t < t'$ we have $\cC_t(x)\supseteq \cC_{t'}(x)$. 
For each $t$, we can define the coverage level of this (deterministic) interval $\cC_t$:
\[1-\alpha_t = \P_P\left(Y\in \cC_t(X)\right).\]
Now we are ready to calculate the expected length of $\cC(X_{n+1})$, in terms of these new values $\alpha_t$. First, we need to rearrange some terms to represent the length of the prediction interval $\cC(x)$ in terms of the probabilities $p(x,y)$. For any fixed $x$, taking expectation with respect to the random prediction interval $\cC$ (i.e., with respect to the random draw of the training data), we have
\begin{align*}
    \E[\textnormal{Leb}(\cC(x))]
    &= \E\left[\int_{\R}\ind{y\in \cC(x)}\;\mathsf{d}y\right]
    = \int_{\R}\P\left(y\in \cC(x)\right)\;\mathsf{d}y
    = \int_{\R}p(x,y)\;\mathsf{d}y\\
    &= \int_{\R}\int_{t=0}^1 \ind{p(x,y)\geq t}\;\mathsf{d}t\;\mathsf{d}y
    = \int_{\R}\int_{t=0}^1 \ind{y\in \cC_t(x)}\;\mathsf{d}t\;\mathsf{d}y\\
    &= \int_{t=0}^1 \int_{\R}\ind{y\in \cC_t(x)}\;\mathsf{d}y\;\mathsf{d}t
    = \int_{t=0}^1 \textnormal{Leb}(\cC_t(x))\;\mathsf{d}t,
\end{align*}
where in the second step and again in the next-to-last step, we swap order of integration by the Fubini--Tonelli theorem.
Replacing the fixed $x$ with a random draw $X_{n+1}$, and applying the Fubini--Tonelli theorem one more time along with the calculations above,
we have
\begin{multline*}
    \E[\textnormal{Leb}(\cC(X_{n+1}))]
    =\E\big[\E[\textnormal{Leb}(\cC(X_{n+1}))\mid X_{n+1}]\big]\\
    =\E\left[\int_{t=0}^1 \textnormal{Leb}(\cC_t(X_{n+1}))\;\mathsf{d}t\right]
    =\int_{t=0}^1 \E\left[\textnormal{Leb}(\cC_t(X_{n+1}))\right]\;\mathsf{d}t.
\end{multline*}
Finally, since $\cC_t$ attains coverage at level $1-\alpha_t$ with respect to the distribution $P$ (by definition of $\alpha_t$), we must have $\E\left[\textnormal{Leb}(\cC_t(X_{n+1}))\right]\geq L_P(1-\alpha_t)$, and so we can give a lower bound on our calculation:
\[\E[\textnormal{Leb}(\cC(X_{n+1}))] \geq \int_{t=0}^1 L_P(1-\alpha_t)\;\mathsf{d}t.\]
Moreover, for any fixed $c\in[0,1]$, we can further lower bound this as follows:
\begin{multline}\E[\textnormal{Leb}(\cC(X_{n+1}))] \geq \int_{t=0}^{\frac{1-\alpha}{1-c\alpha}} L_P(1-\alpha_t)\;\mathsf{d}t
= \frac{1-\alpha}{1-c\alpha} \cdot \frac{1}{\frac{1-\alpha}{1-c\alpha}}\int_{t=0}^{\frac{1-\alpha}{1-c\alpha}}  L_P(1-\alpha_t)\;\mathsf{d}t\\
\geq \frac{1-\alpha}{1-c\alpha} L_P\left(1 - \frac{1}{\frac{1-\alpha}{1-c\alpha}} \int_{t=0}^{\frac{1-\alpha}{1-c\alpha}}\alpha_t\;\mathsf{d}t\right),\end{multline}
where the last step holds since $L_P$ is a convex function by Fact~\ref{fact:L_P_decr_convex}.

To complete the proof, we can see that only one question remains: we need to verify that, for some carefully chosen value of $c$, it holds that
\[L_P\left(1 - \frac{1}{\frac{1-\alpha}{1-c\alpha}} \int_{t=0}^{\frac{1-\alpha}{1-c\alpha}}\alpha_t\;\mathsf{d}t\right) \geq L_P(1-c\alpha\delta).\]
Since $s\mapsto L_P(1-s)$ is nonincreasing (by Fact~\ref{fact:L_P_decr_convex}), we can simply check that
\begin{equation}\label{eqn:last_step_proof_thm:hardness-test-conditional-coverage-relaxed}\frac{1}{\frac{1-\alpha}{1-c\alpha}} \int_{t=0}^{\frac{1-\alpha}{1-c\alpha}}\alpha_t\;\mathsf{d}t\leq c\alpha\delta.\end{equation}
For this, we will need to apply the key lemma, Lemma~\ref{lemma_for_thm:hardness-test-conditional-coverage-relaxed}. 
First, define $p_*$ as the $\delta$-quantile of the distribution of $p(X,Y)$, when $(X,Y)\sim P$. By definition of the quantile, we must therefore have $\P_P(p(X,Y)<p_*) \leq \delta \leq \P_P(p(X,Y)\leq p_*)$. 
Since $P_X$ is assumed to be nonatomic, we can therefore find a set $B\subseteq\cX\times\R$ such that 
\begin{equation}\label{eqn:sandwich_B} \{(x,y): p(x,y) < p_*\} \subseteq B \subseteq \{(x,y) : p(x,y)\leq p_*\},\end{equation}
and $P(B)= \delta$. By Lemma~\ref{lemma_for_thm:hardness-test-conditional-coverage-relaxed}, then, it must hold that
\[\P\left(Y_{n+1}\in\cC(X_{n+1})\mid (X_{n+1},Y_{n+1})\in B\right) \geq 1-\alpha,\]
or equivalently, 
\[\P\left(Y_{n+1}\in\cC(X_{n+1}),  (X_{n+1},Y_{n+1})\in B\right) \geq(1-\alpha)\delta.\]
Now we apply the tower law:
\begin{align*}
    (1-\alpha)\delta & \leq \P\left(Y_{n+1}\in\cC(X_{n+1}),(X_{n+1},Y_{n+1})\in B\right)\\
    &= \E[p(X_{n+1},Y_{n+1})\cdot\ind{ (X_{n+1},Y_{n+1})\in B}]\\
    &= p_*\P\left((X_{n+1},Y_{n+1})\in B\right) - \E[(p_*-p(X_{n+1},Y_{n+1}))\cdot\ind{ (X_{n+1},Y_{n+1})\in B}]\\
    &= p_*\delta - \E[(p_*-p(X_{n+1},Y_{n+1}))\cdot\ind{ p(X_{n+1},Y_{n+1}) < p_*}]\textnormal{ by~\eqref{eqn:sandwich_B}}\\
    &= p_*\delta - \int_{t = 0}^{p_*} \P\left(p(X_{n+1},Y_{n+1})< t\right)\;\mathsf{d}t\\
    &= p_*\delta - \int_{t = 0}^{p_*} \P\left(Y_{n+1}\not\in \cC_t(X_{n+1})\right)\;\mathsf{d}t
    = p_*\delta - \int_{t = 0}^{p_*} \alpha_t\;\mathsf{d}t.
\end{align*}
In particular, this implies that $p_*\geq 1-\alpha$. We then choose $c$ to satisfy $\frac{1-\alpha}{1-c\alpha} = p_*$, and so we have shown that
\[\frac{1}{\frac{1-\alpha}{1-c\alpha}}\int_{t = 0}^{\frac{1-\alpha}{1-c\alpha}} \alpha_t\;\mathsf{d}t = \frac{1}{\frac{1-\alpha}{1-c\alpha}}\int_{t = 0}^{p_*} \alpha_t\;\mathsf{d}t  \leq \frac{1}{\frac{1-\alpha}{1-c\alpha}} \cdot \delta(p_* - (1-\alpha)) = c\alpha\delta.\]
This verifies the bound~\eqref{eqn:last_step_proof_thm:hardness-test-conditional-coverage-relaxed}, and thus completes the proof of the theorem.
\end{proof}

\subsection{Preview: relaxation via localization} \index{localized conformal prediction}
The hardness result established in Theorem~\ref{thm:hardness-test-conditional-coverage-relaxed} tells us that the `relaxed' conditional coverage goal~\eqref{eqn:test_conditional_relax_alpha_delta} is in fact not sufficiently relaxed: to be able to achieve a distribution-free guarantee without overly wide interval, we need to relax further to an even more approximate notion of conditional coverage. 

One natural way to do this is to require that coverage holds conditionally on $X\in\cX_0$ for only certain special subsets $\cX_0\subseteq\cX$, instead of for all $\cX_0$ with sufficient probability $\delta$ as in~\eqref{eqn:test_conditional_relax_alpha_delta}. For instance, we might consider only sets $\cX_0$ that are given by a ball, i.e., requiring
\begin{multline}\label{eqn:test_conditional_relax_alpha_delta_local}\P\big(Y_{n+1}\in\cC(X_{n+1})\mid \|X_{n+1}-x_*\|\leq r_*\big)\geq 1-\alpha\\\textnormal{ for all $P$ and all $x_*\in\cX$, $r_*\geq 0$ with $P_X(\|X-x_*\|\leq r_*)\geq \delta$},\end{multline}
for some norm $\|\cdot\|$ on $\cX$.
We can think of this as roughly requiring that coverage holds `conditional on the event that $X_{n+1}\approx x_*$'.
A related approach is to require that coverage holds relative to a reweighted distribution over $\cX$, e.g., using a kernel centered at some $x_*$. We will study these types of goals in more detail in Chapter~\ref{chapter:weighted-conformal}, via weighted and localized versions of conformal prediction.

\section*{Bibliographic notes}
\addcontentsline{toc}{section}{\protect\numberline{}\textnormal{\hspace{-0.8cm}Bibliographic notes}}

\citet{vovk2012conditional} develops a framework for defining conditional notions of coverage, examining training-conditional, test-conditional (referred to as `object-conditional' in that work), and label-conditional coverage. The strong training-conditional guarantees for split conformal prediction (Theorem~\ref{thm:training-conditional-split-CP}) is proved in a slightly different form in \citet{vovk2012conditional}; the subgaussian tail bound on the Beta distribution is proved in \citet{elder2016bayesian,marchal2017sub}. This type of high-probability control is related to the classical notion of tolerance regions (see, e.g., \citep{wald1943}) and more recent \emph{probably approximately correct} (PAC) results in statistical learning theory~\citep{vapnik2015uniform, valiant1984theory}. More recent PAC-type analyses of conformal methods include the work of \cite{park2020PAC,park2021pac}.
\citet{bian2022training} prove the impossibility of establishing training-conditional coverage for full conformal. 
The asymptotic distribution of the training-conditional coverage rate in transductive conformal prediction (i.e., full conformal prediction with multiple test data points) is studied in~\citet{gazin2024asymptotics}.

Turning to test-conditional coverage, achieving coverage conditionally on a discrete covariate is discussed in many papers, dating back to early work such as \cite{vovk2003mondrian}. In contrast, the impossibility of test-conditional coverage in the nonatomic setting is established by \citet{lei2014distribution,vovk2012conditional}; Theorem~\ref{thm:conditional_infinite} and Corollary~\ref{cor:infinite-lebesgue} give a version of these results. \citet{duchi2024predictive} extends these results to a setting where the distribution of $X$ is nearly nonatomic (in the sense that $P_X$ can only place extremely small mass on any point). The relaxed version of test-conditional coverage, and the corresponding hardness result (Theorem~\ref{thm:hardness-test-conditional-coverage-relaxed}), appear in \citet{barber2021limits}.
Multiple recent works have proposed methodologies to consider more practical relaxations of test-conditional coverage, for detecting regions of $\cX$ where local coverage is too low (or too high) and adjusting the constructed prediction interval to avoid such issues, e.g., 
\cite{ali2022lifecycle}. 
The construction of the group-conditional conformal prediction set can also be motivated as a quantile regression on the set of scores, as developed in the work of \cite{jung2022batch}. \cite{jung2022batch,gibbs2023conformal} extends the group-conditional approach to handle overlapping groups, and other relaxations of conditional coverage.

\citet{lofstrom2015bias} study label-conditional coverage, which we discuss in Section~\ref{sec:lab-conditional}. \citet{ding2024class} extends 
label-conditional coverage to settings with a large number of classes, with an approach based on clustering.
Mondrian prediction (Section~\ref{sec:mondrian}) and its accompanying guarantee (Theorem~\ref{thm:mondrian-cvg}) are developed in \citet{vovk2003mondrian, vovk2005algorithmic}, and the results for bin-wise test-conditional coverage (Theorem~\ref{thm:bin_conditional}) and label-conditional coverage (Theorem~\ref{thm:lab-cond-cvg}) can be viewed as special cases. 

Finally, we add a few technical notes. The first step in the proof of Theorem~\ref{thm:training-conditional-full-CP}, where the map $a:\cX\rightarrow\{0,\dots,n-1\}$ is defined, is due to \citet[Proposition A.1]{dudley2011concrete}; the same result also enables the construction of the nested sets $B_1\supseteq B_2\supseteq \dots$ in the proof of Lemma~\ref{lemma_for_thm:hardness-test-conditional-coverage-relaxed}, and the construction of the set $B$ in the step~\eqref{eqn:sandwich_B} for proving Theorem~\ref{thm:hardness-test-conditional-coverage-relaxed}. Furthermore, the bound on total variation distance between sampling with replacement and sampling without replacement appearing in the proof of Lemma~\ref{lem:sample-resample} can be found in \cite{stam1978distance}.

\section*{Exercises}
\addcontentsline{toc}{section}{\protect\numberline{}\textnormal{\hspace{-0.8cm}Exercises}}
\begin{enumerate}[font=\bfseries, label={\thechapter.\arabic*}, labelsep=1em, itemsep=1em]
\item Consider split conformal prediction with a fixed score function $s(x,y)$. Suppose we have i.i.d.\ data $(X_i, Y_i)_{i\in[n]}$ and consider a hypothesized test point $(x_{\mathrm{test}}, y_{\mathrm{test}})$, which a fixed element of $\cX \times \cY$. Give an explicit expression for 
    \begin{equation}
        \P\left(y_{\mathrm{test}} \in \cC(x_{\mathrm{test}})\right),
    \end{equation}
    where $\cC$ is the split conformal prediction set defined in~\eqref{eq:split-conformal-sets}. (Since $(x_{\mathrm{test}}, y_{\mathrm{test}})$ is fixed, the randomness in the above expression is only the data $(X_i, Y_i)_{i\in[n]}$.)
\item Theorem~\ref{thm:training-conditional-split-CP} proved that, for i.i.d.\ data, the split conformal prediction method offers a training-conditional coverage guarantee---that is, $\P(Y_{n+1}\in\cC(X_{n+1})\mid \cD_n)$ concentrates at (or above) $1-\alpha$, for large $n$. Construct an example of a distribution on $Z_1,\dots,Z_{n+1}$ that is exchangeable (but not i.i.d.), along with a fixed score function $s$, for which this fails (even if $n$ is arbitrarily large). To make the task concrete, the goal is to find an example where the training-conditional coverage $\P(Y_{n+1}\in\cC(X_{n+1})\mid \cD_n)$ is zero (or approximately zero) with probability $\alpha$ (or $\approx \alpha$).
\item In Section~\ref{sec:test-conditional-relax}, we considered a relaxation of test-conditional coverage~\eqref{eqn:test_conditional_relax_alpha_delta}, restated here for convenience:
    \[\P(Y_{n+1}\in\cC(X_{n+1})\mid X_{n+1}\in\cX_0)\geq 1-\alpha \textnormal{ for all $P$, and all $\cX_0\subseteq\cX$ with $P_X(\cX_0)\geq \delta$.}\]
    Recall that Theorem~\ref{thm:hardness-test-conditional-coverage-relaxed},  established a hardness result for defining procedures that achieve this condition. 
    
    We might also consider alternative relaxations, for example,
    \begin{equation}\label{eqn:test_conditional_relax_alpha_delta__alternative}
        \P(\alpha(X_{n+1}) > \alpha) \leq \delta, \textnormal{ where }\alpha(X_{n+1}) = \P(Y_{n+1}\not \in\cC(X_{n+1})\mid X_{n+1}).
    \end{equation}
    Prove that these two relaxations are in fact essentially equivalent (which implies that this alternative relaxation is also hard to achieve distribution-free). Specifically, prove that~\eqref{eqn:test_conditional_relax_alpha_delta} implies~\eqref{eqn:test_conditional_relax_alpha_delta__alternative}. And, prove that if~\eqref{eqn:test_conditional_relax_alpha_delta__alternative} holds, then~\eqref{eqn:test_conditional_relax_alpha_delta} holds with $(\alpha',\delta')$ in place of $(\alpha,\delta)$---you will need to define $\alpha',\delta'$ in terms of $\alpha,\delta$.
\item Use the sample--resample construction (Lemma~\ref{lem:sample-resample}), or a variant of this result as appropriate, to prove the following hardness results (note that these results are for inference problems that are not related to prediction). 
    \begin{enumerate}
        \item Consider the problem of testing whether a distribution $P$ on $\R$ has any nonnegligible point masses, given data $X_1,\dots,X_n\iidsim P$. Specifically, fix any $\epsilon>0$, and let $\psi: \R^n\to \{0,1\}$ be a hypothesis test, returning an answer of $1$ to indicate that we are confident that every point $x\in\R$ has probability $<\epsilon$ under the distribution $P$. Suppose $\psi$ has a distribution-free bound on Type I error rate,
        \[\P_P(\psi(X_1,\dots,X_n) = 1)\leq \alpha\textnormal{ for any $P$ with $\sup_{x\in\R}P(x) \geq \epsilon$}.\] 
        Prove that if $n \ll \epsilon^{-1/2}$ then such a test cannot be powerful. Specifically, show that
        \[\P_P(\psi(X_1,\dots,X_n) = 1)\leq \alpha + \bigo(\epsilon n^2).\]
        \item Consider the problem of inference on the noise in a regression problem: given data points $(X_1,Y_1),\dots,(X_n,Y_n)\iidsim P$ for some distribution $P$ on $\cX\times\R$, our aim is to provide a confidence interval for the parameter $\E_P[\Var_P(Y\mid X)]$. Suppose that $\cC$ is a confidence interval for this parameter (i.e., $\cC\subseteq\R$ is trained a as a function of the data $(X_1,Y_1),\dots,(X_n,Y_n)$, which satisfies a distribution-free coverage guarantee:
        \[\P_P(\E_P[\Var_P(Y\mid X)]\in\cC)\geq 1-\alpha\textnormal{ for all $P$}.\]
        Prove that, in the case of nonatomic features, this confidence interval cannot provide a meaningful lower bound on the noise level:
        \[\P_P(0 \in \cC) \geq 1-\alpha\textnormal{ for any $P$ with $P_X$ nonatomic.}\]
        \end{enumerate}
        \item Suppose that $\cC$ is any procedure that satisfies distribution-free test-conditional coverage as in the assumption of Theorem~\ref{thm:conditional_infinite}.
    Suppose that $P$ is a distribution on $\cX\times\R$, and that $P_X$ has atoms, but that the total probability mass of the atoms is less than $1$. 
    Prove that $\P\big(\textnormal{Leb}(\cC(X_{n+1})) = \infty\big) > 0$.
    \item Suppose we have i.i.d. data $(X_1, Y_1), (X_2, Y_2), \dots, (X_{n+1}, Y_{n+1})$. Consider split conformal prediction with binning as in~\eqref{eqn:test_cond_coverage_by_bin_C}. Derive the distribution of the average bin-wise training-conditional coverage,
    \begin{equation}
    \frac{1}{K} \sum_{k=1}^K \P\left(Y_{n+1}\in\cC(X_{n+1})\mid \cD_n \, ;\,  X_{n+1}\in\cX_k\right),
    \end{equation}
    if we assume that scores are distinct almost surely (i.e., no ties).
    (You do not need to give an algebraic expression for the density or CDF, but describe the distribution in sufficient detail that you could simulate from it on a computer.)
\end{enumerate}

\chapter{A Model-Based Perspective on Conformal Prediction}
\label{chapter:model-based}

The preceding chapters of this book give distribution-free coverage guarantees for conformal prediction, assuming only exchangeability.
In this chapter, we give stronger guarantees for conformal prediction when we additionally assume some prior knowledge of the data distribution.
By incorporating distributional assumptions into the choice of the score function $s$, conformal prediction will inherit stronger, model-based guarantees, such as optimal set size or conditional coverage.
Conformal prediction with an appropriately chosen score function therefore offers the best of both worlds: if the model is correct, the resulting performance will have the same strong guarantees as if we had not used conformal; if instead our model assumptions are wrong, conformal still ensures marginal coverage. In summary:
\begin{align*}
    \textnormal{stronger modeling assumptions hold} &\Rightarrow \textnormal{stronger guarantees for conformal;} \\
    \textnormal{modeling assumptions fail but data is exchangeable} &\Rightarrow \textnormal{marginal coverage guarantee for conformal.}
\end{align*}
Conformal prediction should thus be understood as a tool that we use in conjunction with a model, rather than as an alternative to a model. 

This chapter will begin by studying a range of settings (`case studies') where, given a correctly specified model that can be consistently estimated from data, running conformal with an appropriately chosen score function $s$ will inherit the same optimality properties as a model-based oracle method. 
These case studies give intuition for how to choose the score function in practice.
Then, we will present a unified framework that leads to asymptotic optimality guarantees for conformal prediction under modeling assumptions.
Finally, we will close with a section on modeling assumptions that cause conformal prediction to be robust to violations of exchangeability.

\section{Oracle approximations for optimality guarantees}\label{sec:oracle_highlevel}
\index{optimality}
\index{oracle prediction interval}

We begin with a general design principle: to build a good conformal score, we should first define an `oracle' model that is designed to target the desired aims, and then design a conformal score to approximate this oracle. We will then make this approximation precise with asymptotic results. 

There are many possible aims, depending on the particular setting. For example, we might be interested in optimizing the length of the prediction interval, or, in achieving (an approximation of) test-conditional coverage. These aims, and several others that we will consider in this chapter, are quite different from each other, but can be studied in a unified way. 

We will examine a number of different case studies, each in a different setting with various aims, but each following the same high-level framework. First, we will define the \textbf{aim} for the particular setting (e.g., minimize prediction interval length). We then identify the \textbf{oracle} prediction set construction that would optimize this aim, if we had access to knowledge of the data distribution $P$. With this in place, our next step is \textbf{choosing the score} in a way that attempts to mimic this oracle. We then state a \textbf{model assumption} (for instance, consistent estimation of some true regression function), which enables an \textbf{asymptotic optimality} guarantee establishing that we are able to learn an approximation to the oracle prediction set via split conformal, and thus achieve the stated aim in an asymptotic sense.

The key tool for our convergence guarantees will be the following result, which we state informally here and develop more formally later on in the chapter:
\begin{theorem}[Informal version of Theorems~\ref{thm:splitCP_asymp_formal_qn} and \ref{thm:splitCP_asymp_formal_random_x}]
    \label{thm:splitCP_asymp_informal}
    Let the data points be drawn i.i.d.\ from $P$. 
    Define the oracle set
    \begin{equation}\label{eqn:oracle_set}
        \cC^*(x) = \left\{y : s^*(x,y) \leq q^* \right\},
    \end{equation} 
    where $s^*$ is an oracle score function and $q^*$ is the $(1-\alpha)$-quantile of the distribution of $s^*(X,Y)$ under $(X,Y)\sim P$. Let $\cC_n(x)$ be the usual split conformal set,
        \begin{equation}
        \cC_n(x) = \left\{y : s_n(x,y ) \leq \hat{q}_n \right\},
    \end{equation} 
    where $s_n$ is the pretrained conformal score function.
    Then, under appropriate regularity conditions, if $s_n\to s^*$ then
    \begin{equation}
        \label{eq:convergence-setdifference-as}
        \hat{q}_n \to q^*, \textnormal{ and } \cC_n \to \cC^*.
    \end{equation}
\end{theorem}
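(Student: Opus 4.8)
The plan is to reduce the statement to two facts: (i) the conformal quantile $\hat q_n$ is an empirical quantile of i.i.d.\ scores whose distribution is close to that of $s^*(X,Y)$, so it converges to $q^*$; and (ii) once $s_n\to s^*$ and $\hat q_n\to q^*$, the thresholds defining $\cC_n$ and $\cC^*$ agree except on a set of vanishing probability. Throughout I would condition on the pretraining set $\cD_{\rm pre}$, so that $s_n$ is a fixed (nonrandom) function and the calibration scores $S_i=s_n(X_i,Y_i)$, $i\in[n]$, are i.i.d.; every statement is then ``almost surely, conditionally on $\cD_{\rm pre}$'', and one marginalizes at the end. The formal theorems will have to pin down what ``$s_n\to s^*$'' means; the natural choice that makes both steps work is convergence in probability of $s_n(X,Y)$ to $s^*(X,Y)$ for a fresh test point $(X,Y)\sim P$ (possibly with a mild uniform-integrability hypothesis), and that is the reading I would adopt.

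\emph{Step 1 (quantile convergence).} Let $F^*$ be the CDF of $s^*(X,Y)$ under $(X,Y)\sim P$, let $F_n$ be the CDF of $s_n(X,Y)$ (a fixed function of $\cD_{\rm pre}$), and let $\hat F_n$ be the empirical CDF of $S_1,\dots,S_n$. By the triangle inequality, $\sup_t|\hat F_n(t)-F^*(t)|\le \sup_t|\hat F_n(t)-F_n(t)| + \sup_t|F_n(t)-F^*(t)|$. The first term tends to $0$ almost surely by Glivenko--Cantelli (or with an explicit rate via the DKW inequality, which is where conditional independence of the $S_i$'s is used). The second term tends to $0$ because $s_n(X,Y)\to s^*(X,Y)$ in distribution and $F^*$ is continuous, via P\'olya's theorem. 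Given uniform convergence $\sup_t|\hat F_n(t)-F^*(t)|\to0$ and the regularity assumption that $F^*$ is strictly increasing in a neighborhood of $q^*$, the standard sandwiching argument gives $\hat q_n\to q^*$: for any $\epsilon>0$ one has $F^*(q^*-\epsilon)<1-\alpha<F^*(q^*+\epsilon)$, hence for $n$ large $\hat F_n(q^*-\epsilon)<(1-\alpha)(1+1/n)\le \hat F_n(q^*+\epsilon)$, so $q^*-\epsilon\le \hat q_n\le q^*+\epsilon$. The $(1+1/n)$ inflation of the quantile level is harmless since it converges to $1$.

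\emph{Step 2 (set convergence).} I would measure set convergence by the $P$-probability of the symmetric difference, $d(\cC_n,\cC^*) = \P\big( \ind{Y\in\cC_n(X)} \ne \ind{Y\in\cC^*(X)} \big)$, for a fresh draw $(X,Y)\sim P$; now condition on $(\cD_{\rm pre},\cD_n)$ so $s_n$ and $\hat q_n$ are fixed. A point $(X,Y)$ lies in the symmetric difference only if $s_n(X,Y)-\hat q_n$ and $s^*(X,Y)-q^*$ have opposite signs, which, when $|\hat q_n-q^*|<\epsilon/2$, forces either $|s^*(X,Y)-q^*|\le\epsilon$ or $|s_n(X,Y)-s^*(X,Y)|>\epsilon/2$. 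The probability of the first event is at most $F^*(q^*+\epsilon)-F^*(q^*-\epsilon)\to0$ as $\epsilon\to0$ by continuity of $F^*$ at $q^*$ (equivalently, $s^*(X,Y)$ has no atom at $q^*$); the probability of the second event $\to0$ as $n\to\infty$ by $s_n\to s^*$ in probability. Taking $\epsilon$ small, then $n$ large (using Step 1 to get $|\hat q_n-q^*|<\epsilon/2$ eventually), shows $d(\cC_n,\cC^*)\to0$, and marginalizing over $\cD_{\rm pre}$ finishes the argument.

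\emph{Main obstacle.} The delicate point is not any single estimate but the choice and propagation of the convergence mode for ``$s_n\to s^*$'': it must be strong enough to force $F_n\to F^*$ and to kill the event $\{|s_n-s^*|>\epsilon/2\}$, yet weak enough to hold for the pretrained scores arising in the case studies; getting this hypothesis exactly right (and, if $L^1$-type control of $F_n$ is wanted, adding just enough integrability) is the crux. The secondary obstacle is purely regularity: the map $F\mapsto\quantile(F;1-\alpha)$ is continuous at $F^*$ only when $F^*$ has no flat spot and no jump at $q^*$, so these conditions must be absorbed into the ``regularity conditions'' of the formal statements, and it is precisely the no-atom-at-$q^*$ condition that keeps the set-convergence step from failing. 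Disentangling the $\cD_{\rm pre}$-conditioning from the calibration randomness and then marginalizing is routine but should be spelled out carefully.
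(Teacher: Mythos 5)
Your proposal is correct and follows essentially the same route as the paper's proofs of Theorems~\ref{thm:splitCP_asymp_formal_qn} and~\ref{thm:splitCP_asymp_formal_random_x}: DKW for empirical-CDF concentration after conditioning on $\cD_{\rm pre}$, sandwiching to get $\hat q_n\to q^*$, and the same case analysis (near-threshold score vs.\ large score error) for the symmetric-difference step. The one place you take a slightly stronger path is invoking P\'olya's theorem, which requires $F^*$ continuous everywhere and $F^*$ strictly increasing near $q^*$; the paper instead works only at continuity points of $F^*$ and tracks the interval $[q^*,q^*_+]$, which lets it cover CDFs with jumps or flat spots at the cost of assuming only $\P(s^*(X,Y)=q^*_+)=0$---but under ``appropriate regularity conditions'' as in the informal statement, this distinction is immaterial.
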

In words, this result says that if our base model is consistent for the true model (in that $s_n$ converges to an oracle score $s^*$), then conformal prediction will converge to the oracle prediction set given by $\cC^*$.
Of course, we will need to formalize all these notions of convergence.
With the appropriate definitions and formalities in place, we will see that these convergence guarantees allow us to show that the split conformal set $\cC_n$ asymptotically reaches the stated aim of the oracle score (e.g., minimizing the length of the prediction interval).

In the upcoming case studies, we will assume that the score function $s^*$ is capturing the true model for the data---that is, $s^*$ is truly an `oracle' and therefore leads to a prediction set $\cC^*$ that is optimally designed for the distribution $P$ of the data. However, the general theoretical framework of Theorem~\ref{thm:splitCP_asymp_informal} (and its formal versions, Theorems~\ref{thm:splitCP_asymp_formal_qn} and \ref{thm:splitCP_asymp_formal_random_x}) can be interpreted more broadly: 
the so-called oracle score function $s^*$ can technically be any fixed function, and does not necessarily need to reflect the true model for the data.
For instance, if the true model for $Y\mid X$ is not actually linear, but we are using linear quantile regression to construct the CQR score $s_n$, then we might expect the split conformal prediction set to converge to the best possible interval $\cC^*(x)$ whose endpoints are linear functions of $x$.

\paragraph{Defining the asymptotic regime.}
Since we will be developing asymptotic guarantees for the split conformal sets $\cC_n$, formally we are considering a \emph{sequence} of problems, indexed by $n\geq 1$, while the underlying distribution $P$ of the data remains fixed. At each value of $n$, we write $s_n$ to denote the pretrained conformal score function that will be used in the construction of the split conformal set, and then the conformal quantile $\hat{q}_n$ will be determined by a calibration set of size $n$. For $\cC_n(x)=\{y:s_n(y)\leq\hat{q}_n\}$ to converge to some oracle, then, we will need \emph{both} $s_n$ and $\hat{q}_n$ to converge. 

Recall from Section~\ref{sec:split-conformal} that when we run split conformal in practice, the available data is partitioned into two sets: a `pretraining set' $\cD_{{\rm pre},n}$ used for training the score function $s_n$, and a calibration set $\cD_n$ (of size $n$) used to then determine the conformal quantile $\hat{q}_n$. (Here we introduce a subscript $n$ into each of these objects in order to emphasize that we are now considering a sequence of problems indexed by $n$.) In order to achieve asymptotic guarantees, then, we would need the sample size of $\cD_{{\rm pre},n}$ to grow with $n$ in order to ensure convergence of $s_n$. 
For example, if we choose to use half the data for training the score function and the other half for calibration, then to obtain a calibration set of size $n$, we would need a total of $2n$ data points (so that the pretraining set $\cD_{{\rm pre},n}$ and the calibration set $\cD_n$ each contain $n$ data points).

In the case studies below, we will not make any explicit assumptions about the sample size of  the pretraining set $\cD_{{\rm pre},n}$. Instead, we will place assumptions on the convergence of $s_n$, which implicitly means that the sample size of $\cD_{{\rm pre},n}$ must grow as $n\to\infty$ in order to allow for this convergence to occur. (See Section~\ref{sec:unified_framework_splitCP_asymp} for a more formal definition of this asymptotic framework.)

\section{Case studies for classification}
\label{sec:case_studies_classification}

This section considers the classification setting, where the response $Y$ is categorical and takes values in a finite set $\cY$. For each of our two case studies, we will define
$\pi^*$ as the true label probability function,
\[\pi^*(y\mid x) = \P_{(X,Y)\sim P}(Y=y\mid X=x),\]
and $\hat{\pi}_n$ will denote a pretrained estimate of $\pi^*$ (that is, $\hat{\pi}_n$ is an estimate of $\pi^*$ that is fitted using the pretraining set $\cD_{{\rm pre},n}$, with $\hat{\pi}_n(\cdot\mid x)$ being a probability distribution over $\cY$).

\subsection{Case study: classification with minimal set size}
\label{sec:case_study_1_classification}
\index{optimality!set size}

\paragraph{Aim.}
In this first case study, our goal is to provide the sets with the smallest average size among those that have $1-\alpha$ marginal coverage---we would like to construct $\cC$ to solve the following optimization problem:
\begin{equation}
\begin{aligned}
\minimize \quad & \E_{X\sim P_X}[|\cC(X)|] \\
\st \quad & \P_{(X,Y)\sim P}(Y \in \cC(X)) \geq 1-\alpha.
\end{aligned}
\end{equation}
Here in the categorical setting, $|\cC(X)|$ refers to the cardinality of $\cC(X)$, which is a subset of the finite set of labels $\cY$.

\paragraph{Oracle.}

Suppose for simplicity that the conditional probability $\pi^*(Y\mid X)$, which is a random variable, has a continuous distribution (under $(X,Y)\sim P$). It is straightforward to prove that the solution to the above optimization problem then has the form $\cC^*(x) = \{ y : \pi^*(y \mid x) \geq t^* \}$, for some appropriate value $t^*$. Rewriting this into score function notation, we can define the oracle score
\begin{equation}
    \label{eqn:case_study_1_classification}
    s^*(x,y) = -\pi^*(y\mid x),
\end{equation}
where we take the negative of the conditional label probability $\pi^*$ because our prediction set should include labels $y$ with high probability (which therefore should correspond to low values of the score).
We can then construct the oracle prediction set,
\[\cC^*(x) = \{ y : s^*(x,y) \leq q^* \},\]
where 
\[q^* = \inf\left\{q \in\R: \P_{(X,Y)\sim P}(s^*(X,Y)\leq q) \geq 1-\alpha\right\},\]
i.e., the $(1-\alpha)$-quantile of the distribution of $s^*(X,Y)$, under $(X,Y)\sim P$.

\paragraph{Choosing the score.}\index{score function!high-probability score}
To approximate the oracle sets, we can instead use plug-in estimate $\hat \pi_n$ of the probability $\pi^*$. This choice leads to the score function
\begin{equation}
    s_n(x,y) = -\hat \pi_n(y \mid x),
\end{equation}
which is the  high-probability score defined previously in Chapter~\ref{chapter:introduction}. 
This results in the prediction set
\[\cC_n(x) = \{y\in\cY: s_n(x,y) \leq \hat{q}_n\} = \{y\in\cY : \hat{\pi}_n(y\mid x) \geq -\hat{q}_n\},\]
i.e., all possible labels $y\in\cY$ whose estimated conditional probability is sufficiently large.

\paragraph{Model assumption and asymptotic optimality.} We will assume that the estimated conditional distribution of $Y\mid X$, given by $\hat{\pi}_n$ as defined above, is consistent as an estimator of the true $\pi^*$. Specifically, at any $x\in\cX$, consider the quantity
\[\dtv\big(\pi^*(\cdot\mid x), \hat\pi_n(\cdot \mid x)\big) = \frac{1}{2}\sum_{y\in\cY} \big|\pi^*(y\mid x) - \hat\pi_n(y\mid x)\big|,\]
which is the total variation distance between 
the true and estimated conditional distributions of $Y\mid X=x$. We will assume that this estimation error is small on average over the test feature $X$---that is, we will measure consistency via the quantity
\[\E_{X\sim P_X}\left[\dtv\big(\pi^*(\cdot\mid X), \hat\pi_n(\cdot \mid X)\big)\right]= \E_{X\sim P_X}\left[\frac{1}{2}\sum_{y\in\cY} \big|\pi^*(y\mid X) - \hat\pi_n(y\mid X)\big|\right].\]
Informally, assuming that 
\[\E_{X\sim P_X}\left[\dtv\big(\pi^*(\cdot\mid X), \hat\pi_n(\cdot \mid X)\big)\right]\to 0\]
can be thought of as effectively claiming that, for large $n$, the conditional distribution of $Y\mid X=x$ is estimated accurately at most values $x$.

For this consistency condition, and hereafter throughout the chapter, the notation $\E_{X\sim P_X}$ and $\P_{X\sim P_X}$ (or, $\E_{(X,Y)\sim P}$ and $\P_{(X,Y)\sim P}$)  denotes that we are taking expected value or probability \emph{only} with respect to the randomness of a random test point $X$ sampled according to $P_X$ (or $(X,Y)\sim P$). In particular, we are conditioning on the pretraining and calibration datasets---that is, $\hat\pi_n$, $\hat{q}_n$, and other quantities that are constructed when running split conformal prediction, are all being treated as fixed.

We now arrive at our formal result. Under the above model assumption, the split conformal prediction set $\cC_n$ constructed with the score $s_n$ above is asymptotically a solution to the optimization problem~\eqref{eqn:case_study_1_classification}.

\begin{proposition}\label{prop:case_study_1_classification}
    Assume that the conditional probability $\pi^*(Y\mid X)$ is continuously distributed under $(X,Y)\sim P$.  
    Then the following claim holds almost surely: if
    \[\E_{X\sim P_X}\left[\dtv\big(\pi^*(\cdot\mid X), \hat\pi_n(\cdot \mid X)\big)\right]\to 0,\]
    then
    \[
 \limsup_{n\to\infty}\E_{X\sim P_X}[|\cC_n(X)|] 
  \leq \E_{X\sim P_X}[|\cC^*(X)|]
 \textnormal{ \  and \  }\liminf_{n\to\infty}\P_{(X,Y)\sim P}(Y\in\cC_n(X)) \geq 1-\alpha,\]
    i.e., $\cC_n$ is asymptotically optimal for the aim~\eqref{eqn:case_study_1_classification}.
\end{proposition}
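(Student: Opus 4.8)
The plan is to invoke the informal convergence result, Theorem~\ref{thm:splitCP_asymp_informal} (formally Theorems~\ref{thm:splitCP_asymp_formal_qn} and~\ref{thm:splitCP_asymp_formal_random_x}), after verifying that its hypotheses are met in this setting, and then translate the set-convergence conclusion $\cC_n\to\cC^*$ into the two claimed asymptotic statements about expected size and coverage. First I would observe that the model assumption $\E_{X\sim P_X}[\dtv(\pi^*(\cdot\mid X),\hat\pi_n(\cdot\mid X))]\to 0$ immediately gives $\E_{X\sim P_X}[\frac12\sum_y|\pi^*(y\mid X)-\hat\pi_n(y\mid X)|]\to0$, hence $s_n\to s^*$ in an appropriate $L^1(P_X)$ sense (summing over the finite label set $\cY$). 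The continuity assumption on the distribution of $\pi^*(Y\mid X)$ under $(X,Y)\sim P$ is exactly the regularity condition ensuring that the oracle quantile $q^*$ is well-behaved (the CDF of $s^*(X,Y)$ is continuous and strictly increasing at $q^*$), so there are no ties at the threshold and $\hat q_n\to q^*$ follows. Thus the hypotheses of Theorem~\ref{thm:splitCP_asymp_informal} hold almost surely (over the draw of the pretraining and calibration data), and we obtain $\hat q_n\to q^*$ and $\cC_n\to\cC^*$.

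Next I would make the set convergence concrete. Since $\cY$ is finite, for each fixed $x$ the set $\cC_n(x)=\{y:-\hat\pi_n(y\mid x)\le\hat q_n\}$ agrees with $\cC^*(x)=\{y:-\pi^*(y\mid x)\le q^*\}$ once $n$ is large, \emph{provided} no label $y$ has $\pi^*(y\mid x)=-q^*$ exactly. By the continuity assumption, the event $\{\exists y:\pi^*(y\mid X)=-q^*\}$ has $P_X$-probability zero, so $\mathbbm{1}\{y\in\cC_n(X)\}\to\mathbbm{1}\{y\in\cC^*(X)\}$ for $P_X$-almost every $X$, for each $y$. For the size claim, $|\cC_n(X)|=\sum_{y\in\cY}\mathbbm{1}\{y\in\cC_n(X)\}$ is bounded by $|\cY|<\infty$, so dominated convergence gives $\E_{X\sim P_X}[|\cC_n(X)|]\to\E_{X\sim P_X}[|\cC^*(X)|]$; in particular the $\limsup$ is an equality, which is stronger than claimed. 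For the coverage claim, write $\P_{(X,Y)\sim P}(Y\in\cC_n(X))=\E_{(X,Y)\sim P}[\mathbbm{1}\{s_n(X,Y)\le\hat q_n\}]$ and compare to $\E_{(X,Y)\sim P}[\mathbbm{1}\{s^*(X,Y)\le q^*\}]=\P_{(X,Y)\sim P}(Y\in\cC^*(X))\ge 1-\alpha$ (the oracle has exact coverage $1-\alpha$ by definition of $q^*$ and continuity). Using the same a.e.\ pointwise convergence $\mathbbm{1}\{s_n(X,Y)\le\hat q_n\}\to\mathbbm{1}\{s^*(X,Y)\le q^*\}$ under $(X,Y)\sim P$ — valid because $P(s^*(X,Y)=q^*)=0$ — together with dominated convergence (the indicators are bounded by $1$), we get $\P_{(X,Y)\sim P}(Y\in\cC_n(X))\to\P_{(X,Y)\sim P}(Y\in\cC^*(X))\ge 1-\alpha$, so the $\liminf$ is at least $1-\alpha$.

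The main obstacle I anticipate is the careful bookkeeping around ``almost surely'': the convergences $\hat q_n\to q^*$ and $s_n\to s^*$ hold only on an event of probability one over the pretraining/calibration data, and one must take that as the outer ``almost surely'' in the statement, then \emph{on that event} run the deterministic/test-point argument above (conditioning on the training data, so $\hat\pi_n,\hat q_n$ are fixed). A secondary technical point is confirming that the mode of convergence ``$\cC_n\to\cC^*$'' delivered by Theorem~\ref{thm:splitCP_asymp_informal} is strong enough to yield pointwise-a.e.\ convergence of the membership indicators; if it is stated as, say, $P_X$-measure of the symmetric difference $\to0$, one instead argues via $\E_{X}[|\cC_n(X)\triangle\cC^*(X)|]\to 0$ directly and bounds $|\,\E_X|\cC_n(X)|-\E_X|\cC^*(X)|\,|\le\E_X|\cC_n(X)\triangle\cC^*(X)|$, and similarly bounds the coverage gap by a measure of symmetric difference plus a term controlled by $|\hat q_n-q^*|$ and the continuity of the score CDF. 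Either way the finiteness of $\cY$ makes all the dominated-convergence steps trivial, so I expect no real analytic difficulty — only the need to be scrupulous about which randomness is being averaged over at each stage.
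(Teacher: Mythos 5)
Your high-level plan --- apply Theorems~\ref{thm:splitCP_asymp_formal_qn}--\ref{thm:splitCP_asymp_formal_random_x} after checking their hypotheses (which you do correctly for $s_n\stackrel{P}{\to}s^*$ and condition~\eqref{eqn:asm_thm:splitCP_asymp_formal_random_x}), then push the set-convergence conclusion through to the size and coverage claims --- matches the paper's structure, and your coverage argument is essentially the paper's. But two steps in the size argument have genuine gaps.

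First, ``the CDF of $s^*(X,Y)$ is continuous and strictly increasing at $q^*$, so \dots\ $\hat q_n\to q^*$ follows'' is not justified: continuity of the distribution of $\pi^*(Y\mid X)$ gives continuity of $F_{P,s^*}$ but nothing about strict monotonicity, so you cannot conclude $q^*=q^*_+$. Theorem~\ref{thm:splitCP_asymp_formal_qn} only supplies $q^*\le\liminf\hat q_n\le\limsup\hat q_n\le q^*_+$, and the paper's proof never claims more. (One can show that $\{y:s^*(X,y)\le q\}=\cC^*(X)$ for $P_X$-a.e.\ $X$, for \emph{every} $q\in[q^*,q^*_+]$, so the slack is harmless --- but that argument is missing from your write-up.) Second and more seriously, what Theorem~\ref{thm:splitCP_asymp_formal_random_x} delivers is $\P_{(X,Y)\sim P}\big(Y\in\cC_n(X)\triangle\cC^*(X)\big)\to 0$, the \emph{$P$-mass} of the symmetric difference, whereas the size claim needs to control $\E_{X\sim P_X}\big[|\cC_n(X)\triangle\cC^*(X)|\big]$, its expected \emph{cardinality}. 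This conversion is not ``direct'' as your fallback asserts: labels in the symmetric difference could carry tiny $\pi^*$-weight, so a small $P$-mass does not a priori bound the count. The paper closes this gap by introducing $c_n=\inf_{(x,y):\,y\in\cC_n(x)}\hat\pi_n(y\mid x)$, bounding $\E_X[|\cC_n(X)\setminus\cC^*(X)|]\le c_n^{-1}\big(\E_X[\dtv(\hat\pi_n,\pi^*)]+\P_{(X,Y)\sim P}(Y\in\cC_n(X)\setminus\cC^*(X))\big)$, and then proving $\liminf_n c_n\ge|q^*_+|/2>0$ from $\limsup\hat q_n\le q^*_+<0$. Your primary route --- pointwise-a.e.\ convergence of $\ind{y\in\cC_n(X)}$ plus dominated convergence over a finite $\cY$ --- is a genuinely different and arguably cleaner path (it would give actual convergence, not merely the claimed $\limsup$), but as written it silently upgrades the $L^1(P_X)$ hypothesis on $\dtv(\hat\pi_n(\cdot\mid X),\pi^*(\cdot\mid X))$ to $P_X$-a.e.\ pointwise convergence, and uses $\hat q_n\to q^*$; making it rigorous requires an explicit subsequence extraction for both, which you haven't supplied. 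Neither your main route nor your fallback is complete as stated.
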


In words, conformal prediction with the high-probability score yields asymptotically optimal set sizes as the fitted model probabilities $\hat \pi_n$ approach the true model $\pi^*$. This suggests that the high-probability score will generally be a good choice if one is seeking small average set size. Note, however, that the aim of having small average set length is often in tension with having good conditional coverage. We will turn to this in the next case study.

In the above result, we assume that the conditional probability $\pi^*(Y\mid X)$ is continuously distributed. This assumption (and analogous continuity conditions in the case studies that follow) allows us to avoid ties among the conformal scores, but is not necessary if we use randomized versions of conformal prediction that break ties at random (see Chapter~\ref{chapter:further-topics}).

\subsection{Case study: classification with minimal set size and conditional coverage}\label{sec:case_study_2_classification}
\index{optimality!conditional coverage}

\paragraph{Aim.}
Next, we provide sets that have  the smallest size among those that have \emph{conditional} coverage. Note that optimizing set size while only enforcing marginal coverage can result in poor behavior, as we saw in Chapter~\ref{chapter:conditional} (see Figure~\ref{fig:conditional-coverage}). Consequently, it is often better to target conditional coverage, as we do next.
The corresponding optimization problem is as follows:
\begin{equation}
\begin{aligned}
\label{eqn:case_study_2_classification}
\textnormal{minimize} \quad & \E_{X \sim P_X}[|\cC(X)|] \\
\textrm{subject to} \quad & \P_{Y\sim P_{Y\mid X}}(Y \in \cC(X) \mid X) \geq 1-\alpha' \textnormal{ almost surely.}
\end{aligned}
\end{equation}
(We will see shortly why we write $\alpha'$, rather than $\alpha$, to define the aim.)
The sets $\cC(X)$ that solve the above optimization problem also have the smallest size conditionally on $X$, almost surely.
In other words, for almost every $x\in\cX$, there cannot be a smaller subset of $\cY$ that would still maintain $1-\alpha'$ coverage, since this would contradict the optimality of our set.

\paragraph{Oracle.}
A short calculation will show that the following set is the solution to our aim: 
\begin{equation}\label{eqn:case_study_2_classification_preliminary}\cC^*(x) = \{y: \pi^*(y\mid x)\geq t^*(x)\},\end{equation}
where, at each $x\in\cX$, $t^*(x)$ is chosen so that $\P_{Y\sim P_{Y\mid X}}(\pi^*(Y\mid x)\geq t^*(x)\mid X=x) \geq 1-\alpha'$; see Figure~\ref{fig:case_study_2_classification} for an illustration of this set.
The reason for the optimality of this score is relatively straightforward to see: for any $x$, the oracle prediction set $\cC^*(x)$ includes only the most likely classes until the total mass reaches (at least) $1-\alpha'$.
Any other rule would have to include at least as many classes to reach the $1-\alpha'$ threshold.

However, the oracle prediction set given in~\eqref{eqn:case_study_2_classification_preliminary} cannot be immediately converted to a construction of the form $\{y:s^*(x,y)\leq q^*\}$, as in Theorem~\ref{thm:splitCP_asymp_informal}---this is because the threshold $t^*(x)$ appearing in~\eqref{eqn:case_study_2_classification_preliminary} is dependent on $x$, rather than a common value that is shared over all $x\in\cX$. Thus, we will need to reformulate this set. Define
\[s^*(x,y) = \sum_{y'\in\cY} \pi^*(y'\mid x) \cdot \ind{\pi^*(y'\mid x)> \pi^*(y\mid x)},\]
which is the (conditional) probability captured by the set $\{y' : \pi^*(y'\mid x)> \pi^*(y\mid x)\}$, i.e., all labels that are \emph{strictly more likely} than the label $y$ (given features $x$).
With this reformulation, we can verify that the set defined in~\eqref{eqn:case_study_2_classification_preliminary}  can equivalently be written as
\[\cC^*(x) = \{y : s^*(x,y) < 1-\alpha' \}.\]
If we again assume a continuity condition for simplicity---namely, that $\P_{(X,Y)\sim P}(s^*(X,Y)=1-\alpha')=0$---then the set
\begin{equation}
    \label{eqn:case_study_2_classification_reformulated}
    \cC^*(x) = \{y : s^*(x,y) \leq 1-\alpha' \}
\end{equation}
is also a solution to~\eqref{eqn:case_study_2_classification}, since it differs from the first definition only on an event of measure zero. In particular, this means that the oracle prediction set can be defined by a criterion of the form $s^*(x,y)\leq q^*$ (where $q^*=1-\alpha'$), which thus fits into the notation of Theorem~\ref{thm:splitCP_asymp_informal}. However, again recalling Theorem~\ref{thm:splitCP_asymp_informal}, in order to produce a conformal prediction set that approximates $\cC^*(X_{n+1})$ (when running conformal prediction at coverage level $1-\alpha$), we need $q^*$ to be the $(1-\alpha)$-quantile of $s^*(X,Y)$ under $(X,Y)\sim P$. In other words, if we define $\alpha'$ to satisfy
\begin{equation}\label{eqn:def_alpha_prime_case_study}1-\alpha' = q^*\textnormal{ where $q^*$ is the $(1-\alpha)$-quantile of $s^*(X,Y)$ under $(X,Y)\sim P$,}\end{equation}
then the oracle set $\cC^*(x)=\{y:s^*(x,y)\leq q^*\}$ will achieve \emph{conditional} coverage at level $1-\alpha'$ (not $1-\alpha$). This is not surprising: due to discreteness of the problem, in order to achieve $\geq 1-\alpha'$ conditional coverage at every $x$, we generally will achieve a higher level of marginal coverage, $1-\alpha > 1-\alpha'$.

\begin{figure}[t]
    \centering
    \includegraphics[width=0.55\textwidth]{\diagramspath oracle-classification-conditional-coverage.pdf}
    \caption{\textbf{An illustration of the oracle prediction set for classification with conditional coverage.} For each value of $x$, the highlighted bars illustrate the oracle prediction set $\cC^*(x)$ constructed in~\eqref{eqn:case_study_2_classification_preliminary}, consisting of all $y$ values for which $\pi^*(y\mid x)\geq t^*(x)$. The prediction sets all contain a $1-\alpha'$ proportion of the conditional probability mass, at each value $x$.}
    \commentAlt{Three bar plots, labeled $x=x_1$, $x=x_2$, $x=x_3$, display conditional probability $\pi^*(y\mid x)$. For each plot, a horizontal line marks a threshold $t^*(x_1)$, $t^*(x_2)$, or $t^*(x_3)$. Bars whose level lies above the threshold are highlighted.}
    \label{fig:case_study_2_classification}
\end{figure}

\paragraph{Choosing the score.} \index{score function!cumulative-probability score}
We again use a plug-in approach to approximate the oracle score distribution:
\begin{equation}
    s_n(x,y) = \sum_{y'\in\cY} \hat \pi_n(y'\mid x) \cdot \ind{\hat \pi_n(y'\mid x)> \hat \pi_n(y\mid x)}.
\end{equation}
To contrast this with the high-probability score of Section~\ref{sec:case_study_1_classification}, we will refer to this as the \emph{cumulative-probability score}. (This construction is also sometimes called an \emph{adaptive prediction set} score.)

\paragraph{Model assumption and asymptotic optimality.}

We will require the same convergence condition on $\hat\pi_n$ as in the previous case study (in Section~\ref{sec:case_study_1_classification}): we will again assume that
\begin{equation}
    \E_{X\sim P_X}\left[\dtv\big(\pi^*(\cdot\mid X), \hat\pi_n(\cdot \mid X)\big)\right] \to 0.
\end{equation}

We now want to show that, under the above model assumption, the split conformal prediction set $\cC_n$ constructed with $s_n$ above is asymptotically a solution to the optimization problem~\eqref{eqn:case_study_2_classification}.  To this end, we first need to formalize what we mean by an `asymptotic' solution---what does it mean to achieve conditional coverage in an asymptotic sense? There are multiple valid answers to this question; here we will choose a particular one. Recall that exact conditional coverage (at level $1-\alpha'$, rather than $1-\alpha$, as before) means that
\[\P_{Y\sim P_{Y\mid X}}(Y\in\cC_n(X)\mid X) \geq 1-\alpha'\]
holds almost surely, with respect to $X\sim P_X$. We will take the convention that achieving conditional coverage in an approximate sense means that this inequality holds, at least approximately, for most $X$---i.e., with probability $\approx$ 1 with respect to $X\sim P_X$.

\begin{proposition}\label{prop:case_study_2_classification}
    Assume that the oracle score $s^*(X,Y)$ satisfies the following condition:
\begin{equation}\label{eqn:continuity_for_case_study_2_classification}\P_{(X,Y)\sim P}(s^*(X,Y) = 0) < 1-\alpha, \textnormal{ and }
\P_{(X,Y)\sim P}(s^*(X,Y) = t) = 0\textnormal{ for all $t>0$}.
\end{equation}
Assume also that, for any $y\neq y'\in\cY$, we have $\pi^*(y\mid X)\neq\pi^*(y'\mid X)$ almost surely under $X\sim P_X$.  
    Then the following claim holds almost surely: if
     \begin{equation}
         \E_{X\sim P_X}\left[\dtv\big(\pi^*(\cdot\mid X), \hat\pi_n(\cdot \mid X)\big)\right]\to 0
     \end{equation}
    then
\[\limsup_{n\to\infty}\E_{X\sim P_X}[|\cC_n(X)| ]
  \leq \E_{X\sim P_X}[|\cC^*(X)|]\]
  and
  \[
         \lim_{n\to\infty}\P_{X\sim P_X}\Big( \,\P_{Y\sim P_{Y\mid X}}(Y\in\cC_n(X)\mid X) \geq 1-\alpha' - \epsilon\,\Big) = 1\textnormal{ for all $\epsilon>0$},\]
    i.e., $\cC_n$ is asymptotically optimal for the aim~\eqref{eqn:case_study_2_classification} with conditional coverage level $1-\alpha'$ (where $\alpha'$ is defined as in~\eqref{eqn:def_alpha_prime_case_study}).
\end{proposition}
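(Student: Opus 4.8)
The plan is to funnel everything through the general convergence result, Theorem~\ref{thm:splitCP_asymp_informal} (in its formal form, Theorem~\ref{thm:splitCP_asymp_formal_random_x}): once we know that the cumulative-probability scores $s_n$ converge to the oracle score $s^*$ in the sense that theorem requires, and that the continuity condition~\eqref{eqn:continuity_for_case_study_2_classification} places us in its hypotheses with $q^*=1-\alpha$, the theorem hands us $\hat q_n\to 1-\alpha$ together with set convergence, and both displayed conclusions follow by bookkeeping using finiteness of $\cY$.

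First I would establish the score convergence, which is the delicate part. Fix $x$ for which the values $(\pi^*(y\mid x))_{y\in\cY}$ are all distinct, and set $\gamma(x)=\min_{y\neq y'}|\pi^*(y\mid x)-\pi^*(y'\mid x)|>0$. If $\dtv\big(\pi^*(\cdot\mid x),\hat\pi_n(\cdot\mid x)\big)<\gamma(x)/4$, then $\max_y|\hat\pi_n(y\mid x)-\pi^*(y\mid x)|<\gamma(x)/2$, so $\hat\pi_n(\cdot\mid x)$ and $\pi^*(\cdot\mid x)$ induce the same strict ordering on $\cY$; hence the sets $\{y':\hat\pi_n(y'\mid x)>\hat\pi_n(y\mid x)\}$ and $\{y':\pi^*(y'\mid x)>\pi^*(y\mid x)\}$ coincide for every $y$, which gives $|s_n(x,y)-s^*(x,y)|\le\sum_{y'\in\cY}|\hat\pi_n(y'\mid x)-\pi^*(y'\mid x)|=2\,\dtv\big(\pi^*(\cdot\mid x),\hat\pi_n(\cdot\mid x)\big)$. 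By hypothesis $\E_{X\sim P_X}[\dtv(\pi^*(\cdot\mid X),\hat\pi_n(\cdot\mid X))]\to 0$, so Markov's inequality gives $\dtv(\pi^*(\cdot\mid X),\hat\pi_n(\cdot\mid X))\to 0$ in probability over $X\sim P_X$; combining this with the assumption $\gamma(X)>0$ a.s.\ via a double-limit argument (bound $\P_X(\dtv>\gamma(X)/4)\le\P_X(\dtv>c)+\P_X(\gamma(X)\le 4c)$, let $n\to\infty$, then $c\to 0$) yields $s_n(X,y)\to s^*(X,y)$ in probability over $X$ for each $y\in\cY$ --- the input Theorem~\ref{thm:splitCP_asymp_formal_random_x} needs.

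Then I would invoke Theorem~\ref{thm:splitCP_asymp_formal_random_x}. Condition~\eqref{eqn:continuity_for_case_study_2_classification} guarantees that $q^*=1-\alpha$ is the relevant quantile of $s^*(X,Y)$ and that this distribution has no atom there, so the theorem delivers, almost surely over the pretraining/calibration data, $\hat q_n\to 1-\alpha$ and set convergence in the form $\P_{X\sim P_X}\big(\cC_n(X)\neq\cC^*(X)\big)\to 0$. The latter holds because $\cC_n(X)\neq\cC^*(X)$ forces, for some $y$, one of the events $\{|s_n(X,y)-s^*(X,y)|>c/2\}$, $\{|\hat q_n-(1-\alpha)|>c/2\}$, $\{|s^*(X,y)-(1-\alpha)|\le c\}$, all of which have vanishing probability --- the last one by the same $c\to 0$ argument, using that $s^*(X,y)\neq 1-\alpha$ a.s.\ for each fixed $y$ (true for $y$ in the conditional support of $Y$ by~\eqref{eqn:continuity_for_case_study_2_classification}, and because $s^*(x,y)=1\neq 1-\alpha$ when $\pi^*(y\mid x)=0$). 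The set-size claim is then immediate: $|\cC_n(X)|\le|\cC^*(X)|+|\cY|\cdot\ind{\cC_n(X)\neq\cC^*(X)}$, and since everything is bounded by $|\cY|<\infty$, bounded convergence gives $\E_X[|\cC_n(X)|]\to\E_X[|\cC^*(X)|]$, hence the $\limsup$ inequality. For the conditional-coverage claim, recall the reformulated oracle~\eqref{eqn:case_study_2_classification_reformulated} agrees with~\eqref{eqn:case_study_2_classification_preliminary} a.s.\ and so satisfies $\P_{Y\sim P_{Y\mid X}}(Y\in\cC^*(X)\mid X)\ge 1-\alpha$ a.s.; thus on the event $\{\cC_n(X)=\cC^*(X)\}$ we have $\P_{Y\sim P_{Y\mid X}}(Y\in\cC_n(X)\mid X)\ge 1-\alpha\ge 1-\alpha-\epsilon$, and since $\P_X(\cC_n(X)=\cC^*(X))\to 1$ this proves $\P_X\big(\P_{Y\sim P_{Y\mid X}}(Y\in\cC_n(X)\mid X)\ge 1-\alpha-\epsilon\big)\to 1$ for every $\epsilon>0$.

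The main obstacle is the score-convergence step: $s^*(x,\cdot)$ is a discontinuous functional of the probability vector $\pi^*(\cdot\mid x)$ --- it depends only on the induced ranking --- so an arbitrarily small $L^1$ perturbation of $\hat\pi_n$ can flip two ranks and move $s_n$ by an $O(1)$ amount. This is precisely where the no-ties hypothesis on $\pi^*$ is essential, and the subtlety is that the perturbation tolerance $\gamma(x)$ is random and can be arbitrarily small, which is why one is forced into the double-limit argument rather than a uniform estimate. Everything downstream is routine, modulo verifying that the abstract regularity hypotheses of Theorem~\ref{thm:splitCP_asymp_formal_random_x} are implied by~\eqref{eqn:continuity_for_case_study_2_classification}.
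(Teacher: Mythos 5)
Your proof takes the same overall route as the paper's: establish $s_n\stackrel{P}{\to}s^*$ via the no-ties argument, verify the continuity hypothesis of Theorem~\ref{thm:splitCP_asymp_formal_random_x}, invoke that theorem, and read off both conclusions. Your score-convergence step is essentially the paper's Step~2 (you use $\gamma(x)/4$ and arrive at the bound $2\,\dtv$ where the paper uses $\Delta(x)/2$ and the slightly sharper $\dtv$; the difference is immaterial), and your double-limit argument matches the paper's use of the random cutoff $\min\{\epsilon,\Delta(X)/2\}$.

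The genuine gap is the assertion ``condition~\eqref{eqn:continuity_for_case_study_2_classification} guarantees $q^*=1-\alpha$.'' This is not implied by the hypotheses, and in fact is generically false in this case study. For each $x$, the scores $s^*(x,\cdot)$ take only the $|\cY|$ partial-sum values $0=S_0(x)<S_1(x)<\dots<S_{|\cY|-1}(x)$ of the sorted $\pi^*(\cdot\mid x)$, so the conditional coverage of $\{y:s^*(x,y)\le t\}$ equals $S_{J(t,x)}(x)$ with $J(t,x)=\max\{j:S_{j-1}(x)\le t\}$, which is \emph{strictly} greater than $t$ for every $t\in[0,1)$. Integrating over $X\sim P_X$ gives $F_{P,s^*}(t)>t$, in particular $F_{P,s^*}(1-\alpha)>1-\alpha$, hence $q^*<1-\alpha$ strictly. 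The paper's Step~1 is careful on exactly this point: it verifies only $\P(s^*(X,Y)=q^*_+)=0$ --- the precise hypothesis Theorem~\ref{thm:splitCP_asymp_formal_random_x} requires --- and never asserts $q^*=1-\alpha$.

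This matters downstream. With $q^*<1-\alpha$, the set Theorem~\ref{thm:splitCP_asymp_formal_random_x} hands you, $\{y:s^*(x,y)\le q^*\}$, is strictly smaller than the case-study oracle $\{y:s^*(x,y)\le 1-\alpha\}$ and does \emph{not} have conditional coverage $\ge 1-\alpha$ pointwise (only in expectation, since $F_{P,s^*}(q^*)=1-\alpha$). Your conditional-coverage argument --- that on $\{\cC_n(X)=\cC^*(X)\}$ the coverage is $\ge 1-\alpha$ --- is therefore invoking a pointwise property of a set that does not possess it, and your stronger set-convergence claim $\P_X(\cC_n(X)\neq\cC^*(X))\to 0$ is also built on this mis-identification. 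The paper's Step~3 structures the downstream work differently: it derives both conclusions from the theorem's weaker $\P(Y\in\cC_n(X)\triangle\cC^*(X))\to 0$, combined with a separate lower bound $\liminf_n c_n>0$ for the set-size claim and a Markov inequality for the coverage claim, and so never needs (nor asserts) $q^*=1-\alpha$.
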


In words, conformal prediction with the cumulative-probability score gives the smallest sets with conditional coverage as the fitted model probabilities $\hat \pi_n$ approach the true probabilities $\pi$. This suggests the cumulative-probability score is generally a good choice when one is seeking to approximately achieve conditional coverage.

\section{Case studies for regression}
\label{sec:case_studies_regression}

This section considers the regression setting, where the response $Y$ is real-valued, i.e., $\cY = \R$.
We will assume that the conditional distribution of $Y\mid X=x$, under the joint distribution $(X,Y)\sim P$, has a conditional density
\begin{equation}
    f^*(y \mid x)
\end{equation}
with respect to Lebesgue measure. For the first case study, we assume access to a pretrained estimate of $f^*$, given by a conditional density $\hf_n(y \mid x)$.
We also make use of the conditional quantile function, $\tau^*(x; \beta)$ (that is, the $\beta$-quantile of the conditional distribution of $Y$ given $X=x$), and its corresponding estimator, $\hat{\tau}_n(x ; \beta)$, in the second case study.

\subsection{Case study: regression with minimal set size}\label{sec:case_study_1_regression}
\index{optimality!set size}

\paragraph{Aim.}
Our next goal is to provide the sets with the smallest average size among those that have $1-\alpha$ marginal coverage in the regression setting.
This goal is entirely analogous to that in Section~\ref{sec:case_study_1_classification}, and the score solves the same optimization problem,
\begin{equation}
\begin{aligned}
\textnormal{minimize} \quad & \E_{X\sim P_X}[\textnormal{Leb}(\cC(X))] \\
\textrm{subject to} \quad & \P_{(X,Y)\sim P}(Y \in \cC(X)) \geq 1-\alpha,
\end{aligned}
\label{eq:case_study_1_regression}
\end{equation}
the only difference being that we now consider the Lebesgue measure $\textnormal{Leb}(\cC(X))$ of the prediction set, rather than its cardinality $|\cC(X)|$.

\paragraph{Oracle.}
By a standard argument reminiscent of the Neyman--Pearson lemma, if we assume for simplicity that $f^*(Y\mid X)$ is continuously distributed under $(X,Y)\sim P$, then the set construction with the smallest average size has the form $\cC^*(x) = \{ y : f^*(y \mid x) \geq t^* \}$, where $t^*$ is chosen as the largest possible value that still yields coverage at level $\geq 1-\alpha$:
\begin{equation}
    t^* = \sup \{t \in \R : \P_{(X,Y)\sim P}(f^*(Y \mid X) \geq t) \geq 1-\alpha \}.
\end{equation}
As before, this can equivalently be written as $\cC^*(x) = \{ y : s^*(x,y) \leq q^* \}$, where $s^*(x,y) = -f^*(y \mid x)$ and $q^* = -t^*$ (here $q^*$ is again the $(1-\alpha)$-quantile of $s^*(X,Y)$).

\paragraph{Choosing the score.} \index{score function!high-density score}
To approximate the oracle sets, we use an estimate of the density, $\hf_n$, in place of the unknown $f^*$:
\begin{equation}
    s_n(x,y) = -\hf_n(y \mid x).
\end{equation}
We will refer to this as the \emph{high-density score}---it is exactly analogous to the high-probability score of Section~\ref{sec:case_study_1_classification} but with conditional density in place of conditional probability. This leads to a prediction set of the form
\[\cC_n(x) = \{y\in\R: s_n(x,y) \leq \hat{q}_n\} = \{y\in\R : \hf_n(y\mid x) \geq -\hat{q}_n\},\]
i.e., all values $y$ with a sufficiently high estimated conditional density.

\paragraph{Model assumption and asymptotic optimality.}
We will assume that the estimated conditional density of $Y\mid X$, given by $\hf_n$ as defined above, is consistent as an estimator of the true $f^*$ with respect to TV distance. 
For any $x\in\cX$, we can calculate the TV distance between the true and estimated conditional distributions of $Y\mid X=x$ as
\begin{equation}
    \dtv\big(f^*(\cdot\mid x), \hf_n(\cdot \mid x)\big) = \frac{1}{2}\int_{y\in\R} \big|f^*(y\mid x) - \hf_n(y\mid x)\big|\;\mathsf{d}y.
\end{equation}
We will measure TV distance on average over $X$:
\begin{equation}
    \E_{X\sim P_X}\left[\dtv\big(f^*(\cdot\mid X), \hf_n(\cdot \mid X)\big)\right]= \E_{X\sim P_X}\left[\frac{1}{2}\int_{y\in\cY} \big|f^*(y\mid X) - \hf_n(y\mid X)\big|\;\mathsf{d}y\right].
\end{equation}
The core assumption will be that the estimation error vanishes in TV distance,
\begin{equation}
    \E_{X\sim P_X}\left[\dtv\big(f^*(\cdot\mid X), \hf_n(\cdot \mid X)\big)\right]\to 0,
\end{equation}
which means that for large $n$, the conditional density of $Y\mid X=x$ is estimated accurately at most values $x$.

We now show that, under the above model assumption, the split conformal prediction set $\cC_n$ constructed with $s_n$ above is asymptotically a solution to the optimization problem~\eqref{eq:case_study_1_regression}.

\begin{proposition}\label{prop:case_study_1_regression}
    Assume that the conditional density $f^*(Y\mid X)$ has a continuous distribution under $(X,Y)\sim P$.
    Furthermore, assume that $\sup_{(x,y)}f^*(y\mid x) < \infty$.
    Then the following claim holds almost surely: if
    \[\E_{X\sim P_X}\left[\dtv\big(f^*(\cdot\mid X), \hf_n(\cdot \mid X)\big)\right]\to 0\]
    then
    \[\limsup_{n\to\infty}\E_{X\sim P_X}[\textnormal{Leb}(\cC_n(X))] \leq \E_{X\sim P_X}[\textnormal{Leb}(\cC^*(X))] \textnormal{ \  and \  }\liminf_{n\to\infty}\P_{(X,Y)\sim P}(Y\in\cC_n(X)) \geq 1-\alpha,\]
    i.e., $\cC_n$ is asymptotically optimal for the aim~\eqref{eq:case_study_1_regression}.
\end{proposition}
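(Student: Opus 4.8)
## Proof proposal for Proposition~\ref{prop:case_study_1_regression}

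The plan is to invoke the general machinery of Theorem~\ref{thm:splitCP_asymp_informal} (in its formal version, Theorem~\ref{thm:splitCP_asymp_formal_random_x}) and then translate the resulting set convergence into the two desired asymptotic statements: coverage at level $\geq 1-\alpha$ and length no larger than the oracle. The first and most important task is to verify the hypotheses of that theorem. The key quantitative input is that $s_n \to s^*$ in an appropriate sense; here $s_n(x,y) = -\hat f_n(y\mid x)$ and $s^*(x,y) = -f^*(y\mid x)$, so I would show that the assumed TV convergence $\E_{X\sim P_X}[\dtv(f^*(\cdot\mid X),\hat f_n(\cdot\mid X))]\to 0$ implies the required mode of convergence of the score functions (e.g.\ convergence in an $L^1$-type sense with respect to $P_X$ times Lebesgue, or convergence of the induced distributions of $s_n(X,Y)$ and $s^*(X,Y)$). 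The boundedness assumption $\sup_{(x,y)}f^*(y\mid x)<\infty$ keeps the oracle score distribution supported on a bounded interval, and the continuity assumption on $f^*(Y\mid X)$ ensures the oracle quantile $q^* = -t^*$ is a continuity point of the relevant CDF, which is what lets $\hat q_n\to q^*$.

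Next I would derive the coverage conclusion. From $\hat q_n\to q^*$ together with the convergence of $s_n$, Theorem~\ref{thm:splitCP_asymp_informal} gives $\cC_n\to\cC^*$ in the sense of symmetric-difference measure going to zero; combined with the fact that $\cC^*$ has exact coverage $1-\alpha$ (by definition of $t^*$ and the continuity condition), a limiting argument yields $\liminf_n \P_{(X,Y)\sim P}(Y\in\cC_n(X))\geq 1-\alpha$. Concretely, $|\P(Y\in\cC_n(X)) - \P(Y\in\cC^*(X))|$ is controlled by the $P$-measure of the symmetric difference of $\cC_n$ and $\cC^*$ (as subsets of $\cX\times\cY$), which vanishes. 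The delicate point is that split conformal's finite-sample guarantee already gives $\P(Y\in\cC_n(X))\geq 1-\alpha$ unconditionally, but here we have conditioned on the calibration and pretraining sets, so coverage is only asymptotic; this is exactly why the statement is phrased with $\liminf$ and ``almost surely.''

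For the length conclusion, I would argue that $\textnormal{Leb}(\cC_n(x))\to\textnormal{Leb}(\cC^*(x))$ for $P_X$-almost every $x$, using that at almost every $x$ the level set $\{y:\hat f_n(y\mid x)\geq -\hat q_n\}$ converges to $\{y:f^*(y\mid x)\geq -q^*\}$ in Lebesgue measure (here the continuity of the distribution of $f^*(Y\mid X)$ rules out the pathological case where $f^*(\cdot\mid x)$ spends positive Lebesgue mass exactly at the level $-q^*$, for a.e.\ $x$), and then pass to expectations. The subtlety is that pointwise convergence of measures of sets does not automatically give convergence of the integrals $\E_X[\textnormal{Leb}(\cC_n(X))]$ — this requires either a uniform integrability / domination argument or a one-sided (Fatou-type) argument. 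Since the proposition only claims $\limsup_n \E_X[\textnormal{Leb}(\cC_n(X))]\leq \E_X[\textnormal{Leb}(\cC^*(X))]$, I expect a reverse-Fatou argument to suffice provided one has a uniform (in $n$) integrable upper bound on $\textnormal{Leb}(\cC_n(X))$; obtaining such a bound — or equivalently arguing that the conformal threshold $\hat q_n$ stays bounded away from values that would blow up the set length — is the main obstacle, and I would handle it by exploiting $\sup_{(x,y)}f^*(y\mid x)<\infty$ together with the TV convergence to bound the mass that $\hat f_n$ can place at low density levels, hence controlling $\textnormal{Leb}(\cC_n(X))$ in expectation uniformly in $n$. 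Modulo that bound, the rest is the same limiting bookkeeping as in the classification case study (Proposition~\ref{prop:case_study_1_classification}), with Lebesgue measure replacing cardinality.
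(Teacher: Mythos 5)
Your overall plan — verify the hypotheses of Theorem~\ref{thm:splitCP_asymp_formal_random_x} and translate the symmetric-difference convergence into coverage and length conclusions — matches the paper's, and your coverage argument (bound $|\P(Y\in\cC_n(X))-\P(Y\in\cC^*(X))|$ by $\P(Y\in\cC_n(X)\triangle\cC^*(X))$) is exactly right. But your treatment of the length bound has two gaps. First, you assert $\textnormal{Leb}(\cC_n(x))\to\textnormal{Leb}(\cC^*(x))$ for $P_X$-almost every $x$. The hypothesis gives only $L^1(P_X)$-convergence of $\dtv(f^*(\cdot\mid X),\hat f_n(\cdot\mid X))$, and $L^1$-convergence yields a.e.\ convergence only along subsequences, not for the whole sequence — so reverse Fatou cannot be applied as stated (this can be rescued by the standard ``every subsequence has a further subsequence'' argument, but you must say so, and it adds nontrivial work). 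Second, you misdiagnose both of your own ingredients. The uniform integrable upper bound on $\textnormal{Leb}(\cC_n(X))$ does not come from $\sup f^*<\infty$ and TV control of low-density mass; it comes from the oracle quantile being strictly negative. Since $s^*(X,Y)=-f^*(Y\mid X)<0$ a.s., we have $q^*\leq q^*_+<0$, so eventually $\hat q_n\leq q^*_+/2<0$; then $y\in\cC_n(x)$ forces $\hat f_n(y\mid x)\geq -q^*_+/2>0$, and since $\hat f_n(\cdot\mid x)$ integrates to one, $\textnormal{Leb}(\cC_n(x))\leq 2/(-q^*_+)$ uniformly. The assumption $\sup f^*<\infty$ is used elsewhere: it is what makes $s_n\stackrel{P}{\to}s^*$ follow from the TV hypothesis, via $\E[|s_n(X,Y)-s^*(X,Y)|]=\E_X\left[\int f^*(y\mid X)\,|\hat f_n(y\mid X)-f^*(y\mid X)|\,\mathsf{d}y\right]\leq 2\sup f^*\cdot\E_X\left[\dtv(\hat f_n(\cdot\mid X),f^*(\cdot\mid X))\right]$.

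The paper sidesteps pointwise convergence of $\textnormal{Leb}(\cC_n(x))$ altogether, and the resulting argument is shorter. Since $\E_X[\textnormal{Leb}(\cC_n(X))]\leq\E_X[\textnormal{Leb}(\cC^*(X))]+\E_X[\textnormal{Leb}(\cC_n(X)\backslash\cC^*(X))]$, one only needs the last term to vanish. With $c_n=\inf\{\hat f_n(y\mid x):y\in\cC_n(x)\}$ (eventually $\geq -q^*_+/2>0$, as above), membership $y\in\cC_n(X)$ gives the pointwise bound $\ind{y\in\cC_n(X)\backslash\cC^*(X)}\leq c_n^{-1}\hat f_n(y\mid X)\ind{y\in\cC_n(X)\backslash\cC^*(X)}$; integrating in $y$ and over $X$, and splitting $\hat f_n\leq f^*+|\hat f_n-f^*|$, yields
\[
\E_X[\textnormal{Leb}(\cC_n(X)\backslash\cC^*(X))]\;\leq\; c_n^{-1}\Big(2\,\E_X\left[\dtv(\hat f_n(\cdot\mid X),f^*(\cdot\mid X))\right]+\P_{(X,Y)\sim P}(Y\in\cC_n(X)\backslash\cC^*(X))\Big),
\]
and both terms on the right vanish (the first by hypothesis, the second by Theorem~\ref{thm:splitCP_asymp_formal_random_x}). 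This is a direct expectation bound with no a.e.\ convergence, no reverse Fatou, and no subsequences — it is exactly the classification-case calculation with $\int\mathsf{d}y$ in place of $\sum_{y\in\cY}$.
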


This result suggests that the high-density score is a good choice when one wants prediction sets that have the smallest average length. However, as mentioned previously in Section~\ref{sec:case_study_1_classification}, aiming for small average length may conflict with achieving conditional coverage. We consider the question of conditional coverage next.

\subsection{Case study: regression with minimal-length equal-tailed intervals}\label{sec:case_study_2_regression}
\index{optimality!equal-tailed coverage}
\index{optimality!conditional coverage}
\index{quantile regression}

\paragraph{Aim.}
Our next goal is to provide the prediction intervals with the smallest average size among those with $1-\alpha$ \emph{equal-tailed conditional coverage}.
Equal-tailed conditional coverage means that, conditionally on $X=x$ for all $x$, the true value of $Y$ will lie below the lower endpoint of the interval with probability $\alpha/2$, and similarly, the true value of $Y$ will lie above the upper endpoint of the interval with probability $\alpha/2$.
Thus, errors happen equally frequently in the upper tail and lower tail.
Setting this up as an optimization problem, we want to solve
\begin{equation}
\begin{aligned}
\textnormal{minimize} \quad & \E_{X\sim P_X}[\textnormal{Leb}(\cC(X))] \\
\textrm{subject to} \quad & \P_{Y\sim P_{Y\mid X}}(Y > \sup\cC(X) \mid X ) \leq \alpha/2\textnormal{ almost surely}, \\
\quad & \P_{Y\sim P_{Y\mid X}}(Y < \inf\cC(X) \mid X ) \leq \alpha/2\textnormal{ almost surely},\\&\textnormal{$\cC(x)$ is an interval for all $x$.}
\end{aligned}
\label{eq:case_study_2_regression}
\end{equation}

\paragraph{Oracle.}
With access to the true conditional quantile function, we can simply take the oracle set to be
\begin{equation}
    \cC^*(x) = [\tau^*(x ; \alpha/2), \tau^*(x ; 1-\alpha/2)],
\end{equation}
where we recall that $\tau^*(x;\beta)$ is the $\beta$-quantile of the conditional distribution of $Y\mid X=x$. Since this conditional distribution is continuous, this prediction interval, whose endpoints are given by the $(\alpha/2)$- and $(1-\alpha/2)$-quantiles of $Y\mid X=x$, solves the optimization problem in~\eqref{eq:case_study_2_regression}.

\paragraph{Choosing the score.} \index{score function!conformalized quantile regression (CQR)}
We now want to define a split conformal prediction set $\cC_n$ that is asymptotically a solution to the optimization problem~\eqref{eq:case_study_1_regression}.
For this purpose, we will construct $\cC_n$ using the score function
\begin{equation}\label{eqn:score_CQR}
    s_n(x,y) = \max\{\hat{\tau}_n(x; \alpha/2) - y, y - \hat{\tau}_n(x; 1-\alpha/2)\}.
\end{equation}
As in Chapter~\ref{chapter:introduction}, since this is the score used in the conformalized quantile regression (CQR) method, we refer to it as the \emph{CQR score}.
This score leads to a prediction interval of the form
\begin{equation}
    \label{eqn:case_study_2_regression_set}
    \cC_n(x) = \left[ \hat{\tau}_n(x; \alpha/2) - \hat{q}_n ,  \hat{\tau}_n(x; 1-\alpha/2) + \hat{q}_n\right],
\end{equation}
where we take the convention that $[a,b]$ denotes the empty set if $a>b$, to handle degenerate cases.
Figure~\ref{fig:case_study_2_regression_set} illustrates the prediction interval in~\eqref{eqn:case_study_2_regression_set} above.

To see how this corresponds to the oracle interval, consider an oracle score,
\begin{equation}\label{eqn:oracle_score_CQR}s^*(x,y) = \max\{\tau^*(x; \alpha/2) - y, y - \tau^*(x; 1-\alpha/2)\}.\end{equation}
For a data point $(x,y)$, this score will be positive if $y$ lies outside the oracle interval $\cC^*(x)$, negative if $y$ lies strictly inside the interval, or zero if $y$ lies on one of the endpoints.
Then, by construction, we have $\cC^*(x) = \{y\in\R: s^*(x,y)\leq q^*\}$ when we choose $q^*=0$, and moreover, $q^*=0$ is the $(1-\alpha)$-quantile of $s^*(X,Y)$.

\begin{figure}[t]
    \centering
    \includegraphics[width=0.8\textwidth]{\diagramspath cqr.pdf}
    \caption{\textbf{An illustration of the prediction set for CQR.} The prediction set $\cC_n(x)$ constructed in~\eqref{eqn:case_study_2_regression_set} is shown as a highlighted region along the horizontal axis.}
    \commentAlt{The figure shows the score function for CQR, which is piecewise linear, first decreasing and then increasing. This function is labeled as $s_n(X_{n+1},y)$. See long description.}
    \commentLongAlt{The figure shows the score function for CQR, which is piecewise linear, first decreasing and then increasing. This function is labeled as $s_n(X_{n+1},y)$. Two horizontal lines indicate values $0$ (lower) and $\hat{q}_n$ (higher) along the vertical axis, which is labeld `score'. The points where the score function intersects the horizontal line at level $0$ are labeled as $\hat{\tau}_n(X_{n+1},\alpha/2)$ on the left, and $\hat{\tau}_n(X_{n+1},1-\alpha/2)$ on the right. A wider region, where the score function lies below the horizontal line at level $\hat{q}_n$, is shaded and labeled as $\cC_n(X_{n+1})$.}
    \label{fig:case_study_2_regression_set}
\end{figure}

\paragraph{Model assumption and asymptotic optimality.}
We will assume that the estimated conditional $(\alpha/2)$- and $(1-\alpha/2)$-quantiles of $Y \mid X$, given by $\hat{\tau}_n(X ; \alpha/2)$ and $\hat{\tau}_n(X ; 1-\alpha/2)$ respectively, converge to their oracle values:
\begin{equation}
    \E_{X \sim P_X}[|\hat{\tau}(X ; \alpha/2) - \tau^*(X ; \alpha/2)|] \to 0 \textnormal{ and } \E_{X \sim P_X}[|\hat{\tau}(X ; 1-\alpha/2) - \tau^*(X ; 1-\alpha/2)|] \to 0.
\end{equation}
This means that, for large $n$, the $(\alpha/2)$- and $(1-\alpha/2)$-quantiles converge on average over $X$.
In this case, the equal-tailed CQR intervals converge to the oracle interval, as stated next.

\begin{proposition}\label{prop:case_study_2_regression}
Assume that the conditional density $f^*$ satisfies $f^*(y\mid x)>0$ for all $(x,y)$, and that
    \begin{equation}\label{eqn:case_study_2_regression_finite_quantile_gap}
    \sup_{x\in\cX}\left(\tau^*(x;1-\alpha/2) - \tau^*(x;\alpha/2)\right) < \infty.\end{equation}
    Then the following claim holds almost surely: if
    \begin{equation}
        \E_{X \sim P_X}[|\hat{\tau}_n(X ; \alpha/2) - \tau^*(X ; \alpha/2)|] \to 0 \textnormal{ and } \E_{X \sim P_X}[|\hat{\tau}_n(X ; 1-\alpha/2) - \tau^*(X ; 1-\alpha/2)|] \to 0,
    \end{equation}
    then
    \begin{equation}
        \limsup_{n\to\infty}\E_{X\sim P_X}[\textnormal{Leb}(\cC_n(X)) ]
  \leq \E_{X\sim P_X}[\textnormal{Leb}(\cC^*(X))],    \end{equation}
    and, for all $\epsilon>0$,
    \begin{equation}
        \lim_{n\to\infty}\P_{X\sim P_X}\Big( \,\P_{Y\sim P_{Y\mid X}}(Y > \sup \cC_n(X)\mid X) \geq \alpha/2+\epsilon \,\Big) = 0\end{equation}
        and
        \begin{equation} \lim_{n\to\infty}\P_{X\sim P_X}\Big( \,\P_{Y\sim P_{Y\mid X}}(Y < \inf \cC_n(X)\mid X) \geq \alpha/2+\epsilon \,\Big) = 0.
    \end{equation}
    That is, $\cC_n$ is asymptotically optimal for the aim~\eqref{eq:case_study_2_regression}.
\end{proposition}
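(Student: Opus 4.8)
Looking at this proposition, the proof plan is to invoke the general asymptotic machinery (Theorem~\ref{thm:splitCP_asymp_informal}, formalized as Theorems~\ref{thm:splitCP_asymp_formal_qn} and~\ref{thm:splitCP_asymp_formal_random_x}) by verifying that the CQR score $s_n$ converges appropriately to the oracle score $s^*$ defined in~\eqref{eqn:oracle_score_CQR}, and then translating the resulting set convergence $\cC_n\to\cC^*$ into the three conclusions of the proposition.

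\textbf{Step 1: Reduce to score convergence.} The first task is to show that $\E_{X\sim P_X}[|\hat\tau_n(X;\alpha/2)-\tau^*(X;\alpha/2)|]\to 0$ and the analogous statement at level $1-\alpha/2$ imply convergence of $s_n$ to $s^*$ in whatever metric Theorem~\ref{thm:splitCP_asymp_formal_random_x} requires (likely something like $\E_{(X,Y)\sim P}[|s_n(X,Y)-s^*(X,Y)|]\to 0$, or convergence in probability under $(X,Y)\sim P$). Since $s_n(x,y) = \max\{\hat\tau_n(x;\alpha/2)-y,\ y-\hat\tau_n(x;1-\alpha/2)\}$ and $s^*$ has the same form with $\tau^*$ in place of $\hat\tau_n$, the difference $|s_n(x,y)-s^*(x,y)|$ is bounded pointwise by $|\hat\tau_n(x;\alpha/2)-\tau^*(x;\alpha/2)| + |\hat\tau_n(x;1-\alpha/2)-\tau^*(x;1-\alpha/2)|$ using that $\max$ is $1$-Lipschitz in each argument. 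Taking $\E_{X\sim P_X}$ gives the required convergence directly from the hypothesis; the $y$-variable plays no role here since the bound is uniform in $y$. I would also need to check the regularity conditions of the formal theorem: continuity of the distribution of $s^*(X,Y)$ at the relevant quantile $q^*=0$, which follows from $f^*(y\mid x)>0$ everywhere (so the conditional CDF is strictly increasing, hence $\P(s^*(X,Y)=0\mid X=x)=0$), and some boundedness/tightness condition, which is where the hypothesis~\eqref{eqn:case_study_2_regression_finite_quantile_gap} enters to control $\textnormal{Leb}(\cC^*(X)) = \tau^*(X;1-\alpha/2)-\tau^*(X;\alpha/2)$ uniformly.

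\textbf{Step 2: Extract the three conclusions.} Given $\hat q_n\to q^*=0$ and the set convergence from Theorem~\ref{thm:splitCP_asymp_informal}, the length bound $\limsup_n \E_{X\sim P_X}[\textnormal{Leb}(\cC_n(X))]\leq \E_{X\sim P_X}[\textnormal{Leb}(\cC^*(X))]$ should come from the same argument as in the earlier case studies (Propositions~\ref{prop:case_study_1_regression},~\ref{prop:case_study_2_classification}): $\textnormal{Leb}(\cC_n(X)) = \hat\tau_n(X;1-\alpha/2)-\hat\tau_n(X;\alpha/2)+2\hat q_n$ when this is nonnegative, and the expectation converges to $\E_{X\sim P_X}[\textnormal{Leb}(\cC^*(X))]$ by the quantile-convergence hypothesis plus $\hat q_n\to 0$, with~\eqref{eqn:case_study_2_regression_finite_quantile_gap} supplying a dominating function for the limit interchange. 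For the two tail-coverage statements, I would write $\sup\cC_n(X) = \hat\tau_n(X;1-\alpha/2)+\hat q_n$, so $\P_{Y\sim P_{Y\mid X}}(Y>\sup\cC_n(X)\mid X) = 1 - F^*\!\big(\hat\tau_n(X;1-\alpha/2)+\hat q_n \,\big|\, X\big)$ where $F^*(\cdot\mid x)$ is the conditional CDF. Since $\hat q_n\to 0$ and $\hat\tau_n(X;1-\alpha/2)\to\tau^*(X;1-\alpha/2)$ in $L^1$ hence along a subsequence almost surely (and one passes to the statement-in-probability via subsequences), continuity of $F^*(\cdot\mid x)$ (from $f^*>0$) gives $F^*(\hat\tau_n(X;1-\alpha/2)+\hat q_n\mid X)\to F^*(\tau^*(X;1-\alpha/2)\mid X)=1-\alpha/2$ for most $X$, which yields $\lim_n\P_{X\sim P_X}(\P_{Y\sim P_{Y\mid X}}(Y>\sup\cC_n(X)\mid X)\geq\alpha/2+\epsilon)=0$. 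The lower-tail statement is symmetric.

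\textbf{Main obstacle.} The delicate point is the interplay between the two modes of convergence: the hypothesis gives $L^1(P_X)$ convergence of the estimated quantiles, but the conclusions about tail coverage are pointwise-in-$X$ statements ("for most $X$"), and $\hat q_n$ is itself random (a function of the calibration set), so I must be careful that the "almost surely" qualifier in the proposition is handled correctly — i.e., first condition on the pretraining/calibration data as the theorem framework prescribes, establish $\hat q_n\to 0$ deterministically along the conditioned sequence, then combine with the $L^1$-to-subsequential-a.s. passage for the quantile estimates. The continuity condition $f^*(y\mid x)>0$ for all $(x,y)$ is doing real work here: it is needed not only to make $q^*=0$ a continuity point of the score distribution but also to ensure $F^*(\cdot\mid x)$ is a homeomorphism onto $(0,1)$ so that small perturbations of the quantile estimates translate into small perturbations of the realized tail probabilities. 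I would expect the bookkeeping around these nested limits, rather than any single inequality, to be the part requiring the most care.
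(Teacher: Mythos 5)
Your overall structure tracks the paper's: verify the hypotheses of Theorem~\ref{thm:splitCP_asymp_formal_random_x}, conclude set convergence, then translate that into the three claims. Step 1 (score convergence via the $1$-Lipschitz bound on the max, plus Markov) is essentially identical to the paper, and you correctly see that $f^*>0$ is what makes $q^*=q^*_+=0$. But for the tail-coverage conclusions you take a genuinely different route from the paper, and that route has a gap.

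The paper never touches the conditional CDF directly. It establishes $\P_{(X,Y)\sim P}(Y\in\cC_n(X)\triangle\cC^*(X))\to 0$ via Theorem~\ref{thm:splitCP_asymp_formal_random_x}, and then bounds the one-sided tail error purely in terms of this quantity: when $\cC_n(X)$ and $\cC^*(X)$ overlap, $\{\sup\cC_n(X)<Y\leq\sup\cC^*(X)\}\subseteq\{Y\in\cC_n(X)\triangle\cC^*(X)\}$, so $\P_{Y\mid X}(Y>\sup\cC_n(X)\mid X)\leq\alpha/2+\P_{Y\mid X}(Y\in\cC_n(X)\triangle\cC^*(X)\mid X)$; when they are disjoint (including the degenerate case $\cC_n(X)=\varnothing$), the tail probability is at most $1=\P_{Y\mid X}(Y\in\cC^*(X)\mid X)/(1-\alpha)\leq\P_{Y\mid X}(Y\in\cC_n(X)\triangle\cC^*(X)\mid X)/(1-\alpha)$. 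Combining both cases and applying Markov plus the tower law finishes the argument in a few lines, with no subsequence extraction and no explicit appeal to the continuity of $F^*(\cdot\mid x)$.

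Your CDF route ($\P_{Y\mid X}(Y>\sup\cC_n(X)\mid X)=1-F^*(\hat\tau_n(X;1-\alpha/2)+\hat q_n\mid X)$) does work, but you need to patch two things you glossed over. First, when $\cC_n(X)=\varnothing$, $\sup\cC_n(X)=-\infty$ by convention, so the formula $\sup\cC_n(X)=\hat\tau_n(X;1-\alpha/2)+\hat q_n$ is false in precisely the degenerate case, and you must separately show $\P_X(\cC_n(X)=\varnothing)\to 0$ (which follows from $\hat q_n\to 0$, quantile consistency, and $\tau^*(X;1-\alpha/2)>\tau^*(X;\alpha/2)$ a.s., the last of which uses $f^*>0$). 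Second, the passage from $L^1(P_X)$ convergence to the display-claim requires a subsequence argument as you note, but you should be explicit that the pointwise continuity of $F^*(\cdot\mid x)$ for each fixed $x$ suffices — no uniform modulus is needed — because the composition $x\mapsto F^*(\hat\tau_n(x;1-\alpha/2)+\hat q_n\mid x)$ is being evaluated at $x=X$ with $X$ fixed along each a.s.-convergent subsequence. What each approach buys: the paper's set-difference bound is more robust (it needs nothing beyond what Theorem~\ref{thm:splitCP_asymp_formal_random_x} already gives) and avoids both patches; your CDF route is more concrete and makes the role of $f^*>0$ transparent, at the cost of the degenerate-case bookkeeping.

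Two smaller points. For the length bound, you don't actually need a dominating function: the paper bounds $\textnormal{Leb}(\cC_n(X)\triangle\cC^*(X))\leq 2\max_\beta|\hat\tau_n(X;\beta)-\tau^*(X;\beta)|+2|\hat q_n|$ directly (handling the empty-interval case automatically), and takes expectations via the triangle inequality; assumption~\eqref{eqn:case_study_2_regression_finite_quantile_gap} enters only to guarantee $\E[\textnormal{Leb}(\cC^*(X))]<\infty$ so the resulting arithmetic is meaningful. Also, condition~\eqref{eqn:asm_thm:splitCP_asymp_formal_random_x} requires $\P(s^*(X,Y)=q^*_+)=0$, and the paper verifies this by showing $s^*(X,Y)$ has a continuous distribution outright (not just at $0$), using continuity of $Y\mid X$; that is one more place the continuous-density hypothesis is doing work, separate from its role in forcing $q^*_+=0$.
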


This result suggests that the CQR score is a good choice for approximate conditional coverage with regression problems. Note that other related scores have a similar feature; see the bibliographic notes at end of this chapter.

\section{A unified framework for asymptotic guarantees}
\label{sec:unified_framework_splitCP_asymp}

This section develops the framework underlying the asymptotic optimality guarantees in the previous sections, by formalizing the convergence guarantee presented informally in Theorem~\ref{thm:splitCP_asymp_informal}.
We will then give the proofs of the asymptotic optimality guarantees for the four case studies---Propositions~\ref{prop:case_study_1_classification},~\ref{prop:case_study_2_classification},~\ref{prop:case_study_1_regression}, and~\ref{prop:case_study_2_regression}---using a unified, step-by-step approach.
The theory in this chapter will be presented in a way that emphasizes its generality: this framework is a foundational approach that can be used to prove \emph{many} asymptotic guarantees about conformal prediction, including but not limited to the specific propositions in the case studies above.

\subsection{Defining the framework} 
To begin, we need to lay out a precise asymptotic framework for the question.
We will define a sequence of split conformal prediction problems, indexed by $n$, the size of the calibration set. 
In other words, for each $n$, we will define a pretraining set of size $m_n$ and a calibration set of size $n$:
\[\underbrace{(X'_{n,1},Y'_{n,1}),\dots,(X'_{n,m_n},Y'_{n,m_n})}_{\textnormal{pretraining set $\cD_{{\rm pre},n}$}}\, ,\ \underbrace{(X_{n,1},Y_{n,1}),\dots,(X_{n,n},Y_{n,n})}_{\textnormal{calibration set $\cD_n$}}.\]
The data points $((X'_{n,i},Y'_{n,i}))_{i\in[m_n]}$ form a pretraining set of size $m_n$, used to train a conformal score function:
\[\textnormal{$s_n:\cX\times\cY\to\R$ is constructed as a function of }\cD_{{\rm pre},n} = ((X'_{n,i},Y'_{n,i}))_{i\in[m_n]}.\]
The data points $((X_{n,i},Y_{n,i}))_{i\in[n]}$ form the calibration set, so that the quantile $\hat{q}_n$ is then computed as
\[\hat{q}_n = \quantile\left(s_n(X_{n,1},Y_{n,1}),\dots,s_n(X_{n,n},Y_{n,n}); 1-\alpha_n\right)\]
(where, in our usual definition of the split conformal method, we set $1-\alpha_n = (1-\alpha)(1+1/n)$).
Finally, at any test feature value $x$, the split conformal prediction set is given by
\[\cC_n(x) = \left\{y\in\cY : s_n(x,y) \leq \hat{q}_n\right\}.\]

In this chapter, we have focused on the i.i.d.\ data setting. Under our new notation, this means that we assume
\begin{equation}\label{eqn:iid_asymp}(X'_{n,1},Y'_{n,1}),\dots,(X'_{n,m_n},Y'_{n,m_n}),(X_{n,1},Y_{n,1}),\dots,(X_{n,n},Y_{n,n})\iidsim P,\end{equation}
for each $n\geq1$, where $P$ is some fixed distribution on $\cX\times\cY$. 
In particular, this implies that the calibration data points $(X_{n,i},Y_{n,i})$ are independent of the pretrained score function $s_n$, since $s_n$ is fitted on the pretraining set $\cD_{{\rm pre},n}$ (which is disjoint from the calibration set).
Note that this condition places no assumptions on the dependence structure of the \emph{sequence} of pretraining and calibration sets; for example, $\cD_n$ and $\cD_{n+1}$ may share all data points except the $(n+1)$st, or they may be two independent samples.

\subsection{Convergence of the conformal quantile}
As a preliminary result, we will study the convergence properties of the conformal quantiles $\hat{q}_n$ that define the thresholds for the split conformal prediction sets.
For this part of the analysis, we will need to assume that the sequence of score functions $s_n$ converges to the oracle score $s^*$, in a particular sense.
To be precise, we will write
$s_n\stackrel{\rm CDF}{\longrightarrow} s^*$ to mean that
\[F_{P,s_n}(t)\to F_{P,s^*}(t),\textnormal{ for any $t\in\R$ such that $F_{P,s^*}$ is continuous at $t$},\]
where, for any $s:\cX\times\cY\to\R$, we define $F_{P,s}$ to denote the CDF of $s(X,Y)$ under $(X,Y)\sim P$. 

Even though $s_n$ is a function of the randomly drawn pretraining set $\cD_{\rm pre ,n}$, the CDF $F_{P,s_n}$ is treating $s_n$ as fixed---that is, $F_{P,s_n}$ is the CDF of the score $s_n(X,Y)$ when we treat $s_n$ as fixed and only consider randomness coming from drawing an independent new data point $(X,Y)\sim P$. In other words, $s_n \stackrel{\rm CDF}{\longrightarrow} s^*$ is a (random) event---it is the event that $s_n(X,Y)\to s^*(X,Y)$ in distribution, when $(X,Y)\sim P$ is treated as random while the score functions $s_1,s_2,\dots$ are treated as fixed.

\begin{theorem}[Asymptotic convergence of the conformal quantile]\label{thm:splitCP_asymp_formal_qn}
For each $n\geq1$, assume the data follows assumption~\eqref{eqn:iid_asymp} for some distribution $P$ on $\cX\times\cY$, and define the split conformal prediction set $\cC_n(x) = \{y\in\cY: s_n(x,y) \leq \hat{q}_n\}$ where 
\[s_n(x,y) = s\left((x,y); \cD_{{\rm pre},n}\right)\]
for $\cD_{{\rm pre},n} = ((X'_{n,i},Y'_{n,i}))_{i\in[m_n]}$, and \[\hat{q}_n = \quantile\left(s_n(X_{n,1},Y_{n,1}),\dots,s_n(X_{n,n},Y_{n,n}); 1-\alpha_n\right),\]
for a sequence $\alpha_n\to\alpha\in(0,1)$.
Let $s^*:\cX\times\cY\to\R$ be any fixed score function, and define
\[q^* = \inf\{t: F_{P,s^*}(t)\geq 1-\alpha\}\textnormal{ \ and \ }
q^*_+ = \sup\{t: F_{P,s^*}(t)\leq 1-\alpha\}.\]
Then the following statement holds almost surely:
\[\textnormal{If $s_n\stackrel{\rm CDF}{\longrightarrow} s^*$, \ then \   $q^* \leq \liminf_{n\to\infty}\hat{q}_n \leq  \limsup_{n\to\infty}\hat{q}_n 
 \leq q^*_+$}.\]
\end{theorem}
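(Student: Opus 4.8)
The plan is to exploit the fact that $\hat{q}_n$ is a sample quantile of the calibration scores $s_n(X_{n,i},Y_{n,i})$, which are (conditionally on $\cD_{{\rm pre},n}$) i.i.d.\ draws from the distribution with CDF $F_{P,s_n}$. First I would record the Glivenko--Cantelli-type fact we will rely on: by the strong law of large numbers, for each fixed $t$ the empirical CDF $\widehat{F}_n(t) = \frac1n\sum_{i=1}^n \ind{s_n(X_{n,i},Y_{n,i})\le t}$ concentrates. However, a subtlety is that the target CDF $F_{P,s_n}$ itself changes with $n$, so a single Glivenko--Cantelli statement is not quite enough; instead I would pass through the hypothesis $s_n\stackrel{\rm CDF}{\to}s^*$, which says $F_{P,s_n}(t)\to F_{P,s^*}(t)$ at continuity points $t$ of $F_{P,s^*}$. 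Combining this with an almost-sure bound on $|\widehat F_n(t) - F_{P,s_n}(t)|$ along a countable dense set of continuity points (using that the calibration data are independent of $\cD_{{\rm pre},n}$, so we may condition on $\cD_{{\rm pre},n}$ and apply standard i.i.d.\ concentration, then take a countable union of null events), I would deduce that $\widehat F_n(t)\to F_{P,s^*}(t)$ almost surely at every continuity point $t$ of $F_{P,s^*}$.

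Next I would convert this convergence of empirical CDFs into a statement about their quantiles. Fix any $t < q^*$ that is a continuity point of $F_{P,s^*}$; by definition of $q^*$ we have $F_{P,s^*}(t) < 1-\alpha$, hence for $n$ large $\widehat F_n(t) < 1-\alpha_n$ (using $\alpha_n\to\alpha$), which by the definition of $\quantile$ (Definition~\ref{def:quantile}) forces $\hat q_n \ge$ roughly $t$; letting $t\uparrow q^*$ through continuity points (these are dense, since $F_{P,s^*}$ has at most countably many discontinuities) yields $\liminf_n \hat q_n \ge q^*$. Symmetrically, fix $t > q^*_+$ a continuity point of $F_{P,s^*}$; then $F_{P,s^*}(t) > 1-\alpha$, so for $n$ large $\widehat F_n(t) > 1-\alpha_n$, forcing $\hat q_n \le t$; letting $t\downarrow q^*_+$ gives $\limsup_n \hat q_n \le q^*_+$. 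The middle inequality $\liminf \le \limsup$ is trivial, and $q^*\le q^*_+$ holds by monotonicity of $F_{P,s^*}$ (the set $\{t: F_{P,s^*}(t)\ge 1-\alpha\}$ lies to the right of $\{t: F_{P,s^*}(t)\le 1-\alpha\}$, with possible overlap only at the single point where $F$ jumps across $1-\alpha$, if any).

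The main obstacle I anticipate is handling the moving-target issue carefully: because $F_{P,s_n}$ depends on $n$, I cannot simply invoke a textbook Glivenko--Cantelli theorem, and I need to be cautious about the order of quantifiers when taking the almost-sure statement. The clean way is to condition on the entire sequence $(\cD_{{\rm pre},n})_{n\ge1}$ (equivalently, treat all the $s_n$ as fixed), fix a countable dense set $D$ of continuity points of $F_{P,s^*}$, and for each $t\in D$ use the conditional independence of the calibration sample from $\cD_{{\rm pre},n}$ plus, say, Hoeffding's inequality and Borel--Cantelli to get $\widehat F_n(t) - F_{P,s_n}(t)\to 0$ a.s.; then add $F_{P,s_n}(t)\to F_{P,s^*}(t)$ on the event $s_n\stackrel{\rm CDF}{\to}s^*$. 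A second, more minor technical point is relating the one-sided $\pm$ conventions in the definition of $\quantile$ to the strict-versus-nonstrict inequalities $\widehat F_n(t) < 1-\alpha_n$ vs.\ $\le$; this is routine but must be done consistently, and using continuity points of $F_{P,s^*}$ (so that $F_{P,s_n}$ is not pinned to a plateau near $1-\alpha$) is what makes it go through. Everything else is bookkeeping.
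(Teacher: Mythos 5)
Your argument is correct and follows essentially the same route as the paper's proof: show that the empirical CDF $\widehat F_n$ converges almost surely to $F_{P,s^*}$ at its continuity points (by combining $\widehat F_n \approx F_{P,s_n}$ with the hypothesis $F_{P,s_n}\to F_{P,s^*}$), then read off the quantile bounds using the definitions of $q^*$ and $q^*_+$. The only minor technical difference is that the paper obtains the uniform bound $\|\widehat F_n - F_{P,s_n}\|_\infty \to 0$ a.s.\ via the Dvoretzky--Kiefer--Wolfowitz inequality (after conditioning on $\cD_{{\rm pre},n}$) rather than your pointwise-Hoeffding-plus-Borel--Cantelli argument on a countable dense set of continuity points; both routes are valid.
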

To interpret the result of this theorem, we need to understand the quantities $q^*$ and $q^*_+$. Consider the condition
\[\P_{(X,Y)\sim P}(s^*(X,Y) < q) \leq 1-\alpha \leq \P_{(X,Y)\sim P}(s^*(X,Y) \leq q).\]

\begin{figure}[t]
    \centering
    \includegraphics[width=0.7\textwidth]{\diagramspath qstar-qstarplus.pdf}
    \caption{\textbf{An illustration of $q^*$ and $q^*_+$.} This figure illustrates the values $q^*$ and $q^*_+$ defined in the statement of Theorem~\ref{thm:splitCP_asymp_formal_qn}. The two left panels show examples where $q^*=q^*_+$, in settings where $s^*(X,Y)$ has a continuous distribution (top-left) or a discrete distribution (bottom-left). The two right panels show examples where $q^* < q^*_+$, in settings where $s^*(X,Y)$ has a continuous distribution (top-right) or a discrete distribution (bottom-right).}
    \commentAlt{Four plots show different CDFs, each with horizontal axis labeled $q$ and vertical axis labeled $F_{P,s^*}(q)$, with level $1-\alpha$ marked on the vertical axis, and values $q^*$ and $q^*_+$ marked on the horizontal axis. See long description.}
    \commentLongAlt{Four plots show different CDFs, each with horizontal axis labeled $q$ and vertical axis labeled $F_{P,s^*}(q)$, with level $1-\alpha$ marked on the vertical axis, and values $q^*$ and $q^*_+$ marked on the horizontal axis, with dashed lines connecting these values to the corresponding points on the function. The top two plots show continuous CDFs and the bottom two plots show discrete CDFs. In some cases $q^*$ and $q^*_+$ have the same value (the two plots on the left), and in other cases these values are not equal, with $q^*$ strictly smaller than $q^*_+$ (the two plots on the right).}
    \label{fig:qstar_qstarplus}
\end{figure}

Any $q$ that satisfies this condition could be called the $(1-\alpha)$-quantile (for the distribution of $s^*(X,Y)$, under $(X,Y)\sim P$). In this book, we have taken the convention that if the solution $q$ to this condition is not unique, then the $(1-\alpha)$-quantile of the distribution is taken to be the \emph{infimum} of all such solutions $q$ (see Section~\ref{sec:distributional-properties-quantiles-cdfs})---this is the value $q^*$ in the theorem above. In contrast, $q^*_+$ is, by definition, the \emph{supremum} of all such solutions $q$. (See Figure~\ref{fig:qstar_qstarplus} for an illustration.)
In particular, if the solution $q$ is unique, then we will have $q^*=q^*_+$, and in that case, we have a stronger result:
\begin{corollary}\label{cor:splitCP_asymp_formal_qn}
Under the notation and assumptions of Theorem~\ref{thm:splitCP_asymp_formal_qn}, assume also that $q^*=q^*_+$. 
Then the following statement holds almost surely:
\[\textnormal{If $s_n\stackrel{\rm CDF}{\longrightarrow} s^*$ then $\hat{q}_n\to q^*$.}\]
\end{corollary}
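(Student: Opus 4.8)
The plan is to obtain Corollary~\ref{cor:splitCP_asymp_formal_qn} as an immediate squeeze consequence of Theorem~\ref{thm:splitCP_asymp_formal_qn}. Let $\cE$ denote the almost-sure event on which the conclusion of Theorem~\ref{thm:splitCP_asymp_formal_qn} holds, so $\P(\cE)=1$. I would fix an outcome in $\cE$ and suppose that $s_n\stackrel{\rm CDF}{\longrightarrow} s^*$; the theorem then yields the chain $q^*\leq\liminf_{n\to\infty}\hat q_n\leq\limsup_{n\to\infty}\hat q_n\leq q^*_+$. Invoking the extra hypothesis $q^*=q^*_+$, every inequality in this chain is forced to be an equality, so $\liminf_{n\to\infty}\hat q_n=\limsup_{n\to\infty}\hat q_n=q^*$, which is precisely the assertion $\hat q_n\to q^*$. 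Since this deterministic implication holds at every outcome in $\cE$, and $\P(\cE)=1$, the claimed almost-sure statement follows.

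I would also record why the added hypothesis is the natural one, to connect the corollary to how it is used. The pair $q^*,q^*_+$ is the infimum and supremum of all $q$ satisfying $\P_{(X,Y)\sim P}(s^*(X,Y)<q)\leq 1-\alpha\leq\P_{(X,Y)\sim P}(s^*(X,Y)\leq q)$, i.e.\ of all valid $(1-\alpha)$-quantiles of the oracle score, so $q^*=q^*_+$ says exactly that this quantile is uniquely determined. This holds, for instance, whenever $F_{P,s^*}$ is strictly increasing through the level $1-\alpha$---in particular under the continuity-type conditions imposed in the case studies (Propositions~\ref{prop:case_study_1_classification}--\ref{prop:case_study_2_regression})---and corresponds to the left panel of Figure~\ref{fig:qstar_qstarplus}. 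In those settings the corollary upgrades the two-sided bound of Theorem~\ref{thm:splitCP_asymp_formal_qn} to genuine convergence $\hat q_n\to q^*$, which is the form needed for the subsequent set-convergence arguments.

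I do not expect a real obstacle here: all of the analytic content---controlling the empirical quantile $\hat q_n$ of the calibration scores $s_n(X_{n,i},Y_{n,i})$ while the score functions $s_n$ themselves drift toward $s^*$---is already carried out inside the proof of Theorem~\ref{thm:splitCP_asymp_formal_qn}. The only things requiring a moment of care are bookkeeping: making sure the nesting of ``almost surely'' and the conditional implication ``$s_n\stackrel{\rm CDF}{\longrightarrow} s^*\Rightarrow\cdots$'' is read correctly (the randomness resides entirely in the event $\cE$ from the theorem, and on $\cE$ the collapse of the inequalities is deterministic), and noting that since $\alpha_n\to\alpha\in(0,1)$ the level $1-\alpha_n$ lies in $(0,1)$ for all large $n$, so $\hat q_n$ is eventually a bona fide finite-sample quantile---a point already subsumed in Theorem~\ref{thm:splitCP_asymp_formal_qn}.
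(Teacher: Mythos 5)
Your proof is correct and is precisely the route the paper takes: the paper notes that the corollary ``follows immediately from Theorem~\ref{thm:splitCP_asymp_formal_qn}'' and omits details, and your squeeze argument on the almost-sure event from that theorem, using $q^*=q^*_+$ to collapse the $\liminf/\limsup$ bounds, is exactly that immediate deduction.
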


We now present the proofs of these results. Since Corollary~\ref{cor:splitCP_asymp_formal_qn} follows immediately from Theorem~\ref{thm:splitCP_asymp_formal_qn}, we only need to prove the theorem.
\begin{proof}[Proof of Theorem~\ref{thm:splitCP_asymp_formal_qn}]
For each $n\geq 1$, define the empirical CDF of the calibration scores,
\[\hat{F}_n(t) = \frac{1}{n}\sum_{i=1}^n \ind{s_n(X_{n,i},Y_{n,i})\leq t}, \ t\in\R. \]
We can observe that, by definition, at each $n\geq1$ the value $\hat{q}_n$ is defined as the $(1-\alpha_n)$-quantile of this empirical CDF $\hat{F}_n$.

\textbf{Step 1: a deterministic result for the empirical CDFs.}
First we prove a deterministic result relating properties of the empirical CDFs, $\hat{F}_n$, to limits of the quantiles, $\hat{q}_n$.
We claim that, for any $q\in\R$,
\begin{equation}\label{eqn:Fn_qn_1_for_thm:splitCP_asymp_formal_qn}\textnormal{ If $\limsup_{n\to \infty} \hat{F}_n(q) < 1-\alpha$ then $\liminf_{n\to\infty} \hat{q}_n \geq q$},\end{equation}
and
\begin{equation}\label{eqn:Fn_qn_2_for_thm:splitCP_asymp_formal_qn}\textnormal{ If $\liminf_{n\to \infty} \hat{F}_n(q) > 1-\alpha$ then $\limsup_{n\to\infty} \hat{q}_n \leq q$}.\end{equation}

To prove these claims, first fix any $q\in\R$ with $\limsup_{n\to \infty} \hat{F}_n(q) < 1-\alpha$.
Since $\alpha_n\to \alpha$, this means that for some finite $N\geq 1$,
it holds for all $n\geq N$ that $\hat{F}_n(q) < 1-\alpha_n$. By definition of the quantile, then, $\hat{q}_n = \quantile(\hat{F}_n;1-\alpha_n) > q$, for any $n\geq N$. This verifies~\eqref{eqn:Fn_qn_1_for_thm:splitCP_asymp_formal_qn}.
Next, fix any $q\in\R$ with $\liminf_{n\to \infty} \hat{F}_n(q)>1-\alpha$.
Since $\alpha_n\to \alpha$, this means that for some finite $N\geq 1$, it holds for all $n\geq N$ that $\hat{F}_n(q)\geq 1-\alpha_n$. By definition of the quantile, then, $\hat{q}_n = \quantile(\hat{F}_n;1-\alpha_n) \leq q$, for any $n\geq N$, which completes the proof of~\eqref{eqn:Fn_qn_2_for_thm:splitCP_asymp_formal_qn}.

\textbf{Step 2: refining the deterministic result.}
Now we will refine the deterministic results of Step 1, to make use of the assumption $s_n\stackrel{\rm CDF}{\longrightarrow} s^*$.
We claim that
\begin{equation}\label{eqn:sn_qn_1_for_thm:splitCP_asymp_formal_qn}\textnormal{ If $s_n\stackrel{\rm CDF}{\longrightarrow} s^*$ and $\|\hat{F}_n - F_{P,s_n}\|_\infty\to 0$ then $\liminf_{n\to\infty} \hat{q}_n \geq q^*$},\end{equation}
and, 
\begin{equation}\label{eqn:sn_qn_2_for_thm:splitCP_asymp_formal_qn}\textnormal{ If  $s_n\stackrel{\rm CDF}{\longrightarrow} s^*$ and $\|\hat{F}_n - F_{P,s_n}\|_\infty\to 0$ then $\limsup_{n\to\infty} \hat{q}_n \leq q^*_+$}.\end{equation}
First, fix any $q<q^*$. 
Since $F_{P,s^*}$ is a CDF and therefore has at most countably many discontinuities, we can find some $q'\in(q,q^*)$ such that $F_{P,s^*}$ is continuous at $q'$. And, by definition of $q^*$ we must have $F_{P,s^*}(q')<1-\alpha$. We then have $F_{P,s_n}(q')\to F_{P,s^*}(q')$ since $s_n\stackrel{\rm CDF}{\longrightarrow} s^*$. Since $\|\hat{F}_n-F_{P,s_n}\|_\infty\to 0$, this means $\hat{F}_n(q')\to F_{P,s^*}(q') <1-\alpha$. Therefore, 
by~\eqref{eqn:Fn_qn_1_for_thm:splitCP_asymp_formal_qn},  $\liminf_{n\to\infty}\hat{q}_n \geq q'>q$. Since this holds for any $q<q^*$, this completes the proof of~\eqref{eqn:sn_qn_1_for_thm:splitCP_asymp_formal_qn}. Finally, proving~\eqref{eqn:sn_qn_2_for_thm:splitCP_asymp_formal_qn} using~\eqref{eqn:Fn_qn_2_for_thm:splitCP_asymp_formal_qn} follows a similar argument.

\textbf{Step 3: almost sure convergence.}
Finally, we can see that the claim of the theorem follows immediately from~\eqref{eqn:sn_qn_1_for_thm:splitCP_asymp_formal_qn} (for the lower bound) and~\eqref{eqn:sn_qn_2_for_thm:splitCP_asymp_formal_qn} (for the upper bound), as long as we show that $\|\hat{F}_n-F_{P,s_n}\|_\infty\asto 0$.
To verify this, we apply the Dvoretzky--Kiefer--Wolfowitz inequality, which tells us that for each $n$ and for any $\epsilon>0$,
\[\P\left(\|\hat{F}_n-F_{P,s_n}\|_\infty\geq \epsilon\right)\leq 2e^{-2n\epsilon^2},\]
since, after conditioning on $\cD_{{\rm pre},n}$, $\hat{F}_n$ is the empirical CDF of $n$ i.i.d.\ draws from the distribution with CDF $F_{P,s_n}$.
Therefore, for any fixed $\epsilon>0$ and $N\geq 1$, we have
\begin{multline*}\P\left(\limsup_{n\to\infty}\|\hat{F}_n-F_{P,s_n}\|_\infty>\epsilon\right)\leq 
\P\left(\sup_{n\geq N}\|\hat{F}_n-F_{P,s_n}\|_\infty> \epsilon\right) \\\leq \sum_{n\geq N} 2e^{-2n\epsilon^2} = \frac{2e^{-2N\epsilon^2}}{1-e^{-2\epsilon^2}} .\end{multline*}
For any fixed $\epsilon>0$, since this holds for all $N\geq 1$, we therefore have $\limsup_{n\to\infty}\|\hat{F}_n-F_{P,s_n}\|_\infty\asleq \epsilon$. Finally,
since $\epsilon>0$ can be taken to be arbitrarily small, this proves the claim.
\end{proof}

\subsection{Convergence of the sets}

In the results above, we have studied convergence of the conformal quantile, $\hat{q}_n$. In this section, we will now translate these results into a statement about convergence of the split conformal prediction sets: we will aim to bound the difference between $\cC_n$, the split conformal prediction set, and the `oracle' prediction set $\cC^*$ given by
\[\cC^*(x) = \{y\in\cY : s^*(x,y)\leq q^*\},\]
where $s^*$ is some fixed score function (e.g., reflecting the true model of the data), and where $q^*$ is the $(1-\alpha)$-quantile of $s^*(X,Y)$ under the data distribution $(X,Y)\sim P$. At a high level, we are going to verify that
\begin{equation}\label{eqn:convergence_of_sets_intuition}\textnormal{If $s_n$ converges to $s^*$, and $s^*$ satisfies some regularity conditions, then $\cC_n$ converges to $\cC^*$,}\end{equation}
where $\cC_n(x) = \{y\in\cY:s_n(x,y)\leq \hat{q}_n\}$ is the split conformal prediction set.

The remainder of this section will develop a formal version of the statement~\eqref{eqn:convergence_of_sets_intuition}.
In order to be able to ask whether the prediction set construction $\cC_n$ converges to the oracle $\cC^*$, we will study 
the symmetric set difference between the sets, $\cC_n(X)\triangle\cC^*(X)$---specifically, we will establish a bound on $\P_{(X,Y)\sim P}(Y\in \cC_n(X)\triangle\cC^*(X))$, the mass of the symmetric set difference under a draw of a new data point $(X,Y)\sim P$.
To be able to achieve this type of bound,
we will need to define a stronger notion of convergence for the score functions $s_n$. While in the previous section, it was sufficient to assume $s_n\stackrel{\rm CDF}{\longrightarrow} s^*$ in order to establish results on the $\hat{q}_n$'s, this will no longer be sufficient: when we draw a test point $(X,Y)\sim P$, we need the evaluated scores $s_n(X,Y)$ and $s^*(X,Y)$ to return similar values, not just to be similar in distribution. 
To make this concrete, we will define a stronger notion of convergence:
we will write $s_n\stackrel{P}{\to}s^*$ to mean that
\[\lim_{n\to\infty}\P_{(X,Y)\sim P}\Big(|s_n(X,Y)-s^*(X,Y)|>\epsilon\Big) = 0 \textnormal{ for all $\epsilon>0$}.\]
Note that, treating the functions $s_n$ as fixed (as before), this simply means that the random variables $s_n(X,Y)$ converge in probability to $s^*(X,Y)$.
To compare the two notions of convergence, we have
\begin{equation}\label{eqn:compare_convergence_sn}s_n\stackrel{P}{\to}s^* \ \Longrightarrow \ s_n\stackrel{\rm CDF}{\longrightarrow} s^*.\end{equation}
(For fixed functions $s_n$, this is true simply because convergence in probability is strictly stronger than convergence in distribution; for random functions $s_n$, this holds since we define $s_n\stackrel{P}{\to}s^*$ and $s_n\stackrel{\rm CDF}{\longrightarrow} s^*$ by treating the $s_n$'s as fixed.)

We will now present our general theorem.
\begin{theorem}[Asymptotic convergence of the prediction set]\label{thm:splitCP_asymp_formal_random_x}
    Under the setting and notation of Theorem~\ref{thm:splitCP_asymp_formal_qn}, assume also that 
    \begin{equation}\label{eqn:asm_thm:splitCP_asymp_formal_random_x}\P_{(X,Y)\sim P}(s^*(X,Y)= q^*) = \P_{(X,Y)\sim P}(s^*(X,Y)= q^*_+)  = 0.\end{equation}
Then the following statement holds almost surely:
    \[\textnormal{If $s_n\stackrel{P}{\to} s^*$ \ then \    $\P_{(X,Y)\sim P}\left(Y\in\cC_n(X)\triangle\cC^*(X)\right)\to 0$.}\]
\end{theorem}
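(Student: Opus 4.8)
The plan is to split the symmetric difference into its two halves, $\cC_n(X)\setminus\cC^*(X)$ and $\cC^*(X)\setminus\cC_n(X)$, and show each has vanishing $P$-mass, but by rather different arguments. First some preliminaries. Since $s_n\stackrel{P}{\to}s^*$ implies $s_n\stackrel{\rm CDF}{\longrightarrow}s^*$ by~\eqref{eqn:compare_convergence_sn}, Theorem~\ref{thm:splitCP_asymp_formal_qn} applies: I would work on the almost sure event on which its conclusion holds, so $q^*\le\liminf_n\hat q_n\le\limsup_n\hat q_n\le q^*_+$, and on which also $\|\hat F_n-F_{P,s_n}\|_\infty\to0$ (this is shown, via the Dvoretzky--Kiefer--Wolfowitz inequality, in the proof of that theorem), where $\hat F_n$ denotes the empirical CDF of the $n$ calibration scores. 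I would then record the consequence of the assumption $\P_{(X,Y)\sim P}(s^*(X,Y)=q^*_+)=0$: since $q^*_+=\sup\{t:F_{P,s^*}(t)\le1-\alpha\}$ and the set $\{t:F_{P,s^*}(t)\le1-\alpha\}$ is a down-set, every $t<q^*_+$ has $F_{P,s^*}(t)\le1-\alpha$, so the left limit of $F_{P,s^*}$ at $q^*_+$ is $\le1-\alpha$; as the no-atom condition means $F_{P,s^*}$ has no jump at $q^*_+$, this gives $F_{P,s^*}(q^*_+)\le1-\alpha$. Combining with $F_{P,s^*}(q^*)\ge1-\alpha$ (from right-continuity and the definition of $q^*$) and with $q^*\le q^*_+$, monotonicity forces $F_{P,s^*}(q^*)=F_{P,s^*}(q^*_+)=1-\alpha$; in particular $\P_{(X,Y)\sim P}(Y\in\cC^*(X))=F_{P,s^*}(q^*)=1-\alpha$.

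For the first half, fix $\epsilon>0$ and write $\delta_n(x,y)=|s_n(x,y)-s^*(x,y)|$. For all $n$ large enough that $\hat q_n<q^*_++\epsilon$, if a test point $(X,Y)$ satisfies $Y\in\cC_n(X)\setminus\cC^*(X)$ and $\delta_n(X,Y)\le\epsilon$ then $s^*(X,Y)\le s_n(X,Y)+\epsilon\le\hat q_n+\epsilon<q^*_++2\epsilon$ while $s^*(X,Y)>q^*$. Hence $\P_{(X,Y)\sim P}(Y\in\cC_n(X)\setminus\cC^*(X))\le\P_{(X,Y)\sim P}(\delta_n(X,Y)>\epsilon)+\bigl(F_{P,s^*}(q^*_++2\epsilon)-F_{P,s^*}(q^*)\bigr)$. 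Taking $\limsup_n$ kills the first term since $s_n\stackrel{P}{\to}s^*$, and then taking $\epsilon\downarrow0$ and using right-continuity of $F_{P,s^*}$ together with $F_{P,s^*}(q^*_+)=F_{P,s^*}(q^*)$ shows $\P_{(X,Y)\sim P}(Y\in\cC_n(X)\setminus\cC^*(X))\to0$.

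For the second half I would avoid a direct set-theoretic bound and argue by subtraction, using a coverage lower bound for $\cC_n$. By definition of the quantile, $\hat F_n(\hat q_n)\ge1-\alpha_n$ once $n$ is large enough that $1-\alpha_n\le1$, so $\P_{(X,Y)\sim P}(Y\in\cC_n(X))=F_{P,s_n}(\hat q_n)\ge\hat F_n(\hat q_n)-\|\hat F_n-F_{P,s_n}\|_\infty\ge(1-\alpha_n)-o(1)\to1-\alpha$. Since $\cC^*(X)\setminus\cC_n(X)$ and $\cC_n(X)\setminus\cC^*(X)$ partition $\cC^*(X)\triangle\cC_n(X)$ and $\P_{(X,Y)\sim P}(Y\in\cC^*(X))=1-\alpha$, a short inclusion--exclusion gives $\P_{(X,Y)\sim P}(Y\in\cC^*(X)\setminus\cC_n(X))=(1-\alpha)-\P_{(X,Y)\sim P}(Y\in\cC_n(X))+\P_{(X,Y)\sim P}(Y\in\cC_n(X)\setminus\cC^*(X))$, whose $\limsup_n$ is at most $(1-\alpha)-\liminf_n\P_{(X,Y)\sim P}(Y\in\cC_n(X))+\lim_n\P_{(X,Y)\sim P}(Y\in\cC_n(X)\setminus\cC^*(X))\le0$; being nonnegative, it therefore tends to $0$. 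Adding the two halves yields $\P_{(X,Y)\sim P}(Y\in\cC_n(X)\triangle\cC^*(X))\to0$.

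The main obstacle is precisely this second half: the easy region bound (putting $s^*(X,Y)$ in a shrinking window around $q^*$) does \emph{not} give vanishing mass when $s^*(X,Y)$ has an atom at $q^*$ with $q^*<q^*_+$, which is compatible with the hypotheses --- for instance when $F_{P,s^*}$ jumps up to the level $1-\alpha$ and then stays flat on $[q^*,q^*_+]$. The resolution --- that $\cC_n$ is automatically at least $(1-\alpha-o(1))$-covering, so that controlling over-coverage relative to $\cC^*$ (the first half) also controls under-coverage --- is the one genuinely non-obvious step; everything else is routine, requiring only care with the boundary bookkeeping (finiteness of $q^*$ and $q^*_+$, the one-sided inequalities $F_{P,s^*}(q^*)\le F_{P,s^*}(q^*_+)\le1-\alpha$, and the harmless use of right-continuity as $\epsilon\downarrow0$).
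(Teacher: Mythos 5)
Your proof is correct, and it takes a genuinely different route from the paper's. The paper bounds the whole symmetric difference at once: it shows that on $\{|s_n(X,Y)-s^*(X,Y)|\le\epsilon\}$ membership in $\cC_n(X)\triangle\cC^*(X)$ forces $q^*-2\epsilon<s^*(X,Y)<q^*_++2\epsilon$, sends $\epsilon\downarrow0$ to get $\limsup_n\P_{(X,Y)\sim P}(Y\in\cC_n(X)\triangle\cC^*(X))\le\P_{(X,Y)\sim P}(q^*\le s^*(X,Y)\le q^*_+)$, and then splits this last probability as $\P(q^*\le s^*(X,Y)<q^*_+)+\P(s^*(X,Y)=q^*_+)$, asserting the first term is zero ``by definition of $q^*$ and $q^*_+$.'' As you observed, that assertion is equivalent to $\P(s^*(X,Y)=q^*)=0$ when $q^*<q^*_+$, which is not implied by the hypothesis~\eqref{eqn:asm_thm:splitCP_asymp_formal_random_x} (only an atom at $q^*_+$ is excluded), so the paper's region bound does not by itself close the argument in that corner case. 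Your asymmetric split avoids this. For $\cC_n\setminus\cC^*$ you run essentially the same region argument but keep the strict inequality $s^*(X,Y)>q^*$, so only the no-atom condition at $q^*_+$ is needed; for $\cC^*\setminus\cC_n$, instead of a region bound you use the definition of the quantile directly --- $\hat F_n(\hat q_n)\ge1-\alpha_n$ together with the DKW-based convergence $\|\hat F_n-F_{P,s_n}\|_\infty\to0$ gives $\liminf_n\P_{(X,Y)\sim P}(Y\in\cC_n(X))\ge1-\alpha$ --- and pair it with $\P_{(X,Y)\sim P}(Y\in\cC^*(X))=F_{P,s^*}(q^*)=1-\alpha$ (correctly deduced from the no-atom-at-$q^*_+$ assumption) and inclusion--exclusion. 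The net effect is a proof of the theorem exactly as stated, including when $s^*(X,Y)$ has an atom at $q^*$ with $q^*<q^*_+$; the paper's proof as written would additionally need $\P(s^*(X,Y)=q^*)=0$, which holds in every case study in that chapter but is not part of the stated hypotheses. Your observation that controlling over-coverage plus a coverage lower bound for $\cC_n$ automatically controls under-coverage is the right way to patch this.
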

We remark also that the assumption~\eqref{eqn:asm_thm:splitCP_asymp_formal_random_x} will immediately hold if we assume that $s^*(X,Y)$ has a continuous distribution under $(X,Y)\sim P$, but (as we will see below in the proofs for one of our case studies) it can hold more generally as well.

Returning to our original goal~\eqref{eqn:convergence_of_sets_intuition} for this section, we can interpret this theorem as follows:
\[\textnormal{If }\underbrace{\textnormal{$s_n$ converges to $s^*$,}}_{\textnormal{i.e., $s_n\stackrel{P}{\to}s^*$}}\textnormal{ \ and \ }\underbrace{\textnormal{$s^*$ satisfies some regularity conditions,}}_{\textnormal{i.e., condition~\eqref{eqn:asm_thm:splitCP_asymp_formal_random_x} holds}}\textnormal{ \ then  \ }\underbrace{\textnormal{$\cC_n$ converges to $\cC^*$.}}_{\textnormal{i.e., bound $ \cC_n(X)\triangle\cC^*(X)$}}\]
Of course, while this result measures the difference between $\cC_n$ and $\cC^*$  in terms of probability (by bounding the mass placed by $P$ on the event $Y\in \cC_n(X)\triangle\cC^*(X)$), similar techniques can be used to bound the difference of the sets in other ways, as we will see in some of our case studies---for instance, in the setting of a real-valued response ($\cY=\R$), we might instead be interested in bounding the Lebesgue measure of $\cC_n(X)\triangle\cC^*(X)$.

\begin{figure}[t]
    \centering
    \includegraphics[width=0.6\linewidth]{\diagramspath convergence-score-quantile-set.pdf}
    \caption{\textbf{An illustration of the set difference $\cC_n(x)\triangle\cC^*(x)$, at a fixed value of $x$.} These two sets are given by $\cC_n(x) = \{y: s_n(x,y)\leq \hat{q}_n\}$ and $\cC^*(x) = \{y:s^*(x,y)\leq q^*\}$; the curves corresponding to the functions $y \mapsto s_n(x,y)$ and $y \mapsto s^*(x,y)$, and the thresholds $\hat{q}_n$ and $q^*$, are labeled on the plot. The symmetric set difference $\cC_n(x)\triangle\cC^*(x)$ is highlighted on the horizontal axis, and illustrates the two cases derived in~\eqref{eqn:symmetric_set_difference_2_cases}: the portion of the highlighted region that is on the right corresponds to $y$ values where $s^*(x,y)$ is close to its threshold $q^*$, while the portion on the left corresponds to $y$ values where $|s_n(x,y)-s^*(x,y)|$ is large.}
    \commentAlt{A solid curve shows $s_n(x,y)$ and a dotted curve shows $s^*(x,y)$. The curves are compared against levels $\hat{q}_n$ and $q^*$. The symmetric set difference $\cC_n(x)\triangle\cC^*(x)$ of the two level sets is highlighted.}
    \label{fig:convergence-score-quantile-set}
\end{figure}

\begin{proof}[Proof of Theorem~\ref{thm:splitCP_asymp_formal_random_x}]
    By Theorem~\ref{thm:splitCP_asymp_formal_qn}, it holds almost surely that $s_n\stackrel{\rm CDF}{\longrightarrow} s^*$ implies 
$q^* \leq \liminf_{n\to\infty}\hat{q}_n \leq  \limsup_{n\to\infty}\hat{q}_n 
 \leq q^*_+$.
Therefore, combining this with~\eqref{eqn:compare_convergence_sn}, it suffices to prove the following \emph{deterministic} statement:
\begin{multline*}\textnormal{If $s_n\stackrel{P}{\to}s^*$ \ and \   $q^* \leq \liminf_{n\to\infty}\hat{q}_n \leq  \limsup_{n\to\infty}\hat{q}_n 
 \leq q^*_+$,}\\\textnormal{ then \ $\P_{(X,Y)\sim P}\left(Y\in\cC_n(X)\triangle\cC^*(X))\right)\to 0$.}\end{multline*}
From this point on, we assume that the events $s_n\stackrel{P}{\to}s^*$ and $q^* \leq \liminf_{n\to\infty}\hat{q}_n \leq  \limsup_{n\to\infty}\hat{q}_n 
 \leq q^*_+$ both occur.

First, fix any $(x,y)$, and any $\epsilon>0$. For sufficiently large $n$, it holds that $q^*-\epsilon < \hat{q}_n < q^*_+ + \epsilon$. Then if $y\in\cC_n(x)\triangle \cC^*(x)$, we either have
\begin{align*}
    y \in \cC_n(x) \backslash \cC^*(x) \ &\Longrightarrow \ s_n(x,y)\leq \hat{q}_n< q^*_+ + \epsilon\textnormal{ and } s^*(x,y)>q^* \\ &\Longrightarrow \ \textnormal{ $q^*<s^*(x,y)< q^*_++2\epsilon$ or $|s_n(x,y) - s^*(x,y)|>\epsilon$},
\end{align*}
or
\begin{align*}
    y \in \cC^*(x) \backslash \cC_n(x) \ &\Longrightarrow \ s_n(x,y)>\hat{q}_n> q^*-\epsilon\textnormal{ and } s^*(x,y)\leq q^* \\ &\Longrightarrow \ \textnormal{ $q^*-2\epsilon<s^*(x,y)\leq q^*$ or $|s_n(x,y) - s^*(x,y)|>\epsilon$}.
\end{align*}
Combining these two cases, we then have
\begin{equation}\label{eqn:symmetric_set_difference_2_cases} y\in \cC_n(x)\triangle \cC^*(x) \ \Longrightarrow  q^*-2\epsilon <  s^*(x,y)< q^*_+ +2\epsilon \textnormal{ \ or \  }|s_n(x,y) - s^*(x,y)|>\epsilon.\end{equation}
(This step is illustrated in Figure~\ref{fig:convergence-score-quantile-set}.)
Therefore,
\begin{multline*}\P_{(X,Y)\sim P}(Y\in \cC_n(X)\triangle \cC^*(X)) 
\leq \P_{(X,Y)\sim P}(q^*-2\epsilon <  s^*(X,Y)< q^*_+ +2\epsilon) +{}\\\P_{(X,Y)\sim P}(|s_n(X,Y) - s^*(X,Y)|>\epsilon).\end{multline*}
Since $\P_{(X,Y)\sim P}(|s_n(X,Y) - s^*(X,Y)|>\epsilon)\to 0$ as $n\to 0$ (because we have assumed $s_n\stackrel{P}{\to}s^*$), we therefore have
\[\limsup_{n\to\infty} \P_{(X,Y)\sim P}(Y\in \cC_n(X)\triangle \cC^*(X)) 
\leq \P_{(X,Y)\sim P}(q^*-2\epsilon <  s^*(X,Y)< q^*_+ +2\epsilon).\]
Since this holds for any fixed $\epsilon>0$, we therefore have
\begin{multline*}\limsup_{n\to\infty} \P_{(X,Y)\sim P}(Y\in \cC_n(X)\triangle \cC^*(X)) 
\leq \lim_{\epsilon\to0} \P_{(X,Y)\sim P}(q^*-2\epsilon <  s^*(X,Y)< q^*_+ +2\epsilon)\\=\P_{(X,Y)\sim P}(q^*\leq   s^*(X,Y)\leq q^*_+) ,\end{multline*}
by continuity of measure. Moreover, $ \P_{(X,Y)\sim P}(s^*(X,Y)\in\{q^*,q^*_+\}) = 0$ by our assumption~\eqref{eqn:asm_thm:splitCP_asymp_formal_random_x},  
and we also have
\[\P_{(X,Y)\sim P}(q^*<  s^*(X,Y)< q^*_+) =0\]
by definition of $q^*$ and $q^*_+$, which completes the proof.
\end{proof}

\subsection{Proofs for case studies}
With our general theory in place, we are now ready to present the proofs of the asymptotic optimality results for our four case studies.
All four results will be derived as applications of Theorem~\ref{thm:splitCP_asymp_formal_random_x}, and will follow the same general recipe. For \textbf{Step 1}, we will need to verify that the oracle score $s^*$ satisfies the condition~\eqref{eqn:asm_thm:splitCP_asymp_formal_random_x} required by the theorem, and for \textbf{Step 2} we need to check that the model assumptions (on how accurately we estimate the true model) are  sufficient to ensure that $s_n\stackrel{P}{\to}s^*$. 
Finally, in \textbf{Step 3}, we will show that asymptotic optimality of the split conformal set
follows from the guarantee of Theorem~\ref{thm:splitCP_asymp_formal_random_x}.

\paragraph{Proofs for the classification setting.} First we consider the two case studies for classification, developed in Section~\ref{sec:case_studies_classification}. 

\begin{proof}[Proof of Proposition~\ref{prop:case_study_1_classification}]

\textbf{Step 1: verifying condition~\eqref{eqn:asm_thm:splitCP_asymp_formal_random_x}.}
Since $\pi^*(Y\mid X)$ is assumed to have a continuous distribution under $(X,Y)\sim P$, the score $s^*(X,Y) = -\pi^*(Y\mid X)$ is therefore also continuously distributed, which immediately implies~\eqref{eqn:asm_thm:splitCP_asymp_formal_random_x}. 

\textbf{Step 2: verifying that $s_n\stackrel{P}{\to} s^*$.} 
We calculate
    \begin{align*}
        \E_{(X,Y)\sim P}[|s_n(X,Y)-s^*(X,Y)|]
        &=\E_{(X,Y)\sim P}[|\hat\pi_n(Y\mid X)-\pi^*(Y\mid X)|]\\
        &=\E_{X\sim P_X}\left[\sum_{y\in\cY}\pi^*(y\mid X) \cdot |\hat\pi_n(y\mid X)-\pi^*(y\mid X)| \right]\\
        &\leq \sup_{(x,y)}\pi^*(y\mid x)\cdot  \E_{X\sim P_X}\left[\sum_{y\in\cY}|\hat\pi_n(y\mid X)-\pi^*(y\mid X)| \right]\\
        &= \sup_{(x,y)}\pi^*(y\mid x)\cdot  \E_{X\sim P_X}\left[ 2\dtv(\hat\pi_n(\cdot\mid X), \pi^*(\cdot\mid X))\right],
    \end{align*}
   by definition of total variation distance. Note that $\sup_{(x,y)}\pi^*(y\mid x)\leq 1$, since $\pi^*$ is a conditional probability. Therefore, if we assume $\E_{X\sim P_X}\left[\dtv\big(\pi^*(\cdot\mid X), \hat\pi_n(\cdot \mid X)\big)\right]\to 0$ as in the proposition, this implies $\E_{(X,Y)\sim P}[|s_n(X,Y)-s^*(X,Y)|]\to 0$, which in turn implies $s_n\stackrel{P}{\to} s^*$.

\textbf{Step 3: establishing asymptotic optimality.} Applying Theorems~\ref{thm:splitCP_asymp_formal_qn} and~\ref{thm:splitCP_asymp_formal_random_x} (along with the results of Steps 1 and 2), we see that $\E_{X\sim P_X}[\dtv(\hat\pi_n(\cdot\mid X), \pi^*(\cdot\mid X))]\to 0$ implies $\P_{(X,Y)\sim P}(Y\in \cC_n(X)\triangle \cC^*(X))\to 0$ and $q^*\leq\liminf_{n\to\infty}\hat{q}_n\leq\limsup_{n\to\infty}\hat{q}_n\leq q^*_+$, almost surely. From this point on, we will assume that this event holds.
To complete the proof, we now need to show that this implies $\cC_n$ is asymptotically optimal. 

For asymptotic optimality of the set size, it suffices to show $
\E_{X\sim P_X}[|\cC_n(X)\backslash \cC^*(X)|] \to 0$. Define 
\begin{equation}\label{eqn:case_study_classification_1_define_c_n}c_n = \inf_{(x,y):y\in\cC_n(x)} \hat\pi_n(y\mid x).\end{equation}
We calculate
\begin{align}
\notag    &\E_{X\sim P_X}[|\cC_n(X)\backslash \cC^*|]
    =\E_{X\sim P_X}\left[\sum_{y\in\cY}\ind{y\in\cC_n(X)\backslash \cC^*(X)}\right]\\
\notag    &\leq c_n^{-1}\cdot \E_{X\sim P_X}\left[ \sum_{y\in\cY} \hat\pi_n(y\mid X)\cdot \ind{y\in\cC_n(X)\backslash \cC^*(X)}\right] \\
\notag    &\leq c_n^{-1}\cdot \E_{X\sim P_X}\left[ \dtv(\hat\pi_n(\cdot\mid X), \pi^*(\cdot\mid X)) + \sum_{y\in\cY} \pi^*(y\mid X)\cdot \ind{y\in\cC_n(X)\backslash \cC^*(X)}\right]\\
    \label{eqn:case_study_classification_1_E_set_size}&=c_n^{-1}\left(\E_{X\sim P_X}\left[ \dtv(\hat\pi_n(\cdot\mid X), \pi^*(\cdot\mid X))\right] +\P_{(X,Y)\sim P}(Y\in\cC_n(X)\backslash \cC^*(X))\right).
\end{align}
Since both the expected value and the probability on the right-hand side are vanishing as $n\to \infty$, from this point on we only need to verify that $\liminf_{n\to\infty} c_n >0$.
By definition of the score, we have  $\P_{(X,Y)\sim P}(s^*(X,Y)<0) = 1$. In particular, this implies that we must have $q^*\leq q^*_+< 0$, and since $\limsup_{n\to\infty} \hat{q}_n \leq q^*_+$, for sufficiently large $n$ we have $\hat{q}_n \leq q^*_+/2<0$. Therefore, for sufficiently large $n$,
\[y\in\cC_n(x) \Longleftrightarrow - \hat{\pi}_n(y\mid x) = s_n(x,y) \leq \hat{q}_n \Longrightarrow \hat\pi_n(y\mid x) \geq |q^*_+|/2,\]
and therefore,
\[\liminf_{n\to\infty}c_n \geq |q^*_+|/2>0.\]

Next,
    for the asymptotic coverage guarantee, since $\P_{(X,Y)\sim P}(Y\in\cC^*(X))\geq 1-\alpha$ by definition of the oracle $\cC^*$,
\begin{align*}
    \P_{(X,Y)\sim P}(Y\in\cC_n(X))
    &\geq 1-\alpha - \P_{(X,Y)\sim P}(Y\in\cC^*(X)\backslash \cC_n(X))\\
    &\geq 1-\alpha - \P_{(X,Y)\sim P}(Y\in\cC^*(X)\triangle \cC_n(X)),
\end{align*}
and therefore, $\liminf_{n\to\infty} \P_{(X,Y)\sim P}(Y\in\cC_n(X))\geq 1-\alpha$ almost surely.
This verifies asymptotic optimality of $\cC_n$, and thus completes the proof.
\end{proof}

\begin{proof}[Proof of Proposition~\ref{prop:case_study_2_classification}]

\textbf{Step 1: verifying condition~\eqref{eqn:asm_thm:splitCP_asymp_formal_random_x}.}
Since $\P_{(X,Y)\sim P}(s^*(X,Y) = 0) < 1-\alpha$ (by our assumption~\eqref{eqn:continuity_for_case_study_2_classification}), we must have $q^*_+ \geq q^* >0$ by definition of the quantile. Therefore (again applying~\eqref{eqn:continuity_for_case_study_2_classification}) we have $\P_{(X,Y)\sim P}(s^*(X,Y) = q^*)=\P_{(X,Y)\sim P}(s^*(X,Y) = q^*_+) = 0$.

\textbf{Step 2: verifying that $s_n\stackrel{P}{\to} s^*$.}
First, for each $x\in\cX$ define
\[\Delta(x) = \min_{y\neq y'\in\cY}|\pi^*(y\mid x) - \pi^*(y'\mid x)|.\]
By assumption, $\Delta(X)>0$ almost surely for $X\sim P_X$. Now fix some $x\in\cX$, and suppose that $\dtv(\pi^*(\cdot\mid x),\hat\pi_n(\cdot\mid x)) < \Delta(x)/2$. If this holds, then for any $y\neq y'\in\cY$,
\begin{multline*}\left|\left(\hat\pi_n(y\mid x) - \hat\pi_n(y'\mid x)\right) - \left(\pi^*(y\mid x) - \pi^*(y'\mid x)\right)\right| \\
\leq |\hat\pi_n(y\mid x) - \pi^*(y\mid x)| + |\hat\pi_n(y'\mid x) - \pi^*(y'\mid x)| \leq 2\dtv(\pi^*(\cdot\mid x),\hat\pi_n(\cdot\mid x))\\< 2\cdot \Delta(x)/2 \leq |\pi^*(y\mid x) - \pi^*(y'\mid x)|.
\end{multline*}
In particular, this implies
\[\ind{\hat\pi_n(y'\mid x)>\hat\pi_n(y\mid x)} = \ind{\pi^*(y'\mid x)>\pi^*(y\mid x)}\]
for all $y\neq y'\in\cY$, and therefore for any $y\in\cY$ we have
\begin{align*}
    &\left|s_n(x,y) - s^*(x,y)\right|\\
    &=\left|\sum_{y'\in\cY} \hat\pi_n(y'\mid x) \cdot\ind{\hat\pi_n(y'\mid x)  > \hat\pi_n(y\mid x)} - \sum_{y'\in\cY} \pi^*(y'\mid x) \cdot\ind{\pi^*(y'\mid x)  > \pi^*(y\mid x)}\right|\\
    &=\left|\sum_{y'\in\cY} \big(\hat\pi_n(y'\mid x) - \pi^*(y'\mid x)\big) \cdot\ind{\pi^*(y'\mid x)  > \pi^*(y\mid x)}\right|\\
    &\leq \dtv(\pi^*(\cdot\mid x),\hat\pi_n(\cdot\mid x)).
\end{align*}
In other words, we have proved that for all $(x,y)$, we have
\[
\left|s_n(x,y) - s^*(x,y)\right| \leq \begin{cases} \dtv(\pi^*(\cdot\mid x),\hat\pi_n(\cdot\mid x)), & \textnormal{ if }\dtv(\pi^*(\cdot\mid x),\hat\pi_n(\cdot\mid x))<\Delta(x)/2,\\ 1, & \textnormal{ otherwise}.\end{cases} \]
Therefore, for any $\epsilon>0$, 
\begin{multline*}\P_{(X,Y)\sim P}(|s_n(X,Y)-s^*(X,Y)|>\epsilon)\\\leq \P_{X\sim P_X}\Big(\dtv(\pi^*(\cdot\mid X),\hat\pi_n(\cdot\mid X)) \geq \min\{\epsilon,\Delta(X)/2\}\Big). \end{multline*}
Since $ \min\{\epsilon,\Delta(X)/2\}$ is positive almost surely, it must hold that if $\E_{X\sim P_X}[\dtv(\pi^*(\cdot\mid X),\hat\pi_n(\cdot\mid X))] \to 0$ then
\[\lim_{n\to\infty}\P_{X\sim P_X}\big(\dtv(\pi^*(\cdot\mid X),\hat\pi_n(\cdot\mid X)) \geq  \min\{\epsilon,\Delta(X)/2\} \big) = 0.\]
Therefore we have proved the desired claim.

\textbf{Step 3: establishing asymptotic optimality.}
Applying Theorems~\ref{thm:splitCP_asymp_formal_qn} and~\ref{thm:splitCP_asymp_formal_random_x} (along with the results of Steps 1 and 2), we see that $\E_{X\sim P_X}[\dtv(\hat\pi_n(\cdot\mid X), \pi^*(\cdot\mid X))]\to 0$ implies $\P_{(X,Y)\sim P}(Y\in \cC_n(X)\triangle \cC^*(X))\to 0$ and $q^*\leq\liminf_{n\to\infty}\hat{q}_n\leq\limsup_{n\to\infty}\hat{q}_n\leq q^*_+$, almost surely.
From this point on, we will assume that this event holds, and will show that this implies $\cC_n$ is asymptotically optimal. 

For asymptotic optimality of the set size, following the same steps as in~\eqref{eqn:case_study_classification_1_E_set_size} (in the proof of Proposition~\ref{prop:case_study_1_classification}),
we only need to show that $\liminf_{n\to\infty}c_n>0$, where $c_n$ is defined as in~\eqref{eqn:case_study_classification_1_define_c_n}.
Note that $\P_{(X,Y)\sim P}(s^*(X,Y)\leq 1)=1$ by construction. By our assumption~\eqref{eqn:continuity_for_case_study_2_classification}, along with the definition of $q^*_+$, we must have $q^*_+ < 1$. Since $\limsup_{n\to\infty}\hat{q}_n\leq q^*_+$, we can assume $\hat{q}_n \leq 1-\frac{1-q^*_+}{2}$ for all sufficiently large $n$.
Next, suppose $y\in\cC_n(x)$. Then, for sufficiently large $n$,
\begin{align*}1-\frac{1-q^*_+}{2} \geq \hat{q}_n \geq s_n(x,y) &= \sum_{y'\in\cY} \hat \pi_n(y'\mid x) \cdot \ind{\hat \pi_n(y'\mid x)> \hat \pi_n(y\mid x)} \\
&=1 - \sum_{y'\in\cY} \hat \pi_n(y'\mid x) \cdot \ind{\hat \pi_n(y'\mid x)\leq \hat \pi_n(y\mid x)} \\
&\geq 1 - \sum_{y'\in\cY} \hat \pi_n(y\mid x)  = 1 - |\cY| \cdot \hat\pi_n(y\mid x) .
\end{align*}
Therefore,
\[y\in\cC_n(x) \Longrightarrow \hat\pi_n(y\mid x) \geq \frac{1-q^*_+}{2|\cY|},\]
and so $\liminf_{n\to\infty} c_n \geq (1-q^*_+)/2|\cY|>0$, as desired.

    For the asymptotic conditional coverage guarantee,
    since $\P_{Y\sim P_{Y\mid X}}(Y\in\cC^*(X)\mid X)\geq 1-\alpha'$ almost surely by definition of the oracle $\cC^*$,  we have
\begin{align*}
    \P_{Y\sim P_{Y\mid X}}(Y\in\cC_n(X) \mid X) 
    &\geq 1-\alpha' - \P_{Y\sim P_{Y\mid X}}(Y\in\cC^*(X)\backslash \cC_n(X)\mid X)\\
    &\geq 1-\alpha' - \P_{Y\sim P_{Y\mid X}}(Y\in\cC^*(X)\triangle \cC_n(X)\mid X),
\end{align*}
and so for any $\epsilon>0$,
\begin{multline*}\P_{X\sim P_X}\left( \P_{Y\sim P_{Y\mid X}}(Y\in\cC_n(X) \mid X)  < 1 - \alpha' - \epsilon \right) 
\\\leq \P_{X\sim P_X}\left(\P_{Y\sim P_{Y\mid X}}(Y\in\cC^*(X)\triangle \cC_n(X) \mid X)> \epsilon \right)  \leq \epsilon^{-1}\P_{(X,Y)\sim P}(Y\in\cC^*(X)\triangle \cC_n(X)),
\end{multline*}
by Markov's inequality and the tower law. Thus \[\P_{X\sim P_X}\left( \P_{Y\sim P_{Y\mid X}}(Y\in\cC_n(X) \mid X)  \geq 1 - \alpha' - \epsilon \right)\to 1,\] almost surely. This verifies asymptotic optimality of $\cC_n$, and thus completes the proof.
\end{proof}

\paragraph{Proofs for the regression setting.} Next we turn to the two case studies for regression, developed in Section~\ref{sec:case_studies_regression}.

\begin{proof}[Proof of Proposition~\ref{prop:case_study_1_regression}]

\textbf{Step 1: verifying condition~\eqref{eqn:asm_thm:splitCP_asymp_formal_random_x}.}
Since $f^*(Y\mid X)$ is assumed to have a continuous distribution under $(X,Y)\sim P$, the score $s^*(X,Y) = -f^*(Y\mid X)$ is therefore also continuously distributed, which immediately implies~\eqref{eqn:asm_thm:splitCP_asymp_formal_random_x}.

\textbf{Step 2: verifying that $s_n\stackrel{P}{\to} s^*$.} This step's proof follows an identical argument as for the corresponding step in the proof of Proposition~\ref{prop:case_study_1_classification}, except with $f^*$ and $\hf_n$ in place of $\pi^*$ and $\hat{\pi}_n$, and with integration over $y\in\R$ in place of summation over $y\in\cY$.
(Note that this calculation relies on our assumption that $\sup_{(x,y)}f^*(y\mid x)<\infty$---while, for the classification setting with $\pi^*$ in place of $f^*$, $\sup_{(x,y)}\pi^*(y\mid x)$ is finite simply because conditional probability is always bounded by $1$).

\textbf{Step 3: establishing asymptotic optimality.} 
As for Step 2, 
the proof again follows an identical argument as for the corresponding step in the proof of Proposition~\ref{prop:case_study_1_classification}, where now
\begin{equation}\label{eqn:case_study_regression_1_define_c_n}c_n = \inf_{(x,y):y\in\cC_n(x)} \hf_n(y\mid x).\end{equation}
\end{proof} 

\begin{proof}[Proof of Proposition~\ref{prop:case_study_2_regression}]

\textbf{Step 1: verifying condition~\eqref{eqn:asm_thm:splitCP_asymp_formal_random_x}.}
It is sufficient to prove that $s^*(X,Y)$ has a continuous distribution under $(X,Y)\sim P$. For any $t\in\R$, we have
\begin{align*}
    &\P_{Y\sim P_{Y\mid X}}(s^*(X,Y)=t\mid X)\\
    &=\P_{Y\sim P_{Y\mid X}}\left(\max\{\tau^*(X; \alpha/2) - Y, Y - \tau^*(X; 1-\alpha/2)\} = t \, \middle|\, X\right)\\
    &\leq \P_{Y\sim P_{Y\mid X}}\left(Y \in\big\{ \tau^*(X; \alpha/2) - t\, , \tau^*(X; 1- \alpha/2) + t\big\}\, \middle|\, X\right)\\
    &=0,
\end{align*}
since the conditional distribution of $Y\mid X$ is continuous. Therefore, by the tower law, 
$\P_{(X,Y)\sim P}(s^*(X,Y)=t) = 0$ for all $t\in\R$, as desired.

\textbf{Step 2: verifying that $s_n\stackrel{P}{\to} s^*$.}
By definition of $s_n$ and $s^*$, we see that
\[|s_n(x,y) - s^*(x,y)|\leq \max_{\beta\in\{\alpha/2,1-\alpha/2\}}|\hat{\tau}_n(x ; \beta) - \tau^*(x ; \beta)|\]
for all $(x,y)$.
Therefore,
\begin{multline*}\P_{(X,Y)\sim P}\left(|s_n(X,Y) - s^*(X,Y)|>\epsilon\right) \\\leq \P_{X\sim P_X}\left(\max_{\beta\in\{\alpha/2,1-\alpha/2\}}|\hat{\tau}_n(X ; \beta) - \tau^*(X ; \beta)| > \epsilon\right)\\
\leq \epsilon^{-1}\sum_{\beta\in\{\alpha/2,1-\alpha/2\}} \E_{X \sim P_X}[|\hat{\tau}_n(X ; \beta) - \tau^*(X ; \beta)|],
\end{multline*}
by Markov's inequality.
Therefore, if
    $\E_{X \sim P_X}[|\hat{\tau}_n(X ; \beta) - \tau^*(X ; \beta)|] \to 0 $  holds  for each $\beta \in\{\alpha/2,1-\alpha/2\}$ (as is assumed in the proposition), then this
    implies $s_n\stackrel{P}{\to} s^*$.

\textbf{Step 3: establishing asymptotic optimality.}
Applying Theorems~\ref{thm:splitCP_asymp_formal_qn} and~\ref{thm:splitCP_asymp_formal_random_x} (along with the results of Steps 1 and 2), we see that $\E_{X \sim P_X}[|\hat{\tau}_n(X ; \beta) - \tau^*(X ; \beta)|] \to 0 $ for each $\beta\in\{\alpha/2,1-\alpha/2\}$ implies $\P_{(X,Y)\sim P}(Y\in \cC_n(X)\triangle \cC^*(X))\to 0$ and $q^*\leq\liminf_{n\to\infty}\hat{q}_n\leq\limsup_{n\to\infty}\hat{q}_n\leq q^*_+$, almost surely. From this point on, we will assume that this event holds, and will show that this implies $\cC_n$ is asymptotically optimal. 

First, for asymptotic optimality of the set size, we will use the fact that, for two intervals $[a,b]$ and $[c,d]$ in the real line, we can calculate
\[\Big|\, [a,b]\triangle[c,d]\,\Big| \leq |a-c| + |b-d|\]
(with equality if the two intervals overlap, but a strict inequality if they are disjoint). Then, by construction of the prediction intervals $\cC_n(X)$ and $\cC^*(X)$ in this particular setting, we have
\begin{multline*}\textnormal{Leb}(\cC_n(X)\triangle \cC^*(X)) \leq\\ \left| \big(\hat{\tau}_n(X;\alpha/2) - \hat{q}_n\big) - \tau^*(X;\alpha/2)\right| + \left| \big(\hat{\tau}_n(X;1-\alpha/2) + \hat{q}_n\big) - \tau^*(X;1-\alpha/2)\right| \\
\leq 2\max_{\beta\in\{\alpha/2,1-\alpha/2\}}\big|\hat{\tau}_n(X;\beta)-\tau^*(X;\beta)\big| + 2|\hat{q}_n|. 
\end{multline*}
Next, by construction of the oracle interval we have $q^*=0$, and moreover, the assumption that $f^*(y\mid x)>0$ for all $(x,y)$ ensures that $q^*_+=0$ as well (because, with a positive density, any positive inflation of the interval $\cC^*(X)$ would lead to a strictly higher probability of coverage---that is, $\P_{(X,Y)\sim P}(s^*(X,Y) \leq t)>1-\alpha$ for any $t>0$). Thus we have $\hat{q}_n\to 0$. Therefore,
\[\E_{X\sim P_X}[\textnormal{Leb}(\cC_n(X)\triangle\cC^*(X))] \leq 2\sum_{\beta\in\{\alpha/2,1-\alpha/2\}}\E_{X \sim P_X}[|\hat{\tau}_n(X ; \beta) - \tau^*(X ; \beta)|] + 2|\hat{q}_n|\to 0,\]
which therefore implies $\limsup_{n\to \infty} \E_{X\sim P_X}[\textnormal{Leb}(\cC_n(X))]  \leq \E_{X\sim P_X}[\textnormal{Leb}(\cC^*(X))]$.

Next we turn to the equal-tailed conditional coverage guarantee.
If $\cC_n(X)$ and $\cC^*(X)$ are disjoint (which includes the degenerate case where $\cC_n(X)$ is empty), we have
\begin{multline*}\P_{Y\sim P_{Y\mid X}}(Y>\sup \cC_n(X) \mid X) \leq 1 = \frac{\P_{Y\sim P_{Y\mid X}}(Y\in \cC^*(X))}{1-\alpha} \\\leq \frac{\P_{Y\sim P_{Y\mid X}}(Y\in \cC_n(X)\triangle \cC^*(X))}{1-\alpha}.\end{multline*}
If instead the intervals $\cC_n(X)$ and $\cC^*(X)$ overlap (and in particular $\cC_n(X)$ is nonempty), then
\[ \sup\cC^*(X) \geq  Y > \sup \cC_n(X) \Longrightarrow  Y\in \cC_n(X)\triangle \cC^*(X),\]
and so
\[\P_{Y\sim P_{Y\mid X}}(Y>\sup \cC_n(X) \mid X) \leq\alpha/2 +\P_{Y\sim P_{Y\mid X}}(Y\in \cC_n(X)\triangle \cC^*(X)) ,\]
using the oracle property of $\cC^*$, which ensures that $\P_{Y\sim P_{Y\mid X}}(Y>\sup \cC^*(X) \mid X) \leq\alpha/2$.
Combining these calculations, then, 
\[ \P_{Y\sim P_{Y\mid X}}(Y>\sup \cC_n(X) \mid X) \leq\alpha/2 +\frac{1}{1-\alpha}\cdot \P_{Y\sim P_{Y\mid X}}(Y\in \cC_n(X)\triangle \cC^*(X)),\]
across both possible cases. Therefore,
\begin{multline*}\P_{X\sim P_X}\left( \P_{Y\sim P_{Y\mid X}}(Y>\sup \cC_n(X) \mid X) > \alpha/2 + \epsilon\right) \\\leq \frac{\P_{(X,Y)\sim P}(Y\in \cC_n(X)\triangle \cC^*(X))}{(1-\alpha)\epsilon} \to 0,\end{multline*}
by Markov's inequality and the tower law. This
establishes asymptotic coverage in the right tail. An identical argument verifies coverage for the left tail as well, and thus completes the proof.
\end{proof}

\section{Model-based robustness to violations of exchangeability}\label{sec:models_without_exch}
\index{nonexchangeability}
\index{time series|(}

In this final section of Chapter~\ref{chapter:model-based}, we will turn to a question of a very different flavor: instead of asking about guarantees that can be obtained by placing model-based assumptions on the data \emph{in addition to} the assumption of exchangeability, we will now ask whether model-based assumptions allow for conformal type methods to perform well \emph{even if exchangeability does not hold}.

For simplicity, let us again consider only split conformal prediction throughout this section, so that we have a pretrained score function $s_n:\cX\times\cY\to \R$. The key insight is that the marginal coverage guarantee relies only on
finding a good approximation for the $(1-\alpha)$-quantile of the test point's score, $s_n(X_{n+1},Y_{n+1})$. 
Exchangeability of the data points $(X_1,Y_1),\dots,(X_{n+1},Y_{n+1})$ is certainly sufficient to ensure this (i.e., with the split conformal quantile $\hat{q}_n$), but it is not necessary: it is possible to estimate this quantile well under other assumptions even in the absence of exchangeability.
Thus, conformal prediction offers a double-robustness type guarantee: conformal prediction offers coverage guarantees as long as \emph{either} exchangeability holds (as in the theory we have seen in Chapter~\ref{chapter:conformal-exchangeability}), or if instead we can rely on model-based assumptions.
Although this is a general point, for the remainder of the chapter we focus on making this intuition precise in a specific setting, namely, stationary time-series. 

\paragraph{Stationary time series.}
In the setting of a time series, exchangeability may fail drastically due to dependence over time. Nonetheless, we will see that, under model-based assumptions, applying the conformalized quantile regression (CQR) method (recall Section~\ref{sec:case_study_2_regression}) can still ensure coverage. In particular, if our estimated conditional quantiles $\hat\tau_n(x;\beta)$ are a good approximation to the true conditional quantiles $\tau^*(x;\beta)$, we can expect to achieve coverage even without exchangeability.

We now define our setting. Consider a time series of data points,
\[(X_1,Y_1), \, (X_2,Y_2),\, \dots\]
where the response is real-valued, $\cY=\R$. Suppose that, to predict response $Y_{n+1}$ from features $X_{n+1}$ at time $n+1$, we train a quantile regression model on past data points $(X_1,Y_1),\dots,(X_{\lfloor n/2\rfloor },Y_{\lfloor n/2\rfloor })$---that is, we construct quantile estimates $\hat\tau_n(x;\alpha/2)$ and $\hat\tau_n(x;1-\alpha/2)$. We then use the more recent data points $(X_{\lfloor n/2\rfloor +1},Y_{\lfloor n/2\rfloor +1}),\dots,(X_n,Y_n)$ as our calibration set, to define $\hat{q}_n$ and return the corresponding CQR prediction set,
\[\cC_n(X_{n+1}) = \left[\hat\tau_n(X_{n+1};\alpha/2) - \hat{q}_n, \hat\tau_n(X_{n+1};1-\alpha/2) + \hat{q}_n\right].\] 
(Note that our notation for indexing the data points has changed for this time series setting---the calibration set size is $\lceil n/2\rceil $, rather than $n$ as has been the case throughout the chapter.)

\begin{proposition}[Asymptotic guarantee for the time series setting]
    \label{prop:time-series}
    Under the setting and notation defined above, assume that $(X_1,Y_1),(X_2,Y_2),\dots$
 is a time series of identically distributed (but not necessarily independent) data points, with $(X_i,Y_i)\sim P$ for each $i$. Assume that the conditional distribution $P_{Y\mid X}$ has a positive and bounded density $f^*(y\mid x)$, and let $\tau^*(x;\beta)$ denote the $\beta$-quantile of this conditional distribution. Assume also that the time series is \emph{absolutely regular} (also called \emph{$\beta$-mixing}), meaning that
    \[\lim_{m\to\infty} \left\{\sup_{I,J,k\geq 1} \sup_{\substack{\textnormal{Disjoint }A_1,\dots,A_I\in\cE_{\leq k}\\\textnormal{Disjoint }A'_1,\dots,A'_J\in\cE_{\geq k+m}}} \sum_{i=1}^I\sum_{j=1}^J\big|\P(A_i\cap A'_j) - \P(A_i)\P(A'_j)\big|\right\}  = 0,\]
    where $\cE_{\leq k}$ is the set of all events that depend only on $((X_1,Y_1),\dots,(X_k,Y_k))$, and $\cE_{\geq k+m}$ is the set of all events that depend only on $((X_{k+m},Y_{k+m}),(X_{k+m+1},Y_{k+m+1}),\dots)$.

    If the quantile estimates $\hat\tau_n$ satisfy
     \begin{equation}\label{eqn:assume_tau_n_consistent}
        \lim_{n \to \infty} \E\left[ \int_{\cX}| \hat\tau_n(x;\beta) - \tau^*(x;\beta)|\;\mathsf{d}P_X(x)\right] = 0
    \end{equation}
    for each $\beta\in\{\alpha/2,1-\alpha/2\}$,
    then
\begin{equation}
        \lim_{n \to \infty} \E\left[ \big| \P\left( Y_{n+1} \in \cC_n(X_{n+1}) \mid X_{n+1} \right)  - (1-\alpha)\big| \right]=0.
    \end{equation}
\end{proposition}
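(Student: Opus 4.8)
The plan is to compare the conformalized interval $\cC_n$ with the oracle CQR interval $\cC^*(x)=[\tau^*(x;\alpha/2),\tau^*(x;1-\alpha/2)]$ and to lean on the one fact that requires \emph{neither} exchangeability nor independence: since $(X_{n+1},Y_{n+1})\sim P$ and $f^*(\cdot\mid x)$ is continuous, $\tau^*(x;\beta)$ is an exact $\beta$-quantile of $Y_{n+1}\mid X_{n+1}=x$, so $\P(Y_{n+1}\in\cC^*(X_{n+1})\mid X_{n+1})=1-\alpha$ identically. Hence for every realization of the data,
\[\big|\P(Y_{n+1}\in\cC_n(X_{n+1})\mid X_{n+1})-(1-\alpha)\big|\le\P\big(Y_{n+1}\in\cC_n(X_{n+1})\triangle\cC^*(X_{n+1})\mid X_{n+1}\big),\]
and taking expectations reduces the theorem to showing $\P\big(Y_{n+1}\in\cC_n(X_{n+1})\triangle\cC^*(X_{n+1})\big)\to0$.

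Next I would reduce this to two scalar convergence statements. On the event that $\cC_n(X_{n+1})$ is a nonempty interval, its symmetric difference with $\cC^*(X_{n+1})$ lies in the union of two intervals, one around each endpoint, of lengths at most $\Delta_n^\beta:=|\hat\tau_n(X_{n+1};\beta)-\tau^*(X_{n+1};\beta)|+|\hat q_n|$ for $\beta\in\{\alpha/2,1-\alpha/2\}$; and $\P(\cC_n(X_{n+1})=\varnothing)\to0$ because emptiness forces $2\hat q_n$ below the gap $\hat\tau_n(X_{n+1};\alpha/2)-\hat\tau_n(X_{n+1};1-\alpha/2)$, which converges to the strictly negative quantity $\tau^*(X_{n+1};\alpha/2)-\tau^*(X_{n+1};1-\alpha/2)$. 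Since $Y_{n+1}\mid X_{n+1}$ has density bounded by $\|f^*\|_\infty$ and $\tau^*(X_{n+1};\beta)$ is a function of $X_{n+1}$ alone, for any fixed $\epsilon>0$,
\[\P\big(|Y_{n+1}-\tau^*(X_{n+1};\beta)|\le\Delta_n^\beta\big)\le\P(\Delta_n^\beta>\epsilon)+\P\big(|Y_{n+1}-\tau^*(X_{n+1};\beta)|\le\epsilon\big)\le\P(\Delta_n^\beta>\epsilon)+2\epsilon\|f^*\|_\infty.\]
Letting $n\to\infty$ and then $\epsilon\to0$, the whole statement follows once we establish: (i) $\hat q_n\to0$ in probability, and (ii) $|\hat\tau_n(X_{n+1};\beta)-\tau^*(X_{n+1};\beta)|\to0$ in probability for each $\beta$.

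For (i): a direct computation with the positive, bounded conditional density shows (exactly as noted for the CQR case study in Section~\ref{sec:unified_framework_splitCP_asymp}) that the oracle score $s^*(x,y)=\max\{\tau^*(x;\alpha/2)-y,\ y-\tau^*(x;1-\alpha/2)\}$ has $q^*=q^*_+=0$ as its unique $(1-\alpha)$-quantile under $P$. The fitted score satisfies $s_n\stackrel{P}{\to}s^*$: as in Step 2 of the proof of Proposition~\ref{prop:case_study_2_regression}, $|s_n(x,y)-s^*(x,y)|\le\max_\beta|\hat\tau_n(x;\beta)-\tau^*(x;\beta)|$, and~\eqref{eqn:assume_tau_n_consistent} controls the right side on average over an independent draw from $P_X$; in particular $s_n\stackrel{\rm CDF}{\longrightarrow}s^*$. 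One then repeats the argument of Theorem~\ref{thm:splitCP_asymp_formal_qn}, but with the Dvoretzky--Kiefer--Wolfowitz step replaced by a Glivenko--Cantelli-type law of large numbers for strongly mixing sequences: conditionally on the pretraining block, the empirical CDF of the calibration scores $\{s_n(X_i,Y_i)\}$ converges uniformly to $F_{P,s_n}$, so its $(1-\alpha)(1+o(1))$-quantile $\hat q_n$ converges to $q^*=0$. For (ii): assumption~\eqref{eqn:assume_tau_n_consistent} is exactly $L^1$ (hence in-probability) consistency of $\hat\tau_n(X';\beta)$ to $\tau^*(X';\beta)$ for $X'\sim P_X$ drawn \emph{independently} of the pretraining block; since that block occupies times $\le\lfloor n/2\rfloor$ while $X_{n+1}$ sits at time $n+1$, the separation diverges, and strong mixing transfers the convergence to the actual $X_{n+1}$ up to a vanishing mixing-coefficient error (via the covariance inequality for $\alpha$-mixing applied to bounded functionals, after truncating the possibly-unbounded quantile differences).

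The main obstacle is precisely this interaction with the dependence structure in (i) and (ii). Unlike the i.i.d.\ setting of Section~\ref{sec:unified_framework_splitCP_asymp}, the calibration scores are neither i.i.d.\ nor cleanly independent of the fitted score $s_n$ (the calibration block abuts the pretraining block, so the first few calibration points are correlated with the training data, and these ``edge effects'' must be absorbed via mixing), and the test feature is correlated with the pretraining block. What makes the proof go through is the role of $\hat q_n\to0$: it means we never have to decouple $Y_{n+1}$ from the \emph{adjacent} calibration data, because once $\hat q_n$ is negligible the only remaining randomness in the endpoints of $\cC_n(X_{n+1})$ enters through $\hat\tau_n$, which depends on the far-away pretraining block and can be decoupled by mixing. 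Converting these heuristics into rigorous uniform LLN and coupling statements under $\alpha$-mixing (as opposed to the stronger $\beta$-mixing, where the coupling is immediate) is where the real technical work lies.
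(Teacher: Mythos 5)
Your overall plan is the same as the paper's: prove the oracle interval $\cC^*$ achieves exact conditional coverage, reduce the problem to showing the mass of the symmetric difference $\cC_n\triangle\cC^*$ vanishes, further reduce that to two facts---that $\hat q_n\to0$ in probability and that $\hat\tau_n(X_{n+1};\beta)\to\tau^*(X_{n+1};\beta)$ in probability---and then use the mixing structure to prove those two facts. Your Step 1 decomposition (intervals of length $\Delta_n^\beta$ around each oracle endpoint, plus a separate treatment of the $\cC_n=\varnothing$ case) is a geometric rephrasing of what the paper writes in score notation, which bounds the symmetric-difference probability by $\P(|s_n-s^*|>\epsilon) + \P(|s^*|\leq 2\epsilon) + \P(|\hat q_n|>\epsilon)$; they are equivalent.

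The one substantive divergence is in how you propose to establish $\hat q_n\to0$. You suggest transplanting the general machinery of Theorem~5.12 by replacing Dvoretzky--Kiefer--Wolfowitz with a uniform Glivenko--Cantelli law of large numbers for mixing sequences. The paper takes a more economical route: it never proves a uniform empirical-CDF bound, but instead fixes $\epsilon>0$, shows that $\frac{1}{\lceil n/2\rceil}\sum_{i>\lfloor n/2\rfloor}\ind{|\hat\tau_n(X_i;\beta)-\tau^*(X_i;\beta)|>\epsilon/2}\to0$ in probability via the consistency assumption plus mixing, and combines this with an ordinary (pointwise) strong-mixing LLN applied to the indicator $\ind{Y_i\in\cC^{*,\epsilon/2}(X_i)}$ of a slightly widened oracle interval. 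Working at a fixed $\epsilon$ like this avoids needing any Glivenko--Cantelli argument at all, which is worth noting: the route you sketch is heavier than necessary for the specific target.

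The sharpest observation in your proposal is the remark that your (and the paper's) mixing argument naturally wants $\beta$-mixing rather than the $\alpha$-mixing spelled out in the hypothesis. The paper defines $\gamma_m=\sup_k \dtv\big((Z_1,\dots,Z_k,Z_{k+m}),(Z_1,\dots,Z_k,Z)\big)$ and asserts $\gamma_m\to0$ ``by the strongly mixing assumption,'' but a total-variation coupling of this form is a $\beta$-mixing statement; $\alpha$-mixing only controls differences over rectangles $A\times A'$, not over arbitrary measurable events in the joint $\sigma$-algebra such as $\{|\hat\tau_n(X_{n+1};\beta)-\tau^*(X_{n+1};\beta)|>\epsilon\}$. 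Your proposed workaround (covariance inequalities for $\alpha$-mixing applied to bounded functionals, after a truncation step) would patch this but adds length; the paper's implicit route is simply to use the TV bound and not comment on the distinction. Either way this is the place where your instinct that ``the real technical work lies'' is correct, and it applies to the paper's own write-up as much as to your proposal.
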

The assumption that the time series is absolutely regular can be interpreted as meaning that, for sufficiently large $m$, data points that are $\geq m$ time points apart are approximately independent---that is, $((X_1,Y_1),\dots,(X_k,Y_k))$ is approximately independent from $((X_{k+m},Y_{k+m}),(X_{k+m+1},Y_{k+m+1}),\dots)$.

\begin{proof}[Proof of Proposition~\ref{prop:time-series}]
Let $Z=(X,Y)\sim P$ denote an independent data point, and let $Z_i=(X_i,Y_i)$ as usual. We will use the following notation:
\[b_m = \sup_{k\geq 1} \dtv\big( (Z_1,\dots,Z_k,Z_{k+m}), (Z_1,\dots,Z_k,Z)\big).\]
By the assumption that the time series is absolutely regular, we must have $\lim_{m\to\infty}b_m=0$. Note that
since $\hat\tau_n$ is trained on $(Z_1,\dots,Z_{\lfloor n/2\rfloor})$, we therefore have
\begin{equation}\label{eqn:timeseries_gamma_m}\dtv\big( (\hat\tau_n, Z_{\lfloor n/2\rfloor +m}) , (\hat\tau_n,Z)\big) \leq b_m\end{equation}
for all $n$ and all $m$. In other words, for large $m$ (i.e., if $b_m\approx 0$), the trained model $\hat\tau_n$ is nearly independent of the future data point $Z_{\lfloor n/2\rfloor +m}=(X_{\lfloor n/2\rfloor +m },Y_{\lfloor n/2\rfloor +m})$.

The key challenge of the proof will be to verify that $\hat{q}_n$ converges to zero in probability, i.e.,
\begin{equation}\label{eqn:time_series_hatqn_limit}\lim_{n\to \infty} \P(|\hat{q}_n|>\epsilon) =0\end{equation}
for any $\epsilon>0$.

\paragraph{Step 1: show that convergence of $\hat{q}_n$ is sufficient.} First we assume~\eqref{eqn:time_series_hatqn_limit} holds.
By definition of $\tau^*$,
$\P\left( Y_{n+1} \in \cC^*(X_{n+1}) \mid X_{n+1}\right)=1-\alpha$ (almost surely), which implies
\[\big|\P\left( Y_{n+1} \in \cC_n(X_{n+1}) \mid X_{n+1}\right) - (1-\alpha)\big|\\  \leq \P\left( Y_{n+1} \in \cC_n(X_{n+1})\triangle \cC^*(X_{n+1}) \mid X_{n+1}\right).\] 
Therefore, recalling the CQR score $s_n$~\eqref{eqn:score_CQR} and the oracle CQR score $s^*$~\eqref{eqn:oracle_score_CQR} from Section~\ref{eq:case_study_2_regression}, we have
\begin{multline*}
    \E\left[\left|\P\left( Y_{n+1} \in \cC_n(X_{n+1}) \mid X_{n+1}\right) - (1-\alpha)\right|\right]\\
    \leq \P\left(Y_{n+1}\in \cC_n(X_{n+1})\triangle \cC^*(X_{n+1})\right)
    =\P\left(\ind{s_n(Z_{n+1})\leq \hat{q}_n} \neq \ind{s^*(Z_{n+1})\leq 0}\right)\\
    \leq \P\left(|s_n(Z_{n+1}) - s^*(Z_{n+1})|>\epsilon\right) + \P\left(|s^*(Z_{n+1})|\leq 2\epsilon\right)  + \P\left(|\hat{q}_n|>\epsilon \right),
\end{multline*}
where for the last step we
fix any $\epsilon>0$. 

Next, by definition of $s^*$, we have
\[|s^*(Z_{n+1})|\leq 2\epsilon \ \Longrightarrow \ \min_{\beta\in\{\alpha/2,1-\alpha/2\}}|Y_{n+1} - \tau^*(X_{n+1};\beta)| \leq 2\epsilon, \]
and therefore
\begin{multline*}\P(|s^*(Z_{n+1})|\leq 2\epsilon)\leq \sum_{\beta\in\{\alpha/2,1-\alpha/2\}}\P\left(-2\epsilon\leq Y_{n+1} - \tau^*(X_{n+1};\beta) \leq 2\epsilon\right)\\ \leq 2\cdot  4\epsilon \cdot \sup_{(x,y)}f^*(y\mid x),\end{multline*}
since $f^*(\cdot\mid X_{n+1})$ is the conditional density of $Y_{n+1}\mid X_{n+1}$.

Furthermore, by definition of $s_n$ and $s^*$, we have
\begin{multline*}
    \P\left(|s_n(Z_{n+1}) - s^*(Z_{n+1})|>\epsilon\right)
    \leq \sum_{\beta\in\{\alpha/2,1-\alpha/2\}}\P\left(|\hat\tau_n(X_{n+1};\beta)-\tau^*(X_{n+1};\beta)| > \epsilon\right) \\
    \leq  \sum_{\beta\in\{\alpha/2,1-\alpha/2\}}\left(\P\left( |\hat\tau_n(X;\beta)-\tau^*(X;\beta)| > \epsilon\right) + b_{\lceil n/2\rceil +1}\right),
\end{multline*}
by~\eqref{eqn:timeseries_gamma_m}.
We also have $\lim_{n\to\infty}b_{\lceil n/2\rceil +1}=0$, and moreover for each $\beta\in\{\alpha/2,1-\alpha/2\}$, $\lim_{n\to\infty} \P\left( |\hat\tau_n(X;\beta)-\tau^*(X;\beta)| > \epsilon\right)=0$ due to the assumption~\eqref{eqn:assume_tau_n_consistent}. Therefore, we have shown that $\lim_{n\to \infty}\P\left(|s_n(Z_{n+1}) - s^*(Z_{n+1})|>\epsilon\right)=0$.

Combining all these calculations, and using the fact that $ \lim_{n\to\infty} \P(|\hat{q}_n|>\epsilon)=0$ by~\eqref{eqn:time_series_hatqn_limit}, we have shown that
\[\limsup_{n\to\infty}\E\left[\left|\P\left( Y_{n+1} \in \cC_n(X_{n+1}) \mid X_{n+1}\right) - (1-\alpha)\right|\right] \leq 8\epsilon \cdot \sup_{x,y}f^*(y\mid x).\]
Since these calculations hold for arbitrarily small $\epsilon>0$, and since $\sup_{(x,y)}f^*(y\mid x)<\infty$ by assumption, this completes the proof.

\paragraph{Step 2: prove convergence of $\hat{q}_n$.} We now need to verify~\eqref{eqn:time_series_hatqn_limit}.
We will prove that, for any $\epsilon>0$, $\P(\hat{q}_n\leq \epsilon)\to 1$; the proof for the lower bound on $\hat{q}_n$ is similar so we omit the details. 

First we need to show a concentration property of $\hat\tau_n$ over the calibration set. We calculate
\begin{align*}
    &\E\left[\frac{1}{\lceil n/2\rceil}\sum_{i=\lfloor n/2\rfloor +1}^n \ind{| \hat\tau_n(X_i;\beta) - \tau^*(X_i;\beta)|>\epsilon/2}\right]\\
    &=\frac{1}{\lceil n/2\rceil}\sum_{i=\lfloor n/2\rfloor +1}^n  \P\left(| \hat\tau_n(X_i;\beta) - \tau^*(X_i;\beta)|>\epsilon/2\right)\\
    &\leq\frac{1}{\lceil n/2\rceil}\sum_{i=\lfloor n/2\rfloor +1}^n \left( \P\left(| \hat\tau_n(X;\beta) - \tau^*(X;\beta)|>\epsilon/2\right) + b_{i-\lfloor n/2\rfloor}\right)\textnormal{\quad by~\eqref{eqn:timeseries_gamma_m}}\\
    &= \P\left(| \hat\tau_n(X;\beta) - \tau^*(X;\beta)|>\epsilon/2\right) + \frac{1}{\lceil n/2\rceil}\sum_{i=1}^{\lceil n/2\rceil}b_i\\
    &\to 0,
\end{align*}
as $n\to\infty$, where the last step holds since the first term is vanishing by the assumption~\eqref{eqn:assume_tau_n_consistent}, while the second term is vanishing since $\sup_mb_m\leq 1$ and $\lim_{m\to\infty}b_m=0$.
By Markov's inequality, then, fixing any $\delta>0$, we have
\begin{equation}\label{eqn:strongly_mixing_step1}\P\left(\frac{1}{\lceil n/2\rceil}\sum_{i=\lfloor n/2\rfloor + 1}^n \ind{| \hat\tau_n(X_i;\beta) - \tau^*(X_i;\beta)|>\epsilon/2} > \delta \right)\to 0\end{equation}
for each $\beta\in\{\alpha/2,1-\alpha/2\}$.

Next, let
\[1-\alpha' = \P_{(X,Y)\sim P}(Y\in\cC^{*,\epsilon/2}(X)),\]
where
\[\cC^{*,\epsilon/2}(x) = \big[\tau^*(x;\alpha/2)-\epsilon/2, \tau^*(x;1-\alpha/2)+\epsilon/2\big].\]
Since the conditional density $f^*(\cdot \mid X)$ of $Y\mid X$ is assumed to be positive, and $\epsilon>0$, we must have
$\alpha' < \alpha$. By the Law of Large Numbers for strongly mixing time series, we have
\begin{equation}\label{eqn:strongly_mixing_LLN}\P\left(\frac{1}{\lceil n/2\rceil} \sum_{i=\lfloor n/2\rfloor + 1}^{n} \ind{Y_i\in\cC^{*,\epsilon/2}(X_i)} < 1-\alpha' - \delta\right) \to 0\end{equation}
for any $\delta>0$.
Next, by definition of the CQR score, $s_n(x,y) =\max\{\hat\tau_n(x;\alpha/2) - y,  y - \hat\tau_n(x;1-\alpha/2)\}$, it holds that
\[\textnormal{If $y\in\cC^{*,\epsilon/2}(x)$ then }s_n(x,y) \leq \epsilon/2 + \max_{\beta\in\{\alpha/2,1-\alpha/2\}} |\hat\tau_n(x;\beta) - \tau^*(x;\beta)|.\]
Consequently,
\begin{multline*}\frac{1}{\lceil n/2\rceil}\sum_{i=\lfloor n/2\rfloor +1}^n \ind{s_n(X_i,Y_i)\leq \epsilon}
\geq \frac{1}{\lceil n/2\rceil}\sum_{i=\lfloor n/2\rfloor +1}^n \ind{Y_i\in\cC^{*,\epsilon/2}(X_i)}\\{} - \sum_{\beta\in\{\alpha/2,1-\alpha/2\}} \frac{1}{\lceil n/2\rceil}\sum_{i=\lfloor n/2\rfloor +1}^n \ind{ |\hat\tau_n(x;\beta) - \tau^*(x;\beta)|> \epsilon/2}
\end{multline*}
By~\eqref{eqn:strongly_mixing_step1} and~\eqref{eqn:strongly_mixing_LLN}, for any $\delta>0$, we therefore have
\[\P\left(\frac{1}{\lceil n/2\rceil}\sum_{i=\lfloor n/2\rfloor +1}^n \ind{s_n(X_i,Y_i)\leq \epsilon} \geq 1 - \alpha' - 3\delta\right) \to 1.\]
Choosing $\delta$ sufficiently small so that $1-\alpha'-3\delta > 1-\alpha$, then,
\[\P\left(\frac{1}{\lceil n/2\rceil}\sum_{i=\lfloor n/2\rfloor +1}^n \ind{s_n(X_i,Y_i)\leq \epsilon} \geq (1 - \alpha)\left(1+\frac{1}{\lceil n/2\rceil}\right)\right) \to 1.\]
But by definition of $\hat{q}_n$ as the quantile of the scores on the calibration data, this means that $\P(\hat{q}_n\leq \epsilon) \to 1$, as desired.
\end{proof}
\index{time series|)}

\section*{Bibliographic notes}
\addcontentsline{toc}{section}{\protect\numberline{}\textnormal{\hspace{-0.8cm}Bibliographic notes}}

Many works in the literature have established asymptotic optimality properties for conformal prediction, in a range of different settings. These works have generally focused on specific choices of the score and/or the underlying distributional assumptions. For the two case studies presented in Section~\ref{sec:case_studies_classification} for the classification setting, the first one uses the high-probability score, which is studied by~\cite{Sadinle2016LeastAS}; this work also establishes asymptotic optimality statements (see also earlier work by~\cite{lei2014classification}). The second case study uses the cumulative-probability score, which is a score proposed by~\cite{romano2020classification} and studied further by~\cite{angelopoulos2020uncertainty}. 
Turning to the regression setting, for the two case studies presented in Section~\ref{sec:case_studies_regression}, the first uses the high-density score, studied by~\cite{lei2013distribution} and \cite{izbicki2019flexible}, which both establish asymptotic optimality results for the score. The second uses the conformalized quantile regression (CQR) score, as developed by  \cite{romano2019conformalized}; subsequent work by \cite{sesia2019comparison} develops asymptotical optimality results for the CQR score.
CQR builds on the large literature in quantile regression, originating from~\cite{koenker1978regression}, with further developments in~\cite{chaudhuri1991global, koenker2005quantile, hao2007quantile, koenker2017quantile, koenker2018handbook}. Related quantile-based scores, which offer similar oracle properties as CQR, include the works of~\cite{chernozhukov2021distributional} and~\cite{kivaranovic2020adaptive}.

Early results establishing asymptotic optimality properties for conformal prediction include results by \citet[Chapter 3]{vovk2005algorithmic} for the classification setting, using a score based on nearest-neighbor classifiers, and the work of \cite{lei2018distribution} in the regression setting, which considers the residual score (this score was introduced in Chapter~\ref{chapter:introduction}, but is not studied in this present chapter).
Additional asymptotic optimality results for various settings include the work of \citet{burnaev2014efficiency} for ridge regression, \cite{lei2014distribution} for kernel density estimates, \cite{lei2011efficient} for unsupervised learning, \cite{papadopoulos2011regression,gyorfi2019nearest} for nearest-neighbors, and \cite{chernozhukov2018exact} for counterfactual inference in synthetic controls. Asymptotic results for conditional coverage are also developed by \cite{azizi2025clear}, by considering a decomposition into aleatoric uncertainty, arising from errors in the model, and epistemic uncertainty, arising from inherent noise in the data distribution (see also earlier work by \citet{rossellini2024integrating}).
Finally, the work of~\citet{hoff2023bayes} establishes an optimality guarantee for the conformalized Bayesian posterior: essentially, the conformalized posterior distribution has the smallest expected volume under the Bayesian model among all sets with distribution-free $1-\alpha$ frequentist coverage---see also~\citet{bersson2024optimal}.

Of course, all of the results in this chapter (and all the works cited above) assume some knowledge of the underlying data distribution in order to establish optimality properties. This is necessary, since although conformal prediction offers distribution-free marginal coverage,  stronger properties can only hold under additional assumptions. For instance, hardness results in Chapter~\ref{chapter:conditional} establish impossibility of assumption-free conditional coverage guarantees (see see Theorems~\ref{thm:conditional_infinite} and~\ref{thm:hardness-test-conditional-coverage-relaxed}). Similarly, \citet{gao2025volume} establish that, for any method $\cC_n$ providing distribution-free marginal predictive coverage, there must exist some distributions for which $\cC_n$ is far from optimal in terms of the size of the prediction set.

The general theory developed in Section~\ref{sec:unified_framework_splitCP_asymp} is adapted from \cite{duchi2024predictive}. We note that this paper studies a more general problem, where split conformal prediction (or, in the paper, alternatively a cross-validation based method) is applied across data gathered from multiple different environments; the i.i.d.\ setting can be viewed as a special case.

Finally, the results of Section~\ref{sec:models_without_exch}, for the time series setting, are based on the work of~\cite{xu2023conformal, xu2023sequential}. The proof of Proposition~\ref{prop:time-series} relies on mixing conditions for time series, including a Law of Large Numbers for strongly mixing time series; see \citet{andrews1988laws,bradley2005basic} for background. Additional results for conformal prediction with time series data exist in the literature for a range of different settings, for instance, the results of~\cite{chernozhukov2021distributional,oliveira2024split,barber2025predictive}.

\section*{Exercises}
\addcontentsline{toc}{section}{\protect\numberline{}\textnormal{\hspace{-0.8cm}Exercises}}
\begin{enumerate}[font=\bfseries, label={\thechapter.\arabic*}, labelsep=1em, itemsep=1em]
\item In the case study considered in Proposition~\ref{prop:case_study_2_classification}, why is it important to allow $\P(s^*(X,Y)=0)$ to be positive? In other words, if we instead replaced the condition with just assuming $\P(s^*(X,Y)=t)=0$ for all $t$ (rather than for all $t>0$), why would this assumption lead to a vacuous result?
\item Consider the case study discussed in Proposition~\ref{prop:case_study_2_regression}, showing that CQR is optimal for minimizing length subject to a symmetric-tail condition.
The result requires a positive density assumption, $f^*(y\mid x)>0$. In this exercise, we aim to show why this assumption is needed---we cannot remove it (even
when all other assumptions are satisfied).
Construct a distribution with a conditional density $f^*(y\mid x)$, such that the CQR interval $\cC_n(X_{n+1})$ has expected length that is substantially larger (even for large $n$) than the 
oracle interval $\cC^*(X_{n+1})$, even if the estimated quantiles $\hat\tau_n(x;\beta)$ (for $\beta\in\{\alpha/2,1-\alpha/2\}$) are highly accurate.
\item Consider the aim of minimal length for an interval with conditional coverage, \begin{equation}
\begin{aligned}
\textnormal{minimize} \quad & \E_{X\sim P_X}[\textnormal{Leb}(\cC(X))] \\
\textrm{subject to} \quad & \P_{Y\sim P_{Y\mid X}}(Y \in\cC(X) \mid X ) \geq 1-\alpha\textnormal{ almost surely},\\&\textnormal{$\cC(x)$ is an interval for all $x$.}
\end{aligned}
\end{equation}
Compared with the aim~\eqref{eq:case_study_2_regression} in the case study considered in Section~\ref{eq:case_study_2_regression}, here we are requiring conditional coverage $\geq 1-\alpha$, but we do not require the miscoverage probability to be equal in the left and right tails.

Recall that the oracle interval from the case study~\eqref{eq:case_study_2_regression} is given by
\begin{equation}
    \cC^*(x) = [\tau^*(x ; \alpha/2), \tau^*(x ; 1-\alpha/2)],
\end{equation}
\begin{enumerate}
    \item Assume that, at any value $x$, the conditional distribution of $Y\mid X$ has a positive, symmetric, and unimodal density $f^*(y\mid x)$ (i.e., symmetric around some value that depends on $x$). Prove that the same interval $\cC^*(x)$ is optimal for this new aim.
    \item Next, construct an example to show that, with a conditional density that is positive but not necessarily symmetric and unimodal, the above definition of $\cC^*(x)$ may not be optimal for this new aim (even though it is optimal for the aim of equal-tailed conditional coverage considered in~\eqref{eq:case_study_2_regression}).
    \end{enumerate}
\end{enumerate}

\part{Extensions of Conformal Prediction}
\label{part:extensions-conformal}

\chapter{Cross-Validation Based Conformal Methods}
\label{chapter:cv}
\index{cross-validation|(}

This chapter begins Part~\ref{part:extensions-conformal} of the book.
The previous part has focused on conformal prediction for batches of exchangeable data points.
In this next part of the book, we present extensions to the conformal prediction procedure that modify it to have a more flexible computational/statistical tradeoff, to handle non-exchangeable data, to operate in an online setting with streaming data, and other extensions of the methodology.

In this chapter, we will focus on cross-validation (CV) style approaches to conformal prediction, covering the main methodologies in this area along with their corresponding theoretical guarantees. Recall that in Part~\ref{part:conformal}, we introduced full conformal prediction and split conformal prediction, which can be viewed as lying at two extremes of a computational/statistical tradeoff.
Split conformal only requires fitting the model once, but may lead to wider intervals due to the statistical cost of data splitting. In contrast, full conformal may provide narrower intervals as it uses the entire available training set for model fitting, but in principle may require a large number of (or even infinitely many) calls to the model fitting algorithm and therefore is very expensive to compute even approximately.

A natural question is therefore whether we can use a strategy that is more statistically efficient than data splitting, without paying the steep computational price of full conformal prediction: namely, can we use cross-validation?
Interestingly, the theoretical analysis of CV-type methods proves to be surprisingly different from the analysis of full and split conformal, and this chapter will go through these arguments in detail.

\section{Preliminaries: split conformal as a tournament}\label{sec:split_as_tournament}
\index{tournament}
\index{split conformal prediction|(}

\begin{figure}[t]
    \centering
    \includegraphics[width=0.8\textwidth]{\diagramspath tournament-construction.pdf}
    \caption{\textbf{Visualization of split conformal prediction as a tournament,} as described in Section~\ref{sec:split_as_tournament}. Given a score function $s$ trained on dataset $D_{[n]\setminus I}$, we construct a `tournament' among the calibration points $(X_i,Y_i)$ (for $i\in I$) and the hypothesized test point $(X_{n+1},y)$, where a team `wins' against another team if its score is strictly larger. The scores are shown in the bottom panel of the figure. The corresponding tournament graph at the top of the figure shows the outcomes of all the games, with arrows pointing from the winning team to the losing team in each pairwise comparison.}
    \commentAlt{The top panel shows a fully connected directed graph among $8$ nodes, with one node highlighted and labeled as $n+1$, and additional nodes $i$ (which points to $n+1$) and $j$ (with node $n+1$ pointing to node $j$). See long description.}
    \commentLongAlt{The top panel shows a fully connected directed graph among $8$ nodes, with one node highlighted and labeled as $n+1$, and additional nodes $i$ (which points to $n+1$) and $j$ (with node $n+1$ pointing to node $j$). Below, a horizontal axis shows $8$ points, with one point highlighted and labeled as $s((X_{n+1},y);\cD_{[n]\backslash I})$, and two other points labeled as $s((X_i,Y_i);\cD_{[n]\backslash I})$ (whose value is higher) and $s((X_j,Y_j);\cD_{[n]\backslash I})$ (whose value is lower). A directed arrow points from $s((X_i,Y_i);\cD_{[n]\backslash I})$ to $s((X_{n+1},y);\cD_{[n]\backslash I})$, and another arrow from $s((X_{n+1},y);\cD_{[n]\backslash I})$ to $s((X_j,Y_j);\cD_{[n]\backslash I})$.}
    \label{fig:split-conformal-tournament}
\end{figure}

Before presenting cross-validation-style variants of the conformal prediction method, it will help to first return to split conformal, and reformulate the construction of that method. Let $I\subseteq[n]$ be a subset of the indices of the available labeled data, and suppose we will use this subset $\cD_I=((X_i,Y_i))_{i\in I}$ as our calibration set, after training our model on the remaining data $\cD_{[n]\setminus I} = ((X_i,Y_i))_{i\in[n]\setminus I}$. 
Note that here we depart from the notation of the earlier chapters: in this section, we assume a total of $n$ labeled data points, and thus $\cD_n$ must be partitioned to provide both data for model training and for calibration. For example, for a data point $i\in I$ used for calibration in the split conformal procedure, its score will be given by $s((X_i,Y_i);\cD_{[n]\setminus I})$. 

The split conformal method operates by comparing the score of a hypothesized test point, $s((X_{n+1},y);\cD_{[n]\setminus I})$, against the scores for calibration data points, $s((X_i,Y_i);\cD_{[n]\setminus I})$, for all $i\in I$.
Concretely, in this setting where we have $|I|$ (rather than $n$) as the number of calibration points, the split conformal interval is defined as
\[\cC(X_{n+1}) = \left\{y : s((X_{n+1},y);\cD_{[n]\setminus I}) \leq \quantile\left(\left(s((X_i,Y_i);\cD_{[n]\setminus I})\right)_{i\in I} ; (1-\alpha)\left( 1 + 1/|I| \right)\right)\right\}.\]
We observe that, by definition of the quantile, this prediction interval can equivalently be defined as
\begin{equation}\label{eq:compare_split_to_CC}\cC(X_{n+1}) = \left\{y : \sum_{i\in I} \ind{ s((X_{n+1},y);\cD_{[n]\setminus I}) > s((X_i,Y_i);\cD_{[n]\setminus I})} < (1-\alpha)(|I|+1)\right\}.\end{equation}
With this reformulation of the prediction interval, we can now see that split conformal can be interpreted as a tournament between teams---that is, between data points $i\in I\cup\{n+1\}$. Each team's strength is determined by its score, namely, $s((X_i,Y_i);\cD_{[n]\setminus I})$ for each team $i\in I$, and $s((X_{n+1},y);\cD_{[n]\setminus I})$ for team $i=n+1$. When team $i$ plays against team $j$, it wins if its score is strictly higher; if the scores are equal, neither team wins. We can therefore see that, to determine whether a value $y$ is included in the prediction interval $\cC(X_{n+1})$, we are equivalently asking:
\begin{quote}
    Did team $n+1$ win fewer than $(1-\alpha)(|I|+1)$ many games, when playing against all teams $i\in I$?
\end{quote}
This tournament interpretation of split conformal, visualized in Figure~\ref{fig:split-conformal-tournament}, will help us to understand the construction of the cross-validation methods that we will study next, as well as the proofs of their corresponding theoretical guarantees.

\index{split conformal prediction|(}

\section{Cross-conformal prediction}
\index{cross-conformal prediction|(}
Next, we will extend this reformulation of split conformal to a cross-validation style approach. To begin, we partition the available data into $K\geq 2$ disjoint \emph{folds}, given by
\[[n] = I_1 \cup \dots \cup I_K.\]
(Typically we would take the folds to be of equal size $n/K$, or as close to equal as possible if $n/K$ is not an integer.) Then for each $k=1,\dots,K$, the fold $I_k$ plays the role of the calibration set: we compare the score of the hypothesized test point against the scores of data points indexed by $i\in I_k$, using the model trained on $\cD_{[n]\setminus I_k}$. That is, for each $k$,
\[\textnormal{Compare $s((X_{n+1},y);\cD_{[n]\setminus I_k})$ against $\left(s((X_i,Y_i);\cD_{[n]\setminus I_k})\right)_{i\in I_k}$}.\]
To pool these $K$ sets of comparisons together, and construct a \emph{single} prediction interval, we define
\begin{equation}\label{eq:cross-conf}\cC(X_{n+1}) = \left\{y : \sum_{k=1}^K\sum_{i\in I_k} \ind{s((X_{n+1},y);\cD_{[n]\setminus I_k}) > s((X_i,Y_i);\cD_{[n]\setminus I_k})} < (1-\alpha)(n+1)\right\},\end{equation}
which is known as the \emph{cross-conformal prediction} set.
In constructing this set, the innermost sum calculates how many games were won by player $n+1$ within fold $k$, and the outermost sum is over the $K$ folds.
Thus, the left-hand side of the inequality simply counts the total number of  games won by player $n+1$.

Now compare to~\eqref{eq:compare_split_to_CC}: that reformulation of split conformal makes it immediately apparent that the cross-conformal method is simply an extension of split conformal to a setting where we cycle through multiple folds.
To ask whether a value $y$ will be included in the prediction set, we can return to the tournament interpretation: when team $n+1$ plays against team $i\in I_k$, the two teams have `strengths' determined by $s((X_{n+1},y);\cD_{[n]\setminus I_k})$ and $s((X_i,Y_i);\cD_{[n]\setminus I_k})$, respectively. In particular, unlike for split conformal, here the strength of team $n+1$, when it plays a game against team $i$, will vary depending on the fold $I_k$ to which team $i$ belongs (see Figure~\ref{fig:cross-conformal-tournament} for an illustration). In both settings, however, the intuition is essentially the same: a value $y\in\cY$ is included in the prediction set if, with this value of $y$, the test point does not win too many of its games. \index{tournament}

The case $K=n$, where each fold contains a single data point, is a special case. For this variant of the method, which is often called leave-one-out cross-conformal prediction, the procedure requires a model to be trained on each of the $n$ leave-one-out datasets, $\cD_{[n]\setminus \{i\}} = ((X_j,Y_j))_{j\in[n]\setminus \{i\}}$.

\subsection{Coverage guarantees for cross-conformal prediction}
\label{sec:theory-CV}

We next establish marginal coverage guarantees for the $K$-fold cross-conformal method.
We will present two results, which follow completely different proof strategies, and offer complementary guarantees. The first theorem uses an argument relating to averaging p-values, and offers a more favorable guarantee in the regime where $K$ is small; the second theorem relies on a tournament-matrix type argument, and gives a more favorable guarantee in the regime where $K$ is large (including the case $K=n$).

\begin{figure}[t]
    \centering
    \includegraphics[width=0.8\textwidth]{\diagramspath cv.pdf}
    \caption{\textbf{Visualization of cross-conformal prediction as a tournament,} for a  partition $[n] = I_1\cup \dots \cup I_K$. For each fold $k$, and for each data point $i\in I_k$, the hypothesized test point $(X_{n+1},y)$ `wins' its game against a data point $(X_i,Y_i)$ if its score is strictly higher, when trained on the data with the $k$th fold held out---that is, we compare $s((X_{n+1},y);\cD_{[n]\backslash I_k})$ against $s((X_i,Y_i);\cD_{[n]\backslash I_k})$.}
    \commentAlt{The top panel shows a horizontal axis labeled as `Fold 1', plotting $5$ scores along the axis, highlighting one as the test point score. A directed arrow points from the higher test point score to a lower training point score. See long description.}
    \commentLongAlt{The top panel shows a horizontal axis labeled as `Fold 1', plotting $5$ scores along the axis, highlighting one as the test point score. A directed arrow points from the higher test point score $s((X_{n+1},y);\cD_{[n]\backslash I_1})$ to a lower training point score $s((X_i,Y_i);\cD_{[n]\backslash I_1})$. The next panel does the same for `Fold 2', and then this repeats until `Fold $K$'. The value of the test point score varies slightly across the different panels, and the values of the other points in each fold are varied.}
    \label{fig:cross-conformal-tournament}
\end{figure}

\begin{theorem}[Cross-conformal prediction coverage guarantee (Part I)]\label{thm:CC_smallK}
If the data points $(X_1,Y_1),\dots,(X_{n+1},Y_{n+1})$ are exchangeable, then the cross-conformal prediction method~\eqref{eq:cross-conf} (run with $K$ folds of equal size $n/K$) satisfies
    \[\P\left(Y_{n+1}\in\cC(X_{n+1})\right) \geq 1-2\alpha -  2(1-\alpha)\cdot\frac{1-1/K}{n/K+1}.\]
\end{theorem}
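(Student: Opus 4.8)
The plan is to reduce $K$-fold cross-conformal to an averaging-of-p-values argument. For each fold $k\in\{1,\dots,K\}$, write $S^{(k)}_i = s\big((X_i,Y_i);\cD_{[n]\setminus I_k}\big)$ for $i\in I_k$ and $S^{(k)}_{n+1} = s\big((X_{n+1},Y_{n+1});\cD_{[n]\setminus I_k}\big)$, and define the within-fold p-value
\[p^{(k)} = \frac{1 + \sum_{i\in I_k}\ind{S^{(k)}_i \geq S^{(k)}_{n+1}}}{1 + n/K},\]
which is exactly the split-conformal p-value formed by using fold $I_k$ as the calibration set (of size $n/K$) and $\cD_{[n]\setminus I_k}$ for training. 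The first step is to show each $p^{(k)}$ is super-uniform, i.e.\ $\P(p^{(k)}\leq\tau)\leq\tau$ for all $\tau\in[0,1]$. Condition on the ordered tuple $\cD_{[n]\setminus I_k}$; after reindexing so that $[n]\setminus I_k$ occupies the first $n-n/K$ coordinates (harmless since the full data vector is exchangeable), Fact~\ref{fact:conditional-exch-1} shows $\big((X_i,Y_i)\big)_{i\in I_k\cup\{n+1\}}$ is conditionally exchangeable given $\cD_{[n]\setminus I_k}$. Since $s(\,\cdot\,;\cD_{[n]\setminus I_k})$ is a fixed function once we condition, the $n/K+1$ scores $\big(S^{(k)}_i\big)_{i\in I_k\cup\{n+1\}}$ are exchangeable conditionally on $\cD_{[n]\setminus I_k}$, so Corollary~\ref{cor:perm_test_pval} applied conditionally --- and then integrated out via the tower law --- yields super-uniformity of $p^{(k)}$.

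The second step is a purely algebraic identity linking the cross-conformal coverage event to the average $\bar p := \frac1K\sum_{k=1}^K p^{(k)}$. Let $W_k = \sum_{i\in I_k}\ind{S^{(k)}_{n+1} > S^{(k)}_i}$ be the number of games the test point wins in fold $k$, so that by~\eqref{eq:cross-conf} we have $Y_{n+1}\notin\cC(X_{n+1})$ if and only if $\sum_{k=1}^K W_k \geq (1-\alpha)(n+1)$. Since $W_k = n/K - \#\{i\in I_k : S^{(k)}_i \geq S^{(k)}_{n+1}\} = (1+n/K)(1-p^{(k)})$, summing over $k$ gives $\sum_{k=1}^K W_k = (n+K)(1-\bar p)$. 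Hence miscoverage is equivalent to $\bar p \leq t^\ast$, where $t^\ast = 1 - \frac{(1-\alpha)(n+1)}{n+K} = \frac{K-1+\alpha(n+1)}{n+K}$.

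The third step is the classical fact that the arithmetic mean of (arbitrarily dependent) valid p-values, inflated by a factor of $2$, is again valid: $\P(\bar p\leq t)\leq 2t$ for every $t\geq 0$. I would prove this via the pointwise inequality $\ind{\bar p\leq t} \leq \frac{1}{Kt}\sum_{k=1}^K (2t - p^{(k)})_+$, which holds because on $\{\bar p\leq t\}$ one has $\frac1K\sum_k (2t-p^{(k)})_+ \geq \frac1K\sum_k(2t-p^{(k)}) = 2t-\bar p \geq t$; taking expectations and using $\E\big[(2t-p^{(k)})_+\big] = \int_0^{2t}\P(p^{(k)}< s)\,\mathsf{d}s \leq 2t^2$ gives the claim. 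Combining with Step 2, $\P\big(Y_{n+1}\notin\cC(X_{n+1})\big)=\P(\bar p\leq t^\ast)\leq 2t^\ast$, and since a short simplification gives $2t^\ast = 2\alpha + \frac{2(1-\alpha)(K-1)}{n+K} = 2\alpha + 2(1-\alpha)\cdot\frac{1-1/K}{n/K+1}$, taking complements yields the stated bound.

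The step I expect to be the main obstacle is the first one: being precise about what is conditioned on. One must condition on $\cD_{[n]\setminus I_k}$ as an \emph{ordered} tuple so that the trained model (hence the fold-$k$ score function) is genuinely deterministic, and must note that the reindexing used to invoke Fact~\ref{fact:conditional-exch-1} does not change the distribution, by exchangeability of $(X_1,Y_1),\dots,(X_{n+1},Y_{n+1})$. Once per-fold validity is established, the algebraic reduction of Step 2 and the $2t$-inflation bound of Step 3 are routine calculations.
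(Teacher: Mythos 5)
Your proposal is correct and follows essentially the same route as the paper's proof: per-fold split-conformal p-values validated by conditioning on the held-out training data, an algebraic identity linking the cross-conformal miscoverage event to the average $\bar p$ dropping below a threshold, and the factor-of-two bound for arbitrarily dependent averaged p-values. The only difference is cosmetic: you supply an explicit short proof of the $\P(\bar p\leq t)\leq 2t$ inequality (via the pointwise bound and $\E[(2t-p^{(k)})_+]\leq 2t^2$), whereas the paper invokes it as a known fact.
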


For the second bound, we will need to assume symmetry of the score $s$ (recall Definition~\ref{def:symmetric_score}). We will also need a slightly stronger form of exchangeability: we will need to assume that the $n+1$ training and test data points can be embedded into a longer, and still exchangeable, sequence of data points:
\begin{equation}\label{eqn:longer_exch_sequence_cross_conf}
\underbrace{(X_1,Y_1),\dots,(X_n,Y_n),(X_{n+1},Y_{n+1})}_{\textnormal{the observed training data + test point}},(X_{n+2},Y_{n+2}),\dots,(X_{n+n/K},Y_{n+n/K})\textnormal{ are exchangeable.}
\end{equation}
This is only mildly stronger than our usual assumption of exchangeability of the training and test data---for instance, it is satisfied if the data points are drawn i.i.d.\ from any distribution, or if they are sampled without replacement from a finite population of length $N\geq n+n/K$. However, when $K<n$, it will not hold in all settings---for example, if the $n+1$ data points are sampled uniformly without replacement from a finite population of size $n+1$.

\begin{theorem}[Cross-conformal prediction coverage guarantee (Part II)]\label{thm:CC_bigK}
        Under the same setting as Theorem~\ref{thm:CC_smallK},
        and assuming also that the exchangeability condition~\eqref{eqn:longer_exch_sequence_cross_conf} holds and that the score function is symmetric (as in Definition~\ref{def:symmetric_score}), the cross-conformal prediction method~\eqref{eq:cross-conf} satisfies
        \[\P\left(Y_{n+1}\in\cC(X_{n+1})\right) \geq 1-2\alpha - 2(1-\alpha)\cdot\frac{1-K/n}{K+1}.\]
\end{theorem}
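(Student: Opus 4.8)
The plan is to run a tournament-matrix argument on an augmented dataset, in the spirit of the coverage analyses for leave-one-out / cross-validation predictive inference. Write $m = n/K$ for the common fold size, and use the embedding assumption~\eqref{eqn:longer_exch_sequence_cross_conf} to work with the exchangeable sequence $Z_1,\dots,Z_N$, where $N = n+m$, $Z_i = (X_i,Y_i)$, and $Z_{n+2},\dots,Z_N$ are ``phantom'' points. Append one extra fold $I_{K+1} = \{n+1,\dots,N\}$ to the given partition, so that $I_1,\dots,I_{K+1}$ partition $[N]$ into $K+1$ blocks of size $m$. The key bookkeeping observation is that $[n] = [N]\setminus I_{K+1}$, so for any $k\le K$ the training set $\cD_{[n]\setminus I_k}$ appearing in~\eqref{eq:cross-conf} coincides (as a multiset, hence by symmetry of $s$ in Definition~\ref{def:symmetric_score} also as an input to $s$) with $\cD_{[N]\setminus(I_k\cup I_{K+1})}$. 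This lets me define, for $i\in I_k$ and $j\in I_\ell$ with $k\neq\ell$, the comparison bit
\[A_{ij} = \ind{s\big(Z_i ; \cD_{[N]\setminus(I_k\cup I_\ell)}\big) > s\big(Z_j ; \cD_{[N]\setminus(I_k\cup I_\ell)}\big)},\]
with $A_{ij}=0$ when $i,j$ share a fold, and then $B_i = \sum_{j\notin I_{k(i)}} A_{ij}$ (where $k(i)$ is the fold of $i$). Plugging $y=Y_{n+1}$ into~\eqref{eq:cross-conf} and using the identification above shows that the miscoverage event $\{Y_{n+1}\notin\cC(X_{n+1})\}$ is exactly $\{B_{n+1}\ge (1-\alpha)(n+1)\}$.

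Next I would symmetrize. Let $G\le\cS_N$ be the subgroup of permutations that map the partition $\{I_1,\dots,I_{K+1}\}$ to itself (relabeling the blocks and permuting within them). For fixed $\pi\in G$, symmetry of $s$ gives $A_{ij}\big((Z_{\pi(l)})_l\big) = A_{\pi(i)\pi(j)}\big((Z_l)_l\big)$, hence $B_{n+1}\big((Z_{\pi(l)})_l\big) = B_{\pi(n+1)}\big((Z_l)_l\big)$; since the data is exchangeable, $(Z_{\pi(l)})_l \eqd (Z_l)_l$, and since a uniformly random $\pi\in G$ sends $n+1$ to a uniformly random index of $[N]$, averaging over $\pi$ yields
\[\P\big(Y_{n+1}\notin\cC(X_{n+1})\big) = \P\big(B_{n+1}\ge (1-\alpha)(n+1)\big) = \E\!\left[\frac{\#\{i\in[N] : B_i \ge (1-\alpha)(n+1)\}}{N}\right].\]

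It then remains to bound the ``bad set'' $S = \{i\in[N] : B_i\ge (1-\alpha)(n+1)\}$ deterministically. The only structural fact needed is that $A_{ij}+A_{ji}\le 1$ for $i,j$ in distinct folds (both comparisons use the same fitted model, so they cannot both be strict). Summing $B_i$ over $i\in S$ and splitting the edges into those internal to $S$ — at most $\binom{|S|}{2}$, using $A_{ij}+A_{ji}\le 1$ — and those leaving $S$ — at most $|S|(N-|S|)$ — gives $|S|\,(1-\alpha)(n+1)\le \sum_{i\in S}B_i \le \binom{|S|}{2} + |S|(N-|S|)$, hence $|S|\le 2N-1-2(1-\alpha)(n+1)$ when $|S|\ge 1$. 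Feeding this into the symmetrization identity, and simplifying with $N=n+m$, $m = n/K$, and the algebraic identity $\frac{m-1}{n+m} = \frac{1-K/n}{K+1}$, gives $\P(Y_{n+1}\notin\cC(X_{n+1})) \le \frac{2N-2(1-\alpha)(n+1)}{N} = 2\alpha + 2(1-\alpha)\,\frac{1-K/n}{K+1}$, which is the claimed bound. The main obstacle is exactly this combinatorial step: obtaining the factor $2$ and the precise correction term hinges on bounding the within-$S$ edges by $\binom{|S|}{2}$ rather than the trivial $|S|(N-1)$, which is what makes the tournament argument work in the large-$K$ regime; a secondary point requiring care is the invariance $B_{n+1}\big((Z_{\pi(l)})_l\big) = B_{\pi(n+1)}\big((Z_l)_l\big)$ for $\pi\in G$, where both exchangeability and symmetry of $s$ are genuinely used.
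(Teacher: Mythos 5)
Your proposal is correct and follows essentially the same route as the paper's proof: embed the $n+1$ data points into a length-$N=n+n/K$ exchangeable sequence with a phantom $(K+1)$st fold, build the cross-fold tournament matrix, identify miscoverage with a row-sum threshold, symmetrize over the subgroup of fold-preserving permutations (the paper phrases this as choosing, for each $i$, a specific fold-preserving $\sigma$ with $\sigma(n+1)=i$, which amounts to the same averaging), and close with the deterministic tournament counting bound (the paper's Lemma~\ref{lem:A-rowsum}). Your within-$S$ bound of $\binom{|S|}{2}$ is marginally tighter than the paper's $\tfrac{1}{2}|S|^2$, though you discard the improvement when you drop the $-1$ in the final inequality; the resulting bound matches the theorem exactly.
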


In particular, Theorem~\ref{thm:CC_bigK} establishes that, for leave-one-out cross-conformal (i.e., the case $K=n$),
 \[\P\left(Y_{n+1}\in\cC(X_{n+1})\right) \geq 1-2\alpha.\]
(Note that for the leave-one-out setting, the condition~\eqref{eqn:longer_exch_sequence_cross_conf} is simply equivalent to exchangeability of the $n+1$ training and test points, since $n+n/K=n+1$ in this case.)

For the general case where $K$ may take any value, by combining the two theorems, we see that $K$-fold cross-conformal satisfies the following bound:
\begin{corollary}[Combined coverage guarantee for cross-conformal prediction]\label{cor:CC_allK}
Under the same setting as Theorem~\ref{thm:CC_bigK}, the cross-conformal prediction method~\eqref{eq:cross-conf} satisfies
\[\P\left(Y_{n+1}\in\cC(X_{n+1})\right) \geq 1-2\alpha - 2(1-\alpha)\cdot \min\left\{\frac{1-1/K}{n/K+1},  \frac{1-K/n}{K+1}\right\} \geq 1-2\alpha - \frac{2}{\sqrt{n}},\]
for any choice of $K$.
\end{corollary}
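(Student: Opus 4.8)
The plan is to obtain the corollary by simply combining the two preceding theorems and then performing a short elementary estimate; there is essentially no new probabilistic content here.

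\textbf{First inequality.} The hypotheses of the corollary are precisely those of Theorem~\ref{thm:CC_bigK} (the extended exchangeability condition~\eqref{eqn:longer_exch_sequence_cross_conf} together with symmetry of the score), and they also imply the weaker hypothesis of Theorem~\ref{thm:CC_smallK}: since any sub-vector of an exchangeable vector is itself exchangeable, the sequence $(X_1,Y_1),\dots,(X_{n+1},Y_{n+1})$ is exchangeable, as required there. Consequently \emph{both} lower bounds
\[\P(Y_{n+1}\in\cC(X_{n+1}))\ \geq\ 1-2\alpha-2(1-\alpha)\cdot\frac{1-1/K}{n/K+1}, \qquad \P(Y_{n+1}\in\cC(X_{n+1}))\ \geq\ 1-2\alpha-2(1-\alpha)\cdot\frac{1-K/n}{K+1}\]
hold simultaneously, and taking the larger of the two right-hand sides yields $\P(Y_{n+1}\in\cC(X_{n+1}))\geq 1-2\alpha-2(1-\alpha)\min\{\cdots\}$, which is the first claimed bound.

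\textbf{Second inequality.} I would rewrite the two error terms in the cleaner form $\frac{1-1/K}{n/K+1}=\frac{K-1}{n+K}$ and $\frac{1-K/n}{K+1}=\frac{n-K}{n(K+1)}$, and then split on the size of $K$ relative to $\sqrt n$. If $K\le\sqrt n$, then $\frac{K-1}{n+K}\le\frac{K}{n}\le\frac{1}{\sqrt n}$; if instead $K\ge\sqrt n$, then $\frac{n-K}{n(K+1)}\le\frac{1}{K+1}\le\frac{1}{\sqrt n}$. In either regime the minimum of the two terms is at most $1/\sqrt n$, and since $1-\alpha\le 1$ this gives $2(1-\alpha)\min\{\cdots\}\le 2/\sqrt n$, which combined with the first inequality yields $\P(Y_{n+1}\in\cC(X_{n+1}))\geq 1-2\alpha-2/\sqrt n$.

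The only points needing any care are verifying that the strengthened assumption~\eqref{eqn:longer_exch_sequence_cross_conf} really does license applying Theorem~\ref{thm:CC_smallK} as well (so that taking the minimum of the two bounds is legitimate), and checking that the case split on $K$ versus $\sqrt n$ is exhaustive over the admissible values of $K$ (the divisors of $n$), including the degenerate endpoints $K=1$ and $K=n$, where one of the two error terms simply vanishes. Neither presents a genuine difficulty, so I expect the proof to be only a few lines long.
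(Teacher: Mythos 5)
Your proof is correct and follows exactly the route the paper indicates (the paper omits a formal proof and simply states the corollary follows "by combining the two theorems"); you have filled in the necessary details. The observation that the hypotheses of Theorem~\ref{thm:CC_bigK} imply those of Theorem~\ref{thm:CC_smallK} (so both bounds hold simultaneously), and the elementary case split on $K \lessgtr \sqrt n$ to show $\min\{(K-1)/(n+K),\,(n-K)/(n(K+1))\}\le 1/\sqrt n$, together with $1-\alpha\le 1$, are exactly what is needed.
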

Ignoring the $\bigo(1/\sqrt{n})$ term, this result essentially tells us that, for any $K$, cross-conformal will always yield marginal coverage at level $\geq 1-2\alpha$---but by construction, we would expect to achieve $\geq 1-\alpha$ coverage (and indeed, this is generally achieved in practice). We will discuss this gap in more detail in Section~\ref{sec:why-factor-of-2} below.

We next prove the individual bounds.
\begin{proof}[Proof of Theorem~\ref{thm:CC_smallK}]
For each fold $k$, and for any $(x,y)$, define
\[p_k(x,y) = \frac{1 + \sum_{i\in I_k} \ind{s((x,y);\cD_{[n]\setminus I_k}) \leq s((X_i,Y_i);\cD_{[n]\setminus I_k})}}{1+n/K}.\]
In fact, $p_k(X_{n+1},y)$ is simply the conformal p-value for a hypothesized test point $(X_{n+1},y)$ (recall Definition~\ref{def:conformal-pvalue}), if we were to run split conformal with pretraining set $\cD_{[n]\setminus I_k}$ and calibration set $I_k$. As in Section~\ref{sec:conformal_as_perm}, then, $p_k(X_{n+1},Y_{n+1})$ satisfies
\[\P\left(p_k(X_{n+1},Y_{n+1})\leq t\right) \leq t\textnormal{ for all $t\in[0,1]$}.\]

Next,
for any $y$, we can calculate
\begin{multline*}\frac{p_1(X_{n+1},y) + \dots + p_K(X_{n+1},y)}{K} \\= \frac{K + \sum_{k=1}^K \sum_{i\in I_k} \ind{s((X_{n+1},y);\cD_{[n]\setminus I_k}) \leq s((X_i,Y_i);\cD_{[n]\setminus I_k})}}{K+n}.\end{multline*}
Comparing to the definition of the cross-conformal method~\eqref{eq:cross-conf}, we see that 
\[y \in \cC(X_{n+1}) \Longleftrightarrow  \frac{p_1(X_{n+1},y) + \dots + p_K(X_{n+1},y)}{K} > \alpha + (1-\alpha)\cdot\frac{K-1}{K+n}.\]
Therefore,
\begin{multline*}\P\left(Y_{n+1}\not\in\cC(X_{n+1})\right) \\= \P\left( \frac{p_1(X_{n+1},Y_{n+1}) + \dots + p_K(X_{n+1},Y_{n+1})}{K} \leq \alpha + (1-\alpha)\cdot\frac{K-1}{K+n}\right).\end{multline*}
Finally, to complete the proof, we apply the fact that averaging p-values provides a quantity that is a p-value up to a factor of 2, i.e., 
\[\P\left(\frac{p_1(X_{n+1},Y_{n+1}) + \dots + p_K(X_{n+1},Y_{n+1})}{K}\leq t \right) \leq 2t\textnormal{ for all $t$.}\]
\end{proof}

\begin{proof}[Proof of Theorem~\ref{thm:CC_bigK}]
The proof of this result will refer back to the tournament interpretation of the cross-conformal prediction interval, described in Section~\ref{sec:split_as_tournament}. Recall that, to determine whether $Y_{n+1}$ is covered by $\cC(X_{n+1})$, we examine the outcomes of `games' played between team $n+1$ and each team $i=1,\dots,n$, where for $i\in I_k$, the outcome of the game is determined by comparing scores $s((X_{n+1},Y_{n+1});\cD_{[n]\setminus I_k})$ and $s((X_i,Y_i);\cD_{[n]\setminus I_k})$.\bigskip

\textbf{Step 1: constructing the tournament.}
We begin by imagining that rather than a single test point $(X_{n+1},Y_{n+1})$,
we instead have $n/K$ test points, $(X_{n+i},Y_{n+i})$, for $i=1,\dots,n/K$.
This will allow us to have $K+1$ folds each containing $n/K$ points, where the
first $K$ folds contain the training data, while the last fold contains a test set. (We can construct this longer sequence of $n+n/K$ many exchangeable data points---that is, $(X_1,Y_1),\dots,(X_{n+n/K},Y_{n+n/K})$---due to assumption~\eqref{eqn:longer_exch_sequence_cross_conf}.)

Next consider a tournament where team $i$ plays against team $j$,
for each pair of teams that are \emph{not} in the same fold. Specifically,
suppose that $i\in I_k$ and $j\in I_\ell$, for $k\neq \ell$.
(Here we have $k,\ell \in[K+1]$, that is, we may choose to have one of the folds
be the $(K+1)$st fold that comprises the test set.)
Next, define $\cD_{n+n/K}$ to be the complete dataset,
with all $n+n/K$ (training and test) data points, 
and for any folds $k\neq \ell\in[K+1]$, let $\cD_{[n+n/K]\setminus (I_k\cup I_\ell)}$ be the corresponding dataset of size $n-n/K$
where \emph{two} folds are removed from $\cD_{n+n/K}$.

We now construct a $(n+n/K)$-by-$(n+n/K)$ matrix of scores,
\[S_{ij} = \begin{cases} s\big((X_i,Y_i); \cD_{[n+n/K]\setminus (I_k\cup I_\ell)}\big),& \textnormal{ if $i,j$ are in different folds: $i\in I_k$, $j\in I_\ell$, $k\neq \ell$},\\
 0, & \textnormal{ if $i,j$ are in the same fold}.\end{cases}\]
 In other words, $S_{ij}$ is the score assigned to the $i$th data point, when the model
 is trained after removing the two folds corresponding to data point $i$ and data point $j$ (and if instead
 $i$ and $j$ are in the same fold, then we assign the arbitrary value $S_{ij}=0$ since no game
 was played).

Next, consider the tournament matrix $A\in\{0,1\}^{(n+n/K)\times(n+n/K)}$, defined as follows: when team $i$ plays agains team $j$, we declare team $i$ to be the winner if its score is strictly higher:
 \[A_{ij} = \ind{S_{ij} > S_{ji}},\]
 for each $i,j\in[n+n/K]$. 
The following lemma deterministically bounds the number of teams which achieve a minimum  number of wins in this tournament. (This result is closely related to Landau's theorem on tournaments.)
\begin{lemma}[Tournament lemma]\label{lem:A-rowsum}
Let $N\geq 1$ and let $A\in\{0,1\}^{N\times N}$ satisfy $A_{ij} + A_{ji} \leq1$ for all $i,j\in[N]$. Then for any $t\in[0,1]$,
\[\sum_{i=1}^N \ind{\sum_{j=1}^N A_{ij} \geq N(1-t)} \leq 2tN.\]
\end{lemma}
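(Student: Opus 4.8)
The plan is to run a short counting argument on the sub‑tournament induced by the set of ``winners''. First I would set $r_i = \sum_{j=1}^N A_{ij}$, the number of games won by team $i$, and record two elementary consequences of the hypothesis $A_{ij}+A_{ji}\le 1$: taking $j=i$ gives $2A_{ii}\le 1$, hence $A_{ii}=0$ for every $i$; and for \emph{any} subset $T\subseteq[N]$ the number of wins in games played between teams of $T$ satisfies $\sum_{i\in T}\sum_{j\in T}A_{ij}\le \binom{|T|}{2}$, since each unordered pair $\{i,j\}$ with $i\neq j$ contributes $A_{ij}+A_{ji}\le 1$ while the diagonal contributes $0$.

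Next I would introduce $W=\{i\in[N] : r_i\ge N(1-t)\}$ and $m=|W|$, so that the left‑hand side of the lemma is exactly $m$; the goal is to show $m\le 2tN$. For each $i\in W$, I split its wins by whether the opponent lies in $W$: $r_i=\sum_{j\in W}A_{ij}+\sum_{j\notin W}A_{ij}$. There are $N-m$ teams outside $W$, so $\sum_{j\notin W}A_{ij}\le N-m$, and therefore $\sum_{j\in W}A_{ij}\ge r_i-(N-m)\ge N(1-t)-(N-m)=m-tN$.

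Summing this bound over all $i\in W$ gives $m(m-tN)\le \sum_{i\in W}\sum_{j\in W}A_{ij}\le \binom{m}{2}=\tfrac{m(m-1)}{2}$. If $m=0$ the inequality $m\le 2tN$ is trivial. If $m\ge 1$, I cancel a factor of $m$ to get $m-tN\le \tfrac{m-1}{2}$, i.e.\ $2m-2tN\le m-1$, hence $m\le 2tN-1\le 2tN$, which is the claimed conclusion.

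I do not expect any real obstacle here: the argument is purely combinatorial and the only points requiring care are the bookkeeping of the vanishing diagonal entries and the (slightly loose) count of $N-m$ outside opponents, neither of which affects the final bound. Conceptually, the content is just that $m$ teams each winning about $N$ games must accumulate on the order of $mN$ wins, yet can win at most $\binom{m}{2}\approx m^2/2$ games among themselves, forcing $m\lesssim 2tN$.
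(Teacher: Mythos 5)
Your proof is correct and uses essentially the same double-counting argument as the paper: both compare the total number of wins by teams in the ``winner'' set against the combinatorial cap on wins available inside and outside that set. The only cosmetic difference is that you explicitly observe $A_{ii}=0$ and hence bound the in-set wins by $\binom{m}{2}$ rather than $m^2/2$, giving the marginally sharper conclusion $m\le 2tN-1$, whereas the paper symmetrizes $A_{ij}+A_{ji}\le 1$ to get $m^2/2$ directly; both yield the stated bound.
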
 \index{tournament}
Applying this lemma with $N=n+n/K$ and $t = \alpha + (1-\alpha)\cdot \frac{1-K/n}{K+1}$, we then obtain
\begin{equation}\label{eq:A-rowsum-CC}\sum_{i=1}^{n+n/K}\ind{\sum_{j=1}^{n+n/K} A_{ij} \geq (1-\alpha)(n+1)} \leq 2\left(\alpha + (1-\alpha)\cdot \frac{1-K/n}{K+1}\right)\cdot (n+n/K).\end{equation}
\smallskip

 \textbf{Step 2: relating the tournament to the prediction set.}
 Next we consider the special case where one of the two folds is the test fold. For any $k\in[K]$, $\cD_{[n+n/K]\setminus(I_k\cup I_{K+1})} = \cD_{[n]\setminus I_k}$---that is,
removing the $k$th training fold and also the test fold (the $(K+1)$st fold) from the complete
dataset $\cD_{n+n/K}$, is equivalent to removing \emph{only} the $k$th fold from the training dataset $\cD_n$.
Therefore, considering the definition of $S_{ij}$, if $i=n+1$ and $j\in I_k$ for some $k\leq K$, then we can see that
\[S_{n+1,j} = s((X_{n+1},Y_{n+1});\cD_{[n]\setminus I_k}),\]
while if $j=n+1$ and $i\in I_k$ for some $k\leq K$, then we instead have
\[S_{i,n+1} = s((X_i,Y_i);\cD_{[n]\setminus I_k}).\]
Therefore, for any $k\in[K]$ and $i\in I_k$,
\[s((X_{n+1},Y_{n+1});\cD_{[n]\setminus I_k}) > s((X_i,Y_i);\cD_{[n]\setminus I_k})  \Longleftrightarrow A_{n+1,i} = 1.\]
Recalling the definition of the cross-conformal method~\eqref{eq:cross-conf}, 
 we then have
 \[Y_{n+1}\not\in\cC(X_{n+1}) \Longleftrightarrow \sum_{i=1}^n A_{n+1,i} \geq (1-\alpha)(n+1) \Longleftrightarrow \sum_{i=1}^{n+n/K} A_{n+1,i} \geq (1-\alpha)(n+1),\]
 where the last step holds since $A_{n+1,i} = 0$ for all $i\in I_{K+1} = \{n+1,\dots,n+n/K\}$, by definition.\bigskip
 
\textbf{Step 3: applying exchangeability.}
Next, we establish an exchangeability property for the tournament matrix $A\in\{0,1\}^{(n+n/K)\times(n+n/K)}$. In particular, 
consider any $\sigma$ on $[n+n/K]$ that preserves the equivalence relation induced by the folds:
\begin{equation}\label{eqn:sigma_preserves_folds}\textnormal{If $i,j\in I_k$ for some $k\in[K+1]$, then $\sigma(i),\sigma(j)\in I_\ell$ for some $\ell\in[K+1]$},\end{equation}
so that, for all $i,j\in[n+n/K]$, it holds that $i,j$ are in the same fold if and only if $\sigma(i),\sigma(j)$ are in the same fold. We will show that for any such
$\sigma$,
\begin{equation}\label{eq:tournament-matrix-perm}A \eqd A_\sigma,\end{equation}
where we define $(A_\sigma)_{ij} = A_{\sigma(i),\sigma(j)}$, that is, the permuted tournament matrix $A_\sigma$ is derived from the original tournament matrix $A$ by applying the same permutation $\sigma$ to both row and column indices. To verify~\eqref{eq:tournament-matrix-perm}, we observe that we can write $A$ explicitly as a function of the data: we write $A = A(\cD_{n+n/K})$, where, given any dataset $\cD=(z_r)_{r\in[n+n/K]}$ we define the matrix $A(\cD)$ to have entries 
\[\big(A(\cD)\big)_{ij} = \ind{s\big(z_i;(z_r)_{r\in[n+n/K]\setminus (I_k\cup I_\ell)}\big)> s\big(z_j;(z_r)_{r\in[n+n/K]\setminus (I_k\cup I_\ell)}\big)},\]
if $i\in I_k$, $j\in I_\ell$ for some $k\neq \ell\in [K+1]$, or otherwise if $i,j$ lie in the same fold then we set
\[\big(A(\cD)\big)_{ij}=0.\]
With this new notation in place, it holds immediately that
\[A = A(\cD_{n+n/K}) \eqd A((\cD_{n+n/K})_\sigma),\]
where the first step holds by definition of our new notation of $A$ as a function of the dataset, while the second step holds since the dataset $\cD_{n+n/K}$ is assumed to be exchangeable. Note that thus far, this claim holds for any permutation $\sigma$.
Finally, it is also straightforward to verify that, since the score
function $s$ is symmetric, as long as we choose a permutation $\sigma$ that preserves the folds as in~\eqref{eqn:sigma_preserves_folds}, then $A_\sigma = A((\cD_{n+n/K})_\sigma)$---that is, permuting the tournament matrix $A$ that was computed on the original dataset $\cD_{n+n/K}$, is equivalent to computing the tournament matrix using the permuted dataset $(\cD_{n+n/K})_\sigma$. We have therefore shown that~\eqref{eq:tournament-matrix-perm} holds.

Consequently, for any $\sigma$ that satisfies~\eqref{eqn:sigma_preserves_folds},
\begin{align*}\P\left(Y_{n+1}\not\in\cC(X_{n+1})\right)
&=\P\left(\sum_{j=1}^{n+n/K}A_{n+1,j} \geq (1-\alpha)(n+1)\right) \\
&= \P\left(\sum_{j=1}^{n+n/K}(A_\sigma)_{n+1,j} \geq (1-\alpha)(n+1)\right)\\
&= \P\left(\sum_{j=1}^{n+n/K}A_{\sigma(n+1),\sigma(j)} \geq (1-\alpha)(n+1)\right)\\
&= \P\left(\sum_{j=1}^{n+n/K}A_{\sigma(n+1),j} \geq (1-\alpha)(n+1)\right),\end{align*}
where the last step holds since summing over $\sigma(j)$, for $j\in[n+n/K]$, is equivalent to simply summing over $j\in[n+n/K]$. For any $i\in[n+n/K]$, we can choose some $\sigma$ satisfying~\eqref{eqn:sigma_preserves_folds} such that $\sigma(n+1)=i$, so we have
\[\P\left(Y_{n+1}\not\in\cC(X_{n+1})\right) = \P\left(\sum_{j=1}^{n+n/K}A_{ij} \geq (1-\alpha)(n+1)\right).\]
After averaging over all $i\in[n+n/K]$, then,
\begin{multline*}
    \P\left(Y_{n+1}\not\in\cC(X_{n+1})\right)
    = \frac{1}{n+n/K}\sum_{i=1}^{n+n/K} \P\left(\sum_{j=1}^{n+n/K}A_{ij} \geq (1-\alpha)(n+1)\right)\\
    = \E\left[\frac{1}{n+n/K}\sum_{i=1}^{n+n/K} \ind{\sum_{j=1}^{n+n/K}A_{ij} \geq (1-\alpha)(n+1)}\right]
    \leq 2\alpha + 2(1-\alpha)\cdot \frac{1-K/n}{K+1},
\end{multline*}
where the last step holds by applying~\eqref{eq:A-rowsum-CC} to the sum inside the expected value.

\end{proof}

\subsection{Proof of the tournament lemma}
In this section, we provide the formal proof of the tournament lemma, and discuss the intuition behind this result.
\begin{proof}[Proof of Lemma~\ref{lem:A-rowsum}]
Let 
\[J = \left\{i \in [N] : \sum_{j=1}^N A_{ij} \geq (1-t)N\right\},\]
so that our goal is to bound $|J|$.
We then calculate
\begin{align}
    |J| \cdot (1-t)N
    \label{eq:tournament_step1}&\leq \sum_{i \in J} \sum_{j=1}^N A_{ij}\textnormal{ by definition of $J$}\\
    \notag&= \sum_{i\in J} \sum_{j\in J} A_{ij} + \sum_{i\in J}\sum_{j\in[N]\setminus J}A_{ij}\\
    \notag&= \frac{1}{2} \sum_{i\in J} \sum_{j\in J} (A_{ij} + A_{ji}) + \sum_{i\in J}\sum_{j\in[N]\setminus J}A_{ij}\\
    \label{eq:tournament_step2}&\leq  \frac{1}{2} |J|^2 + |J|(N - |J|),
\end{align}
where the last step holds since, for all $i,j$, $A_{ij}\in\{0,1\}$ and $A_{ij}+A_{ji}\leq 1$, by assumption. Rearranging terms, we have proved the desired bound.
\end{proof}

\begin{figure}[t]
    \centering
    \includegraphics[width=0.3\textwidth]{\diagramspath tournament-matrix.pdf}
    \caption{\textbf{An illustration of the tournament matrix} and the set $J$ constructed in the proof of Lemma~\ref{lem:A-rowsum}.}
    \commentAlt{A square matrix $A$ partitioned into four blocks, with Blocks 1, 2, 3, 4 in the top-left, top-right, bottom-left, and bottom-right. The rows are partitioned into sets $J$ (top) and $[N]\setminus J$ (bottom), and same for the columns (left and right).}
    \label{fig:tournament-matrix}
\end{figure}

The key idea of the proof is a counting argument, which is illustrated in Figure~\ref{fig:tournament-matrix}. By definition of $J$, the total number of $1$'s in Blocks 1 and 2 of the figure, must be at least $|J|\cdot (1-t)N$, as in~\eqref{eq:tournament_step1}. On the other hand, the upper bound in step~\eqref{eq:tournament_step2} is due to the fact that, since $A$ is a tournament matrix, at most half of the entries of Block 1 can be equal to $1$.

\subsection{Why the factor of 2?}\label{sec:why-factor-of-2}

We now explain why the miscoverage rate bound of cross conformal is $2\alpha$ rather than $\alpha$, and give an example showing that the factor of $2$ is indeed necessary.
We first reexamine the proof of full conformal prediction through the lens of the tournament matrix construction.
For full conformal, the key construction in the proof is a \emph{vector} of scores,
\[(S_1,\dots,S_{n+1}),\]
with $i$th entry given by
\[S_i = s((X_i,Y_i); \cD_{n+1}),\]
where $\cD_{n+1}$ comprises the training and test data, $(X_1,Y_1),\dots,(X_n,Y_n),(X_{n+1},Y_{n+1})$. The next step is to simply observe that at most $\alpha$ fraction of this list can be above the $(1-\alpha)$-quantile of this list,
\[\sum_{i=1}^{n+1}\ind{S_i > \quantile\left(S_1,\dots,S_{n+1}; 1-\alpha\right)} \leq \alpha(n+1)\]
(recall Fact~\ref{fact:conversion-quantiles-cdfs}\ref{fact:conversion-quantiles-cdfs_part3}).
In other words, if we were to define a tournament matrix as 
\[A_{ij} = \ind{S_i > S_j},\]
which we can interpret as saying that team $i$ `wins' against team $j$ if its score is strictly higher,
then the above quantile inequality could be written as
\begin{equation}\label{eq:A-rowsum-fullCP}\sum_{i=1}^{n+1}\ind{\sum_j A_{ij} \geq (1-\alpha)(n+1)} \leq \alpha(n+1),\end{equation}
that is, if we count teams that win at least $(1-\alpha)$ of their games,
these teams can comprise at most an $\alpha$ fraction of the total number of teams in the league. By contrast, Lemma~\ref{lem:A-rowsum} shows that, for a general tournament matrix $A$, the bound inflates by a factor of 2.

The key difference is that in full conformal, the the tournament matrix $A$ is induced by a vector of scores and corresponds to a \emph{total ordering} on the $n+1$ data points. In particular, this tournament matrix satisfies transitivity: if $A_{ij}=1$ and $A_{jr}=1$ (i.e., team $i$ wins against $j$, and team $j$ wins against $r$), then we cannot have $A_{ri}=1$ (i.e., team $r$ cannot then win against $i$).
In contrast, for cross-conformal, it is possible to have a failure of transitivity where, say, $A_{ij}=1$, $A_{jr}=1$, and $A_{ri} = 1$---that is, team $i$ wins against $j$, team $j$ wins against $r$, and team $r$ wins against $i$---and thus $A$ is not consistent with any total ordering. 
 
The following construction illustrates a case where the factor of 2 is needed. For simplicity, take $K=n$, so that we have $A\in\{0,1\}^{(n+1)\times(n+1)}$, and assume that $m = \alpha(n+1) - 1$ is an integer.  Then define $A$ as follows:
\[A_{ij} = \begin{cases}
    1, & i,j\leq 2m+1\textnormal{ and }1\leq \textnormal{mod}(j-i,2m+1) \leq m, \\ 
    1, & i\leq 2m+1 < j, \\
    0, & \textnormal{ otherwise.}
\end{cases}
\]
For example, if $n+1 = 10$ and $\alpha = 0.4$ so that $m=3$, we have
\begin{equation}\label{eqn:jackknife_worstcase}A = \left(\begin{array}{ccccccc|ccc}
0 & 1 & 1 & 1 & 0 & 0 & 0 & 1 & 1 & 1\\
0 & 0 & 1 & 1 & 1 & 0 & 0 & 1 & 1 & 1\\
0 & 0 & 0 & 1 & 1 & 1 & 0 & 1 & 1 & 1\\
0 & 0 & 0 & 0 & 1 & 1 & 1 & 1 & 1 & 1\\
1 & 0 & 0 & 0 & 0 & 1 & 1 & 1 & 1 & 1\\
1 & 1 & 0 & 0 & 0 & 0 & 1 & 1 & 1 & 1\\
1 & 1 & 1 & 0 & 0 & 0 & 0 & 1 & 1 & 1\\\hline
0 & 0 & 0 & 0 & 0 & 0 & 0 & 0 & 0 & 0\\
0 & 0 & 0 & 0 & 0 & 0 & 0 & 0 & 0 & 0\\
0 & 0 & 0 & 0 & 0 & 0 & 0 & 0 & 0 & 0\end{array}\right),
\end{equation}
where the top-left block is of size $(2m+1)$-by-$(2m+1)$. We can see that, for each $i=1,\dots,2m+1$, we have \[\sum_{j=1}^{n+1} A_{ij} = \sum_{j=1}^{2m+1} A_{ij} + \sum_{j=(2m+1)+1}^{n+1} A_{ij} = m + (n+1 - (2m+1)) = n - m = (1-\alpha)(n+1).\]
We therefore have
\[\sum_{i=1}^{n+1}\ind{\sum_j A_{ij} \geq (1-\alpha)(n+1)} = 2m+1 = 2\alpha (n+1) - 1,\]
which shows that it is possible to have the row sum bound~\eqref{eq:A-rowsum-fullCP} fail,
by (nearly) a factor of 2.
\index{cross-conformal prediction|)}

\section{CV+ and jackknife+}\label{sec:CV+_jack+}
\index{cross-validation!CV+|(}

In this section, we will first step away from the framework of conformal and cross-conformal and consider a more classical approach: the prediction interval constructed via cross-validation (CV), for the problem of predicting a real-valued response (i.e., $\cY=\R$). 
Specifically, given a fitted model $\hf:\cX\rightarrow\R$ trained on the full training set $\cD_n=((X_i,Y_i))_{i\in[n]}$, a popular approach for constructing a prediction interval around $\hf(X_{n+1})$ is to use cross-validation to estimate the margin of error (i.e., the width of the interval).
CV has similar benefits to cross-conformal prediction when compared to full and split conformal prediction: it allows one to traverse the tradeoff between computational and statistical efficiency.
However, we will see that prediction intervals formed via CV can fail in the distribution-free setting, but a small modification (the CV+ method) resolves the issue. 
We will then see how CV+ is actually closely connected to cross-conformal.

Before proceeding, we introduce a new piece of notation. For the remainder of this chapter, we will write $\cA$ to denote a regression algorithm that is run in order to produce the fitted model $\hf$---for instance, the least-squares algorithm. That is, $\cA$ is a function that inputs a dataset $\cD$ (a collection of points in $\cX\times\R$), and returns a fitted model---the function $\hf:\cX\rightarrow\R$. (To compare to our earlier notation, when we discussed the residual score in Chapter~\ref{chapter:conformal-exchangeability}, the notation $\hf(\cdot;\cD)$ denoted this same fitted model.)

\subsection{Cross-validation for prediction}

Cross-validation refers to a genre of classical methods for estimating predictive error. Given a training set of $n$ data points, $\cD_n = ((X_i,Y_i))_{i\in[n]}$,
the cross-validation procedure for constructing a prediction interval can be implemented as follows.
\begin{enumerate}
    \item Split the training dataset $\cD_n$  into $K$ disjoint folds, via a partition $[n] = I_1\cup\ldots\cup I_K$.
    \item For each fold $k \in [K]$, fit the model $\hf_{-I_k} = \cA(\cD_{[n]\setminus I_k})$, where $\cD_{[n]\setminus I_k} = ((X_i,Y_i))_{i\in[n]\setminus I_k}$ denotes the training set with $k$th fold $I_k$ held out.
    \item For each data point $i\in[n]$, compute $S_i = |Y_i - \hf_{-I_{k(i)}}(X_i)|$, where $k(i)$ is the fold of the $i$th data point (i.e., the value $k$ such that $i\in I_k$).
    \item Compute the margin of error, $\hat{q}_{\rm CV} = \quantile\left(S_1,\dots,S_n;1-\alpha\right)$.
    \item Refit the model on the full dataset, $\hf=\cA(\cD_n)$, and output the prediction interval
        \begin{equation}\label{eq:classical_CV}\cC(X_{n+1}) = \hf(X_{n+1}) \pm \hat{q}_{\rm CV}.\end{equation}
\end{enumerate}
The reason cross-validation works well in practice is that the residuals $S_i$ have approximately the same distribution as the residuals of the fully trained model $\hf$ on new data, because the data point $(X_i,Y_i)$, was not used in training the model $\hf_{-I_{k(i)}}$. 
When $K=n$, this algorithm is sometimes called the jackknife, or, leave-one-out cross-validation.

Although the CV algorithm above is well-motivated, it does not have an assumption-free guarantee of coverage.
To see why, consider the following failure case:
\begin{example}[Failure of predictive coverage for CV]\label{example:jackknife_fails}
Suppose that our regression algorithm $\cA$ is the following: given a dataset $\cD$, to fit the model $\hf=\cA(\cD)$, if $\cD$ has an even number of data points then we return the constant model $\hf(x)= 0$, while if $\cD$ has an odd number of data points then we instead return $\hf(x)= 1$.

Now consider a distribution $P$ with $\P_P(Y=0) = 1$. Suppose that $n$ and $n/K$ are both odd numbers. Then, for each fold $k$, $\hf_{-I_k}(x)=0$ for all $x$, because the dataset $\cD_{[n]\setminus I_k}$ contains an even number of data points. Consequently, the $i$th training data point has residual $S_i=0$, for all $i\in[n]$. On the other hand, $\cD_n$ has an odd number of data points, and so we will have $\hf(X_{n+1})=1$. The $K$-fold CV interval will then be equal to $\cC(X_{n+1}) =1 \pm 0 = \{1\}$ almost surely, which will have zero probability of coverage under the distribution $P$.
\end{example}

This example, while highly degenerate, shows that CV can exhibit undercoverage if the performance of the model $\hf$ (fitted on $n$ data points) is substantially different from the performance of models $\hf_{-I_k}$ trained on a smaller sample size $n-n/K$. To avoid such failure cases, below we will derive a slight modification to the CV algorithm.
This modification (the CV+ method, and its special case, jackknife+) enjoys an assumption-free coverage guarantee. 

\subsection{An alternative formulation of CV}
Before proceeding, we pause to give an equivalent definition of the CV prediction interval, which will be useful for extending to the CV+ method below.

The $K$-fold CV prediction interval~\eqref{eq:classical_CV} can equivalently be defined as
\begin{equation*}\cC(X_{n+1}) = \left[\hf(X_{n+1}) - \quantile\left( (S_i)_{i\in[n]} ; 1-\alpha \right),\right.
\left. \hf(X_{n+1}) + \quantile\left( (S_i)_{i\in[n]} ; 1-\alpha \right)\right].\end{equation*}
Rewriting this one more time, we have
\begin{multline}\label{eq:classical_CV_reinterpret}\cC(X_{n+1}) = \left[  -\quantile\left( \left( -(\hf(X_{n+1}) -  S_i)\right)_{i\in[n]} ; 1-\alpha \right), \right.  \\\left.\quantile\left( \left( \hf(X_{n+1}) +  S_i\right)_{i\in[n]} ; 1-\alpha\right)\right].\end{multline}

The unusual `double negative' form of the left endpoint is due to the fact that, in general, $-\quantile\big((-z_1,\dots,-z_n);\tau\big)$ and $ \quantile\big((z_1,\dots,z_n);1-\tau\big)$ are not necessarily equal; for defining this interval, using the `double negative' for the left endpoint makes its construction analogous to that of the right endpoint.

\subsection{Defining CV+ and jackknife+}
We are now ready to define the modified versions of the CV and jackknife prediction intervals. 
First, for $K$-fold CV+, we define
\begin{multline}\label{eq:CV+}\cC(X_{n+1}) = \left[  -\quantile\left( \left( -( \hf_{-I_{k(i)}}(X_{n+1}) -  S_i)\right)_{i\in[n]} ; (1-\alpha)(1+1/n) \right), \right. \\ \left.\quantile\left( \left( \hf_{-I_{k(i)}}(X_{n+1}) +  S_i\right)_{i\in[n]} ; (1-\alpha)(1+1/n) \right)\right],\end{multline}
where as before we define $S_i = |Y_i - \hf_{-I_{k(i)}}(X_i)|$, and $k(i)\in[K]$ denotes the fold to which data point $i$ belongs.
Comparing to~\eqref{eq:classical_CV}, we can see that the difference
lies in using the leave-one-fold-out predictions, $\hf_{-I_k}(X_{n+1})$, in place of the prediction $\hf(X_{n+1})$ obtained by retraining on the full dataset. 
The quantile level $1-\alpha$ has also been replaced by $(1-\alpha)(1+1/n)$, which is useful for obtaining finite-sample guarantees, just as for conformal prediction in earlier chapters.
For the special case $K=n$,
the jackknife+ prediction interval is defined as
\begin{multline}\label{eq:jackknife+}\cC(X_{n+1}) = \left[  -\quantile\left( \left(-( \hf_{-i}(X_{n+1}) -  S_i)\right)_{i\in[n]} ;  (1-\alpha)(1+1/n) \right), \right. \\ \left.\quantile\left( \left(\hf_{-i}(X_{n+1}) +  S_i\right)_{i\in[n]} ; (1-\alpha)(1+1/n) \right)\right],\end{multline}
where $\hf_{-i}=\cA(\cD_{[n]\backslash\{i\}})$ is the leave-one-out fitted model.
Note that for $\alpha \leq 0.5$, the left endpoint is guaranteed to take a value less than or equal to the right endpoint. For larger $\alpha$, if this is violated, we define $\cC(X_{n+1})$ to be the empty set.

Section~\ref{sec:CC_CV} below will relate the CV+ and jackknife+ methods to the cross-conformal framework defined earlier in this chapter, and will present distribution-free theoretical guarantees for these methods. For the moment, we pause to give some brief intuition about why the CV+ and jackknife+ methods avoid the 
worst-case scenario that arose for CV and jackknife in Example~\ref{example:jackknife_fails}. In that degenerate example, 
the issue arose from the fact that CV (i.e., traditional CV, rather than CV+) uses residuals $S_i = |Y_i - \hf_{-I_{k(i)}}(X_i)|$ to try to estimate the value of the error on the test point, $|Y_{n+1} - \hf(X_{n+1})|$.
These two quantities are not drawn from the same distribution: for $i\in[n]$, $S_i$ is an out-of-sample error for a model fitted on $n -n/K$ data points, while the test error $|Y_{n+1} - \hf(X_{n+1})|$ is an out-of-sample error for a model fitted on $n$ data points. In contrast, for CV+, by using predictions $\hf_{-I_k}(X_{n+1})$ (in place of the original prediction $\hf(X_{n+1})$), we are implicitly comparing $S_i$ with a test point error $|Y_{n+1} - \hf_{-I_k}(X_{n+1})|$. These two errors are now drawn from the same distribution (and are in fact exchangeable), since both are out-of-sample errors for the same fitted model $\hf_{-I_k}$.

\subsection{Interpreting the interval}
Unlike the CV (or jackknife), this new CV+ (or jackknife+) interval is no longer constructed as a symmetric interval around the prediction $\hf(X_{n+1})$ fitted on the entire training dataset---and indeed in extreme cases it can even be the case that the prediction $\hf(X_{n+1})$ is not even contained in the CV+ interval $\cC(X_{n+1})$. Therefore, we cannot interpret this new construction as giving us an interval around $\hf(X_{n+1})$. However, an alternative interpretation does hold: 
\begin{proposition}\label{prop:CV+_contains_median}
Assume $\alpha\leq 0.5$.
The CV+ interval~\eqref{eq:CV+} (run with $K$ folds of equal size $n/K$) must contain the median prediction
\[\hf^{\textnormal{med}}(X_{n+1}) = \textnormal{Median}\left(\left(\hf_{-I_k}(X_{n+1})\right)_{k\in [K]}\right).\]
In particular for the case $K=n$, the jackknife+ prediction interval contains the median leave-one-out prediction,
\[\hf^{\textnormal{med}}(X_{n+1}) = \textnormal{Median}\left(\left(\hf_{-i}(X_{n+1})\right)_{i\in [n]}\right).\]
\end{proposition}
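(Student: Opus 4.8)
The goal is to show that the point $m := \hf^{\textnormal{med}}(X_{n+1})$ lies in the CV+ interval $\cC(X_{n+1})$ given by~\eqref{eq:CV+}. Writing $L$ for the left endpoint and $R$ for the right endpoint of that interval, I need to verify both $m \geq L$ and $m \leq R$. By symmetry of the two arguments (the left endpoint is a ``double-negative'' version of the right endpoint applied to the reflected data), it suffices to focus on one inequality, say $m \leq R$, and then observe that the other follows by an identical argument after negating everything.

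\textbf{Key steps.} First, unpack $R = \quantile\big( (\hf_{-I_{k(i)}}(X_{n+1}) + S_i)_{i\in[n]} \, ; \, (1-\alpha)(1+1/n)\big)$, where $S_i = |Y_i - \hf_{-I_{k(i)}}(X_i)| \geq 0$. By the definition of the quantile of a finite list (Definition~\ref{def:quantile}, or equivalently Fact~\ref{fact:conversion-order-stats-quantiles}), $m \leq R$ fails only if strictly more than $(1 - \alpha)n$ of the values $\hf_{-I_{k(i)}}(X_{n+1}) + S_i$ are strictly below $m$; so I want to bound the count $\#\{i : \hf_{-I_{k(i)}}(X_{n+1}) + S_i < m\}$. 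Since $S_i \geq 0$, this count is at most $\#\{i : \hf_{-I_{k(i)}}(X_{n+1}) < m\}$. Now group the training indices by fold: for each fold $k\in[K]$, the contribution is $n_k \cdot \ind{\hf_{-I_k}(X_{n+1}) < m}$ where $n_k = |I_k| = n/K$. Since $m$ is the median of the $K$ leave-one-fold-out predictions $\hf_{-I_k}(X_{n+1})$, at most $\lfloor K/2 \rfloor$ of them (when $K$ is even; at most $(K-1)/2$ when $K$ is odd, and in general at most $\lceil K/2\rceil - \ind{K \text{ odd}}$, but the clean bound is at most $K/2$ when $K$ even, at most $(K-1)/2$ when $K$ odd) are strictly below $m$; in all cases this number is $\leq K/2$. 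Hence the count of training indices $i$ with $\hf_{-I_{k(i)}}(X_{n+1}) < m$ is at most $(n/K)\cdot (K/2) = n/2$. Since $\alpha \leq 0.5$, we have $n/2 \leq (1-\alpha)n$, so the count is $\leq (1-\alpha)n$, which is not strictly greater than $(1-\alpha)n$, and therefore $m \leq R$ by the quantile characterization.

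\textbf{The symmetric half and wrap-up.} For $m \geq L$, recall $L = -\quantile\big( (-(\hf_{-I_{k(i)}}(X_{n+1}) - S_i))_{i\in[n]} \, ; \, (1-\alpha)(1+1/n)\big)$. The condition $m < L$ would require strictly more than $(1-\alpha)n$ of the values $-(\hf_{-I_{k(i)}}(X_{n+1}) - S_i) = S_i - \hf_{-I_{k(i)}}(X_{n+1})$ to be strictly below $-m$, i.e., strictly more than $(1-\alpha)n$ indices $i$ with $\hf_{-I_{k(i)}}(X_{n+1}) - S_i > m$, which (using $S_i\ge 0$) is at most $\#\{i : \hf_{-I_{k(i)}}(X_{n+1}) > m\}$. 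The same fold-grouping argument and the median property bound this by $n/2 \leq (1-\alpha)n$, giving a contradiction. The case $K = n$ is just the specialization $n_k = 1$, so the jackknife+ claim is immediate.

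\textbf{Main obstacle.} The only delicate point is getting the median inequality exactly right when $K$ is odd versus even, and matching it against the quantile level $(1-\alpha)(1+1/n)$ rather than simply $(1-\alpha)$. The inflated level only makes $R$ larger and $L$ smaller, so it cannot hurt; the care needed is purely in confirming that ``at most $K/2$ of the fold-predictions lie strictly on one side of the median'' combines with $n_k = n/K$ and $\alpha\leq 0.5$ to give a \emph{non-strict} inequality $\leq (1-\alpha)n$, which is precisely what the quantile definition requires to conclude $m\in\cC(X_{n+1})$. I expect this bookkeeping — and nothing deeper — to be the crux.
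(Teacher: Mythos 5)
Your proof is correct and takes essentially the same approach as the paper's, with the only difference being presentation: the paper writes the argument directly as a chain of quantile monotonicity inequalities (drop the nonnegative $S_i$, then compare the level $(1-\alpha)(1+1/n) \geq 0.5(1+1/n)$ quantile to the median, then observe that the median of the $n$-length list with each fold value repeated $n/K$ times equals the median of the $K$ fold values), whereas you unpack the quantile into a rank-count contrapositive and track the same three facts — $S_i \geq 0$, $\alpha \leq 0.5$, and the median bound on how many fold predictions fall strictly to one side — at the level of counts. The bookkeeping you flag as the crux (odd vs.\ even $K$, the inflated quantile level) does come out cleanly exactly as you describe.
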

We can view this median value as an ensembled prediction, obtained by retraining $\hf$ on different subsets of the training data, and can therefore interpret CV+ as providing an interval around $\hf^{\textnormal{med}}(X_{n+1})$ to quantify our uncertainty about this ensembled predictor.

\begin{proof}[Proof of Proposition~\ref{prop:CV+_contains_median}]
The upper bound of the CV+ interval~\eqref{eq:CV+} satisfies
\begin{align*}
    &\quantile\left( \left(\hf_{-I_{k(i)}}(X_{n+1}) +  S_i\right)_{i\in[n]} ; (1-\alpha)(1+1/n) \right)\\
    &\geq \quantile\left( \left(\hf_{-I_{k(i)}}(X_{n+1})\right)_{i\in[n]} ; (1-\alpha)(1+1/n) \right)\\
    &\geq \textnormal{Median}\left(\left(\hf_{-I_{k(i)}}(X_{n+1})\right)_{i\in[n]}  \right),
\end{align*}
where the first inequality holds since $S_i\geq 0$ for all $i$ by construction, and the second inequality holds since $\alpha\leq 0.5$. Moreover, we also have
\[\textnormal{Median}\left(\left(\hf_{-I_{k(i)}}(X_{n+1})\right)_{i\in[n]}  \right) = \textnormal{Median}\left(\left(\hf_{-I_k}(X_{n+1})\right)_{k\in[K]}\right) = \hf^{\textnormal{med}}(X_{n+1}),\]
where the first equality holds since the folds are assumed to be of equal size $n/K$: we are using the fact that the median of a vector $v=(v_1,v_2,\dots,v_{K-1},v_K)$ will be unchanged if we repeat each value an equal number of times, e.g., $(v_1,v_1,v_2,v_2,\dots,v_{K-1},v_{K-1},v_K,v_K)$. A similar argument shows that the lower bound of the CV+ interval satisfies
\[-\quantile\left( \left( -( \hf_{-I_{k(i)}}(X_{n+1}) -  S_i)\right)_{i\in[n]} ; (1-\alpha)(1+1/n) \right) \leq \hf^{\textnormal{med}}(X_{n+1}),\]
and therefore, it must hold that $\hf^{\textnormal{med}}(X_{n+1})\in \cC(X_{n+1})$.
\end{proof}

\section{Coverage guarantees for CV+ and jackknife+}\label{sec:CC_CV}
On the surface, the CV+ method (with jackknife+ as a special case) appears to be quite different from cross-conformal. The CV+ method is constructed to form intervals around a predicted value of $Y_{n+1}$ based on computing residuals $S_i$ for the leave-one-out or leave-one-fold-out models, while cross-conformal prediction can return a prediction set that is not an interval, and does not even need to use a score that is based on a regression model.
However, we will now see that CV+ is closely related to the cross-conformal prediction method: 
the cross-conformal set is \emph{always} contained in the CV+ interval, if cross-conformal is implemented with the  residual score, $s((x,y);\cD) = |y - [\cA(\cD)](x)|$. (Here $[\cA(\cD)](x)$ denotes the prediction at $x$ when the algorithm $\cA$ is trained on dataset $\cD$---that is, if $\hf = \cA(\cD)$ is the fitted model, then $s((x,y);\cD) = |y-\hf(x)|$.) 

In particular, this means that cross-conformal has the advantage of being potentially less conservative, but on the other hand CV+ is potentially more interpretable as it is guaranteed to return an interval (rather than, perhaps, a disconnected set) and to contain the median ensembled prediction, by construction. In practice, however, the output of the two methods is often identical.

\index{cross-conformal prediction}
\begin{proposition}[Relating CV+ and jackknife+ to cross-conformal prediction]\label{prop:CC_CV+}
    Let $[n] = I_1\cup \dots \cup I_K$ be any partition.
    Let $\cC^{\textnormal{CV+}}(X_{n+1})$ be the $K$-fold CV+ interval
    constructed with some regression algorithm $\cA$, as in~\eqref{eq:CV+}. 
    Define a score function
    \[s((x,y);\cD) =|y - [\cA(\cD)](x)|,\]
    and let $\cC^{\textnormal{CC}}(X_{n+1})$ be the 
    $K$-fold cross-conformal interval constructed as in~\eqref{eq:cross-conf} with this score function. Then it holds surely that
    \[\cC^{\textnormal{CC}}(X_{n+1})\subseteq \cC^{\textnormal{CV+}}(X_{n+1}).\]
\end{proposition}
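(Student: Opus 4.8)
The plan is to show that if a hypothesized value $y$ lies in the cross-conformal set $\cC^{\textnormal{CC}}(X_{n+1})$, then it also lies in the CV+ interval $\cC^{\textnormal{CV+}}(X_{n+1})$. Fix such a $y$. By definition~\eqref{eq:cross-conf} with the residual score $s((x,y);\cD) = |y - [\cA(\cD)](x)| = |y-\hf_{-I_k}(x)|$ when $\cD = \cD_{[n]\setminus I_k}$, membership $y\in\cC^{\textnormal{CC}}(X_{n+1})$ says that
\[
\sum_{k=1}^K\sum_{i\in I_k}\ind{\,|y - \hf_{-I_k}(X_{n+1})| > |Y_i - \hf_{-I_k}(X_i)|\,} < (1-\alpha)(n+1),
\]
i.e., writing $S_i = |Y_i - \hf_{-I_{k(i)}}(X_i)|$, that $y$ wins strictly fewer than $(1-\alpha)(n+1)$ of its $n$ games. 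The CV+ interval~\eqref{eq:CV+} has upper endpoint $\quantile\big((\hf_{-I_{k(i)}}(X_{n+1}) + S_i)_{i\in[n]};(1-\alpha)(1+1/n)\big)$ and a symmetrically-defined lower endpoint. So it suffices to show: (a) $y \le \quantile\big((\hf_{-I_{k(i)}}(X_{n+1}) + S_i)_{i\in[n]};(1-\alpha)(1+1/n)\big)$, and (b) the analogous lower bound.

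For step (a), I will use the characterization that $y \le \quantile(v_1,\dots,v_n;\tau)$ fails only if strictly more than $n\tau - $ (something) of the $v_i$ are $< y$; more precisely, $\quantile(v_1,\dots,v_n;\tau) < y$ iff $\sum_i \ind{v_i < y} \ge n\tau$ (by the definition of the quantile of a finite list, as used already in the proof of Proposition~\ref{prop:conformal-via-pvalues}). Here $v_i = \hf_{-I_{k(i)}}(X_{n+1}) + S_i$ and $n\tau = (1-\alpha)(n+1)$. Now observe the key pointwise implication: if $\hf_{-I_{k(i)}}(X_{n+1}) + S_i < y$, then $y - \hf_{-I_{k(i)}}(X_{n+1}) > S_i \ge 0$, hence $|y - \hf_{-I_{k(i)}}(X_{n+1})| = y - \hf_{-I_{k(i)}}(X_{n+1}) > S_i = |Y_i - \hf_{-I_{k(i)}}(X_i)|$, meaning $y$ wins its game against data point $i$ (strictly). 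Therefore $\sum_i \ind{v_i < y} \le \sum_i \ind{y \text{ wins against } i} < (1-\alpha)(n+1) = n\tau$, which contradicts $\quantile(v;\tau) < y$. Hence $y \le \quantile(v;\tau)$, giving (a). Step (b) is entirely symmetric: set $w_i = -(\hf_{-I_{k(i)}}(X_{n+1}) - S_i) = S_i - \hf_{-I_{k(i)}}(X_{n+1})$; if $w_i < -y$ then $\hf_{-I_{k(i)}}(X_{n+1}) - y > S_i \ge 0$, so again $|y - \hf_{-I_{k(i)}}(X_{n+1})| > S_i$ and $y$ wins against $i$; the same counting argument shows $\sum_i \ind{w_i < -y} < (1-\alpha)(n+1)$, hence $-y \le \quantile(w;(1-\alpha)(1+1/n))$, i.e. $y \ge -\quantile(w;(1-\alpha)(1+1/n))$, which is exactly the lower endpoint of~\eqref{eq:CV+}. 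Combining (a) and (b) gives $y\in\cC^{\textnormal{CV+}}(X_{n+1})$.

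The only mild subtlety — and the place to be careful — is the bookkeeping between the strict inequality defining a "win" in cross-conformal ($s_{\text{test}} > s_i$) and the strict-versus-nonstrict conventions in the quantile characterization $\quantile(v;\tau) < y \iff \sum\ind{v_i < y}\ge n\tau$. The implication I need runs in the convenient direction: $v_i < y \Rightarrow$ ($y$ strictly wins against $i$), because the slack $S_i \ge 0$ only helps, and strictness of $v_i < y$ forces strictness of the score comparison. So the count of indices with $v_i < y$ is bounded above by the count of wins, which is what the cross-conformal membership controls. No exchangeability, symmetry of the score, or distributional assumption is used — this is a purely deterministic (indeed "surely") containment, matching the statement. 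I would write this up in two short displays for (a) and (b) after the initial unpacking, keeping the argument essentially as above.
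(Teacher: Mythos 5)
Your proof is correct and is essentially the contrapositive of the paper's argument: the paper assumes $y\notin\cC^{\textnormal{CV+}}(X_{n+1})$ and shows $y\notin\cC^{\textnormal{CC}}(X_{n+1})$, whereas you assume $y\in\cC^{\textnormal{CC}}(X_{n+1})$ and rule out violation of each CV+ endpoint by contradiction, but the key implication (strict dominance $v_i < y$ or $w_i < -y$ forces a strict score comparison, since $S_i\geq 0$) and the quantile characterization are identical. The only difference is presentational, so there is nothing further to add.
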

We note that in the special case $K=n$, this result implies that the jackknife+ interval contains the $n$-fold (i.e., leave-one-out) cross-conformal prediction set. 

Since, by Proposition~\ref{prop:CC_CV+}, the prediction interval constructed by jackknife+ or CV+ is guaranteed to contain the cross-conformal prediction set, the coverage guarantees in Section~\ref{sec:theory-CV} apply immediately to jackknife+ and to CV+. 
For completeness, these are stated in the following corollary.
\begin{corollary}[Predictive coverage guarantee for CV+ and jackknife+]\label{cor:CV_jack}
Assume $\cA$ is a symmetric algorithm (i.e., $\cA(\cD) = \cA(\cD_\sigma)$ for any dataset $\cD$ and any permutation $\sigma$). If the data points $(X_1,Y_1),\dots,(X_{n+1},Y_{n+1})$ are exchangeable, then the jackknife+ interval~\eqref{eq:jackknife+} satisfies
    \[\P\left(Y_{n+1}\in\cC(X_{n+1})\right) \geq 1-2\alpha.\]
Moreover, for any $K$, if the exchangeability condition~\eqref{eqn:longer_exch_sequence_cross_conf} holds, then the CV+ interval~\eqref{eq:CV+} (run with $K$ folds of equal size $n/K$) satisfies
            \begin{equation}
        \P\left(Y_{n+1}\in\cC(X_{n+1})\right) \geq 1-2\alpha - 2(1-\alpha)\cdot\min\left\{\frac{1-1/K}{n/K+1},\frac{1-K/n}{K+1}\right\} \geq 1-2\alpha -2/\sqrt{n}.
    \end{equation}
\end{corollary}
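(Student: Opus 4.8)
The plan is to obtain this corollary almost entirely from two ingredients already in hand: the sure containment of Proposition~\ref{prop:CC_CV+}, and the cross-conformal coverage bounds of Section~\ref{sec:theory-CV}. First I would verify that the residual score $s((x,y);\cD) = |y - [\cA(\cD)](x)|$ is symmetric in the sense of Definition~\ref{def:symmetric_score}. This is immediate from the assumption that $\cA$ is a symmetric algorithm: since $\cA(\cD) = \cA(\cD_\sigma)$ for every dataset $\cD$ and permutation $\sigma$, we get $s((x,y);\cD) = |y - [\cA(\cD)](x)| = |y - [\cA(\cD_\sigma)](x)| = s((x,y);\cD_\sigma)$. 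Hence all the hypotheses needed to run cross-conformal prediction with this score, and to invoke the theorems of Section~\ref{sec:theory-CV}, are satisfied whenever the stated exchangeability assumptions hold.

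Next, by Proposition~\ref{prop:CC_CV+}, with this choice of score we have the sure inclusion $\cC^{\textnormal{CC}}(X_{n+1}) \subseteq \cC(X_{n+1})$, where $\cC^{\textnormal{CC}}$ is the $K$-fold cross-conformal set and $\cC$ is the corresponding CV+ interval (or, for $K=n$, the jackknife+ interval). Consequently the event $Y_{n+1}\in\cC^{\textnormal{CC}}(X_{n+1})$ implies the event $Y_{n+1}\in\cC(X_{n+1})$, so $\P(Y_{n+1}\in\cC(X_{n+1})) \geq \P(Y_{n+1}\in\cC^{\textnormal{CC}}(X_{n+1}))$, and it suffices to lower-bound the right-hand side.

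For the jackknife+ case ($K=n$), I would apply Theorem~\ref{thm:CC_bigK} with $K=n$. The only subtlety is that the theorem's hypothesis is the embedding condition~\eqref{eqn:longer_exch_sequence_cross_conf}, but when $K=n$ the sequence there has length $n + n/K = n+1$, so that condition reduces to exchangeability of the $n+1$ training and test points---exactly what is assumed for jackknife+. Plugging $K=n$ into the bound of Theorem~\ref{thm:CC_bigK} makes the correction term $2(1-\alpha)\cdot\frac{1-K/n}{K+1}$ vanish, yielding $\P(Y_{n+1}\in\cC(X_{n+1})) \geq 1-2\alpha$. For the general $K$ case, I would instead invoke Corollary~\ref{cor:CC_allK}, whose hypotheses (symmetry of the score, together with condition~\eqref{eqn:longer_exch_sequence_cross_conf}) we have just verified, to obtain the stated $\min\{\cdot,\cdot\}$ bound and hence $1-2\alpha-2/\sqrt{n}$.

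The argument is essentially bookkeeping, so I do not anticipate a genuine obstacle; the two points requiring care are (i) confirming that symmetry of $\cA$ transfers to symmetry of the residual score, and (ii) observing that for $K=n$ the embedding hypothesis~\eqref{eqn:longer_exch_sequence_cross_conf} collapses to ordinary exchangeability, so the weaker assumption stated for jackknife+ in the corollary is indeed strong enough to invoke Theorem~\ref{thm:CC_bigK}.
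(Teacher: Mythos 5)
Your argument is correct and matches the paper's own reasoning exactly: the paper also derives this corollary by noting that Proposition~\ref{prop:CC_CV+} gives the sure containment of the cross-conformal set in the CV+/jackknife+ interval, so the bounds of Theorem~\ref{thm:CC_bigK} (with $K=n$) and Corollary~\ref{cor:CC_allK} transfer directly, and the paper makes the same observation that condition~\eqref{eqn:longer_exch_sequence_cross_conf} reduces to ordinary exchangeability when $K=n$. Your additional explicit check that symmetry of $\cA$ yields symmetry of the residual score is a worthwhile piece of bookkeeping that the paper leaves implicit.
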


\begin{proof}[Proof of Proposition~\ref{prop:CC_CV+}]
    If $y\not\in\cC^{\textnormal{CV+}}(X_{n+1})$, then either $y$ lies above the right endpoint of the interval $\cC^{\textnormal{CV+}}(X_{n+1})$, i.e.,
    \[y > \quantile\left( \left(\hf_{-I_{k(i)}}(X_{n+1}) +  S_i\right)_{i\in[n]} ; (1-\alpha)(1+1/n) \right),\]
    or $y$ lies below the left endpoint of the interval. By symmetry, these two cases are proved analogously, so without loss of generality we will assume the first case holds. Then, equivalently, we have
    \[\sum_{i=1}^n \ind{y > \hf_{-I_{k(i)}}(X_{n+1}) +  S_i} = \sum_{k=1}^K \sum_{i\in I_k} \ind{y > \hf_{-I_k}(X_{n+1}) +  S_i} \geq (1-\alpha)(n+1).\]
    For any $i\in I_k$, 
    \begin{multline*}y > \hf_{-I_k}(X_{n+1}) +  S_i \ \Longrightarrow \ |y - \hf_{-I_k}(X_{n+1})| > S_i \\ \Longleftrightarrow \ s((X_{n+1},y);\cD_{[n]\setminus I_k}) > s((X_i,Y_i);\cD_{[n]\setminus I_k}), \end{multline*}
    where the last step holds by definition of the score function,
    and so
    \[\sum_{k=1}^K \sum_{i\in I_k} \ind{s((X_{n+1},y);\cD_{[n]\setminus I_k}) > s((X_i,Y_i);\cD_{[n]\setminus I_k})} \geq (1-\alpha)(n+1).\]
    Comparing to the definition of $\cC^{\textnormal{CC}}(X_{n+1})$ given in~\eqref{eq:cross-conf}, we see that this implies $y\not\in\cC^{\textnormal{CC}}(X_{n+1})$, as desired.    
\end{proof}
\index{cross-validation!CV+|)}

\section{Training-conditional coverage for cross-validation type methods}
\index{coverage!training-conditional|(}
In Chapter~\ref{chapter:conditional}, for the full and split conformal methods, we considered the question of training-conditional coverage---do the methods offer reliable predictive coverage, conditional on the dataset $\cD_n$? In this section, we will now ask this question for the cross-validation based methods that are the focus of this chapter.

First, we will see that for $K$-fold cross-conformal, we can establish a training-conditional coverage guarantee as long as the size $n/K$ of each fold is sufficiently large. At a high level, this result holds because the scores $(S_i)_{i\in I_k}$, where  $S_i = s\big((X_i,Y_i);\cD_{[n]\setminus I_k}\big)$, are i.i.d.\ conditional on the data $\cD_{[n]\setminus I_k}$ used for training for the $k$th fold, and therefore concentration results can be applied. In this sense, the result below can be viewed as an extension of the training-conditional coverage guarantee for split conformal, which was given in Theorem~\ref{thm:training-conditional-split-CP}. Before stating the result, we first recall a definition from Chapter~\ref{chapter:conditional}: we define $\alpha_P(\cD_n) =\P(Y_{n+1} \not\in \cC(X_{n+1}) \mid \cD_n)$, the  probability of coverage failing when we condition on $\cD_n$. As before, the goal of training-conditional coverage is to establish a high-probability upper bound on $\alpha_P(\cD_n)$.

\index{cross-conformal prediction}
\begin{theorem}[Training-conditional coverage for cross-conformal prediction]\label{thm:training-conditional-cross-conformal}
Let $P$ be any distribution on $\cX\times\cY$, and let $s$ be any score function. Let $\delta\in(0,1)$. Then cross-conformal (with $K$ folds of equal size $n/K$) satisfies
the training-conditional coverage guarantee    \[\P\left(\alpha_P(\cD_n) \leq 2\alpha + \sqrt{\frac{2\log(K/\delta)}{n/K}}\right)\geq 1-\delta,\]
    where the probability is taken with respect to the training set $\cD_n=((X_i,Y_i))_{i\in[n]}$ drawn i.i.d.\ from $P$.
\end{theorem}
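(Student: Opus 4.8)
The plan is to combine the p-value averaging argument from the proof of Theorem~\ref{thm:CC_smallK} with a Dvoretzky--Kiefer--Wolfowitz (DKW) concentration bound applied fold by fold, conditionally on each fold's training data. First I would recall the reformulation from that proof: writing $S_i = s((X_i,Y_i);\cD_{[n]\setminus I_k})$ for $i\in I_k$ and defining the fold-wise conformal p-value
\[p_k(x,y) = \frac{1 + \sum_{i\in I_k}\ind{s((x,y);\cD_{[n]\setminus I_k})\le S_i}}{1 + n/K},\]
we have $Y_{n+1}\notin\cC(X_{n+1}) \iff \frac{1}{K}\sum_{k=1}^K p_k(X_{n+1},Y_{n+1})\le\alpha'$, where $\alpha' = \alpha + (1-\alpha)\frac{K-1}{K+n}$. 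Hence $\alpha_P(\cD_n) = \P\big(\frac1K\sum_k p_k(X_{n+1},Y_{n+1})\le\alpha'\mid\cD_n\big)$, and the only randomness left is in the test point, which is independent of $\cD_n$.

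Next I would introduce the population (oracle) fold-wise p-values
\[\tilde p_k(x,y) = \P_{(X,Y)\sim P}\big(s((X,Y);\cD_{[n]\setminus I_k})\ge s((x,y);\cD_{[n]\setminus I_k})\,\big|\,\cD_{[n]\setminus I_k}\big).\]
The key point is that $\tilde p_k(X_{n+1},Y_{n+1})$ depends on the data only through $\cD_{[n]\setminus I_k}$ and the test point, and the test point is independent of $\cD_n$ given $\cD_{[n]\setminus I_k}$; therefore $\tilde p_k(X_{n+1},Y_{n+1})$ is super-uniform conditionally on $\cD_n$, i.e.\ $\P(\tilde p_k(X_{n+1},Y_{n+1})\le u\mid\cD_n)\le u$ for all $u\in[0,1]$ (the usual ``$1-(\text{strict CDF})$ evaluated at a draw is super-uniform'' fact). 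Since each $\tilde p_k(X_{n+1},Y_{n+1})$ is a genuine conditional p-value, the ``averaging p-values gives a p-value up to a factor $2$'' fact used in the proof of Theorem~\ref{thm:CC_smallK} applies conditionally on $\cD_n$, giving $\P\big(\frac1K\sum_k\tilde p_k(X_{n+1},Y_{n+1})\le t\mid\cD_n\big)\le 2t$ for all $t$.

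The concentration enters when I control the gap between $p_k$ and $\tilde p_k$. Let $\hat F_{I_k}$ be the empirical CDF of $(S_i)_{i\in I_k}$ and $F_k$ the CDF of $s((X,Y);\cD_{[n]\setminus I_k})$ under $(X,Y)\sim P$. Conditionally on $\cD_{[n]\setminus I_k}$ the scores $(S_i)_{i\in I_k}$ are i.i.d.\ with CDF $F_k$, so DKW gives $\P\big(\|\hat F_{I_k}-F_k\|_\infty>\epsilon_0\mid\cD_{[n]\setminus I_k}\big)\le 2e^{-2(n/K)\epsilon_0^2}$; a union bound over $k$ shows the $\cD_n$-measurable event $\mathcal E = \bigcap_k\{\|\hat F_{I_k}-F_k\|_\infty\le\epsilon_0\}$ has probability at least $1 - 2Ke^{-2(n/K)\epsilon_0^2}$. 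On $\mathcal E$, expressing both $p_k(x,y)$ and $\tilde p_k(x,y)$ as ``one minus a (strict) CDF of the fold-$k$ scores evaluated at $s((x,y);\cD_{[n]\setminus I_k})$'' and using the uniform closeness of $\hat F_{I_k}$ to $F_k$, one obtains the deterministic bound $p_k(x,y)\ge \tilde p_k(x,y) - \epsilon_0 - \frac{1}{1+n/K}$ for every $(x,y)$ (the extra $\frac{1}{1+n/K}$ being the finite-sample correction in the definition of $p_k$). Combining, on $\mathcal E$ the event $\frac1K\sum_k p_k(X_{n+1},Y_{n+1})\le\alpha'$ forces $\frac1K\sum_k\tilde p_k(X_{n+1},Y_{n+1})\le\alpha' + \epsilon_0 + \frac{1}{1+n/K}$, so by the previous step $\alpha_P(\cD_n)\le 2\big(\alpha' + \epsilon_0 + \frac{1}{1+n/K}\big)$ on $\mathcal E$. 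Plugging in $\alpha'$ and choosing $\epsilon_0 = \sqrt{\log(2K/\delta)/(2n/K)}$ so that $\P(\mathcal E)\ge 1-\delta$, the bound becomes $2\alpha + \sqrt{2\log(2K/\delta)/(n/K)}$ plus lower-order $O(K/n)$ terms, which yields the stated inequality after absorbing the lower-order terms (and up to the precise constant in the DKW/union-bound step).

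The main obstacle I expect is the third step: writing $p_k$ and $\tilde p_k$ in terms of strict versus weak CDFs so that the DKW bound transfers cleanly, bookkeeping the $\pm\epsilon_0$ and $1/(n/K+1)$ corrections, and handling ties and strict inequalities correctly; a secondary subtlety is justifying that the ``averaging of p-values'' fact is legitimately invoked in the conditional setting, which works because it reduces to a deterministic inequality (of the form $\ind{\bar p\le t}\le \text{(a combination of }\ind{p_k\le 2t}\text{)}$) followed by taking conditional expectations, using only that each $\tilde p_k(X_{n+1},Y_{n+1})$ is conditionally super-uniform. Note also that the theorem is only informative when the fold size $n/K$ is large (for $K=n$ the error term exceeds $1$), which is exactly the regime where the per-fold DKW argument is effective, so no separate treatment of small folds is needed.
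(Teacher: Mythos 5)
Your proposal is essentially the paper's proof: both introduce population (oracle) fold-wise p-values, observe that each is super-uniform conditionally on $\cD_n$ because the test point is an independent fresh draw, invoke the factor-of-two p-value-averaging fact conditionally on $\cD_n$, and bridge from the oracle to the empirical quantities via a fold-wise Dvoretzky--Kiefer--Wolfowitz bound combined with a union bound over the $K$ folds. The one place you lose relative to the stated constants is bookkeeping: the paper applies the one-sided DKW inequality to the raw count $\tfrac1n\sum_i\ind{S_i\ge s((X_{n+1},Y_{n+1});\cD_{[n]\setminus I_{k(i)}})}$ and uses the implication that miscoverage forces this count $\le\alpha$, which avoids both the $+1$ correction in $p_k$ and the $\alpha'$-vs-$\alpha$ discrepancy; your route through the $+1$-corrected fold p-values and two-sided DKW leaves a $\log(2K/\delta)$ in place of $\log(K/\delta)$ plus $O(K/n)$ slack, which you acknowledge but would need to be eliminated to recover the theorem exactly as stated.
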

Since $K$-fold CV+ is strictly more conservative than $K$-fold cross-conformal (by Proposition~\ref{prop:CC_CV+}), the same bound holds as well for $K$-fold CV+ implemented with any regression algorithm $\cA$.

Before turning to the proof, we first briefly comment on the nature of this result. Note that we have not proved a bound showing that $\alpha_P(\cD_n)$ is approximately bounded by $\alpha$, with high probability---rather, we instead have $\alpha_P(\cD_n) \lessapprox 2\alpha$. This is because the marginal coverage guarantee for cross-conformal only establishes coverage at level (approximately) $1-2\alpha$, rather than $1-\alpha$ (see Theorems~\ref{thm:CC_smallK} and~\ref{thm:CC_bigK}), or in other words, $\E[\alpha_P(\cD_n)]$ is bounded (approximately) by $2\alpha$ rather than by $\alpha$. Therefore, we cannot hope to remove this factor of 2 from the conditional result without further assumptions.
\begin{proof}[Proof of Theorem~\ref{thm:training-conditional-cross-conformal}]
First, let $P_k$ be the distribution of $s((X,Y);\cD_{[n]\setminus I_k})$, where this distribution is calculated with respect to a new data point $(X,Y)\sim P$, after conditioning on the data $\cD_{[n]\setminus I_k}$ used for training in the $k$th fold. Then, conditional on $\cD_{[n]\setminus I_k}$, we have
\[(S_i)_{i\in I_k}\iidsim P_k,\]
where as before, for each $k\in[K]$ and each $i\in I_k$, we define $S_i = s((X_i,Y_i);\cD_{[n]\setminus I_k})$.
A standard concentration result (namely, the Dvoretzky--Kiefer--Wolfowitz inequality) implies that, for each $k\in[K]$,
\[\frac{1}{n/K}\sum_{i\in I_k}\ind{S_i \geq t} \geq \P_{S\sim P_k}(S \geq t\mid \cD_{[n]\backslash I_k}) - \sqrt{\frac{\log(K/\delta)}{2n/K}}\]
holds simultaneously for all $t\in\R$, with probability at least $1-\delta/K$. Note also that $\P_{S\sim P_k}(S \geq t\mid \cD_{[n]\backslash I_k}) = \P_{S\sim P_k}(S \geq t\mid \cD_n)$ (since $P_k$ depends on $\cD_n$ only through the subset of data, $\cD_{[n]\backslash I_k}$). Taking a union bound over all folds $k\in[K]$, we then see that
\[\frac{1}{n/K}\sum_{i\in I_k}\ind{S_i \geq t} \geq \P_{S\sim P_k}(S \geq t\mid \cD_n) - \sqrt{\frac{\log(K/\delta)}{2n/K}}\]
holds
simultaneously for all $k\in[K]$ and all $t\in\R$, with probability at least $1-\delta$. 

Next, define a p-value 
\[p^*_k(x,y) = \P_{S\sim P_k}\big(S \geq s((x,y);\cD_{[n]\setminus I_k}) \mid \cD_n\big).\]
Comparing to the p-value $p_k(x,y)$ defined in the proof of Theorem~\ref{thm:CC_smallK}, here we are comparing the score $s((x,y);\cD_{[n]\setminus I_k})$ for the data point $(x,y)$ against the true score distribution, $P_k$, while in 
the proof of Theorem~\ref{thm:CC_smallK}, we were comparing against the empirical distribution of scores obtained from data points in the $k$th fold $I_k$.

Note that, for $(X_{n+1},Y_{n+1})\sim P$, the value $p^*_k(X_{n+1},Y_{n+1})$ is indeed a valid p-value conditional on the training data, i.e., $\P(p^*_k(X_{n+1},Y_{n+1})\leq \tau\mid \cD_n)\leq \tau$ for all $\tau\in[0,1]$. This is true since, conditional on $\cD_n$, $s((X_{n+1},Y_{n+1});\cD_{[n]\setminus I_k})$ is a draw from the (random) distribution $P_k$. Since averaging p-values provides a quantity that is a p-value up to a factor of 2, then,
\[\P\left(\frac{\sum_{k=1}^K p^*_k(X_{n+1},Y_{n+1})}{K}\leq \tau\ \Bigg\vert\ \cD_n\right)\leq 2\tau\]
holds almost surely for any $\tau\in[0,1]$. Combining this with the concentration result above, then, with probability at least $1-\delta$,
\[\P\left(\frac{\sum_{k=1}^K \frac{1}{n/K}\sum_{i\in I_k}\ind{S_i \geq s((X_{n+1},Y_{n+1});\cD_{[n]\setminus I_k})}}{K}\leq \tau - \sqrt{\frac{\log(K/\delta)}{2n/K}} \ \Bigg\vert\ \cD_n\right)\leq 2\tau.\]
Choosing $\tau = \alpha +\sqrt{\frac{\log(K/\delta)}{2n/K}}$, we have shown that
\[ \P\left(\frac{1}{n}\sum_{k=1}^K\sum_{i\in I_k}\ind{S_i \geq s((X_{n+1},Y_{n+1});\cD_{[n]\setminus I_k})}\leq \alpha \ \Bigg\vert\ \cD_n\right)\leq 2\alpha + \sqrt{\frac{2\log(K/\delta)}{n/K}}\]
holds  with probability at least $1-\delta$. Finally, it follows directly from the definition of the $K$-fold cross-conformal method that
\[Y_{n+1}\not\in\cC(X_{n+1}) \ \Rightarrow \ \frac{1}{n}\sum_{k=1}^K\sum_{i\in I_k}\ind{S_i \geq s((X_{n+1},Y_{n+1});\cD_{[n]\setminus I_k})}\leq \alpha,\]
and therefore,
\begin{multline*}\alpha_P(\cD_n) = \P\big(Y_{n+1}\not\in\cC(X_{n+1})\mid \cD_n\big) \\\leq \P\left(\frac{1}{n}\sum_{k=1}^K\sum_{i\in I_k}\ind{S_i \geq s((X_{n+1},Y_{n+1});\cD_{[n]\setminus I_k})}\leq \alpha \ \Bigg\vert\ \cD_n\right), \end{multline*}
which completes the proof.
\end{proof}

In the proof of the result above, the key idea is that, when the fold $I_k$ has a large size $n/K$, the empirical distribution of scores $(S_i)_{i\in I_k}$ within this fold exhibits concentration. This is the reason that the term $n/K$ appears in the denominator, in the upper bound. If instead $n/K$ is small---in particular, with jackknife+ or leave-one-out cross-conformal we have $K=n$---then this bound no longer gives a meaningful result. 

The next theorem shows that, for jackknife+, training-conditional coverage can in fact fail to hold if we do not place any assumptions on the regression algorithm $\cA$. By Proposition~\ref{prop:CC_CV+}, this result therefore applies to leave-one-out cross-conformal as well: that is, without placing assumptions on the score function $s$, we cannot guarantee training-conditional coverage for leave-one-out cross-conformal.

\index{cross-validation!CV+}
\begin{theorem}[Failure of training-conditional coverage for jackknife+]\label{thm:training-conditional-jackknife+}
Let $P$ be any distribution on $\cX\times\cY$ such that the marginal $P_X$ is nonatomic while $P_Y$ has bounded support. Then there exists a symmetric regression algorithm $\cA$ such that, for jackknife+, the training-conditional noncoverage probability $\alpha_P(\cD_n)$ satisfies
    \[\P\left(\alpha_P(\cD_n) = 1\right)\geq \alpha - \bigo\left(\sqrt{\frac{\log n}{n}}\right),\]
    where the probability is taken with respect to the training set $\cD_n=((X_i,Y_i))_{i\in[n]}$ drawn i.i.d.\ from $P$.
\end{theorem}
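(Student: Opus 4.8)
The plan is to mimic the proof of Theorem~\ref{thm:training-conditional-full-CP}: I will exhibit an explicit, highly degenerate symmetric regression algorithm $\cA$ such that, on a bad training event of probability $\approx\alpha$, the jackknife+ interval lies entirely above the support of $Y$ and hence covers $Y_{n+1}$ with conditional probability $0$. After a harmless shift we may assume $P_Y$ is supported in $[-B_0,B_0]$ for some finite $B_0$. Since $P_X$ is nonatomic, fix a function $a:\cX\to\{0,\dots,n-1\}$ with $a(X)\sim\textnormal{Unif}\{0,\dots,n-1\}$ for $X\sim P_X$, so that $a(X_1),\dots,a(X_{n+1})$ are i.i.d.\ uniform, $a(X_{n+1})$ is independent of the training data, and $A:=\big(\sum_{i=1}^n a(X_i)\big)\bmod n$ is uniform on $\{0,\dots,n-1\}$. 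Choose $M=2B_0+1$, a set $V\subseteq\{0,\dots,n-1\}$ of size $N\approx\alpha n$, and a window $V_0\subseteq\{0,\dots,n-1\}\setminus V$ of size $N'\approx\alpha n+c\sqrt{n\log n}$ (both choices possible simultaneously when $\alpha<1/2$ and $n$ is large). Define $\cA$ by: given a dataset $\cD$ with features $x_1,\dots,x_k$, the fitted model is $[\cA(\cD)](x)=M\cdot\ind{\big(a(x)+\sum_{j=1}^k a(x_j)\big)\bmod n\in V_0}$. This is symmetric in $\cD$ (and ignores the responses, which is permitted), so the residual score $s((x,y);\cD)=|y-[\cA(\cD)](x)|$ is symmetric.

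The crux is the deterministic identity $\big(a(X_i)+A_{-i}\big)\bmod n=A$, where $A_{-i}:=\big(\sum_{j\neq i}a(X_j)\big)\bmod n$ is the training sum for the $i$th leave-one-out model $\hf_{-i}=\cA(\cD_{[n]\setminus\{i\}})$. Hence on the event $\cE_{\textnormal{mod}}:=\{A\in V\}$, which has probability $N/n\approx\alpha$, we get $\hf_{-i}(X_i)=M\cdot\ind{A\in V_0}=0$ for \emph{every} $i\in[n]$ (since $A\in V$ and $V\cap V_0=\varnothing$), so every residual satisfies $S_i=|Y_i-\hf_{-i}(X_i)|=|Y_i|\le B_0$. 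Meanwhile, $\hf_{-i}(X_{n+1})=M$ iff $A_{-i}\in\big(V_0-a(X_{n+1})\big)\bmod n$, and since $A_{-i}=(A-a(X_i))\bmod n$, the number of such $i$ equals $\sum_{v\in W}N_v$, where $N_v=\#\{i:a(X_i)=v\}$ is the occupancy count and $W$ (depending on $A$ and $a(X_{n+1})$) is a translate of the fixed size-$N'$ window $V_0$. I would then let $\cE_{\textnormal{unif}}$ be the event that \emph{every} translate of $V_0$ has occupancy $\ge\alpha n$; a union bound over the $n$ translates together with Hoeffding's inequality (each count is $\textnormal{Binomial}(n,N'/n)$, whose mean $N'$ exceeds $\alpha n$ by $c\sqrt{n\log n}$) gives $\P(\cE_{\textnormal{unif}}^c)=\bigo(1/n)$.

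Putting the pieces together: on $\cE_{\textnormal{mod}}\cap\cE_{\textnormal{unif}}$, for \emph{every} value of $a(X_{n+1})$ at least $\alpha n$ indices $i$ have $\hf_{-i}(X_{n+1})=M$ and $\hf_{-i}(X_i)=0$, so for each such $i$ the jackknife+ lower quantity $v_i^-:=\hf_{-i}(X_{n+1})-S_i\ge M-B_0=B_0+1>B_0$. Since at least $\alpha n$ of the $n$ values $v_i^-$ strictly exceed $B_0$, the definition of the quantile forces the jackknife+ lower endpoint $L$ in~\eqref{eq:jackknife+} to satisfy $L>B_0$; therefore $Y_{n+1}\le B_0<L$, so $Y_{n+1}\notin[L,U]=\cC(X_{n+1})$. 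As this holds for a.e.\ test point $(X_{n+1},Y_{n+1})$, we conclude $\alpha_P(\cD_n)=\P(Y_{n+1}\notin\cC(X_{n+1})\mid\cD_n)=1$ on this event, whence $\P(\alpha_P(\cD_n)=1)\ge\P(\cE_{\textnormal{mod}})-\P(\cE_{\textnormal{unif}}^c)\ge N/n-\bigo(1/n)\ge\alpha-\bigo\!\big(\sqrt{\log n/n}\big)$ after optimizing $N$ and $N'$.

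The main obstacle is the apparent impossibility of \emph{simultaneously} controlling the leave-one-out residuals and inflating the predictions at the test point: any model predicting a huge value at a typical test feature will, because $X_i$ and $X_{n+1}$ play symmetric roles relative to $\hf_{-i}$, also predict a huge value at the held-out point $X_i$ and thereby blow up $S_i$. The construction escapes this trap through the identity $a(X_i)+A_{-i}\equiv A\pmod n$: every leave-one-out model, evaluated at \emph{its own} held-out point, effectively ``sees'' the full-sample statistic $A$ (the analogue of the way full conformal's score sees the augmented dataset), so all residuals can be zeroed out at once on $\cE_{\textnormal{mod}}$, while the independent fresh coordinate $a(X_{n+1})$ decouples the test-point predictions and lets the sliding-window counting argument run as in Theorem~\ref{thm:training-conditional-full-CP}. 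A secondary point is checking that the binomial concentration retains enough slack for $N$ to be within $\bigo(\sqrt{n\log n})$ of $\alpha n$; this is precisely where $\alpha<1/2$ (so that $V$ and $V_0$ fit disjointly, giving room $N+N'\le n$) and the hypothesis that $P_Y$ has bounded support (so that $S_i=|Y_i|\le B_0$ and $M$ can dominate $2B_0$) both enter essentially.
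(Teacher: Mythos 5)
Your high-level strategy is exactly the paper's: construct a degenerate symmetric $\cA$ based on a mod-$n$ counter $a$, use the identity $a(X_i)+A_{-i}\equiv A\pmod n$ to zero every leave-one-out residual simultaneously on a bad event $\cE_{\textnormal{mod}}$ of probability $\approx\alpha$, push the test-point predictions $\hf_{-i}(X_{n+1})$ up to a large constant for many $i$, and run a sliding-window Hoeffding/union-bound argument. But there is a quantitative gap in the final quantile step. Jackknife+ is CV+ with $K=n$, so both endpoints use the level $(1-\alpha)(1+1/n)$; unwinding the double negative, the lower endpoint $L$ is the $\lfloor\alpha(n+1)\rfloor$-th \emph{smallest} of the values $v_i^-=\hf_{-i}(X_{n+1})-S_i$. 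For $L>B_0$ you therefore need \emph{fewer than} $\lfloor\alpha(n+1)\rfloor$ of those values to be $\leq B_0$, i.e.\ at least $(1-\alpha)(n+1)$ of them must exceed $B_0$ --- not $\alpha n$ of them. In your construction $\hf=M$ only when the counter lands in $V_0$, with $|V_0|\approx\alpha n$, so only $\approx\alpha n$ of the $v_i^-$ are large while the remaining $\approx(1-\alpha)n$ satisfy $v_i^-\leq 0$; the $\alpha n$-th smallest is then $\leq 0$, the interval reaches down below the support of $Y$, and coverage in fact holds. (Your claim that $\alpha n$ exceedances force $L>B_0$ seems to be read off from the level $1-(1-\alpha)(1+1/n)$ appearing in display~\eqref{eq:jackknife+}, which is a typo in the text: it disagrees with the CV+ definition~\eqref{eq:CV+} at $K=n$ and with the level $(1-\alpha)(1+1/n)$ used in the paper's own proof of this theorem.)

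The fix is to flip the indicator, exactly as the paper does: set $\hf(x)=0$ when $\textnormal{mod}(a(x)+\sum_j a(x_j),n)$ lies in a window of size $N=\alpha n-\bigo(\sqrt{n\log n})$, and $\hf(x)=2B$ otherwise. Then $\cE_{\textnormal{mod}}=\{A\textnormal{ in the window}\}$ still forces $\hf_{-i}(X_i)=0$ and $S_i=|Y_i|<B$ for all $i$, while for every possible value of $a(X_{n+1})$ the indices with $\hf_{-i}(X_{n+1})=2B$ are those with $a(X_i)$ in a translate of the \emph{complement}, which has size $n-N\approx(1-\alpha)n+\bigo(\sqrt{n\log n})$. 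The event $\cE_{\textnormal{unif}}$ (occupancy $\geq(1-\alpha)(n+1)$ for every such translate) then holds with probability $1-\bigo(1/n)$, and on $\cE_{\textnormal{mod}}\cap\cE_{\textnormal{unif}}$ at least $(1-\alpha)(n+1)$ of the $v_i^-$ exceed $2B-B=B$, which is exactly what the lower-endpoint quantile needs to give $\cC(X_{n+1})\subseteq(B,\infty)$. Note also that the separate disjoint sets $V$ and $V_0$ are unnecessary: the paper uses a single window for both roles ($\cE_{\textnormal{mod}}$ is the event that $A$ lies in the zero-region), which is cleaner and avoids the sign error.
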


The proof of this result is very similar to that of Theorem~\ref{thm:training-conditional-full-CP}, which is the analogous result proving that training-conditional coverage can fail for full conformal prediction.

\begin{proof}[Proof of Theorem~\ref{thm:training-conditional-jackknife+}] 
Since $P_Y$ has bounded support, we can fix a finite $B$ with $\P_P(|Y|< B) = 1$. Fix a large integer $N\approx \alpha n$, to be specified later. As in the proof of Theorem~\ref{thm:training-conditional-full-CP}, define a function $a$ with $a(X)\sim \textnormal{Unif}\{0,\dots,n-1\}$ when $X$ is drawn from $P_X$. 
Next, define the regression algorithm $\cA$: when the input is given by a training dataset $((x_1,y_1),\dots,(x_{n-1},y_{n-1}))$, $\cA$ returns the fitted model $\hf$ given by
\[\hf(x) = \begin{cases} 0, &\textnormal{ if }\textnormal{mod}\left(a(x) + \sum_{j=1}^{n-1}a(x_j), n\right)  < N,\\
2B, & \textnormal{ otherwise}.\end{cases}\]
In particular, when running jackknife+, we have
\[\hf_{-i}(X_i) = \begin{cases} 0, &\textnormal{ if }\textnormal{mod}\left(\sum_{j=1}^na(X_j), n\right)  < N,\\
2B, & \textnormal{ otherwise}.\end{cases}\]
Note that $\hf_{-i}(X_i)$ takes the same value for every $i\in[n]$, by construction.

Define events $\cE_{\textnormal{mod}}$ and $\cE_{\textnormal{unif}}$ exactly as in the proof of Theorem~\ref{thm:training-conditional-full-CP}: $\cE_{\textnormal{mod}}$ is the event that 
$\textnormal{mod}\left(\sum_{i=1}^n a(X_i)  , n \right) < N$, and 
$\cE_{\textnormal{unif}}$ is the event that
$\sum_{i=1}^n\ind{a(X_i)\in W_k} \geq (1-\alpha)(n+1)$ for all integers $k$ where the sets $W_k$ are as defined in the proof of Theorem~\ref{thm:training-conditional-full-CP}
(this event is illustrated in Figure~\ref{fig:E_unif_illustration}). On the event $\cE_{\textnormal{mod}}$, we then have
$\hf_{-i}(X_i) = 0 $ for all $i=1,\dots,n$,
and consequently,
\[S_i = |Y_i - \hf_{-i}(X_i)| = |Y_i - 0|< B\textnormal{ for all $i=1,\dots,n$}.\]
The jackknife+ prediction interval then satisfies
\begin{multline*}\cC(X_{n+1}) 
\subseteq \left(  -\quantile\left( ( -(\hf_{-i}(X_{n+1}) -B) )_{i\in[n]}  ; (1-\alpha)(1+1/n) \right) , \right. \\ \left.\quantile\left( ( \hf_{-i}(X_{n+1}) +B )_{i\in[n]}  ; (1-\alpha)(1+1/n) \right) \right). \end{multline*}
Next, for each $i$, we calculate
\[\hf_{-i}(X_{n+1}) = \begin{cases} 0, &\textnormal{ if }\textnormal{mod}\left(-a(X_i) + \sum_{j=1}^{n+1}a(X_j), n\right)  < N,\\
2B, & \textnormal{ otherwise}.\end{cases}\]
On the event $\cE_{\textnormal{unif}}$, then, we have $\hf_{-i}(X_{n+1}) = 2B$ for $\geq (1-\alpha)(n+1)$ many training points $i\in[n]$, and $\hf_{-i}(X_{n+1})=0$ for the others. In particular, this implies that the left endpoint of $\cC(X_{n+1})$ is given by
\[-\quantile\left( ( -(\hf_{-i}(X_{n+1}) -B) : i\in[n] )  ; (1-\alpha)(1+1/n) \right) = B,\]
and therefore, if $\cE_{\textnormal{mod}}$ and $\cE_{\textnormal{unif}}$ both occur,
\[\cC(X_{n+1}) \subseteq [B,\infty).\]
Since $P_Y$ is supported on $(-B,B)$, i.e., $\P_P(Y_{n+1}\in [B,\infty)) = 0$, we see that if $\cE_{\textnormal{mod}}$ and $\cE_{\textnormal{unif}}$ both occur, we therefore have $\alpha_P(\cD_n) = 1$. This proves that
\[\P_P(\alpha_P(\cD_n)=1) \geq \P_P(\cE_{\textnormal{mod}}\cap \cE_{\textnormal{unif}}).\]
The last steps of the proof consist of proving a lower bound on this probability for an appropriate choice of $N$, exactly as in 
the proof of Theorem~\ref{thm:training-conditional-full-CP}; we omit the details.
\end{proof}

\index{coverage!training-conditional|)}

\section{Algorithmic stability and prediction with the jackknife}\label{sec:alg-stability-jackknife}
\index{algorithmic stability|(}
\index{cross-validation!jackknife|(}

The results of Corollary~\ref{cor:CV_jack} show that CV+ and jackknife+ can offer coverage at level $\geq 1-2\alpha$, but this does not match the target level $1-\alpha$; in contrast, in practice, we generally see coverage around level $1-\alpha$ empirically. However, Section~\ref{sec:why-factor-of-2} explains why the factor of 2 in the theory cannot be removed. Strikingly, we saw in Example~\ref{example:jackknife_fails} that CV and jackknife (rather than their modified versions, CV+ and jackknife+) can even reach zero coverage.
The counterexample works by proposing a model $\hf=\cA(\cD_n)$ that when fitted to a training set $\cD_n$ of size $n$ behaves very differently from the model $\hf_{-i} = \cA(\cD_{[n]\setminus \{i\}})$, which was fitted to the same training set with a single data point removed. 
This can be described as an issue of \emph{instability} of the training algorithm: a small change to the training set creates a large change in the resulting fitted model.
See Figure~\ref{fig:stability} for an illustration.

\begin{figure}[t]
    \centering
    \includegraphics[width=0.75\textwidth]{\diagramspath stability.pdf}
    \caption{\textbf{An illustration of algorithmic stability in a regression.} In each plot, the gray dots represent data points $(X_i,Y_i)$, the dashed line represents the output of a quadratic regression algorithm $\cA_{\textnormal{quad}}$, and the solid line represents the output of a spline regression algorithm $\cA_{\textnormal{spline}}$. The left plot shows the fitted models on the full dataset $((X_j,Y_j))_{j\in[n]}$, while the right plot shows the fitted models trained on the dataset $((X_j,Y_j))_{j\in[n]\backslash\{i\}}$, where a single data point $(X_i,Y_i)$ (marked by a cross) has been removed.
    A stable algorithm is one for which the removal of a typical data point would not cause a large change in the predicted values returned by the fitted model (as in Definition~\ref{def:alg_stab_eps_delta}). In this figure, $\cA_{\textnormal{quad}}$ appears to satisfy this property, but $\cA_{\textnormal{spline}}$ does not.}
    \commentAlt{Two panels with scatterplots of the same dataset of $(X,Y)$ pairs. In the right panel one point is crossed out. Each panel has a dashed curve for quadratic regression and a solid curve for spline regression. The two solid curves are quite different.}
    \label{fig:stability}
\end{figure}

We will now study algorithmic stability in more detail, developing a formal definition of this property and analyzing how it can improve our theoretical guarantees for the cross-validation-based predictive inference methods.

\subsection{The algorithmic stability framework}\label{sec:alg_stab_framework}

At a high level, algorithmic stability simply means that small perturbations to the training dataset (e.g., adding or removing a single data point) do not lead to large changes in the fitted model. However, there are a multitude of different ways to formalize this intuition into a precise definition, and various choices can have extremely different implications for downstream results. Here we will focus on a version of the definition that is most relevant for analyzing the jackknife.

\begin{definition}[Algorithmic stability]\label{def:alg_stab_eps_delta}
An algorithm $\cA$ and distribution $P$ satisfy \emph{algorithmic stability}, with parameters $\epsilon,\delta\geq 0$ at sample size $n$, if for all $i\in[n]$ it holds that
\[\P\left( \big|\hf(X_{n+1}) - \hf_{-i}(X_{n+1})\big|\leq \epsilon \right) \geq 1- \delta,\]
with respect to data points $(X_1,Y_1),\dots,(X_{n+1},Y_{n+1})\iidsim P$,
where $\hf = \cA(((X_j,Y_j))_{j\in[n]})$ and $\hf_{-i} = \cA(((X_j,Y_j))_{j\in[n]\setminus\{i\}})$.
\end{definition}
Importantly, this condition requires the change in the prediction to be small when $\hf$ and $\hf_{-i}$ are evaluated on an independent test point $X_{n+1}$---a data point that does not appear in the training dataset for either $\hf$ or $\hf_{-i}$.
In contrast, requiring $|\hf(X_i)-\hf_{-i}(X_i)|$ to be small would be a much stronger assumption---it would imply that the prediction at $X_i$ is essentially the same regardless of whether $(X_i,Y_i)$ was used for training, which cannot hold for algorithms $\cA$ that exhibit substantial overfitting.

A number of common regression algorithms are known to satisfy this type of stability property. One example is $K$-nearest-neighbor regression, which satisfies Definition~\ref{def:alg_stab_eps_delta} with $\epsilon=0$ and $\delta = K/n$ (since the predictions $\hf(X_{n+1})$ and $\hf_{-i}(X_{n+1})$ are actually equal, unless $X_i$ is one of the $K$ nearest neighbors of the test point). Other examples include algorithms based on strongly convex optimization problems such as ridge regression, and algorithms based on ensembled or bootstrapped methods. See the bibliographic notes at the end of this chapter for more details.

\subsection{Guarantees for jackknife under stability}
We will now see that the algorithmic stability assumption ensures predictive coverage for the  jackknife method. Similar results can also be established for jackknife+, or for the $K$-fold versions of these methods, but we do not consider these extensions here.

Recall the jackknife (i.e., leave-one-out cross-validation) prediction interval, defined as
\[\cC(X_{n+1}) = \hf(X_{n+1}) \pm \hat{q}_{\textnormal{CV}} ,\]
where the margin of error is given by the quantile of the leave-one-out residuals,
\[\hat{q}_{\textnormal{CV}} = \quantile((S_1,\dots,S_n);1-\alpha)\textnormal{ for }S_i = |Y_i - \hf_{-i}(X_i)|.\]
Here $\hf$ is the model trained on the full training set, $\hf = \cA(((X_j,Y_j))_{j\in[n]})$, while for each $i\in[n]$, $\hf_{-i}$ is the leave-one-out model, $\hf_{-i} = \cA(((X_j,Y_j))_{j\in[n]\setminus\{i\}})$.

We will now see that algorithmic stability enables us to establish coverage at a level that is nearly the target level $1-\alpha$, for a slightly inflated version of the jackknife interval.

\begin{theorem}[Coverage guarantee for jackknife under algorithmic stability]\label{thm:jackknife-stability}
Suppose $(X_1,Y_1),\dots,(X_{n+1},Y_{n+1})$ are drawn i.i.d.\ from some distribution $P$, and assume that $\cA$ is a symmetric algorithm that satisfies algorithmic stability as in Definition~\ref{def:alg_stab_eps_delta} with parameters $\epsilon, \delta\geq 0$. Then the inflated jackknife prediction interval
\[\cC(X_{n+1}) = \hf(X_{n+1}) \pm (\hat{q}_{\textnormal{CV}} + \epsilon)\]
satisfies
\[\P(Y_{n+1}\in \cC(X_{n+1})) \geq 1- \alpha - 2\sqrt{\delta} - \frac{1}{n+1}.\]
\end{theorem}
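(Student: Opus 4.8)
The plan is to compare the inflated jackknife interval to an idealized "oracle" interval built from the leave-one-out models, exploiting stability to bridge between $\hf$ and the $\hf_{-i}$'s. First I would introduce, for each $i\in[n+1]$, the leave-one-out residual evaluated at the \emph{test} point: $R_i = |Y_{n+1} - \hf_{-i}(X_{n+1})|$ is not quite the right object since it mixes indices, so instead define the symmetric quantities $R_i^{(n+1)} = |Y_{n+1} - \hf_{-i}(X_{n+1})|$ for $i\in[n]$ together with the analogous residuals obtained by symmetry. The cleaner route: define $\mu_i = \hf_{-i}(X_{n+1})$ for $i\in[n]$ and $\mu_{n+1} = \hf(X_{n+1})$ (the full-data fit evaluated at the test point), and note that by stability, $|\mu_i - \mu_{n+1}|\le\epsilon$ with probability $\ge 1-\delta$ for each fixed $i\in[n]$. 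I would then observe that the event $Y_{n+1}\notin\cC(X_{n+1})$, i.e. $|Y_{n+1}-\hf(X_{n+1})| > \hat q_{\textnormal{CV}}+\epsilon$, implies $|Y_{n+1}-\hf_{-i}(X_{n+1})| > \hat q_{\textnormal{CV}}$ \emph{whenever} $|\mu_i-\mu_{n+1}|\le\epsilon$, by the triangle inequality. So miscoverage forces the test-point leave-one-out residual to exceed $\hat q_{\textnormal{CV}}$ for all the "stable" indices $i$.

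Next I would run an exchangeability argument on the collection of $n+1$ residuals. Consider the augmented list of leave-one-out-style residuals $\tilde S_i = |Y_i - \cA(\cD_{n+1}\setminus\{(X_i,Y_i)\})(X_i)|$ for $i\in[n]$ and $\tilde S_{n+1} = |Y_{n+1} - \cA(\cD_n)(X_{n+1})|$; since $\cA$ is symmetric and the data are i.i.d.\ (hence exchangeable), the vector $(\tilde S_1,\dots,\tilde S_{n+1})$ is exchangeable. Note $\tilde S_i = S_i$ for $i\le n$ (these are exactly the jackknife residuals) and $\tilde S_{n+1} = |Y_{n+1}-\hf(X_{n+1})|$. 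By Fact~\ref{fact:exchangeable-properties}, $\P(\tilde S_{n+1} > \quantile(\tilde S_1,\dots,\tilde S_{n+1};1-\alpha)) \le \alpha + \tfrac{1}{n+1}$ roughly (the $\tfrac{1}{n+1}$ coming from the gap between quantiles of the $n$-list and the $(n+1)$-list, via the Replacement Lemma~\ref{lem:n+1-to-n-reduction} applied in the CDF direction). But $\hat q_{\textnormal{CV}}$ is the $(1-\alpha)$-quantile of $(S_1,\dots,S_n) = (\tilde S_1,\dots,\tilde S_n)$, which dominates the $(1-\alpha)$-quantile of the longer list $(\tilde S_1,\dots,\tilde S_{n+1})$ minus a correction; more carefully, $\tilde S_{n+1}\le\hat q_{\textnormal{CV}}$ with probability $\ge 1-\alpha-\tfrac{1}{n+1}$ directly from the rank argument. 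This handles the "oracle" miscoverage; the remaining gap is exactly the stability slack.

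The main obstacle — and where I'd spend the most care — is controlling the probability that \emph{many} of the indices $i$ are "unstable" simultaneously. Per-index, instability has probability $\le\delta$, but we need the event "$|\mu_i-\mu_{n+1}|>\epsilon$ for \emph{enough} $i$ that the rank-based quantile argument breaks." The standard device here is a Markov/union-counting argument: let $U = \sum_{i=1}^n \ind{|\mu_i-\mu_{n+1}|>\epsilon}$; then $\E[U]\le n\delta$, so $\P(U \ge \sqrt\delta\,n)\le \sqrt\delta$ by Markov. On the complement, a $1-\sqrt\delta$ fraction of the leave-one-out residuals $S_i$ at the test point (i.e.\ $|Y_{n+1}-\hf_{-i}(X_{n+1})|$) lie within $\epsilon$ of the "true" residual behavior, and one shifts the quantile level from $1-\alpha$ to $1-\alpha-\sqrt\delta$ to absorb the $\sqrt\delta n$ possibly-bad indices — this is the source of one $\sqrt\delta$ term. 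Combining: miscoverage $\le \alpha + \tfrac{1}{n+1} + \sqrt\delta$ (oracle + quantile-level shift) $+ \sqrt\delta$ (the Markov event on $U$), giving the claimed $\alpha + 2\sqrt\delta + \tfrac{1}{n+1}$. I would assemble the final bound by writing $\P(Y_{n+1}\notin\cC(X_{n+1})) \le \P(U\ge\sqrt\delta\,n) + \P(\{Y_{n+1}\notin\cC(X_{n+1})\}\cap\{U<\sqrt\delta\,n\})$ and bounding the second term by the exchangeability/rank argument at the shifted level. The delicate bookkeeping is making sure the quantile-level shift by $\sqrt\delta$ genuinely dominates the $\lceil\sqrt\delta n\rceil$ bad indices in the rank count, which needs $\sqrt\delta n \ge$ (number of order statistics skipped) — true since we shift the \emph{proportion} by $\sqrt\delta$.
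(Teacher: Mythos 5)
Your high-level skeleton — exchangeability of augmented leave-one-out residuals, a Markov count of unstable indices, a quantile-level shift of order $\sqrt{\delta}$, and two $\sqrt{\delta}$ losses — is the right shape, and the paper's proof has exactly this structure. But there are two genuine errors in how you connect the pieces, and they compound.

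First, the claim ``$\tilde{S}_i = S_i$ for $i \le n$'' is false, and it is not a notational slip. You define $\tilde{S}_i = |Y_i - \cA(\cD_{n+1}\setminus\{(X_i,Y_i)\})(X_i)|$, a residual using a model trained on $n$ points that \emph{include} the test point $(X_{n+1},Y_{n+1})$. The jackknife residual is $S_i = |Y_i - \hf_{-i}(X_i)|$ with $\hf_{-i}=\cA(\cD_n\setminus\{(X_i,Y_i)\})$, trained on $n-1$ points that \emph{exclude} the test point. These are different models and different residuals. Indeed, if they were equal, $\hat{q}_{\textnormal{CV}}$ would be a quantile of $(\tilde{S}_1,\dots,\tilde{S}_n)$ and the exchangeability argument would give coverage with no stability assumption at all — a clear sign the identification cannot be right. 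The whole content of the theorem is in controlling the gap between $\tilde{S}_i$ and $S_i$, and that gap is exactly where stability enters.

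Second, and relatedly, you track the wrong stability quantity. You set $\mu_i = \hf_{-i}(X_{n+1})$ and $\mu_{n+1}=\hf(X_{n+1})$ and apply stability at the \emph{test} point $X_{n+1}$. But the quantity that governs the discrepancy $\tilde{S}_i - S_i$ is the change in prediction at the \emph{training} point $X_i$ when the test point is added to the training set: $|\tilde{f}_{-i}(X_i) - \hf_{-i}(X_i)|$, where $\tilde{f}_{-i}=\cA(\cD_{n+1}\setminus\{(X_i,Y_i)\})$. Definition~\ref{def:alg_stab_eps_delta} applies here because the data are i.i.d.: $\tilde{f}_{-i}$ plays the role of the model on $n$ points, $\hf_{-i}$ the leave-one-out model with $(X_{n+1},Y_{n+1})$ removed, and $X_i$ the out-of-sample evaluation point. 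With this corrected indicator, your Markov argument works as intended: if $S_{(k)}+\epsilon < \tilde{S}_{(k')}$ with $k=\lceil(1-\alpha)n\rceil$ and $k'=\lceil(1-\alpha')(n+1)\rceil$, then $\tilde{S}_i > S_i+\epsilon$ for at least $k-k'+1$ indices, and $\P(\tilde{S}_i>S_i+\epsilon)\le\P(|\tilde{f}_{-i}(X_i)-\hf_{-i}(X_i)|>\epsilon)\le\delta$ gives $\P(S_{(k)}+\epsilon<\tilde{S}_{(k')})\le n\delta/(k-k'+1)$. Your triangle-inequality argument with the $\mu_i$'s is a separate thread about test-point predictions; it does not feed into the quantile comparison you set up, because $\hat{q}_{\textnormal{CV}}$ is a quantile of residuals at training points, not of predictions at $X_{n+1}$. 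Replacing $\mu_i$ with the training-point discrepancy and dropping the ``$\tilde{S}_i=S_i$'' identification recovers the paper's argument, and the final bookkeeping you describe (choosing $\alpha'=\alpha+\sqrt{\delta}+\tfrac{1}{n+1}$) then goes through.
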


The reason that jackknife can fail in the absence of algorithmic stability is that since $\hf$ and $\hf_{-i}$ are fitted using different training same sizes, the test error $|Y_{n+1}-\hf(X_{n+1})|$ and the leave-one-out residuals $S_i = |Y_i - \hf_{-i}(X_i)|$ may have very different distributions---and in particular, the test error $|Y_{n+1}-\hf(X_{n+1})|$ fails to be exchangeable with the $S_i$'s.
The idea of the proof is to instead construct a different vector of scores $\tilde{S}_1,\dots,\tilde{S}_n$, which \emph{are} exchangeable with the test error $|Y_{n+1}-\hf(X_{n+1})|$, and then show that the algorithmic stability assumption is sufficient to ensure $S_i\approx \tilde{S}_i$ for most training points $i\in[n]$.

\begin{proof}[Proof of Theorem~\ref{thm:jackknife-stability}]
For each $i\in[n+1]$, define a fitted model
\[\tilde{f}_{-i} = \cA(\cD_{[n+1]\setminus \{i\}}).\]
Note that for $i=n+1$, we simply have $\tilde{f}_{-(n+1)}=\hf$, while for $i\in[n]$, this model is trained on the dataset consisting of the $n-1$ training points when data point $i$ is held out, together with the test point $(X_{n+1},Y_{n+1})$ as an $n$th training point. Define also a modified leave-one-out residual
\[\tilde{S}_i = |Y_i - \tilde{f}_{-i}(X_i)|,\]
for each $i\in[n+1]$. Since $\cA$ is a symmetric algorithm, by symmetry of the construction we see that $\tilde{S}_1,\dots,\tilde{S}_{n+1}$ are exchangeable. We therefore have
\[\P\left(\tilde{S}_{n+1} \leq \quantile(\tilde{S}_1,\dots,\tilde{S}_{n+1};1-\alpha')\right)\geq 1-\alpha',\]
by Fact~\ref{fact:exchangeable-properties}\ref{fact:exchangeable-properties_part2}, for any $\alpha'\in[0,1]$.
By the Replacement Lemma (Lemma~\ref{lem:n+1-to-n-reduction}), together with the fact that $\tilde{S}_{n+1} = |Y_{n+1} - \hf(X_{n+1})|$ by definition, we then have
\[\P\left(|Y_{n+1} - \hf(X_{n+1})| \leq \quantile\big((\tilde{S}_i)_{i\in[n]};(1-\alpha')(1+1/n)\big)\right)\geq 1-\alpha'.\]
On the other hand, by definition of the inflated jackknife prediction interval $\cC(X_{n+1})$, we have
\[Y_{n+1}\in\cC(X_{n+1}) \ \Longleftrightarrow \ |Y_{n+1} - \hf(X_{n+1})| \leq \hat{q}_{\textnormal{CV}} + \epsilon.\]
Therefore,
\[\P(Y_{n+1}\in\cC(X_{n+1})) \geq 1-\alpha' - \P\left(\hat{q}_{\textnormal{CV}}+\epsilon < \quantile\left((\tilde{S}_i)_{i\in[n]}; (1-\alpha')(1+1/n)\right) \right).\]

Plugging in our definition of $\hat{q}_{\textnormal{CV}}$, our last step is to bound the probability
\[ \P\left(\quantile\left((S_i)_{i\in[n]}; 1-\alpha\right)  + \epsilon < \quantile\left((\tilde{S}_i)_{i\in[n]}; (1-\alpha')(1+1/n)\right) \right) ,\]
or equivalently,
\[\P(S_{(k)} + \epsilon < \tilde{S}_{(k')}),\]
where $k = \lceil (1-\alpha)n\rceil$ while $k' = \lceil (1-\alpha')(n+1)\rceil$, and where $S_{(1)}\leq \dots \leq S_{(n)}$ and $\tilde{S}_{(1)}\leq \dots \leq \tilde{S}_{(n)}$ are the order statistics of $S_1,\dots,S_n$ and of $\tilde{S}_1,\dots,\tilde{S}_n$, respectively.
But by definition of the order statistics, 
if $S_{(k)} + \epsilon < \tilde{S}_{(k')}$ holds then this implies that we must have $\tilde{S}_i > S_i + \epsilon$ for at least $k - k'+1$ indices $i\in[n]$. We now need to use the assumption of algorithmic stability: since $\cA$ is stable, we expect to obtain similar predictions $\hf_{-i}(X_i)\approx \tilde{f}_{-i}(X_i)$ (i.e., the fitted model is only slightly altered by including an additional data point $(X_{n+1},Y_{n+1})$ in the training process), and so we will have $S_i\approx \tilde{S}_i$ for most indices $i$. To formalize this, we calculate
\begin{align*}
    \P(S_{(k)} + \epsilon < \tilde{S}_{(k')})&\leq\P\left(\sum_{i\in[n]} \ind{\tilde{S}_i > S_i + \epsilon} \geq k - k'+1\right)\\
    &\leq \P\left(\sum_{i\in[n]} \ind{|\tilde{f}_{-i}(X_i) - \hf_{-i}(X_i)|>\epsilon} \geq k - k'+1\right)\\
    &\leq \frac{\E\left[\sum_{i\in[n]} \ind{|\tilde{f}_{-i}(X_i) - \hf_{-i}(X_i)|>\epsilon}\right]}{k-k'+1}\textnormal{\quad by Markov's inequality}\\
    &= \frac{\sum_{i\in[n]} \P(|\tilde{f}_{-i}(X_i) - \hf_{-i}(X_i)|>\epsilon)}{k-k'+1}\\
    &\leq \frac{n\delta}{k-k'+1},
\end{align*}
where the last step uses the algorithmic stability assumption (Definition~\ref{def:alg_stab_eps_delta}).
Finally, by construction, $k-k'+1 \geq (1-\alpha)n - (1-\alpha')(n+1)$, and so combining everything, we obtain
\[\P(Y_{n+1}\in\cC(X_{n+1})) \geq 1-\alpha' - \frac{n\delta}{(1-\alpha)n - (1-\alpha')(n+1)}.\]
Choosing $\alpha' = \alpha + \sqrt{\delta} + \frac{1}{n+1}$ completes the proof.
\end{proof}

\paragraph{Can algorithmic stability be certified?}
We have seen above that algorithmic stability leads to important downstream guarantees:  it implies that the jackknife offers guaranteed predictive coverage and provides confidence intervals for the risk of a trained model, without distributional assumptions. To make use of this in practice, however, we would need to know that jackknife is being applied to a stable algorithm.  Earlier,
we mentioned several examples of regression algorithms $\cA$ that are stable by construction---algorithms such as $K$-nearest neighbors, for which we can theoretically establish that stability holds---but such examples are rare and leave out many modern machine learning methods. 

We are therefore motivated to turn to the second option: is it possible to \emph{certify} that an algorithm $\cA$ is stable by testing it empirically, without needing to analyze $\cA$ theoretically? Interestingly, it turns out that in general this is not possible. 
While formalizing this type of hardness result is beyond the scope of this book (we refer the reader to the bibliographic notes for references), the implications for distribution-free inference are important to mention. These hardness results imply that a method such as jackknife, whose validity relies on a stability condition, cannot be viewed as a completely assumption-free method in general, because aside from special cases (i.e., simple algorithms such as $K$-nearest neighbors), the method's guarantees require untestable assumptions in order to hold.

\index{algorithmic stability|)}
\index{cross-validation!jackknife|)}
\index{cross-validation|)}

\section*{Bibliographic notes}
\addcontentsline{toc}{section}{\protect\numberline{}\textnormal{\hspace{-0.8cm}Bibliographic notes}}

Cross-validation style methods date back at least 50 years (see, e.g., \citet{stone1974cross}). Cross-conformal predictors were first introduced by \citet{vovk2015cross}, and studied further by \citet{vovk2018cross}; in particular, the latter work establishes a theoretical result proving marginal coverage at a level $\approx 1-2\alpha$ when the number of folds $K$ is not too large, which appears (in a slightly modified form) in Theorem~\ref{thm:CC_smallK} above. This result is based on the fact that an average of p-values is itself a valid p-value up to a factor of 2, as established by \citet{ruschendorf1982random} and studied more generally in \citet{vovk2020combining}.

The jackknife+ and CV+ algorithms are proposed by \citet{barber2021predictive}, where the relationship between jackknife+/CV+ and cross-conformal is also established, as in Proposition~\ref{prop:CC_CV+} above.  This paper also proves the coverage guarantee given in Theorem~\ref{thm:CC_bigK}, which covers the case where $K$ is large (e.g., $K=n$, for jackknife+) and the connection with intervals around the median prediction in Proposition~\ref{prop:CV+_contains_median}. Lemma~\ref{lem:A-rowsum}, which is a key ingredient in that proof, is related to Landau's theorem on tournaments \citep{landau1953dominance}.
The factor of 2 appearing in the coverage guarantee is explained in \citet{barber2021predictive}, which also formally establishes that a tighter bound is not possible; the intuition behind the argument for this result comes from the example illustrated  in~\eqref{eqn:jackknife_worstcase}, which is inspired by an earlier construction by \citet{vovk2015cross}

\citet{gupta2020nested} generalizes and reinterprets cross-conformal, as well as jackknife+ and CV+, through a formulation relating to nested families of sets. \citet{kim2020predictive} gives an version of jackknife+ adapted for efficient implementation in the setting of a bootstrapped or ensembled base algorithm; see also~\cite{linusson2020efficient} for a related method via out-of-bag calibration.
Training-conditional coverage results for jackknife+ (i.e., the hardness result of Theorem~\ref{thm:training-conditional-jackknife+}), and $K$-fold CV+ and $K$-fold cross-validation (i.e., the coverage guarantee of Theorem~\ref{thm:training-conditional-cross-conformal}), appear in \citet{bian2022training}.
The proof of Theorem~\ref{thm:training-conditional-cross-conformal} relies on the Dvoretzky--Kiefer--Wolfowitz inequality \citep{dvoretzky1956asymptotic}, a classical concentration inequality on the empirical CDF of i.i.d.\ data.

Algorithmic stability, as introduced in Section~\ref{sec:alg_stab_framework}, has been broadly studied in the fields of statistics and learning theory. \cite{bousquet2002stability} establishes connections between algorithmic stability and the notion of generalization.
The specific formulation of stability we use in this work, Definition~\ref{def:alg_stab_eps_delta}, is closely related to the \emph{hypothesis stability} condition in that work. Extensions of those results to the setting of randomized algorithms are developed by \cite{elisseeff2005stability}. \cite{shalev2010learnability} studies the connections between stability and \emph{learnability}, i.e., whether an algorithm is able to achieve some minimal risk on a supervised learning problem. 
The finite-sample stability-based guarantee for jackknife (Theorem~\ref{thm:jackknife-stability}) is developed by \cite{barber2021predictive}, along with improved predictive coverage guarantees for jackknife+. The proof of Theorem~\ref{thm:jackknife-stability} is related to the conformal jackknife method, a version of full conformal prediction that uses a leave-one-out regression based score function \cite[Section 2.2]{vovk2005algorithmic}. Earlier work by \cite{steinberger2018conditional} proves an asymptotic coverage guarantee for jackknife under stability. \cite{liang2023algorithmic,amann2023assumption} establish that algorithmic stability leads to training-conditional coverage for jackknife+ (and also for other related methods, namely, full conformal for the former paper, and CV and CV+ for the latter). \cite{ndiaye2022stable} also establishes computational shortcuts for full conformal prediction, under a stronger form of the stability assumption.

A related line of work studies a different inference question for cross-validation-based methods: for a loss function $\ell$, is the average leave-one-out loss $\frac{1}{n}\sum_{i=1}^n\ell(Y_i,\hf_{-i}(X_i))$ an accurate estimator of the true risk of the fitted models? \cite{austern2020asymptotics,bayle2020cross} establish asymptotic normality of this estimator. These results rely on a weaker formulation of stability, relying on stability of the loss $\ell(\cdot,\hf(\cdot))$ rather than the predictive model $\hf$ itself; other works such as \cite{kearns1997algorithmic,kale2011cross,kumar2013near} have also studied this type of loss-based stability.

Several types of algorithms have been proved to satisfy stability by their construction. As mentioned in Section~\ref{sec:alg_stab_framework}, nearest-neighbor type methods satisfy stability (see, e.g., \cite{rogers1978finite,devroye1979distribution}); results for ridge regression (and strongly convex regularizers more generally) are discussed by \cite{bousquet2002stability}. \cite{hardt2016train} establishes stability of stochastic gradient descent.
Ensembling or bootstrapping based methods have also been shown to satisfy stability properties, see, e.g., \cite{poggio2002bagging,chen2022debiased,soloff2023bagging}.
Results on the hardness of testing algorithmic stability, as mentioned at the end of Section~\ref{sec:alg-stability-jackknife}, appear in \cite{kim2021black,luo2024algorithmic}. 

\section*{Exercises}
\addcontentsline{toc}{section}{\protect\numberline{}\textnormal{\hspace{-0.8cm}Exercises}}
\begin{enumerate}[font=\bfseries, label={\thechapter.\arabic*}, labelsep=1em, itemsep=1em]
\item In the proof of Theorem~\ref{thm:jackknife-stability}, we defined models
    \[\tilde{f}_{-i} = \cA(\cD_{[n+1]\setminus\{i\}}),\]
    trained on leave-one-out versions of the combined training and test dataset $\cD_{n+1}$. 
    These models can be used for a variant of full conformal prediction, known as the conformal jackknife: in the context of the residual score, the conformal jackknife prediction set is defined as
    \[\cC(X_{n+1}) = \left\{y\in\cY : |y - \hf(X_{n+1})| \leq \hat{q}^y\right\},\]
    where $\hf = \cA(\cD_n)$, and where
    \[\hat{q}^y = \quantile\left( (|Y_i - \tilde{f}^y_{-i}(X_i|)_{i\in[n]};(1-\alpha)(1+1/n)\right),\]
    for
    \[\tilde{f}^y_{-i} = \cA(\cD^y_{[n+1]\setminus\{i\}}),\]
    where $\cD^y_{[n+1]\setminus\{i\}}$ denotes the dataset comprised of all training data except the $i$th point, along with the hypothesized test point $(X_{n+1},y)$.
    
    For this exercise, define a symmetric score function $s((x,y);\cD)$, such that the conformal jackknife is equivalent to running full conformal prediction (Algorithm~\ref{alg:full-cp}) with this score function. 
\item\label{exercise:jackknife_needs_inflation} In Theorem~\ref{thm:jackknife-stability}, we proved that assuming algorithmic stability ensures predictive coverage for the \emph{inflated} jackknife interval---but this does not necessarily guarantee that coverage will hold for the original (un-inflated) jackknife interval.
    In this exercise, we will show that algorithmic stability cannot ensure any guarantee of coverage for the original jackknife interval. Specifically, fix any arbitrarily small $\epsilon,\delta>0$, and  construct an explicit example of a data distribution $P$ and a regression algorithm $\cA$, such that algorithmic stability (Definition~\ref{def:alg_stab_eps_delta}) is satisfied, but the jackknife interval $\cC(X_{n+1})=\hf(X_{n+1})\pm \hat{q}_{\textnormal{CV}}$ has zero coverage, \[\P(Y_{n+1}\in\cC(X_{n+1}))=0,\]
    for data $(X_i,Y_i)\iidsim P$. 
\item Continuing from Exercise~\ref{exercise:jackknife_needs_inflation}, we will now show that the (un-inflated) jackknife interval does offer a coverage guarantee if we make an additional assumption. Specifically, suppose the distribution $P_{Y\mid X}$ (i.e., the conditional distribution of $Y\mid X$, when $(X,Y)\sim P$) has a bounded density $g(y\mid x)\in [0,B]$. Prove that the jackknife interval $\cC(X_{n+1})=\hf(X_{n+1})\pm \hat{q}_{\textnormal{CV}}$ satisfies
        \[\P(Y_{n+1}\in\cC(X_{n+1})) \geq 1-\alpha - 2\sqrt{\delta} - \frac{1}{n+1} - 2\epsilon B.\]
\item In an earlier exercise (Exercise~\ref{exercise:asymm_split_CP}), we constructed an asymmetric-tail version of split conformal prediction, with $\leq \alpha/2$ probability of miscoverage in each tail.
Define an analogous asymmetric-tail version of the jackknife+ method, and prove that it has $\geq 1-2\alpha$ marginal coverage.
\item Construct an example where cross-conformal prediction, with real-valued $Y$ and using the residual score, will often result in a prediction set $\cC(X_{n+1})$ that is not an interval.
(Here `using the residual score' means that $s((x,y);\cD) = |y-\hf(x)|$, where $\hf=\cA(\cD)$ is a regression model trained on the dataset $\cD$, for some choice of a symmetric regression algorithm $\cA$, as in the notation of Section~\ref{sec:CV+_jack+}.)
\end{enumerate}

\chapter{Weighted Variants of Conformal Prediction}
\label{chapter:weighted-conformal}
\index{weighted conformal prediction|(}

In the previous chapters, our conformal algorithms have treated all data points equally.
Often, however, some data points are more relevant than others. For example, data collected more recently is generally expected to be more closely related to the test point, and data points closer in feature space to the test point are expected to be more informative about the test point.

This chapter concerns weighted variants of conformal prediction, which allow us to give different data points a different level of influence.
We begin by defining a generic algorithm for weighted conformal prediction in Section~\ref{sec:weighted-algorithm}. After that, in the remaining sections, we show how to instantiate weighted conformal prediction for various settings:
\begin{itemize}
    \item In Section~\ref{sec:covariate_and_label_shift}, we consider known distribution shifts relating the training data distribution and the test point, such as covariate shifts and label shifts. For this setting, we use data-dependent weights to correct for this distribution shift.
    \item In Section~\ref{sec:localized}, we present the localized conformal prediction method, which aims to improve conditional coverage by placing higher weights on data points that are close to the test point $X_{n+1}$.
    \item In Section~\ref{sec:fixed-weights}, we consider arbitrary (and unknown) distribution shifts. We use fixed weights to prioritize more trusted data points, which leads to coverage closer to $1-\alpha$ when exchangeability is violated. 
    \item In Section~\ref{sec:general-outlook-permutations}, we consider a more general and unified view of conformal prediction, providing a framework for deriving conformal algorithms that generalize beyond the assumptions of exchangeability or of a symmetric score function. 
\end{itemize}
These setups all use variants of the same weighted conformal prediction algorithm, but with different choices of the weights, to achieve different effects.

\section{Weighted quantiles and the weighted conformal algorithm}
\label{sec:weighted-algorithm}
In this section, we will define a weighted version of the split and full conformal prediction algorithms. 
While conformal prediction computes $\hat{q}$ (or $\hat{q}^y$) as an unweighted quantile of the data points' scores, thus implicitly placing equal weight on each data point, we will now compute it as a weighted quantile, placing different weights on the various data points. 
This modification will allow us to provide coverage guarantees for conformal prediction under a variety of distribution shifts, as well as enabling approximate conditional guarantees, in later sections. In this section we restrict our attention to the mechanics of the procedure.

In unweighted full conformal prediction, we include the value $y$ in the prediction set whenever $S^y_{n+1} \leq \hat{q}^y$, where $\hat{q}^y$ is calculated as a quantile of the data points' scores.
By contrast, in weighted full conformal prediction, we take a weighted $1-\alpha$ quantile, \index{weighted quantile}
\begin{equation}
    \hat{q}^y_w = \quantile\left(\sum_{i=1}^{n+1} w_i \delta_{S_i^y} ; 1-\alpha\right),
\end{equation}
for a vector $w=(w_1,\dots,w_{n+1})$ of nonnegative, unit-sum weights. This weighted quantile is the $(1-\alpha)$-quantile of the distribution that places mass $w_i$ on the value $S_i^y$, for each $i$.
Points with higher weights will have a greater influence on the chosen quantile.
See Figure~\ref{fig:weighted-quantile} for an illustration, and Algorithm~\ref{alg:wtd-full-cp} for the weighted full conformal algorithm for a generic (and possibly data-dependent) vector of weights.
\begin{figure}[t]
    \centering
    \includegraphics[width=0.7\linewidth]{\diagramspath weighted-quantile.pdf}
    \caption{\textbf{An illustration of the weighted conformal quantile.} The horizontal axis shows the scores. The vertical axis shows the (unweighted) CDF of the scores on the left, and the weighted CDF on the right. The quantile level $1-\alpha$ is shown as a light gray line on both plots. The unweighted of quantile $\hat{q}^y$ is shown on the left, and the weighted quantile $\hat{q}_w^y$ is shown on the right.}
    \commentAlt{The left panel plots the CDF of $9$ scores. The right panel has the weighted CDF for the same scores, with vertical jumps determined by weights $w_i$. The quantile $\hat{q}^y$ is marked on the left, and weighted quantile $\hat{q}^y_w$ on the right.}
    \label{fig:weighted-quantile}
\end{figure}
\begin{algbox}[Weighted full conformal prediction]
    \label{alg:wtd-full-cp}
    \begin{enumerate}
        \item Input training data $(X_1, Y_1), ..., (X_n, Y_n)$, test point $X_{n+1}$, weights $w_1,\dots,w_{n+1}\geq 0$ with $\sum_i w_i =1$, target coverage level $1-\alpha$, conformal score function $s$.
                \item For each possible response value $y \in \cY$,\begin{enumerate}
            \item Compute $S_i^y = s((X_i, Y_i); \cD^y_{n+1})$ for all $i\in[n]$, and $S_{n+1}^y = s((X_{n+1}, y);\cD^y_{n+1})$.
            \item Compute the weighted conformal quantile $\hat{q}^y_w = \quantile\left(\sum_{i=1}^{n+1} w_i \delta_{S_i^y} ; 1-\alpha\right)$.
        \end{enumerate}
        \item Return the prediction set $\cC(X_{n+1}) = \{ y \in \cY : S_{n+1}^y \leq \hat{q}^y_w \}$.
    \end{enumerate}
\end{algbox}
Comparing to unweighted full conformal prediction (Algorithm~\ref{alg:full-cp}), we can see that this weighted algorithm  differs only in the calculation of the thresholds $\hat{q}^y_w$. 
Indeed, we can recover unweighted conformal prediction by choosing constant weights $w_i = \frac{1}{n+1}$ in the weighted full conformal algorithm.
The following calculation shows the equivalence: if we use constant weights in Algorithm~\ref{alg:wtd-full-cp}, then
\[\hat{q}^y_w =\quantile\left(\frac{1}{n+1}\sum_{i=1}^{n+1}  \delta_{S_i^y} ; 1-\alpha\right)= \quantile\left(S_1^y, \ldots, S_n^y, S_{n+1}^y ; 1-\alpha\right) \textnormal{ for $w_i= \frac{1}{n+1}$}.\]
But in fact, this results in an identical prediction set as Algorithm~\ref{alg:full-cp}, because by the Replacement Lemma (Lemma~\ref{lem:n+1-to-n-reduction}), for each $y\in\cY$,
\[S^y_{n+1}\leq \hat{q}^y = \quantile\left(S_1^y, \ldots, S_n^y ; (1-\alpha)(1+1/n)\right) \ \Longleftrightarrow \ S^y_{n+1}\leq \quantile\left(S_1^y, \ldots, S_n^y, S_{n+1}^y ; 1-\alpha\right).\]

We can also define a weighted version of split conformal prediction. As for the unweighted case, this can be viewed as a special case of weighted full conformal prediction.
\begin{algbox}[Weighted split conformal prediction]
    \label{alg:wtd-split-cp}
    \begin{enumerate}
                \item Input pretraining dataset $\cD_{\rm pre}$, calibration data  $(X_1, Y_1), ..., (X_n, Y_n)$, test point $X_{n+1}$, weights $w_1,\dots,w_{n+1}\geq 0$ with $\sum_i w_i =1$, target coverage level $1-\alpha$.
                \item Using the pretraining dataset $\cD_{\rm pre}$, construct a conformal score function $s:\cX\times\cY\to\R$.
                \item Compute the conformal scores on the calibration set, $S_i = s(X_i, Y_i)$ for $i\in[n]$.
                \item Compute the weighted conformal quantile $\hat{q}_w = \quantile\left(\sum_{i=1}^n w_i \delta_{S_i} + w_{n+1}\delta_{+\infty};1-\alpha\right)$.
                \item  Return the prediction set $\cC(X_{n+1}) = \{ y \in \cY : s(X_{n+1},y) \leq \hat{q}_w \}$.
    \end{enumerate}
\end{algbox}
Here $\delta_{+\infty}$ is a point mass at $+\infty$, and so
\[\quantile\left(\sum_{i=1}^n w_i \delta_{S_i} + w_{n+1}\delta_{+\infty};1-\alpha\right) = \inf\left\{t : \sum_{i=1}^n w_i \ind{S_i\leq t} \geq 1-\alpha\right\}.\]
As before, this is exactly equivalent to split conformal prediction (Algorithm~\ref{alg:split-cp}), if we choose constant weights $w_i=\frac{1}{n+1}$.

In the remainder of this chapter, we will give coverage guarantees for the weighted conformal algorithm and its extensions in various settings.

\section{Conformal prediction under covariate and label shifts}\label{sec:covariate_and_label_shift}
\index{covariate shift|(}

In this section, we see how the weighted conformal algorithm can be applied to handle covariate shift and label shift---that is, differences between the distribution of the training data and the test data, so that $(X_{n+1},Y_{n+1})$ is drawn from a covariate- or label-shifted distribution as compared to $(X_1,Y_1), \ldots, (X_n,Y_n)$. 
The strategy for constructing weights in these two settings will be a common one: setting the weights $w_i$ proportionally to the likelihood ratio relating these two distributions.

\subsection{Covariate shift}\label{sec:covariate-shift}
We will begin by using weighted conformal prediction to address covariate shift: the setting where the distribution of $X$ differs between the training and test data, but the distribution of $Y \mid X$ remains the same.
Covariate shift is a common model in practice; for example, it captures demographic shifts, where certain demographic covariates might be more common in the training population than in the test population, but the relationship between the covariates and the response remains fixed.

We assume that $(X_1,Y_1), \ldots, (X_n,Y_n) \iidsim P_X \times P_{Y \mid X}$ and that $(X_{n+1},Y_{n+1})\sim Q_X \times P_{Y \mid X}$ independently, for different covariate distributions $P_X$ and $Q_X$. Here, the notation $P_X \times P_{Y \mid X}$ means that $X$ follows distribution $P_X$ and $Y \mid X$ follows distribution $P_{Y \mid X}$ (and, $Q_X \times P_{Y \mid X}$ is defined similarly).
The key insight is that if we know the \emph{likelihood ratio} $\frac{\mathsf{d}Q_X}{\mathsf{d}P_X}: \cX \to [0,\infty) $, we can construct weights that compensate for the distribution shift to give us coverage. Formally, $\frac{\mathsf{d}Q_X}{\mathsf{d}P_X}$ denotes the Radon--Nikodym derivative relating the distribution $Q_X$ of the test data point's feature vector to the distribution $P_X$ of the training features. For example, if $P_X$ and $Q_X$ both have densities, then $\frac{\mathsf{d}Q_X}{\mathsf{d}P_X}$ can simply be taken to be the ratio of the densities.

The strategy for setting the weights is straightforward: we set $w_i \propto \frac{\mathsf{d}Q_X}{\mathsf{d}P_X}(X_i)$. 
To ensure that the weights sum to $1$, we set
\begin{equation}
    \label{eq:weights-covariate-shift}
    w_i = \frac{\frac{\mathsf{d}Q_X}{\mathsf{d}P_X}(X_i)}{\sum\limits_{j=1}^{n+1}\frac{\mathsf{d}Q_X}{\mathsf{d}P_X}(X_j)}
\end{equation}
for all $i \in [n+1]$.
We will now see that 
this choice of weights gives us a coverage guarantee.
\begin{theorem}[Coverage guarantee under covariate shift]
    \label{thm:covariate-shift}
    Let $(X_1,Y_1), \ldots, (X_n,Y_n) \iidsim P_X \times P_{Y \mid X}$ and $(X_{n+1},Y_{n+1})\sim Q_X \times P_{Y \mid X}$ independently.
    Furthermore, assume that $Q_X$ is absolutely continuous with respect to $P_X$, and so $\frac{\mathsf{d}Q_X}{\mathsf{d}P_X}(x) < \infty$ for all $x \in \cX$. Fix any symmetric score function $s$, and define the prediction set
    \[\cC(X_{n+1}) = \left\{y : S^y_{n+1} \leq \hat{q}^y_w\right\}\textnormal{ where }\hat{q}^y_w = \quantile\left(\sum_{i=1}^{n+1} w_i \delta_{S_i^y};1-\alpha\right),\]
    where the weights $w_i$ are defined as in~\eqref{eq:weights-covariate-shift}.
    Then,
    \begin{equation}
        \P(Y_{n+1} \in \cC(X_{n+1})) \geq 1-\alpha.
    \end{equation}
\end{theorem}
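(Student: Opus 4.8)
The plan is to reduce the covariate-shift coverage guarantee to an exchangeability-style argument after reweighting, following the same logic used for the unweighted full conformal proof but now tracking the likelihood ratios. First I would observe (via the Replacement Lemma, Lemma~\ref{lem:n+1-to-n-reduction}, exactly as in the passage just before the theorem statement) that it suffices to work with the full list of scores $S_1,\dots,S_{n+1}$ computed on $\cD_{n+1} = \cD_{n+1}^{Y_{n+1}}$, and to show that
\[
\P\Big(S_{n+1} \leq \quantile\big(\textstyle\sum_{i=1}^{n+1} w_i \delta_{S_i};1-\alpha\big)\Big) \geq 1-\alpha,
\]
where now the sum runs over all $n+1$ indices with $w_i$ as in~\eqref{eq:weights-covariate-shift}. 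By symmetry of the score function, each $S_i = s((X_i,Y_i);\cD_{n+1})$ is an identical (symmetric) function of the unordered multiset of data points.

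The core step is a \emph{change of measure} argument. Under $P_X^n \times Q_X \times P_{Y\mid X}$, the joint density of $(X_1,Y_1),\dots,(X_{n+1},Y_{n+1})$ is proportional to $\big(\prod_{i=1}^{n+1} p_{Y\mid X}(Y_i\mid X_i)\big)\cdot\big(\prod_{i=1}^n p_X(X_i)\big)\cdot q_X(X_{n+1})$, which I would rewrite (using $q_X = \frac{\mathsf{d}Q_X}{\mathsf{d}P_X}\cdot p_X$) as $\big(\prod_i p_{Y\mid X}(Y_i\mid X_i) p_X(X_i)\big)\cdot \frac{\mathsf{d}Q_X}{\mathsf{d}P_X}(X_{n+1})$. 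The first factor is symmetric in the $n+1$ data points, so conditional on the unordered multiset $\{Z_1,\dots,Z_{n+1}\}$ (equivalently, conditional on the empirical distribution $\widehat P_{n+1}$), the index $n+1$ of the ``test'' point is \emph{not} uniform over $[n+1]$ but is instead distributed with probability proportional to $\frac{\mathsf{d}Q_X}{\mathsf{d}P_X}(X_{n+1})$ — which is exactly the weight $w_{n+1}$. Thus, conditional on $\widehat P_{n+1}$, the test point $Z_{n+1}$ takes value $Z_j$ with probability $w_j = \frac{\frac{\mathsf{d}Q_X}{\mathsf{d}P_X}(X_j)}{\sum_{k}\frac{\mathsf{d}Q_X}{\mathsf{d}P_X}(X_k)}$ for each $j \in [n+1]$. (I would make this rigorous either via a density computation on standard Borel spaces, or by the cleaner combinatorial route: first sample $Z^{(1)},\dots,Z^{(n+1)}$ i.i.d.\ from $P$, then \emph{select} the test index $j$ with probability proportional to $\frac{\mathsf{d}Q_X}{\mathsf{d}P_X}(X^{(j)})$ and relabel — this reproduces the target joint law and makes the conditional weighting transparent.)

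From there the conclusion is immediate: conditional on $\widehat P_{n+1}$, the score $S_{n+1}$ equals the (fixed) value $s(Z_j;\widehat P_{n+1})$ with conditional probability $w_j$, so $S_{n+1}$ has conditional distribution $\sum_{j=1}^{n+1} w_j \delta_{S_j}$ — note that by symmetry of $s$ the scores $S_1,\dots,S_{n+1}$ are themselves deterministic functions of $\widehat P_{n+1}$ once the multiset is fixed. By the defining property of the weighted quantile, $\P\big(S_{n+1} \leq \quantile(\sum_j w_j \delta_{S_j};1-\alpha) \mid \widehat P_{n+1}\big) \geq 1-\alpha$, and marginalizing over $\widehat P_{n+1}$ finishes the proof. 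The main obstacle — and the one deserving care — is justifying the conditional-distribution claim when $P_X, Q_X$ need not have densities with respect to a common dominating measure given only as ``Lebesgue''; here the sample--resample / relabeling construction sidesteps the measure-theoretic subtleties, and one also needs the mild regularity (countably-generated $\sigma$-algebra) already assumed in the book to invoke regular conditional distributions. A secondary technical point is handling ties and the edge case where some $\frac{\mathsf{d}Q_X}{\mathsf{d}P_X}(X_i)=0$ (then $w_i=0$ and that point simply drops out, with the $w_{n+1}=0$ case forcing $\hat q^y_w$ to ignore a zero-weight test point — but $Q_X$-a.s.\ we have $\frac{\mathsf{d}Q_X}{\mathsf{d}P_X}(X_{n+1})>0$, so this is not an issue for coverage).
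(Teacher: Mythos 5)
Your strategy is the same one the paper uses: the paper does not prove Theorem~\ref{thm:covariate-shift} directly but derives it as a corollary of Theorem~\ref{thm:train-test-shift-weighted}, whose proof conditions on the empirical distribution $\widehat{P}_{n+1}$, invokes Proposition~\ref{prop:weighted-exchangeability} to show $Z_{n+1}\mid\widehat{P}_{n+1}\sim\sum_j w_j\delta_{Z_j}$, and finishes with the weighted quantile property (Fact~\ref{fact:conversion-quantiles-cdfs-weighted}). Your reduction via a (weighted analogue of the) Replacement Lemma, the conditioning on $\widehat{P}_{n+1}$, and the application of the weighted quantile fact all align with that argument, and the ``density computation on standard Borel spaces'' you mention is precisely the Radon--Nikodym argument of Proposition~\ref{prop:weighted-exchangeability}.

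One correction you should note: the ``cleaner combinatorial route'' you describe does \emph{not} reproduce the target joint law. If you sample $Z^{(1)},\dots,Z^{(n+1)}\iidsim P$, pick a test index $J^*$ with probability proportional to $\frac{\mathsf{d}Q_X}{\mathsf{d}P_X}(X^{(j)})$, and relabel, the multiset $\{Z^{(1)},\dots,Z^{(n+1)}\}$ is that of $n+1$ i.i.d.\ $P$-draws, whereas the target multiset should consist of $n$ $P$-draws and one $Q$-draw. The $n=0$ case makes this stark: your construction gives $Z_1\sim P$, but the target requires $Z_1\sim Q$. What is true --- and sufficient --- is that the conditional law of $Z_{n+1}$ given $\widehat{P}_{n+1}$ matches between your construction and the target model (both select the test index with probability proportional to $\frac{\mathsf{d}Q_X}{\mathsf{d}P_X}$ given the multiset). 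But that match has to be verified for the target model separately; it is not established merely by exhibiting a different model with the same selection rule. So the combinatorial route as stated is not a rigorous alternative to the density computation; it is at best a heuristic, and the Radon--Nikodym argument is the one that actually carries the proof. A minor secondary point: the Replacement Lemma (Lemma~\ref{lem:n+1-to-n-reduction}) is stated for unweighted lists, so your step-one reduction really needs the weighted analogue $\quantile(\sum_{i\le n}w_i\delta_{S_i};1-\alpha)=\quantile(\sum_{i\le n}w_i\delta_{S_i}+w_{n+1}\delta_{+\infty};1-\alpha)$, whose equivalence with the full weighted quantile condition is easy but should be stated explicitly rather than cited as Lemma~\ref{lem:n+1-to-n-reduction} itself.
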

In other words, coverage for the covariate shift setting is obtained by running Algorithm~\ref{alg:wtd-full-cp} with weights $w_i$ defined as in~\eqref{eq:weights-covariate-shift}. This result is a corollary to the more general result of Theorem~\ref{thm:train-test-shift-weighted} (see Section~\ref{sec:train-test-shifts-weighted-lr} below), so we do not present the proof.

\begin{figure}[t]
    \centering
    \includegraphics[width=0.8\linewidth]{\diagramspath covariate-shift-example.pdf}
    \caption{\textbf{Visualization of a covariate shift} in two groups on the left, and the resulting unweighted and weighted quantiles on the right. On the left, there are two groups, $X=1$ and $X=2$, with different frequencies under $P$ and $Q$. 
    On the right, we show the density of the scores conditional on $X=1$ and $X=2$, in the same colors.
    We additionally show the density of the score under the mixture distributions $P$ (dotted) and $Q$ (dashed). If we compute a quantile using training data drawn from $P$, then the unweighted quantile $\hat{q}^y$ reflects the fact that $P$ places high weight on $X=1$ (which is associated with lower scores), and is consequently much lower than the weighted quantile $\hat{q}^y_w$, which reflects the fact that $Q$ places higher weight on $X=2$ (which is associated with higher score values).}
    \commentAlt{A histogram shows distributions $P$ and $Q$ with different proportions of $X=1$ or $X=2$. A plot shows two densities of scores when $X=1$ (mostly smaller scores) or when $X=2$ (larger scores), and the different mixture distributions under $P$ or $Q$.}
    \label{fig:covariate-shift}
\end{figure}

\textbf{Why does reweighting give coverage?} 
First, it is worth considering why unweighted conformal prediction does not provide coverage under covariate shift. 
Imagine that the covariate takes only two values, i.e., $\cX = \{1, 2\}$, and that the scores in the subpopulation where $X=1$ tend to be much smaller than in the subpopulation where $X=2$. 
If $\P_{P_X}(X=1) > \P_{Q_X}(X=1)$, then
$X=1$ is over-represented in the training data (relative to the test point's distribution), and the conformal quantile will be biased downwards; see Figure~\ref{fig:covariate-shift} for a visualization. 
The fundamental problem is that the training and test scores $S_1,\dots,S_n,S_{n+1}$ are no longer exchangeable---the test point's score $S_{n+1}$ no longer behaves like a draw from the empirical distribution of $S_1, \ldots, S_{n+1}$, and so calculating an unweighted quantile cannot be expected to achieve the right coverage level.
Weighted conformal prediction works because the distribution of $S_{n+1}$ can be explicitly represented with a weighted empirical distribution instead---we will describe this in detail soon in Section~\ref{sec:train-test-shifts-weighted-lr}.
\index{covariate shift|)}

\subsection{Label shift}
\label{sec:label-shift}
\index{label shift|(}

In label shift, the distribution of $Y$ changes and the distribution of $X \mid Y$ is held fixed.
Label shift is common in many classification problems.
Our strategies for handling covariate shift and label shift are almost identical, with one key difference: the weights are now a function of $Y$ as opposed to $X$.

The setup is analogous to the covariate shift procedure.
We assume that $(X_1,Y_1), \ldots, (X_n,Y_n) \iidsim P_{X \mid Y} \times P_Y$ and that $(X_{n+1},Y_{n+1})\sim P_{X \mid Y} \times Q_Y$ independently, for a different label distribution $Q_Y$. Here, the notation $P_{X \mid Y} \times P_Y$ means that $Y$ follows distribution $P_Y$ and then $X \mid Y$ follows distribution $P_{X \mid Y}$, and similarly, for $P_{X \mid Y} \times Q_Y$.
Then, we define the likelihood ratio $\frac{\mathsf{d}Q_Y}{\mathsf{d}P_Y}: \cY \to \R$.

To run Algorithm~\ref{alg:wtd-full-cp}, we need to make a slight modification in the calculation of the weighted quantile, since the weights are now a function of $Y$ (which, for the test point, is unknown), rather than of $X$ (where the test point value $X_{n+1}$ is known). 
In particular, define
\begin{equation}
    \label{eq:weights-label-shift}
    w_i^y = \frac{\frac{\mathsf{d}Q_Y}{\mathsf{d}P_Y}(Y_i)}{\sum\limits_{j=1}^{n}\frac{\mathsf{d}Q_Y}{\mathsf{d}P_Y}(Y_j) + \frac{\mathsf{d}Q_Y}{\mathsf{d}P_Y}(y)} \textnormal{ for } i \in [n] \quad \textnormal{ and } \quad w_{n+1}^y = \frac{\frac{\mathsf{d}Q_Y}{\mathsf{d}P_Y}(y)}{\sum\limits_{j=1}^{n}\frac{\mathsf{d}Q_Y}{\mathsf{d}P_Y}(Y_j) + \frac{\mathsf{d}Q_Y}{\mathsf{d}P_Y}(y)}.
\end{equation}
For the label shift setting, we will use these weights $w_i^y$ for computing the conformal quantile---that is, this procedure is a slight variant of Algorithm~\ref{alg:wtd-full-cp}, where the weights $w_i$ did not depend on the hypothesized test point $y$. 

Our next result verifies a coverage guarantee for this setting. 
\begin{theorem}[Coverage guarantee under label shift]
    \label{thm:label-shift}   
    Let $(X_1,Y_1), \ldots, (X_n,Y_n) \iidsim P_{X \mid Y} \times Q_Y$ and $(X_{n+1},Y_{n+1})\sim P_{X \mid Y} \times Q_Y$ independently.
    Furthermore, assume that $Q_Y$ is absolutely continuous with respect to $P_Y$, and so $\frac{\mathsf{d}Q_Y}{\mathsf{d}P_Y}(y) < \infty$ for all $y \in \cY$.
    Fix any symmetric score function $s$, and define the prediction set
    \[\cC(X_{n+1}) = \left\{y : S^y_{n+1} \leq \hat{q}^y_w\right\}\textnormal{ where }\hat{q}^y_w = \quantile\left(\sum_{i=1}^{n+1} w_i^y \delta_{S_i^y};1-\alpha\right),\]
    where the weights $w_i^y$ are defined as in~\eqref{eq:weights-label-shift}.
    Then,
    \begin{equation}
        \P(Y_{n+1} \in \cC(X_{n+1})) \geq 1-\alpha.
    \end{equation}
\end{theorem}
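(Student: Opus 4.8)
The plan is to derive Theorem~\ref{thm:label-shift} as a special case of the general weighted conformal result for train/test distribution shifts, Theorem~\ref{thm:train-test-shift-weighted} (equivalently the setup around~\eqref{eq:weights-train-test-shift}), exactly as was done for the covariate shift case in Theorem~\ref{thm:covariate-shift}. The key observation is that label shift is just a distribution shift where the likelihood ratio between the joint test distribution $Q = P_{X\mid Y}\times Q_Y$ and the joint training distribution $P = P_{X\mid Y}\times P_Y$ factors through the label coordinate: since $P_{X\mid Y}$ is common to both, we have $\frac{\mathsf{d}Q}{\mathsf{d}P}(x,y) = \frac{\mathsf{d}Q_Y}{\mathsf{d}P_Y}(y)$ for $(P\text{-almost})$ every $(x,y)$. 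So the general likelihood-ratio weighting scheme, which would assign weights proportional to $\frac{\mathsf{d}Q}{\mathsf{d}P}$ evaluated at each data point, reduces precisely to weights proportional to $\frac{\mathsf{d}Q_Y}{\mathsf{d}P_Y}(Y_i)$.

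The main subtlety — and the reason the algorithm for label shift differs slightly from Algorithm~\ref{alg:wtd-full-cp} — is that the weight attached to the test point depends on $Y_{n+1}$, which is unobserved. First I would make this precise: in full conformal, we run the procedure for each hypothesized value $y\in\cY$, and the augmented dataset $\cD^y_{n+1}$ replaces $(X_{n+1},Y_{n+1})$ by $(X_{n+1},y)$. The correct normalized weight vector for the general shift result applied to $\cD^y_{n+1}$ is the one that uses $\frac{\mathsf{d}Q_Y}{\mathsf{d}P_Y}(y)$ in the last coordinate — and this is exactly the definition of $w_i^y, w_{n+1}^y$ in~\eqref{eq:weights-label-shift}. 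Then I would invoke the general theorem: at the true value $y=Y_{n+1}$, the weights $w_i^{Y_{n+1}}$ are the likelihood-ratio weights for the joint distributions, and since $\cD^{Y_{n+1}}_{n+1} = \cD_{n+1}$ consists of $n$ i.i.d.\ draws from $P$ together with one independent draw from $Q$, Theorem~\ref{thm:train-test-shift-weighted} gives $\P(Y_{n+1}\in\cC(X_{n+1}))\geq 1-\alpha$ directly, using the symmetry of the score function $s$ and the absolute continuity assumption $\frac{\mathsf{d}Q_Y}{\mathsf{d}P_Y}(y)<\infty$ (which guarantees all weights are finite and the normalization is well-defined). One should also note the harmless convention that the weighted quantile construction $\quantile\left(\sum_{i=1}^n w_i^y\delta_{S_i^y};1-\alpha\right)$ in the theorem statement is equivalent, via the weighted Replacement Lemma analogue, to the construction that explicitly includes the $(n+1)$st atom — this is the same equivalence used to pass between Algorithm~\ref{alg:wtd-full-cp} and Algorithm~\ref{alg:full-cp} in the unweighted case.

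The hard part will essentially be bookkeeping rather than a genuine obstacle: one must carefully check that, for the purposes of applying the general shift theorem, the roles of the coordinates are not confused — in covariate shift the weight is a known function of $X_i$ (so $w_i$ can be computed for all $i$ including $i=n+1$), whereas in label shift the weight is a function of $Y_i$, so the test weight is only available once we hypothesize $y$. The general theorem must therefore be applied at each $y$ to the dataset $\cD^y_{n+1}$ with the weight vector $(w_1^y,\dots,w_{n+1}^y)$, and coverage follows by specializing to $y=Y_{n+1}$. Since the paper states Theorem~\ref{thm:label-shift} is a corollary of the general result, I would keep the writeup short: state the factorization of the likelihood ratio, identify the weights, invoke the general theorem at the true label, and conclude — deferring all the real work (the weighted-exchangeability argument, which is the content of Section~\ref{sec:general-outlook-permutations}) to the proof of Theorem~\ref{thm:train-test-shift-weighted}.
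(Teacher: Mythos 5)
Your proposal is correct and matches the paper's own approach: the paper explicitly states that Theorem~\ref{thm:label-shift} is a corollary of Theorem~\ref{thm:train-test-shift-weighted}, and your argument — factor the joint likelihood ratio as $\frac{\mathsf{d}Q}{\mathsf{d}P}(x,y)=\frac{\mathsf{d}Q_Y}{\mathsf{d}P_Y}(y)$ since the conditional $P_{X\mid Y}$ cancels, observe that the general weights~\eqref{eq:weights-train-test-shift} then reduce exactly to~\eqref{eq:weights-label-shift}, and apply the general theorem at $y=Y_{n+1}$ — is precisely that derivation. One minor remark: your closing aside about needing a "weighted Replacement Lemma analogue" to reconcile the quantile over $n$ atoms with the one including the $(n+1)$st atom is unnecessary here, since Theorems~\ref{thm:train-test-shift-weighted} and~\ref{thm:label-shift} are stated with the identical construction (a weighted quantile of the sub-probability measure $\sum_{i=1}^n w_i^y\delta_{S_i^y}$, whose deficit mass $w_{n+1}^y$ implicitly sits at $+\infty$), so no translation step is required.
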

This result is again a corollary of Theorem~\ref{eq:weights-train-test-shift}, so we do not prove it here.

\subsection{General shifts between the training distribution and test distribution}
\label{sec:train-test-shifts-weighted-lr}

We now turn to a more general setting that unifies covariate shift and label shift, and clarifies the core statistical logic at play.

Let $(X_1,Y_1), \ldots, (X_n,Y_n) \iidsim P$ and $(X_{n+1},Y_{n+1})\sim Q$ independently, for a different distribution $Q$, where we assume that $Q$ is absolutely continuous with respect to $P$ so that the likelihood ratio (i.e., the Radon--Nikodym derivative) $\frac{\mathsf{d}Q}{\mathsf{d}P}: \cX\times\cY \to [0,\infty) $ is well-defined.
Paralleling the label shift section, we will set the weights as
\begin{equation}
    \label{eq:weights-train-test-shift}
    w_i^y = \frac{\frac{\mathsf{d}Q}{\mathsf{d}P}(X_i, Y_i)}{\sum\limits_{j=1}^{n}\frac{\mathsf{d}Q}{\mathsf{d}P}(X_j, Y_j) + \frac{\mathsf{d}Q}{\mathsf{d}P}(X_{n+1}, y)} \textnormal{ for } i \in [n], \quad w_{n+1}^y = \frac{\frac{\mathsf{d}Q}{\mathsf{d}P}(X_{n+1}, y)}{\sum\limits_{j=1}^{n}\frac{\mathsf{d}Q}{\mathsf{d}P}(X_j, Y_j) + \frac{\mathsf{d}Q}{\mathsf{d}P}(X_{n+1}, y)}.
\end{equation}
As a side note, because these weights are self-normalized, we only need to know $\frac{\mathsf{d}Q}{\mathsf{d}P}$ up to a constant of proportionality---that is, if $\frac{\mathsf{d}Q}{\mathsf{d}P}(x,y) \propto w(x,y)$ for some function $w:\cX\times\cY\to\R$, it suffices to know the function $w$, without calculating its normalizing constant.
(This was equally true in Sections~\ref{sec:covariate-shift} and~\ref{sec:label-shift} for the covariate-shift and label-shift settings.)

With the weights defined, we are now ready to state the general result.
\begin{theorem}[Coverage guarantee under distribution shift]
    \label{thm:train-test-shift-weighted}
    Let $(X_1,Y_1), \ldots, (X_n,Y_n) \iidsim P$ and $(X_{n+1},Y_{n+1})\sim Q$ independently.
    Furthermore, assume that $Q$ is absolutely continuous with respect to $P$, and so $\frac{\mathsf{d}Q}{\mathsf{d}P}(x,y) < \infty$ for all $(x,y) \in \cX\times\cY$.
Fix any symmetric score function $s$, and define the prediction set
    \[\cC(X_{n+1}) = \left\{y : S^y_{n+1} \leq \hat{q}^y_w\right\}\textnormal{ where }\hat{q}^y_w = \quantile\left(\sum_{i=1}^{n+1} w_i^y \delta_{S_i^y};1-\alpha\right),\]
    where the weights $w_i^y$ are defined as in~\eqref{eq:weights-train-test-shift}.
    Then,
    \begin{equation}
        \P(Y_{n+1} \in \cC(X_{n+1})) \geq 1-\alpha.
    \end{equation}
\end{theorem}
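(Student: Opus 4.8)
The plan is to reduce the claim to a statement purely about the scores and their weighted empirical distribution, following exactly the logic sketched in Section~\ref{sec:covariate-shift}. First I would observe that by the Replacement Lemma (Lemma~\ref{lem:n+1-to-n-reduction}) combined with the self-normalizing nature of the weights, the event $S^{Y_{n+1}}_{n+1}\leq \hat q^{Y_{n+1}}_w$ (with $\hat q^{Y_{n+1}}_w$ computed using only the $n$ training weights) is equivalent to the event $S_{n+1}\leq \quantile\left(\sum_{i=1}^{n+1} \tilde w_i \delta_{S_i};1-\alpha\right)$, where now the normalization runs over all $n+1$ indices, $\tilde w_i = \frac{\mathsf{d}Q}{\mathsf{d}P}(X_i,Y_i)/\sum_{j=1}^{n+1}\frac{\mathsf{d}Q}{\mathsf{d}P}(X_j,Y_j)$, and $S_i = s((X_i,Y_i);\cD_{n+1})$ are the scores computed on the true augmented dataset (with $y = Y_{n+1}$). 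This is the analogue of~\eqref{eqn:standard-proof-coverage-equiv_Yn+1}, and it uses symmetry of $s$ exactly as in the proof of Theorem~\ref{thm:full-conformal} to ensure that $S_i = S_i^{Y_{n+1}}$ depends on $\cD_{n+1}$ only through its unordered contents.

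The heart of the proof is then a distributional identity: conditional on the unordered multiset of data points $\{(X_1,Y_1),\dots,(X_{n+1},Y_{n+1})\}$, the index $n+1$ (the test point) is distributed over the $n+1$ positions with probabilities proportional to the likelihood ratios. Concretely, I would show that for any symmetric function $g$,
\[
\P\big((X_{n+1},Y_{n+1}) \in A \mid \widehat P_{n+1}\big) = \sum_{i=1}^{n+1} \tilde w_i \, \delta_{(X_i,Y_i)}(A),
\]
where $\widehat P_{n+1}$ is the empirical distribution of the full dataset. The argument: the joint density of $((X_1,Y_1),\dots,(X_{n+1},Y_{n+1}))$ with respect to $P^{n+1}$ is $\prod_{i=1}^{n}1 \cdot \frac{\mathsf{d}Q}{\mathsf{d}P}(X_{n+1},Y_{n+1}) = \frac{\mathsf{d}Q}{\mathsf{d}P}(X_{n+1},Y_{n+1})$, so the density of the dataset with the test point placed in position $\sigma(n+1)$ rather than $n+1$ is obtained by the same formula with the roles permuted; since $P^{n+1}$ is exchangeable and the only non-trivial factor is the likelihood ratio at the test slot, conditioning on the multiset makes position $i$ carry weight proportional to $\frac{\mathsf{d}Q}{\mathsf{d}P}(X_i,Y_i)$. (This is precisely the reweighted version of Proposition~\ref{prop:empirical-distrib-exch}, and it is rigorous once one invokes the regularity assumption that $\cZ$ has a countably-generated $\sigma$-algebra.) Applying the function $(x,y)\mapsto s((x,y);\widehat P_{n+1})$, which is symmetric, gives
\[
S_{n+1}\mid \widehat P_{n+1} \sim \sum_{i=1}^{n+1}\tilde w_i\,\delta_{S_i}.
\]

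Finally, conditional on $\widehat P_{n+1}$ the weights $\tilde w_i$ and the scores $S_i$ are all fixed, so $\sum_{i=1}^{n+1}\tilde w_i \delta_{S_i}$ is a fixed distribution and $S_{n+1}$ is a draw from it; by the defining property of the quantile (Fact~\ref{fact:conversion-quantiles-cdfs}\ref{fact:conversion-quantiles-cdfs_part3} applied to a weighted empirical distribution, i.e.\ that a draw from a distribution lies at or below its $(1-\alpha)$-quantile with probability at least $1-\alpha$),
\[
\P\Big(S_{n+1}\leq \quantile\big(\textstyle\sum_{i=1}^{n+1}\tilde w_i \delta_{S_i};1-\alpha\big)\,\Big|\,\widehat P_{n+1}\Big)\geq 1-\alpha.
\]
Marginalizing over $\widehat P_{n+1}$ and translating back via the equivalence from the first step yields $\P(Y_{n+1}\in\cC(X_{n+1}))\geq 1-\alpha$. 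The main obstacle, and the step worth writing out carefully, is the reweighted conditional-distribution identity: one has to handle the measure-theoretic bookkeeping for the Radon--Nikodym derivative under permutations and conditioning on a multiset, and to be careful that $\frac{\mathsf{d}Q}{\mathsf{d}P}$ may vanish (so some data points get zero weight) and that the self-normalization is well-defined almost surely. Everything else — the Replacement Lemma reduction and the quantile inequality — is routine given the tools already developed in the excerpt.
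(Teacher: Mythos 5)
Your proposal is correct and follows essentially the same route as the paper's proof: reduce the coverage event to a statement about scores using symmetry of $s$, condition on the empirical distribution $\widehat P_{n+1}$ via the reweighted conditional identity (which is exactly the paper's Proposition~\ref{prop:weighted-exchangeability}), and close with the weighted quantile inequality. Your explicit first step of passing from the $n$-term sub-probability quantile to the full $(n+1)$-term quantile is a clean way to ensure the quantile is $\widehat P_{n+1}$-measurable before conditioning; just note that the equivalence you invoke there is a direct weighted analogue of Lemma~\ref{lem:n+1-to-n-reduction} (namely, if $\sum_{i\le n}w_i+w_{n+1}=1$ then $v\le \quantile(\sum_{i\le n}w_i\delta_{s_i};\tau)\ \Leftrightarrow\ v\le\quantile(\sum_{i\le n}w_i\delta_{s_i}+w_{n+1}\delta_v;\tau)$) rather than the lemma itself.
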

The validity of the covariate shift algorithm in Section~\ref{sec:covariate-shift} and the label shift algorithm in Section~\ref{sec:label-shift} are special cases of this result. In this general algorithm, we require knowledge of the distribution shift $w(x,y) = \frac{\mathsf{d}Q}{\mathsf{d}P}(x,y)$ in order to run this algorithm, and so from a methodological perspective this algorithm is useful only in settings where we are able to estimate $w$.
Covariate shift and label shift are the primary instances where this is feasible:
in these two special cases, this ratio depends only on $x$ (in the case of covariate shift), or only on $y$ (in the case of label shift), meaning that we do not need to assume any knowledge of the dependence between $X$ and $Y$ in order to estimate the weight function $w$. 

Our proof of this result will generalize the ideas developed in Section~\ref{sec:conformal-conditioning-bag}, where we proved the coverage guarantee for unweighted full conformal prediction (in a setting without distribution shift) by considering the empirical distribution $\widehat{P}_{n+1} = \frac{1}{n+1}\sum_{i=1}^{n+1}\delta_{(X_i,Y_i)}$ of the training and test data.
The argument in Section~\ref{sec:conformal-conditioning-bag} showed that, in the setting of exchangeability, the conditional distribution of the test point $(X_{n+1},Y_{n+1})$, if we condition on $\widehat{P}_{n+1}$, is equal to the empirical distribution $\widehat{P}_{n+1}$ itself. This is no longer true in the distribution shift setting---if $P\neq Q$, then the test point is not equally likely to be equal to each one of the points in the dataset. The following proposition shows that the conditional distribution is instead given by a reweighted version of the empirical distribution.

\begin{proposition}[Weighted exchangeability and the empirical distribution]
    \label{prop:weighted-exchangeability}
    Let $Z_1, \ldots, Z_n \iidsim P$ and $Z_{n+1}\sim Q$ independently, for some distributions $P,Q$ on $\cZ$.
     Furthermore, assume that $Q$ is absolutely continuous with respect to $P$, and so $\frac{\mathsf{d}Q}{\mathsf{d}P}(z) < \infty$ for all $z \in \cZ$.
    Let
    \[\widehat{P}_{n+1} = \frac{1}{n+1}\sum_{i=1}^{n+1}\delta_{Z_i}\]
    denote the empirical distribution of the $n+1$ data points. Then the conditional distribution of $Z_{n+1}$, given the empirical distribution $\widehat{P}_{n+1}$, is given by
    \begin{equation}    
        Z_{n+1} \mid \widehat{P}_{n+1} \sim \sum\limits_{i=1}^{n+1}  w_i\cdot \delta_{Z_i},
    \end{equation}
    where 
    \[w_i = \frac{\frac{\mathsf{d}Q}{\mathsf{d}P}(Z_i) }{\sum_{j=1}^{n+1} \frac{\mathsf{d}Q}{\mathsf{d}P}(Z_j) }, \ i\in[n+1].\]
\end{proposition}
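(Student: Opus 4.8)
\textbf{Proof proposal for Proposition~\ref{prop:weighted-exchangeability}.}

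The plan is to compute the conditional distribution of $Z_{n+1}$ given $\widehat{P}_{n+1}$ directly. The key observation is that conditioning on $\widehat{P}_{n+1}$ is the same as conditioning on the \emph{unordered multiset} $\{Z_1,\dots,Z_{n+1}\}$: the empirical distribution records exactly which values appear and with what multiplicity, but forgets the labels. So the task reduces to understanding, given the multiset of values, how likely it is that any particular value was the test point $Z_{n+1}$ (as opposed to one of the i.i.d.\ training points). First I would handle the case where the $n+1$ values are almost surely distinct to build intuition, then give the general argument that also accounts for ties; alternatively, I would phrase the whole argument in a way that is insensitive to ties from the start by working with the joint density of the \emph{ordered} tuple against a suitable dominating product measure and then symmetrizing.

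Concretely, let $\mu$ be a $\sigma$-finite measure dominating both $P$ and $Q$ (e.g.\ $\mu = P + Q$, or just $P$ itself since $Q\ll P$), and write $p = \mathsf{d}P/\mathsf{d}\mu$, $q = \mathsf{d}Q/\mathsf{d}\mu$. Then the joint law of $(Z_1,\dots,Z_n,Z_{n+1})$ has density $\big(\prod_{i=1}^n p(z_i)\big)q(z_{n+1})$ with respect to $\mu^{n+1}$. Now fix a measurable test set $A\subseteq\cZ$; I want to show
\[
\P\big(Z_{n+1}\in A \,\big|\, \widehat{P}_{n+1}\big) \;=\; \sum_{i=1}^{n+1} w_i\,\ind{Z_i\in A}
\]
almost surely, where $w_i = \frac{(q/p)(Z_i)}{\sum_{j}(q/p)(Z_j)}$ (note $(q/p) = \mathsf{d}Q/\mathsf{d}P$ $P$-a.s., and this ratio is finite by absolute continuity, which also guarantees the denominator is positive a.s.). The standard device here is the one already used in the proof of Proposition~\ref{prop:empirical-distrib-exch}: it suffices to check that for every symmetric, bounded measurable function $g:\cZ^{n+1}\to\R$,
\[
\E\Big[\ind{Z_{n+1}\in A}\cdot g(Z_1,\dots,Z_{n+1})\Big] \;=\; \E\Big[\Big(\textstyle\sum_{i=1}^{n+1} w_i\ind{Z_i\in A}\Big)\cdot g(Z_1,\dots,Z_{n+1})\Big],
\]
since $\widehat{P}_{n+1}$ is (a.s.\ equivalent to) the symmetrization of $(Z_1,\dots,Z_{n+1})$, and conditional expectations given a symmetric statistic are characterized by integrating against symmetric functions. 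The left side is an integral against $\big(\prod_i p(z_i)\big)q(z_{n+1})\,\mathsf{d}\mu^{n+1}$; I would rewrite $q(z_{n+1}) = \frac{q}{p}(z_{n+1})\,p(z_{n+1})$ so that the base measure becomes the symmetric product $\prod_i p(z_i)\,\mathsf{d}\mu^{n+1}$ with the asymmetric factor $\frac{q}{p}(z_{n+1})\ind{z_{n+1}\in A}$ pulled out. Then I symmetrize: because $g$ and $\prod_i p(z_i)$ are symmetric, I can average the remaining factor $\frac{q}{p}(z_{n+1})\ind{z_{n+1}\in A}$ over all $(n+1)!$ relabelings, which turns it into $\frac{1}{n+1}\sum_{k=1}^{n+1}\frac{q}{p}(z_k)\ind{z_k\in A}$. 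Doing the analogous manipulation on the right side — write out $w_i$, note the denominator $\sum_j \frac{q}{p}(z_j)$ is already symmetric, and the numerator sum $\sum_i \frac{q}{p}(z_i)\ind{z_i\in A}$ is symmetric — and carefully tracking the factor $q(z_{n+1})$ on that side too, I expect both sides to collapse to the same symmetric integral, after which I divide through appropriately. The weights are normalized to sum to one by construction, so the resulting conditional law is a genuine probability distribution.

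The main obstacle is getting the bookkeeping with ties and with the base measure exactly right: when values coincide, the ``which index is the test point'' question is genuinely ambiguous, and the clean statement $w_i \propto \frac{\mathsf{d}Q}{\mathsf{d}P}(Z_i)$ must be interpreted correctly (repeated values get their weight counted once per occurrence, which is precisely what the sum over $i\in[n+1]$ does). The symmetrization argument via testing against symmetric functions handles this automatically — it never needs to single out an index — which is why I would prefer it over a pointwise density computation on the distinct-values event followed by a separate ties argument. A secondary point to be careful about is the a.s.\ finiteness and positivity of the denominator $\sum_j \frac{\mathsf{d}Q}{\mathsf{d}P}(Z_j)$: each summand is finite $P$-a.s.\ by hypothesis, and at least the test-point term $\frac{\mathsf{d}Q}{\mathsf{d}P}(Z_{n+1})$ is positive $Q$-a.s.\ (since $Z_{n+1}\sim Q$ cannot land where the density vanishes), so division is legitimate almost surely. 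Everything else is routine measure-theoretic manipulation.
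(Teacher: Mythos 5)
Your proof is correct and follows essentially the same strategy as the paper's: both change measure via the Radon--Nikodym derivative so as to work under the fully i.i.d.\ law $P^{n+1}$, then exploit the resulting exchangeability to turn the single asymmetric factor $\tfrac{\mathsf{d}Q}{\mathsf{d}P}(z_{n+1})$ into a symmetric sum. The only differences are cosmetic --- the paper tests against indicators $\ind{\widehat{P}_{n+1}\in B}$ and swaps one pair of indices at a time across a sum of $n{+}1$ terms, whereas you test against arbitrary bounded symmetric functions $g$ and symmetrize over all $(n{+}1)!$ relabelings at once; both steps rest on the same permutation-invariance of $P^{n+1}$.
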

With this result in place, we now turn to the proof of the theorem.
\begin{proof}[Proof of Theorem~\ref{thm:train-test-shift-weighted}]
By construction of the prediction set, we have
\[Y_{n+1}\in\cC(X_{n+1})\Longleftrightarrow S_{n+1}\leq \hat{q}_w^{Y_{n+1}} = \quantile\left(\sum_{i=1}^{n+1} w_i^{Y_{n+1}} \delta_{S_i};1-\alpha\right),\]
where $w_i^{Y_{n+1}}$ is defined as in~\eqref{eq:weights-train-test-shift} (with $y=Y_{n+1}$).
Now write $Z_i=(X_i,Y_i)$ for each $i\in[n+1]$, and 
define
\[w_i = \frac{\frac{\mathsf{d}Q}{\mathsf{d}P}(Z_i) }{\sum_{j=1}^{n+1} \frac{\mathsf{d}Q}{\mathsf{d}P}(Z_j) }\]
(as in the statement of Proposition~\ref{prop:weighted-exchangeability}). Note that
\[w_i = w_i^{Y_{n+1}}\]
by definition. Therefore, 
\[Y_{n+1}\in\cC(X_{n+1})\Longleftrightarrow S_{n+1}\leq \quantile\left(\sum_{i=1}^{n+1} w_i \delta_{S_i};1-\alpha\right).\]
We can therefore write 
\begin{multline*}\P(Y_{n+1}\in\cC(X_{n+1})) 
= \P\left(S_{n+1}\leq \quantile\left(\sum_{i=1}^{n+1} w_i \delta_{S_i};1-\alpha\right)\right)\\
 = \E\left[ \P\left(S_{n+1}\leq \quantile\left(\sum_{i=1}^{n+1} w_i \delta_{S_i};1-\alpha\right) \,\middle|\, \widehat{P}_{n+1}\right)\right],\end{multline*}
by the tower law. Next, since the score function $s$ is assumed to be symmetric, note that $s(\cdot;\cD_{n+1})$ depends on the dataset $\cD_{n+1}$ only via the empirical distribution $\widehat{P}_{n+1}$. To emphasize this point, we will use the notation $s(\cdot ;\widehat{P}_{n+1})$ in place of $s(\cdot;\cD_{n+1})$ throughout the remainder of this proof. Since $S_i = s(Z_i;\widehat{P}_{n+1})$ for each $i$ by construction, we then have
\begin{multline*} \P(Y_{n+1}\in\cC(X_{n+1})) \\=  \E\left[ \P\left(s(Z_{n+1};\widehat{P}_{n+1})\leq \quantile\left(\sum_{i=1}^{n+1} w_i \delta_{s(Z_i;\widehat{P}_{n+1})};1-\alpha\right) \,\middle|\, \widehat{P}_{n+1}\right)\right].\end{multline*}

Now we will consider the conditional probability on the right-hand side. First, observe that the quantile,
\[\quantile\left(\sum_{i=1}^{n+1} w_i \delta_{s(Z_i;\widehat{P}_{n+1})};1-\alpha\right),\]
can be expressed as a function of the empirical distribution $\widehat{P}_{n+1}$, i.e., it does not depend on the order of the data points---this is because the reweighted empirical distribution $\sum_{i=1}^{n+1} w_i \delta_{Z_i}$ is itself $\widehat{P}_{n+1}$-measurable (in particular, this is implied by Proposition~\ref{prop:weighted-exchangeability}). Using the conditional distribution of $(X_{n+1},Y_{n+1})$ as derived in Proposition~\ref{prop:weighted-exchangeability}, then, we have
\begin{multline}\label{eqn:weighted_quantile_eqn_for_citing_ref}\P\left(s(Z_{n+1};\widehat{P}_{n+1})\leq \quantile\left(\sum_{i=1}^{n+1} w_i \delta_{s(Z_i;\widehat{P}_{n+1})};1-\alpha\right) \,\middle|\, \widehat{P}_{n+1}\right)\\ = \sum_{j=1}^{n+1} w_j \cdot \ind{s(Z_j;\widehat{P}_{n+1})\leq \quantile\left(\sum_{i=1}^{n+1} w_i \delta_{s(Z_i;\widehat{P}_{n+1})};1-\alpha\right)}.\end{multline}
But this must deterministically be $\geq 1-\alpha$, by definition of the weighted quantile---this is due to the following fact (which we can view as a weighted version of Fact~\ref{fact:conversion-quantiles-cdfs}\ref{fact:conversion-quantiles-cdfs_part3}):
\begin{fact}\label{fact:conversion-quantiles-cdfs-weighted}
    Fix weights $w_1,\dots,w_k\geq 0$ with $\sum_{i=1}^kw_i=1$. Then for any $z\in\R^k$ and any $\tau\in[0,1]$,
    \[\sum_{i=1}^kw_i\ind{z_i\leq \quantile\left(\sum_{j=1}^kw_j\delta_{z_j};\tau\right)} \geq \tau.\]
\end{fact}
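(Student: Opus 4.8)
The statement to prove is Fact~\ref{fact:conversion-quantiles-cdfs-weighted}: given weights $w_1,\dots,w_k \geq 0$ summing to $1$, any $z \in \R^k$, and any $\tau \in [0,1]$, we have $\sum_{i=1}^k w_i \ind{z_i \leq \quantile(\sum_j w_j \delta_{z_j};\tau)} \geq \tau$.

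\textbf{Approach.} The plan is to unwind the definition of the quantile of a distribution (Section~\ref{sec:distributional-properties-quantiles-cdfs}) as applied to the weighted empirical distribution $\mu := \sum_{j=1}^k w_j \delta_{z_j}$. Write $F_\mu$ for the CDF of $\mu$, so that $F_\mu(v) = \sum_{j=1}^k w_j \ind{z_j \leq v}$, and recall $\quantile(\mu;\tau) = \inf\{v \in \R : F_\mu(v) \geq \tau\}$. The left-hand side of the claimed inequality is precisely $F_\mu(q)$ where $q = \quantile(\mu;\tau)$. So the entire content of the fact is the statement $F_\mu(\quantile(\mu;\tau)) \geq \tau$, i.e., the CDF evaluated at the $\tau$-quantile is at least $\tau$. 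This is the distributional analogue of Fact~\ref{fact:conversion-quantiles-cdfs}\ref{fact:conversion-quantiles-cdfs_part3}, which states exactly this for the unweighted empirical CDF of a finite list; here we just need the version for a general (in fact, finitely-supported) distribution on $\R$.

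\textbf{Key steps.} First I would handle the degenerate case $\tau = 0$: then $\quantile(\mu;0) = -\infty$ by convention, $F_\mu(-\infty) = 0 \geq 0 = \tau$, done. (If one prefers to avoid $\pm\infty$ bookkeeping, note the inequality is trivial at $\tau=0$ since the left side is nonnegative.) Now assume $\tau \in (0,1]$. Let $q = \quantile(\mu;\tau) = \inf\{v : F_\mu(v) \geq \tau\}$; note the set $\{v : F_\mu(v)\geq\tau\}$ is nonempty because $F_\mu(v) \to 1 \geq \tau$ as $v \to \infty$, so $q < \infty$. Pick a decreasing sequence $v_m \downarrow q$ with $F_\mu(v_m) \geq \tau$ for every $m$ (possible by definition of the infimum). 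Since $F_\mu$ is the CDF of $\mu$, it is right-continuous, so $F_\mu(q) = \lim_{m\to\infty} F_\mu(v_m) \geq \tau$. This gives $F_\mu(q) \geq \tau$, which, upon substituting the explicit form $F_\mu(q) = \sum_{i=1}^k w_i \ind{z_i \leq q}$, is exactly the claimed inequality. Alternatively, and perhaps cleaner given the finite support: the CDF $F_\mu$ of a finitely-supported distribution is a right-continuous step function that only jumps at the points $z_1,\dots,z_k$; hence $q$ is attained as one of the $z_i$ (the smallest $z_i$ with $F_\mu(z_i)\geq\tau$), and right-continuity at that jump point gives $F_\mu(q)\geq\tau$ directly. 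Either route works; I would present the right-continuity argument since it is the shortest and is self-contained.

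\textbf{Main obstacle.} There is no real obstacle here — this is a one-line consequence of right-continuity of CDFs once the left-hand side is recognized as $F_\mu(\quantile(\mu;\tau))$. The only thing to be careful about is the boundary conventions ($\tau = 0$ giving $q = -\infty$, and ensuring the infimum defining $q$ is over a nonempty set so $q$ is finite for $\tau>0$), and making explicit that the weighted sum $\sum_i w_i \ind{z_i \leq v}$ really is the CDF of $\mu = \sum_j w_j \delta_{z_j}$ so that Fact~\ref{fact:conversion-quantiles-cdfs}\ref{fact:conversion-quantiles-cdfs_part3} (or its distributional restatement) applies verbatim. I would also note in passing that when all $w_i = 1/k$ this recovers the unweighted statement, tying it back to the earlier fact.
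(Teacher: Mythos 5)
The paper states this Fact without proof, attributing it in the bibliographic notes to \citet{harrison2012conservative}, so there is no in-paper argument to compare against. Your proposal is a correct proof: the key observation that the left-hand side is exactly $F_\mu\bigl(\quantile(\mu;\tau)\bigr)$ for the CDF $F_\mu$ of $\mu = \sum_j w_j\delta_{z_j}$ reduces the claim to the standard statement that any CDF evaluated at its $\tau$-quantile is at least $\tau$, which follows from right-continuity of $F_\mu$ (or, since $F_\mu$ here is a finite step function, from the observation that the quantile is attained at one of the support points $z_i$). This is indeed the natural weighted analogue of Fact~\ref{fact:conversion-quantiles-cdfs}\ref{fact:conversion-quantiles-cdfs_part3}, and your handling of the $\tau=0$ boundary case ($\quantile(\mu;0)=-\infty$ with $F_\mu(-\infty)=0$, or just noting the left side is nonnegative) is consistent with the paper's conventions. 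No gaps.
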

Therefore,
we have
\[\P(Y_{n+1}\in\cC(X_{n+1})) \geq \E[1-\alpha] = 1-\alpha,\]
which completes the proof.
\end{proof}

Finally, to complete this section, we prove Proposition~\ref{prop:weighted-exchangeability}.

\begin{proof}[Proof of Proposition~\ref{prop:weighted-exchangeability}]
In this proof, we will use the following two facts, which are measure-theoretic in nature, and we state without proof.
The first is that proving the desired statement is equivalent to verifying that, for all (measurable) sets $A$ and $B$, it holds that
\begin{equation}
    \label{eq:equivalence-weighted-distribution}
    \P(Z_{n+1}\in A, \widehat{P}_{n+1}\in B) = \E\left[\sum_{i=1}^{n+1} w_i \ind{Z_i \in A}\ind{\widehat{P}_{n+1}\in B} \right],
\end{equation}
since $\sum_{i=1}^{n+1} w_i \ind{Z_i \in A}$ is the claimed conditional probability of the event $Z_{n+1}\in A$ given $\widehat{P}_{n+1}$.
The second fact is that for any (measurable) function $h : \cZ^{n+1} \to \R$, we have
\begin{multline}
    \label{eq:radon-nikodym-expectation}
    \E_{(Z_1, \ldots, Z_{n+1}) \sim P^n\times Q}\left[ h(Z_1, \ldots, Z_{n+1}) \right] \\= \E_{(Z_1, \ldots, Z_{n+1}) \sim P^{n+1}}\left[ \frac{\mathsf{d}Q}{\mathsf{d}P}(Z_{n+1}) \cdot h(Z_1, \ldots, Z_{n+1}) \right],
\end{multline}
as long as these expected values are defined,
which is a property of the Radon--Nikodym derivative.

Now, we move to verifying~\eqref{eq:equivalence-weighted-distribution}.
We have  
\begin{align}
    &\P_{(Z_1, \ldots, Z_{n+1}) \sim P^n\times Q}(Z_{n+1} \in A, \widehat{P}_{n+1} \in B)\\
    &=  \E_{(Z_1, \ldots, Z_{n+1}) \sim P^n\times Q}\left[ \ind{Z_{n+1} \in A} \ind{\widehat{P}_{n+1} \in B} \right] \\
    &=  \E_{(Z_1, \ldots, Z_n, Z_{n+1}) \sim P^{n+1}}\left[ \frac{\mathsf{d}Q}{\mathsf{d}P}(Z_{n+1})\cdot  \ind{Z_{n+1} \in A} \ind{\widehat{P}_{n+1} \in B} \right],
\end{align}
where the last equality above follows by~\eqref{eq:radon-nikodym-expectation}.
Next, we redefine the weights as  
\[w_i = \begin{cases} \frac{\frac{\mathsf{d}Q}{\mathsf{d}P}(Z_i)}{\sum\limits_{j=1}^{n+1} \frac{\mathsf{d}Q}{\mathsf{d}P}(Z_j)}, & \sum\limits_{j=1}^{n+1} \frac{\mathsf{d}Q}{\mathsf{d}P}(Z_j)>0 ,\\ 0, & \sum\limits_{j=1}^{n+1} \frac{\mathsf{d}Q}{\mathsf{d}P}(Z_j)=0.\end{cases} \]
(Note that, if we draw $(Z_1,\dots,Z_{n+1})\sim P^{n+1}$, it is no longer the case that $\frac{\mathsf{d}Q}{\mathsf{d}P}(Z_{n+1})$ must be positive almost surely---and therefore the event
\[\sum\limits_{j=1}^{n+1} \frac{\mathsf{d}Q}{\mathsf{d}P}(Z_j)>0\]
could potentially have probability $<1$. However, if this event does hold, then these new weights are equivalent to those defined in the statement of the proposition---and also, on this event, we have $\sum_{j=1}^{n+1}w_j = 1$.) Then
\[\frac{\mathsf{d}Q}{\mathsf{d}P}(Z_{n+1}) = w_{n+1} \cdot \sum_{j=1}^{n+1}\frac{\mathsf{d}Q}{\mathsf{d}P}(Z_j)
\]
holds
almost surely under $(Z_1, \ldots, Z_n, Z_{n+1}) \sim P^{n+1}$, and so
\begin{align}
    &\P_{(Z_1, \ldots, Z_{n+1}) \sim P^n\times Q}(Z_{n+1} \in A, \widehat{P}_{n+1} \in B)\\
    &=  \E_{(Z_1, \ldots, Z_n, Z_{n+1}) \sim P^{n+1}}\left[ w_{n+1} \cdot \sum_{j=1}^{n+1}\frac{\mathsf{d}Q}{\mathsf{d}P}(Z_j)\cdot  \ind{Z_{n+1} \in A} \ind{\widehat{P}_{n+1} \in B} \right]\\
    &=  \sum_{j=1}^{n+1} \E_{(Z_1, \ldots, Z_n, Z_{n+1}) \sim P^{n+1}}\left[ w_{n+1} \cdot \frac{\mathsf{d}Q}{\mathsf{d}P}(Z_j)\cdot  \ind{Z_{n+1} \in A} \ind{\widehat{P}_{n+1} \in B} \right].
\end{align}
Because the expectation above is taken with respect to i.i.d.\ random variables, we can reindex these random variables however we like.
Swapping indexes $j$ and $n+1$ in the $j$th term, for each $j$, yields that the above expression is equal to
\begin{align}
    = & \sum_{j=1}^{n+1} \E_{(Z_1, \ldots, Z_n, Z_{n+1}) \sim P^{n+1}}\left[ w_j \cdot \frac{\mathsf{d}Q}{\mathsf{d}P}(Z_{n+1})\cdot  \ind{Z_j \in A} \ind{\widehat{P}_{n+1} \in B} \right]\\
    = & \sum_{j=1}^{n+1} \E_{(Z_1, \ldots, Z_n, Z_{n+1}) \sim P^n\times Q}\left[ w_j \cdot   \ind{Z_j \in A} \ind{\widehat{P}_{n+1} \in B} \right],
\end{align}
where the last step holds by applying~\eqref{eq:radon-nikodym-expectation}. 
This is equivalent to the right-hand side of~\eqref{eq:equivalence-weighted-distribution}, as desired.
\end{proof}
\index{label shift|)}

\subsection{Comparing distribution shift and conditional coverage}
\index{covariate shift}
\index{label shift}
\index{coverage!test-conditional}
\index{coverage!label-conditional}

Robustness to distribution shift is closely related to conditional coverage.
In fact, requiring test-conditional coverage (i.e., coverage conditional on the test feature $X_{n+1}$) is strictly stronger
than requiring coverage relative to a covariate shift:
test-conditional coverage implies coverage with respect to \emph{any} 
covariate shift (i.e., for any $Q_X$).
Similarly, label-conditional coverage implies 
coverage with respect to \emph{any} 
label shift (i.e., for any $Q_Y$).

However, conditional coverage guarantees are harder to obtain.
For example, if label-conditional coverage holds (that is, $\P(Y_{n+1}\in\cC(X_{n+1})\mid Y_{n+1}=y)\geq 1-\alpha$ for each possible label $y$) then it implies coverage for any label-shifted distribution---but these prediction sets will be very large unless we have a large number of observations in each category (i.e., $\sum_{i\in[n]} \ind{Y_i=y}$ is large, for each $y\in\cY$). In contrast, the method studied in Theorem~\ref{thm:label-shift} gives a marginal guarantee relative to a particular label-shifted distribution $P_{X \mid Y} \times Q_Y$ and could therefore produce more informative prediction sets. Similarly, test-conditional coverage implies that coverage is maintained under all possible covariate shifts, but in Section~\ref{sec:covariate-shift} we study the more limited goal of maintaining coverage under a specific, known covariate shift.

\section{Localized conformal prediction}
\label{sec:localized}
\index{localized conformal prediction|(}
\index{coverage!test-conditional}

We next use reweighting for the purpose of improving test-conditional coverage. Suppose we wish to give confidence sets that approximately give conditional coverage for a test point $X_{n+1}$, as we have discussed in earlier chapters. Recall that, in Chapter~\ref{chapter:conditional} we established that it is impossible in general to ensure test-conditional coverage, but as in Chapter~\ref{chapter:model-based}, certain score functions (such as the CQR score) lead to approximate conditional coverage under additional assumptions. In this section, we pursue a complementary strategy based on weighting.

To see why reweighting may be useful, notice that conformal prediction chooses the cutoff $\hat{q}$ (or $\hat{q}^y)$ by looking at the conformal score for all training points, even those that are far away from the test point $X_{n+1}$. This can lead to a failure of test-conditional coverage if the distribution of scores is very different across different regions of $\cX$. If we instead wish to improve test-conditional coverage, we might choose to give more weight to data points close to the test point $X_{n+1}$. This is the key idea behind localized conformal prediction.

To make this concrete, suppose we have a function $H : \cX \times \cX \to \R_{\ge 0}$ that is capturing the similarity between two feature vectors---we will refer to $H$ as the \emph{localization kernel}. For example, when $\cX = \R^d$, we could take $H(x, x')  = \exp\{-\lVert x - x' \rVert^2_2 / 2h^2\}$, where $h>0$ is a bandwidth parameter. An initial idea is to run Algorithm~\ref{alg:wtd-full-cp} with the weight $w_i$ on data point $(X_i,Y_i)$ chosen to be proportional to $H(X_i,X_{n+1})$: 
\begin{equation}\label{eqn:baseLCP}\cC(X_{n+1}) = \left\{y\in\cY : S_{n+1}^y\leq \quantile\left(\sum_{i=1}^{n+1}w_i \delta_{S_i^y};1-\alpha\right)\right\} \textnormal{ where }w_i = \frac{H(X_i,X_{n+1})}{\sum_{j=1}^{n+1}H(X_j,X_{n+1})}\end{equation}
That is, when running conformal prediction, we weight data points according to their distance from the test point (see Figure~\ref{fig:lcp} for an illustration). 
This initial approach is intuitively appealing and in practice would likely improve test-conditional coverage, but the distribution-free marginal coverage guarantee no longer holds for this weighted algorithm.
We will remedy this next.
 
\subsection{The localized conformal prediction algorithm}

\begin{figure}[t]
    \centering
    \includegraphics[width=0.75\linewidth]{\diagramspath lcp.pdf}
    \caption{\textbf{Constructing the weighted CDF used for localized conformal prediction.} The left panel shows the covariate space $\cX$, where the test point $X_{n+1}$ is labeled, and each additional dot represents a calibration point $X_1, \ldots, X_n$. Concentric circles centered at $X_{n+1}$ illustrate the level sets of the localization kernel $H$, which assigns higher weights to calibration points closer to $X_{n+1}$. The right panel shows the resulting weighted CDF of the scores: each jump corresponds to a calibration point, with the size of the jump reflecting its proximity to $X_{n+1}$. The two arrows indicate the relationship between a calibration point $X_i$ and its corresponding weight in the CDF: the closer point produces a larger jump, and the farther one, a smaller jump.}
    \commentAlt{A scatterplot has a point labeled $X_{n+1}$, with concentric circles showing distance to its neighbors. Arrows connect the plot to a weighted CDF of scores, showing that a point near $X_{n+1}$ has a large weight, and a farther point has small weight.}
    \label{fig:lcp}
\end{figure}

To endow the localized conformal approach with a distribution-free marginal coverage guarantee, it can be modified with a recalibration step, as stated next. 
\begin{algbox}[Localized conformal prediction]
    \label{alg:lcp}
    \begin{enumerate}
        \item Input training data $(X_1, Y_1), ..., (X_n, Y_n)$, test point $X_{n+1}$, localization kernel $H$, target coverage level $1-\alpha$, conformal score function $s$.
                \item For each possible response value $y \in \cY$,\begin{enumerate}
            \item Compute $S_i^y = s((X_i, Y_i); \cD^y_{n+1})$ for all $i\in[n]$, and $S_{n+1}^y = s((X_{n+1}, y);\cD^y_{n+1})$.
            \item Compute $w_{i,j} = H(X_j, X_i) / \sum_{j'=1}^{n+1} H(X_{j'}, X_i)$ for each $i,j\in[n+1]$.
            \item For $i=1,\dots,n+1$ define
            \[\tilde{S}^y_i = \sum_{j=1}^{n+1} w_{i,j} \ind{S^y_j<S^y_i}.\]
            \item Compute the conformal quantile $\tilde{q}^y = \quantile\left(\tilde{S}^y_1,\dots,\tilde{S}^y_{n+1};1-\alpha\right)$.
        \end{enumerate}
        \item Return the prediction set $\cC(X_{n+1}) = \{ y \in \cY : \tilde{S}_{n+1}^y \leq \tilde{q}^y \}$.
    \end{enumerate}
\end{algbox}
How does this algorithm relate to our original formulation of the localized approach in~\eqref{eqn:baseLCP}? To see the connection, first observe that $w_{n+1,i}$ (the weight defined here in Algorithm~\ref{alg:lcp}) is equal to $w_i$ (the weight defined in~\eqref{eqn:baseLCP}), by construction. When running Algorithm~\ref{alg:lcp}, therefore, the prediction set $\cC(X_{n+1})$ contains all values $y$ for which $\tilde{S}_{n+1}^y\leq \tilde{q}^y$, or equivalently,
\[\sum_{i=1}^{n+1} w_i \ind{S^y_i < S_{n+1}^y}  \leq \tilde{q}^y,\]
while the set defined in~\eqref{eqn:baseLCP} instead contains $y$ whenever
\[S_{n+1}^y \leq \quantile\left(\sum_{i=1}^{n+1}w_i \delta_{S_i^y};1-\alpha\right) \Longleftrightarrow \sum_{i=1}^{n+1} w_i\ind{S_i^y < S_{n+1}^y} < 1-\alpha.\]
Thus, the localized conformal prediction method defined in Algorithm~\ref{alg:lcp} is \emph{exactly the same} as the intuitive approach outlined in~\eqref{eqn:baseLCP}, except with a data-dependent value $\tilde{q}^y$ replacing the usual threshold $1-\alpha$. In other words, the localized conformal prediction method can essentially be interpreted as running the intuitive weighted conformal method suggested in~\eqref{eqn:baseLCP}, but with a modified value of $\alpha$ (which varies with $y$).
 
This modification is needed in order to ensure valid marginal coverage for localized conformal prediction---and in particular, the following result verifies that this method is a special case of full conformal prediction:
\begin{proposition}[Relating localized conformal prediction to full conformal]\label{prop:LCP_is_fullCP}
The prediction set defined in Algorithm~\ref{alg:lcp} is equivalent to the full conformal prediction set, when full conformal is run with the score function
\[\tilde{s}((x,y);\cD) = \sum_{j=1}^m \frac{H(x_j,x)}{\sum_{j'=1}^m H(x_{j'},x)}\ind{s((x_j,y_j);\cD)<s((x,y);\cD)},\]
for any $(x,y)$ and any dataset $\cD = ((x_1,y_1),\dots,(x_m,y_m))$. 
\end{proposition}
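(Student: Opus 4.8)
The plan is to show that the localized conformal prediction set from Algorithm~\ref{alg:lcp} is literally the full conformal prediction set associated with the score function $\tilde{s}$ defined in the proposition statement. First I would set up notation carefully: given the augmented dataset $\cD^y_{n+1} = ((X_1,Y_1),\dots,(X_n,Y_n),(X_{n+1},y))$, I would write $\tilde{S}_i^y = \tilde{s}((X_i,Y_i);\cD^y_{n+1})$ for $i\in[n]$ and $\tilde{S}_{n+1}^y = \tilde{s}((X_{n+1},y);\cD^y_{n+1})$, then verify that these coincide exactly with the quantities $\tilde{S}_i^y$ computed in Step 2(c) of Algorithm~\ref{alg:lcp}. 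The key point here is that $\tilde{s}((x,y);\cD)$ depends on $\cD$ only through the (unordered) empirical distribution of the feature points together with the base scores $s(\cdot;\cD)$, so applying $\tilde{s}$ to the $i$th entry of $\cD^y_{n+1}$ reproduces the weighted sum $\sum_{j} w_{i,j}\ind{S_j^y < S_i^y}$, where $w_{i,j} = H(X_j,X_i)/\sum_{j'}H(X_{j'},X_i)$ — this is just a matter of matching definitions, provided one is careful that the localization kernel in $\tilde{s}$ is applied from the perspective of the $i$th data point (i.e., the "$x$" argument is $X_i$ or $X_{n+1}$, not the test point).

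Next I would recall the definition of the full conformal set from~\eqref{eq:full-cp-set-construction}--\eqref{eq:full-cp-quantile}: running full conformal with score $\tilde{s}$ gives $\cC^{\mathrm{full}}(X_{n+1}) = \{y : \tilde{S}_{n+1}^y \leq \hat{q}^y\}$ where $\hat{q}^y = \quantile(\tilde{S}_1^y,\dots,\tilde{S}_n^y;(1-\alpha)(1+1/n))$. By the Replacement Lemma (Lemma~\ref{lem:n+1-to-n-reduction}), applied with $t = 1-\alpha$, this is equivalent to $\tilde{S}_{n+1}^y \leq \quantile(\tilde{S}_1^y,\dots,\tilde{S}_{n+1}^y;1-\alpha)$, which is precisely the membership criterion for $\cC(X_{n+1})$ in Step 3 of Algorithm~\ref{alg:lcp} (with $\tilde{q}^y = \quantile(\tilde{S}_1^y,\dots,\tilde{S}_{n+1}^y;1-\alpha)$). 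So the two sets have the same membership condition for every $y\in\cY$, hence are equal.

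The step I expect to require the most care — though it is bookkeeping rather than a genuine obstacle — is checking that $\tilde{s}$ as written in the proposition really does output, when evaluated on the $i$th point of $\cD^y_{n+1}$, exactly the localized quantity $\tilde{S}_i^y$ of the algorithm. The subtlety is that the sum in $\tilde{s}((x,y);\cD)$ runs over \emph{all} $m$ points of $\cD$, including possibly the point $(x,y)$ itself if it happens to lie in $\cD$; when we plug in $(x,y) = (X_i,Y_i)$ and $\cD = \cD^y_{n+1}$, the term $j=i$ contributes $w_{i,i}\ind{s((X_i,Y_i);\cD^y_{n+1}) < s((X_i,Y_i);\cD^y_{n+1})} = w_{i,i}\cdot 0 = 0$, so it does no harm, and this matches the algorithm where the sum over $j$ also effectively excludes the strict self-comparison. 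I would also note that $\tilde{s}$ is a valid score function in the sense required by full conformal — it is symmetric in $\cD$ because it depends on $\cD$ only through the multiset $\{(x_j,y_j)\}$ (the outer sum over $j$ and the kernel normalization are permutation-invariant, and $s$ is assumed symmetric) — which is what licenses the appeal to the full conformal machinery. Once these identifications are made, the proof is complete by the Replacement Lemma argument above; no new estimates are needed.
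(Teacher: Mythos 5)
Your proof is correct and is exactly the formalization the paper has in mind: the paper skips the argument, saying the result "follows directly from the construction of Algorithm~\ref{alg:lcp}," and your spelled-out version — matching $\tilde{s}((X_i,Y_i);\cD_{n+1}^y)$ to the algorithm's $\tilde{S}_i^y$ and then invoking the Replacement Lemma (Lemma~\ref{lem:n+1-to-n-reduction}) to pass between the $(1-\alpha)(1+1/n)$-quantile over the $n$ training scores and the $(1-\alpha)$-quantile over all $n+1$ scores $\tilde{S}_1^y,\dots,\tilde{S}_{n+1}^y$ — is precisely the right bookkeeping. You also correctly read through the two small typos in the source (the indicator in the displayed $\tilde{s}$ should index $(x_j,y_j)$ rather than $(x_i,y_i)$, and Step 3 of the algorithm should compare against $\tilde{q}^y$ rather than $\hat{q}^y$).
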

This result follows directly from the construction of Algorithm~\ref{alg:lcp}. Note that the function $\tilde{s}((x,y);\cD)$ is symmetric in $\cD$ as long as the original score function $s((x,y);\cD)$ is itself symmetric in $\cD$. Consequently, by Theorem~\ref{thm:full-conformal}, marginal coverage must hold for localized conformal prediction as long as we assume that the data is exchangeable:
\begin{corollary}[Marginal coverage of localized conformal prediction]\label{cor:LCP_coverage}
    Suppose that $(X_1,Y_1),\dots,(X_{n+1},Y_{n+1})$ are exchangeable and that $s$ is a symmetric score function. Then the prediction set $\cC(X_{n+1})$ defined in Algorithm~\ref{alg:lcp} satisfies the marginal coverage guarantee,
    \[\P(Y_{n+1}\in\cC(X_{n+1}))\geq 1-\alpha.\]
\end{corollary}
\index{localized conformal prediction|)}

\subsection{The randomly-localized conformal prediction algorithm}
\index{localized conformal prediction!randomly-localized|(}
Formally, the localized conformal prediction algorithm offers a marginal coverage guarantee. In practice, since the weights focus attention near the test point $X_{n+1}$, the method will likely yield approximate test-conditional coverage as well---but establishing theoretical guarantees for approximate conditional coverage is challenging and requires strong technical conditions. Instead, we next turn to the \emph{randomly-localized conformal prediction} algorithm, which comes with a simple and explicit guarantee of approximate conditional coverage.

We continue as before with a localization kernel function $H : \cX \times \cX \to \R_{\ge 0}$, and now we additionally assume $H(x, \cdot)$ defines a density with respect to some measure $\nu$ for each $x \in \cX$:
\begin{equation}
    \int_{\cX} H(x, x') \;\mathsf{d}\nu(x') = 1.
\end{equation}

\begin{algbox}[Randomly-localized conformal prediction]
    \label{alg:rlcp}
    \begin{enumerate}
        \item Input training data $(X_1, Y_1), ..., (X_n, Y_n)$, test point $X_{n+1}$, localization kernel $H$, target coverage level $1-\alpha$, conformal score function $s$.
        \item Sample $\tilde{X}_{n+1}$ from the distribution with density $H(X_{n+1},\cdot)$.
                \item For each possible response value $y \in \cY$,\begin{enumerate}
            \item Compute $S_i^y = s((X_i, Y_i); \cD^y_{n+1})$ for all $i\in[n]$, and $S_{n+1}^y = s((X_{n+1}, y);\cD^y_{n+1})$.
            \item Compute $w_i = H(X_i,\tilde X_{n+1}) / \sum_{j=1}^{n+1} H(X_j,\tilde X_{n+1})$ for each $i\in[n+1]$.
            \item Compute the conformal quantile $\hat{q}^y = \quantile\left(\sum_{i=1}^{n+1} w_i \delta_{S^y_i};1-\alpha\right)$.
        \end{enumerate}
        \item Return the prediction set $\cC(X_{n+1}) = \{ y \in \cY : S_{n+1}^y \leq \hat{q}^y \}$.
    \end{enumerate}
\end{algbox}
In other words, we are simply running the weighted conformal prediction (Algorithm~\ref{alg:wtd-full-cp}), with the weights $w_i$ defined using the kernel $H$ centered at the \emph{random} sample $\tilde{X}_{n+1}$.

This algorithm comes with a guarantee that can be viewed as an approximation of conditional coverage.
\begin{theorem}[Coverage guarantee for randomly-localized conformal prediction]
\label{thm:rlcp-coverage}
Suppose $(X_1,Y_1),\dots,(X_{n+1}, Y_{n+1})$ are i.i.d.\ and $s$ is a symmetric score function, and
let $\cC(X_{n+1})$ be the output of Algorithm~\ref{alg:rlcp}. Then,
    \begin{equation}
        \P\left(Y_{n+1} \in \cC(X_{n+1}) \mid \tilde X_{n+1}\right) \ge 1-\alpha, \textnormal{ \ almost surely.}
    \end{equation}
\end{theorem}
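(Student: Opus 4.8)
\textbf{Proof proposal for Theorem~\ref{thm:rlcp-coverage}.}

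The plan is to condition on $\tilde{X}_{n+1}$ and then invoke the coverage guarantee for weighted full conformal prediction under a known distribution shift (Theorem~\ref{thm:train-test-shift-weighted}), with the likelihood ratio chosen so that the weights produced by Algorithm~\ref{alg:rlcp} are exactly the likelihood-ratio weights of~\eqref{eq:weights-train-test-shift}. The key observation is this: conditionally on $\tilde{X}_{n+1} = \tilde{x}$, the data points $(X_1,Y_1),\dots,(X_{n+1},Y_{n+1})$ are no longer i.i.d.\ from the original distribution $P$, because the test point $(X_{n+1},Y_{n+1})$ has been used to generate $\tilde{X}_{n+1}$ (via the density $H(X_{n+1},\cdot)$), whereas the training points have not. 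I would compute this conditional distribution explicitly: the training points $(X_i,Y_i)$ for $i\in[n]$ remain i.i.d.\ from $P$, while the conditional density of the test point $(X_{n+1},Y_{n+1})$ given $\tilde{X}_{n+1}=\tilde{x}$ is proportional to $H(X_{n+1},\tilde{x})$ times the $P$-density. In other words, writing $Z_i=(X_i,Y_i)$, conditionally on $\tilde{X}_{n+1}=\tilde{x}$ we have $Z_1,\dots,Z_n\iidsim P$ and $Z_{n+1}\sim Q^{\tilde{x}}$ independently, where $Q^{\tilde{x}}$ is the distribution with $\frac{\mathsf{d}Q^{\tilde{x}}}{\mathsf{d}P}(x,y) \propto H(x,\tilde{x})$ (the normalizing constant being $\int H(x,\tilde{x})\,\mathsf{d}P_X(x)$, which is finite and positive for $P_X$-almost-every realization of the randomness).

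With this identification in hand, the next step is to check that the algorithm run conditionally on $\tilde{X}_{n+1}=\tilde{x}$ is \emph{exactly} the weighted full conformal prediction algorithm (Algorithm~\ref{alg:wtd-full-cp}, in the form analyzed in Theorem~\ref{thm:train-test-shift-weighted}) applied to the shift $P\to Q^{\tilde{x}}$. Indeed, the likelihood-ratio weights of~\eqref{eq:weights-train-test-shift} for this shift are
\[
w_i^y = \frac{H(X_i,\tilde{x})}{\sum_{j=1}^n H(X_j,\tilde{x}) + H(X_{n+1},\tilde{x})},\quad i\in[n],\qquad
w_{n+1}^y = \frac{H(X_{n+1},\tilde{x})}{\sum_{j=1}^n H(X_j,\tilde{x}) + H(X_{n+1},\tilde{x})},
\]
which are precisely the weights $w_i = H(X_i,\tilde{X}_{n+1})/\sum_{j=1}^{n+1}H(X_j,\tilde{X}_{n+1})$ computed in step~3(b) of Algorithm~\ref{alg:rlcp}; note in particular that these weights do not actually depend on the hypothesized value $y$ here, and the normalizing constant cancels as usual. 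Therefore Theorem~\ref{thm:train-test-shift-weighted} applies directly and yields $\P(Y_{n+1}\in\cC(X_{n+1})\mid \tilde{X}_{n+1}=\tilde{x})\geq 1-\alpha$ for ($P_X$-almost) every $\tilde{x}$, which is the claimed conditional statement.

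The main obstacle is the first step: justifying the precise form of the conditional distribution of the full data vector given $\tilde{X}_{n+1}$, and making sure the measure-theoretic bookkeeping is clean. One has to be careful that $\tilde{X}_{n+1}$ is generated \emph{from} $X_{n+1}$ alone (so $(X_1,Y_1),\dots,(X_n,Y_n)$ and $(X_{n+1},Y_{n+1})$ are conditionally independent given $\tilde{X}_{n+1}$, with the training points unaffected and the test point reweighted by $H(X_{n+1},\tilde{X}_{n+1})$), and that the normalizing integral $\int H(x,\tilde{x})\,\mathsf{d}P_X(x)$ is almost surely strictly positive and finite so that $Q^{\tilde{x}}$ is a well-defined probability measure absolutely continuous with respect to $P$. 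This is where the assumption that $H(x,\cdot)$ integrates to one against $\nu$ is used (it guarantees $\tilde{X}_{n+1}$ is well-defined) together with a routine argument that $\int H(x,\tilde{x})\,\mathsf{d}P_X(x) > 0$ almost surely under the generative process. Once this conditional-distribution identity is established, the rest is a direct citation of Theorem~\ref{thm:train-test-shift-weighted}, so the proof is short. I would also remark that, unlike Algorithm~\ref{alg:lcp}, no recalibration is needed here precisely because the randomization turns the localization into an honest covariate-type shift with a known likelihood ratio.
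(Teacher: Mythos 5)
Your proof is correct and follows essentially the same route as the paper: condition on $\tilde X_{n+1}$, identify the resulting conditional distribution of the test point as a reweighting of $P$ with likelihood ratio proportional to $H(\cdot, \tilde X_{n+1})$, match the algorithm's weights to the likelihood-ratio weights, and invoke the weighted-conformal coverage guarantee. The only cosmetic difference is that the paper notes the shift leaves $P_{Y\mid X}$ unchanged and cites the covariate-shift specialization (Theorem~\ref{thm:covariate-shift}) rather than the general Theorem~\ref{thm:train-test-shift-weighted}, but since the likelihood ratio here depends only on $x$ these are interchangeable.
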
 \index{coverage!test-conditional}
Note that this also implies the usual marginal coverage guarantee, by the tower property. We can interpret the coverage property guaranteed in Theorem~\ref{thm:rlcp-coverage} as follows. Since we choose $H(X_{n+1}, \cdot)$ to be localized near $X_{n+1}$, the distribution of $X_{n+1} \mid \tilde X_{n+1}$ is concentrated near $\tilde X_{n+1}$. Thus, the result of Theorem~\ref{thm:rlcp-coverage} is essentially a guarantee of coverage for a test point $X_{n+1}$ sampled from a distribution that is concentrated near $\tilde{X}_{n+1}$---a relaxation of conditioning on the exact value of the test point $X_{n+1}$.

\begin{proof}[Proof of Theorem~\ref{thm:rlcp-coverage}]
First, note that $(X_1, Y_1),\dots,(X_n, Y_n)$ are independent of $(X_{n+1}, Y_{n+1}, \tilde{X}_{n+1})$, by construction. Next, by the definition of $\tilde X_{n+1}$, the joint distribution of $(X_{n+1}, Y_{n+1}, \tilde{X}_{n+1})$ is defined by
\begin{align}
    & X_{n+1} \sim P_X \\
    & Y_{n+1} \mid X_{n+1} \sim P_{Y \mid X} \\
    & \tilde X_{n+1} \mid (X_{n+1}, Y_{n+1}) \sim H(X_{n+1}, \cdot).
\end{align}
We then have that
\begin{equation}
    (X_{n+1}, Y_{n+1}) \mid \tilde X_{n+1} \sim (P_X \circ H(\cdot, \tilde X_{n+1})) \times P_{Y \mid X},
\end{equation}
where $P_X \circ H(\cdot, \tilde X_{n+1})$ denotes a reweighted distribution on $\cX$, defined by
\[\frac{\mathsf{d}(P_X \circ H(\cdot, \tilde X_{n+1}))}{\mathsf{d}P_X}(x) \propto H(x,\tilde X_{n+1}).\]
Thus, conditional on $\tilde X_{n+1}$, this is an instance of covariate shift---the distribution of $X_{n+1}$ is no longer $P_X$, but the conditional distribution of $Y_{n+1}\mid X_{n+1}$ is unchanged.

We now note that Algorithm~\ref{alg:rlcp} is the same as the weighted conformal algorithm with covariate shift. To see this, observe that the weights satisfy
\begin{equation}
    w_i \propto H(X_i, \tilde X_{n+1}) \propto \frac{\mathsf{d}(P_X \circ H(\cdot, \tilde X_{n+1}))}{\mathsf{d} P_X}(X_i),
\end{equation}
exactly as in~\eqref{eq:weights-covariate-shift} (where the test distribution $Q_X$ of the covariates is now replaced by $P_X \circ H(\cdot, \tilde X_{n+1})$---note that $\tilde X_{n+1}$ is being treated as fixed, since the proposition seeks to establish coverage conditional on $\tilde X_{n+1}$). Thus, the validity of weighted conformal for covariate shift (Theorem~\ref{thm:covariate-shift}) implies the desired claim.
\end{proof}
\index{localized conformal prediction!randomly-localized|)}

\section{Fixed-weight conformal prediction}
\label{sec:fixed-weights}
\index{nonexchangeability|(}

Next, we focus on conformal prediction with \emph{fixed} weights---ones that don't depend on $X$ or $Y$---in order to be more robust to deviations from exchangeability.
For example, in a time series, we may want to give recent data higher weights relative to data from long ago, since we expect that the distribution of recent points is closer to that of the test point.
In this section, we will, a-priori, assign each of our data points a weight, $w_i \geq 0$ for all $i \in [n+1]$, with larger weights indicating greater relevance.
When we run Algorithm~\ref{alg:wtd-full-cp} with this non-data-dependent vector of weights, we call it \emph{fixed-weight} conformal prediction. 

Incorporating weights allows us to achieve a coverage guarantee for exchangeable datasets, and bound the loss of coverage for datasets that violate the exchangeability assumption. The key idea is that if we choose the weights well, the weighted procedure will be more robust to certain violations of exchangeability. We state this formally next. Throughout this section, we will use the notation $Z_i = (X_i,Y_i)$ for the data points, and will define scores $S^y_i$ as usual.
\begin{theorem}[Coverage guarantee for fixed-weight conformal prediction]\label{thm:fixed-weight-conformal}
    Let weights $w_1,\dots,w_{n+1}\geq 0$ be fixed, with $\sum_i w_i = 1$ and $w_{n+1}\geq w_i$ for all $i$. Let $Z_1,\dots,Z_{n+1}$ be random variables with any joint distribution. 
    Fix any symmetric score function $s$, and define the prediction set
    \[\cC(X_{n+1}) = \left\{y : S^y_{n+1} \leq \hat{q}^y_w\right\}\textnormal{ where }\hat{q}^y_w = \quantile\left(\sum_{i=1}^{n+1} w_i \delta_{S_i^y};1-\alpha\right).\]
    Then,
    \[\P(Y_{n+1}\in\cC(X_{n+1})) \geq 1 - \alpha - \sum_{i=1}^n w_i \cdot {\rm d}_{\rm TV}\left((Z_1,\dots,Z_{n+1}), (Z_1,\dots,Z_{i-1},Z_{n+1},Z_{i+1},\dots,Z_n,Z_i)\right),\]
    where ${\rm d}_{\rm TV}$ denotes the total variation distance between distributions. \index{total variation distance}
\end{theorem}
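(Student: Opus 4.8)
The plan is to run the standard ``swap and couple'' argument behind coverage guarantees under non-exchangeability: reduce the coverage event to a statement about the scores, recognize the miscoverage indicator as a fixed function $g$ of the data, prove a purely deterministic inequality for $g$ evaluated on all pairwise swaps of the data, and finally trade the swaps for total variation terms.

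\textbf{Step 1 (packaging miscoverage as a function of the data).} First I would note, exactly as in the proof of Theorem~\ref{thm:full-conformal}, that taking $y=Y_{n+1}$ gives $\cD^{Y_{n+1}}_{n+1}=\cD_{n+1}$, so writing $S_i = s(Z_i;\cD_{n+1})$ we have $Y_{n+1}\in\cC(X_{n+1}) \iff S_{n+1}\leq \quantile\big(\sum_{i=1}^{n+1}w_i\delta_{S_i};1-\alpha\big)$. Since $s$ is symmetric, each $S_i$ depends on $\cD_{n+1}$ only through $Z_i$ and the multiset $\{Z_1,\dots,Z_{n+1}\}$, so the miscoverage indicator equals $g(Z_1,\dots,Z_{n+1})$ for the fixed function $g:\cZ^{n+1}\to\{0,1\}$ defined by $g(z_1,\dots,z_{n+1}) = \ind{v_{n+1} > \quantile(\sum_i w_i\delta_{v_i};1-\alpha)}$ with $v_j=s(z_j;(z_1,\dots,z_{n+1}))$.

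\textbf{Step 2 (the deterministic lemma --- the crux).} For $i\in[n]$ let $z^{(i)}=(z_1,\dots,z_{i-1},z_{n+1},z_{i+1},\dots,z_n,z_i)$, and set $z^{(n+1)}:=z$. Because $g$ only sees the multiset, $g(z^{(i)})$ is the miscoverage indicator of the hypothetical in which point $i$ plays the role of the test point: its score $v_i$ is compared to the weighted quantile of the measure that places weight $w_{n+1}$ at $v_i$, weight $w_i$ at $v_{n+1}$, and weight $w_k$ at $v_k$ for $k\neq i,n+1$. The key deterministic fact is
\[\sum_{i=1}^{n+1} w_i\, g(z^{(i)}) \leq \alpha \qquad\text{for every } z\in\cZ^{n+1}.\]
I would prove this by sorting the scores and using the definition of the weighted quantile to characterize exactly which $i$ yield $g(z^{(i)})=1$: this occurs precisely when the total (slot-dependent) weight of points with score at least $v_i$ is at most $\alpha$, so these $i$ form a ``top-weight'' prefix of the sorted list, and summing their weights along this prefix telescopes to at most $\alpha$ --- and this is exactly the step where the hypothesis $w_{n+1}\geq w_i$ is used, since the last point added to the prefix contributes weight at most $w_{n+1}$. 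I expect the handling of ties among the $v_i$'s (and the strict-versus-nonstrict inequalities hidden inside $\quantile$) to be the fiddliest part; it can be dispatched either directly or by an infinitesimal perturbation reducing to the distinct-scores case.

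\textbf{Step 3 (coupling the swaps via total variation).} Since $0\leq g\leq 1$, for each $i\in[n]$ we have $\E[g(Z)]-\E[g(Z^{(i)})] \leq \dtv\big(\mathrm{Law}(Z_1,\dots,Z_{n+1}),\,\mathrm{Law}(Z_1,\dots,Z_{i-1},Z_{n+1},Z_{i+1},\dots,Z_n,Z_i)\big)$, which is exactly the $i$th term in the theorem. Taking expectations in the deterministic lemma gives $\sum_{i=1}^{n+1}w_i\E[g(Z^{(i)})]\leq\alpha$, whence, using $\sum_i w_i=1$ and that $Z^{(n+1)}=Z$ contributes a zero total variation term,
\[\P(Y_{n+1}\notin\cC(X_{n+1})) = \E[g(Z)] = \sum_{i=1}^{n+1}w_i\E[g(Z)] \leq \sum_{i=1}^{n+1}w_i\E[g(Z^{(i)})] + \sum_{i=1}^{n}w_i\,\dtv(\cdots) \leq \alpha + \sum_{i=1}^{n}w_i\,\dtv(\cdots),\]
which rearranges to the claimed bound. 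Steps 1 and 3 are routine once the miscoverage event is expressed as $g(Z)$; the only genuine obstacle is the deterministic lemma of Step 2 and its tie-breaking bookkeeping.
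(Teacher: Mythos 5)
Your proposal is correct and mirrors the paper's proof: both reduce miscoverage to evaluating a fixed symmetric function $g$ of the data at each swap $z^{(i)}$, prove the deterministic inequality $\sum_{i=1}^{n+1} w_i\, g(z^{(i)}) \leq \alpha$, and then couple each swap via total variation. The organization of the deterministic step differs slightly. The paper isolates a ``generalized replacement lemma'' (Lemma~\ref{lem:generalized-replacement-lemma}), asserting that for a mixture $P=(1-\epsilon)P_0+\epsilon P_1$ one has $x \leq \quantile(P;\tau) \Rightarrow x \leq \quantile((1-\epsilon)P_0 + \epsilon\delta_x;\tau)$, applies it (with $\epsilon = w_{n+1}-w_i$, which is where $w_{n+1}\geq w_i$ enters) to show that coverage under the original weight measure implies coverage under the swapped one, and then closes with the weighted-quantile bound Fact~\ref{fact:conversion-quantiles-cdfs-weighted}. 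Your ``weight-counting above the threshold'' description is the same fact in different clothing, and the tie-breaking worry you flag dissolves once the quantile condition is phrased as a mass inequality: $g(z^{(i)})=1$ iff $\mu^{(i)}[v_i,\infty)\leq\alpha$ where $\mu^{(i)}$ is the swap-weighted score measure, and $\mu^{(i)}[v_i,\infty) \geq \mu[v_i,\infty)$ holds unconditionally from $w_{n+1}\geq w_i$, so $\{i : g(z^{(i)})=1\} \subseteq \{i : v_i > \quantile(\mu;1-\alpha)\}$ with no perturbation argument needed. One thing the paper's proof gets that yours does not make explicit: because the miscoverage indicator factors through the score vector $S$, and the swapped score vector is obtained by applying the same symmetric map to the swapped data, the data-processing inequality yields the sharper bound $\sum_{i\in[n]} w_i\,\dtv(S,S^i)$, with the stated theorem (in terms of $\dtv$ on the raw data) following as a corollary; the paper remarks this sharpening can be substantial when the $Z_i$ are high-dimensional but the scores $S_i$ are scalar.
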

This guarantee gives a bound on the loss of coverage for conformal prediction run on a general dataset, regardless of whether it satisfies the exchangeability assumption.
The bound depends on two quantities: the TV distance of the dataset when the $i$th data point is swapped with the test point (which we can view as a measure of the degree of non-exchangeability), and the weights.
When data points $Z_i$ that are differently distributed from the test point $Z_{n+1}$ are given high weight, this can hurt the coverage.
On the other hand, if they have a low weight, then coverage is (approximately) preserved.

Two important implications of the theorem are as follows:
\begin{enumerate}
    \item If the dataset is exchangeable, the fixed-weight procedure has coverage at least $1-\alpha$ (since the total variation distance appearing in the bound is equal to zero in that case).
    \item If the dataset is not exchangeable, the loss of coverage can be mitigated by choosing the weights strategically---e.g., by placing higher weights on more recently gathered data points.
\end{enumerate}
\begin{proof}[Proof of Theorem~\ref{thm:fixed-weight-conformal}]
    First we define some notation. Recall from our notation in Chapter~\ref{chapter:conformal-exchangeability} that we write $S_i = s((X_i,Y_i);\cD_{n+1})$ to denote the score of data point $Z_i=(X_i,Y_i)$ when the algorithm is trained on the entire dataset $\cD_{n+1} = (Z_1,\dots,Z_{n+1})$. We will write $S=(S_1,\dots,S_{n+1})$ to denote this vector of scores, and will write
    \[S^i = (S_1,\dots,S_{i-1},S_{n+1},S_{i+1},\dots,S_n,S_i)\]
    to denote the vector $S$ with $i$th and $(n+1)$st entries swapped, for each $i\in[n]$.

    Our first step is to observe that the total variation distance between the vectors $S$ and $S^i$, is bounded by the total variation distance on the corresponding vectors of data points. Specifically, define a function $h: (\cX\times\cY)^{n+1}\to\R^{n+1}$ as
    \[h(z_1,\dots,z_{n+1}) = \left( s(z_1;(z_1,\dots,z_{n+1})), \dots, s(z_{n+1} ; (z_1,\dots,z_{n+1}))\right),\]
    i.e., the function that computes a score for each $z_i$ (relative to the dataset comprised of $z_1,\dots,z_{n+1}$). 
    Then by definition we have
    $S = h(Z_1,\dots,Z_{n+1})$. Moreover, the assumption of symmetry on $s$ ensures that
    $S^i = h(Z_1,\dots,Z_{i-1},Z_{n+1},Z_{i+1},\dots,Z_n,Z_i)$---and therefore, by the Data Processing Inequality,
    \begin{multline*}{\rm d}_{\rm TV}(S,S^i)= {\rm d}_{\rm TV}\left(h(Z_1,\dots,Z_{n+1}), h(Z_1,\dots,Z_{i-1},Z_{n+1},Z_{i+1},\dots,Z_n,Z_i)\right)\\ \leq {\rm d}_{\rm TV}\left((Z_1,\dots,Z_{n+1}), (Z_1,\dots,Z_{i-1},Z_{n+1},Z_{i+1},\dots,Z_n,Z_i)\right).\end{multline*}
    From this point on, then, we will now aim to show that
    \begin{equation}\label{eqn:dtv_scores_or_data}
    \P(Y_{n+1}\in\cC(X_{n+1})) \geq 1 - \alpha - \sum_{i=1}^n w_i \cdot {\rm d}_{\rm TV}(S,S^i).\end{equation}

    Defining $S^{n+1}=S$ for notational convenience, we then calculate
    \begin{align*}
        &\P(Y_{n+1}\in\cC(X_{n+1}))
        = \P\left(S_{n+1} \leq \quantile\left(\sum_{j=1}^{n+1} w_j \cdot \delta_{S_j};1-\alpha\right)\right)\\
        &=\sum_{i=1}^{n+1} w_i \cdot \P\left(S_{n+1} \leq \quantile\left(\sum_{j=1}^{n+1} w_j \cdot \delta_{S_j};1-\alpha\right)\right)\\
        &\geq \sum_{i=1}^{n+1} w_i  \left[\P\left((S^i)_{n+1} \leq \quantile\left(\sum_{j=1}^{n+1} w_j \cdot \delta_{(S^i)_j};1-\alpha\right)\right) - {\rm d}_{\rm TV}(S,S^i)\right]\\
        &=\E\left[ \sum_{i=1}^{n+1} w_i\cdot   \ind{(S^i)_{n+1} \leq \quantile\left(\sum_{j=1}^{n+1} w_j \cdot \delta_{(S^i)_j};1-\alpha\right)}\right] - \sum_{i=1}^n w_i \cdot {\rm d}_{\rm TV}(S,S^i),
    \end{align*}
    where the first step simply applies the by definition of $\cC(X_{n+1})$ for the weighted full conformal method, the second step uses the fact that $\sum_i w_i=1$, and the inequality step uses the definition of total variation distance.

Our last task is to simplify the remaining expected value.
Below we will prove that the following claim holds deterministically:
\begin{equation}\label{eqn:claim_for_S_vs_Si}S_i \leq \quantile\left(\sum_{j=1}^{n+1} w_j \cdot \delta_{S_j};1-\alpha\right)\Longrightarrow
(S^i)_{n+1} \leq \quantile\left(\sum_{j=1}^{n+1} w_j \cdot \delta_{(S^i)_j};1-\alpha\right) 
 .
\end{equation}
Assuming that this holds, we then have
\begin{align*}
        &\P(Y_{n+1}\in\cC(X_{n+1}))\\
        &\geq\E\left[ \sum_{i=1}^{n+1} w_i\cdot   \ind{S_i \leq \quantile\left(\sum_{j=1}^{n+1} w_j \cdot \delta_{S_j};1-\alpha\right)}\right] - \sum_{i=1}^n w_i \cdot {\rm d}_{\rm TV}(S,S^i)\\
        &\geq 1 - \alpha- \sum_{i=1}^n w_i \cdot {\rm d}_{\rm TV}(S,S^i),
\end{align*}
where the last step follows since
\begin{equation}\label{eqn:weighted_quantile_eqn_for_citing_ref_2}\sum_{i=1}^{n+1} w_i\cdot   \ind{S_i \leq \quantile\left(\sum_{j=1}^{n+1} w_j \cdot \delta_{S_j};1-\alpha\right)}\geq 1-\alpha\end{equation}
holds deterministically by Fact~\ref{fact:conversion-quantiles-cdfs-weighted}.

To complete the proof, we now need to verify~\eqref{eqn:claim_for_S_vs_Si}. For $i=n+1$ the claim holds trivially (since $S=S^{n+1}$), so we restrict our attention to the case $i\in[n]$.
We will need the following result, which state without proof---we can view this result as a
    generalized version of the Replacement Lemma (recall Lemma~\ref{lem:n+1-to-n-reduction}):
    \begin{lemma}
        \label{lem:generalized-replacement-lemma}
        Let $P_0,P_1$ be any distributions on $\R$, and let
        \[P = (1 - \epsilon)\cdot P_0 + \epsilon \cdot P_1\]
        be their mixture, for some $\epsilon\in [0,1]$. 
        Then for any $x\in\R$, and any $\tau\in[0,1]$,
        \[
            x \leq \quantile(P;\tau) \Longrightarrow x \leq \quantile\left( (1-\epsilon)\cdot P_0 + \epsilon\cdot \delta_x ; \tau\right) .
        \]
    \end{lemma}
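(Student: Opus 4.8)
\textbf{Proof proposal for Lemma~\ref{lem:generalized-replacement-lemma}.}
The plan is to translate both quantile inequalities into statements about cumulative distribution functions and then observe that replacing the mixture component $P_1$ by the point mass $\delta_x$ can only \emph{lower} the CDF at every point strictly below $x$. Write $F_P$, $F_{P_0}$, $F_{P_1}$ for the CDFs of $P$, $P_0$, $P_1$, and let $Q = (1-\epsilon)\cdot P_0 + \epsilon\cdot\delta_x$ with CDF $F_Q$. Recall from the distributional definition of the quantile (Section~\ref{sec:distributional-properties-quantiles-cdfs}) that $\quantile(P;\tau) = \inf\{v : F_P(v)\geq\tau\}$, and hence for any $x$,
\[
x \leq \quantile(P;\tau) \iff F_P(v) < \tau \textnormal{ for every } v < x.
\]
The same equivalence holds with $P$ replaced by $Q$. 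So after this reformulation it suffices to show: if $F_P(v) < \tau$ for all $v<x$, then $F_Q(v) < \tau$ for all $v<x$.

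The key step is the pointwise comparison. For any $v < x$ we have $F_Q(v) = (1-\epsilon)F_{P_0}(v) + \epsilon\indsub{x \leq v} = (1-\epsilon)F_{P_0}(v)$, since $\indsub{x\leq v}=0$ when $v<x$. On the other hand $F_P(v) = (1-\epsilon)F_{P_0}(v) + \epsilon F_{P_1}(v) \geq (1-\epsilon)F_{P_0}(v) = F_Q(v)$, because $F_{P_1}(v)\geq 0$. Combining, for every $v<x$ we get $F_Q(v) \leq F_P(v) < \tau$, which by the reformulation above is exactly $x \leq \quantile(Q;\tau)$, as desired. The degenerate cases ($\epsilon\in\{0,1\}$, or $\tau=0$ where the hypothesis is vacuous since $\quantile(P;0)=-\infty$) are all absorbed by this argument.

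I do not expect any serious obstacle here: the whole content is the monotonicity $F_Q\leq F_P$ on $(-\infty,x)$, and the only subtlety is being careful with the strictness of the inequality at the boundary point $v=x$ (where $\delta_x$ does contribute mass) versus the open half-line $v<x$ (where it does not) — choosing the ``for every $v<x$'' characterization of $x\leq\quantile(\cdot\,;\tau)$ sidesteps this cleanly. One could alternatively phrase it via the deterministic finite-list statements in Section~\ref{sec:deterministic-properties-quantiles-cdfs}, but the direct CDF computation is the shortest route and also transparently handles arbitrary (non-discrete) $P_0,P_1$.
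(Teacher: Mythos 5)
Your proof is correct. The paper actually states Lemma~\ref{lem:generalized-replacement-lemma} without proof (the surrounding text says ``we will need the following result, which [we] state without proof''), so there is no official argument to compare against. Your argument is sound on its own terms: the equivalence $x \leq \quantile(P;\tau) \iff F_P(v) < \tau \ \forall v < x$ is the right reformulation (and holds in the boundary cases $\tau = 0$ and $\quantile(P;\tau)=+\infty$ with the usual conventions $\inf\varnothing = +\infty$, $\quantile(\cdot;0)=-\infty$), and the pointwise inequality $F_Q(v) = (1-\epsilon)F_{P_0}(v) \leq (1-\epsilon)F_{P_0}(v)+\epsilon F_{P_1}(v) = F_P(v)$ for $v<x$, using only $F_{P_1}\geq 0$ and $\indsub{x\leq v}=0$, does all the work. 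This is the natural CDF-monotonicity argument: the single observation that swapping the $P_1$ component for $\delta_x$ pushes mass (weakly) to the right of every point in $(-\infty,x)$ is exactly what's needed, and no case analysis on $\epsilon$ or on whether $P_0,P_1$ are discrete is required.
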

Now define $\epsilon = w_{n+1} - w_i$ (which is nonnegative by assumption), $P_1 = \delta_{S_{n+1}}$, and 
\[P_0 = \frac{\sum_{j\in[n],j\neq i} w_j\cdot \delta_{S_j} + w_i \cdot (\delta_{S_i} + \delta_{S_{n+1}})}{1 - \epsilon}.\]
Then by Lemma~\ref{lem:generalized-replacement-lemma} applied with $\tau=1-\alpha$,
\begin{equation}\label{eqn:claim_for_S_vs_Si_v2}S_i \leq \quantile((1-\epsilon)P_0 + \epsilon P_1;1-\alpha) \Longrightarrow S_i \leq \quantile((1-\epsilon)P_0 + \epsilon \delta_{S_i};1-\alpha).\end{equation}
By construction, we can verify that 
\[(1-\epsilon)P_0 + \epsilon P_1  = \sum_{j=1}^{n+1} w_j \cdot \delta_{S_j}\]
and similarly
\[(1-\epsilon)P_0 + \epsilon \delta_{S_i} = \sum_{j\in[n],j\neq i} w_j \cdot \delta_{S_j} + w_{n+1}\cdot\delta_{S_i} + w_i\cdot\delta_{S_{n+1}}.\]
Therefore, applying the definition of the swapped score vector $S^i$, we see that~\eqref{eqn:claim_for_S_vs_Si} is exactly equivalent to~\eqref{eqn:claim_for_S_vs_Si_v2}, which completes the proof.
\end{proof}
It is worth noting that this proof does not directly use the TV distance between the swapped datasets, but rather, only uses the TV distance between swapped score vectors, as in~\eqref{eqn:dtv_scores_or_data}. Indeed, in  many settings the bound~\eqref{eqn:dtv_scores_or_data} may be much stronger than the result stated in the theorem---for instance, if the data is high-dimensional, we might expect that the total variation distance between high-dimensional data points $Z_i$ and $Z_{n+1}$ could be large, but the distributions of the corresponding scores $S_i,S_{n+1}\in\R$ might nonetheless be fairly similar.

\section{A general outlook through weighted permutations}
\label{sec:general-outlook-permutations}

To conclude this chapter, let us zoom out and reinterpret conformal prediction through general distributions over permutations of the data. With this lens, we will see that it is possible to generalize conformal prediction to work with any joint distribution of the data. In particular, we now give an extension that applies even when the joint distribution and test point are not exchangeable, and the algorithm is not necessarily symmetric.
 
The fundamental idea is that if we know the probability of observing every ordering of the data points conditionally on their values, this is enough to run conformal prediction. To make this more concrete, we will again write $Z_i =(X_i,Y_i)$, and define  $f:(\cX\times\cY)^{n+1}\to\R$ to be the joint density of $(Z_1,\dots,Z_{n+1})$, with respect to some base measure on $\cX\times\cY$ (in particular, if the data has a continuous joint distribution, then $f$ can be taken to be the joint density in the usual sense, while if the data is discrete, then $f$ can be the joint probability mass function). If we assume exchangeability, then $f$ must be a symmetric function---but in this section, we are working in a general setting where $f$ may be arbitrary. 
Now recall that $\widehat{P}_{n+1}$ denotes the empirical distribution of the $n+1$ data points---informally, observing $\widehat{P}_{n+1}$ means that we have observed the \emph{collection} of data points $Z_1,\dots,Z_{n+1}$, but not their order, and in particular we do not know which one of these $n+1$ data points is equal to the test point, $Z_{n+1}$. Then we can verify that
the conditional distribution of the test point $Z_{n+1}$, given the empirical distribution $\widehat{P}_{n+1}$, is equal to
\begin{equation}\label{eqn:empirical_distribution_general_perm}
    Z_{n+1} \mid \widehat{P}_{n+1} \sim \frac{1}{\sum\limits_{\sigma \in \cS_{n+1}} f(Z_{\sigma(1)}, \ldots, Z_{\sigma(n+1)}) }\sum\limits_{\sigma \in \cS_{n+1}} f(Z_{\sigma(1)}, \ldots, Z_{\sigma(n+1)})  \cdot \delta_{Z_{\sigma(n+1)}}.
\end{equation}
That is, once we fix the \emph{values} of the data points, then we can explicitly describe the distribution of the test point in terms of their \emph{ordering}, if we have knowledge of the joint density $f$.
This can be viewed as a generalized version of Proposition~\ref{prop:weighted-exchangeability}.

With this conditional distribution in hand, we can now construct prediction sets using a generalized form of weighted conformal prediction. First, we need some notation: let $Z^y_1,\dots,Z^y_{n+1}$ index the dataset comprised of the training points and the hypothesized test point $(X_{n+1},y)$, i.e.,
\[Z^y_i = Z_i, \ i\in[n], \quad Z^y_{n+1} = (X_{n+1},y),\]
so that the dataset $\cD^y_{n+1}$ is equal to $(Z^y_1,\dots,Z^y_{n+1})$.
Then for every $\sigma \in \cS_{n+1}$, we define
\begin{equation}
    \label{eq:weights-general-permutation}
    w_\sigma^y = \frac{f\left(Z^y_{\sigma(1)}, \ldots, Z^y_{\sigma(n+1)}\right)}{\sum\limits_{\sigma' \in \cS_{n+1}}f\left(Z^y_{\sigma'(1)}, \ldots, Z^y_{\sigma'(n+1)}\right)}.
\end{equation}
An important point is that we often do not need complete knowledge of $f$ in order to compute these weights: for instance, if the data points are i.i.d.\ from an unknown distribution, we immediately have $w_\sigma^y= \frac{1}{(n+1)!}$.

Note that here the weight depends on the entire permutation $\sigma$, which includes the index $\sigma(n+1)$ assigned to the test point position but also the ordering of the training data points as well. In contrast, for covariate shift and label shift (studied in Section~\ref{sec:covariate_and_label_shift}), the weights $w^y_i$ depend only on a single index $i$, corresponding to the question of which data value is placed into the test point position. 

With these weights defined, we now present a coverage guarantee for this general formulation.
\begin{theorem}[Conformal prediction via distributions over permutations]
    \label{thm:general-weighted-permutation}   
    Let $(Z_1, \ldots, Z_n, Z_{n+1}) \sim P$ for an arbitrary joint distribution $P$ on $(\cX\times\cY)^{n+1}$, and let $f$ be the joint density of this distribution with respect to any base measure on $\cX\times\cY$. Fix any score function $s$ (not necessarily symmetric), and define the prediction set 
    \[\cC(X_{n+1}) = \left\{y : s((X_{n+1},y);\cD_{n+1}^y) \leq \hat{q}^y_w\right\}\]
    where
    \[\hat{q}^y_w = \quantile\left(\sum_{\sigma \in \cS_{n+1}} w_\sigma^y \delta_{s\left(Z_{\sigma(n+1)}^y ; (\cD_{n+1}^y)_\sigma\right)} ; 1-\alpha\right),\]
    where the weights $w_\sigma^y$ are defined as in~\eqref{eq:weights-general-permutation}, and where $(\cD_{n+1}^y)_\sigma$ denotes the permuted dataset, $(Z^y_{\sigma(1)},\dots,Z^y_{\sigma(n+1)})$. 
Then,
    \begin{equation}
        \P(Y_{n+1} \in \cC(X_{n+1})) \geq 1-\alpha.
    \end{equation}
\end{theorem}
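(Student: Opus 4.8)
The plan is to mimic the proof of Theorem~\ref{thm:train-test-shift-weighted} (the general train--test shift result), using the conditional distribution of the test point given the empirical distribution $\widehat{P}_{n+1}$ as the central tool. The key structural fact is~\eqref{eqn:empirical_distribution_general_perm}, which says that conditional on $\widehat{P}_{n+1}$, the test point $Z_{n+1}$ is distributed as a weighted mixture over the points $Z_{\sigma(n+1)}$ for $\sigma\in\cS_{n+1}$, with weights proportional to $f(Z_{\sigma(1)},\dots,Z_{\sigma(n+1)})$. The subtlety compared to the earlier results is that here the score function need not be symmetric, so the score assigned to the test point depends on the full ordering of the dataset, not just which point is in the test slot --- this is exactly why the weights $w_\sigma^y$ are indexed by permutations rather than by a single index.

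\textbf{Key steps.} First I would reduce the coverage event to its canonical form: by the construction of $\cC(X_{n+1})$, we have $Y_{n+1}\in\cC(X_{n+1})$ if and only if $s((X_{n+1},Y_{n+1});\cD_{n+1}) \leq \hat q_w^{Y_{n+1}}$, and I would observe that with $y=Y_{n+1}$ the augmented dataset $\cD_{n+1}^{Y_{n+1}}$ coincides with $\cD_{n+1}$, so that $Z_i^{Y_{n+1}} = Z_i$ for all $i$ and $w_\sigma^{Y_{n+1}}$ equals the ``true'' permutation weight $w_\sigma := f(Z_{\sigma(1)},\dots,Z_{\sigma(n+1)})/\sum_{\sigma'} f(Z_{\sigma'(1)},\dots,Z_{\sigma'(n+1)})$ built from the actual data. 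Second, I would condition on $\widehat{P}_{n+1}$ and apply the tower law, writing
\[
\P(Y_{n+1}\in\cC(X_{n+1})) = \E\!\left[\P\!\left(s(Z_{n+1};\cD_{n+1}) \leq \quantile\Big(\textstyle\sum_{\sigma} w_\sigma \,\delta_{s(Z_{\sigma(n+1)};(\cD_{n+1})_\sigma)};1-\alpha\Big)\,\Big|\,\widehat{P}_{n+1}\right)\right].
\]
Third, I would argue that both the collection of scores $\{s(Z_{\sigma(n+1)};(\cD_{n+1})_\sigma)\}_{\sigma}$ and the weights $\{w_\sigma\}_\sigma$ and hence the weighted quantile are $\widehat{P}_{n+1}$-measurable (they are invariant under relabeling the data points, since summing over all $\sigma\in\cS_{n+1}$ absorbs any relabeling). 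Fourth --- and this is where~\eqref{eqn:empirical_distribution_general_perm} enters --- I would use the conditional law of $Z_{n+1}$ given $\widehat{P}_{n+1}$ to compute the inner conditional probability as
\[
\sum_{\sigma\in\cS_{n+1}} w_\sigma \cdot \ind{s(Z_{\sigma(n+1)};(\cD_{n+1})_\sigma) \leq \quantile\Big(\textstyle\sum_{\sigma'} w_{\sigma'}\,\delta_{s(Z_{\sigma'(n+1)};(\cD_{n+1})_{\sigma'})};1-\alpha\Big)},
\]
using the fact that under the conditional distribution, $Z_{n+1}=Z_{\sigma(n+1)}$ with probability $w_\sigma$ \emph{and simultaneously} the whole dataset ordering becomes $(\cD_{n+1})_\sigma$ --- so the score $s(Z_{n+1};\cD_{n+1})$ becomes $s(Z_{\sigma(n+1)};(\cD_{n+1})_\sigma)$. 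Finally, I would apply the weighted-quantile inequality Fact~\ref{fact:conversion-quantiles-cdfs-weighted} (with the weights $w_\sigma$ summing to one over $\cS_{n+1}$ and the values being the permuted test scores) to conclude that this weighted sum is deterministically $\geq 1-\alpha$, giving $\P(Y_{n+1}\in\cC(X_{n+1}))\geq\E[1-\alpha]=1-\alpha$.

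\textbf{Main obstacle.} The delicate point is step four: correctly bookkeeping the joint replacement in the conditional distribution. When we condition on $\widehat{P}_{n+1}$ and the test point ``is'' $Z_{\sigma(n+1)}$, it is not merely that the test slot gets a new value --- the remaining slots get reshuffled according to $\sigma$ as well, and the (asymmetric) score function $s$ sees this reshuffled dataset $(\cD_{n+1})_\sigma$. One must verify carefully that the event inside the indicator, namely the comparison of $s(Z_{n+1};\cD_{n+1})$ against the $\widehat{P}_{n+1}$-measurable threshold, transforms into exactly the comparison of $s(Z_{\sigma(n+1)};(\cD_{n+1})_\sigma)$ against that same threshold. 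This is precisely the generalization, to arbitrary joint densities and asymmetric scores, of the argument in~\eqref{eqn:weighted_quantile_eqn_for_citing_ref} in the proof of Theorem~\ref{thm:train-test-shift-weighted}; the measure-theoretic justification of~\eqref{eqn:empirical_distribution_general_perm} itself (analogous to Proposition~\ref{prop:weighted-exchangeability}, but without the product structure) is the other place where care is needed, since one is conditioning on a statistic of a vector with a completely general joint law.
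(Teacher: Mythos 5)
Your proposal is correct and follows the approach the paper sketches (the paper merely remarks that Theorem~\ref{thm:general-weighted-permutation} follows from the calculation~\eqref{eqn:empirical_distribution_general_perm}). You also correctly locate the one refinement needed to make the argument rigorous when $s$ is asymmetric: \eqref{eqn:empirical_distribution_general_perm} as stated only describes the conditional law of the value in the last slot, whereas the proof actually uses the stronger fact that the conditional law of the entire ordered tuple $(Z_1,\dots,Z_{n+1})$ given $\widehat{P}_{n+1}$ is $\sum_{\sigma\in\cS_{n+1}} w_\sigma\,\delta_{(\cD_{n+1})_\sigma}$, so that the reshuffling of the non-test slots and the value in the test slot are determined jointly by $\sigma$ --- which is exactly what makes your step four, and then the application of Fact~\ref{fact:conversion-quantiles-cdfs-weighted}, go through.
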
 \index{weighted quantile}
This result can be proved as a consequence of the calculation~\eqref{eqn:empirical_distribution_general_perm} above (analogous to the way in which Proposition~\ref{prop:weighted-exchangeability} leads to the proof of Theorem~\ref{thm:train-test-shift-weighted}).

The consequence of this theorem is that if we are able to compute the weights in~\eqref{eq:weights-general-permutation}, then we can carry out conformal prediction, even though we may have nonexchangeable data and/or a nonsymmetric algorithm. However, this general version of the algorithm is not practically implementable without further structure, for both statistical reasons (we are assuming prior knowledge of the joint density $f$, so that the weights in~\eqref{eq:weights-general-permutation} can be computed) and computational reasons (since computing the weights, and the quantile $\hat{q}^y_w$, involves performing calculations over $(n+1)!$ permutations $\sigma$). 
Nonetheless, it points toward an underlying structure that facilitates conformal prediction. 

In particular, many our preceding results can be viewed as special cases of this general formulation, where the statistical and computational challenges are simplified due to the structure of the specific setting. For example, in the case of covariate shift (Section~\ref{sec:covariate-shift}), we have assumed that $Z_1,\dots,Z_n$ are i.i.d.\ from some distribution $P$, and that the test distribution is a known covariate shift of $P$. In this case, the weights $w_\sigma^y$ depend only on the known covariate shift but not the whole distribution $f$, so they can in principle be computed without full knowledge of $f$. Moreover, they can be computed efficiently---the weights $w_\sigma^y$ in~\eqref{eq:weights-general-permutation} depend on $\sigma$ only through $\sigma(n+1)$. Thus, covariate shift is a tractable special case of the general view.

In summary, this abstract viewpoint unifies existing cases of the conformal algorithm. In particular, exchangeable conformal prediction, and conformal prediction with covariate shift, label shift, or general train-test shift, are all special cases. This unified framework can also be viewed as a roadmap for developing extensions to additional settings as well: it suggests that we can aim to identify more complex settings (e.g., if the data exhibits dependence over time) where the structure of the problem enables computation of the weights $w^y_\sigma$ and the weighted quantiles $\hat{q}^y_w$, in order to be able to apply conformal methodology.

\index{weighted conformal prediction|)}
\index{nonexchangeability|)}

\section*{Bibliographic notes}
\addcontentsline{toc}{section}{\protect\numberline{}\textnormal{\hspace{-0.8cm}Bibliographic notes}}

The weighted conformal algorithm introduced in Section~\ref{sec:weighted-algorithm} is proposed by \citet{tibshirani2019conformal}, initially for the setting of covariate shift. In this chapter, we focus on weighted versions of the full and split conformal algorithms; relatedly, \citet{prinster2022jaws,prinster2023jawsx} develop a weighted version of jackknife+ and CV+.

In the setting of a distribution shift between the training and test data (discussed in Section~\ref{sec:covariate_and_label_shift}), 
the weighted conformal method for covariate shift and for label shift is studied by \citet{tibshirani2019conformal} and  by \citet{podkopaev2021distribution}, respectively. These results build on the framework of weighted exchangeability (and its properties such as Proposition~\ref{prop:weighted-exchangeability}), which is introduced in the former paper. These methods have been applied to the problem of predictive inference in a range of statistical problems and settings, including causal inference (specifically, estimation of individualized treatment effects) \citep{lei2020conformal,jin2023sensitivity}, censored data and survival analysis \citep{candes2021conformalized,gui2024conformalized}, and adaptive learning \citep{fannjiang2022conformal}, and also to the problem of testing hypotheses about distribution shift \citep{hu2020distributionfree,bordino2025density}. Results guaranteeing approximate coverage, in the setting where the distribution shift is only known approximately when we define the weights in the WCP algorithm, appear in many works in the literature, e.g., \citet{lei2020conformal,yang2022doubly,jin2023sensitivity,yin2021conformal,gui2024conformalized}. A different approach is taken by \citet{cauchois2024robust}, which establishes robustness of (unweighted) conformal prediction to bounded distribution shifts. As a technical note, Fact~\ref{fact:conversion-quantiles-cdfs-weighted}, used in the proofs of Theorems~\ref{thm:train-test-shift-weighted} and~\ref{thm:fixed-weight-conformal}, is due to \cite{harrison2012conservative}.

Section~\ref{sec:localized} presents the localized conformal prediction method, developed by~\citet{guan2023localized}, and the randomly-localized conformal prediction method, due to~\citet{hore2023conformal}. In the case where $s$ is a pretrained score function, \citet{guan2023localized} provides an efficient implementation of localized conformal prediction (i.e., for the conformalization procedure to determine $\tilde{q}^y$ in Algorithm~\ref{alg:lcp}). 
Both of these works also establish approximate test-conditional coverage guarantees for (randomly-)localized conformal prediction, namely, coverage when the test point is drawn from a small neighborhood. 
The result of Proposition~\ref{prop:LCP_is_fullCP}, establishing that localized conformal prediction can be viewed as an instance of full conformal prediction, appears in \citet{hore2023conformal}.

The fixed-weight conformal prediction approach described in Section~\ref{sec:fixed-weights} is due to \citet{barber2022conformal} (where this method is referred to as `nonexhangeable conformal prediction', or NexCP). In this book, we have presented a special case of that result, by focusing on the setting of a symmetric score function $s$; the work of \citet{barber2022conformal} also extends to the case of non-symmetric $s$, as well.

Finally, in Section~\ref{sec:general-outlook-permutations}, we considered a more general viewpoint that unifies some of the results presented earlier in the chapter. Our exposition in this section follows ideas developed by \citet{prinster2024conformal}, which explains the viewpoint of conformal prediction through weighted permutations to handle arbitrary data distributions, and the work of \citet{barber2025unifying}, which extends this framework to allow nonsymmetric algorithms.
A related viewpoint is developed by \cite{dobriban2023symmpi}, which extends the idea of distributional invariance under permutations to the more general setting of invariance under an arbitrary group action. A specific example of a different form of exchangeability is the setting where data follows a hierarchical sampling structure---for instance we may have data from multiple (random) subpopulations, each of which contributes multiple data points to the sample. \citet{dunn2023distribution,lee2023distribution,duchi2024predictive} develop conformal methodologies for this setting.

\section*{Exercises}
\addcontentsline{toc}{section}{\protect\numberline{}\textnormal{\hspace{-0.8cm}Exercises}}
\begin{enumerate}[font=\bfseries, label={\thechapter.\arabic*}, labelsep=1em, itemsep=1em]
\item Prove Lemma~\ref{lem:generalized-replacement-lemma}.
\item Consider the setting of covariate shift, as studied in Section~\ref{sec:covariate-shift}. Suppose that the likelihood ratio function $x\mapsto \frac{\mathsf{d}Q_X}{\mathsf{d}P_X}(x)$ is not known exactly, but is instead known approximately: for some class $\cW$ of functions $\cX\to (0,\infty)$, we know that $\frac{\mathsf{d}Q_X}{\mathsf{d}P_X} \in \cW$. (For simplicity we will assume that $\frac{\mathsf{d}Q_X}{\mathsf{d}P_X}(x)$ is positive, as well as finite, for all $x$.) 

    With this partial information about the likelihood ratio function, we now define a prediction set (using some symmetric score function $s$): 
    \[\cC(X_{n+1}) = \left\{y\in\cY: S^y_{n+1} \leq \sup_{w\in\cW}\hat{q}^y_w\right\},\]
    where for any function $w:\cX\to(0,\infty)$,
    \[\hat{q}^y_w = \quantile\left(\frac{\sum_{i=1}^{n+1}w(X_i) \delta_{S^y_i}}{\sum_{i=1}^{n+1}w(X_i)};1-\alpha\right).\]
    This type of procedure allows us to account for uncertainty about the likelihood ratio function. 
    \begin{enumerate}
    \item Assuming that the likelihood ratio function $\frac{\mathsf{d}Q_X}{\mathsf{d}P_X}$ is in the class $\cW$, prove that
        \[\P(Y_{n+1}\in\cC(X_{n+1}))\geq 1-\alpha.\]

    \item Now let $\cW$ be the set of all (measurable) functions $w:\cX\to(0,\infty)$. Prove that $\cC(X_{n+1})$ satisfies test-conditional coverage,
        \[\P(Y_{n+1}\in\cC(X_{n+1})\mid X_{n+1})\geq 1-\alpha\textnormal{ almost surely}.\]
        This implies that there is a tradeoff in our choice of the set $\cW$: if $\cW$ is too small then the coverage guarantee of the previous part may not apply, but if $\cW$ is too large then the resulting prediction set may not be informative (since, as we know from Chapter~\ref{chapter:conditional}, requiring test-conditional coverage may lead to meaningless prediction sets).
\end{enumerate}
\item In this exercise we will prove that the RLCP method (Algorithm~\ref{alg:rlcp}) satisfies an approximate notion of test-conditional coverage. Assume that $\cX = \R^d$, and choose the kernel $H$ as
    \[H(x,x') = \frac{\ind{\|x-x'\|_2 \leq h}}{h^dV_d},\]
    where $V_d$ is the volume of the unit ball in $\R^d$. That is, $H(x,\cdot)$ is the density corresponding to the uniform distribution over $\mathbb{B}(x,h)$, which is the ball of radius $h$ centered at $x$. 

    Next, let $B\subseteq\R^d$ be any subset, and for any $r>0$ define
    \[B_{-r} = \{x\in B : \mathbb{B}(x,r)\subseteq B\},\]
    the set of all points $x$ that lie strictly in the interior of $B$ (with a positive margin $r$). We can interpret this as a `deflated' version of $B$.

    Prove the following coverage property: if the data points are i.i.d.\ from some distribution $P$, then for any $B\subseteq\R^d$ with $\P_P(X\in B)>0$,
    \[\P\left(Y_{n+1}\in\cC(X_{n+1})\mid X_{n+1}\in B\right) \geq (1-\alpha)\cdot \frac{\P_P(X\in B_{-2h})}{\P_P(X\in B)}.\]
    In particular, if $B$ is large relative to the bandwidth $h$ of then under some regularity conditions we would typically have $\frac{\P_P(X\in B_{-2h})}{\P_P(X\in B)}\approx 1$, meaning that this result can be viewed as an approximate version of test-conditional coverage.
    \emph{Hint: apply Theorem~\ref{thm:rlcp-coverage}, along with the fact that $\|X_{n+1}-\tilde{X}_{n+1}\|_2\leq h$ almost surely by our choice of the kernel $H$.}
\item Write $Z_i = (X_i,Y_i)$, for data points $i\in[n+1]$. Suppose that $Z_1,\dots,Z_n\iidsim P$ for some unknown distribution $P$. Then, conditional on $\cD_n=(Z_i)_{i\in[n]}$, the test point $Z_{n+1}$ is generated via the following procedure:
    \begin{itemize}
        \item Sample a candidate feature $X_{\rm new}\sim P_X$. If $X_{\rm new}\not\in\{X_1,\dots,X_n\}$ then we set $X_{n+1}=X_{\rm new}$. Otherwise, discard this data point and try again, iterating as many times as needed. (We assume that $P_X$ is not supported on any set of size $\leq n$, to avoid the trivial case where we can never accept any new sample.)
        \item Then sample $Y_{n+1}\mid X_{n+1}$ from the conditional distribution, $P_{Y\mid X}$.
    \end{itemize}
    Derive a simple expression for the weights $w^y_\sigma$ defined in~\eqref{eq:weights-general-permutation}, for this setting.
\end{enumerate}

\chapter{Online Conformal Prediction}
\label{chapter:online-conformal}
\index{time series|(}

So far, we have been discussing conformal prediction in the batch setting: that is, after obtaining a training dataset $\cD_n$ comprising $n$ data points, we then train conformal prediction on this dataset to provide predictive inference on future test points.
On the other hand, there are many applications where we observe data points \emph{online} (i.e., sequentially), and conformal prediction can be extended to this setting.
More precisely, imagine at time $t$, we have observed data points $(X_1, Y_1), \ldots, (X_{t-1}, Y_{t-1})$, and the covariate $X_t$ for the next data point in the series, and need to construct a prediction set $\cC_t(X_t)$ for $Y_t$.
Then at time $t+1$, we add the newly observed data point $(X_t, Y_t)$ into our dataset, and the process repeats.
See Figure~\ref{fig:online-conformal-setting} for an illustration of this setup.
In this new setting, unique problems arise relating to constructing multiple prediction sets, while reusing data, all under potentially shifting distributions. 
This chapter describes some solutions to these problems.

\section{Online conformal prediction with exchangeable data}
\label{sec:online_conformal_data_reuse}

We first consider the case where we have a sequence of data points $(X_1, Y_1), (X_2, Y_2), \dots, (X_T, Y_T)$ that are exchangeable. At each time step $t$, we have seen $(X_1, Y_1), \dots, (X_{t-1}, Y_{t-1})$ and the next feature vector $X_t$, and we would like to construct a prediction set $\cC_t(X_t)$ for the as-yet-unseen response $Y_t$ with correct coverage:
\begin{equation}
\label{eq:online_timet_cvg}
    \P(Y_t \in \cC_t(X_t)) \ge 1-\alpha.
\end{equation}
To this end, suppose we generate prediction sets at each step with full conformal prediction in
the usual way. That is, $\cC_t(X_t)$ is the output of Algorithm~\ref{alg:full-cp} on training data $(X_1, Y_1), \dots, (X_{t-1}, Y_{t-1})$ with test point $X_t$, implemented with some choice of the score function $s$. For completeness, we also define $\cC_1(X_1) = \cY$.

\begin{figure}[t]
    \centering
    \includegraphics[width=0.75\textwidth]{\diagramspath online-conformal-setup.pdf}
    \caption{\textbf{Online prediction setting}, with data points $Z_i=(X_i,Y_i)$ for $i=1,2,\dots$. The rows represent times $t$, $t+1$, and $t+2$, respectively, in the streaming process of running online conformal prediction.
    At time $t$, we use the historical training set $(Z_1, \ldots, Z_{t-1})$ and the test covariate $X_t$ to produce a prediction set for the test label $Y_t$.
    We then repeat this process at times $t+1$, $t+2$, etc, folding the new data points into the training set as we go. 
    }
    \commentAlt{A sequence of data points $Z_1,\dots,Z_{t-1}$ is highlighted with shading and labeled `Training set', while the next point $Z_t$ is not highlighted and is labeled `Test point'. This picture is repeated two more times, for the two next time points.}
    \label{fig:online-conformal-setting}
\end{figure}

An immediate consequence of the validity of conformal prediction (Theorem~\ref{thm:full-conformal}) is that for each $t$, the coverage property in~\eqref{eq:online_timet_cvg} holds. 
Interestingly, we will now see that this algorithm has desirable properties beyond this basic marginal coverage result.
We define
\begin{equation}
\label{eq:err-def}
    \err_t = 
    \begin{cases}
    1 & \textnormal{ if } Y_t \notin \cC_t(X_t) \\
    0 & \textnormal{ if } Y_t \in \cC_t(X_t)
    \end{cases}
\end{equation}
as the indicator variable of the miscoverage event at time step $t$. Our marginal coverage property 
in~\eqref{eq:online_timet_cvg} is then equivalent to the statement that $\E[\err_t] \le \alpha$ for all $t$. This by itself is not enough to imply that the average coverage of this procedure for a large sequence of data points (i.e., averaging $\err_t$ over a long range of times $t$) would concentrate around $1-\alpha$; since each data point $t$ is reused in constructing $\cC_{t'}$ for every $t'>t$, the coverage events would appear to be dependent in a complex way. 

A surprising and deep result about conformal prediction is that the events $\err_t$ are in fact independent across different time steps.
\begin{proposition}[Independence of errors for online conformal]
\label{prop:online-indep-errors}
Suppose $(X_1, Y_1), (X_2, Y_2), \dots, (X_T, Y_T)$ are exchangeable, the score function $s$ is symmetric, and the scores are distinct almost surely at each time $t$ (i.e., at each $t\in[T]$, it holds almost surely that $s((X_i,Y_i);\cD_t)$ are distinct over all $i\in[t]$). Then the indicators of miscoverage, $\err_t$, are mutually independent.
\end{proposition}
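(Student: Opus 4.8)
The plan is to reduce the independence statement to a statement about ranks of exchangeable random variables. Fix a time step $t$ and recall from the first proof of Theorem~\ref{thm:full-conformal} (specifically~\eqref{eqn:standard-proof-coverage-equiv_Yn+1}) that the coverage event at time $t$ can be written purely in terms of the ``full'' scores: defining $S^{(t)}_i = s((X_i,Y_i);\cD_t)$ for $i\in[t]$, we have $Y_t\in\cC_t(X_t)$ if and only if $S^{(t)}_t\leq \quantile(S^{(t)}_1,\dots,S^{(t)}_t;1-\alpha)$. By the symmetry of $s$ and the exchangeability of the data (Step 2 of that proof), the vector $(S^{(t)}_1,\dots,S^{(t)}_t)$ is exchangeable, and by assumption its entries are almost surely distinct. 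Under the convention that the quantile equals the appropriate order statistic (Fact~\ref{fact:conversion-order-stats-quantiles}), $\err_t = 1$ exactly when the rank of $S^{(t)}_t$ among $S^{(t)}_1,\dots,S^{(t)}_t$ exceeds $\lceil(1-\alpha)t\rceil$ — that is, $\err_t$ is a deterministic function of the rank $R_t$ of the last entry, where $R_t \in \{1,\dots,t\}$.

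First I would establish that $R_t$ is uniform on $\{1,\dots,t\}$ (immediate from exchangeability plus distinctness), which re-derives the marginal coverage bound. The crux, however, is to show that the ranks $R_1,\dots,R_T$ are \emph{mutually} independent, with $R_t\sim\textnormal{Unif}\{1,\dots,t\}$; this immediately implies that $\err_1,\dots,\err_T$ are mutually independent, since each $\err_t$ is a function of $R_t$ alone. To prove joint independence of the ranks, I would condition on the unordered collection (empirical distribution) $\widehat{P}_T$ of the full data $(X_1,Y_1),\dots,(X_T,Y_T)$. By Lemma~\ref{lem:conditional_exchangeability}, conditional on $\widehat{P}_T$ the data points remain exchangeable, so conditionally the ordering $(Z_1,\dots,Z_T)$ is a uniformly random permutation of the $T$ values (using~\eqref{eq:exchangeability-uniform-ranks}). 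The key observation is then purely combinatorial: if $\pi$ is a uniformly random permutation of $T$ distinct items, and for each $t$ we let $R_t$ be the rank of the $t$-th item among the first $t$ items, then $R_1,\dots,R_T$ are independent with $R_t\sim\textnormal{Unif}\{1,\dots,t\}$. This is the classical ``records/Fisher–Yates'' fact, provable by the chain-rule decomposition $T! = \prod_{t=1}^T t$: the map sending a permutation to its rank-sequence $(R_1,\dots,R_T)$ is a bijection onto $\prod_{t=1}^T\{1,\dots,t\}$, and uniformity is preserved. Since the values of the scores $s((X_i,Y_i);\cD_t)$ are order-preserving functions of the underlying data items (the score function applied to the fixed unordered dataset is a fixed function, and distinctness is assumed), the rank of $S^{(t)}_t$ among $S^{(t)}_1,\dots,S^{(t)}_t$ coincides with the positional rank $R_t$ of $Z_t$ among $Z_1,\dots,Z_t$ under the induced ordering. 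Hence conditional on $\widehat{P}_T$, the ranks $R_1,\dots,R_T$ — and therefore $\err_1,\dots,\err_T$ — are mutually independent with the stated marginals. Because this conditional distribution does not depend on $\widehat{P}_T$, the independence holds unconditionally as well.

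The main obstacle I anticipate is the bookkeeping in the last step: one must carefully verify that the rank of the newest score $S^{(t)}_t$ within the time-$t$ scores genuinely equals the positional rank $R_t$ of $Z_t$ within $(Z_1,\dots,Z_t)$, \emph{uniformly in $t$ and jointly across all $t$}. The subtlety is that at each time $t$ the scores are recomputed with a different dataset $\cD_t$ (so the numerical score values change with $t$), yet the rank structure we need only depends on comparisons among $\{s(Z_i;\cD_t):i\in[t]\}$, and these comparisons are a deterministic function of the unordered multiset $\{Z_1,\dots,Z_t\}$ together with which element is ``last'' — information that is entirely captured by the permutation $\pi$ restricted to positions $\{1,\dots,t\}$. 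Making this precise (and handling the degenerate $t=1$ case, where $R_1\equiv 1$ and $\err_1$ is deterministic given the convention, or uninformative) requires care but no hard estimates. Everything else is the clean combinatorial identity $T!=\prod_{t}t$ dressed up via conditioning on the order statistics.
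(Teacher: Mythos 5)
Your proposal is correct, and it organizes the argument differently from the paper's proof (of Theorem~\ref{thm:online-indep-pvals}, from which Proposition~\ref{prop:online-indep-errors} follows). The paper conditions on $(\widehat{P}_t, Z_{t+1},\dots,Z_T)$ separately for each $t$, proving two modular facts: (i) conditional on that $\sigma$-field, $p_t$ is uniform on $\{1/t,\dots,1\}$, and (ii) $p_{t'}$ for $t'>t$ is measurable with respect to that $\sigma$-field. You instead condition once on the coarser statistic $\widehat{P}_T$ and deduce joint independence of all ranks $R_1,\dots,R_T$ in one shot via a bijection $\cS_T \to \prod_{t=1}^T\{1,\dots,t\}$. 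The trade-off is that your bijection is not literally the standard records/Fisher--Yates fact, since the comparison order at time $t$ is induced by $s(\cdot;\cD_t)$ and thus depends on the current subset $\{Z_1,\dots,Z_t\}$ rather than on a fixed total order; you correctly flag this. The fix is exactly the backward reconstruction you gesture at: given $\widehat{P}_T$ and the rank sequence $(r_T,\dots,r_1)$, one recovers $Z_T$ from $r_T$ (the time-$T$ order is fixed), then the set $\{Z_1,\dots,Z_{T-1}\}$ is determined, so the time-$(T-1)$ order is fixed and $Z_{T-1}$ is recovered from $r_{T-1}$, and so on. Noting that this backward pass uses exactly the information content of the paper's $(\widehat{P}_t, Z_{t+1},\dots,Z_T)$ at each stage, the two proofs are mathematically close cousins, but yours packages the sequential conditioning into a single counting argument. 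What you gain is a clean combinatorial statement and a proof that independence and marginal uniformity come out simultaneously; what the paper's version gains is modularity, and it also more directly yields the finer result that the p-values themselves (not just $\err_t$) are independent, which is needed elsewhere (e.g., Section~\ref{sec:testing_exch_online}).
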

Here $\cD_t$ denotes the dataset comprised of the first $t$ data points, $\cD_t=((X_1,Y_1),\dots,(X_t,Y_t))$, for each $t\in[T]$.

This is a consequence of the slightly more general result that
the conformal p-values at each step are independent. To be explicit,
the conformal p-value at time step $t$ is defined as
\begin{equation}
\label{eq:online-conformal-pvalue}
    p_t = \frac{\sum_{i=1}^{t} \ind{s\left((X_i, Y_i); \cD_t \right) \geq s\left((X_{t}, Y_t); \cD_t \right)}}{t}.
\end{equation}

\begin{theorem}[Independence of online conformal p-values]
\label{thm:online-indep-pvals}
Suppose $(X_1, Y_1), (X_2, Y_2), \dots, (X_T, Y_T)$ are exchangeable, the score function $s$ is symmetric, and the scores are distinct almost surely at each time $t$. Then, $p_t$ is distributed as a
discrete random variable on $\{1/t,2/t,\dots,1\}$, and $p_1,\dots,p_T$ are
mutually independent.
\end{theorem}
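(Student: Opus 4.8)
The plan is to reduce the statement to a fact about the ranks of an exchangeable sequence of distinct real-valued random variables, and then establish the independence of these ranks by a conditioning argument. First I would fix the score function and, using symmetry of $s$ together with exchangeability of the data, observe that for each $t$ the scores $S_i^{(t)} := s((X_i,Y_i);\cD_t)$, $i\in[t]$, are themselves exchangeable; since they are distinct almost surely, the standard characterization of exchangeability (the discussion around~\eqref{eq:exchangeability-uniform-ranks} and Fact~\ref{fact:exchangeable-properties}) gives that the rank of $S_t^{(t)}$ among $S_1^{(t)},\dots,S_t^{(t)}$ is uniform on $\{1,\dots,t\}$. Writing $R_t$ for this rank, we have $p_t = R_t/t$, which establishes the claimed marginal distribution of $p_t$.

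The substance is the joint independence of $R_1,\dots,R_T$. The key structural observation is that the vector $(R_1,\dots,R_t)$ is a deterministic function of the \emph{relative ordering} of $S_1^{(t)},\dots,S_t^{(t)}$ — equivalently, of the permutation $\pi_t\in\cS_t$ that sorts these $t$ scores — and moreover, because $s$ is symmetric, $s(\cdot;\cD_t)$ depends on $\cD_t$ only through its empirical distribution $\widehat P_t$. The plan is then to condition on $\widehat P_T$ (the unordered bag of all $T$ data points). By Proposition~\ref{prop:empirical-distrib-exch} (and the conditional-exchangeability Lemma~\ref{lem:conditional_exchangeability}), conditional on $\widehat P_T$ the order in which the $T$ points appear is a uniformly random permutation. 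I would then argue that, given $\widehat P_T$, one can peel off the ranks one at a time in reverse: knowing $\widehat P_T$ and which point sits in position $T$ determines the bag $\widehat P_{T-1}$ of the first $T-1$ points; given a uniformly random assignment of the $T$ points to positions, the identity of the point in position $T$ is uniform over the bag, and conditional on $\widehat P_{T-1}$ the first $T-1$ points are again in uniformly random order, independent of which point was removed. Crucially, $R_T$ — the rank of $S_T^{(T)}$ among the $T$ scores computed from $\widehat P_T$ — is a function only of $\widehat P_T$ and the identity of the last point, and is uniform on $\{1,\dots,T\}$ by the argument of the previous paragraph, \emph{whatever} $\widehat P_T$ is; hence $R_T$ is uniform and independent of $\widehat P_{T-1}$, which in turn is all that $R_1,\dots,R_{T-1}$ depend on (they are functions of $\widehat P_{T-1}$ and the relative order of the first $T-1$ points). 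Iterating this peeling-off argument downward from $t=T$ to $t=1$ gives that $R_1,\dots,R_T$ are mutually independent with $R_t\sim\mathrm{Unif}\{1,\dots,t\}$, which is exactly the theorem, and Proposition~\ref{prop:online-indep-errors} follows since $\err_t = \ind{p_t \le \alpha}$ (after the usual Replacement-Lemma identification of the conformal set with the p-value threshold, as in Proposition~\ref{prop:conformal-via-pvalues}) is a function of $R_t$ alone.

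A cleaner way to organize the same idea, which I would probably adopt to avoid messy bookkeeping, is via a change of variables: condition on $\widehat P_T$, let $\Pi$ be the uniformly random permutation of the $T$ points into positions $1,\dots,T$, and note that the map $\Pi \mapsto (R_1,\dots,R_T)$ is a bijection from $\cS_T$ onto $\prod_{t=1}^T\{1,\dots,t\}$ — this is the classical Lehmer-code / factorial-number-system encoding of a permutation by its sequence of ``insertion ranks.'' Here $R_t$ is precisely the rank, among the first $t$ points in the order given by $\Pi$, of the $t$-th point; since $s(\cdot;\cD_t)$ depends only on $\widehat P_t$ and since the scores are order-preserving in a way that must be made precise, the rank of the $t$-th \emph{point} equals the rank of its \emph{score} $S_t^{(t)}$, giving $R_t = t\,p_t$. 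Because $\Pi$ is uniform and this encoding map is a bijection onto a product set, the coordinates $R_1,\dots,R_T$ are independent uniforms, and since the conditional law of $(R_1,\dots,R_T)$ given $\widehat P_T$ does not depend on $\widehat P_T$, the independence holds unconditionally as well.

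\textbf{Main obstacle.} The delicate point — and the one I would spend the most care on — is justifying that $R_t$, the rank of the point in position $t$ relative to the earlier points, genuinely equals the rank of the \emph{score} $S_t^{(t)} = s((X_t,Y_t);\cD_t)$ among $S_1^{(t)},\dots,S_t^{(t)}$, uniformly over the conditioning and with the no-ties hypothesis doing real work. This is where symmetry of $s$ is essential: it guarantees $s(\cdot;\cD_t)$ is a function of $\widehat P_t$ only, so that when we compare two points both appearing among the first $t$, they are scored against the \emph{same} model, and the comparison is therefore a fixed (ties-free, by hypothesis) total order on the bag $\widehat P_t$ that does not itself depend on the ordering $\Pi$. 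Without symmetry this breaks entirely (the model changes with the ordering), and without the no-ties assumption the rank map is no longer a bijection onto the product set and both the marginal distribution of $p_t$ and the independence can fail. The remaining steps — exchangeability of the conditional order, the bijectivity of the Lehmer encoding, and the reduction of $\err_t$ to $\ind{p_t\le\alpha}$ — are routine given the machinery already developed in Chapter~\ref{chapter:exchangeability} and Section~\ref{sec:reinterpret-conformal}.
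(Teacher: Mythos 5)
Your marginal-distribution argument (exchangeability of the scores at time $t$, plus Fact~\ref{fact:exchangeable-properties}) matches the paper's Step~1 exactly. For the independence, your ``Lehmer-code'' formulation is a genuinely different organization from the paper's. The paper fixes a time $t$, conditions on the $\sigma$-algebra generated by $(\widehat P_t, Z_{t+1},\dots,Z_T)$, shows $p_t$ is conditionally uniform there, and shows $(p_{t+1},\dots,p_T)$ are measurable with respect to it; iterating over $t$ gives independence. You instead condition once on $\widehat P_T$, view the unknown as a uniformly random permutation $\Pi\in\cS_T$, and exhibit $\Pi\mapsto(R_1,\dots,R_T)$ as a bijection onto $\prod_{t=1}^T\{1,\dots,t\}$, from which joint uniformity (hence independence) is immediate. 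Both routes rest on the same two facts (conditional exchangeability given the bag, and that $s(\cdot;\cD_t)$ depends on $\cD_t$ only through $\widehat P_t$), but the global-bijection view is combinatorially cleaner and worth noting. One caveat: the map is \emph{not} the literal Lehmer code, since the total order used at step $t$ changes with $t$. To establish the bijection you should peel $\Pi$ in reverse: given $\widehat P_T$ and $R_T$, recover $Z_T$ as the rank-$R_T$ point under the order induced by $s(\cdot;\cD_T)$, hence recover $\widehat P_{T-1}$; given those and $R_{T-1}$, recover $Z_{T-1}$ under the (now different) order induced by $s(\cdot;\cD_{T-1})$, which is determined since $\widehat P_{T-1}$ is; and so on. Your ``obstacle'' paragraph gestures at this but says the order ``does not itself depend on $\Pi$,'' which is not quite right: the order at step $t$ depends on the set $\{\Pi(1),\dots,\Pi(t)\}$, i.e.\ on $\widehat P_t$, and that set is determined only by $\widehat P_T$ together with $(R_{t+1},\dots,R_T)$. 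Spelling this out is the step that closes the argument.

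The first, informal ``peeling-off'' sketch contains a precise statement that is false: ``$R_T$ is uniform and independent of $\widehat P_{T-1}$.'' Take $T=2$, $Z_1,Z_2\iidsim\textnormal{Unif}[0,1]$, and $s(z;\cD)=z$; then $R_2 = 2\cdot\ind{Z_1\ge Z_2}$ and $\widehat P_1$ is just $Z_1$, yet $\P(R_2=2, Z_1<1/2) = \int_0^{1/2}(1-u)\,\mathsf du = 3/8 \ne 1/4 = \P(R_2=2)\P(Z_1<1/2)$. In general, conditional on $\widehat P_T$, the random variables $R_T$ and $\widehat P_{T-1}$ are in bijection (both are determined by the identity of $Z_T$), so claiming their independence as the load-bearing step cannot work. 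What is true, and what the bijection argument actually delivers, is that $(R_1,\dots,R_{T-1})$ has a fixed distribution (uniform over $\prod_{t<T}\{1,\dots,t\}$) \emph{conditional on} $(\widehat P_{T-1}, Z_T)$, and therefore is independent of $(\widehat P_{T-1}, Z_T)$ — and hence independent of $R_T$, which is a function of $(\widehat P_{T-1}, Z_T)$. This is the correct direction of the conditional-uniformity claim, and is what the paper proves as its Step~1/Step~2 combination. If you adopt the Lehmer-code phrasing throughout and drop the ``$R_T\perp\widehat P_{T-1}$'' step, the argument goes through.
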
 \index{conformal p-value}
Note that Proposition~\ref{prop:online-indep-errors} follows directly from Theorem~\ref{thm:online-indep-pvals} because $\err_t$
is a function of $p_t$: namely, recalling Proposition~\ref{prop:conformal-via-pvalues}, it holds that $\err_t = \ind{p_t \le \alpha}$.

To see the main idea of the proof, let us return to Figure~\ref{fig:online-conformal-setting}. Observe that in the middle row, by symmetry of the score function, the conformal p-value $p_{t+1}$ (and the outcome of $\err_{t+1}$) does not depend on the ordering of the $t$ data points in the shaded block---that is, $(X_1, Y_1),\dots,(X_t,Y_t)$ are still exchangeable even if we condition on $p_{t+1}$. By contrast, $p_t$ depends \emph{only} on the ordering of the first $t$ points. This leads to independence between $p_t$ and $p_{t+1}$.

\begin{proof}[Proof of Theorem~\ref{thm:online-indep-pvals}]
For each $t\in[T]$, let $\cD_t = (Z_1,\dots,Z_t)$ denote the data points observed up to time $t$ (where $Z_t = (X_t,Y_t)$), and let $\widehat{P}_t = \frac{1}{t}\sum_{t'=1}^t \delta_{Z_{t'}}$ denote its empirical distribution. Below, we will prove that for each $t\in [T]$, the random variable $p_t$ has the following conditional distribution:
\begin{equation}\label{eqn:online-indep-pvals_step1}
    p_t \mid \big(\widehat{P}_t,Z_{t+1},\dots,Z_T\big) \ \sim \ \textnormal{Unif}\big(\{1/t,2/t,\dots,1\}\big).
\end{equation}
In particular, this is sufficient to verify the first claim (i.e., the marginal distribution of $p_t$). We will also prove that
\begin{equation}\label{eqn:online-indep-pvals_step2}
    \textnormal{For each $t'>t$, $p_{t'}$ can be written as a function of $\big(\widehat{P}_t,Z_{t+1},\dots,Z_T\big)$.}
\end{equation}
In particular, combining~\eqref{eqn:online-indep-pvals_step1} and~\eqref{eqn:online-indep-pvals_step2} yields
\[p_t \mid \big(\widehat{P}_t,Z_{t+1},\dots,Z_T,p_{t+1},\dots,p_T\big)\ \sim \ \textnormal{Unif}\big(\{1/t,2/t,\dots,1\}\big) ,\]
and then marginalizing we have
\[p_t \mid \big(p_{t+1},\dots,p_T\big)\ \sim \ \textnormal{Unif}\big(\{1/t,2/t,\dots,1\}\big) ,\]
for each $t\in[T]$. This implies that $p_1,\dots,p_T$ are mutually independent, as desired.

\paragraph{Proving~\eqref{eqn:online-indep-pvals_step1}.}
By exchangeability of the data, we can see that
\[(Z_1,\dots,Z_t) \mid \big(\widehat{P}_t,Z_{t+1},\dots,Z_T\big)\]
is conditionally exchangeable (formally, this statement can be verified via Lemma~\ref{lem:conditional_exchangeability} and Fact~\ref{fact:conditional-exch-1}). Consequently, the scores
\[s(Z_1;\cD_t),\dots, s(Z_t;\cD_t)\]
are also exchangeable conditionally on $\big(\widehat{P}_t,Z_{t+1},\dots,Z_T\big)$, since $s$ is symmetric. Now let $S_{(1;t)}\leq \dots \leq S_{(t;t)}$ denote the order statistics of these $t$ scores, at time $t$. Since we have assumed these $t$ scores are distinct, almost surely, Fact~\ref{fact:exchangeable-properties}\ref{fact:exchangeable-properties_part4} implies that $\P(s(Z_t;\cD_t) > S_{(t-k;t)}) = k/t$, for each $k\in[t-1]$. But by definition of the p-value $p_t$, we have $p_t \leq k/t$ if and only if $s(Z_t;\cD_t) > S_{(t-k;t)}$; since $p_t$ can only take values in the grid $\{1/t,\dots,1\}$, by construction, the conclusion follows.

\paragraph{Proving~\eqref{eqn:online-indep-pvals_step2}.}
For this last step, we will write the p-value at time $t$ as $p_t(\cD_T)$, to make explicit its dependence on the dataset $\cD_T$ (the definition of this p-value is still given by~\eqref{eq:online-conformal-pvalue}, as before). 

Let $\sigma\in\cS_T$ be any permutation that only permutes the first $t$ indices, i.e., $\sigma(t')=t'$ for any $t'>t$. Let $(\cD_T)_{\sigma} = (Z_{\sigma(1)},\dots,Z_{\sigma(T)})$ be the dataset $\cD_T$ permuted according to $\sigma$. Note that, since $\sigma$ maps $[t']$ to $[t']$ for any $t'>t$, this means that the first $t'$ many entries of $(\cD_T)_{\sigma}$ are, up to permutation, equal to $\cD_{t'}$.

We now verify that permuting the data according to $\sigma$ does not change p-values after time $t$: for any $t'>t$,
\begin{align}
    p_{t'}((\cD_T)_{\sigma}) &= \frac{\sum_{i=1}^{t'} \ind{s\left(Z_{\sigma(i)}; \cD_{t'} \right) \geq s\left(Z_{\sigma(t')}; \cD_{t'} \right)}}{t'}\\ 
    &= \frac{\sum_{i=1}^{t'} \ind{s\left(Z_{\sigma(i)}; \cD_{t'} \right) \geq s\left(Z_{t'}; \cD_{t'} \right)}}{t'}\textnormal{\quad since $\sigma(t')=t'$} \\
    &= \frac{\sum_{i=1}^{t'} \ind{s\left(Z_i; \cD_{t'} \right) \geq s\left(Z_{t'}; \cD_{t'}\right)}}{t'}\textnormal{\quad since $\sigma$ maps $[t']$ to $[t']$}\\   
    &=p_{t'}(\cD_T),
\end{align}
where the first step holds since $s$ is symmetric and, as mentioned above, up to permutation it holds that $\cD_{t'}$ is equal to the first $t'$ many entries of $(\cD_T)_{\sigma}$.

In particular, this implies that $p_{t'}=p_{t'}(\cD_T)$ depends on $\cD_T = (\cD_t,Z_{t+1},\dots,Z_T)$ only through $(\widehat{P}_t,Z_{t+1},\dots,Z_T)$, since permuting the first $t$ data points (i.e., changing the order of the data points in $\cD_t$) does not change the value of $p_{t'}$.

\end{proof}

This result is important since it implies that conformal prediction in the online setting will have total coverage approaching $1-\alpha$:
\begin{corollary}[Average coverage of online conformal]\label{cor:avg_online_conformal}
Suppose $(X_1, Y_1), (X_2, Y_2), \dots$ are exchangeable, the score function is symmetric, and the scores are distinct almost surely at each time $t$. Then,
\begin{equation}
    \frac{1}{T} \sum_{t=1}^T \ind{Y_{t} \in \cC_t(X_t)} \to 1-\alpha,
\end{equation}
almost surely as $T \to \infty$.
\end{corollary}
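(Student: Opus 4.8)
The plan is to derive Corollary~\ref{cor:avg_online_conformal} from Proposition~\ref{prop:online-indep-errors} via a strong law of large numbers for independent (but not identically distributed) random variables. Recall that $\err_t = \ind{Y_t\notin\cC_t(X_t)}$, and that $\frac1T\sum_{t=1}^T \ind{Y_t\in\cC_t(X_t)} = 1 - \frac1T\sum_{t=1}^T\err_t$, so it suffices to show $\frac1T\sum_{t=1}^T\err_t \to \alpha$ almost surely. By Proposition~\ref{prop:online-indep-errors}, the $\err_t$ are mutually independent; they are bounded in $[0,1]$; and since $\err_t = \ind{p_t\le\alpha}$ where $p_t$ is uniform on $\{1/t,\dots,1\}$ by Theorem~\ref{thm:online-indep-pvals}, we have $\E[\err_t] = \P(p_t\le\alpha) = \frac{\lfloor \alpha t\rfloor}{t} \to \alpha$ as $t\to\infty$.

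First I would record the exact expectation: $\E[\err_t] = \frac{\lfloor \alpha t\rfloor}{t}$, which lies in $[\alpha - 1/t,\,\alpha]$. Hence the Ces\`aro averages of the means satisfy $\frac1T\sum_{t=1}^T\E[\err_t]\to\alpha$. Next I would invoke a law of large numbers for independent uniformly bounded summands. The cleanest route is Kolmogorov's SLLN for independent (non-identically-distributed) random variables: since the $\err_t$ are independent with $\Var(\err_t)\le 1/4$, the series $\sum_{t\ge1}\Var(\err_t)/t^2 \le \sum_{t\ge1}\frac{1}{4t^2} < \infty$ converges, so Kolmogorov's criterion gives $\frac1T\sum_{t=1}^T(\err_t - \E[\err_t]) \to 0$ almost surely. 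Combining this with the convergence of the averaged means yields $\frac1T\sum_{t=1}^T\err_t\to\alpha$ a.s., which is the claim.

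The steps, in order: (1) rewrite the empirical coverage in terms of $\frac1T\sum\err_t$; (2) cite Proposition~\ref{prop:online-indep-errors} for independence and Theorem~\ref{thm:online-indep-pvals} for the marginal law of $p_t$, giving $\E[\err_t]=\lfloor\alpha t\rfloor/t$ and $\Var(\err_t)\le1/4$; (3) apply Kolmogorov's SLLN (via the convergent variance series $\sum 1/(4t^2)$) to get a.s.\ convergence of the centered averages to $0$; (4) observe $\frac1T\sum_{t=1}^T\lfloor\alpha t\rfloor/t\to\alpha$ since each term is within $1/t$ of $\alpha$ and $\frac1T\sum 1/t\to0$; (5) add the two limits and translate back to $1-\alpha$ for the coverage. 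One should also note the degenerate initial segment (e.g.\ $\cC_1(X_1)=\cY$, so $\err_1=0$) is irrelevant since finitely many terms do not affect the Ces\`aro limit.

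The main obstacle is essentially bookkeeping rather than a deep difficulty: one must be slightly careful that Proposition~\ref{prop:online-indep-errors} is stated for a finite horizon $T$, whereas the corollary concerns $T\to\infty$; since mutual independence of $\err_1,\dots,\err_T$ holds for every $T$ (and the marginal of each $p_t$ does not depend on the horizon), the whole sequence $(\err_t)_{t\ge1}$ is mutually independent, so the SLLN applies. The only genuine hypothesis being used beyond those already invoked for the independence result is the standing assumption that the scores are distinct almost surely at each $t$, which is what pins down $\E[\err_t]=\lfloor\alpha t\rfloor/t$ exactly (rather than merely $\le\alpha$); without it one would still get $\limsup$ of the average $\le\alpha$, but not the exact limit. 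I would flag this explicitly.
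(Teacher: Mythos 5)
Your proof is correct and follows exactly the route the paper sketches: independence of the errors (Proposition~\ref{prop:online-indep-errors}), the exact marginal $\E[\err_t]=\lfloor\alpha t\rfloor/t$ from Theorem~\ref{thm:online-indep-pvals}, and a strong law for independent non-identically-distributed bounded variables, with the Ces\`aro convergence of the means handling the $\bigo(1/t)$ discrepancy. You supply the details (Kolmogorov's criterion via $\sum \Var(\err_t)/t^2<\infty$, extension of finite-horizon independence to the full sequence) that the paper leaves implicit, so this is the same argument made explicit.
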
 \index{coverage!online}
This result follows from Proposition~\ref{prop:online-indep-errors} and the Law of Large Numbers. There is a minor subtlety here in that $\E[\err_t]$ is not exactly the same for each $t$---rather, it is some value lying in the range $(\alpha - 1/t,\alpha]$. Nonetheless, this is easily handled. More refined versions of this result can be given with Hoeffding's inequality and other refinements of the Law of Large Numbers. 

If we do not assume that the scores are distinct almost surely at each time $t$, then it may no longer be the case that the p-values $p_t$ are independent. Nonetheless, due to the fact that conformal prediction can only become more conservative in the presence of ties, and thus a more conservative version of Corollary~\ref{cor:avg_online_conformal} holds: if $(X_1,Y_1),(X_2,Y_2),\dots$ are exchangeable, then
\[\liminf_{T\rightarrow\infty} \frac{1}{T} \sum_{t=1}^T \ind{Y_{t} \in \cC_t(X_t)} \geq 1-\alpha,\]
almost surely.

These powerful results show that we can reliably use conformal prediction to construct a sequence of prediction intervals in the online exchangeable setting. Moreover, having independent p-values at each time step facilitates other goals when analyzing a sequence of data, which is the focus of the next section.

\paragraph{Online conformal without online training?}
Throughout this section, we have assumed that each test point is added to the training set for the next time step---that is, at time $t$ the point $(X_t,Y_t)$ is the test point, but at time $t+1$ (and beyond), this same point $(X_t,Y_t)$ is now part of the training set. 
In some settings, it may not be possible or desirable to add the test data into the training set in this online way.
The independence results of this section may no longer apply; if the test points $t=n+1,n+2,\dots$ are all compared to the \emph{same} training set $((X_i,Y_i))_{i\in[n]}$ the conformal p-values $p_{n+1},p_{n+2},\dots$ will be dependent. 
We refer the reader to Chapter~\ref{chapter:extensions} (see Section~\ref{sec:multiplicity__outlier_FDR}) for results that characterize the dependence among the p-values for this setting.

\section{Testing exchangeability online}\label{sec:testing_exch_online}
\index{nonexchangeability|(}
\index{exchangeability!testing|(}

We next consider the ability to test the exchangeability hypothesis online---an important task when monitoring the online deployment of learning algorithms to identify the presence of distribution shift. 
In this section, we show how to test this hypothesis using the conformal p-values from~\eqref{eq:online-conformal-pvalue}. More concretely, given a stream of data $(X_1,Y_1),(X_2,Y_2),\dots$ (with finite or infinite length), we would like to detect any failure of exchangeability in this data stream---for instance, due to a sudden changepoint.

The main message is that the conformal p-values can be combined in such a way that, under exchangeability, the resulting statistic is a supermartingale.
Any supermartingale will generally take only small values, so large values of this statistic constitute evidence against the null hypothesis of exchangeability.
In this section, we show how to define such supermartingales and how to use them to test the hypothesis of exchangeability online.
\begin{definition}[Supermartingale]
    A (finite or infinite) sequence of random variables $M_1, M_2,\ldots$ is a supermartingale if, for all $t$, $\E[|M_{t}|] < \infty$ and, for all $t\geq 2$,
    \begin{equation}
        \E[M_{t} \mid M_1, \ldots, M_{t-1}] \leq M_{t-1}.
    \end{equation}
\end{definition} \index{supermartingale}
If the inequality above is replaced by an equality, then this sequence is called a martingale.
An important fact about supermartingales that we will soon use is \emph{Ville's inequality}: if $M_1,M_2,\dots$ is a nonnegative supermartingale, then for any $a > 0$,
\begin{equation}
    \label{eq:ville}
    \P\left( \sup_{t \geq 1} M_t \geq a \right) \leq \frac{\E[M_1]}{a}.
\end{equation}

For the problem of testing the hypothesis of exchangeability, we need to define a sequence $M_t$ that is large if the hypothesis is violated, but is a supermartingale if the hypothesis holds. 
Fortunately, by Theorem~\ref{thm:online-indep-pvals} above, the conformal p-values are independent for an exchangeable stream of data points, which enables the construction of a supermartingale. This next result gives a simple example:

\begin{proposition}
    \label{prop:simple-conformal-supermartingale}
    Consider the (finite or infinite) sequence of conformal p-values $p_1, p_2, \ldots$ from~\eqref{eq:online-conformal-pvalue}, constructed on a (finite or infinite) streaming dataset $(X_1,Y_1),(X_2,Y_2),\dots$.
    Assume the data is exchangeable and the score function is symmetric. Let $\lambda\in[0,1]$ be a fixed parameter.
    Then, if the scores are distinct almost surely at each time $t$, the sequence
    \begin{equation}
        M_t = \prod_{t'=1}^t\frac{1-\lambda p_{t'}}{1-\lambda/2}
    \end{equation}
    is a supermartingale.
\end{proposition}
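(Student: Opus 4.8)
The plan is to verify the supermartingale property directly from the definition, leveraging the key structural fact established in Theorem~\ref{thm:online-indep-pvals}: the conformal p-values $p_1, p_2, \ldots$ are mutually independent (under exchangeability, a symmetric score, and distinct scores at each time), and moreover each $p_t$ is uniformly distributed on $\{1/t, 2/t, \ldots, 1\}$. First I would observe that $M_t = M_{t-1} \cdot \frac{1 - \lambda p_t}{1 - \lambda/2}$, so it suffices to show $\E\!\left[\frac{1-\lambda p_t}{1-\lambda/2} \,\middle|\, p_1,\dots,p_{t-1}\right] \leq 1$, and integrability is immediate since $0 \le \frac{1-\lambda p_{t'}}{1-\lambda/2} \le \frac{1}{1-\lambda/2}$ is bounded (note $\lambda \in [0,1]$ so $1 - \lambda/2 \ge 1/2 > 0$, and $1 - \lambda p_{t'} \ge 0$). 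By the independence from Theorem~\ref{thm:online-indep-pvals}, the conditional expectation equals the unconditional one, $\E\!\left[\frac{1-\lambda p_t}{1-\lambda/2}\right]$.

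Next I would compute $\E[p_t]$. Since $p_t \sim \textnormal{Unif}(\{1/t, 2/t, \ldots, 1\})$, we have $\E[p_t] = \frac{1}{t}\sum_{k=1}^t \frac{k}{t} = \frac{1}{t^2}\cdot\frac{t(t+1)}{2} = \frac{t+1}{2t} \ge \frac{1}{2}$. Therefore
\[
\E\!\left[\frac{1-\lambda p_t}{1-\lambda/2}\right] = \frac{1 - \lambda \E[p_t]}{1-\lambda/2} = \frac{1 - \lambda(t+1)/(2t)}{1-\lambda/2} \le \frac{1 - \lambda/2}{1 - \lambda/2} = 1,
\]
using $\E[p_t] \ge 1/2$ and $\lambda \ge 0$ in the inequality. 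This gives $\E[M_t \mid M_1,\dots,M_{t-1}] \le \E[M_t \mid p_1,\dots,p_{t-1}] = M_{t-1} \cdot \E\!\left[\frac{1-\lambda p_t}{1-\lambda/2}\right] \le M_{t-1}$, where the first inequality is because $(M_1,\dots,M_{t-1})$ is a function of $(p_1,\dots,p_{t-1})$ (tower property / coarsening), completing the argument.

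The main obstacle, such as it is, is bookkeeping rather than a genuine difficulty: one must be careful that conditioning on $(M_1,\dots,M_{t-1})$ versus $(p_1,\dots,p_{t-1})$ is handled correctly — since $M_{t'}$ is a deterministic function of $p_1,\dots,p_{t'}$, conditioning on the p-values is a finer conditioning, and $\E[M_t \mid p_1,\dots,p_{t-1}]$ is itself a function of $p_1,\dots,p_{t-1}$, hence equals $M_{t-1}$ exactly; then taking conditional expectation given the coarser $\sigma$-algebra generated by $M_1,\dots,M_{t-1}$ preserves the inequality. One should also note the edge case $t=1$: $M_1 = \frac{1-\lambda p_1}{1-\lambda/2}$ with $p_1 = 1$ deterministically (the empty-training-set p-value), giving $M_1 = \frac{1-\lambda}{1-\lambda/2} \le 1$, consistent with $\E[M_1] \le 1$; the supermartingale condition is only required for $t \ge 2$ anyway. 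Finally I would remark that since $M_t \ge 0$ and $\E[M_1] \le 1$, Ville's inequality~\eqref{eq:ville} shows $\P(\sup_t M_t \ge 1/\alpha) \le \alpha$, so rejecting exchangeability when $M_t$ ever exceeds $1/\alpha$ yields a valid level-$\alpha$ sequential test — though this last observation is a corollary and strictly outside the stated proposition.
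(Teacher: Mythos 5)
Your proof is correct and follows essentially the same route as the paper's: both arguments factor $M_t = M_{t-1}\cdot\frac{1-\lambda p_t}{1-\lambda/2}$, invoke the independence of $p_t$ from $(p_1,\dots,p_{t-1})$ via Theorem~\ref{thm:online-indep-pvals}, and reduce to showing $\E[p_t]\geq 1/2$. The only cosmetic differences are that the paper deduces $\E[p_t]\geq 1/2$ from the superuniformity of $p_t$ while you compute $\E[p_t]=(t+1)/(2t)$ exactly from the grid-uniform distribution, and you spell out the tower-property step (conditioning on $(p_1,\dots,p_{t-1})$ and then coarsening to $(M_1,\dots,M_{t-1})$) that the paper leaves implicit.
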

\begin{proof}[Proof of Proposition~\ref{prop:simple-conformal-supermartingale}]
    Since $p_t \in [0,1]$ for all $t$, $M_{t}$ is bounded almost surely for each $t$ and thus satisfies $\E[|M_t|]<\infty$. 
    We also have $M_t = M_{t-1} \cdot \frac{1-\lambda p_t}{1-\lambda/2}$ by definition, and thus, under exchangeability,
    \begin{multline}
        \E[M_{t} \mid M_{1}, \ldots, M_{t-1}] = \E\left[ 
        M_{t-1}\cdot \frac{1-\lambda p_t}{1-\lambda/2} \,\middle|\, M_{1}, \ldots, M_{t-1} \right]\\ = M_{t-1} \cdot \frac{\E[1-\lambda p_{t}\mid M_{1}, \ldots, M_{t-1} ]}{1-\lambda/2}.
    \end{multline}
    Finally, we have 
    \[\E[1-\lambda p_{t}\mid M_{1}, \ldots, M_{t-1} ] \leq 1-\lambda/2,\]
    since, by Theorem~\ref{thm:online-indep-pvals}, $p_t$ is superuniform (i.e., $\P(p_t\leq\tau)\leq\tau$ for all $\tau\in[0,1]$), and is independent from $p_1,\dots,p_{t-1}$ (and is thus independent from $M_1,\dots,M_{t-1}$).
\end{proof}
Thus, the values $M_t$ will be small under the hypothesis of exchangeability.
Alternatively, if the data contains strong evidence against exchangeability, the p-values $p_t$ will likely be small, and thus the sequence $M_t$ could potentially become quite large as $t$ increases. This intuition can be formalized into a statistical test for exchangeability based on Ville's inequality, as we will see next. Of course, this particular definition of $M_t$ is only one specific (and very simple) example; the design of powerful supermartingales is an active topic of research, and this next theorem applies to a more general recipe for constructing the sequence $M_t$.

\begin{theorem}[Online test for exchangeability]
    \label{thm:supermartingale-test}
Let $f_t:[0,1]\rightarrow[0,\infty)$ be any sequence of nonincreasing functions such that $\int_{r=0}^1 f_t(r)\;\mathsf{d}r\leq 1$. Let $(X_1,Y_1),(X_2,Y_2),\dots$ be a (finite or infinite) sequence of data points, 
and let
\begin{equation}\label{eqn:conformal_martingale_recipe}M_t = \prod_{t'=1}^t f_{t'}(p_{t'}),\end{equation}
where we define the conformal p-values $p_t$ as in~\eqref{eq:online-conformal-pvalue}.
Then, if the data points are exchangeable, the score function is symmetric, and the scores are distinct almost surely at each time $t$, the sequence $M_t$ is a supermartingale. Moreover, under the same assumptions,
for any $\alpha\in[0,1]$ we have
\[\P\left(\sup_t M_t \geq 1/\alpha\right) \leq \alpha.\]
\end{theorem}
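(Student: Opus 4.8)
The plan is to establish the two claims in order: first that $M_t$ as defined in~\eqref{eqn:conformal_martingale_recipe} is a supermartingale, and then derive the tail bound $\P(\sup_t M_t \geq 1/\alpha) \leq \alpha$ as an immediate consequence of Ville's inequality~\eqref{eq:ville}. The key structural fact we are leaning on is Theorem~\ref{thm:online-indep-pvals}: under the stated assumptions (exchangeability, symmetric score, scores distinct a.s.\ at each time $t$), the conformal p-values $p_1, p_2, \ldots$ are mutually independent, and moreover each $p_t$ is uniform on $\{1/t, 2/t, \ldots, 1\}$. In particular each $p_t$ is superuniform, i.e.\ $\P(p_t \leq \tau) \leq \tau$ for all $\tau \in [0,1]$.

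First I would verify the supermartingale property. Each $M_t = M_{t-1} \cdot f_t(p_t)$, so since the $f_t$ are nonnegative and each $p_t$ lies in $[0,1]$ where $f_t$ is bounded (a nonincreasing nonnegative function with integral $\leq 1$ is bounded on any interval bounded away from $0$; one needs a small remark that $f_t(1/t') \leq 1/(1/t') = t'$ or similar since $f_t$ nonincreasing and $\int_0^1 f_t \leq 1$ forces $f_t(r)\leq 1/r$), we get $\E[|M_t|] < \infty$. Then
\[
\E[M_t \mid M_1, \ldots, M_{t-1}] = M_{t-1} \cdot \E[f_t(p_t) \mid M_1, \ldots, M_{t-1}] = M_{t-1}\cdot \E[f_t(p_t)],
\]
using that $p_t$ is independent of $p_1, \ldots, p_{t-1}$ (hence of $M_1, \ldots, M_{t-1}$, which are measurable functions of $p_1, \ldots, p_{t-1}$) by Theorem~\ref{thm:online-indep-pvals}. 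It then remains to show $\E[f_t(p_t)] \leq 1$. The crux is that if $U$ is superuniform and $f$ is nonincreasing with $\int_0^1 f(r)\,\mathsf{d}r \leq 1$, then $\E[f(U)] \leq \int_0^1 f(r)\,\mathsf{d}r \leq 1$: write $f(U) = \int_{u=0}^\infty \ind{f(U) \geq u}\,\mathsf{d}u$, and note $\{f(U) \geq u\} = \{U \leq r_u\}$ for the appropriate level $r_u = \sup\{r : f(r) \geq u\}$ by monotonicity, so $\E[f(U)] = \int_{u} \P(U \leq r_u)\,\mathsf{d}u \leq \int_u r_u\,\mathsf{d}u = \int_0^1 f(r)\,\mathsf{d}r \leq 1$ by superuniformity and Fubini--Tonelli. (Alternatively, a discrete version using that $p_t$ is uniform on $\{1/t,\ldots,1\}$ and that averaging a nonincreasing function over those points is at most its integral.) This gives $\E[M_t \mid M_1,\ldots,M_{t-1}] \leq M_{t-1}$, so $M_t$ is a supermartingale.

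For the tail bound: $M_t$ is nonnegative by construction, and $\E[M_1] = \E[f_1(p_1)] \leq 1$ by the same computation. Applying Ville's inequality~\eqref{eq:ville} with $a = 1/\alpha$ yields $\P(\sup_{t\geq 1} M_t \geq 1/\alpha) \leq \E[M_1]/(1/\alpha) \leq \alpha$, which is the desired conclusion. The main obstacle is the technical lemma $\E[f(U)] \leq \int_0^1 f$ for superuniform $U$ and nonincreasing $f$ — it is conceptually simple but needs to be stated cleanly (and one must be slightly careful handling the case where $f$ is unbounded near $0$, though the constraint $\int_0^1 f \leq 1$ keeps everything finite). Everything else is bookkeeping: confirming integrability and that $M_1,\ldots,M_{t-1}$ are functions of $p_1,\ldots,p_{t-1}$ so the independence from Theorem~\ref{thm:online-indep-pvals} can be invoked.
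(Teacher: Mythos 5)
Your proposal is correct and follows the same route as the paper: factor $M_t = M_{t-1}\cdot f_t(p_t)$, use the independence of the $p_t$'s from Theorem~\ref{thm:online-indep-pvals} to pull out $M_{t-1}$, bound $\E[f_t(p_t)]\le \int_0^1 f_t(r)\,\mathsf{d}r\le 1$ via superuniformity of $p_t$ together with monotonicity of $f_t$, and then apply Ville's inequality. The paper states the inequality $\E[f_t(p_t)]\le\int_0^1 f_t$ in one line and leaves the superuniformity-plus-monotonicity lemma implicit; you spell it out via the layer-cake / level-set argument (and note the cleaner discrete version exploiting that $p_t$ is exactly uniform on $\{1/t,\ldots,1\}$), and you also address integrability explicitly — both are fine elaborations rather than a different approach. (One small remark: the paper's proof text says ``$f_t$ is nondecreasing,'' which is a typo; the hypothesis and your reasoning correctly use that $f_t$ is nonincreasing.)
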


Theorem~\ref{thm:supermartingale-test} tells us that, under exchangeability, with probability $\geq 1-\alpha$ the supermartingale $M_t$ constructed in~\eqref{eqn:conformal_martingale_recipe} never reaches the threshold $1/\alpha$ over its entire time of existence.
Thus, it implies a simple algorithm for detecting distribution shift online:
\begin{enumerate}
    \item Choose functions $f_1,f_2,\dots$ as specified in Theorem~\ref{thm:supermartingale-test} (for example, we might take $f_t(r) = \frac{1-\lambda r}{1-\lambda/2}$ as in Proposition~\ref{prop:simple-conformal-supermartingale}).
    \item At each time $t=1,2,\dots$:
    \begin{itemize}
    \item Observe the new data point $(X_t,Y_t)$, compute conformal p-value $p_t$, and compute $M_t$.
    \item If $M_t \geq 1/\alpha$, reject the hypothesis of exchangeability. Otherwise, continue to the next time.
    \end{itemize}
\end{enumerate}
See Figure~\ref{fig:exchangeability-martingale-test} for an illustration of this procedure.

\begin{figure}[t]
    \centering
    \includegraphics[width=0.85\textwidth]{\diagramspath ville-test.pdf}
    \caption{\textbf{The online test for exchangeability.} We apply the online test for exchangeability to two example draws from the same sequence $(M_t)_{t\geq1}$.
    On the left-hand side is a draw for which the test does not reject the null hypothesis of exchangeability, because the realized value of the sequence never reaches $1/\alpha$. 
    On the right, the hypothesis of exchangeability is rejected, since the realized value of the sequence exceeds $1/\alpha$ at some point in its history.
    }
    \commentAlt{Two plots with axes labeled $t$ and $M_t$. The left plot (`Failure to reject exchangeability') shows a trajectory that stays below threshold $1/\alpha$. On the right ('Rejection of exchangeability'), the trajectory crosses the threshold.}
    \label{fig:exchangeability-martingale-test}
\end{figure}

\begin{proof}[Proof of Theorem~\ref{thm:supermartingale-test}]
First, we verify that $M_t$ is a supermartingale under the hypothesis of exchangeability. For each $t\geq1$ we have
\[\E[f_t(p_t)]\leq \int_{r=0}^1 f_t(r)\;\mathsf{d}r\leq 1,\]
where the first step holds since $p_t$ is superuniform according to Theorem~\ref{thm:online-indep-pvals}, and $f_t$ is nonincreasing. Next,
we observe that $M_t = M_{t-1} \cdot f_t(p_t)$ for each $t$, and so
\[\E[M_t\mid M_1,\dots,M_{t-1}] = M_{t-1}\cdot \E[f_t(p_t)\mid M_1,\dots,M_{t-1}] = M_{t-1}\cdot \E[f_t(p_t)]\leq M_{t-1},\]
where the next-to-last step holds since $p_t$ is independent from $p_1,\dots,p_{t-1}$ (and therefore, from $M_1,\dots,M_{t-1}$) by Theorem~\ref{thm:online-indep-pvals}. Thus $M_t$ is a supermartingale. (Since $M_t$ is nonnegative, and $\E[M_t]\leq 1$, we have also verified $\E[|M_t|<\infty]$.)
Finally, the last claim holds by applying Ville's inequality~\eqref{eq:ville}.
\end{proof}

As we can see in the proof above, the bound $\alpha$, bounding the probability of a false rejection of the hypothesis of exchangeability, is due simply to Ville's inequality. This would hold for any supermartingale---so why was conformal prediction critical?
The role of conformal prediction is that it allows us to reuse the same data over time for every test of exchangeability without having to pay an explicit multiplicity penalty for data reuse.
This is because of the independence of the conformal p-values (Theorem~\ref{thm:online-indep-pvals}), as used in the proof of Proposition~\ref{prop:simple-conformal-supermartingale}. 
This is one of the most unique and powerful features of conformal prediction.

As an example, suppose the conformal score at each step is simply the value of $y$:
$s((x,y); \cD) = y$. In this case, the conformal p-value $p_t$ is a re-scaling of the rank of the new data point $Y_t$ relative to those seen previously. We can take these p-values and construct the exchangeability supermartingale $M_t$ as in Proposition~\ref{prop:simple-conformal-supermartingale} in order to test for exchangeability. It is clear that $M_t$ will grow if we frequently observe  small p-values, which corresponds to seeing values of $Y_t$ that are larger than most of the previous observed values $Y_1,\dots,Y_{t-1}$. Thus, we expect this to detect violations of exchangeability where the values of $Y_t$ are trending up. More broadly, Theorem~\ref{thm:supermartingale-test} has practical ramifications for the problem of changepoint detection: if the conformal score function is constructed to measure some notion of error of the data point $(X_t,Y_t)$ relative to a trained model, then
online tests for exchangeability allow us to identify deviations of the error from its typical behavior.

Finally, we note that while all results in Sections~\ref{sec:online_conformal_data_reuse} and~\ref{sec:testing_exch_online} are stated under the assumption that scores are distinct almost surely at each time $t$, the results can be generalized to remove this assumption (in particular, in Chapter~\ref{chapter:further-topics} we will present a randomized mechanism for breaking ties, under which the conformal p-values $p_t$ become i.i.d.\ uniform).
\index{exchangeability!testing|)}

\section{Prediction sets for adversarial sequences}\label{sec:ACI_and_quantile_tracking}
\index{adversarial sequence|(}

The previous sections have dealt with online conformal prediction for exchangeable sequences.
Recent efforts have aimed to generalize beyond that setting, to one where the incoming sequence of data points $(X_1, Y_1), (X_2, Y_2), \ldots$ is arbitrary---the data points might be drawn from distributions that change over time (i.e., non-i.i.d.\ data), or might even be deterministic rather than random.
Because we do not want to make any distributional assumptions
in this setting, the best one can generally hope for is a form of \emph{long-run coverage}, i.e., that a large fraction of the prediction sets $\cC_t(X_t)$ contain $Y_t$.
A recently developed line of work has leveraged conformal prediction to achieve guarantees of the form
\begin{equation}
   \left| \frac{1}{T} \sum_{t=1}^T \err_t - \alpha \right|\rightarrow 0,
\end{equation}
meaning that, on average, the miscoverage rate is essentially equal to $\alpha$---without any assumption of exchangeability or, indeed, even assuming that the data is randomly generated at all.

Building on conformal prediction, we consider
prediction sets $\cC_t$ of the form
\begin{equation}\label{eqn:Ct_st_qt}
    \cC_t(X_t) = \{ y : s_t(X_t, y) \leq q_t \},
\end{equation}
where the subscript $t$ indicates a dependence on all previous time steps $1, 2, \ldots, t-1$. That is, the score function $s_t$, and the threshold $q_t$, may be designed in a way that depends on the previous data.

In the adversarial sequence setting, where the data may no longer be exchangeable, we will allow the score functions $s_t$ to be chosen arbitrarily, and the thresholds $q_t$ will then be defined in an online way.
Specifically, after initializing at any value $q_1\in[0,B]$, for each $t\geq 1$ the threshold $q_{t+1}$ will be defined as follows:\index{quantile tracking}
\begin{equation}
    \label{eq:quantile-tracking}
    q_{t+1} = q_t + \eta_t(\err_t - \alpha),
\end{equation} where $\eta_t>0$ is a step size parameter, and where $\err_t$ is defined as in~\eqref{eq:err-def}.
This \emph{quantile tracking} algorithm does something very simple: when we fail to cover (i.e., $\err_t = 1$), it increases the threshold $q_{t+1}$ in the next time step, to be slightly more conservative.
Alternatively, when we cover (i.e., $\err_t=0$), we decrease the next threshold, to be slightly less conservative.
This construction leads to the following guarantee on average coverage.
\begin{theorem}[Average coverage guarantee for quantile tracking]
    \label{thm:quantile-tracking}
    Consider an arbitrary sequence of data points $(X_1, Y_1), (X_2, Y_2), \ldots$, and sequence of score functions $s_1, s_2, \ldots$ satisfying $s_t(x,y) \in [0, B]$ for all $t$ and $(x,y)$.
    Furthermore, let the sequence of step sizes $\eta_t$ be any nonincreasing positive sequence.
    Then the quantile tracking algorithm~\eqref{eq:quantile-tracking}, initialized at any $q_1\in[0,B]$, satisfies
    \begin{equation}
        \frac{1}{T}\sum\limits_{t=1}^T \err_t \in \left[ \alpha \pm \frac{B + \eta_1}{\eta_T T}\right]\textnormal{ for all $T\geq 1$}.
    \end{equation}
\end{theorem}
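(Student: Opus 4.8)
The plan is to reduce the statement to a boundedness property of the running thresholds $q_t$, and then extract the coverage bound by summation by parts.

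The key lemma I would prove first is that the thresholds stay in a fixed bounded range, namely $q_t \in [-\alpha\eta_1,\ B + (1-\alpha)\eta_1]$ for every $t$. This follows by a two-case induction starting from $q_1 \in [0,B]$. The crucial observation is that the update $q_{t+1} = q_t + \eta_t(\err_t - \alpha)$ is self-correcting as soon as $q_t$ leaves $[0,B]$: if $q_t > B$, then since $s_t(X_t,y) \in [0,B]$ we have $s_t(X_t,Y_t) \le B < q_t$, so $Y_t \in \cC_t(X_t)$, i.e. $\err_t = 0$, hence $q_{t+1} = q_t - \eta_t\alpha < q_t$; symmetrically, if $q_t < 0$, then $s_t(X_t,Y_t) \ge 0 > q_t$ forces $\err_t = 1$ and $q_{t+1} = q_t + \eta_t(1-\alpha) > q_t$. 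While $q_t$ remains in $[0,B]$ a single step moves it by at most $\eta_t(1-\alpha) \le \eta_1(1-\alpha)$ upward or $\eta_t\alpha \le \eta_1\alpha$ downward, and using that $\eta_t$ is nonincreasing one checks the interval $[-\alpha\eta_1,\ B + (1-\alpha)\eta_1]$ is exactly wide enough to be preserved.

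Next I would rewrite the update as $\err_t - \alpha = (q_{t+1} - q_t)/\eta_t$ and sum over $t = 1,\dots,T$. Since $1/\eta_t$ is nondecreasing (as $\eta_t$ is positive and nonincreasing), Abel summation gives
\[
\sum_{t=1}^T(\err_t - \alpha) \;=\; \frac{q_{T+1}}{\eta_T} - \frac{q_1}{\eta_1} - \sum_{t=2}^T q_t\Big(\frac{1}{\eta_t} - \frac{1}{\eta_{t-1}}\Big).
\]
Each coefficient $\tfrac{1}{\eta_t} - \tfrac{1}{\eta_{t-1}}$ is nonnegative, so the last sum lies between $(\min_t q_t)$ and $(\max_t q_t)$ times the telescoping total $\sum_{t=2}^T(\tfrac{1}{\eta_t} - \tfrac{1}{\eta_{t-1}}) = \tfrac{1}{\eta_T} - \tfrac{1}{\eta_1}$. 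Substituting the bounds from the lemma (and $q_1 \in [0,B]$, which is what makes the $-q_1/\eta_1$ term one-signed), a short computation shows both the resulting upper and lower bounds collapse to $\pm(B+\eta_1)/\eta_T$ — the $\alpha$-dependent contributions cancel. Hence $\big|\sum_{t=1}^T(\err_t - \alpha)\big| \le (B+\eta_1)/\eta_T$, and dividing by $T$ yields $\frac{1}{T}\sum_{t=1}^T \err_t \in \big[\alpha \pm \tfrac{B+\eta_1}{\eta_T T}\big]$, as claimed.

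The main obstacle is the boundedness lemma: the induction itself is short, but one has to be careful to track the asymmetry between the $-\alpha$ and $1-\alpha$ increments so that the constants land exactly at $B+\eta_1$ rather than a looser $B + 2\eta_1$. Everything afterward — the Abel summation and the telescoping — is routine bookkeeping. It is worth remarking that the ``nonincreasing $\eta_t$'' hypothesis is used exactly twice: once in the lemma, to ensure the corrective steps cannot overshoot, and once to make $1/\eta_t$ monotone so the signs in the summation-by-parts bound are controlled.
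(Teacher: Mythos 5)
Your proposal is correct and follows essentially the same route as the paper's proof: the same boundedness lemma on the iterates $q_t$ (which you establish by a direct two-case induction rather than the paper's argument by contradiction, a cosmetic difference), followed by a summation-by-parts argument. Your Abel-summation identity is algebraically the same manipulation as the paper's Step~3, where the telescoping identity $1/\eta_t = \sum_{r\le t}\bigl(1/\eta_r - 1/\eta_{r-1}\bigr)$ is inserted and the order of summation is swapped, so the final bound $\lvert\sum_{t\le T}(\err_t-\alpha)\rvert \le (B+\eta_1)/\eta_T$ is reached the same way.
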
 \index{coverage!online}
As a special case, for a constant step size $\eta_t=\eta$, the empirical miscoverage rate concentrates to $\alpha$ at a fast rate of $\bigo(1/T)$.
We emphasize that this result holds deterministically, for any sequence of data points and any choice of score functions $s_t$. In particular, we have not assumed exchangeability: by adjusting the threshold $q_t$ based on the method's coverage over past data points, the procedure is able to maintain approximately the desired coverage level over time, even under distribution shift or other non-exchangeable settings.
\begin{proof}[Proof of Theorem~\ref{thm:quantile-tracking}]
    \textbf{Step 1: The iterates are bounded.}
    We will first show that $q_t \in [-\eta_{1}\alpha, B+\eta_{1}(1-\alpha)]$ deterministically for all $t$. Since $q_1\in[0,B]$ by definition, this must hold at $t=1$. Now suppose that the statement fails for the first time at some $t\geq 2$.
    Assume for the sake of contradiction that $q_t > B+\eta_{1}(1-\alpha)$; since $t$ is the first time the statement fails, we also have $q_{t-1} \leq B+\eta_{1}(1-\alpha)$. 
    Since $q_t > q_{t-1}$ we must have that $\err_{t-1} = 1$ by~\eqref{eq:quantile-tracking}. 
    But, since $\eta_{t-1}\leq \eta_1$, we have $q_{t-1} = q_t - \eta_{t-1}(1-\alpha) \geq q_t - \eta_1(1-\alpha)> B$, and so
    \[\err_{t-1}= \ind{s_{t-1}(X_{t-1},Y_{t-1}) > q_{t-1}} \leq \ind{s_{t-1}(X_{t-1},Y_{t-1}) > B} = 0,\]
    since scores are bounded by $B$. This is a contradiction.
    The other side follows a similar argument.

    \textbf{Step 2: Simultaneous bounds on the weighted coverage gap.}
    Examining the form of~\eqref{eq:quantile-tracking}, we can see that for all $T_1 \geq T_0 \geq 1$, 
    \begin{equation}\label{eq:qT_equals_sum}
        q_{T_1+1} - q_{T_0}= \sum\limits_{t=T_0}^{T_1} \eta_t(\err_t - \alpha).
    \end{equation}
    Since both $q_{T_1+1}$ and $q_{T_0}$ are bounded by Step 1, we then have
    \begin{align}
        \left|\sum\limits_{t=T_0}^{T_1} \eta_t(\err_t - \alpha)\right| & = |q_{T_1+1}-q_{T_0}| \leq B + \eta_1.
    \end{align}
    
    \textbf{Step 3: Bounding the long-run coverage gap.} Set $\eta_0 = \infty$ for convenience, so that
    \[\frac{1}{\eta_t} = \sum_{r=1}^t \left(\frac{1}{\eta_r}-\frac{1}{\eta_{r-1}}\right)\]
    holds for all $t=1,\dots,T$. We calculate
    \begin{align}
        \left|\frac{1}{T}\sum\limits_{t=1}^T(\err_t - \alpha)\right| &= \left|\frac{1}{T}\sum\limits_{t=1}^T\frac{1}{\eta_t}\eta_t(\err_t - \alpha)\right| \\
        &= \left|\frac{1}{T}\sum\limits_{t=1}^T\sum_{r=1}^t\left(\frac{1}{\eta_r} - \frac{1}{\eta_{r-1}}\right)\eta_t(\err_t - \alpha) \right| \\
        & = \left|\frac{1}{T}\sum\limits_{r=1}^T\left(\frac{1}{\eta_r} - \frac{1}{\eta_{r-1}}\right)\sum_{t=r}^T\eta_t(\err_t - \alpha) \right|\\
        & \leq \frac{1}{T}\sum\limits_{r=1}^T\left(\frac{1}{\eta_r} - \frac{1}{\eta_{r-1}}\right)\left|\sum_{t=r}^T\eta_t(\err_t - \alpha) \right|\\
        & \leq \frac{B + \eta_1}{T}\sum\limits_{r=1}^T\left(\frac{1}{\eta_r} - \frac{1}{\eta_{r-1}}\right) \\
        & = \frac{B + \eta_1}{\eta_T T},
    \end{align}
    where the first inequality holds since $\eta_t$ is a positive and nonincreasing sequence, and the second inequality holds from the calculation in Step 2 above.
\end{proof}
The key idea of the proof is that the threshold $q_t$ is deterministically bounded over all times $t$.
Since $q_T$ is equal to the (reweighted) long-run coverage gap as in~\eqref{eq:qT_equals_sum}, this implies that the number of historical errors cannot be too large. See Figure~\ref{fig:quantile-tracker-coverage-plot} for a visual intuition.
\begin{figure}[t]
    \centering
    \includegraphics[width=0.7\textwidth]{\diagramspath qt-coverage-plot.pdf}
    \caption{\textbf{The long-run coverage gap.} The plot displays the long-run coverage gap $\frac{1}{T}\sum_{t=1}^T \err_t - \alpha$ over time $T$. The procedure is initialized with $q_1 = 0$ and run with a fixed step size $\eta_t = \eta$. By~\eqref{eq:qT_equals_sum}, the coverage gap at time $T$ is equal to $\frac{q_{T+1}}{\eta T}$. The thick gray lines at $\frac{B + \eta}{\eta T}$ define a deterministic envelope for the long-term coverage gap; the proof says that the long-run coverage gap must always lie within these bounds. The thin gray envelope defines the region where $q_{T+1} > B$ (top) or $q_{T+1} < 0$ (bottom).
    One way to understand the proof of coverage is that, once the long-run coverage gap leaves the thin gray envelope, it is forced to return---because the next prediction set $\cC_{T+1}$ will be $\cY$ (if $q_{T+1}>B$) or $\varnothing$ (if $q_{T+1}<0$), forcing the miscoverage to move back towards the target value $\alpha$ in the next step.
    }
    \commentAlt{The plot displays the accumulated average error deviation $\frac{1}{T}\sum_{t=1}^T (\err_t-\alpha)$, against time $T$. The trajectory moves up and down, but always lies between two curves, $\frac{B+\eta}{\eta T}$ and $-\frac{B+\eta}{\eta T}$.}
    \label{fig:quantile-tracker-coverage-plot}
\end{figure}

When implementing this method in practice, how should the step sizes $\eta_t$ be defined? 
This choice depends on the context, and different choices lead to different flavors of guarantees.
When tracking a highly varying score sequence, such as one generated by a dynamical system, it can make sense to think of the quantile tracker as a control algorithm and pick a non-decaying $\eta_t = \eta$ for some constant $\eta$, or even one that adapts to the score sequence. 
However, this can lead to thresholds $q_t$ that fluctuate wildly, yielding many infinite-size sets and empty sets.

Alternatively, we might choose $\eta_t \propto t^{-(1/2+\epsilon)}$, for some small $\epsilon>0$.
This choice of $\eta_t$ allows the threshold to adapt to distribution drift (since Theorem~\ref{thm:quantile-tracking} guarantees average error $\lesssim T^{-(1/2-\epsilon)}$ at each time $T$), while stabilizing the threshold and even converging when possible.

\paragraph{The i.i.d.\ setting.} In the setting of i.i.d.\ data, we can formalize our empirical observation that a fixed step size $\eta_t= \eta$ leads to fluctuating behavior while a decaying sequence $\eta_t$ offers more stability.
In particular, consider a setting where $(X_t,Y_t)\iidsim P$ (for some distribution $P$), and where the score functions $s_t:\cX\times\cY\rightarrow[0,B]$ are trained online---that is, $s_t$ may depend on the earlier data, $(X_1,Y_1),\dots,(X_{t-1},Y_{t-1})$ (which includes the setting of applying split or full conformal prediction at each time $t$, as discussed earlier). For any function $s$, let $F_s$ denote the CDF of $s(X,Y)$, under $(X,Y)\sim P$.
At each time $t$, this means that the probability of coverage for the next data point $(X_t,Y_t)$ is given by $F_{s_t}(q_t)$.
We state the following result without proof:
\begin{theorem}[Asymptotics of quantile tracking with fixed or decaying step sizes]\label{thm:online_conformal_converge}
    Under the assumptions and notation above, if $\eta_t= \eta$ for some $\eta>0$, we have
    \[\liminf_{t\rightarrow \infty} F_{s_t}(q_t) = 0, \quad \limsup_{t\rightarrow \infty} F_{s_t}(q_t) = 1\]
    almost surely.
    On the other hand, if $\eta_t$ is a nonnegative sequence with $\sum_t \eta_t = \infty$ and $\sum_t \eta_t^2 <\infty$, then for any continuous and strictly increasing CDF $F$, it holds almost surely that 
    \[\textnormal{If $F_{s_t}\stackrel{\rm d}{\rightarrow} F$ then $F_{s_t}(q_t)\rightarrow 1-\alpha$.}\]
\end{theorem}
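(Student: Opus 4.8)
The plan is to analyze the recursion $q_{t+1}=q_t+\eta_t(\err_t-\alpha)$ as a stochastic‑approximation scheme. The key observation is that, conditionally on $\mathcal F_{t-1}$ (the $\sigma$‑field generated by $(X_1,Y_1),\dots,(X_{t-1},Y_{t-1})$, which determines both $s_t$ and $q_t$), we have $\err_t\sim\mathrm{Bern}(1-g_t)$ where $g_t:=F_{s_t}(q_t)$ is the conditional coverage probability at step $t$; hence $\E[q_{t+1}-q_t\mid\mathcal F_{t-1}]=\eta_t((1-\alpha)-g_t)$, a drift pushing $q_t$ toward $\{g_t=1-\alpha\}$ and, crucially, a \emph{monotone nonincreasing} function of $q_t$ since each $F_{s_t}$ is nondecreasing. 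I would also record, as in the proof of Theorem~\ref{thm:quantile-tracking}, the boundary facts that follow from $s_t\in[0,B]$: $q_t$ is deterministically bounded; $q_t$ can never exceed $B$ (an overshoot above $B$ forces $\err_t=1$ and hence $q_{t+1}>B+\sup_t\eta_t$, contradicting the bound); $q_t\ge B$ forces $g_t=1$; and $q_t$ below the essential infimum of $F_{s_t}$ forces $g_t=0$ and $\err_t=1$ almost surely.

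For the second part (decaying $\eta_t$), I would run the ODE method for stochastic approximation. Split the recursion as $q_{t+1}=q_t+\eta_t\big((1-\alpha)-F(q_t)+\beta_t\big)+\eta_t\xi_t$, where $\xi_t=\err_t-(1-g_t)$ are bounded martingale differences and $\beta_t=g_t-F(q_t)=F_{s_t}(q_t)-F(q_t)$ is a bias term. Because $\sum_t\eta_t^2<\infty$, the noise martingale $\sum_t\eta_t\xi_t$ converges almost surely; and on the event $\{F_{s_t}\to F\}$, Pólya's theorem (pointwise convergence of CDFs to a continuous limit is uniform) gives $\sup_q|F_{s_t}(q)-F(q)|\to0$, so $\beta_t\to0$. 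With $q_t$ bounded, $\sum_t\eta_t=\infty$, the noise settling, and the bias vanishing, the iterates track the scalar ODE $\dot q=(1-\alpha)-F(q)$, whose global attractor is the closed interval $\{q:F(q)=1-\alpha\}$ (the vector field is strictly negative above it and strictly positive below, using continuity of $F$ and $1-\alpha\in(0,1)$). Hence $\mathrm{dist}(q_t,\{F=1-\alpha\})\to0$, so $F(q_t)\to1-\alpha$, and therefore $g_t=F(q_t)+\beta_t\to1-\alpha$. One can equivalently phrase this through a Lyapunov function $V(q)=\int_{q^\ast}^q(F(u)-(1-\alpha))\,\mathsf du\ge0$, but the bias $\beta_t$ is not summable against $\eta_t$, which is precisely why the ODE/asymptotic‑pseudotrajectory formulation (which needs only $\beta_t\to0$) is cleaner than a naive Robbins--Siegmund estimate.

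For the first part (constant step size $\eta_t\equiv\eta$), the target is to show the iterates are \emph{recurrent}, reaching both boundary regimes infinitely often: I would prove that $q_t$ dips below the essential infimum of $F_{s_t}$ infinitely often — giving $g_t=0$ i.o., hence $\liminf_t g_t=0$ — and that $g_t>1-\epsilon$ infinitely often for every $\epsilon>0$, giving $\limsup_t g_t=1$. Both go by contradiction. Suppose, say, $\P(\limsup_t g_t\le1-2\epsilon)>0$; then there are $\epsilon>0$ and a random $T$ with $g_t\le1-\epsilon$ for all $t\ge T$, so $\P(\err_t=1\mid\mathcal F_{t-1})\ge\epsilon$ on this event for all $t\ge T$. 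When $\epsilon>\alpha$ this contradicts boundedness immediately: a bounded adapted process with increments bounded by $\eta$ and conditional mean increment $\ge\eta(\epsilon-\alpha)>0$ must diverge to $+\infty$ (martingale SLLN). When $\epsilon\le\alpha$, a `lucky run' argument applies: with $N=\lceil 2B/(\eta(1-\alpha))\rceil$ consecutive outcomes $\err_t=1$, the iterate $q$ is driven above $B$, forcing $g=1>1-\epsilon$, impossible on our event; but since each step has conditional probability $\ge\epsilon$ of $\err_t=1$, a conditional second Borel--Cantelli argument over disjoint length‑$N$ blocks after time $T$ shows such a run occurs almost surely on the event — contradiction. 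The argument for $\liminf_t g_t=0$ is the mirror image, using $\P(\err_t=0\mid\mathcal F_{t-1})=g_t\ge\epsilon$ on $\{\liminf_t g_t\ge2\epsilon\}$ and running the iterate down past the essential infimum of $F_{s_t}$.

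I expect the main obstacle to be making the first part fully rigorous. The delicacy is that the events `$g_t\le1-\epsilon$ eventually' and `$g_t\ge2\epsilon$ eventually' are not adapted — they involve the whole future — so one cannot directly lower‑bound $\P(\err_t=1\mid\mathcal F_{t-1})$ on them. The fix is to carry out the Borel--Cantelli bookkeeping block by block against the \emph{adapted} events $\{g_s\le1-\epsilon\text{ for all }s\text{ in block }k\}$ and chain the conditional probabilities of all‑error blocks, using at each step that $\{g_s\le1-\epsilon\}$ is $\mathcal F_{s-1}$‑measurable; alternatively one localizes via a stopping time recording the first violation of the `eventually' constraint. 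A secondary, more routine obstacle is the non‑summable bias in the second part, handled as noted by the ODE method rather than a crude almost‑supermartingale bound.
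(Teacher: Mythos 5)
The paper itself contains no proof of this theorem---the text says ``We state the following result without proof,'' and the bibliographic notes defer to \citet{angelopoulos2024online}---so there is nothing in the paper to compare your route against. Evaluating your sketch on its own terms: the plan is sound and the hard points are correctly identified. The conditional Bernoulli structure $\P(\err_t=1\mid\mathcal F_{t-1})=1-F_{s_t}(q_t)$ is the right pivot, the deterministic boundedness of $q_t$ is correctly imported from Step~1 of the proof of Theorem~\ref{thm:quantile-tracking}, and for the decaying-step-size part you correctly diagnose that the bias $\beta_t=F_{s_t}(q_t)-F(q_t)$ is not summable against $\eta_t$, so a naive Robbins--Siegmund bound fails and the asymptotic-pseudotrajectory framework (with P\'olya's theorem upgrading pointwise CDF convergence to uniform convergence via continuity of $F$) is the appropriate tool. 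You should add the one-line check that $\{q:F(q)=1-\alpha\}$ is a nonempty compact interval (from continuity of $F$ and $1-\alpha\in(0,1)$) so that the ODE has the stated attractor.

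For the constant-step-size part, the block Borel--Cantelli plan works and you correctly flag the non-adaptedness trap. The cleanest way to carry out your own fix is to take, for block $I_k$ starting at $t_0(k)$, the adapted events $D_k=\bigcap_{s\in I_k}\{\err_s=1\text{ or }g_s>1-\epsilon\}$; peeling the tower property from the back of the block and using that $g_s$ is $\mathcal F_{s-1}$-measurable gives $\P(D_k\mid\mathcal F_{t_0(k)-1})\geq\epsilon^N$ unconditionally, so by L\'evy's extension of Borel--Cantelli $D_k$ occurs infinitely often a.s.; but on $\{g_s\leq 1-\epsilon\ \forall s\geq T_0\}$ the event $D_k$ forces $N$ consecutive errors and hence $q$ overshoots its deterministic bound once $N\eta(1-\alpha)>B+\eta$. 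Two small corrections: (i) your case split on $\epsilon\gtrless\alpha$ is unnecessary---the lucky-run argument already covers all $\epsilon>0$, and the martingale-SLLN branch for $\epsilon>\alpha$ suffers from exactly the same non-adaptedness issue, so it is not actually simpler; (ii) $N=\lceil 2B/(\eta(1-\alpha))\rceil$ is insufficient when $\eta>B$ since the range of $q_t$ is $B+\eta$, so take $N>(B+\eta)/(\eta(1-\alpha))$.
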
 \index{quantile tracking}
In other words, a constant step size leads to infinitely many oscillations between undercoverage and overcoverage (e.g., $\cC_t(X_t)=\varnothing$ and $\cC_t(X_t)=\cY$). On the other hand, an appropriately chosen decaying sequence of step sizes, such as $\eta_t \propto t^{-(1/2+\epsilon)}$, for some small $\epsilon>0$, leads to convergence to the desired coverage rate, as long as the score functions $s_t$ converge (for instance, if $s_t(x,y) = |y-\hf_t(x)|$ is a residual score for some fitted regression model $\hf_t$, then we are effectively assuming that the $\hf_t$'s converge to some fixed function).
\index{coverage!upper bound}

\index{nonexchangeability|)}
\index{adversarial sequence|)}
\index{time series|)}
\section*{Bibliographic notes}
\addcontentsline{toc}{section}{\protect\numberline{}\textnormal{\hspace{-0.8cm}Bibliographic notes}}

Our exposition of the exchangeable online setting follows that of~\citet{vovk2005algorithmic}. The independence of online p-values for exchangeable sequences (Theorem~\ref{thm:online-indep-pvals}) is a foundational result due to~\citet{vovk2002line}. 
Testing exchangeability online using conformal p-values as in Theorem~\ref{thm:supermartingale-test} (or relatedly, testing for outliers) is introduced in~\citet{vovk2003testing}; see also \citet{fedorova2012plug,laxhammar2015inductive, volkhonskiy2017inductive, vovk2021retrain, shaer2026testing, hore2026conformal}. 
The martingale we present in Proposition~\ref{prop:simple-conformal-supermartingale} is only for illustrative purposes; more powerful and efficient martingales are available in~\citet{vovk2005algorithmic} and~\citet{ramdas2022testing}. 
This work builds on martingale theory, particularly Ville's inequality~\citep{ville1939etude}. 
We refer the reader to~\citet{williams1991probability} for an introduction to martingales.
This strategy of using supermartingales in a sequential data setting can be leveraged for tasks beyond testing exchangeability.
In particular, we note that the exchangeability supermartingale in Proposition~\ref{prop:simple-conformal-supermartingale}, and the more general construction in Theorem~\ref{thm:supermartingale-test}, are each a special case of an e-value, and these are useful more broadly; see \cite{shafer2001probability,ramdas2023game} for background on the field of game-theoretic statistics, and see \citet{ramdas2024hypothesis} for a recent survey on e-values.

The adversarial sequence model is commonly studied in online learning~\citep[e.g.,][]{hazan2016introduction}. Conformal prediction with coverage guarantees in this setting (see Section~\ref{sec:ACI_and_quantile_tracking}) is introduced in~\citet{gibbs2021adaptive} under the name \emph{adaptive conformal inference} (ACI). Subsequent work by~\citet{zaffran2022adaptive, gibbs2022conformal, bhatnagar2023improved} studies notions of adaptivity and considers adaptive step sizes, and~\citet{bastani2022practical} discusses conformal prediction with subgroup guarantees in the online setting.
The quantile tracking algorithm here is studied  in~\citet{feldman2023achieving, angelopoulos2023conformalpid}.  
The coverage result in Theorem~\ref{thm:quantile-tracking} is established in \citet{angelopoulos2024online}, extending the coverage guarantees in the above works to allow for decaying step sizes. This same work also proves the results for the i.i.d.\ setting, given in Theorem~\ref{thm:online_conformal_converge}; these results are extended to a setting where the model is asymptotically well-specified (but data is not required to be i.i.d.) by \cite{areces2025online}.
Online conformal prediction also connects with online optimization, regret minimization, and optimal betting, as established in~\cite{gibbs2022conformal, bhatnagar2023improved, zhang2024discounted, podkopaev2024adaptive, angelopoulos2025gradient, srinivas2026online, liu2026online}.

\section*{Exercises}
\addcontentsline{toc}{section}{\protect\numberline{}\textnormal{\hspace{-0.8cm}Exercises}}
\begin{enumerate}[font=\bfseries, label={\thechapter.\arabic*}, labelsep=1em, itemsep=1em]
\item Theorem~\ref{thm:online-indep-pvals} establishes that, when running online full conformal prediction, the conformal p-values $p_t$ are independent as long as we assume that there are no ties among scores, almost surely. We will now examine the role of this last assumption. Construct an example of a distribution $P$ and a symmetric score function $s$, such that for data points $(X_t,Y_t)\iidsim P$, the $p_t$'s are not independent.
\item Consider the setting of Theorem~\ref{thm:supermartingale-test}. Suppose in addition that $X_1,\dots,X_{T}\iidsim \textnormal{Unif}[0,1]$ and $X_{T + 1},\dots, X_{2T}\iidsim\textnormal{Unif}[1,2]$, and that $Y_t = 0$ for all $t$.  Consider the score function $s(x,y) = x$. 

    What is the joint distribution of $(p_{T+1},\dots,p_{2T})$? Explain why we should expect that a test for exchangeability would have high power to detect that this is not exchangeable when $2T$ is large.
\item Consider the online conformal prediction setting of Section~\ref{sec:ACI_and_quantile_tracking}, with i.i.d.\ data as in the setting of Theorem~\ref{thm:online_conformal_converge}. In this exercise we will prove a partial version of that theorem, for the case of a constant step size.
    
    Consider a fixed score function: $s_t = s$ for all $t$, taking values in a bounded range, $s(x,y)\in[0,B]$. Assume $\P_P(s(X,Y) \leq B-\epsilon)<1$ for any $\epsilon>0$. Suppose we run the quantile tracking algorithm as defined in~\eqref{eq:quantile-tracking} with a constant step size $\eta_t = \eta = 1$ for all $t$, and with target coverage level $1-\alpha = 0.9$. We initialize at any $q_1\in[0,B]$.
    Prove that, for some sufficiently large $C$ and some $\delta>0$, the following holds: for any $t\geq 1$,
    \[\P\left(\max\{q_t,\dots,q_{t+C}\} \geq B\,\middle|\, (X_1,Y_1),\dots(X_{t-1},Y_{t-1})\right) \geq \delta.\]
    (Verifying this claim is a key step towards showing that, almost surely, we will have $\cC_t(X_t) = \cY$ infinitely often.)
\end{enumerate}

\chapter{Additional Results in Conformal Prediction}
\label{chapter:further-topics}

In this chapter, we return to the core conformal prediction framework of Chapter~\ref{chapter:conformal-exchangeability}, taking a closer look at some interesting details of the algorithm. We discuss the role of randomization, addressing how to use conformal prediction with randomized algorithms such as stochastic gradient descent. We next turn to computational results for full conformal prediction, explaining cases where the set can be computed exactly, and exploring valid ways to employ discretization to maintain coverage. Lastly, we turn to the universality of conformal prediction: any method with distribution-free validity that is permutation invariant is equivalent to a conformal method. Each section in this chapter stands alone, but together they supply many additional insights about conformal prediction.

\section{Conformal prediction with randomization}
\label{sec:cp-random}
\index{randomization|(}

Thus far, we have presented full conformal prediction as a deterministic algorithm---the output $\cC(X_{n+1})$ is a deterministic function of the training data $\big((X_i,Y_i)\big)_{i\in[n]}$ and test point $X_{n+1}$. In practice, however,
we may want to allow for the score $s$ to be determined with a randomized algorithm. For example, for a residual score of the form $s((x,y);\cD)=|y-\hf(x;\cD)|$, in some settings we may want to fit the function $\hf(\cdot;\cD)$ with a randomized regression algorithm, such as stochastic gradient descent. Section~\ref{sec:cp-random-1} will extend our framework to allow for the possibility of pairing conformal prediction with a randomized algorithm.

Randomization can also enter into the conformal prediction framework in a different form. Specifically, we recall from Chapter~\ref{chapter:conformal-exchangeability} that the (full or split) conformal prediction intervals are guaranteed to have at least $1-\alpha$ level coverage; as explored in Theorem~\ref{thm:upper-bound}, overcoverage can arise from the possibility of ties among scores and/or from the fact that $(1-\alpha)(n+1)$ is not an integer. Introducing randomization into the construction of the conformal prediction set removes this overcoverage, resulting in exactly $1-\alpha$ coverage, as we will see in Section~\ref{sec:cp-random-2}.

\subsection{Allowing for randomization in the score function}\label{sec:cp-random-1}
We first consider the case where our score functions may 
contain auxiliary randomness, such as a model trained using
stochastic gradient descent.
In this case, the requirement of a symmetric score function, as in Definition~\ref{def:symmetric_score}, needs to be modified.

To be more concrete, let $\xi\in[0,1]$ denote a stochastic noise term that we use for randomization---we will refer to this as the random seed. The score function is now randomized: given a data point $(x,y)$, a dataset $\cD$, and a random seed value $\xi$, we can now compute a score 
\[s((x,y);\cD,\xi).\]
Returning to our example of stochastic gradient descent, we might write $\hf(\cdot;\cD,\xi)$ to denote the fitted model when stochastic gradient descent is run on dataset $\cD$ with random seed $\xi$ (in practice, this means that $\xi$ determines the order in which we cycle through points in the dataset $\cD$, when taking gradient descent steps for our optimization problem). Then we can define a score such as
$s((x,y);\cD,\xi) = |y - \hf(x;\cD,\xi)|$.

What does it mean for the score to be symmetric in this randomized definition? Here, Definition~\ref{def:symmetric_score} is replaced by a requirement that the score function is symmetric in a distributional sense: 
\begin{definition}[Symmetric randomized score function]\label{def:symmetric_randomized_score}
A randomized score function $s$ is symmetric if for any fixed dataset $\cD=((x_1,y_1),\dots,(x_m,y_m))$, any fixed test points $(x'_1,y'_1),\dots,(x'_{m'},y'_{m'})$, and any permutation $\sigma\in\cS_m$,
\begin{equation}
\label{eq:symmetric_randomized_score}
\Big(s\big((x'_i,y'_i);\cD,\xi\big)\Big)_{i\in[m']}\eqd \Big(s\big((x'_i,y'_i);\cD_\sigma,\xi\big)\Big)_{i\in[m']}
\end{equation}
where $\cD_\sigma = ((x_{\sigma(1)},y_{\sigma(1)}),\dots,(x_{\sigma(m)},y_{\sigma(m)}))$ denotes the permuted dataset, and where the equality in distribution holds with respect to drawing the random seed as $\xi\sim\textnormal{Unif}[0,1]$.  
\end{definition}
Note that the property in~\eqref{eq:symmetric_randomized_score} is weaker 
than requiring that $s\big((x'_i,y'_i);\cD,\xi\big) = s\big((x'_i,y'_i);\cD_\sigma,\xi\big)$ for all test points $(x_i',y_i')$ (i.e., requiring equality, rather than equality in distribution). This 
distinction is important in practice. With stochastic gradient descent, for instance, for a fixed random seed $\xi$ the fitted function will in general be changed if we permute the data---if 
the random seed $\xi$ determines that the first gradient descent step is taken with respect to the $i$th data point for some particular $i$, this data point is equal to $(X_i,Y_i)$ in the dataset $\cD$, but is instead equal to $(X_{\sigma(i)},Y_{\sigma(i)})$ for the permuted dataset $\cD_\sigma$. Thus, we would generally have different fitted models, $\hf(\cdot;\cD,\xi)\neq \hf(\cdot;\cD_\sigma,\xi)$, leading to different score functions, $s(\cdot;\cD,\xi)\neq s(\cdot;\cD_\sigma,\xi)$---but equality in distribution (as in~\eqref{eq:symmetric_randomized_score}) does hold.

This symmetry condition will allow us to verify that marginal coverage holds for conformal prediction when using a randomized score function.
\begin{theorem}[Marginal coverage of conformal prediction with a randomized score]
    \label{thm:full-conformal-randomized-score}
    Suppose that $(X_1,Y_1),...,(X_{n+1},Y_{n+1})$ are exchangeable and that $s$ is a symmetric randomized score function. Let $\xi\sim\textnormal{Unif}[0,1]$ be drawn independently of the data.
    Define the prediction set 
    \[\cC(X_{n+1}) = \left\{y\in\cY : s((X_{n+1},y);\cD^y_{n+1},\xi) \leq \hat{q}^y(\xi)\right\}\]
    where $\cD^y_{n+1} = ((X_1,Y_1),\dots,(X_n,Y_n),(X_{n+1},y))$ and
    \[\hat{q}^y(\xi) = \quantile\left(s((X_1,Y_1);\cD^y_{n+1},\xi),\dots,s((X_n,Y_n);\cD^y_{n+1},\xi) ;(1-\alpha)(1+1/n)\right).\]
    Then $\cC(X_{n+1})$ satisfies
    \begin{equation}
        \P\left( Y_{n + 1} \in \cC(X_{n + 1}) \right) \geq 1-\alpha.
    \end{equation}
\end{theorem}
\begin{proof}[Proof of Theorem~\ref{thm:full-conformal-randomized-score}]
    Define $S_i = s((X_i,Y_i);\cD_{n+1},\xi)$, for each $i\in[n+1]$, where $\cD_{n+1} = ((X_i,Y_i))_{i\in[n+1]}$ as before.
    As for the proof of Theorem~\ref{thm:full-conformal}, which establishes the marginal coverage property of full conformal prediction (without randomization), it suffices to verify that $S_1,\dots,S_{n+1}$ are exchangeable (recall the key step~\eqref{eqn:scores_exchangeable_n+1} in the proof of Theorem~\ref{thm:full-conformal}).

    For any permutation $\sigma\in\cS_{n+1}$, let $(\cD_{n+1})_\sigma = ((X_{\sigma(1)},Y_{\sigma(1)}),\dots,(X_{\sigma(n+1)},Y_{\sigma(n+1)}))$ denote the permuted dataset. We then have
    \begin{multline*}(S_{\sigma(1)},\dots,S_{\sigma(n+1)})
= (s((X_{\sigma(i)},Y_{\sigma(i)});\cD_{n+1},\xi))_{i\in[n+1]}\\ \eqd (s((X_{\sigma(i)},Y_{\sigma(i)});(\cD_{n+1})_\sigma,\xi))_{i\in[n+1]},\end{multline*}
where the equality in distribution in the last step holds  by applying Definition~\ref{def:symmetric_randomized_score} with the permuted data points $(X_{\sigma(1)},Y_{\sigma(1)}),\dots,(X_{\sigma(n+1)},Y_{\sigma(n+1)})$ playing the role of the test points $(x_1',y_1'),\dots,(x'_{m'},y'_{m'})$. But by exchangeability of the data, we also have
\[\Big(s((X_{\sigma(i)},Y_{\sigma(i)});(\cD_{n+1})_\sigma,\xi)\Big)_{i\in[n+1]}\eqd \Big(s((X_i,Y_i);\cD_{n+1},\xi)\Big)_{i\in[n+1]}=(S_1,\dots,S_{n+1}).\]
This verifies that the scores $S_1,\dots,S_{n+1}$ are exchangeable in this randomized setting, as desired.
\end{proof}

From this point on, throughout the book, for clarity of the presentation we will generally only consider deterministic score functions, but in general, all results can easily be generalized to encompass randomized score functions.

\subsection{Using a randomized calibration step}\label{sec:cp-random-2}
Next, we will consider a different type of randomization in the construction of the conformal prediction set for the purpose of achieving prediction sets with exact coverage, rather than conservative coverage.
While in Section~\ref{sec:cp-random-1} we considered randomization in the construction of the scores, here we instead consider randomization in comparing the values of the scores, which will help handle ties.
Recall from Proposition~\ref{prop:conformal-via-pvalues} that the conformal prediction set can equivalently be defined as
\[\cC(X_{n+1}) = \{y\in\cY : p^y>\alpha\},\]
where the conformal p-value is given by
\[p^y = \frac{\sum_{i=1}^{n+1} \ind{S^y_i \geq S^y_{n+1}}}{n+1}\]
(as in Definition~\ref{def:conformal-pvalue}).
If the scores are distinct, then exchangeability ensures that the p-value $p^{Y_{n+1}}$, corresponding to the true test response value $y=Y_{n+1}$, is uniformly distributed on the grid $\{\frac{1}{n+1},\dots,\frac{n}{n+1},1\}$, reflecting the fact that the scores $S_1,\dots,S_{n+1}$ are equally likely to be ranked in any order (see Theorem~\ref{thm:online-indep-pvals}). This means that the coverage event
$p^{Y_{n+1}}>\alpha$ holds with probability exactly $\frac{k}{n+1}$, where $k = \lceil(1-\alpha)(n+1)\rceil$, which leads to overcoverage if $(1-\alpha)(n+1)$ is not an integer. Moreover, this calculation has not accounted for ties between the scores---another potential source of overcoverage.

To avoid this issue, we can define a randomized version of the conformal prediction set for which coverage will hold at exactly level $1-\alpha$. This modification is sometimes referred to as \emph{smoothing}: the distribution of $p^{Y_{n+1}}$, which is supported on the grid $\{\frac{1}{n+1},\dots,\frac{n}{n+1},1\}$, is smoothed to achieve a $\textnormal{Unif}[0,1]$ distribution.

The smoothed version of the prediction set is easiest to define using the p-value-based construction. Given a hypothesized value $y\in\cY$ for the test response, we redefine the conformal p-value as
\begin{equation}\label{eqn:cp-p-value-randomized}p^y(\xi) = \frac{\sum_{i=1}^{n+1} \ind{S^y_i > S^y_{n+1}} + \xi \cdot \sum_{i=1}^{n+1} \ind{S^y_i = S^y_{n+1}}}{n+1},\end{equation}
which now depends additionally on a randomization term $\xi\sim \textnormal{Unif}[0,1]$. The conformal prediction set is then given by
\begin{equation}\label{eqn:define-cp-smoothed}\cC(X_{n+1}) = \left\{y\in\cY : p^y(\xi) > \alpha\right\},\end{equation}
where the dependence on the random $\xi$ is implicit in the notation. Operationally, it is common to use a single draw of $\xi$ that is shared across all values $y\in\cY$; that is, p-values $p^y(\xi)$ and $p^{y'}(\xi)$ are computed using the same value of $\xi$.

\begin{theorem}[Exact coverage for full conformal prediction with smoothing]\label{thm:cp-smoothed}
    Suppose that $(X_1,Y_1),...,(X_{n+1},Y_{n+1})$ are exchangeable and that $s$ is a symmetric score function. Let $\xi\sim\textnormal{Unif}[0,1]$ be drawn independently of the data.
    Then the smoothed conformal prediction set $\cC$ defined in~\eqref{eqn:define-cp-smoothed} has marginal coverage level exactly $1-\alpha$,
\[        \P\left( Y_{n + 1} \in \cC(X_{n + 1}) \right) = 1-\alpha.\]
\end{theorem}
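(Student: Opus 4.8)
The plan is to reduce the coverage statement to a distributional statement about the smoothed conformal p-value $p^{Y_{n+1}}(\xi)$ evaluated at the true response value $y = Y_{n+1}$, and then show that this p-value is exactly $\textnormal{Unif}[0,1]$-distributed. First I would record the deterministic equivalence
\[
Y_{n+1} \in \cC(X_{n+1}) \iff p^{Y_{n+1}}(\xi) > \alpha,
\]
which is immediate from the definition~\eqref{eqn:define-cp-smoothed} of the smoothed set. So it suffices to prove $\P\big(p^{Y_{n+1}}(\xi) > \alpha\big) = 1-\alpha$, i.e., that $p^{Y_{n+1}}(\xi) \sim \textnormal{Unif}[0,1]$.

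Next I would set up the exchangeability machinery exactly as in the first proof of Theorem~\ref{thm:full-conformal}. Writing $S_i = s((X_i,Y_i);\cD_{n+1})$ for the scores computed on the full dataset with $y = Y_{n+1}$, Step 2 of that proof shows $(S_1,\dots,S_{n+1})$ is exchangeable (using symmetry of $s$ and exchangeability of the data). Since $S_i = S_i^{Y_{n+1}}$ by definition, the randomized p-value at $y = Y_{n+1}$ is
\[
p^{Y_{n+1}}(\xi) = \frac{\sum_{i=1}^{n+1}\ind{S_i > S_{n+1}} + \xi\sum_{i=1}^{n+1}\ind{S_i = S_{n+1}}}{n+1}.
\]
The core lemma I need is: if $(S_1,\dots,S_{n+1})$ is exchangeable and $\xi\sim\textnormal{Unif}[0,1]$ is independent of the scores, then this quantity is uniform on $[0,1]$. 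I would prove this by conditioning on the multiset of values $\{S_1,\dots,S_{n+1}\}$ (equivalently, on the empirical distribution $\widehat P_{n+1}$ of the scores, or on the order statistics). Conditional on this, exchangeability implies $S_{n+1}$ is a uniformly random element of the multiset — equivalently, the rank of $S_{n+1}$ (with ties broken uniformly at random by an auxiliary uniform, which is exactly the role of $\xi$) is uniform on $\{1,\dots,n+1\}$. More carefully: group the $n+1$ values into blocks of ties; conditional on $\widehat P_{n+1}$, $S_{n+1}$ lands in a given tie-block of size $m$ with probability $m/(n+1)$, and given that it lands there, $\sum_i\ind{S_i > S_{n+1}}$ is the fixed number $N_>$ of strictly-larger values while $\xi\sum_i \ind{S_i=S_{n+1}} = m\xi$ with $\xi$ independent uniform. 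Hence within that block $p^{Y_{n+1}}(\xi) = (N_> + m\xi)/(n+1)$ is uniform on the interval $(N_>/(n+1), (N_>+m)/(n+1)]$, and these intervals over all blocks partition $[0,1]$ with the correct probabilities $m/(n+1)$; summing gives $p^{Y_{n+1}}(\xi)\mid\widehat P_{n+1} \sim \textnormal{Unif}[0,1]$. Marginalizing over $\widehat P_{n+1}$ preserves uniformity, so $\P(p^{Y_{n+1}}(\xi) > \alpha) = 1-\alpha$, completing the proof.

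The main obstacle is the bookkeeping around ties: one must verify that the $\xi$-perturbation exactly interpolates each tie-block's contribution so that the blocks' intervals tile $[0,1]$ without overlap or gap, and that $\xi$ being shared across all $y$ (rather than redrawn) is irrelevant here since we only evaluate at $y = Y_{n+1}$. A secondary point to handle cleanly is the measure-theoretic step of conditioning on $\widehat P_{n+1}$ and invoking that $(S_i)$ remains exchangeable given $\widehat P_{n+1}$ — but this is precisely Lemma~\ref{lem:conditional_exchangeability} applied to the symmetric function $\widehat P_{n+1}$ of the scores, so it is available off the shelf. Everything else is routine.
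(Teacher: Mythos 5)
Your high-level reduction — to the uniformity of the smoothed p-value $p^{Y_{n+1}}(\xi)$, via exchangeability of the scores $(S_1,\dots,S_{n+1})$ — is exactly the paper's structure, and your proof of the key lemma is correct. Where you diverge is in how you establish uniformity. The paper's Lemma~\ref{lem:randomize-perm} fixes a level $\tau$, averages $\P(p_j(\xi)\leq\tau)$ over the $n+1$ indices $j$ (exchangeability makes these all equal), conditions on the ordered score vector, and then verifies algebraically — by introducing the quantile $q$ and the counts $N_+$ (scores strictly above $q$) and $N_=$ (scores equal to $q$) and checking the identity $\frac{1}{n+1}(N_+ + N_= \cdot \frac{\tau(n+1)-N_+}{N_=}) = \tau$. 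You instead condition on $\widehat{P}_{n+1}$, use the fact that $S_{n+1}$ is then a uniform draw from the multiset, and show the conditional law of $p^{Y_{n+1}}(\xi)$ is exactly $\textnormal{Unif}[0,1]$ by tiling $[0,1]$ with the tie-block intervals $(N_>/(n+1), (N_>+m)/(n+1)]$, whose lengths $m/(n+1)$ match the block selection probabilities. Both are valid and invoke the same underlying conditional-exchangeability fact; yours is a more distributional/geometric argument that makes the role of $\xi$ as a tie-interpolator transparent, while the paper's is a more direct CDF computation at a single level $\tau$, which is a bit shorter because it never needs the full tiling picture. Your remark that sharing $\xi$ across all $y$ is harmless (since the coverage event only involves $y = Y_{n+1}$) is a genuine subtlety worth recording, and the paper handles it the same implicit way.
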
\index{coverage!upper bound}
The idea of the proof is to verify the equivalent statement that the smoothed conformal p-value is uniformly distributed, $p^{Y_{n+1}}(\xi)\sim\textnormal{Unif}[0,1]$.

\begin{proof}[Proof of Theorem~\ref{thm:cp-smoothed}]
The argument follows the same structure as the permutation-test-based proof of Theorem~\ref{thm:full-conformal}, which was given in Section~\ref{sec:conformal_as_perm}. We have
\[Y_{n+1}\in\cC(X_{n+1})  \ \Longleftrightarrow \ p^{Y_{n+1}}(\xi) >\alpha,\]
where by definition, we have
\[ p^{Y_{n+1}}(\xi) = \frac{\sum_{i=1}^{n+1} \ind{S_i > S_{n+1}} + \xi \cdot \sum_{i=1}^{n+1} \ind{S_i = S_{n+1}}}{n+1}.\]
As in the proof of Theorem~\ref{thm:full-conformal}, we know that the scores
\[S_1,\dots,S_{n+1}\]
are exchangeable. Therefore, our last remaining step is to prove the validity of a randomized version of permutation testing, which is established with the following lemma:
\begin{lemma}\label{lem:randomize-perm}
    Let $S_1,\dots,S_{n+1}$ be exchangeable, and let $\xi\sim\textnormal{Unif}[0,1]$ be drawn independently from $S_1,\dots,S_{n+1}$. Define
    \[p(\xi) = \frac{\sum_{i=1}^{n+1} \ind{S_i > S_{n+1}} + \xi \cdot \sum_{i=1}^{n+1} \ind{S_i = S_{n+1}}}{n+1}.\]
    Then $\P(p(\xi)\leq \tau)= \tau$ for all $\tau\in[0,1]$.
\end{lemma}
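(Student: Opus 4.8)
The plan is to condition on the unordered collection of score values and then reduce to a direct computation, treating ties carefully. Let $\widehat{P}_{n+1} = \frac{1}{n+1}\sum_{i=1}^{n+1}\delta_{S_i}$ be the empirical distribution of the scores, and write the distinct values occurring among $S_1,\dots,S_{n+1}$ as $v_1 < \dots < v_m$, with multiplicities $c_1,\dots,c_m$ (so $\sum_{\ell} c_\ell = n+1$), and set $a_\ell = \sum_{k>\ell} c_k$. By Proposition~\ref{prop:empirical-distrib-exch} applied to the exchangeable vector $(S_1,\dots,S_{n+1})$, conditionally on $\widehat{P}_{n+1}$ the score $S_{n+1}$ is itself a draw from $\widehat{P}_{n+1}$; in particular $\P(S_{n+1} = v_\ell \mid \widehat{P}_{n+1}) = c_\ell/(n+1)$.

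Next I would evaluate $p(\xi)$ on the event $\{S_{n+1} = v_\ell\}$. On this event, $\sum_{i=1}^{n+1}\ind{S_i > S_{n+1}} = a_\ell$ and $\sum_{i=1}^{n+1}\ind{S_i = S_{n+1}} = c_\ell$, so $p(\xi) = \frac{a_\ell + \xi c_\ell}{n+1}$. Since $\xi\sim\textnormal{Unif}[0,1]$ is drawn independently of the data (hence independently of $\widehat{P}_{n+1}$ and of the event $\{S_{n+1}=v_\ell\}$), conditionally on $\widehat{P}_{n+1}$ and on $\{S_{n+1}=v_\ell\}$ the random variable $p(\xi)$ is uniform on the interval $\left(\frac{a_\ell}{n+1},\ \frac{a_\ell + c_\ell}{n+1}\right]$, which has length $c_\ell/(n+1)$.

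Then I would observe that, as $\ell$ ranges over $1,\dots,m$, these intervals tile $(0,1]$: since $a_m = 0$ and $a_\ell = a_{\ell+1} + c_{\ell+1}$, consecutive intervals abut, and the union is all of $(0,1]$ because $a_1 + c_1 = n+1$. Combining with the previous step, conditionally on $\widehat{P}_{n+1}$ the law of $p(\xi)$ is a mixture that places mass $c_\ell/(n+1)$ on the uniform distribution over an interval of length $c_\ell/(n+1)$, and these intervals partition $(0,1]$; hence $p(\xi)\mid \widehat{P}_{n+1}\sim\textnormal{Unif}(0,1]$, so $\P(p(\xi)\le\tau\mid \widehat{P}_{n+1}) = \tau$ for every $\tau\in[0,1]$. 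Taking expectation over $\widehat{P}_{n+1}$ yields $\P(p(\xi)\le\tau) = \tau$, as claimed.

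The argument is essentially bookkeeping, so there is no genuine obstacle; the one point requiring care is the treatment of ties — one must track multiplicities rather than ranks, and check that the ``smoothing'' interval assigned to each distinct value $v_\ell$ lines up exactly with the conditional probability that $S_{n+1}$ lands on $v_\ell$. If one prefers not to invoke Proposition~\ref{prop:empirical-distrib-exch}, the same conclusion follows from~\eqref{eq:exchangeability-uniform-ranks} by conditioning on the order statistics and using that $(S_1,\dots,S_{n+1})$ is then a uniformly random (possibly non-unique) ordering of them.
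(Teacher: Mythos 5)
Your proof is correct, and it takes a genuinely different route from the paper's. The paper averages over which index plays the role of the test score: it sets $p_j(\xi) = \frac{\sum_i \ind{S_i > S_j} + \xi\sum_i\ind{S_i=S_j}}{n+1}$, notes $p(\xi)\eqd p_j(\xi)$ by exchangeability so $\P(p(\xi)\le\tau)=\frac{1}{n+1}\sum_j\P(p_j(\xi)\le\tau)$, then introduces $q=\quantile(S_1,\dots,S_{n+1};1-\tau)$ and the counts $N_+,N_=$ and checks case-by-case (scores above, equal to, or below $q$) that the average works out to $\tau$. You instead condition on the bag $\widehat{P}_{n+1}$, invoke Proposition~\ref{prop:empirical-distrib-exch} so $S_{n+1}\mid\widehat{P}_{n+1}\sim\widehat{P}_{n+1}$, and observe that the map $v_\ell\mapsto\bigl(\tfrac{a_\ell}{n+1},\tfrac{a_\ell+c_\ell}{n+1}\bigr]$ sends the atoms of $\widehat{P}_{n+1}$ (with weight $c_\ell/(n+1)$) to a tiling of $(0,1]$ by intervals of matching length, so that smoothing by $\xi$ makes $p(\xi)\mid\widehat{P}_{n+1}$ exactly $\textnormal{Unif}(0,1]$. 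Both arguments hinge on the same facts (exchangeability plus independence of $\xi$), but yours buys a slightly stronger conclusion for free --- the smoothed p-value is uniform \emph{conditionally} on the unordered score multiset, not just marginally --- and the tiling picture makes the ``mass equals length'' cancellation transparent. The paper's version is more self-contained (it uses only the quantile facts already in hand) and is the style reused in its other proofs, but your conditioning-on-$\widehat{P}_{n+1}$ argument is the cleaner structural explanation of why smoothing eliminates both the discreteness and the ties simultaneously.
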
 \index{conformal p-value}
The lemma immediately implies that $\P(p^{Y_{n+1}}(\xi) \leq \alpha)= \alpha$, which completes the proof of the theorem.
\end{proof}
To complete this section, we prove the lemma.
\begin{proof}[Proof of Lemma~\ref{lem:randomize-perm}]
We can assume $\tau<1$ to avoid the trivial case.
Let
\[q = \quantile(S_1,\dots,S_{n+1};1-\tau),\]
and define
\[K = \sum_{i=1}^{n+1}\ind{S_i \leq q}, \ L = \sum_{i=1}^{n+1} \ind{S_i <q}.\]
By definition of the quantile for a finite list (recall Fact~\ref{fact:conversion-order-stats-quantiles}), we have
\[ K \geq (1-\tau)(n+1) > L.\]
Next, we split into cases: by construction of the randomized p-value, we can verify that 
\begin{align*}
     S_{n+1} < q & \Longrightarrow \ p(\xi)  > \tau,\\
     S_{n+1} > q & \Longrightarrow \ p(\xi)  \leq \tau,\\
     S_{n+1} =q & \Longrightarrow \ p(\xi)  =\frac{n+1 - K + \xi(K-L)}{n+1}.
     \end{align*}
Therefore,
\begin{align*}&\P(p(\xi)\leq \tau \mid S_1,\dots,S_{n+1})\\& = \ind{S_{n+1}>q} + \ind{S_{n+1}=q}\cdot \P\left(\frac{n+1 - K + \xi(K-L)}{n+1}\leq \tau\right) \\&= \ind{S_{n+1}>q} + \ind{S_{n+1}=q}\cdot \frac{K-(n+1)(1-\tau)}{K-L}.\end{align*}
Note also that $q,K,L$ can all be expressed as functions of $\widehat{P}_{n+1}=\frac{1}{n+1}\sum_{i=1}^{n+1}\delta_{S_i}$, the empirical distribution of $S_1,\dots,S_{n+1}$, and therefore, by Proposition~\ref{prop:empirical-distrib-exch},
\[\P(S_{n+1}\leq q\mid \widehat{P}_{n+1}) =\frac{K}{n+1}, \quad \P(S_{n+1}<q \mid \widehat{P}_{n+1}) = \frac{L}{n+1}.\]
Therefore,
\begin{align*}\P(p(\xi)\leq \tau) &= \E\left[\ind{S_{n+1}>q} + \ind{S_{n+1}=q}\cdot \frac{K-(n+1)(1-\tau)}{K-L}\right] \\&= \E\left[\P(S_{n+1}>q\mid \widehat{P}_{n+1}) + \P(S_{n+1}=q\mid \widehat{P}_{n+1}) \cdot \frac{K-(n+1)(1-\tau)}{K-L}\right] \\&= \E\left[\frac{n+1-K}{n+1} + \frac{K-L}{n+1}\cdot \frac{K-(n+1)(1-\tau)}{K-L}\right] = \tau.\end{align*}
\end{proof}
\index{randomization|)}

\section{Computational shortcuts for full conformal prediction}
\label{sec:computational-shortcuts}
As we have seen, full conformal prediction is computationally expensive---for each possible value $y\in\cY$ of the test response $Y_{n+1}$, full conformal requires evaluating the score function $s(\cdot;\cD^y_{n+1})$, which generally requires refitting a model. For example, when the score function is of the form $s((x,y);\cD) = |y - \hf(x;\cD)|$, we must fit the model $\hf$ on many different datasets. While this might be easy in the case that $\cY$ is a small finite set of possible labels, this becomes prohibitively expensive if $Y$ is categorical but has a large number of possible values, or even impossible if $Y$ is continuous.

In this section, we will focus on the case $\cY=\R$.  First we will examine some special cases where, due to the particular construction of the score, the full conformal prediction interval can be computed exactly. We will then turn to a general strategy that uses discretization to allow for efficient computation regardless of the choice of score function.

\subsection{Special case: linear regression}\label{sec:special_case_linear_regression}
\index{score function!residual score|(}
\index{linear regression|(}

First, we examine linear regression, in the low-dimensional case where $\cX = \R^d$ for some $d\leq n$. This is an important case and is the first step toward more flexible
models such as ridge regression and Lasso.
We restrict attention to the score function given by the residual for the least-squares regression model: we have
\[s((x,y);\cD) = |y-\hf(x;\cD)|,\]
where the fitted model is given by
\[\hf(x;\cD) = x^\top\hat\beta \textnormal{ for }\hat\beta= \argmin_{\beta\in\R^d}\left\{\sum_{i=1}^m(y_i-x_i^\top\beta)^2\right\}\textnormal{ where }\cD = ((x_1,y_1),\dots,(x_m,y_m)).\]

\begin{proposition}[Computing the full prediction set for linear regression methods]\label{prop:full_CP_leastsq}
Let $\cX = \R^d$ and $\cY=\R$.
Fix a training dataset $\cD_n=((X_i,Y_i))_{i\in[n]}$ and test point $X_{n+1}$, and 
let $X_{[n+1]}\in\R^{(n+1)\times d}$ denote the matrix with $i$th row given by $X_i$, for each $i\in[n+1]$. Assume $X_{[n+1]}$ has full column rank, i.e., rank $d$. Suppose the score function is given by the residual score for least-squares regression, as defined above. 
Then the full conformal prediction set is equal to
\begin{equation}\label{eqn:CP_set_lin_reg}\cC(X_{n+1}) = \left\{y \in\R :\frac{ 1+ \sum_{i=1}^n \ind{|y \cdot (-b_i) - (a_i - Y_i)| \geq |y\cdot (1-b_{n+1}) - a_{n+1}|}}{n+1} > \alpha\right\}, \end{equation}
where we define $a,b\in\R^{n+1}$ with entries
\[a_i = \sum_{j\in[n]} H_{ij}Y_j, \quad b_i = H_{i,n+1}\]
for matrix $H\in\R^{(n+1)\times(n+1)}$  defined 
as
\[ H = X_{[n+1]}\left(X_{[n+1]}^\top X_{[n+1]}\right)^{-1}X_{[n+1]}^\top .\]
\end{proposition}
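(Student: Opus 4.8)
The plan is to make explicit the dependence of each residual $S_i^y$ on the hypothesized value $y$, observe that this dependence is affine in $y$, and then rewrite the conformal p-value $p^y$ in the resulting closed form. By Proposition~\ref{prop:conformal-via-pvalues}, the full conformal prediction set is $\cC(X_{n+1}) = \{y\in\R : p^y > \alpha\}$ with
\[p^y = \frac{1 + \sum_{i=1}^n \ind{S_i^y \geq S_{n+1}^y}}{n+1},\]
so it suffices to show that $S_i^y = |y\cdot(-b_i) - (a_i - Y_i)|$ for $i\in[n]$ and $S_{n+1}^y = |y\cdot(1-b_{n+1}) - a_{n+1}|$, where $a_i, b_i$ are as defined in the statement.

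First I would recall that the scores are computed on the augmented dataset $\cD_{n+1}^y$, where the response vector is $Y_{[n+1]}^y := (Y_1,\dots,Y_n,y)^\top \in \R^{n+1}$ and the design matrix is the fixed $X_{[n+1]}$. Since $X_{[n+1]}$ has full column rank, the least-squares fit on $\cD_{n+1}^y$ produces fitted values $\widehat{Y}^y = H Y_{[n+1]}^y$, where $H$ is the hat matrix defined in the statement. The key observation is that $H$ does not depend on $y$ (it depends only on the features), so the map $y \mapsto \widehat{Y}^y$ is affine: writing $Y_{[n+1]}^y = Y_{[n+1]}^0 + y\cdot e_{n+1}$ where $Y_{[n+1]}^0 = (Y_1,\dots,Y_n,0)^\top$ and $e_{n+1}$ is the last standard basis vector, we get $\widehat{Y}^y_i = \sum_{j} H_{ij}(Y_{[n+1]}^0)_j + y H_{i,n+1} = a_i + y b_i$ for each $i\in[n+1]$ (here I am using that the $n+1$ entry of $Y_{[n+1]}^0$ is zero, so $a_i = \sum_{j\in[n+1]}H_{ij}(Y_{[n+1]}^0)_j = \sum_{j\in[n]}H_{ij}Y_j$, which agrees with the definition $a_i = \sum_{j\in[n+1]}H_{ij}Y_j$ upon noting the $j=n+1$ term would multiply $Y_{n+1}$, but here we use $y=0$ in that slot — I need to be careful and state $a$ in terms of $Y_{[n+1]}^0$, matching the stated formula exactly, i.e. $a_i = \sum_{j=1}^n H_{ij}Y_j$). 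Then the residual at data point $i\in[n]$ is $S_i^y = |Y_i - \widehat{Y}^y_i| = |Y_i - a_i - y b_i| = |y(-b_i) - (a_i - Y_i)|$, and at the test point the residual is $S_{n+1}^y = |y - \widehat{Y}^y_{n+1}| = |y - a_{n+1} - y b_{n+1}| = |y(1 - b_{n+1}) - a_{n+1}|$. Substituting these into the expression for $p^y$ gives exactly~\eqref{eqn:CP_set_lin_reg}.

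I would then add a remark (which is really the practical payoff, illustrated in Figure~\ref{fig:ols-full-conformal}) that $p^y$ as a function of $y$ is piecewise constant: each indicator $\ind{S_i^y \geq S_{n+1}^y}$ compares two absolute values of affine functions of $y$, hence changes value only at finitely many breakpoints (the $y$ at which $|y(-b_i)-(a_i-Y_i)| = |y(1-b_{n+1})-a_{n+1}|$, of which there are at most two per $i$). Therefore $\cC(X_{n+1})$ is a finite union of intervals whose endpoints lie among these $O(n)$ breakpoints, and the whole set can be computed exactly by evaluating $p^y$ on each of the at most $2n$ intervals determined by sorting the breakpoints — no grid search over $\R$ is needed.

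The main obstacle, such as it is, is bookkeeping rather than conceptual: I must be scrupulous about the role of the $(n+1)$st coordinate of the response vector when defining $a$ versus $b$ (the $y$-dependence is entirely absorbed into the $b$-term via the last column of $H$, and the $a$-term must use only $Y_1,\dots,Y_n$ in the first $n$ slots and zero in the last), and I should note explicitly that the full-column-rank assumption on $X_{[n+1]}$ is what makes $H$ well-defined as the orthogonal projection onto the column space and guarantees the least-squares solution is unique (so that $\hf(\cdot;\cD_{n+1}^y)$ is unambiguous). Everything else — the identification of $\cC$ with $\{y : p^y > \alpha\}$ and the affine structure of $Hv$ in $v$ — is immediate from Proposition~\ref{prop:conformal-via-pvalues} and elementary linear algebra.
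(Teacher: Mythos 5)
Your proof is correct and follows essentially the same route as the paper's: reduce via Proposition~\ref{prop:conformal-via-pvalues} to computing the scores $S_i^y$ and $S_{n+1}^y$, observe that the hat matrix $H$ is $y$-independent so the fitted values $X_i^\top\hat\beta^y = a_i + b_i y$ are affine in $y$, and substitute. You also correctly flag that the statement's definition $a_i = \sum_{j\in[n+1]} H_{ij}Y_j$ references the undefined quantity $Y_{n+1}$; the paper's own proof, like yours, works with $a_i = \sum_{j\in[n]}H_{ij}Y_j$ (equivalently, placing a zero in the $(n+1)$st slot of the response vector before applying $H$), so the upper limit in the proposition's definition of $a$ should read $[n]$.
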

To see why this reformulation of the prediction $\cC(X_{n+1})$ leads to an efficient calculation, we note that
\[y\mapsto \sum_{i=1}^n \ind{|y \cdot (-b_i) - (a_i - Y_i)| \geq |y\cdot (1-b_{n+1}) - a_{n+1}|}\]
is a piecewise constant function, and its changepoints can only occur at values $y$ that satisfy
\[y \cdot (-b_i) - (a_i - Y_i) = \pm\big(y\cdot (1-b_{n+1}) - a_{n+1}\big)\]
for some $i=1,\dots,n$.
Therefore, after solving $2n$ univariate linear equations to identify the candidate values $y$ that might be changepoints of the above function, we can derive the exact set $\cC(X_{n+1})$.
See Figure~\ref{fig:ols-full-conformal} for an illustration.
\begin{figure}[t]
    \centering
    \includegraphics[width=0.55\linewidth]{\diagramspath ols-full-conformal.pdf}
    \caption{\textbf{Visualization of the conformal p-value resulting from the least-squares predictor.} The top plot shows the residuals on the augmented dataset as a function of the hypothesized label $y$. The residuals are all linear in $y$. The emphasized line corresponds to the test residual. In the middle plot, we show the absolute residuals on the augmented dataset.
    The bottom plot shows the conformal p-values resulting from the residual score. Since the conformal p-value only changes when the test point's absolute residual crosses any one of the gray lines, we do not have to check all $y \in \R$ to compute the prediction set $\{y \in \cY : p^y > \alpha\}$; it suffices to consider only the $y$ values at the crossing points.}
    \commentAlt{The top panel shows many linear functions, and the middle panel shows absolute values of these functions. The bottom panel, labeled `p-value', shows a step function whose changepoints align with intersections of the absolute value functions.}
    \label{fig:ols-full-conformal}
\end{figure}

\begin{proof}[Proof of Proposition~\ref{prop:full_CP_leastsq}]
Given training data $\cD_n=((X_i,Y_i))_{i\in[n]}$ and test point $X_{n+1}$, the score function is given by
\[s((x',y');\cD^y_{n+1}) = \left|y' - x'{}^\top \hat\beta^y\right|, \]
where
\[\hat\beta^y = \argmin_{\beta\in\R^d}\left\{\sum_{i=1}^n (Y_i - X_i^\top\beta)^2 + (y - X_{n+1}^\top\beta)^2\right\}\]
is the fitted coefficient vector for least-squares regression. 
We can rewrite this as
\[\hat\beta^y = (X_{[n+1]}^\top X_{[n+1]})^{-1} X_{[n+1]}^\top \left(\begin{array}{c}Y_1 \\ \vdots \\ Y_n \\ y\end{array}\right).\] For each $i\in[n+1]$, then,
\[X_i^\top \hat\beta^y = X_i^\top 
(X_{[n+1]}^\top X_{[n+1]})^{-1} X_{[n+1]}^\top \left(\begin{array}{c}Y_1 \\ \vdots \\ Y_n \\ y\end{array}\right) = \sum_{j\in[n]} H_{ij}\cdot  Y_j + H_{i,n+1} \cdot y = a_i + b_iy,\]
by plugging in the definitions of $H$, $a$, and $b$ from the statement of the proposition.
We can then calculate the scores of the training and test data points as
\[S^y_i = s((X_i,Y_i);\cD^y_{n+1}) = |Y_i - X_i^\top\hat\beta^y| = |y \cdot (-b_i) - (a_i - Y_i)|, \]
for $i=1,\dots,n$, and\[S^y_{n+1} = s((X_{n+1},y);\cD^y_{n+1}) = |y - X_{n+1}^\top\hat\beta^y| = |y\cdot (1-b_{n+1}) - a_{n+1}|.\]

Next, following the reinterpretation of full conformal given in Section~\ref{sec:conformal_as_perm}, we have
\[\cC(X_{n+1}) = \{y\in\R: p^y>\alpha\},\]
where we compute the conformal p-value as
\begin{multline*}p^y = \frac{1 + \sum_{i\in[n]}\ind{S^y_i \geq S^y_{n+1}}}{n+1} \\= \frac{1 + \sum_{i\in[n]}\ind{|y \cdot (-b_i) - (a_i - Y_i)| \geq |y\cdot (1-b_{n+1}) - a_{n+1}|}}{n+1}.\end{multline*}
This completes the proof.
\end{proof}

\paragraph{Related special cases.}
\index{ridge regression}
\index{nearest-neighbor regression}
\index{kernel regression}

The example of linear regression can be generalized to a broader class of regression algorithms (while still using the residual score); here we mention some of these special cases briefly, without going into details.  
Suppose that, for all $x'\in\cX$,  the prediction $\hf(x';\cD^y_{n+1})$ depends linearly on $y$---that is,
\begin{equation}\label{eqn:hat_f_is_linear_in_y}\hf(x';\cD^y_{n+1}) = a(x';\cD_n,X_{n+1}) + b(x';\cD_n,X_{n+1})\cdot y,\end{equation}
where $a(x';\cD_n,X_{n+1})$ and $b(x';\cD_n,X_{n+1})$ can depend arbitrarily on their arguments. In addition to linear regression, methods in this class include ridge regression, nearest-neighbor regression, and various kernel regression methods.
In this type of setting, after calculating $a_i=a(X_i;\cD_n,X_{n+1})$ and $b_i=b(X_i;\cD_n,X_{n+1})$ for each $i=1,\dots,n+1$, which we only need to compute once as these values do not depend on $y$, we then have $\cC(X_{n+1})$ given by the calculation~\eqref{eqn:CP_set_lin_reg}, exactly as for linear regression, 
and so the same calculation strategy can be applied to compute this set. 
\index{linear regression|)}

\subsection{Special case: Lasso regression}\label{sec:fullCP-Lasso}
\index{lasso|(}

Another special case arises if the regression algorithm is given by the Lasso, i.e., $\ell_1$-penalized linear regression,
where the fitted model is now given by
\[\hf(x;\cD) = x^\top\hat\beta \textnormal{ for }\hat\beta= \argmin_{\beta\in\R^d}\left\{\frac{1}{2}\sum_{i=1}^m(y_i-x_i^\top\beta)^2 + \lambda\|\beta\|_1\right\}\textnormal{ where }\cD = ((x_1,y_1),\dots,(x_m,y_m)).\]
The addition of the $\ell_1$ penalty allows for accurate estimation even in the high-dimensional setting, under the assumption of sparsity. We again consider the residual score,
\[s((x,y);\cD) = |y-\hf(x;\cD)|.\]

This choice of the base algorithm is more computationally complex than the special case of linear regression---the Lasso does not satisfy a linearity property along the lines of~\eqref{eqn:hat_f_is_linear_in_y}. Nonetheless,
efficient computation of $\cC(X_{n+1})$ can be achieved 
by exploiting the observation that the map $y\mapsto \hf(x';\cD^y_{n+1})$ is still \emph{piecewise} linear in $y$. We will give some intuition for how we may calculate the prediction interval, without a formal derivation.

Write
\[\hat\beta^y = \argmin_{\beta\in\R^d}\left\{\frac{1}{2}\sum_{i=1}^n (Y_i - X_i^\top\beta)^2 + \frac{1}{2}(y - X_{n+1}^\top\beta)^2 + \lambda\|\beta\|_1\right\},\]
the coefficient vector returned by running Lasso on the dataset $\cD^y_{n+1}$. Define also the support,
\[\hat{I}^y = \{j\in[d] : \hat\beta^y_j\neq 0\}\subseteq [d],\]
and the signs,
\[\hat\gamma^y = (\textnormal{sign}(\hat\beta^y_j))_{j\in \hat{I}^y}\in \{\pm 1\}^{|\hat{I}^y|}.\]

For any $y\in\R$, the first-order optimality conditions for the solution $\hat\beta^y$ to the Lasso optimization problem yield the following identity:
\[\textnormal{If $(\hat{I}^y,\hat\gamma^y) = (I,\gamma)$ then }\hat{f}(x';\cD^y_{n+1}) = x'_I{}^\top \left(X_{[n+1],I}^\top X_{[n+1],I}\right)^{-1}\left(X_{[n+1],I} ^\top \left(\begin{array}{c}Y_1 \\ \vdots \\ Y_n \\ y\end{array}\right) - \lambda \gamma\right),\]
were $X_{[n+1]}\in\R^{(n+1)\times d}$ is the matrix with $i$th row given by $X_i$, as in Proposition~\ref{prop:full_CP_leastsq}, while the notation $X_{[n+1],I}$ indicates that we take the submatrix given by columns $I\subseteq[d]$. 
In particular this implies that 
if we know the support and signs of the solution $\hat\beta^y$ then the prediction $\hat{f}(x';\cD^y_{n+1})$ depends linearly on $y$. Moreover, the nature of the Lasso optimization problem ensures that the support and signs are piecewise constant (that is, the map $y\mapsto (\hat{I}^y,\hat\gamma^y)$ is piecewise constant over $y\in\R$), and consequently, the map $y\mapsto \hf(x';\cD^y_{n+1})$ is piecewise linear in $y$: we can write
\begin{equation}\label{eqn:hat_f_is_piecewise_linear_in_y}\hf(x';\cD^y_{n+1}) = a_r(x';\cD_n,X_{n+1}) + b_r(x';\cD_n,X_{n+1})\cdot y \textnormal{ for all $y\in B_r$},\end{equation}
where $\R=B_1\cup \dots \cup B_R$ is some partition of $\R$ into intervals. This can be viewed as a generalization of the previous linear setting~\eqref{eqn:hat_f_is_linear_in_y}, and once we have computed the piecewise linear representation~\eqref{eqn:hat_f_is_piecewise_linear_in_y}, calculating $\cC(X_{n+1})$ follows similar steps as before.

\index{lasso|)}
\index{score function!residual score|)}

\subsection{A general approach: discretization}\label{sec:discretize_fullCP}
\index{discretized conformal prediction|(}

Moving beyond simple cases like linear regression and the Lasso, in general settings where the regression method and/or score function may be arbitrarily complex, we may no longer be able to derive computational shortcuts for computing $\cC(X_{n+1})$ efficiently. Therefore, we now consider a more general approach, where the full conformal prediction set is approximated via discretization.

\paragraph{Naive discretization.}
Since $\cC(X_{n+1})$ is impossible to compute exactly for $\cY=\R$ aside from special cases like the above, in practice it is common to use an informal discretization strategy, approximating $\R$ with a fine grid. The strategy is typically implemented as follows. Let $y^{(1)}<\dots<y^{(M)}$ be a prespecified list of values---typically these values are chosen to have equal spacing, $y^{(m+1)}-y^{(m)}=\Delta$ for each $m=1,\dots,M-1$. For each grid point $m\in[M]$, we then determine whether the full conformal prediction set contains the hypothesized value $y^{(m)}$: 
\[\cI = \left\{ m\in[M] \, : \, 
 s((X_{n+1},y^{(m)});\cD^{y^{(m)}}_{n+1}) \leq \quantile\left((s((X_i,Y_i);\cD^{y^{(m)}}_{n+1}))_{i\in[n]};(1-\alpha)(1+1/n)\right)\right\}.\]
Then we compute an approximation to the full conformal prediction set by constructing the set
\begin{equation}\label{eqn:full_CP_grid_informal}\cC(X_{n+1}) = \bigcup_{m\in\cI}\big(y^{(m-1)}, y^{(m+1)}\big).\end{equation}
In other words, if we observe that $y^{(m)}$ needs to be included in the prediction interval $\cC(X_{n+1})$, then we also include any value $y$ that lies in between grid points $y^{(m-1)}$ and $y^{(m)}$, and any value $y$ that lies in between grid points $y^{(m)}$ and $y^{(m+1)}$, since we are not certain of where the true boundary of the set lies. (For completeness, we can define $y^{(0)}=-\infty$ and $y^{(M+1)}=+\infty$.)

While this strategy generally works well in practice, we cannot verify a distribution-free guarantee for this approximate method. The reason is that the training data points $i\in[n]$ and the test point $n+1$ are no longer being treated symmetrically: when computing our score functions, we use data points $(X_i,Y_i)$ for each training point $i\in[n]$, but the test point $(X_{n+1},Y_{n+1})$ is never used, unless by coincidence we happen to have $Y_{n+1}=y^{(m)}$ for one of the preselected grid points.

\paragraph{Symmetry-preserving discretization.}
The following result addresses this issue, by modifying the method to restore symmetry.
At a high level, it works by discretizing all the points, not just the test point, thus yielding a symmetric algorithm.
\begin{proposition}[Coverage for discretized conformal prediction]\label{prop:discretized_CP}
Let $y_1,\dots,y_M\in\cY$ be a prespecified collection of values and let $k:\cY\rightarrow[M]$ be any function. (Intuitively, we think of this as a rounding function, where for an observed response $Y\in\cY$, the nearest grid point is $y_{k(Y)}$.) Let $s$ be a symmetric score function, and for each $m\in[M]$, define a function
\[s_m(x,y) = s\Big((x;y);\big( (X_1,y_{k(Y_1)}),\dots,(X_n,y_{k(Y_n)}), (X_{n+1},y_m)\big)\Big).\]
Define
\begin{multline*}\cC(X_{n+1}) = \\\bigcup_{m\in[M]} \left\{y\in\cY : k(y) = m , \ s_m(X_{n+1},y)\leq \quantile\big((s_m(X_i,Y_i))_{i\in[n]};(1-\alpha)(1+1/n)\big)\right\}.\end{multline*}
Then this prediction set satisfies distribution-free marginal coverage, i.e., if the data points $(X_1,Y_1),\dots,(X_{n+1},Y_{n+1})$ are exchangeable then
\[\P(Y_{n+1}\in\cC(X_{n+1}))\geq 1-\alpha.\]
\end{proposition}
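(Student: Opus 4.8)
The plan is to reduce this discretized construction to an ordinary application of full conformal prediction (Theorem~\ref{thm:full-conformal}) by defining a single auxiliary score function that is symmetric and that produces exactly the set $\cC(X_{n+1})$ above. The key observation is that the rounding function $k$ is the only source of asymmetry, but it is applied \emph{pointwise} and \emph{identically} to every data point, so symmetry is actually preserved if we bundle it into the score.

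First I would define, for a data point $(x,y)\in\cX\times\cY$ and a dataset $\cD=((x_1,y_1),\dots,(x_m,y_m))$, the rounded score
\[
\tilde{s}\big((x,y);\cD\big) = s\Big((x,y);\big((x_1,y_{k(y_1)}),\dots,(x_m,y_{k(y_m)})\big)\Big).
\]
Note carefully that the \emph{first} argument $(x,y)$ is passed through unrounded, while each point in the conditioning dataset $\cD$ has its response coordinate rounded via $k$. I would then check that $\tilde{s}$ is a symmetric score function in the sense of Definition~\ref{def:symmetric_score}: permuting the dataset $\cD$ permutes the rounded dataset $((x_i,y_{k(y_i)}))_{i}$ in exactly the same way, and $s$ is symmetric by assumption, so $\tilde{s}((x,y);\cD) = \tilde{s}((x,y);\cD_\sigma)$ holds deterministically. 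The next step is to match this up with the functions $s_m$ in the statement: when we run full conformal with score $\tilde{s}$ on training data $(X_1,Y_1),\dots,(X_n,Y_n)$ and hypothesized test value $y$, the augmented dataset $\cD^y_{n+1}$ rounds to $((X_1,y_{k(Y_1)}),\dots,(X_n,y_{k(Y_n)}),(X_{n+1},y_{k(y)}))$, and hence $\tilde{S}^y_i = s_{k(y)}(X_i,Y_i)$ for $i\in[n]$ (here $Y_i$ passes through unrounded in the first slot, matching the definition of $s_m(x,y)$ exactly, since $s_m$'s first argument is also unrounded) and $\tilde{S}^y_{n+1} = s_{k(y)}(X_{n+1},y)$. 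Therefore the full conformal set for $\tilde{s}$, namely $\{y : \tilde{S}^y_{n+1}\leq \quantile((\tilde{S}^y_i)_{i\in[n]};(1-\alpha)(1+1/n))\}$, is precisely $\bigcup_{m\in[M]}\{y : k(y)=m,\ s_m(X_{n+1},y)\leq \quantile((s_m(X_i,Y_i))_{i\in[n]};(1-\alpha)(1+1/n))\} = \cC(X_{n+1})$, where the union over $m$ simply records which grid index $k(y)$ falls into.

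With that identification done, the coverage guarantee follows immediately: since $(X_1,Y_1),\dots,(X_{n+1},Y_{n+1})$ are exchangeable and $\tilde{s}$ is a symmetric score function, Theorem~\ref{thm:full-conformal} gives $\P(Y_{n+1}\in\cC(X_{n+1}))\geq 1-\alpha$. The main obstacle—though it is conceptual rather than technical—is making sure the asymmetric-looking treatment of the first argument versus the conditioning set in the definition of $\tilde{s}$ genuinely does not break the symmetry required by Definition~\ref{def:symmetric_score}: that definition only demands invariance under permutations of the \emph{dataset} argument $\cD$, not any relationship between the point $(x,y)$ and the dataset, so rounding the dataset while leaving $(x,y)$ alone is perfectly admissible. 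One should also double-check the edge case where $k(y)$ is constant or where several grid points coincide, but these cause no issue since the construction is valid for any function $k$. I would close by remarking that this is exactly the same device used to recast localized conformal prediction as full conformal in Proposition~\ref{prop:LCP_is_fullCP}: an apparently bespoke construction is absorbed into the score function, after which the generic theorem applies verbatim.
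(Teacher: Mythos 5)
Your proposal is correct and takes essentially the same route as the paper: both define the modified score $\tilde{s}((x,y);\cD) = s((x,y);\cD')$ where $\cD'$ is $\cD$ with all response coordinates rounded via $k$, observe that $\tilde{s}$ inherits symmetry from $s$, verify that the resulting full conformal set equals $\cC(X_{n+1})$, and then invoke Theorem~\ref{thm:full-conformal}. Your remark on the analogy with Proposition~\ref{prop:LCP_is_fullCP} is a nice additional observation, but the substance of the argument matches the paper's.
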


As for the informal grid-based approximation defined in~\eqref{eqn:full_CP_grid_informal},
the cost of constructing this new discretized full conformal prediction interval is simply the
cost of $M$ many runs of the modeling algorithm. For example, if we use a
residual score function, then writing $\hf_m$ to denote the fitted model when we run our regression algorithm on data points $(X_1,y_{k(Y_1)}),\dots,(X_n,y_{k(Y_n)}), (X_{n+1},y_m)$, the $m$th trained score function $s_m$ is given by $s_m(x,y) = |y-\hf_m(x)|$. This means that, to compute $\cC(X_{n+1})$, we need to run our regression algorithm $M$ many times (as compared to infinitely many times---at least in theory---for the original full conformal method).

The choice of $M$ will affect both the statistical performance and the computational cost of this procedure. If $M$ is chosen to be very small, then the procedure is computationally much more efficient, but the trained models $\hf_m$ will be much less accurate since all the response values are rounded---that is, each $Y_i$ is replaced with a value on the grid, $y_{k(Y_i)}$. We can therefore expect to see a tradeoff between the computation time and the width of the resulting prediction intervals, with higher values of $M$ typically leading to more precise prediction intervals at the cost of additional computation.

\begin{proof}[Proof of Proposition~\ref{prop:discretized_CP}]
The key step for the proof is to observe that the proposed prediction set $\cC(X_{n+1})$, which is an 
approximation of the full prediction set that we would obtain if we use $s$ as our score function, is actually exactly equal to the full conformal prediction set for a different choice of score function.

To make this precise, we define a new score function $\tilde{s}$ as follows.
For any dataset $\cD = ((x'_1,y'_1),\dots,(x'_\ell,y'_\ell))$ and data point $(x',y')$, let
\[\tilde{s}\big((x',y');\cD\big) = 
s\big((x',y'); (x'_1,y_{k(y'_1)}),\dots,(x'_\ell,y_{k(y'_\ell)})\big).\]
In other words, the score function $\tilde{s}$ is obtained by implementing the original score function $s$,
but on a modified version of the dataset $\cD$, obtained by rounding all $Y$ values to the grid $\{y_1,\dots,y_M\}$
 via the map $k:\cY\rightarrow[M]$. Note that $\tilde{s}$ is a symmetric score, due to the symmetry of $s$.

Now suppose that we run full conformal prediction with $\tilde{s}$ as our score function. Let $\tilde\cC(X_{n+1})$
be the resulting prediction set, so that we have
\[\P(Y_{n+1}\in \tilde\cC(X_{n+1}))\geq 1-\alpha\]
by Theorem~\ref{thm:full-conformal}, since the data is exchangeable and the score function $\tilde{s}$ is symmetric.
For any $y\in\cY$, write
\[\tilde{S}^y_i = \tilde{s}((X_i,Y_i); \cD^y_{n+1})\]
for $i\in[n]$, and 
\[\tilde{S}^y_{n+1} = \tilde{s}((X_{n+1},y); \cD^y_{n+1}).\]
Then we have
\[\tilde\cC(X_{n+1}) = \left\{y\in\cY : \tilde{S}^y_{n+1} \leq \quantile\big((\tilde{S}^y_i)_{i\in[n]}; (1-\alpha)(1+1/n)\big)\right\},\]
by definition of full conformal prediction. Since the sets $\{y\in\cY : k(y)=m\}$, indexed by $m\in[M]$, form a partition
of the response space $\cY$, we can equivalently write
\begin{equation}\label{eqn:cC_for_stilde_discretizedCP}\tilde\cC(X_{n+1}) = \bigcup_{m\in[M]} \left\{y\in\cY : k(y) = m, \ \tilde{S}^y_{n+1} \leq \quantile\big((\tilde{S}^y_i)_{i\in[n]}; (1-\alpha)(1+1/n)\big)\right\}.\end{equation}

Now fix any $m\in[M]$. By construction of $\tilde{s}$, we can verify that, for any $y\in\cY$ with $k(y) = m$, we have
\begin{multline*}\tilde{s}(\cdot ; \cD^y_{n+1}) = s(\cdot; (X_1,y_{k(Y_1)}),\dots,(X_n,y_{k(Y_n)}), (X_{n+1}, y_{k(y)}))
\\= s(\cdot; (X_1,y_{k(Y_1)}),\dots,(X_n,y_{k(Y_n)}), (X_{n+1}, y_m)) = s_m(\cdot).\end{multline*}
Therefore, for any $y\in\cY$ with $k(y)=m$,
\[\tilde{S}^y_i = \begin{cases} s_m(X_i,Y_i), & i\in[n], \\ s_m(X_{n+1},y), & i = n+1.\end{cases}\]
Thus, we can see that the prediction set $\cC(X_{n+1})$ defined in the statement of the proposition is exactly equal to the set $\tilde\cC(X_{n+1})$ given in~\eqref{eqn:cC_for_stilde_discretizedCP}, and consequently,
\[\P(Y_{n+1}\in\cC(X_{n+1})) = \P(Y_{n+1}\in\tilde\cC(X_{n+1}))\geq 1-\alpha.\]
\end{proof}

\index{discretized conformal prediction|)}

\section{The universality of conformal prediction}\label{sec:universality}
\index{universality of conformal prediction|(}

Throughout this book, we have seen that conformal prediction provides a strategy for ensuring distribution-free marginal predictive coverage under only an assumption of exchangeability. But are there alternative methods that might achieve the same goal---and, perhaps, offer more informative prediction intervals?

The following result proves a universality property of full conformal prediction. It demonstrates that any method achieving distribution-free marginal coverage must actually be equivalent to running full conformal prediction (with some choice of score function), as long as we assume that the method is symmetric in the training data.

\begin{theorem}[Universality of conformal prediction]\label{thm:universality}
    Let $\cC$ be any predictive inference procedure, which maps any training dataset and test point to a prediction interval (or prediction set), 
    \[(\cD, x) \mapsto \cC(x;\cD)\subseteq \cY.\]
    Assume $\cC$ is symmetric in the training data, i.e., $\cC(x;\cD)=\cC(x;\cD_\sigma)$ for any $x\in\cX$, any dataset $\cD$, and any permutation $\sigma$. Assume also that $\cC$ satisfies distribution-free predictive coverage at level $1-\alpha$:
    \[\textnormal{If $(X_1,Y_1),\dots,(X_{n+1},Y_{n+1})$ are exchangeable then $\P(Y_{n+1}\in\cC(X_{n+1};\cD_n))\geq 1-\alpha$}.\]
    Then there exists a symmetric conformal score function $s$ such that $\cC$ is equal to the full conformal prediction interval constructed with this score function.
\end{theorem}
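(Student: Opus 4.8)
The plan is to reverse-engineer a conformal score function from the given procedure $\cC$. The key idea is that full conformal prediction with a score function $s$ includes a hypothesized value $y$ in $\cC(X_{n+1})$ precisely when the test point's score $S^y_{n+1}$ is not among the largest $\approx\alpha$-fraction of the augmented scores $S^y_1,\dots,S^y_{n+1}$. So, given the black-box procedure $\cC$, I want to define a score $s((x,y);\cD)$ on a dataset $\cD$ of size $m$ (for $m=n+1$ in the relevant case) that is large exactly when the data point $(x,y)$ is one that $\cC$ would tend to exclude. The natural candidate: for a dataset $\cD = ((x_1,y_1),\dots,(x_m,y_m))$ and a distinguished point $(x_j,y_j)$ within it, set $s((x_j,y_j);\cD)$ to be (a monotone transformation of) the indicator of whether $y_j \notin \cC(x_j; \cD \setminus \{(x_j,y_j)\})$ — i.e., whether the procedure $\cC$, trained on the other $m-1$ points, fails to cover $(x_j,y_j)$. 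More carefully, to get a genuine real-valued score that breaks ties appropriately and makes the reduction exact, I would define $s((x,y);\cD)$ using the full "membership pattern" of $\cC$ — something like $s((x,y);\cD) = \sum_{i} \mathbbm{1}\{y_i \notin \cC(x_i; \cD\setminus\{(x_i,y_i)\})\}$ counted over the points of $\cD$ "below" $(x,y)$ in some fixed tie-breaking order, plus the indicator for $(x,y)$ itself — but the cleanest route is probably to work at the level of p-values and the characterization $\cC(X_{n+1}) = \{y : p^y > \alpha\}$ from Proposition~\ref{prop:conformal-via-pvalues}.

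Here are the steps I would carry out. \textbf{Step 1:} Define, for the augmented dataset $\cD^y_{n+1}$ and each index $i \in [n+1]$, the leave-one-out coverage indicator $E_i^y = \mathbbm{1}\{(X_i,Y_i) \text{ or } (X_{n+1},y) \notin \cC(\cdot\,;\,\text{other } n \text{ points})\}$, using symmetry of $\cC$ to make this well-defined. \textbf{Step 2:} Show that $\sum_{i=1}^{n+1} E_i^y \le \alpha(n+1)$ must hold for the true augmented dataset — no, this is not deterministic; rather, show that the \emph{expected} count is $\le \alpha(n+1)$ by exchangeability and the coverage hypothesis (each term has expectation $\le \alpha$). \textbf{Step 3:} This is where the real work lies: I need to convert the procedure $\cC$ into a score function $s$ such that the full-conformal rule built from $s$ reproduces $\cC$ \emph{pointwise, deterministically}. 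The trick is to define $s((X_{n+1},y);\cD^y_{n+1})$ so that the event $\{s((X_{n+1},y);\cD^y_{n+1}) \le \hat q^y\}$ coincides with $\{y \in \cC(X_{n+1};\cD_n)\}$. Concretely: let $s((x,y);\cD) = -\mathbbm{1}\{y \in \cC(x; \cD \setminus \{(x,y)\})\} + (\text{small perturbation to enforce symmetry and break ties by a fixed rule on the remaining coordinates})$. Then the augmented scores $S^y_1,\dots,S^y_{n+1}$ take values in a discretized set where the test score is "low" ($=-1+\epsilon$) iff $y \in \cC(X_{n+1};\cD_n)$, and one checks that the $(1-\alpha)(1+1/n)$-quantile threshold sits exactly so that $S^y_{n+1} \le \hat q^y \iff y\in\cC(X_{n+1};\cD_n)$. \textbf{Step 4:} Verify symmetry of the constructed $s$ in its dataset argument (inherited from symmetry of $\cC$ plus a symmetric tie-breaking convention), and conclude via Theorem~\ref{thm:full-conformal} / Proposition~\ref{prop:conformal-via-pvalues} that the full conformal set for $s$ equals $\cC$.

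\textbf{The main obstacle.} The delicate point is Step 3: making the reduction \emph{exact and deterministic} rather than merely "in expectation". A score of the form "indicator of miscoverage of the leave-one-out procedure" is two-valued, so its augmented-quantile-based acceptance region is a coarse set, and I must show this coarse set is literally $\cC(X_{n+1};\cD_n)$ — not just something with the right coverage probability. This requires carefully checking that (i) the count $\sum_i \mathbbm{1}\{(X_i,Y_i) \notin \cC(\cdot;\text{rest})\}$ is controlled pointwise in the right way (it is not — so the two-valued construction alone fails), which forces the "small perturbation" refinement encoding the \emph{full} membership structure of $\cC$ into the real-valued score (e.g.\ lexicographically via $\cC$ evaluated at all leave-one-out subsets), and (ii) that this refined score, under the Replacement-Lemma reformulation $S^y_{n+1} \le \quantile(S^y_1,\dots,S^y_{n+1};1-\alpha)$, recovers exactly $\{y : p^y > \alpha\} = \cC(X_{n+1};\cD_n)$. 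Getting the bookkeeping of this encoding right — while keeping $s$ symmetric — is the crux; I expect the construction to mirror the known proof in \citet{barber2025unifying} / the universality literature, defining $s((x,y);\cD)$ essentially as "$(x,y)$'s rank among $\cD$ under the ordering induced by which points $\cC$ would exclude when each is held out", with ties broken by a fixed symmetric rule on $\cX\times\cY$.
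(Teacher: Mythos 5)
Your proposed score function---the indicator of whether $\cC$, trained on the other points, fails to cover the held-out point---is precisely the construction used in the paper's proof. The gap is in Step 2 and in your ``main obstacle'' paragraph, where you explicitly assert that the count $\sum_i \ind{(x_i,y_i) \notin \cC(\cdot;\textnormal{rest})}$ is \emph{not} controlled pointwise and conclude that the two-valued score ``alone fails,'' pushing you toward an unnecessary lexicographic/perturbation refinement. In fact the count \emph{is} controlled deterministically, and this is the crux you miss: because $\cC$ satisfies coverage under \emph{every} exchangeable distribution, in particular it must satisfy coverage when the $n+1$ data points are drawn by sampling uniformly without replacement from any fixed multiset $\cD=((x_1,y_1),\dots,(x_m,y_m))$. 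For that specific distribution, the test point $(\tilde X_m,\tilde Y_m)$ is a uniformly random element of $\cD$ and the remaining $m-1$ points are (up to permutation) $\cD$ with that element removed; by symmetry of $\cC$, the coverage probability is exactly the empirical average $\frac{1}{m}\sum_{i=1}^m \ind{y_i\in\cC(x_i;\cD_{\setminus(x_i,y_i)})}$, which must therefore be $\geq 1-\alpha$ for \emph{every} $\cD$, deterministically. Applied to the augmented dataset $\cD^y_{n+1}$, this shows $\quantile(S^y_1,\dots,S^y_{n+1};1-\alpha)=0$ for every $y$, and the Replacement Lemma then gives the exact pointwise equality between the full conformal set and $\cC(X_{n+1};\cD_n)$, with no need to encode any further structure into the score. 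The other direction (conformal set $\supseteq \cC$) is immediate from the fact that $S^y_{n+1}=0$ whenever $y\in\cC(X_{n+1};\cD_n)$. In short: your construction is correct, your assessment of the obstacle is not --- the deterministic inequality you rejected in Step 2 is exactly what distribution-free validity gives you when instantiated on finite-population exchangeability.
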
 \index{exchangeability}

This result says that the conformal prediction methodology is so broad---due to the flexibility in choosing the score function $s$---that any method with distribution-free validity can be expressed as a special case. Therefore, if we are restricting ourselves to symmetric methods, it is not fruitful to ask whether we can find a distribution-free method that will perform better than conformal prediction; instead, we should simply search for a better score function if we are not satisfied with the performance of conformal prediction on a particular application.

However, the symmetry assumption is important. As an example of a distribution-free method that does not satisfy symmetry, we can recall the weighted version of conformal prediction (with fixed weights), covered in Chapter~\ref{chapter:weighted-conformal} (see Section~\ref{sec:fixed-weights}); this method achieves marginal coverage under exchangeability, but cannot be expressed as an instance of (unweighted) conformal prediction. 

\begin{proof}[Proof of Theorem~\ref{thm:universality}]
First we define some notation.
Given a dataset $\cD$ and a data point $(x,y)$, we define $\cD_{\setminus(x,y)}$ as the dataset obtained by removing one copy of $(x,y)$, if $(x,y)$ appears (one or more times) in $\cD$, or otherwise simply returning $\cD$.

Now define the score function $s$ as follows: for any dataset $\cD'$ and any point $(x,y)$, let
\[s((x,y);\cD') = \ind{y\not\in\cC(x; \cD'_{\setminus (x,y)})}.\]
The assumption of symmetry on $\cC$ implies that $s$ is a symmetric score function.
Given training data $\cD_n = ((X_i,Y_i))_{i\in[n]}$ and test point $X_{n+1}$, we now claim that $\cC(X_{n+1};\cD_n) = \cC_{\textnormal{CP}}(X_{n+1})$, where $\cC_{\textnormal{CP}}(X_{n+1})$ denotes the output of full conformal prediction (Algorithm~\ref{alg:full-cp}) run with the score function $s$ defined above, and with $\cD_n$ as the training data.

As usual for full conformal, for any $y\in\cY$, let $\cD^y_{n+1}$ denote the augmented dataset,
\[\cD^y_{n+1} = ((X_1,Y_1),\dots,(X_n,Y_n),(X_{n+1},y)),\]
and note that $(\cD^y_{n+1})_{\setminus (X_{n+1},y)} = \cD_n$ (or, we may have that $(\cD^y_{n+1})_{\setminus (X_{n+1},y)}$ is equal to some permutation of $\cD_n$, in the case that the data point $(X_{n+1},y)$ happens to be equal to one of the training points $(X_i,Y_i)$).

By definition of full conformal prediction, we have
\[\cC_{\textnormal{CP}}(X_{n+1}) = \{y\in\cY : S^y_{n+1} \leq \quantile(S^y_1,\dots,S^y_n; (1-\alpha)(1+1/n))\},\]
where 
\[S^y_i = s((X_i,Y_i);\cD^y_{n+1}), \ i\in[n], \quad S^y_{n+1} = s((X_{n+1},y);\cD^y_{n+1}).\]
In particular, by definition of $s$, we see that 
\[S^y_{n+1} = \ind{y\not\in\cC(X_{n+1};(\cD^y_{n+1})_{\setminus (X_{n+1},y)})} = \ind{y\not\in\cC(X_{n+1};\cD_n)},\]
where for the last step we use the fact that $(\cD^y_{n+1})_{\setminus (X_{n+1},y)}$ is equal to $\cD_n$ up to a permutation, and $\cC$ is a symmetric procedure.

Below, we will verify the following claim:
\begin{equation}\label{eqn:universality_claim}
\textnormal{For any $y\in\cY$, \ $\quantile(S^y_1,\dots,S^y_{n+1}; 1-\alpha) = 0$.}\end{equation}
Assuming that this is true, then for any $y\in\cY$, we have
\begin{multline*}S^y_{n+1} = 0 \ \Longleftrightarrow \ 
 S^y_{n+1} \leq \quantile(S^y_1,\dots,S^y_{n+1}; 1-\alpha) \\ \ \Longleftrightarrow \ 
 S^y_{n+1} \leq \quantile(S^y_1,\dots,S^y_n; (1-\alpha)(1+1/n)) \ \Longleftrightarrow \ y\in\cC_{\textnormal{CP}}(X_{n+1}),\end{multline*}
 where the first step holds by~\eqref{eqn:universality_claim} together with the fact that $S^y_{n+1}\in\{0,1\}$ by definition, the second step holds by the Replacement Lemma (Lemma~\ref{lem:n+1-to-n-reduction}), and the last step holds by definition of the full conformal prediction set. But by our calculations above, $S^y_{n+1}=0$ if and only if $y\in\cC(X_{n+1};\cD_n)$. Therefore, we have verified that $\cC(X_{n+1};\cD_n)=\cC_{\textnormal{CP}}(X_{n+1})$.

 To complete the proof, we will now need to verify~\eqref{eqn:universality_claim}. This step relies on the distribution-free coverage property of $\cC$.
In fact, this is an immediate consequence of the following deterministic lemma:
\begin{lemma}\label{lem:lemma_for_universality}
    Let $\cD=((x_1,y_1),\dots,(x_m,y_m))$ be any dataset. Then, if $\cC$ satisfies the assumptions of Theorem~\ref{thm:universality}, it holds that
    \[\sum_{i=1}^m\ind{y_i\in \cC(x_i;\cD_{\setminus (x_i,y_i)})} \geq (1-\alpha)m.\]
In particular, defining $s_i=\ind{y_i \not\in \cC(x_i;\cD_{\setminus (x_i,y_i)})}\in\{0,1\}$ for each $i\in[m]$, it holds that $\quantile(s_1,\dots,s_m;1-\alpha)=0$.
\end{lemma}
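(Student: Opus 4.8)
The plan is to turn the deterministic statement into a probabilistic one by constructing an exchangeable sequence whose empirical distribution is exactly $\widehat{P}_m := \frac{1}{m}\sum_{i=1}^m \delta_{(x_i,y_i)}$, and then apply the distribution-free coverage assumption. Concretely, I would fix the dataset $\cD = ((x_1,y_1),\dots,(x_m,y_m))$ (for the moment treating the $(x_i,y_i)$ as fixed constants, though possibly with repeats) and let $(Z_1,\dots,Z_m)$ be a uniformly random ordering of the list $((x_1,y_1),\dots,(x_m,y_m))$ — that is, $Z_i = (x_{\pi(i)}, y_{\pi(i)})$ for $\pi$ drawn uniformly from $\cS_m$. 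This random vector is exchangeable by construction (it is sampling $m$ points uniformly without replacement from the multiset of values in $\cD$). Now apply the coverage hypothesis in the form with $n = m-1$: the training set is $(Z_1,\dots,Z_{m-1})$ and the test point is $Z_m$, so
\[
\P\big(Y_m \in \cC(X_m ; (Z_1,\dots,Z_{m-1}))\big) \ge 1-\alpha,
\]
where I write $Z_m = (X_m, Y_m)$ for the test point's feature and response.

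The second step is to rewrite the left-hand side. By symmetry of $\cC$ in the training data, $\cC(X_m;(Z_1,\dots,Z_{m-1}))$ depends on $(Z_1,\dots,Z_{m-1})$ only through its multiset of values, which is exactly the multiset $\cD_{\setminus Z_m}$ obtained from $\cD$ by deleting the value $Z_m$ once. So the coverage event is precisely $\{Y_m \in \cC(X_m; \cD_{\setminus (X_m,Y_m)})\}$, which depends only on $Z_m$. Since $Z_m = (x_{\pi(m)}, y_{\pi(m)})$ and $\pi(m)$ is uniform on $[m]$, we get
\[
\P\big(Y_m \in \cC(X_m;\cD_{\setminus (X_m,Y_m)})\big) = \frac{1}{m}\sum_{i=1}^m \ind{y_i \in \cC(x_i; \cD_{\setminus (x_i,y_i)})}.
\]
Combining the two displays yields $\sum_{i=1}^m \ind{y_i \in \cC(x_i;\cD_{\setminus(x_i,y_i)})} \ge (1-\alpha)m$, which is the first claim. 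The "in particular" clause then follows immediately: writing $s_i = \ind{y_i \notin \cC(x_i;\cD_{\setminus(x_i,y_i)})}$, we have $\sum_{i=1}^m s_i \le \alpha m$, so fewer than a $(1-\alpha)$ fraction of the $s_i$ exceed $0$ — more precisely, at least $(1-\alpha)m$ of the $s_i$ equal $0$ — hence $\widehat{F}_s(0) = \frac{1}{m}\sum_i \ind{s_i \le 0} \ge 1-\alpha$, and by definition of the quantile (Definition~\ref{def:quantile}) we get $\quantile(s_1,\dots,s_m;1-\alpha) = 0$.

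The one genuine subtlety — and where I would be careful — is the bookkeeping around repeated values in $\cD$ and the precise meaning of $\cD_{\setminus(x_i,y_i)}$. If $(x_i,y_i)$ occurs with multiplicity $>1$ in $\cD$, then "deleting one copy" must match what happens when we condition on $Z_m$ equal to that value: the remaining $m-1$ points $(Z_1,\dots,Z_{m-1})$ then contain the value $(x_i,y_i)$ with multiplicity one less than in $\cD$, which is exactly $\cD_{\setminus(x_i,y_i)}$ as defined in the proof of Theorem~\ref{thm:universality}. So the identification of the coverage event with $\{y_i \in \cC(x_i;\cD_{\setminus(x_i,y_i)})\}$ goes through verbatim, and the uniform-on-$[m]$ argument for $\pi(m)$ gives the average over $i$ correctly (counting multiplicities on both sides). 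No measurability or regularity issues arise here since everything is a finite combinatorial computation once $\cD$ is fixed; the only input used beyond symmetry is the single inequality furnished by the distribution-free coverage hypothesis applied to this one exchangeable construction.
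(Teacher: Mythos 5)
Your proof is correct and follows essentially the same route as the paper's: introduce a uniformly random permutation of the fixed dataset, which is exchangeable by construction, invoke the distribution-free coverage hypothesis on the resulting training/test split, use symmetry of $\cC$ to identify the training set with $\cD_{\setminus (X_m,Y_m)}$, and observe that $\pi(m)$ being uniform over $[m]$ converts the resulting probability into the claimed average. Your extra care about multiplicities and the explicit verification of the quantile clause are both sound, though the paper leaves these steps implicit.
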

The claim~\eqref{eqn:universality_claim} then follows by applying this lemma, with $m=n+1$ and with $\cD$ given by the augmented dataset $\cD^y_{n+1}=((X_1,Y_1),\dots,(X_n,Y_n),(X_{n+1},y))$.
\end{proof}

\begin{proof}[Proof of Lemma~\ref{lem:lemma_for_universality}]
Let $\sigma\in\cS_m$ be a permutation drawn uniformly at random, and define 
\[(\tilde{X}_i,\tilde{Y}_i) = (x_{\sigma(i)},y_{\sigma(i)}), \ i\in[m].\]
That is, the new dataset $(\tilde{X}_1,\tilde{Y}_1),\dots,(\tilde{X}_m,\tilde{Y}_m)$ is obtained by sampling uniformly without replacement from the list $(x_1,y_1),\dots,(x_m,y_m)$, and therefore satisfies exchangeability. Distribution-free validity of $\cC$ then implies that
\[\P\big(\tilde{Y}_m\in\cC(\tilde{X}_m;((\tilde{X}_i,\tilde{Y}_i))_{i\in[m-1]})\big) \geq 1-\alpha.\]
We can rewrite this as
\[1-\alpha\leq \P\big(y_{\sigma(m)}\in\cC(x_{\sigma(m)};((x_{\sigma(i)},y_{\sigma(i)}))_{i\in[m-1]})\big) =\P\big(y_{\sigma(m)}\in\cC(x_{\sigma(m)};\cD_{\setminus (x_{\sigma(m)},y_{\sigma(m)})}\big) ,\]
where $\sigma\in\cS_m$ is drawn uniformly at random. Here the last step holds since we  have assumed symmetry of $\cC$, and, up to permutation, the dataset $((x_{\sigma(i)},y_{\sigma(i)}))_{i\in[m-1]}$ is equal to $((x_i,y_i))_{i\in[m]\setminus \{\sigma(m)\}}$, which (again up to permutation) is equal to $ \cD_{\setminus (x_{\sigma(m)},y_{\sigma(m)})}$.
Finally, since the $(x_i,y_i)$'s are treated as fixed while $\sigma$ is a uniformly random permutation (and thus $\sigma(m)$ is distributed uniformly over $\{1,\dots,m\}$), we have
\[\P\big(y_{\sigma(m)}\in\cC(x_{\sigma(m)};\cD_{\setminus (x_{\sigma(m)},y_{\sigma(m)})})\big) = \frac{1}{m}\sum_{i=1}^m \ind{y_i\in\cC(x_i;\cD_{\setminus (x_i,y_i)})},\]
as desired.
\end{proof}

\index{universality of conformal prediction|)}

\section*{Bibliographic notes}
\addcontentsline{toc}{section}{\protect\numberline{}\textnormal{\hspace{-0.8cm}Bibliographic notes}}

Conformal prediction with a randomized score function, presented in Section~\ref{sec:cp-random-1}, appears in \cite{vovk2005algorithmic}. That same work also defines the smoothed version of the conformal prediction interval (described in Section~\ref{sec:cp-random-2}), and establishes this algorithm's guarantee of exact coverage, given in Theorem~\ref{thm:cp-smoothed}.

The closed-form calculation of the full conformal prediction interval $\cC(X_{n+1})$ in the special case of linear regression (as in Proposition~\ref{prop:full_CP_leastsq}) and related methods, such as ridge regression, is due to \cite{nouretdinov2001ridge}. \cite{lei2019fast} develops the piecewise linear homotopy approach for computing $\cC(X_{n+1})$ in the case of Lasso (i.e., $\ell_1$-penalized linear regression \citep{tibshirani1996regression}), which is presented in Section~\ref{sec:fullCP-Lasso}. The properties of the Lasso that are leveraged for this construction are related to those developed earlier in the selective inference literature, e.g., \cite{tibshirani2016exact,lee2014exact}.

The discretized version of full conformal prediction, presented in Proposition~\ref{prop:discretized_CP}, appears in the work of \cite{chen2018discretized}. Related works in the literature include
\cite{chen2016trimmed}, which examines the question of reducing the range of values $y\in\cY$ that need to be considered for inclusion in the prediction set $\cC(X_{n+1})$; \cite{ndiaye2019}, which proposes a homotopy-based algorithm for more efficient computation of $\cC(X_{n+1})$; and \cite{Ndiaye2022root-finding}, which proposes a root-finding algorithm in general settings where the model $\hf(\cdot;\cD)$ is fitted via solving an optimization problem. Furthermore, as mentioned in the bibliographic notes of Chapter~\ref{chapter:cv}, \cite{ndiaye2022stable} 
shows that assuming a certain type of algorithmic stability can also enable a computationally efficient approximation for the full conformal prediction set.

The universality of conformal prediction, stated in Theorem~\ref{thm:universality}, appears in \citet{vovk2005algorithmic} (see also \citet{vovk2025universality,vovk2026randomness} for extensions to the setting of methods that guarantee coverage only for i.i.d.\ data, rather than for all exchangeable distributions). This result can be viewed as a modern perspective on classical results demonstrating that a permutation test is universal as a test of exchangeability (see, e.g., \cite{scheffe1943measure}).

\section*{Exercises}
\addcontentsline{toc}{section}{\protect\numberline{}\textnormal{\hspace{-0.8cm}Exercises}}
\begin{enumerate}[font=\bfseries, label={\thechapter.\arabic*}, labelsep=1em, itemsep=1em]
\item This exercise will examine a randomized version of full conformal prediction based on subsampling. 
Suppose that instead of comparing the test score $S^y_{n+1}$ against all $n$ training scores $S^y_1,\dots,S^y_n$, we compare against a random subsample, $\{S^y_i\}_{i\in \cI}$. (Here $\cI\subseteq[n]$ is sampled at random, e.g., by choosing a random subset of $[n]$ of a fixed size.) The prediction interval is then given by

$$\cC(X_{n+1}) = \left\{y \in\cY  : S^y_{n+1}\leq\quantile\left( (S^y_i)_{i\in\cI} ; (1-\alpha)(1+1/|\cI|)\right)\right\}.$$

Prove marginal coverage for this modified interval, under the usual assumptions of exchangeability of the data and symmetry of the score function, and assuming also that $\cI$ is sampled independently of the data.
\item In this exercise we consider an additional example of a score function for which the full conformal prediction set can be computed efficiently. Consider $K$-nearest-neighbor regression in the setting $\cY=\R$: the fitted model is given by
\[\hf(x;\cD) =\frac{1}{K}\sum_{i\in N(x,\cD)}y_i,\]
where for a dataset $\cD = ((x_1,y_1),\dots,(x_{n+1},y_{n+1}))$, the set $N(x,\cD)\subseteq[n+1]$ indexes the $K$ nearest neighbors of $x$ among $x_1,\dots,x_{n+1}$ (with ties handled via any tie-breaking rule, if needed). 
Show that this regression algorithm can be expressed in the form of~\eqref{eqn:hat_f_is_linear_in_y} (which, as explained in Section~\ref{sec:special_case_linear_regression}, means that the full conformal prediction set can be computed efficiently).
\item Consider the setting of a real-valued response, $\cY=\R$. Fix any sample size $n\geq 1$. For any integer $M\geq 1$ and any $B>0$, and let $y^{(1)} < \dots < y^{(M)}$ denote a uniform grid over the range $[-B,B]$, i.e., $y^{(1)}=-B$, $y^{(2)} = -B + \frac{2B}{M-1}$, etc. Construct an example of a distribution $P$ and a symmetric score function $s$, such that the naive discretization strategy for constructing $\cC(X_{n+1})$ (as defined in~\eqref{eqn:full_CP_grid_informal}) has zero marginal coverage, for any choice of $M$ and $B$.
\item Let $\cA$ be any regression algorithm, which inputs a dataset $\cD=((x_1,y_1),\dots,(x_m,y_m))$ and returns a fitted model $\hf(\cdot;\cD)$. We do not assume that $\cA$ is symmetric. Define a randomized score function $s((x,y);\cD,\xi)$ as follows. Let $\xi\mapsto\sigma_\xi\in\cS_m$ be a map such that, if $\xi\sim\textnormal{Unif}[0,1]$, then $\sigma_\xi$ is uniformly distributed over $\cS_m$. Then define the score function
    \[s((x,y);\cD,\xi) = \left|y - \hf(x; \cD_{\sigma(\xi)})\right|.\]
    Show that this randomized score function $s$ is symmetric (i.e., satisfies Definition~\ref{def:symmetric_randomized_score}).
\end{enumerate}

\chapter{Extensions of Conformal Prediction}
\label{chapter:extensions}

We now consider other prediction settings beyond creating a single uncertainty set for a single prediction. The algorithms we study in this section extend conformal prediction outside the traditional setting and highlight opportunities for further work. We begin with conformal risk control, which is a generalization of conformal prediction to handle notions of error other than miscoverage. This yields conformal-type algorithms applicable for prediction tasks where the output is a high-dimensional object, such as in image segmentation, natural language tasks, and so on. Next, we consider multiplicity and outlier detection, explaining how split conformal p-values have a positive dependence that still enables control of the false discovery rate. We then turn to the issue of selective coverage, where we seek to maintain validity even when we focus on a selected subset of test data, rather than marginal coverage. Lastly, we consider the topic of conformal prediction when aggregating multiple models.

\section{Beyond miscoverage: conformal risk control}
\label{sec:conformal_risk_control}
\index{risk control|(}

Conformal prediction guarantees predictive coverage---that $Y_{n+1} \in \cC(X_{n+1})$ with some desired probability---but this notion of accuracy may not be well suited for certain settings.
Particularly in machine learning, the space $\cY$ may be structured such that a failure of coverage, $Y_{n+1}\not\in\cC(X_{n+1})$, is not a particularly informative notion of error. 
For example, $Y$ can be very high-dimensional (e.g., a high-resolution image), in which case it is too strict to cover every coordinate of $Y$.
For such tasks, other notions of error can be more helpful, such as:
\begin{itemize}
    \item \textbf{Coordinatewise coverage rate.} If $Y_{n+1} \in \R^d$, we may want to return a prediction set such that most coordinates of the response $Y_{n+1}$ are contained in their respective prediction intervals, i.e., we want the coordinatewise miscoverage rate
    \begin{equation}\label{eqn:risk_control_loss_example_1}\frac{1}{d} \sum_{j=1}^d \ind{(Y_{n+1})_j \not\in \cC(X_{n+1})_j}\end{equation}
    to be bounded in expectation by some target error level $\alpha$, where our prediction set is of the form $\cC(X_{n+1}) = \cC(X_{n+1})_1\times \dots \times \cC(X_{n+1})_d$.

    \item \textbf{Accuracy in hierarchical classification.} In hierarchical classification, the classes $\cY$ are the leaf nodes of a hierarchy---for example, a `poodle' is an instance of a `dog', which is an instance of an `animal', and so on. We consider making predictions that correspond to nodes in the hierarchy---predicting a leaf node (`poodle') is a high-resolution prediction, whereas an interior node (`animal') is a low-resolution prediction.
    Here, we wish to bound the probability of event that we have made an error,
    \begin{equation}\label{eqn:risk_control_loss_example_2}
        \ind{\hat Y_{n+1} \textnormal{ is not an ancestor of } Y_{n+1} \textnormal{ in the hierarchy}} ,
    \end{equation}
    by some target level $\alpha$.

    \item \textbf{False negative rate.} Consider a setting where $Y_{n+1} \in \{0,1\}^d$,
    with a $1$ indicating the presence of some signal (e.g., the $j$th pixel of the image contains a vehicle). Then
    we might output a vector $\hat{Y}_{n+1} \in \{0, 1\}^d$ indicating where we believe the true signals lie, and would like to require that our set of (estimated) $1$'s contains most of the positives. Here, we would aim to control the false negative rate:
    \begin{equation}\label{eqn:risk_control_loss_example_3}
        \frac{\sum_{j=1}^d \ind{(Y_{n+1})_j = 1, (\hat{Y}_{n+1})_j=0}}{\sum_{j=1}^d \ind{(Y_{n+1})_j=1}}
    \end{equation}
    to be bounded (in expectation) by some target level $\alpha$.
\end{itemize}

Note that marginal predictive coverage can also be expressed as control of the expectation of miscoverage indicator,
\begin{equation}\label{eqn:risk_control_loss_example_4}
\ind{Y_{n+1}\not\in \cC(X_{n+1})}.
\end{equation}
Thus, marginal miscoverage, and the three other examples above, can all be thought of as a \emph{risk}---the expected value of a loss.
The basic idea of conformal risk control is to replace the indicator of miscoverage with other loss functions, to generalize beyond the goal of marginal coverage.
Below we present the extension to arbitrary monotone loss functions.

We now introduce our notation. Let $\cC_\lambda$ denote the set-valued output of a procedure, with some tuning parameter $\lambda$---for instance, $\cC_\lambda(X)$ might denote a prediction interval for the response $Y$. The tuning parameter $\lambda\in\Lambda\subseteq\R$ indexes a nested family of sets, with
\begin{equation}
\label{eq:nested_sets}
    \lambda_1 \leq \lambda_2 \implies \cC_{\lambda_1}(x) \subseteq \cC_{\lambda_2}(x).
\end{equation}
In particular, split conformal prediction with a pretrained score function $s$ corresponds to choosing $\cC_\lambda(x)=\{y\in\cY:s(x,y)\leq\lambda\}$.

As our notion of statistical error, let $L$ be a {\em loss function} on prediction sets, which is assumed to be monotone in $\cC$,
\begin{equation}
\cC \subseteq \cC' \implies L(y, \cC) \ge L(y, \cC').
\label{eq:monotone_loss}
\end{equation}
i.e., larger sets lead to smaller loss. 
For instance, for conformal prediction we have the miscoverage loss $L(y,\cC) = \ind{y\notin\cC}$ (as in~\eqref{eqn:risk_control_loss_example_4}), but this notation can accommodate more general settings such as the examples above. 
We hope to choose a value of $\lambda$ so that the confidence set $\cC_\lambda$ satisfies
\begin{equation}
\label{eq:e_risk_control}
    \E\left[L(Y_{n+1}, \cC_\lambda(X_{n+1}))\right] \le \alpha,
\end{equation}
a property we refer to as \emph{risk control}.

As hinted above, we can extend the split conformal prediction algorithm in this more general setting. The algorithm is as follows. Consider a plug-in estimator of the expected loss at each value of the tuning parameter $\lambda\in\Lambda$, given by
\begin{equation}
    \hat{R}(\lambda) = \frac{1}{n} \sum_{i=1}^n L\left(Y_i, \cC_\lambda(X_i)\right).
\end{equation}
Intuitively, we would expect that we can then choose a value of $\lambda$ satisfying $\hat{R}(\lambda)\leq\alpha$ to achieve the desired bound on expected loss, but this does not lead to risk control due to finite-sample fluctuations in $\hat{R}(\lambda)$. To adjust for this and obtain the guarantee in~\eqref{eq:e_risk_control}, our procedure needs to be slightly more conservative. For a loss $L$ that takes values in $[0,1]$, we select a value of $\lambda$ where the plug-in estimate of risk is slightly smaller than $\alpha$: \index{conformal quantile}
\begin{equation}
    \label{eq:lhat}
    \hat{\lambda} = \inf \left\{ \lambda : \hat{R}(\lambda) \le \alpha - (1-\alpha) / n \right\}.
\end{equation}
See Figure~\ref{fig:risk-control} for a visualization of this parameter selection, and how it relates to conformal prediction: it is exactly equivalent to the selection of the conformal quantile in the case of the miscoverage loss.

\begin{figure}[t]
    \centering
    \includegraphics[width=0.8\linewidth]{\diagramspath risk-control.pdf}
    \caption{\textbf{Visualization of split conformal risk control as it compares to split conformal prediction.} The left-hand plot shows the quantile calculation for split conformal prediction, reframed in terms of miscoverage risk. The right-hand plot shows the parameter selection for conformal risk control for a smooth risk.}
    \commentAlt{The left panel shows a decreasing piecewise constant function, and the right panel shows a continuous strictly decreasing function. See long description.}
    \commentLongAlt{The left panel shows a decreasing piecewise constant function, and the right panel shows a continuous strictly decreasing function. On the left, the vertical axis ranges from $0$ to $1$ and is labeled `empirical miscoverage', with level $\alpha-\frac{1-\alpha}{n}$ marked by a dashed line. The horizontal axis is labeled `scores', and the value $\hat{q}$ is shown where the level is achieved. On the right, the vertical axis ranges from $0$ to $1$ and is labeled `empirical risk', again with level $\alpha-\frac{1-\alpha}{n}$ marked. The horizontal axis is labeled $\lambda$, and the value $\hat\lambda$ is shown where the level is achieved.}
    \label{fig:risk-control}
\end{figure}

The next result shows that this procedure provides risk control. For simplicity, we will assume $\Lambda=\R$. In order to ensure that $\hat\lambda$ is well-defined, we will assume that $\alpha - (1-\alpha) / n>0$, and that for any $(x,y)$, it holds that $\lim_{\lambda\to\infty}L(y,\cC_\lambda(x))=0$ and $\lim_{\lambda\to-\infty}L(y,\cC_\lambda(x))=1$.
\begin{theorem}[Conformal risk control]
\label{thm:risk_control}
 Suppose that $(X_1, Y_1),\dots, (X_{n+1}, Y_{n+1})$ are exchangeable, and that the loss $L$ takes values in $[0,1]$. Assume that the map $\lambda\mapsto L(y,\cC_\lambda(x))$ is right-continuous and is monotone nonincreasing, for any $(x,y)$.
 Then, with $\hat{\lambda}$ selected as in~\eqref{eq:lhat}, we have
 \begin{equation}
     \E\left[L(Y_{n+1}, \cC_{\hat\lambda}(X_{n+1}))\right] \le \alpha.
 \end{equation}
\end{theorem}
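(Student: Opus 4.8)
The plan is to reduce this to an exchangeability argument analogous to the ``via tuning a plug-in estimate of error'' proof of Theorem~\ref{thm:full-conformal} (the split conformal version). First I would introduce the oracle threshold that uses all $n+1$ data points, namely
\[
\tilde\lambda = \inf\left\{\lambda : \frac{1}{n+1}\sum_{i=1}^{n+1} L\big(Y_i, \cC_\lambda(X_i)\big) \le \alpha\right\},
\]
which is well-defined (the set is nonempty since $\hat R(\lambda_{\max})=0$ by the terminal-loss assumption, hence the $(n+1)$-point average at $\lambda_{\max}$ is also $0\le\alpha$) and which is a symmetric function of the $n+1$ pairs. Right-continuity of $\lambda\mapsto L(y,\cC_\lambda(x))$ ensures that the infimum is attained, so $\frac{1}{n+1}\sum_{i=1}^{n+1} L(Y_i,\cC_{\tilde\lambda}(X_i))\le\alpha$ holds surely.

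Next I would show the deterministic inequality $\tilde\lambda \le \hat\lambda$. This is the same bookkeeping as in the split conformal proof: if $\hat R(\lambda) = \frac1n\sum_{i=1}^n L(Y_i,\cC_\lambda(X_i)) \le \alpha - (1-\alpha)/n$, then multiplying by $n$ and adding the $(n+1)$st term, which is at most $1$, gives $\sum_{i=1}^{n+1} L(Y_i,\cC_\lambda(X_i)) \le n\alpha - (1-\alpha) + 1 = (n+1)\alpha$, i.e.\ the $(n+1)$-point average at $\lambda$ is $\le\alpha$. Hence every $\lambda$ in the infimum defining $\hat\lambda$ also lies in the (larger) set defining $\tilde\lambda$, so $\tilde\lambda\le\hat\lambda$. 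By monotonicity of $\lambda\mapsto L(y,\cC_\lambda(x))$ (equivalently, the nestedness~\eqref{eq:nested_sets} together with monotonicity~\eqref{eq:monotone_loss} of $L$), this yields the pointwise bound $L(Y_{n+1},\cC_{\hat\lambda}(X_{n+1})) \le L(Y_{n+1},\cC_{\tilde\lambda}(X_{n+1}))$.

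Finally I would take expectations and invoke exchangeability. Since $\tilde\lambda$ is a symmetric function of $(X_1,Y_1),\dots,(X_{n+1},Y_{n+1})$, exchangeability of the data gives
\[
\E\big[L(Y_{n+1},\cC_{\tilde\lambda}(X_{n+1}))\big] = \E\left[\frac{1}{n+1}\sum_{i=1}^{n+1} L\big(Y_i,\cC_{\tilde\lambda}(X_i)\big)\right] \le \alpha,
\]
where the last step uses the surely-true bound on the $(n+1)$-point average established above. Chaining with the pointwise inequality $L(Y_{n+1},\cC_{\hat\lambda}(X_{n+1}))\le L(Y_{n+1},\cC_{\tilde\lambda}(X_{n+1}))$ completes the proof.

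The only genuinely delicate point is the role of right-continuity: it is what guarantees the infimum $\tilde\lambda$ is actually achieved, so that the empirical $(n+1)$-point risk at $\tilde\lambda$ is $\le\alpha$ rather than merely approaching $\alpha$ from above. Without it one would need a limiting argument, taking $\lambda\downarrow\tilde\lambda$ and using monotone convergence (the loss is bounded in $[0,1]$ and monotone), which also works but is less clean. Everything else is routine, and the structure is deliberately parallel to the existing plug-in-error proof of Theorem~\ref{thm:full-conformal}, with the miscoverage indicator replaced by the general loss $L$ and the conformal quantile replaced by the threshold $\hat\lambda$.
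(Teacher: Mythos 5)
Your proof is correct and follows essentially the same route as the paper's: you introduce the same oracle threshold (the paper calls it $\lambda^*$, you call it $\tilde\lambda$), establish the deterministic comparison $\tilde\lambda\le\hat\lambda$ by the same arithmetic, apply monotonicity of the loss, and close with the exchangeability argument that $\tilde\lambda$ is a symmetric function of all $n+1$ points. The observation that right-continuity is needed so the empirical $(n+1)$-point risk is actually $\le\alpha$ at the infimum matches the paper's Step~1 exactly.
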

Here, the expectation is over both the calibration data (i.e., the randomness in $\hat \lambda$) and the test point, as with conformal prediction.
Note that the monotonicity condition (i.e., requiring $\lambda\mapsto L(y,\cC_\lambda(x))$ to be nonincreasing) is an immediate consequence of assuming nested sets $\cC_\lambda$, as in~\eqref{eq:nested_sets}, and monotonicity of the loss $L$, as in~\eqref{eq:monotone_loss}. The assumption of right-continuity holds in many practical instances, such as split conformal and the other examples above.

In the case of the miscoverage loss, the conformal risk control procedure exactly recovers the split conformal prediction method (as seen in Proposition~\ref{prop:conformal-equivalent-plugin}), and the result of Theorem~\ref{thm:risk_control} exactly recovers the marginal predictive coverage guarantee for split conformal prediction.

\begin{proof}[Proof of Theorem~\ref{thm:risk_control}]

\textbf{Step 1: Deterministic inequalities on monotone risks.} Consider an arbitrary dataset $\cD = ((x_1,y_1),\dots,(x_m,y_m))$.
We will denote the empirical risk on the dataset as 
\begin{equation}
     \hat{R}(\lambda ; \cD) = \frac{1}{m} \sum\limits_{i=1}^{m}L(y_i, \cC_{\lambda}(x_i)),
\end{equation}
and define the following family of thresholds:
\begin{equation}
    \hat{\lambda}(\cD, \beta) = \inf\left\{ \lambda : \hat{R}(\lambda ; \cD)  \leq \beta \right\},
\end{equation}
for $\beta\in(0,1)$.
The right-continuity of $L$ implies that $\hat{R}(\lambda ; \cD)$ is right-continuous for any $\cD$, giving us the following deterministic inequality for any $\beta$:
\begin{equation}
    \label{eq:deterministic-risk-threshold}
    \hat{R}\left( \hat{\lambda}(\cD; \beta); \cD \right) \leq \beta.
\end{equation}

\textbf{Step 2: Applying exchangeability.}
The family $\hat\lambda(\cD,\beta)$, defined above, generalizes our previous choice of threshold in~\eqref{eq:lhat}: we have that $\hat{\lambda} = \hat{\lambda}(\cD_n, \alpha') $, where $\cD_n=((X_1,Y_1),\dots,(X_n,Y_n))$ and $\alpha' = \alpha - (1-\alpha)/n$. However, here we will consider applying this definition to a different dataset---not to the calibration set $\cD_n$, but instead, to the dataset $\cD_{n+1}=((X_i,Y_i))_{i\in[n+1]}$ containing both the calibration set and test point: define
\[\lambda^* = \hat\lambda(\cD_{n+1}, \alpha)\]
(note that here we use the original target risk level $\alpha$, not the modified level $\alpha'$).
Since the data points are exchangeable, and $\lambda^*$ is a symmetric function of the dataset $\cD_{n+1}$, we have
\[\E\left[L(Y_{n+1},C_{\lambda^*}(X_{n+1}))\right]
=\E\left[L(Y_i,C_{\lambda^*}(X_i))\right], \]
for all $i\in[n+1]$ (since, by Lemma~\ref{lem:conditional_exchangeability}, the data points $(X_1,Y_1),\dots,(X_{n+1},Y_{n+1})$ are exchangeable when we condition on $\lambda^*$, and in particular this implies $L(Y_{n+1},C_{\lambda^*}(X_{n+1}))\eqd L(Y_i,C_{\lambda^*}(X_i))$). Therefore,
\[\E\left[L(Y_{n+1},C_{\lambda^*}(X_{n+1}))\right]
= \frac{1}{n+1}\sum_{i=1}^{n+1}\E\left[L(Y_i,C_{\lambda^*}(X_i))\right] = \E\left[\hat{R}(\lambda^*;\cD_{n+1})\right]\leq \alpha,\]
where the first step holds by exchangeability, the second step by definition of $\hat{R}$, and the third step by~\eqref{eq:deterministic-risk-threshold}.

\textbf{Step 3: proving the bound.}
Since we have assumed $L$ takes values in $[0,1]$, we have 
\[\hat{R}(\lambda;\cD_{n+1})\leq \frac{1}{n+1} + \frac{n}{n+1}\hat{R}(\lambda;\cD_n).\] 
Therefore, for any $\lambda$, by definition of $\alpha'$ we have
\[\hat{R}(\lambda;\cD_n)\leq \alpha' \ \Longrightarrow \ \hat{R}(\lambda;\cD_{n+1})\leq \alpha,\]
and so it must hold that $\lambda^*\leq \hat\lambda$. Monotonicity of $L$ implies that $L(Y_{n+1},C_{\hat\lambda}(X_{n+1}))\leq L(Y_{n+1},C_{\lambda^*}(X_{n+1}))$ holds surely, and therefore
\[\E\left[L(Y_{n+1},C_{\hat\lambda}(X_{n+1}))\right]\leq \E\left[L(Y_{n+1},C_{\lambda^*}(X_{n+1}))\right]\leq \alpha.\]
\end{proof}
\index{risk control}

\section{Family-wise error rate for multiple test points}\label{sec:multiplicity_prediction_FWER}
\index{multiple testing|(}
\index{family-wise error rate|(}

Conformal prediction is typically presented as a method for constructing a prediction interval for a single test point $X_{n+1}$, given a training set of size $n$.
However, it is often of interest to provide prediction intervals for multiple test points in a batch.
This is a question of multiplicity: do we need to adjust our construction of the prediction sets to account for the fact that we are performing inference for multiple test points?

Throughout this section, we will use our previous notation for the split conformal prediction setting: we write $\cD_n=((X_i,Y_i))_{i\in[n]}$ to denote the calibration set, while $s:\cX\times\cY\to\R$ is the pretrained score function. 
Given a set of test data points $X'_1, \ldots, X'_m$, we might be interested in obtaining a guarantee of the form
\begin{equation}\label{eqn:FWER_goal}
    \P\big( Y'_1 \in \cC(X'_1), \ldots, Y'_m \in \cC(X'_m) \big) \ge 1-\alpha_{\textnormal{FWER}},
\end{equation}
meaning \emph{all} of the $m$ prediction sets cover (with probability at least $1-\alpha_{\textnormal{FWER}}$). 
Here $\alpha_{\textnormal{FWER}}$ is a bound on the family-wise error rate: the probability of \emph{at least one} miscoverage among our $m$ prediction sets.

Let us now consider how the split conformal prediction method would perform relative to this goal. Since the prediction set is given by $\cC(x) = \{y\in\cY: s(x,y)\leq \hat{q}\}$, 
where $\hat{q} = \quantile(S_1,\dots,S_n;(1-\alpha)(1+1/n))$, 
we can rephrase the goal~\eqref{eqn:FWER_goal} above as
\begin{equation}
    \label{eq:multiplicity-scores}
    \P\left( S'_1 \leq \hat{q}, \ldots, S'_m \leq \hat{q} \right) \ge 1-\alpha_{\textnormal{FWER}},
\end{equation}
where the test point scores are given by $S'_i = s(X'_i,Y'_i)$.
The key challenge is that the events $ \{S'_i \leq \hat{q} \}_{i \in [m]}$ are not independent, since the conformal quantile $\hat{q}$ (which is random) provides a threshold that is shared across all $m$ events.

In this next result, we consider both the finite-sample setting, where the number of calibration points $n$ is fixed, and an asymptotic regime,
where the number of calibration points is $n\rightarrow\infty$ (while the number of test points $m$, and the pretrained score function $s$, remain fixed). To accommodate both of these settings,
we will write $\hat{q}_n$ (rather than $\hat{q}$) to emphasize that $n$ may be increasing.
\begin{proposition}[The FWER of split conformal prediction (asymptotic)]
    \label{prop:asymptotic-fwer-conformal}
    Let $(X'_1,Y'_1),\ldots,(X'_m,Y'_m),(X_1,Y_1),(X_2,Y_2),\dots$ be sampled i.i.d.\ from some distribution $P$. Let $s:\cX\times\cY\to\R$ be a pretrained score function, and for each calibration set size $n\geq 1$, define the split conformal prediction set as
    \[\cC_{\hat{q}_n}(x) = \{y:s(x,y)\leq\hat{q}_n\}\]
    where
    \[\hat{q}_n = \quantile\big(S_1,\dots,S_n;(1-\alpha)(1+1/n)\big)\]
    for scores $S_i = s(X_i,Y_i)$, $i\in[n]$. Define $q^* = \quantile(F,1-\alpha)$, where $F$ is the CDF of $s(X,Y)$ when $(X,Y)\sim P$.  
    
    Then for any fixed $n,m\geq 1$,
    \begin{equation}
        \P\left( Y'_1 \in \cC_{\hat{q}_n}(X'_1), \ldots,  Y'_m \in \cC_{\hat{q}_n}(X'_m) \right) \geq (1-\alpha)^m.
    \end{equation}
    Moreover, if $F(q)$ is continuous and strictly increasing at $q=q^*$, then for any fixed $m\geq 1$,
    \begin{equation}
        \lim_{n\rightarrow\infty}\P\big( Y'_1 \in \cC_{\hat{q}_n}(X'_1),  \ldots, Y'_m \in \cC_{\hat{q}_n}(X'_m) \big) = (1-\alpha)^m.
    \end{equation}
\end{proposition}
\begin{proof}[Proof of Proposition~\ref{prop:asymptotic-fwer-conformal}]
    By construction of the split conformal prediction set, $Y'_i\in\cC_{\hat{q}_n}(X'_i)$ holds if and only if $s(X'_i,Y'_i)\leq \hat{q}_n$. Since the test point scores $S'_i=s(X'_i,Y'_i)$ are i.i.d.\ draws from the CDF $F$, and $\hat{q}_n$ depends only on the calibration data (and is therefore independent of the test scores), we have
    \[
        \P\left(S'_1\leq \hat{q}_n, \ldots \, S'_m \leq \hat{q}_n\right)
        = \E\left[\P\left(S'_1 \leq \hat{q}_n , \ldots , S'_m\leq \hat{q}_n\mid\hat{q}_n\right)\right] = \E\left[F(\hat{q}_n)^m\right].
    \]
    The function $t\mapsto t^m$ is convex on $t\geq 0$, so $\E[F(\hat{q}_n)^m] \geq \E[F(\hat{q}_n)]^m$ by Jensen's inequality. Finally, by definition of $F$,  we can write $F(\hat{q}_n) = \P(s(X'_1,Y'_1) \leq \hat{q}_n\mid \hat{q}_n) = \P(Y'_1\in\cC_{\hat{q}_n}(X'_1)\mid \hat{q}_n)$, where $(X'_1,Y'_1)\sim P$ is a test point. Thus, by the tower law,
    \[\E[F(\hat{q}_n)] = \E[\P(Y'_1\in\cC_{\hat{q}_n}(X'_1)\mid \hat{q}_n)] = \P(Y'_1\in\cC_{\hat{q}_n}(X'_1)) \geq 1-\alpha,\]
    where the last step holds by the distribution-free marginal coverage guarantee for the split conformal prediction method. This completes the proof for the finite-sample case.
    
    Next, assume $F(q)$ is continuous at $q=q^*$, and that $q^*$ is the unique $(1-\alpha)$-quantile, then for any fixed $m\geq 1$. By Theorem~\ref{thm:splitCP_asymp_formal_qn}, it holds almost surely that $\hat{q}_n\to q^*$. Since $F$ is assumed to be continuous at $q^*$, we then have $F(\hat{q}_n)\to F(q^*) = 1-\alpha$, almost surely. By the dominated convergence theorem, then,  it holds that $\E\left[F(\hat{q}_n)^m\right] \to (1-\alpha)^m$, which proves the desired result for the asymptotic setting.
\end{proof}

This proposition reveals a fundamental fact about FWER control with conformal prediction: it is not possible to do much better than a union bound. That is, the aim of FWER control, as stated in~\eqref{eqn:FWER_goal}, will only hold (asymptotically) if we choose
\[1-\alpha_{\textnormal{FWER}} = (1-\alpha)^m \ \Longleftrightarrow \ \alpha = 1 - (1-\alpha_{\textnormal{FWER}})^{1/m} \approx  \alpha_{\textnormal{FWER}}/m,\]
i.e., this is nearly the Bonferroni correction.
This is because, as the calibration set size $n$ approaches infinity, the events $\{ S'_i \leq \hat{q}_n \}_{i \in [m]}$ become approximately independent. 

\index{family-wise error rate|)}

\section{Outlier detection with false discovery rate control}\label{sec:multiplicity__outlier_FDR}
\index{exchangeability!testing|(}
\index{false discovery rate|(}
\index{outlier detection|(}

In this section, we turn to a different setting where multiplicity arises: the problem of outlier detection.
As before, assume we have i.i.d.~calibration points $(X_1,Y_1),\dots,(X_n,Y_n)$ from a distribution $P$, and an independent set of test points $(X_1',Y'_1),\dots,(X_m',Y_m')$. In contrast to the prediction problem, our goal now is to detect outliers (i.e., test points $(X'_i,Y'_i)$ that were \emph{not} drawn from $P$), rather than to construct prediction intervals. In particular, $Y'_1,\dots,Y_m'$ are observed, which was not previously the case.

To formalize the question, for each test point $i\in[m]$ define the null hypothesis
\[H_{0,i}: (X'_i,Y'_i) \sim P.\]
Rejecting $H_{0,i}$ means that we claim to have detected that test point $(X'_i,Y'_i)$ is an outlier. 
For the outlier detection problem, when we are simultaneously testing $m$ many data points, we generally wish to detect outliers without incurring too many false positives. 
In particular, consider a procedure that  sees both the calibration points and the test points and then returns a subset $\cR \subseteq \{1,\dots,m\}$ of the test points that are believed to be outliers---this is the set of \emph{rejections} among the null hypotheses $H_{0,1},\dots,H_{0,m}$ being tested. 

When the number of test points $m$ is large, a natural notion of error in this setting is the false discovery rate (FDR), defined as 
\begin{equation}
\label{eq:fdr_def_outlier_detection}
\textnormal{FDR} = \E\left[\frac{\sum_{i=1}^m\ind{i\in\cR, \,H_{0,i}\textnormal{ is true}}}{\max\{1,|\cR|\}}\right],
\end{equation}
the expected proportion of false rejections.

We now consider a conformal p-value approach towards testing the $m$ null hypotheses with FDR control.
Assume that we have a
pretrained score function $s : \cX\times\cY \to \R$. We will test each null hypothesis $H_{0,i}$ by comparing the score $S'_i=s(X'_i,Y'_i)$ against the calibration set scores $S_1,\dots,S_n$. Concretely, we consider the conformal p-value for test point $i$ (recall Section~\ref{sec:conformal_as_perm}):
\begin{equation}
\label{eq:outlier-pvalue}
p_i = \frac{1 + \sum_{j=1}^n \ind{S'_i \le S_j}}{n+1}.
\end{equation}
Notice that the p-values $(p_1,\dots,p_m)$ are dependent, because they are all calculated using the same calibration set. This is the main challenge we address in this section---we will characterize the dependence structure and explain how this enables the use of existing FDR control algorithms together with these conformal p-values.

\begin{figure}[t]
    \centering
    \includegraphics[width=\linewidth]{\diagramspath p-value-bias-calibration-set.pdf}
    \caption{\textbf{Visualization of dependence among conformal p-values for outlier detection.} The two panels plot the conformal p-value for two realizations of the calibration set. Specifically, the calibration set determines the map from a test point score, $S'_i$, to the corresponding conformal p-value, $p_i$, as specified in~\eqref{eq:outlier-pvalue}; in each panel, this is illustrated by the solid piecewise constant function. This function is essentially an empirical estimate of $s\mapsto 1-F(s)$, where $F$ is the CDF of the distribution of the score $s(X,Y)$ on a new data point $(X,Y)\sim P$; this population-level function is illustrated by the dotted line. We can see that, given a draw of the calibration set, the conformal p-values $p_i$ for test points $i=1,\dots,m$ are dependent. The calibration scores on the left are smaller than average (due to chance), so the p-values $p_i$ assigned to the test points will tend to be smaller, while in the right-hand panel the opposite is true. 
    }
    \commentAlt{Two plots each have vertical axis `p-value' and horizontal axis `score'. On each plot, $8$ scores are marked, labeled as the calibration score $s(X_i,Y_i)$. A decreasing step function is shown, labeled as the conformal p-value. See long description.}
    \commentLongAlt{Two plots each have vertical axis `p-value' and horizontal axis `score'. On each plot, $8$ scores are marked, labeled as the calibration score $s(X_i,Y_i)$. A decreasing step function is shown, labeled as the conformal p-value. For the left plot, the scores are slightly lower, and the step function mostly lies slightly below a dashed line labeled $1-F(s)$. For the right plot, the scores are slightly higher, and the step function mostly lies slightly above the same dashed line.}
    \label{fig:outlier-pvalues}
\end{figure}

\subsection{FDR control with the Benjamini--Hochberg procedure}
\index{Benjamini--Hochberg procedure}
Controlling the FDR is well understood if we have independent p-values. For example, the widely-used Benjamini--Hochberg algorithm is defined as
\begin{equation}\label{eqn:define_BH_procedure}\cR = \left\{i : p_i \leq \frac{\alpha_{\textnormal{FDR}}\hat{k}}{m}\right\}\textnormal{ where }\hat{k} = \max\left\{k : \sum_{i=1}^m \ind{p_i\leq\frac{ \alpha_{\textnormal{FDR}} k}{m}} \geq k\right\},\end{equation}
where $\alpha_{\textnormal{FDR}} \in [0,1]$ is the target FDR level.
When applied to independent p-values, this algorithm has the property that $\textnormal{FDR}\leq \alpha_{\textnormal{FDR}}$.

This leads to the main idea of this section: for the outlier detection problem, even though the conformal p-values $p_i$ defined above are not independent, we can still control the FDR using the Benjamini--Hochberg algorithm. 
This is because the conformal p-values have a particular form of dependence:
\begin{definition}[Positive regression dependence on a subset (PRDS)]
A random vector $W \in \R^m$ satisfies the PRDS property with respect to a set $I_0 \subseteq \{1,\dots,m\}$ if for any $i \in I_0$ and any nondecreasing set $A$, $\P(W \in A \mid W_i = w_i)$ is nondecreasing in $w_i$.
\end{definition} \index{positive regression dependence on a subset (PRDS)}
Here, a nondecreasing set $A \subseteq \R^m$ is a set where if $(a_1,\dots,a_m) \in A$, then any vector $(b_1,\dots,b_m)$, with $b_i\geq a_i$ for all $i\in[m]$, must also lie in $A$.
Importantly, the Benjamini--Hochberg algorithm is known to control the FDR under this type of dependence---i.e., for PRDS p-values, the Benjamini--Hochberg procedure~\eqref{eqn:define_BH_procedure} guarantees $\textnormal{FDR}\leq\alpha_{\textnormal{FDR}}$.

We now state the main result: the PRDS property of conformal p-values constructed with a shared calibration set. 
\begin{theorem}[Conformal p-values are PRDS]
\label{thm:prds}
Suppose $(X_1,Y_1),\dots,(X_n,Y_n),(X'_1,Y'_1),\dots,(X'_m,Y'_m)$ are independent. Assume also that $(X_i,Y_i)\sim P$ for all $i\in[n]$, and that $(X'_i,Y'_i)\sim P$ for all $i\in I_0$. Let $s$ be a pretrained score function, and assume that all calibration and test scores are distinct almost surely. Then, the conformal p-values~\eqref{eq:outlier-pvalue} are PRDS on the subset $I_0$.
\end{theorem}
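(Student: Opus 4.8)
The plan is to express each conformal p-value $p_i$ as a monotone nonincreasing function of the test score $S'_i$, where the function in question depends on the calibration scores; the randomness entangling the $p_i$'s across $i$ is entirely carried by the empirical distribution of the calibration scores $\widehat{F}$. Concretely, write $\widehat{F}(t) = \frac{1}{n}\sum_{j=1}^n \ind{S_j \le t}$, so that $p_i = \frac{1 + n(1-\widehat{F}(S'_i))}{n+1} = \frac{1 + n - n\widehat{F}(S'_i)}{n+1}$; the key structural observation is that $p_i$ is a (deterministic) nonincreasing function of $S'_i$ and a nonincreasing functional of $\widehat{F}$ (pointwise), while, crucially, conditional on $\widehat{F}$ the p-values $p_1,\dots,p_m$ are mutually independent, since the test points are independent of each other and of the calibration set. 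So the whole dependence structure among the $p_i$'s is a ``common factor'' structure mediated by the single random object $\widehat{F}$.

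First I would reduce to the case $i = n+1$ in spirit: fix $i \in I_0$, and I want to show that for any nondecreasing set $A \subseteq \R^m$, the map $w_i \mapsto \P((p_1,\dots,p_m) \in A \mid p_i = w_i)$ is nondecreasing. Because $p_i$ is a strictly monotone (hence invertible, on the support) function of $S'_i$ given $\widehat{F}$, conditioning on $p_i = w_i$ is essentially conditioning on $S'_i$ taking a particular value; and since $A$ is a nondecreasing set in the $p$-coordinates, I would flip to a ``nonincreasing set'' statement in the $S'$-coordinates, or simply keep working with the $p$'s directly. The cleanest route: condition further on $\widehat{F}$. Then $\P((p_j)_{j} \in A \mid p_i = w_i, \widehat{F}) = \P((p_j)_{j\neq i} \in A_{w_i} \mid \widehat{F})$ where $A_{w_i}$ is the slice of $A$ at $p_i = w_i$, and by the conditional independence this equals $\prod_{j \neq i}$-type expression; since $A$ is nondecreasing, the slice $A_{w_i}$ grows as $w_i$ grows, so this conditional probability is nondecreasing in $w_i$ for each fixed $\widehat{F}$. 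The remaining issue is that we must then average over the conditional law of $\widehat{F}$ given $\{p_i = w_i\}$, and this conditional law itself depends on $w_i$.

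So the main obstacle is exactly this last point: $\widehat{F} \mid \{p_i = w_i\}$ (equivalently $\widehat{F} \mid \{S'_i = t\}$) is not the same for all $w_i$, because observing a small test score $S'_i$ makes it slightly more likely that the calibration scores are small too — no, wait, $S'_i$ is independent of the calibration set, so actually $\widehat{F} \perp S'_i$. That independence is the whole point and makes the averaging step trivial: the conditional law of $\widehat{F}$ given $p_i = w_i$ does \emph{not} depend on $w_i$ (since $\widehat{F}$ is a function of the calibration scores alone, and $S'_i$, hence $p_i$, is independent of these). Therefore $\P((p_j)_j \in A \mid p_i = w_i) = \E_{\widehat{F}}\big[\P((p_j)_{j} \in A \mid p_i = w_i, \widehat{F})\big]$ with a $w_i$-free outer expectation, and monotonicity in $w_i$ passes through the expectation. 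The genuine technical care needed is in (a) handling ties — which is why the hypothesis assumes all scores distinct a.s., so that $t \mapsto p_i$ built from $S'_i$ is almost surely well-behaved and the slicing argument is clean; and (b) making the ``slice of a nondecreasing set is nondecreasing, and conditional independence gives monotone product structure'' step rigorous, i.e. verifying that $(p_j)_{j \neq i}$ conditionally on $\widehat{F}$ has a distribution for which $\P(\cdot \in A_w)$ is monotone in $w$ simply because $A_w \subseteq A_{w'}$ for $w \le w'$. I would close by invoking the standard fact (citing it as known) that the Benjamini--Hochberg procedure controls FDR at the nominal level for PRDS p-value families, so that Theorem~\ref{thm:prds} yields FDR control for conformal outlier detection with a shared calibration set.
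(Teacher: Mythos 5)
Your decomposition captures the right structural insight — all the dependence among the $p_j$'s is mediated by a ``common factor'' that is symmetric in the calibration data, and conditionally on that factor the $p_j$'s are independent and the nondecreasing-set slicing argument is easy. But the proof has a genuine gap in the averaging step, where you claim that ``the conditional law of $\widehat{F}$ given $p_i = w_i$ does \emph{not} depend on $w_i$ (since $\widehat{F}$ is a function of the calibration scores alone, and $S'_i$, hence $p_i$, is independent of these).'' That final parenthetical is false: $p_i$ is \emph{not} a function of $S'_i$ alone — it is $p_i = (1 + n - n\widehat{F}(S'_i))/(n+1)$, which depends on both $S'_i$ and $\widehat{F}$. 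The independence $S'_i \indep \widehat{F}$ therefore does not give you $p_i \indep \widehat{F}$; in fact they are dependent. To see it concretely, take $n = 1$ and one calibration score $S$: conditioning on $p_i = 1$ is conditioning on $S'_i \le S$, which (since $S$ and $S'_i$ are i.i.d.) shifts the distribution of $S$, and hence of $\widehat{F}$, stochastically upward relative to conditioning on $p_i = 1/2$. So the outer expectation over $\widehat{F}$ in your argument is in fact $w_i$-dependent, and your claim that ``monotonicity in $w_i$ passes through the expectation'' is unsupported; you would additionally need to argue that the direction in which the conditional law of $\widehat{F}$ shifts also pushes the slice probability up, which is not automatic from what you wrote.

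The paper sidesteps exactly this difficulty by conditioning not on $\widehat{F}$ (a symmetric statistic of the calibration set alone) but on symmetric statistics of the \emph{augmented} dataset $\{S_1,\dots,S_n,S'_i\}$ — concretely, the counts $C_j = \sum_{k=1}^n \ind{S'_j \le S_k} + \ind{S'_j \le S'_i}$ for $j \ne i$. These $C_j$ are symmetric in the augmented set, and $p_i$ is the rank of $S'_i$ within that same augmented set. By exchangeability of the augmented set under the null (all $n+1$ points are i.i.d.\ from $P$), the rank of $S'_i$ is uniform \emph{even conditionally on} symmetric functions of the set, which gives the genuine independence $p_i \indep (C_j)_{j\ne i}$. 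The paper then recovers each $p_j$ as a coordinatewise nondecreasing function of $(p_i, C_j)$, after which your nondecreasing-slice argument goes through cleanly. So the missing ingredient in your proposal is the switch from conditioning on a calibration-only statistic to conditioning on an augmented-set statistic, and the exchangeability argument that makes the resulting independence claim true; without it, the dependence between $\widehat{F}$ and $p_i$ is real and your averaging step does not close.
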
 \index{conformal p-value}
We may compare Theorem~\ref{thm:prds} with the independence property of full conformal p-values in the online setting in Section~\ref{sec:online_conformal_data_reuse}. There, the conformal p-values were computed by re-running conformal prediction at each step (with all previous data used as the calibration set). In that setting, we saw that the conformal p-values are independent. By contrast, the setting studied here has a fixed calibration set that is re-used for computing the p-value for every test point. 

In this setting, the p-values are no longer independent, but the result of Theorem~\ref{thm:prds} establishes that their dependence must be positive. Indeed, Figure~\ref{fig:outlier-pvalues} gives an intuitive illustration of why this should be the case: we see positive dependence among the conformal p-values $p_1,\dots,p_m$ because they are each comparing against the same reference set of scores (i.e., the scores of the calibration set).
For example, if the calibration set happens to contain an unusually high fraction of data points $(X_i,Y_i)$ with a low score $S_i=s(X_i,Y_i)$, then all p-values $p_1,\dots,p_m$ will be more likely to be small.

The practical implication of this theorem is that the Benjamini--Hochberg algorithm with the conformal p-values controls the FDR. We record this fact in the following corollary.
\begin{corollary}[FDR control with conformal p-values]\label{cor:prds_fdr}
   In the setting above, let $\cR$ be the set of rejections of the Benjamini--Hochberg algorithm, when run at target FDR level $\alpha_{\textnormal{FDR}}$ based on the p-values $p_1,\dots,p_m$. Then, the FDR is controlled:
    \begin{equation}
        \E\left[\frac{|\{i\in\cR , H_{0,i}\textnormal{ is true}\}|}{\max\{1,|\cR|\}}\right] \le \alpha_{\textnormal{FDR}}.
    \end{equation}
\end{corollary}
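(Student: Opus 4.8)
The plan is to derive the corollary by assembling two ingredients that are already in place: (i) Theorem~\ref{thm:prds}, which says the vector of conformal p-values $(p_1,\dots,p_m)$ is PRDS on the index set $I_0$ of true null hypotheses; and (ii) the classical fact, recalled just before the corollary, that the Benjamini--Hochberg procedure controls the FDR whenever the p-values are marginally super-uniform under each true null and the vector is PRDS on the set of true nulls. The only work is to check that these hypotheses are genuinely met in our setting, after which the conclusion is immediate.

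First I would verify marginal super-uniformity of the null p-values. Fix $i\in I_0$. By assumption $(X'_i,Y'_i)\sim P$, and it is independent of the calibration data $(X_1,Y_1),\dots,(X_n,Y_n)\iidsim P$; hence the scores $S'_i,S_1,\dots,S_n$ are exchangeable. By the usual argument (Corollary~\ref{cor:perm_test_pval} applied to this list of $n+1$ scores, or equivalently Fact~\ref{fact:exchangeable-properties}), the conformal p-value $p_i=\frac{1+\sum_{j=1}^n\ind{S'_i\le S_j}}{n+1}$ satisfies $\P(p_i\le\tau)\le\tau$ for all $\tau\in[0,1]$, so each null p-value is a valid p-value. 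Next I would invoke Theorem~\ref{thm:prds}: under the stated hypotheses (independence of all data points, $(X_i,Y_i)\sim P$ on the calibration set, $(X'_i,Y'_i)\sim P$ for $i\in I_0$, and all scores distinct almost surely), the vector $(p_1,\dots,p_m)$ is PRDS on $I_0$.

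With both properties established, I would apply the Benjamini--Yekutieli theorem on FDR control of BH under PRDS: running BH at level $\alpha_{\textnormal{FDR}}$ on $(p_1,\dots,p_m)$ yields a rejection set $\cR$ with $\E\big[\frac{|\{i\in\cR:H_{0,i}\text{ true}\}|}{\max\{1,|\cR|\}}\big]\le\frac{|I_0|}{m}\,\alpha_{\textnormal{FDR}}\le\alpha_{\textnormal{FDR}}$, which is exactly the claimed bound.

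The thing to be careful about — more a bookkeeping point than a genuine obstacle — is matching conventions: one must confirm that the precise PRDS notion used in the statement of the BH-under-PRDS theorem (PRDS on the subset of true nulls, with respect to nondecreasing sets) is literally the one delivered by Theorem~\ref{thm:prds}, and that super-uniformity rather than exact uniformity of the null p-values is tolerated by BH (it is). If one wanted a fully self-contained argument instead of citing Benjamini--Yekutieli, the real effort would be reproving FDR control of BH under PRDS from scratch; but since that result is standard and already quoted in the text, once Theorem~\ref{thm:prds} is in hand the corollary follows with essentially no further argument.
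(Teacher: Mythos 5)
Your proposal is correct and matches the paper's intended argument: the corollary is stated in the text as an immediate consequence of Theorem~\ref{thm:prds} combined with the (quoted, not reproved) Benjamini--Yekutieli result that the Benjamini--Hochberg procedure controls FDR for PRDS p-values. The only additional ingredient you supply — marginal super-uniformity of each null $p_i$, via exchangeability of $S'_i$ with the calibration scores — is correct and indeed needed for the cited theorem to apply.
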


Lastly, we remark that the PRDS result of Theorem~\ref{thm:prds} is not unique to the outlier detection setting and holds for split conformal p-values in general, but it is particularly important in the setting of outlier detection since it enables FDR control.

\begin{proof}[Proof of Theorem~\ref{thm:prds}]
Fix any test point in $I_0$---without loss of generality we can assume it is the first one, $(X'_1,Y'_1)$. First, for each $i\in\{2,\dots,m\}$, we define a count
\[C_i = \sum_{j=1}^n \ind{s(X'_i,Y'_i)\leq s(X_j,Y_j)} + \ind{s(X'_i,Y'_i)\leq s(X'_1,Y'_1)}.\]
For intuition, we can observe that, for each $i\in\{2,\dots,m\}$, the quantity
$\frac{1 + C_i}{n+2}$ is the conformal p-value we would obtain for test point $(X'_i,Y'_i)$ if we were comparing to an augmented calibration set $((X_1,Y_1),\dots,(X_n,Y_n),(X'_1,Y'_1))$ rather than only comparing to $\cD_n=((X_1,Y_1),\dots,(X_n,Y_n))$.

\textbf{Step 1: verifying independence of $p_1$ and the $C_i$'s.} First, we will prove that $p_1\indep (C_2,\dots,C_m)$. 
This claim holds by exchangeability of the augmented calibration set $(X_1,Y_1),\dots,(X_n,Y_n),(X'_1,Y'_1)$---the counts $C_i$ depend symmetrically on this augmented dataset, while $p_1$ expresses the rank of $(X'_1,Y'_1)$ within this dataset and is therefore uniformly distributed over $\{\frac{1}{n+1},\dots,\frac{n}{n+1},1\}$, even after conditioning on $C_2,\dots,C_m$, since we have assumed that the scores are distinct almost surely. (We can observe that this argument is nearly identical to that of Theorem~\ref{thm:online-indep-pvals}, which proved independence of conformal p-values in the online testing regime.)

\textbf{Step 2: recovering $p_i$'s from $C_i$'s.} Next, we claim that for each $i\in\{2,\dots,m\}$, 
\[s(X'_i,Y'_i)\leq s(X'_1,Y'_1) \ \Longleftrightarrow \ C_i \geq (n+1)p_1.\]
To see why, define $f(t) = \sum_{j=1}^n \ind{t\leq s(X_j,Y_j)} + \ind{t\leq s(X'_1,Y'_1)}$.
Then by construction, $C_i = f(s(X'_i,Y'_i))$ for each $i\in\{2,\dots,m\}$, and $p_1 = \frac{f(s(X'_1,Y'_1))}{n+1}$. Since $f$ is monotone, and is strictly decreasing over score values, this verifies the claim.
With this calculation in place, we will now see that the original p-values $p_i$ can be recovered as functions of $p_1$ and $C_i$: for each $i\in\{2,\dots,m\}$, by definition of $p_i$ and of $C_i$, we have
\begin{multline*}p_i = \frac{1 + \sum_{j=1}^n \ind{s(X'_i,Y'_i)\leq s(X_j,Y_j)}}{n+1} = \frac{1 + C_i - \ind{s(X'_i,Y'_i)\leq s(X'_1,Y'_1)}}{n+1}\\
= \frac{1 + C_i - \ind{C_i \geq (n+1)p_1}}{n+1}.\end{multline*}

\textbf{Step 3: the PRDS condition.} Now let $A\subseteq\R^m$ be an nondecreasing set. We need to verify that
\[t\mapsto \P((p_1,\dots,p_m)\in A\mid p_1=t)\]
is nondecreasing in $t$. From Step 2, this function can equivalently be written as
\[t\mapsto \P(V_t \in A \mid p_1 =t)\]
where we define the random vector $V_t$  as
\[V_t = \left(t, \frac{1 + C_2 - \ind{C_2 \geq (n+1)t}}{n+1},\dots,\frac{1 + C_m - \ind{C_m \geq (n+1)t}}{n+1}\right).\]
Since we have proved that $p_1 \indep (C_2,\dots,C_m)$ in Step 1, while $V_t$ is a function of $(C_2,\dots,C_m)$, we have $\P(V_t \in A \mid p_1 =t)=\P(V_t\in A)$. Therefore, we now only need to verify that
\[t\mapsto \P(V_t\in A)\]
is nondecreasing in $t$.
Next, we can observe that $t\mapsto V_t$ is  always a coordinatewise monotone nondecreasing function (i.e., this holds for any value of $C_2,\dots,C_m$). Since $A$ is a nondecreasing set, then, it holds almost surely that \[V_t \in A \ \Longrightarrow \ V_{t'}\in A,\]for any $t\leq t'$. 
Therefore, $t\mapsto \P(V_t\in A)$ is an nondecreasing function, as desired.
\end{proof}

\index{false discovery rate|)}
\index{multiple testing|)}
\index{exchangeability!testing|)}
\index{outlier detection|)}

\section{Selective coverage}
\label{sec:selective_cov}
\index{selective inference}

One important use of predictive models and prediction intervals is to identify promising examples for follow-up study. For example, we might have a list of medical drug candidates and we wish to identify candidates that appear to be highly effective, in order to pursue follow-up studies on these candidates. For problems of this flavor where we wish to focus on a set of `interesting' units, the marginal coverage property is insufficient---for the `interesting' units, the coverage from conformal prediction may be far less than the nominal rate. 

For example, suppose that, for a test feature vector $X_{n+1}$, we output a prediction interval of the form $\cC(X_{n+1}) = [\hf(X_{n+1}) - \hat{q},\hf(X_{n+1}) + \hat{q}]$, as would be the case for split conformal prediction run with the residual score. 
The marginal coverage guarantee for conformal prediction ensures that $\P(Y_{n+1}\in\cC(X_{n+1}))\geq 1-\alpha$, on average over all possible test points. But if this unit is selected for followup study only if some criterion $\hf(X_{n+1}) \geq c$ is reached (i.e., only if the predicted output is sufficiently  high), the probability of coverage conditional on the unit being selected for followup testing may be lower---that is, we may have
\begin{equation}
    \label{eq:selective-coverage-violation}
    \P(Y_{n+1}\in\cC(X_{n+1})\mid \hf(X_{n+1}) \geq c) < 1-\alpha.
\end{equation}
This is the problem of selective inference: a guarantee that holds marginally may no longer hold if we condition on the outcome of a (data dependent) selection rule. See Figure~\ref{fig:selective-coverage} for an illustration of a situation where coverage is violated on a selection event.
\begin{figure}[t]
    \centering
    \includegraphics[width=0.8\linewidth]{\diagramspath selective-coverage.pdf}
    \caption{\textbf{Illustration of a selection event causing a coverage violation.} The central gray line represents a predictor $\hat{f}(x)$, and the surrounding band represents a prediction interval constructed at each $x\in\cX$. The shaded region on the right shows the region in which the predicted value $\hat{f}(x)$ exceeds a threshold $c$ shown by the gray dotted line. Within the shaded region, the coverage of the prediction interval is substantially lower than outside the shaded region, indicating a violation of coverage. This is an example of the scenario defined in~\eqref{eq:selective-coverage-violation}.}
    \commentAlt{A scatterplot of $(X,Y)$ data with an increasing trend and higher variability for large $X$. An increasing curve shows a fitted regression function. A shaded region on the right part of the plot begins where the function crosses threshold $c$.}
    \label{fig:selective-coverage}
\end{figure}

We next explain how conformal prediction can be adapted to avoid undercoverage on selected units.
Here, we will ask for coverage conditional on a point being identified for follow-up---that is, conditional on selection.

Consider a selection rule $\cI$ mapping a dataset $\cD\in(\cX\times\cY)^{n+1}$
to a selected subset of data points $\cI(\cD)\subseteq[n+1]$. 
To give a concrete example, the map $\cI$ could select all points $i$ with $\hf(X_i)\geq c$ for some prefitted model $\hf$ and some predetermined threshold $c$, as above. Or, as another example, the selection rule could depend in a more complex way on the entire dataset, by selecting the $k$ points with the largest predicted response $\hf(X_i)$ for some fixed $k$. 
We assume $\cI$ is symmetric, in the following sense:
\begin{equation}\label{eqn:symmetry_of_selection_rule}\sigma(i)\in \cI(\cD) \ \Longleftrightarrow \ i\in\cI(\cD_\sigma),\end{equation}
for any dataset $\cD$, any permutation $\sigma$, and any index $i\in[n+1]$. This condition means that the ordering of the data points does not affect which data points are selected. 

Given such a selection rule, we now turn to the algorithm. The prediction set will take the following form:
\begin{equation}
\label{eq:selective_conformal_set}
  \cC(X_{n+1}) = \{y : S_{n+1}^y \le \hat{q}^y, \ n+1\in\cI(\cD^y_{n+1})\},
\end{equation}
where $S_{n+1}^y$ and $\cD^y_{n+1}$ are defined as in Chapter~\ref{chapter:conformal-exchangeability}. The last part of the condition, $n+1\in\cI(\cD^y_{n+1})$, ensures that we only consider values $y$ that would lead to the test data point being selected (since we are aiming for coverage to hold conditional on this event). The other part of the condition, $S_{n+1}^y \le \hat{q}^y$ has the same form as in full conformal prediction, but we will need to define $\hat{q}^y$ in a new way to guarantee that coverage holds when restricting attention to selected cases.

The main idea for achieving selective coverage is that $\hat{q}^y$ is computed using only the scores from the set of selected points (rather than using all scores as for marginal coverage, in Chapter~\ref{chapter:conformal-exchangeability}). These are the data points that, \emph{even after conditioning on selection}, are exchangeable with a test point. Formally, we define
\begin{equation}\label{eq:selective_conformal_set_qhat}
    \hat{q}^y = \quantile\left((S_{i}^y)_{i \in \cI^y}; (1-\alpha)(1+1/|\cI^y|) \right),
\end{equation}
where $\cI^y = \{i\in[n] : i\in\cI(\cD^y_{n+1})\}$ is the subset of training points that are selected when the selection rule is run with test point $(X_{n+1},y)$ (or, if this set is empty, then we set $\hat{q}^y = +\infty$).
With this in hand, we have the following result.
\begin{theorem}[Coverage guarantee of selective conformal prediction]
    \label{thm:selective_conformal_coverage}
    Suppose $(X_1,Y_1),...,(X_{n+1},Y_{n+1})$ are exchangeable. Let $s$ be a symmetric score function, and let $\cI$ be a symmetric selection rule as in~\eqref{eqn:symmetry_of_selection_rule}. Assume the event $n+1\in\cI(\cD_{n+1})$ (i.e., the event that the test point is selected) has positive probability.
    Then the prediction set $\cC(X_{n + 1})$ defined above in~\eqref{eq:selective_conformal_set} and~\eqref{eq:selective_conformal_set_qhat} satisfies
    \begin{equation}
        \P\left( Y_{n + 1} \in \cC(X_{n + 1}) \mid n+1\in\cI(\cD_{n+1}) \right) \geq 1-\alpha.
    \end{equation}
\end{theorem}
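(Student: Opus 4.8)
The plan is to adapt the argument used for the bin-conditional coverage guarantee (Theorem~\ref{thm:bin_conditional}) and its underlying machinery, Lemma~\ref{lem:exch_conditional_subvector}, replacing the fixed region $\cZ_0$ with the (data-dependent) selection rule $\cI$. Write $Z_i = (X_i,Y_i)$ throughout. As in the first proof of Theorem~\ref{thm:full-conformal}, the coverage guarantee will only involve the scores obtained when $y = Y_{n+1}$, for which the augmented dataset $\cD^{Y_{n+1}}_{n+1}$ is exactly $\cD_{n+1}$; accordingly, set $S_i = s(Z_i;\cD_{n+1})$ for $i\in[n+1]$.

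First I would establish the key lemma: for any nonempty $I\subseteq[n+1]$ with $n+1\in I$ and $\P(\cI(\cD_{n+1}) = I) > 0$, the scores $(S_i)_{i\in I}$ are exchangeable conditional on the event $\cE_I := \{\cI(\cD_{n+1}) = I\}$. The proof mirrors that of Lemma~\ref{lem:exch_conditional_subvector}: fix a permutation $\sigma$ of $I$ and extend it to $\tilde\sigma$ on $[n+1]$ by the identity off $I$; then for any measurable $A$, rewrite $\P((S_i)_{i\in I}\in A,\cE_I)$ using $\cD_{n+1}\eqd(\cD_{n+1})_{\tilde\sigma}$, symmetry of $s$, and --- the new ingredient --- symmetry of the selection rule. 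The crucial observation is that~\eqref{eqn:symmetry_of_selection_rule} is equivalent to $\cI(\cD_\sigma) = \sigma^{-1}(\cI(\cD))$, so that since $\tilde\sigma$ maps $I$ onto $I$ and fixes $[n+1]\setminus I$, the event $\{\cI((\cD_{n+1})_{\tilde\sigma}) = I\}$ coincides with $\{\cI(\cD_{n+1}) = I\} = \cE_I$. This lets the computation close exactly as in Lemma~\ref{lem:exch_conditional_subvector}, yielding the conditional exchangeability of $(S_i)_{i\in I}$ given $\cE_I$.

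Next I would reformulate the coverage event on the selection event $\cE := \{n+1\in\cI(\cD_{n+1})\}$. When $y=Y_{n+1}$ we have $\cI^{Y_{n+1}} = \cI(\cD_{n+1})\cap[n]$, and on $\cE$ this set together with $\{n+1\}$ equals $\cI(\cD_{n+1})$; applying the Replacement Lemma (Lemma~\ref{lem:n+1-to-n-reduction}) to the list $(S_i)_{i\in\cI^{Y_{n+1}}}$ with the extra element $S_{n+1}$ converts the threshold $\hat q^{Y_{n+1}} = \quantile\big((S_i)_{i\in\cI^{Y_{n+1}}};(1-\alpha)(1+1/|\cI^{Y_{n+1}}|)\big)$ into $\quantile\big((S_i)_{i\in\cI(\cD_{n+1})};1-\alpha\big)$. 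Thus, on $\cE$, $Y_{n+1}\in\cC(X_{n+1})$ if and only if $S_{n+1}\le\quantile\big((S_i)_{i\in\cI(\cD_{n+1})};1-\alpha\big)$ (the degenerate case $\cI^{Y_{n+1}}=\varnothing$, i.e.\ $\cI(\cD_{n+1})=\{n+1\}$, is immediate since $\hat q^{Y_{n+1}}=+\infty$ forces $y$ into the set, and also $S_{n+1}\le\quantile((S_{n+1});1-\alpha)=S_{n+1}$). Conditioning on $\cE_I$ for each $I\ni n+1$, this event becomes $S_{n+1}\le\quantile\big((S_i)_{i\in I};1-\alpha\big)$, and by the conditional exchangeability of $(S_i)_{i\in I}$ together with Fact~\ref{fact:exchangeable-properties}\ref{fact:exchangeable-properties_part2} we get $\P\big(Y_{n+1}\in\cC(X_{n+1})\mid\cE_I\big)\ge1-\alpha$. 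Summing $\P\big(Y_{n+1}\in\cC(X_{n+1}),\cE_I\big)=\P(\cE_I)\,\P\big(Y_{n+1}\in\cC(X_{n+1})\mid\cE_I\big)$ over all $I\ni n+1$ gives $\P\big(Y_{n+1}\in\cC(X_{n+1}),\cE\big)\ge(1-\alpha)\P(\cE)$, and dividing by $\P(\cE)>0$ completes the proof. Equivalently, one could first prove a p-value version in the spirit of Corollary~\ref{cor:pvalues_due_to_lem:exch_conditional_subvector} and invoke the p-value reformulation of conformal prediction (Proposition~\ref{prop:conformal-via-pvalues}).

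I expect the conditional exchangeability step to be the main obstacle --- not because it is long, but because one must handle the data-dependent nature of $\cI$ with care: the conditioning event $\cE_I$ is itself determined by the selection rule, and the argument works only because the symmetry assumption~\eqref{eqn:symmetry_of_selection_rule} makes $\cE_I$ invariant under relabelings that permute indices within $I$ (this is what replaces the elementary fact, in Lemma~\ref{lem:exch_conditional_subvector}, that $\tilde\sigma(i)\in I\iff i\in I$). A secondary source of friction is the bookkeeping around empty or singleton $\cI^y$ and verifying that the Replacement Lemma applies with the selected subset $\cI^{Y_{n+1}}$ playing the role of the ``first $n$'' entries.
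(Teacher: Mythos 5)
Your proposal is correct and takes essentially the same approach as the paper's proof: the paper establishes conditional exchangeability of the data (and hence the scores) given $\cE_I$ via Lemma~\ref{lem:exch_conditional_equivariant}, then passes to the p-value formulation through Corollary~\ref{cor:pvalues_due_to_lem:exch_conditional_equivariant} and Proposition~\ref{prop:conformal-via-pvalues}, whereas you reformulate the coverage event directly through the Replacement Lemma and Fact~\ref{fact:exchangeable-properties}\ref{fact:exchangeable-properties_part2}---but these are the two standard equivalent reformulations of the same conformal argument used interchangeably throughout the paper. Your identification of~\eqref{eqn:symmetry_of_selection_rule} with $\cI(\cD_\sigma)=\sigma^{-1}(\cI(\cD))$ and the consequent invariance of $\cE_I$ under $\tilde\sigma$ is exactly the key observation the paper makes, and your summation over $I\ni n+1$ is the same as in Corollary~\ref{cor:pvalues_due_to_lem:exch_conditional_subvector}.
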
 \index{coverage!selective}

Computationally, forming the set in~\eqref{eq:selective_conformal_set} potentially requires iterating through all $y \in \cY$ and then computing the selection sets $\cI(\cD^y_{n+1})$ (as well as the scores $S_i^y$ for $i=1,\dots,n$). 
However, the computation simplifies considerably when the selection rule depends only on the features $X$ (for instance, if we select data points $i$ for which $\hf(X_i)\geq c$), since $\cI(\cD^y_{n+1})$ then does not depend on $y$.

The reader should note that this algorithm is similar to Mondrian conformal prediction from Section~\ref{sec:mondrian}, with the selection rule $\cI$ serving as the analogue to the function $g$ from that section. The difference is that now $\cI$ depends on all the data points, whereas $g$ depended only on one data point; we can view this method as a generalization of the procedure from Section~\ref{sec:mondrian}.

The proof of the theorem follows from the fact that the conformal scores are exchangeable conditional on the output of a symmetric selection rule, which we formally state next. 
\begin{lemma}[Conditional exchangeability after selection]
\label{lem:exch_conditional_equivariant}
Suppose $(X_1,Y_1),...,(X_{n+1},Y_{n+1})$ are exchangeable, and let $\cI$ be a symmetric selection rule as in~\eqref{eqn:symmetry_of_selection_rule}. Let $\cE_I$ be the event that $\cI(\cD_{n+1}) = I$, for some fixed nonempty subset $I\subseteq[n+1]$. Assume $\cE_I$ has positive probability. Then $((X_i,Y_i))_{i\in I}$ is exchangeable conditional on $\cE_I$.
\end{lemma}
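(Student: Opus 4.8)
The plan is to imitate, almost verbatim, the proof of Lemma~\ref{lem:exch_conditional_subvector} (conditional exchangeability within a bin): a symmetric selection rule $\cI$ plays exactly the role that membership in a fixed set $\cZ_0$ played there. Throughout I would write $Z_i=(X_i,Y_i)$ and $\cD_{n+1}=(Z_1,\dots,Z_{n+1})$, fix a nonempty $I\subseteq[n+1]$ with $\P(\cE_I)>0$, fix an arbitrary permutation $\sigma$ of $I$, and extend it to a permutation $\tilde\sigma$ of $[n+1]$ by $\tilde\sigma(i)=\sigma(i)$ for $i\in I$ and $\tilde\sigma(i)=i$ for $i\notin I$. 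Since the conditional measure $\P(\cdot\mid\cE_I)$ is an honest probability measure, it suffices to show that for every measurable $A$ we have $\P\big((Z_i)_{i\in I}\in A\mid\cE_I\big)=\P\big((Z_{\sigma(i)})_{i\in I}\in A\mid\cE_I\big)$, which (after multiplying by $\P(\cE_I)$) reduces to an identity between unconditional probabilities.

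The one genuinely non-routine step — and the place I expect the only real work to be — is verifying that the event $\cE_I$ is invariant under the block-preserving permutation $\tilde\sigma$, i.e.\ that $\cI\big((\cD_{n+1})_{\tilde\sigma}\big)=I$ if and only if $\cI(\cD_{n+1})=I$. This is exactly where the symmetry hypothesis~\eqref{eqn:symmetry_of_selection_rule} is used. Applying~\eqref{eqn:symmetry_of_selection_rule} with the permutation $\tilde\sigma$ gives $i\in\cI\big((\cD_{n+1})_{\tilde\sigma}\big)\iff\tilde\sigma(i)\in\cI(\cD_{n+1})$ for every $i$; and since $\tilde\sigma$ maps $I$ bijectively onto itself and fixes $[n+1]\setminus I$, we also have $i\in I\iff\tilde\sigma(i)\in I$. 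Combining these two equivalences and reindexing via the bijection $j=\tilde\sigma(i)$, the condition ``$i\in I\iff i\in\cI\big((\cD_{n+1})_{\tilde\sigma}\big)$ for all $i$'' becomes ``$j\in I\iff j\in\cI(\cD_{n+1})$ for all $j$'', which is precisely the claimed equivalence.

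With that invariance in hand, the conclusion is the same short chain of equalities as in Lemma~\ref{lem:exch_conditional_subvector}: for any measurable $A$,
\begin{align*}
\P\big((Z_i)_{i\in I}\in A,\ \cE_I\big)
&=\P\big((Z_i)_{i\in I}\in A,\ \cI(\cD_{n+1})=I\big)\\
&=\P\big((Z_{\tilde\sigma(i)})_{i\in I}\in A,\ \cI\big((\cD_{n+1})_{\tilde\sigma}\big)=I\big)\\
&=\P\big((Z_{\tilde\sigma(i)})_{i\in I}\in A,\ \cI(\cD_{n+1})=I\big)\\
&=\P\big((Z_{\sigma(i)})_{i\in I}\in A,\ \cE_I\big),
\end{align*}
where the second equality applies the same measurable function to both sides of $\cD_{n+1}\eqd(\cD_{n+1})_{\tilde\sigma}$ (exchangeability of the data), the third uses the $\tilde\sigma$-invariance of $\cE_I$ established above, and the fourth uses $\tilde\sigma|_I=\sigma$. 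Dividing by $\P(\cE_I)>0$ yields the desired conditional identity, and since $\sigma$ and $A$ were arbitrary this establishes conditional exchangeability of $((X_i,Y_i))_{i\in I}$ given $\cE_I$. I would then remark that this lemma plugs into the proof of Theorem~\ref{thm:selective_conformal_coverage} exactly as Lemma~\ref{lem:exch_conditional_subvector} plugged into Theorem~\ref{thm:bin_conditional}: conditional exchangeability of the selected subvector gives conditional exchangeability of the corresponding scores (by symmetry of $s$), hence validity of the conformal p-value restricted to the selected points, and the coverage claim follows after rewriting the selective prediction set in p-value form.
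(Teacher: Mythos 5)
Your proof is correct and follows essentially the same argument as the paper's: define the block-preserving extension $\tilde\sigma$, use exchangeability of the data to move to the $\tilde\sigma$-permuted dataset, invoke the symmetry property~\eqref{eqn:symmetry_of_selection_rule} to show $\cE_I$ is unchanged, and read off the conditional identity. The only difference is that you spell out carefully \emph{why} $\cI((\cD_{n+1})_{\tilde\sigma})=I$ is equivalent to $\cI(\cD_{n+1})=I$ (reindexing via the bijection $j=\tilde\sigma(i)$ together with $\tilde\sigma$ preserving $I$ setwise), a step the paper compresses into the parenthetical ``since $\cI$ satisfies~\eqref{eqn:symmetry_of_selection_rule} and $\sigma(I)=I$.''
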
 \index{exchangeability!conditional}
This lemma then implies a useful result for conformal p-values:
\index{conformal p-value!selective}
\begin{corollary}\label{cor:pvalues_due_to_lem:exch_conditional_equivariant}
    In the setting of Lemma~\ref{lem:exch_conditional_equivariant}, define
    \[p = \frac{1 + \sum_{i\in[n]}\ind{i\in\cI(\cD_{n+1}),S_i\geq S_{n+1}}}{1 + \sum_{i\in[n]}\ind{i \in \cI(\cD_{n+1})}}\]
where $S_i = s((X_i,Y_i);\cD_{n+1})$ for some symmetric score function $s$. If $\P(n+1\in\cI(\cD_{n+1}))>0$, then
\[\P(p\leq\alpha\mid n+1\in\cI(\cD_{n+1}))\leq \alpha\]
for any $\alpha\in[0,1]$.
\end{corollary}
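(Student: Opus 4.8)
The plan is to reduce the corollary to the conditional exchangeability statement in Lemma~\ref{lem:exch_conditional_equivariant}, following exactly the template already established for the analogous results Corollary~\ref{cor:pvalues_due_to_lem:exch_conditional_subvector} and the proof of Theorem~\ref{thm:bin_conditional}. The key observation is that the quantity $p$ defined in the statement is, on the event $\cE_I = \{\cI(\cD_{n+1}) = I\}$, equal to a conformal p-value computed among only the selected data points indexed by $I$.

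Concretely, first I would fix any nonempty $I\subseteq[n+1]$ with $n+1\in I$ and $\P(\cE_I)>0$. On the event $\cE_I$, the set $\{i\in[n]: i\in\cI(\cD_{n+1})\}$ equals $I\setminus\{n+1\}$, so the quantity $p$ simplifies to
\[
p_I = \frac{\sum_{i\in I}\ind{S_i\geq S_{n+1}}}{|I|}
\]
(using $n+1\in I$ to absorb the leading $+1$ in numerator and denominator). Next, by Lemma~\ref{lem:exch_conditional_equivariant}, conditional on $\cE_I$ the data points $((X_i,Y_i))_{i\in I}$ are exchangeable; since $s$ is symmetric and the scores $S_i = s((X_i,Y_i);\cD_{n+1})$ are computed from the full dataset $\cD_{n+1}$ which is itself a symmetric function of the data, the scores $(S_i)_{i\in I}$ are exchangeable conditional on $\cE_I$ as well. (This last step needs the same kind of careful unpacking as in the proof of Corollary~\ref{cor:pvalues_due_to_lem:exch_conditional_subvector}: one verifies, for any measurable $A$, that $\P((S_i)_{i\in I}\in A, \cE_I)$ is invariant under permuting the indices within $I$, using that $\cD_{n+1}\eqd (\cD_{n+1})_{\tilde\sigma}$ for the lifted permutation $\tilde\sigma$, that $s$ is symmetric, and that $\tilde\sigma$ preserves membership in $I$.) Then Corollary~\ref{cor:perm_test_pval}, applied conditionally on $\cE_I$, gives $\P(p_I\leq\alpha\mid \cE_I)\leq\alpha$ for all $\alpha\in[0,1]$.

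To finish, I would sum over all admissible $I$: since $\{n+1\in\cI(\cD_{n+1})\}$ is the disjoint union over $I\ni n+1$ of the events $\cE_I$, and since $p = p_I$ on $\cE_I$,
\begin{align*}
\P(p\leq\alpha,\ n+1\in\cI(\cD_{n+1}))
&= \sum_{I\ni n+1}\P(p_I\leq\alpha,\ \cE_I)\\
&= \sum_{I\ni n+1}\P(\cE_I)\cdot\P(p_I\leq\alpha\mid\cE_I)\\
&\leq \alpha\sum_{I\ni n+1}\P(\cE_I) = \alpha\cdot\P(n+1\in\cI(\cD_{n+1})),
\end{align*}
which rearranges to the claimed bound $\P(p\leq\alpha\mid n+1\in\cI(\cD_{n+1}))\leq\alpha$.

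I expect the main obstacle to be the bookkeeping in the conditional-exchangeability-of-scores step rather than anything conceptually deep: one must be careful that the selection rule $\cI$ and the score function $s$ interact correctly with the permutation, and in particular that the symmetry condition~\eqref{eqn:symmetry_of_selection_rule} is exactly what is needed for the lifted permutation $\tilde\sigma$ (acting only within $I$) to preserve both the event $\cE_I$ and the joint law of the scores. This is entirely parallel to the proof of Corollary~\ref{cor:pvalues_due_to_lem:exch_conditional_subvector}, so the cleanest write-up would explicitly note that the argument is identical with $\cZ_0$-membership replaced by selection, $(X_i,Y_i)\in\cZ_0$ replaced by $i\in\cI(\cD_{n+1})$, and Lemma~\ref{lem:exch_conditional_subvector} replaced by Lemma~\ref{lem:exch_conditional_equivariant}.
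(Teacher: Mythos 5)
Your proposal is correct and follows exactly the route the paper intends: the paper itself declines to spell out this proof and instead states that it follows "by identical arguments as for the proof of Corollary~\ref{cor:pvalues_due_to_lem:exch_conditional_subvector}," which is precisely the reduction you carry out. Your observation that on $\cE_I$ the quantity $p$ simplifies to $p_I = \sum_{i\in I}\ind{S_i\geq S_{n+1}}/|I|$ (absorbing the leading $+1$'s via $n+1\in I$), and your summary of the conditional-exchangeability-of-scores bookkeeping, both match the argument the paper gives for the $\cZ_0$-membership version.
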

We note that these results are very similar to Lemma~\ref{lem:exch_conditional_subvector} and Corollary~\ref{cor:pvalues_due_to_lem:exch_conditional_subvector}, which were used for bin-wise conditional coverage results in Chapter~\ref{chapter:conditional}.

\begin{proof}[Proof of Lemma~\ref{lem:exch_conditional_equivariant}]
To verify the first part of the lemma, we fix any nonempty $I\subseteq[n+1]$ such that $\P(\cE_I)>0$. Fix any permutation $\sigma$ on $I$, and let $\tilde{\sigma}$ be the permutation on $[n+1]$ defined as
\[\tilde{\sigma}(i) = \begin{cases}\sigma(i), & i\in I,\\ i, & i\not\in I.\end{cases}\]
Write $Z_i = (X_i,Y_i)$ for each data point $i\in[n+1]$, and let $Z_{\tilde\sigma}=(Z_{\tilde\sigma(1)},\dots,Z_{\tilde\sigma(n+1)})$.
For any $A\subseteq(\cX\times\cY)^{|I|}$, we have
\begin{align*}
    \P((Z_i)_{i\in I}\in A, \cE_I)
    &=\P((Z_i)_{i\in I}\in A, \cI(Z_1,\dots,Z_{n+1}) = I)\\
    &=\P((Z_{\tilde\sigma(i)})_{i\in I}\in A, \cI(Z_{\tilde\sigma(1)},\dots,Z_{\tilde\sigma(n+1)}) = I)\textnormal{\ since $Z\eqd Z_{\tilde\sigma}$}\\
    &=\P((Z_{\tilde\sigma(i)})_{i\in I}\in A, \cI(Z_1,\dots,Z_{n+1}) = I) \textnormal{\  using~\eqref{eqn:symmetry_of_selection_rule} and $\tilde\sigma(I)=I$}\\ 
    &=\P((Z_{\tilde\sigma(i)})_{i\in I}\in A, \cE_I)\\
    &=\P((Z_{\sigma(i)})_{i\in I}\in A, \cE_I),
\end{align*}
where the last step holds by definition of $\tilde\sigma$. As in the proof of Lemma~\ref{lem:exch_conditional_subvector}, since this calculation holds for any $A$ and any $\sigma$, this is sufficient to verify that $(Z_i)_{i\in I}$ is exchangeable conditional on $\cE_I$.
\end{proof}
Corollary~\ref{cor:pvalues_due_to_lem:exch_conditional_equivariant} then follows from Lemma~\ref{lem:exch_conditional_equivariant}, by identical arguments as for the proof of Corollary~\ref{cor:pvalues_due_to_lem:exch_conditional_subvector} (following from Lemma~\ref{lem:exch_conditional_subvector}), and we omit the proof.

\begin{proof}[Proof of Theorem~\ref{thm:selective_conformal_coverage}]
Define a conformal p-value for this selective setting,
\[p^y = \frac{1 + \sum_{i\in[n]}\ind{i\in\cI(\cD^y_{n+1}),S^y_i\geq S^y_{n+1}}}{1 + \sum_{i\in[n]}\ind{i \in \cI(\cD^y_{n+1})}}.\]
By definition, $p^{Y_{n+1}}$ is equal to the p-value $p$ defined in Corollary~\ref{cor:pvalues_due_to_lem:exch_conditional_equivariant} , and consequently,
\[\P(p^{Y_{n+1}}\leq \alpha\mid n+1\in\cI(\cD_{n+1})) \leq \alpha.\]
Moreover, the selective conformal prediction set can be defined in terms of these conformal p-values,
\[\cC(X_{n+1}) = \{y\in\cY: p^y > \alpha, \ n+1\in\cI(\cD^y_{n+1})\}\]
(this claim is analogous to the result of Proposition~\ref{prop:conformal-via-pvalues}, which proves the same equality in the setting of conformal prediction without selection). Therefore,
\[\P(Y_{n+1}\in\cC(X_{n+1})\mid n+1\in\cI(\cD_{n+1}))
= \P(p^{Y_{n+1}}> \alpha\mid n+1\in\cI(\cD_{n+1})) \geq 1-\alpha.\]
\end{proof}

\section{Aggregating conformal sets}
\label{sec:aggregating_sets}
\index{model aggregation|(}

A common strategy in predictive modeling is model ensembling---that is, combining multiple predictive models to achieve better prediction accuracy. This leads naturally to the question of how we might combine multiple prediction \emph{sets} (rather than combining point predictions). 

Such a question is subtle, however, because the precision of our estimate will vary depending on how dependent the original prediction sets are: if we aggregate $K$ prediction sets that provide relatively independent information, we might hope that the miscoverage rate could decrease substantially, while if we aggregate $K$ sets that essentially each repeat the same information, then we would not expect their aggregated version to be more accurate. Indeed, we might even be concerned that aggregation in the presence of arbitrary dependence could potentially even inflate the miscoverage error. To address this last concern, we will begin by showing that there is a simple worst-case bound ensuring that aggregation via majority vote will preserve the coverage guarantee up to a factor of $2$. Next, we will outline a simple strategy that leads to more precise guarantees on coverage, but requires some additional data for calibration.

\subsection{Aggregating prediction sets}\label{sec:aggregating_via_majority_vote}
Suppose we have $K$ conformal prediction algorithms. For the first result, we require only access to the confidence sets $\cC_1(x),\dots,\cC_K(x)$, which are each subsets of $\cY$ that depend on the feature vector $x \in \cX$. For a test point with features $X_{n+1}$, we consider merging the $K$ prediction sets by majority vote:
\begin{equation}
    \cC^{\textnormal{mv}}(X_{n+1}) = \left\{y : \frac{1}{K} \sum_{k=1}^K \ind{y \in \cC_k(X_{n+1})} > 1/2\right\}. 
\end{equation}
The majority vote set has coverage at least $1-2\alpha$, as stated next.
\begin{theorem}[Coverage guarantee for majority vote aggregation]
Suppose $\cC_1(x),\dots,\cC_K(x)$ are any sets satisfying $\P(Y_{n+1} \in \cC_k(X_{n+1})) \ge 1-\alpha$ for each $k=1,\dots,K$. Then
\label{thm:mv_aggregation}
\begin{equation}
\P\Big(Y_{n+1} \in \cC^{\textnormal{mv}}(X_{n+1})\Big) \ge 1 - 2\alpha.   
\end{equation}
\end{theorem}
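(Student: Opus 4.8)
The plan is a one-line Markov inequality argument on the miscoverage count. First I would observe that, by definition of the majority-vote set,
\[
Y_{n+1}\notin \cC^{\textnormal{mv}}(X_{n+1}) \iff \frac{1}{K}\sum_{k=1}^K \ind{Y_{n+1}\in\cC_k(X_{n+1})} \le \tfrac12 \iff \frac{1}{K}\sum_{k=1}^K \ind{Y_{n+1}\notin\cC_k(X_{n+1})} \ge \tfrac12 .
\]
So define the nonnegative random variable $M = \frac{1}{K}\sum_{k=1}^K \ind{Y_{n+1}\notin\cC_k(X_{n+1})}$, which is the fraction of the $K$ prediction sets that miss the test response.

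Next I would bound its expectation: by linearity of expectation and the hypothesis that each $\cC_k$ has marginal coverage at level $1-\alpha$,
\[
\E[M] = \frac{1}{K}\sum_{k=1}^K \P\big(Y_{n+1}\notin\cC_k(X_{n+1})\big) \le \frac{1}{K}\cdot K\alpha = \alpha .
\]
Then applying Markov's inequality to $M\ge 0$,
\[
\P\big(Y_{n+1}\notin \cC^{\textnormal{mv}}(X_{n+1})\big) = \P\big(M\ge \tfrac12\big) \le \frac{\E[M]}{1/2} \le 2\alpha ,
\]
and taking complements gives $\P(Y_{n+1}\in\cC^{\textnormal{mv}}(X_{n+1}))\ge 1-2\alpha$, as claimed.

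There is essentially no obstacle here — the result is a soft consequence of Markov's inequality and requires no exchangeability, no assumption on the dependence among the $\cC_k$, and no structure on the score functions; the only mild subtlety worth a sentence in the writeup is checking that the majority-vote threshold $>1/2$ translates to the event $M\ge 1/2$ for its complement (since $\frac1K\sum_k \ind{\cdot} \le \frac12$ is exactly the negation of $>\frac12$), which is where the factor of $2$ enters. I would also remark that the bound is tight in the worst case and point forward to the subsequent sections of this section, where additional calibration data is used to sharpen it.
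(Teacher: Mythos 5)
Your proof is correct and matches the paper's argument essentially line for line: both rewrite the miscoverage event as $\{M \ge 1/2\}$ for $M=\frac1K\sum_k \ind{Y_{n+1}\notin\cC_k(X_{n+1})}$, bound $\E[M]\le\alpha$ by linearity and the per-set coverage hypothesis, and apply Markov's inequality. The small clarification you flag about the $\le\tfrac12$ versus $>\tfrac12$ complement is accurate and handled implicitly in the paper.
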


\begin{proof}[Proof of Theorem~\ref{thm:mv_aggregation}]
We calculate
\begin{align}
    \P\left(Y_{n+1} \notin \cC^{\textnormal{mv}}(X_{n+1})\right)
    &= \P\left(\frac{1}{K} \sum_{k=1}^K \ind{Y_{n+1} \notin \cC_k(X_{n+1})} \ge \frac{1}{2}\right) \\
    &\le 2 \E\left[\frac{1}{K}\sum_{k=1}^K \ind{Y_{n+1} \notin \cC_k(X_{n+1})}\right] \\
    &= \frac{2}{K} \sum_{k=1}^K \E\left[\ind{Y_{n+1} \notin \cC_k(X_{n+1})}\right] \\
    &\le 2\alpha,
\end{align}
where the first inequality is due to Markov's inequality, and the second inequality uses the marginal coverage property of each $\cC_k$.
\end{proof}

This result provides basic reassurance that the majority vote procedure is reasonable, in that the coverage cannot degrade dramatically. 
However, we might expect that aggregating sets would actually lead to \emph{increased} coverage when the sets provide distinct information.
We next outline an aggregation strategy that gives a more precise coverage guarantee.

\subsection{Re-calibration after aggregation}
We next consider an approach that involves first aggregating multiple prediction sets, and then calibrating to achieve a precise coverage level (using a small amount of extra data).

Suppose we have $K$ prediction sets $\cC_1(x ; 1-\alpha),\dots,\cC_K(x; 1-\alpha)$, where each set $\cC_k$ can be evaluated at any $x\in\cX$ and at any confidence level $1-\alpha$. Intuitively, each $\cC_k(\cdot;1-\alpha)$ is trained with the aim of achieving (marginal) predictive coverage at level $1-\alpha$, but unlike the majority-vote aggregation result in Theorem~\ref{thm:mv_aggregation} above, here we do not assume a marginal coverage property for the base prediction sets $\cC_k$. Instead, recalling the conformal risk control method of Section~\ref{sec:conformal_risk_control}, we will use an additional independent dataset $\cD_n=((X_1, Y_1),\dots,(X_n, Y_n))$ to recalibrate an aggregated prediction set. (Note that we are assuming the $\cC_k$'s are pretrained---that is, the new dataset $\cD_n$ is independent of the $K$ constructed prediction sets.) 

Turning to the details, for technical reasons we consider a version of the prediction set $\cC_k$ that is modified to be monotone and left-continuous in $\alpha$:
\begin{equation}
    \cC_k'(x ; 1 - \alpha) = \left\{y : \textnormal{ for all $\epsilon>0$, $y \in \cC_k(x ; 1 - \alpha')$ for some $\alpha' \geq \alpha -\epsilon$}\right\},
\end{equation}
for $\alpha\in(0,1]$, and $\cC_k'(x;1)=\cY$.
In other words, this guarantees that for $\alpha'\leq\alpha$, we have $\cC_k'(x;1-\alpha)\subseteq \cC_k'(x;1-\alpha')$ (i.e., the set can only increase if we require a higher confidence level), and that $\alpha\mapsto \ind{y\in \cC_k'(x;1-\alpha)}$ is left-continuous. Note that if the $\cC_k$'s already satisfy these properties, then we simply have $\cC_k'=\cC_k$, i.e., the sets are unchanged.

We are now ready to define an aggregation procedure.
We will use the same majority-vote aggregation as in Section~\ref{sec:aggregating_via_majority_vote}, except that here we introduce a tuning parameter $\lambda$:
\begin{equation}
    \cC^{\textnormal{mv}}(x ; \lambda) = \left\{y : \frac{1}{K} \sum_{k=1}^K \ind{y \in \cC'_k(x_{n+1}; \lambda)} > 1/2 \right\}.
\end{equation}
To compare to the procedure studied in the previous section, here if we choose $\lambda = 1-\alpha$ then we would obtain the aggregated set from Section~\ref{sec:aggregating_via_majority_vote} (if we ignore the minor distinction between the $\cC_k$'s and the $\cC_k'$'s).
The intuition for this new procedure is that instead of setting $\lambda = 1-\alpha$ a priori (which only leads to the weak coverage guarantee given in Theorem~\ref{thm:mv_aggregation}---and in fact, might either undercover \emph{or} overcover in practice), we instead use the available data to calibrate the tuning parameter $\lambda$ to achieve tighter control of the coverage level.

Our next task is then to determine how to use the dataset $\cD_n$ to calibrate the tuning parameter $\lambda$. We will define
\begin{equation}
    \hat \lambda = \inf \left\{\lambda\in[0,1] : \frac{1}{n}\sum_{i=1}^n \ind{Y_i \notin \cC^{\textnormal{mv}}(X_i ; \lambda)} \le \alpha - (1-\alpha)/n \right\},
\end{equation}
and return the aggregated set $\cC^{\textnormal{mv}}(X_{n+1} ; \hat \lambda)$.
This choice of $\hat\lambda$ agrees with the tuning step of the conformal risk control procedure~\eqref{eq:lhat} (and, as in Section~\ref{sec:conformal_risk_control}, we can also equivalently formulate this method as an instance of split conformal prediction). 
\begin{theorem}[Coverage guarantee for post-aggregation calibration]
\label{thm:calibrate_after_aggregation}
Suppose $(X_1, Y_1),\dots,(X_{n+1}, Y_{n+1})$ are exchangeable, and are independent of the pretrained prediction sets $\cC_1,\dots,\cC_K$. Then under the notation and definitions above,
\begin{equation}
\P\Big(Y_{n+1} \in \cC^{\textnormal{mv}}(X_{n+1} ; \hat \lambda)\Big) \ge 1 - \alpha.   
\end{equation}
\end{theorem}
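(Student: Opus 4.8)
The key observation is that, once we treat the pretrained prediction sets $\cC_1,\dots,\cC_K$ as fixed (by conditioning on them), the quantity $\cC^{\textnormal{mv}}(\cdot;\lambda)$ becomes a nested family of prediction sets indexed by the tuning parameter $\lambda$, and the calibration step defining $\hat\lambda$ is exactly the split conformal (equivalently, conformal risk control) calibration step. So the plan is to invoke Theorem~\ref{thm:risk_control} (or directly Theorem~\ref{thm:full-conformal} for split conformal) with the miscoverage loss $L(y,\cS)=\ind{y\notin\cS}$ after verifying that the required structural hypotheses hold.

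First I would condition on $\cC_1,\dots,\cC_K$; since these are independent of $(X_1,Y_1),\dots,(X_{n+1},Y_{n+1})$, the data remains exchangeable under this conditioning, and from now on the maps $x\mapsto\cC_k'(x;\lambda)$ are fixed deterministic functions. Next I would check the two structural properties needed. \emph{Monotonicity/nesting:} for $\lambda_1\le\lambda_2$ we have $\cC_k'(x;\lambda_1)\subseteq\cC_k'(x;\lambda_2)$ by construction of $\cC_k'$ (it is built to be monotone in the confidence level $\lambda$), hence $y\in\cC_k'(x;\lambda_1)\Rightarrow y\in\cC_k'(x;\lambda_2)$ for each $k$, so the majority-vote count is nondecreasing in $\lambda$ and $\cC^{\textnormal{mv}}(x;\lambda_1)\subseteq\cC^{\textnormal{mv}}(x;\lambda_2)$. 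Therefore the loss $\lambda\mapsto L(y,\cC^{\textnormal{mv}}(x;\lambda))=\ind{y\notin\cC^{\textnormal{mv}}(x;\lambda)}$ is monotone nonincreasing in $\lambda$. \emph{Left-continuity:} since each $\alpha\mapsto\ind{y\in\cC_k'(x;\lambda)}$ is left-continuous in $\lambda$ by construction of $\cC_k'$, the finite sum $\frac1K\sum_k\ind{y\in\cC_k'(x;\lambda)}$ is left-continuous, and the strict inequality $>1/2$ defining membership in $\cC^{\textnormal{mv}}$ then makes $\lambda\mapsto\ind{y\notin\cC^{\textnormal{mv}}(x;\lambda)}$ right-continuous in $\lambda$. (Here a small care point: the direction of the parametrization. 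In Theorem~\ref{thm:risk_control} the loss is right-continuous and nonincreasing in the parameter $\lambda$, and $\hat\lambda$ is the infimum of $\lambda$ with empirical risk $\le\alpha-(1-\alpha)/n$; our $\hat\lambda$ is defined in exactly this form, so the hypotheses line up. If the continuity turns out to go the wrong way, one reparametrizes by $\lambda\mapsto -\lambda$ or $\lambda\mapsto 1-\lambda$.) Finally the loss takes values in $[0,1]$ trivially, and we should make sure the infimum defining $\hat\lambda$ is over a nonempty set: at $\lambda=1$ each $\cC_k'(x;1)$ should be all of $\cY$ (the coverage-one set), so $\cC^{\textnormal{mv}}(x;1)=\cY$ and the empirical miscoverage is $0\le\alpha-(1-\alpha)/n$; if a given family $\cC_k$ does not satisfy this, one can append $\cY$ to the family at $\lambda=1$ without changing anything else.

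With these checks done, Theorem~\ref{thm:risk_control} applied with $L(y,\cS)=\ind{y\notin\cS}$, $\cC_\lambda=\cC^{\textnormal{mv}}(\cdot;\lambda)$, and target level $\alpha$ gives $\E[\ind{Y_{n+1}\notin\cC^{\textnormal{mv}}(X_{n+1};\hat\lambda)}\mid \cC_1,\dots,\cC_K]\le\alpha$, i.e.\ $\P(Y_{n+1}\in\cC^{\textnormal{mv}}(X_{n+1};\hat\lambda)\mid\cC_1,\dots,\cC_K)\ge1-\alpha$; marginalizing over $\cC_1,\dots,\cC_K$ (via the tower law) yields the claim. I expect the main obstacle to be purely bookkeeping rather than conceptual: getting the continuity and monotonicity directions to match the precise orientation required by Theorem~\ref{thm:risk_control}, and handling the degenerate edge cases (empty calibration-selected family, or the infimum set being empty) so that $\hat\lambda$ is well-defined and the deterministic inequality $\hat{R}(\hat\lambda)\le\alpha-(1-\alpha)/n$ from the proof of Theorem~\ref{thm:risk_control} can be applied verbatim. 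No genuinely hard step is anticipated, since the whole point is that post-aggregation calibration is just split conformal prediction with a particular (pretrained, hence fixed) nested family of sets.
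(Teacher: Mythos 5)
Your high-level plan — view $\cC^{\textnormal{mv}}(\cdot;\lambda)$ as a pretrained nested family, condition on $\cC_1,\dots,\cC_K$, and invoke Theorem~\ref{thm:risk_control} with the miscoverage loss — is the right one and is essentially the same route the paper takes, except that the paper phrases it by explicitly defining the split conformal score $s(x,y)=\inf\{\lambda : y\in\cC^{\textnormal{mv}}(x;\lambda)\}$ and showing $\hat\lambda$ is its calibration quantile. Since split conformal prediction is exactly the instance of conformal risk control with $L(y,\cS)=\ind{y\notin\cS}$, these are the same argument written in two notations.

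There is, however, a sign error in your continuity check. You assert that $\lambda\mapsto\ind{y\in\cC_k'(x;\lambda)}$ is \emph{left}-continuous, and from that (plus the strict inequality $>1/2$) you deduce that $\lambda\mapsto\ind{y\notin\cC^{\textnormal{mv}}(x;\lambda)}$ is right-continuous. Neither step is correct as stated: the paper's construction of $\cC_k'$ makes $\alpha\mapsto\ind{y\in\cC_k'(x;1-\alpha)}$ left-continuous, which after the change of variables $\lambda=1-\alpha$ means $\lambda\mapsto\ind{y\in\cC_k'(x;\lambda)}$ is \emph{right}-continuous, not left-continuous. And if your stated premise were true, the conclusion would actually fail: for a nondecreasing left-continuous $g$, the set $\{\lambda : g(\lambda)>1/2\}$ is an open ray $(\lambda_0,\infty)$, so $\ind{y\notin\cC^{\textnormal{mv}}(x;\lambda)}=\ind{\lambda\le\lambda_0}$ is left-continuous, not right-continuous. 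With the corrected premise the argument does go through: $g(\lambda)=\frac1K\sum_k\ind{y\in\cC_k'(x;\lambda)}$ is nondecreasing, right-continuous, and takes only finitely many values in $\{0,1/K,\dots,1\}$, so $\{g>1/2\}$ is a closed ray $[\lambda_0,\infty)$ and $\ind{y\notin\cC^{\textnormal{mv}}(x;\lambda)}=\ind{\lambda<\lambda_0}$ is right-continuous, as required by Theorem~\ref{thm:risk_control}. So the fix is purely local; once the direction of the monotone limit is corrected, the rest of your proof is sound and matches the paper's.
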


\begin{proof}[Proof of Theorem~\ref{thm:calibrate_after_aggregation}]
This result follows immediately from the guarantee of conformal risk control (Theorem~\ref{thm:risk_control}), by choosing
\[\cC_\lambda(x) = \begin{cases}\cC^{\textnormal{mv}}(x;\lambda),&\lambda\in[0,1), \\ \varnothing, & \lambda<0, \\ \cY, & \lambda\geq 1,\end{cases}\]
and
\[L(y,\cC_\lambda(x)) = \ind{y\not\in\cC_\lambda(x)}.\]
Note that using the sets $\cC'_k$ in place of $\cC_k$ ensures that this loss is monotone and is right-continuous, as required by Theorem~\ref{thm:risk_control}.
\end{proof}

In addition to the lower bound on the coverage, under mild continuity assumptions on the data we can establish an upper bound of $1-\alpha + \frac{1}{n+1}$ (as in Theorem~\ref{thm:upper-bound}). This means that calibrating after aggregation leads to sets with almost exactly the desired coverage level, unlike the original majority-vote approach of Section~\ref{sec:aggregating_via_majority_vote} above. In this way, calibrating after aggregation adapts to the amount of distinct information in each initial set $\cC_k$.

\subsection{Connection with cross-conformal}
\index{cross-validation}
We conclude with a remark about how this section relates to cross-validation type conformal methods.
Note that cross-conformal, CV+, and jackknife+ from Chapter~\ref{chapter:cv} can all be viewed as ways of aggregating distinct conformal sets into one final set, although in a setting with additional structure on how the original sets are related. In particular, we can observe that the result of Theorem~\ref{thm:CC_smallK} is similar to that of Theorem~\ref{thm:mv_aggregation}, although arrived at by different means---both results are based on the idea of using an average of conformal p-values.
\index{conformal p-value}

\index{model aggregation|)}

\section*{Bibliographic notes}
\addcontentsline{toc}{section}{\protect\numberline{}\textnormal{\hspace{-0.8cm}Bibliographic notes}}

The conformal risk control algorithm, and its accompanying theoretical guarantee given in Theorem~\ref{thm:risk_control}, are developed in~\cite{angelopoulos2022conformal}. This builds on work 
that controls monotone and non-monotone risk functions with high-probability guarantees~\citep{bates2021distribution, angelopoulos2021learn}, which in turn
stems from the high-probability guarantees of split conformal prediction~\citep{vovk2012conditional}. Algorithms for conformal risk control with cross-validation or leave-one-out data reuse are introduced in \cite{cohen2024cross} and \cite{angelopoulos2024notefull}, respectively.
See~\cite{fisch2022conformal,schuster2021consistent,cauchois2021knowing,angelopoulos2022image,feldman2023achieving,teneggi2023diffusion} for recent adaptations of the conformal risk control framework to various machine learning tasks, and~\cite{angelopoulos2026conformal} for an extension of conformal risk control to non-monotonic losses.

In Section~\ref{sec:multiplicity_prediction_FWER}, we discussed the problems of providing prediction intervals and of detecting outliers, in the context of multiple test points. Conformal-style approaches to outlier detection, particularly in online settings, have been studied in many works such as
\cite{laxhammar2015inductive,smith2015conformal,ishimtsev2017conformal,guan2022prediction}. See also the references corresponding to Section~\ref{sec:testing_exch_online}, which discussed the related problem of testing exchangeability online. The issue of controlling for multiple comparisons that arises in this type of setting is examined by \cite{vovk2013transductive}, which proposes using a Bonferroni correction for family-wise error rate (FWER) control; more broadly, this work examines the problem of \emph{transductive conformal prediction}, where the aim is to provide a confidence region simultaneously for multiple test points. See also~\citet{gazin2024transductive, huang2023uncertainty, marques2025universal} for recent results in this theme. 

For the problem of outlier detection on multiple test points, addressed in Section~\ref{sec:multiplicity__outlier_FDR},
\cite{bates2023testing} propose using the Benjamini--Hochberg procedure for false discovery rate (FDR) control for this problem. The Benjamini--Hochberg procedure is introduced in~\cite{benjamini1995controlling};
the PRDS property, and the result that the Benjamini--Hochberg procedure controls the
FDR with PRDS p-values, is due to~\cite{benjamini2001control}. The PRDS property is one
of several related notions of positive dependence. In particular, it is a relaxation of the 
PRD property analyzed in~\cite{sarkar1969some}.
The result that split conformal p-values are PRDS is due to~\cite{bates2023testing}, which leads to the FDR control property of Corollary~\ref{cor:prds_fdr}. This result is extended in~\cite{marandon2022adaptive} to handle exchangeable, but not independent, data. Further work develops techniques for outlier detection with FDR control via conformal e-values rather than conformal p-values~\citep{bashari2023derandomized, lee2024boosting}.

The selective coverage algorithm in Section~\ref{sec:selective_cov} can be viewed as a special case of Mondrian conformal prediction~\citep{vovk2003mondrian} (recall Section~\ref{sec:mondrian}). 
The emphasis on requiring coverage to hold conditionally on the test point is motivated by the topic of selective inference in statistics~\citep[e.g.,][]{berk2013valid, fithian2015topics, lee2016exact}.
Selective versions of conformal prediction have been developed by~\citet{jin2023model, jin2023selection}, which extend the ideas described in Section~\ref{sec:selective_cov} to more complex settings, including FDR control and covariate shift.  
The procedure we give in Section~\ref{sec:selective_cov} (along with its accompanying theoretical guarantees) is explicitly discussed in~\cite{bao2024selective, jin2024confidence}, who also develop algorithms for selective coverage with other selection rules and covariate shift. 

Turning to Section~\ref{sec:aggregating_sets}, the general problem of aggregating prediction sets is studied by \citet{linusson2017calibration}, \citet{cherubin2019majority}, \citet{solari2022multi}, and~\citet{gasparin2024merging}. In particular, the statement of Theorem~\ref{thm:mv_aggregation} is given in~\citet{cherubin2019majority} and the proof we give is taken from~\citet{gasparin2024merging}. Our approach toward tuning the confidence level of the initial sets before aggregation uses on the nested-sets interpretation of split conformal prediction developed by~\citet{gupta2020nested}. The related topic of selecting a conformal set from many candidates is also studied in~\citet{yang2024selection}. Lastly, aggregating prediction sets that are formed from resampling on a single dataset (such as cross-validation approaches) is an important topic that we discuss in detail in Chapter~\ref{chapter:cv}---see the references therein. 

\section*{Exercises}
\addcontentsline{toc}{section}{\protect\numberline{}\textnormal{\hspace{-0.8cm}Exercises}}
\begin{enumerate}[font=\bfseries, label={\thechapter.\arabic*}, labelsep=1em, itemsep=1em]
\item In Theorem~\ref{thm:risk_control}, which gives a guarantee for conformal risk control, we assume that the loss function $\lambda\mapsto L(y,\cC_\lambda(x))$ is right-continuous. Here we will explore the importance of this assumption. Recall that we can view split conformal prediction as a special case of conformal risk control, by taking $\cC_\lambda(x) = \{y\in\cY : s(x,y)\leq \lambda\}$, for a pretrained score function $s$, and loss function  $L(y,\cC_\lambda(x)) = \ind{y\not\in\cC_\lambda(x)}$. 
In this problem, we will consider running conformal risk control with a different choice of the sets $\cC_\lambda(x)$: we instead define
    \[\cC_\lambda(x) = \{y\in\cY: s(x,y) < \lambda\}\]
    (and use the same loss function). \begin{enumerate}
    \item Verify that the assumption of right-continuity no longer holds.
    \item Compute the conformal risk control threshold $\hat\lambda$ (as defined in~\eqref{eq:lhat}), and the resulting prediction set $\cC_{\hat\lambda}(X_{n+1})$.
    \item Suppose that there are no ties among scores, almost surely. Show that $\cC_{\hat\lambda}(X_{n+1})$ has marginal coverage $\geq 1-\alpha$.
    \item Now we remove the assumption that there are no ties among scores. Construct an example to show that $\cC_{\hat\lambda}(X_{n+1})$ may now have marginal coverage $< 1-\alpha$.
\end{enumerate}
\item Recall the setting of Exercise~\ref{exercise:split_CP_two_test_points}. There, we computed $\P\left(Y_{n+1} \in \cC(X_{n+1}) \textnormal{ and } Y_{n+2} \in \cC(X_{n+2})\right)$ exactly, under the assumption that there are no ties among scores, almost surely. In this exercise, show that the PRDS property of conformal p-values can be applied directly to prove that the coverage events have positive dependence,
    $$\P\left(Y_{n+1} \in \cC(X_{n+1}) \textnormal{ and } Y_{n+2} \in \cC(X_{n+2})\right) \ge (1-\alpha)^2,$$
    without computing this probability explicitly.
\item In this exercise we consider a different type of selective conformal prediction. We observe features for two test points, $X_{n+1}$ and $X_{n+2}$. Given these features, we select one of the two (i.e., $X_{n+I}$, where $I\in\{1,2\}$ may depend on the pair $(X_{n+1},X_{n+2})$), and now want to construct a prediction set for the corresponding response value, $Y_{n+I}$. Given a pretrained score function $s$, define a conformal prediction set $\cC(X_{n+I})$ that guarantees coverage,
    \[\P\left(Y_{n+I}\in\cC(X_{n+I})\right)\geq 1-\alpha,\]
    assuming that the calibration and test data points, $(X_1,Y_1),\dots,(X_{n+2},Y_{n+2})$, are exchangeable.
\item In this exercise we consider the prediction set $\cC^{\textnormal{mv}}(X_{n+1})$ constructed by majority vote, as in Section~\ref{sec:aggregating_via_majority_vote}. Let $K=2$, and let $s:\cX\times\cY\to\R$ be a pretrained score function. Let $\cC_1,\cC_2$ be constructed using split conformal prediction, each using the same dataset $(X_1,Y_1),\dots,(X_n,Y_n)$ as the calibration set, where $\cC_1$ uses the score $s$ and $\cC_2$ uses the score $-s$. Give an explicit expression for the aggregated set $\cC^{\textnormal{mv}}(X_{n+1})$ in this case, and compute an exact expression for its marginal coverage level, assuming that there are no ties among scores, almost surely. (We will see that the marginal coverage level is close to $1-2\alpha$, so this is a case where the coverage bound on $\cC^{\textnormal{mv}}$ is nearly tight.)
\end{enumerate}

\part{Beyond Predictive Coverage}
\label{part:beyond-predictive-coverage}

 \chapter{Inference on the Regression Function}
\label{chapter:regression}
\index{regression function!inference|(}

This chapter begins Part~\ref{part:beyond-predictive-coverage}, the last part of the book, where we move beyond conformal prediction and examine a range of different statistical problems through the lens of the distribution-free framework. 
This chapter will focus on the problem of \emph{regression}: given training data $\{(X_i,Y_i)\}$ drawn from an unknown distribution $P$, we would like to estimate the \emph{regression function} $\mu_P(x) = \E_P[Y\mid X=x]$. 
Given an estimate $\hat{\mu}$ of this unknown function, can we provide a meaningful confidence interval around $\hat{\mu}(x)$ that has distribution-free validity? \index{regression function}

In particular, we might hope to construct a confidence interval for $\mu_P(x)$ whose width is vanishing as the sample size $n$ increases. 
This type of result is standard in more classical settings, such as for parametric models, or even for nonparametric regression with smoothness assumptions. 
In the distribution-free setting, however, our ability to achieve vanishing interval widths will vary in different contexts: we will show that constructing such intervals is possible when the covariate $X$ is discrete, but impossible when it is nonatomic, motivating us to consider various relaxations that circumvent this hardness result. This conclusion is qualitatively similar to the results obtained in Chapter~\ref{chapter:conditional}, where we showed that test-conditional coverage for predictive inference is straightforward in the case where $X$ is discrete but impossible when $X$ is nonatomic.
This similarity is not coincidental;
towards the end of this chapter, we will see that these two problems---inference for regression, and test-conditional predictive inference---are related on a fundamental level.

\section{Problem formulation and background}\label{sec:regression_intro}

Throughout this chapter, we will assume the available data is given by $(X_1,Y_1),\dots,(X_n,Y_n)\iidsim P$ for some unknown distribution $P$ on $\cX\times\cY$, where $\cY\subseteq\R$. 
Our goal is to provide distribution-free confidence intervals on the regression function, $\mu_P(x) = \E_P[Y \mid X=x]$, for all $x$.
More formally, let $\cC$ be trained on data $(X_1,Y_1),\dots,(X_n,Y_n)$, with $\cC(x)\subseteq\R$. 
We say that $\cC$ is a distribution-free confidence interval for regression at level $1-\alpha$ if
\begin{equation}\label{eqn:DF_confidence_regression}
\P(\mu_P(X_{n+1})\in\cC(X_{n+1}))\geq 1-\alpha\textnormal{ for any distribution $P$ on $\cX\times\cY$,}
\end{equation}
where the probability is calculated with respect to $(X_1,Y_1),\dots,(X_{n+1},Y_{n+1})\iidsim P$. Note that while we refer to $\cC$ as a `confidence interval', it may not necessarily return sets that are intervals---that is, $\cC(X_{n+1})\subseteq\cY$ might be a region comprised of multiple disconnected intervals. 

A key question is whether~\eqref{eqn:DF_confidence_regression} can be achieved while still producing a confidence interval with vanishing width.
We phrase this question informally as follows:
\begin{quote}
    For i.i.d.\ data in $\cX\times\cY$, is there any $\cC$ that satisfies distribution-free validity~\eqref{eqn:DF_confidence_regression}, and also satisfies $\E[\textnormal{Leb}(\cC(X_{n+1}))]\to0$ for `nice' distributions $P$?
\end{quote}
In classical settings such as parametric models, a confidence interval for $\mu_P(x)$ can have vanishing width as $n\to\infty$.
Our key question asks if it is possible to achieve the informativeness of these classical constructions while simultaneously achieving distribution-free validity.

\section{Necessity of a boundedness assumption}\label{sec:bahadur_savage}

This section shows that distribution-free inference for the regression function $\mu_P(x)$ is meaningful only in the setting where the response $Y$ is bounded: in an unbounded case (say, $\cY=\R$), a distribution-free method cannot even return finite-length confidence intervals for $\mu_P(x)$, or indeed, even for the marginal mean of $Y$.
We will justify this claim via the following classical result:

\begin{theorem}[The Bahadur--Savage theorem]\label{thm:bahadur_savage}
    Let $\cY\subseteq\R$ be unbounded both from above and below, i.e., $\sup \cY=+\infty$ and $\inf\cY = -\infty$. Let $\cC\subseteq\R$ be trained on data $Y_1,\dots,Y_n\in\cY$. Suppose $\cC$ is a distribution-free confidence interval for the mean, i.e., $\cC$ satisfies
    \[\P(\E_P[Y] \in \cC) \geq 1-\alpha \textnormal{ for any distribution $P$ on $\cY$ with finite mean},\]
    where the probability is calculated with respect to $Y_1,\dots,Y_n\iidsim P$.
    Then it must hold that 
    \[\P( y \in \cC) \geq 1-\alpha \textnormal{ for any distribution $P$ on $\cY$ with finite mean and for any $y\in\R$.}\]
    In particular, regardless of the distribution of the data, $\cC$ has infinite expected length (if $\alpha<1$):
    \[\E[\textnormal{Leb}(\cC)]=\infty.\]
\end{theorem}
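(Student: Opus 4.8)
The core idea is a perturbation argument very much in the spirit of the proof of Theorem~\ref{thm:conditional_infinite}: I will push mass of the distribution out to $\pm\infty$ in such a way that the mean can be made to equal any prescribed value $y$, while the distribution of the sample $(Y_1,\dots,Y_n)$ barely changes in total variation. Concretely, fix any distribution $P$ with finite mean, any target $y\in\R$, and any $\epsilon>0$. I will construct a family of distributions $P'$ (depending on $\epsilon$, $y$, and $P$) such that $\E_{P'}[Y]=y$ but $\dtv(P,P')\le\epsilon$. Since $\dtv(P^n,P'^n)\le n\,\dtv(P,P')\le n\epsilon$, and $\cC$ satisfies distribution-free validity at every distribution with finite mean (in particular at $P'$), we get
\[
\P_{P}(y\in\cC)\ \ge\ \P_{P'}(y\in\cC)-n\epsilon\ \ge\ 1-\alpha-n\epsilon.
\]
Letting $\epsilon\to0$ yields $\P_P(y\in\cC)\ge1-\alpha$ for every $y\in\R$. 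This is the only substantive step; everything after it is routine.

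\textbf{Constructing $P'$.} This is where unboundedness of $\cY$ is used. Take $P'=(1-\epsilon)P+\epsilon Q$ where $Q$ is a two-point distribution supported on $\{-M, +M\}$ for a large $M$ (both $\pm M$ lie in $\cY$ for $M$ large, since $\cY$ is unbounded above and below). Put mass $\lambda$ on $+M$ and $1-\lambda$ on $-M$; then $\E_Q[Y]=M(2\lambda-1)$, so $\E_{P'}[Y]=(1-\epsilon)\E_P[Y]+\epsilon M(2\lambda-1)$. For fixed $\epsilon$ and any fixed target $y$, as $M\to\infty$ the achievable range of $\E_{P'}[Y]$ (as $\lambda$ ranges over $[0,1]$) is $(1-\epsilon)\E_P[Y]\pm\epsilon M$, which eventually contains $y$; then by continuity there is a choice of $\lambda\in[0,1]$ making $\E_{P'}[Y]=y$ exactly. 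And $\dtv(P,P')\le\epsilon$ by construction of the mixture. (One should note $\E_P[Y]$ is finite by hypothesis, so this is well-defined.)

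\textbf{From pointwise coverage to infinite expected length.} Once $\P_P(y\in\cC)\ge1-\alpha$ for all $y\in\R$, the Lebesgue-measure conclusion follows exactly as in the proof of Corollary~\ref{cor:infinite-lebesgue}: for any $a,b>0$, $\textnormal{Leb}(\cC)\le a$ implies $\int_{0}^{a+b}\ind{y\notin\cC}\,\mathsf{d}y\ge b$, so by Markov's inequality and Fubini--Tonelli,
\[
\P(\textnormal{Leb}(\cC)\le a)\ \le\ \frac{\int_{0}^{a+b}\P(y\notin\cC)\,\mathsf{d}y}{b}\ \le\ \frac{(a+b)\alpha}{b};
\]
sending $b\to\infty$ gives $\P(\textnormal{Leb}(\cC)\le a)\le\alpha<1$ for every finite $a$, hence $\textnormal{Leb}(\cC)=\infty$ with positive probability and $\E[\textnormal{Leb}(\cC)]=\infty$.

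\textbf{Main obstacle.} The only delicate point is the measure-theoretic bookkeeping in the perturbation step — making sure the mixture $P'$ genuinely has finite mean (it does, as a finite mixture of finite-mean distributions), that the total variation bound $\dtv(P^n,P'^n)\le n\dtv(P,P')$ is applied correctly, and that the solvability of $\E_{P'}[Y]=y$ in $\lambda$ is justified cleanly via the intermediate value theorem rather than hand-waved. None of this is hard, but it is the part where an argument could go subtly wrong, so I would write it carefully. The passage from pointwise coverage to infinite length is essentially a verbatim reprise of an argument already in the excerpt.
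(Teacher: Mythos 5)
Your proposal is correct and follows essentially the same perturbation argument as the paper: mix a small amount of far-away mass into $P$ to shift the mean to any prescribed $y$ while staying within $\epsilon$ in total variation, apply distribution-free validity at the perturbed $P'$, pass back to $P$ via $\dtv(P^n,P'^n)\le n\epsilon$, and send $\epsilon\to0$. The only cosmetic differences are that the paper perturbs with a single point mass $\delta_{y'}$ (choosing $y'$ on the appropriate side of $\E_P[Y]$ and tuning the mixing weight $\epsilon'$) whereas you fix the mixing weight and tune a two-point $Q$, and that the paper finishes by directly integrating $\P(y\in\cC)$ over $\R$ via Fubini--Tonelli while you reuse the Markov argument from Corollary~\ref{cor:infinite-lebesgue} (which also gives $\P(\textnormal{Leb}(\cC)=\infty)\ge1-\alpha$). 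One small imprecision to tighten up when writing this out: unboundedness of $\cY$ above and below does not imply $\pm M\in\cY$ for large $M$ (e.g.\ $\cY=\Z$); you should instead choose $M_+\in\cY$ with $M_+\ge M$ and $M_-\in\cY$ with $M_-\le -M$, support $Q$ on $\{M_-,M_+\}$, and then the IVT step on $\lambda$ goes through unchanged.
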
 \index{Bahadur--Savage} \index{hardness result!mean estimation}

In other words, if $\cY$ is unbounded, then distribution-free inference is impossible even for the \emph{marginal} mean, $\E_P[Y]$.

This has implications for both positive and negative results. For positive results, constructing methods that satisfy distribution-free validity as in~\eqref{eqn:DF_confidence_regression} will inevitably require a bounded $\cY$. On the other hand, establishing hardness results (i.e., proving that distribution-free validity~\eqref{eqn:DF_confidence_regression} leads inevitably to wide confidence intervals) is interesting only for a bounded $\cY$---if $\cY$ is unbounded, the theorem above already establishes the impossibility of distribution-free inference even for the easier task of the marginal mean $\E_P[Y]$.
Consequently, for the remaining sections of this chapter, we will generally restrict our attention to the setting of a bounded $\cY\subseteq\R$.

\begin{proof}[Proof of Theorem~\ref{thm:bahadur_savage}]
    Let $P$ be any distribution with mean $\E_P[Y]$, and fix any $y\in\R$.  Without loss of generality assume $y\geq \E_P[Y]$; the case $y\leq \E_P[Y]$ is proved similarly.
    We now show that $\P(y\in\cC)\geq 1-\alpha$.
    
    Fix a small $\epsilon>0$. First, since $\cY$ is unbounded, we can find some value 
    \[y'\in\cY\textnormal{ such that }y' \geq \frac{y - (1-\epsilon)\E_P[Y]}{\epsilon}.\]
    Now define a mixture distribution $P'$ on $\cY$ as
    \[P' = (1-\epsilon')\cdot P + \epsilon'\cdot \delta_{y'},\]
    where
    \[\epsilon' = \frac{y - \E_P[Y]}{y' - \E_P[Y]}.\]
    Note that, by construction, we have $\dtv(P,P')\leq\epsilon'\leq\epsilon$, and $\E_{P'}[Y] = y$.
    The validity of $\cC$ as a confidence interval for the mean implies
    \[\P_{(P')^n}(y\in\cC) = \P_{(P')^n}(\E_{P'}[Y]\in\cC) \geq 1-\alpha.\]
    But we can also calculate
    \[\P_{P^n}(y\in\cC) \geq \P_{(P')^n}(y\in\cC) -\dtv(P^n,(P')^n)  \geq \P_{(P')^n}(y\in\cC) - n\epsilon' \geq 1-\alpha - n\epsilon.\]
    Since $\epsilon>0$ can be taken to be arbitrarily small, this completes the proof of the first claim. Finally, we calculate
    \[\E_{P^n}[\textnormal{Leb}(\cC)] = \E_{P^n}\left[\int_{y\in\R}\ind{y\in\cC}\;\mathsf{d}y\right]=\int_{y\in\R}\P_{P^n}(y\in\cC)\;\mathsf{d}y\geq \int_{y\in\R} (1-\alpha)\;\mathsf{d}y = \infty,\]
    where the second step holds by the Fubini--Tonelli theorem.
\end{proof}

\section{The discrete case}\label{sec:regression-discrete}
We are now ready to consider the question of inference on the regression function, $\mu_P(x) = \E_P[Y\mid X=x]$.
We first consider the discrete setting. Suppose that $\cX = \{x_1,\dots,x_K\}$ is a finite set. Then to perform inference on the regression function $\mu_P(x)$, we only need to perform inference on $\mu_P(x_k)$ for each $k$ with $P_X(\{x_k\})>0$---and each such question is straightforward, since 
\[\mu_P(x_k) = \E_P[Y \mid X = x_k],\]
where we are conditioning on an event $X=x_k$ of positive probability.

As established in Section~\ref{sec:bahadur_savage} above, the problem of distribution-free inference on $\mu_P$ is meaningful only if $\cY$ is bounded, so we will assume that this is the case for the following result.
\begin{theorem}[Inference for the regression function in the discrete setting]\label{thm:DF_regression_discrete}
Let $P$ be a distribution on $\cX\times\cY$, where $\cX = \{x_1,\dots,x_K\}$ and $\cY \subseteq [a,b]$, and let $\alpha\in(0,1)$.
    Let $(X_1,Y_1),\dots,(X_n,Y_n)\iidsim P$. For each $k\in[K]$, define $n_k =\sum_{i=1}^n\ind{X_i = x_k}$, and let
    \[\hat{\mu}(x_k) = \frac{1}{n_k}\sum_{i=1}^n Y_i \cdot\ind{X_i = x_k}\]
    for each $k$ with $n_k\geq 1$, i.e., the mean response observed among all data points with $X_i =x_k$. Define
    \[\cC(x_k) = \begin{cases} \hat{\mu}(x_k)\pm (b-a)\sqrt{\frac{\log(2/\alpha)}{2n_k}}, & \textnormal{ if $n_k\geq 1$},\\ [a,b], & \textnormal{ if $n_k=0$.}\end{cases}\]
    Then $\cC$ is a valid distribution-free confidence interval for regression, i.e., $\cC$ satisfies~\eqref{eqn:DF_confidence_regression}. Moreover,
    \[\E\left[\textnormal{Leb}(\cC(X_{n+1}))\right] \leq 2(b-a)\sqrt{\log(2/\alpha)} \cdot \sqrt{\frac{K}{n}}.\]
\end{theorem}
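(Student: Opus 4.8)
The proof breaks into two parts: (i) establishing the distribution-free coverage guarantee~\eqref{eqn:DF_confidence_regression}, and (ii) bounding the expected Lebesgue measure. For part (i), the plan is to condition on the vector of covariates $(X_1,\dots,X_n,X_{n+1})$, or equivalently on the counts $(n_1,\dots,n_K)$ together with $X_{n+1}$. Given $X_{n+1} = x_k$ with $n_k \geq 1$, the $n_k$ responses $\{Y_i : X_i = x_k\}$ are i.i.d.\ draws from the conditional distribution $P_{Y\mid X=x_k}$, whose mean is exactly $\mu_P(x_k)$ and whose support lies in $[a,b]$. Hoeffding's inequality then gives
\[\P\left(\left|\hat\mu(x_k) - \mu_P(x_k)\right| > (b-a)\sqrt{\tfrac{\log(2/\alpha)}{2n_k}} \,\middle|\, n_k\right) \leq \alpha,\]
so $\mu_P(x_k)\in\cC(x_k)$ with conditional probability at least $1-\alpha$. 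In the case $n_k = 0$ we have $\cC(x_k) = [a,b]\ni\mu_P(x_k)$ trivially (using $\cY\subseteq[a,b]$, hence $\mu_P(x_k)\in[a,b]$). Averaging over the conditioning variables yields~\eqref{eqn:DF_confidence_regression}. One small technical point to handle carefully: Hoeffding must be applied conditionally on the \emph{identity} of which data points land in group $k$, not just on the count $n_k$; but since the $(X_i,Y_i)$ are i.i.d., conditional on $\{i : X_i = x_k\} = S$ the responses $\{Y_i : i\in S\}$ are still i.i.d.\ from $P_{Y\mid X=x_k}$, so this causes no difficulty.

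\textbf{Bounding the expected length.} For part (ii), on the event $X_{n+1} = x_k$ the length of $\cC(x_k)$ is $2(b-a)\sqrt{\log(2/\alpha)/(2n_k)}$ when $n_k\geq 1$, and $b-a$ when $n_k = 0$. I would write
\[\E[\textnormal{Leb}(\cC(X_{n+1}))] = \sum_{k=1}^K P_X(\{x_k\})\cdot \E\left[\textnormal{Leb}(\cC(x_k))\right],\]
using that $X_{n+1}$ is independent of the training data. Writing $p_k = P_X(\{x_k\})$, we have $n_k\sim\textnormal{Binomial}(n,p_k)$, and the inner expectation is at most $(b-a)\sqrt{2\log(2/\alpha)}\cdot\E[1/\sqrt{\max\{n_k,1\}}]$ (the $n_k=0$ term contributes $(b-a)\P(n_k=0)$, which is dominated by the $1/\sqrt{\max\{n_k,1\}}$ bound since $1/\sqrt{1} = 1 \geq 1/\sqrt{2\log(2/\alpha)}$ for the relevant range — I'd absorb this constant cleanly or just use the crude bound $\textnormal{Leb}(\cC(x_k))\leq (b-a)\cdot C/\sqrt{\max\{n_k,1\}}$ throughout). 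The key lemma is the elementary inequality $\E[1/\sqrt{\max\{n_k,1\}}] \leq C'/\sqrt{np_k}$ for a Binomial$(n,p_k)$ variable; a standard way is to bound $\E[1/\sqrt{\max\{n_k,1\}}]\leq \sqrt{\E[1/\max\{n_k,1\}]}$ by Jensen, and then $\E[1/\max\{n_k,1\}]\leq 2/((n+1)p_k)\leq 2/(np_k)$ via the classical identity $\E[1/(n_k+1)] = (1-(1-p_k)^{n+1})/((n+1)p_k)$ together with $1/\max\{n_k,1\}\leq 2/(n_k+1)$.

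\textbf{Assembling.} Combining, $\E[\textnormal{Leb}(\cC(X_{n+1}))] \lesssim (b-a)\sqrt{\log(2/\alpha)}\sum_k p_k \cdot \frac{1}{\sqrt{np_k}} = \frac{(b-a)\sqrt{\log(2/\alpha)}}{\sqrt{n}}\sum_k\sqrt{p_k}$, and by Cauchy--Schwarz $\sum_{k=1}^K\sqrt{p_k}\leq\sqrt{K}\sqrt{\sum_k p_k} = \sqrt{K}$. This gives the claimed $\bigo\!\big((b-a)\sqrt{\log(2/\alpha)}\sqrt{K/n}\big)$ rate; I would track constants to land exactly on the stated bound $2(b-a)\sqrt{\log(2/\alpha)}\sqrt{K/n}$, which requires the slightly more careful version of the Binomial moment bound rather than the crude Jensen step (the factor of $2$ in $\E[1/\max\{n_k,1\}]\leq 2/(np_k)$ combined with the $\sqrt{\cdot}$ is what one needs to massage). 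The main obstacle is precisely this: getting the Binomial negative-moment bound $\E[1/\sqrt{\max\{n_k,1\}}]\leq 1/\sqrt{np_k}$ with the right constant, since the naive Jensen approach loses a $\sqrt{2}$; one likely needs either a direct argument or to be content that the constant in the final bound matches because the $n_k=0$ contributions and the Jensen slack happen to be reabsorbed. The coverage part is routine once the conditioning is set up correctly.
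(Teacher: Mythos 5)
Your proof takes essentially the same route as the paper's, and the worry you flag at the end about losing a $\sqrt{2}$ is unfounded --- the arithmetic works out exactly. Your bound $\textnormal{Leb}(\cC(x_k))\leq(b-a)\sqrt{2\log(2/\alpha)}/\sqrt{\max\{n_k,1\}}$ carries the constant $\sqrt{2\log(2/\alpha)}$, not $2\sqrt{\log(2/\alpha)}$; applying Jensen together with $\max\{n_k,1\}\geq(n_k+1)/2$ and the Binomial bound $\E[1/(n_k+1)]\leq 1/((n+1)p_k)$ gives $\E[1/\sqrt{\max\{n_k,1\}}]\leq\sqrt{2}/\sqrt{np_k}$, and the product $\sqrt{2\log(2/\alpha)}\cdot\sqrt{2}=2\sqrt{\log(2/\alpha)}$ lands precisely on the stated constant after the Cauchy--Schwarz step $\sum_k\sqrt{p_k}\leq\sqrt{K}$. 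The paper avoids the $\max\{\cdot,1\}$ detour by observing that the deterministic bound $\textnormal{Leb}(\cC(x_k))\leq 2(b-a)\sqrt{\log(2/\alpha)/(n_k+1)}$ holds across both cases: for $n_k\geq 1$ it follows from $1/(2n_k)\leq 1/(n_k+1)$, and for $n_k=0$ it reduces to $1\leq 2\sqrt{\log(2/\alpha)}$, which holds for all $\alpha\in[0,1]$ since $\alpha\leq 1 < 2e^{-1/4}$. This form feeds directly into the Binomial bound~\eqref{eqn:binomial_reciprocal} with no intermediate slack, which is why the constants match without massaging. So your plan is correct as written; you did not need a sharper negative-moment inequality --- you only needed to track the arithmetic to its end.
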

This bound on the (expected) length of $\cC$ gives us an affirmative answer to the question posed in Section~\ref{sec:regression_intro} for discrete $\cX$.
That is, if $|\cX|$ is finite, then it is possible to achieve distribution-free coverage without resorting to wide and uninformative intervals, since the expected length of this interval vanishes as $n\to\infty$.

\begin{proof}[Proof of Theorem~\ref{thm:DF_regression_discrete}]
    Let $p_k = \P_P(X=x_k)$. Write $\cD_n=\big((X_1,Y_1),\dots,(X_n,Y_n)\big)$ to denote the training dataset. Then, since $X_{n+1}$ is independent from $\cD_n$, we can calculate
    \[\P\big(\mu_P(X_{n+1}) \not\in \cC(X_{n+1})\mid \cD_n\big) = \sum_{k=1}^K p_k \cdot \ind{\mu_P(x_k) \not\in \cC(x_k)}.\]
    Therefore, marginalizing over $\cD_n$,
    \begin{align*}
        \P\big(\mu_P(X_{n+1}) &\not\in \cC(X_{n+1})\big)
        =\sum_{k=1}^K p_k\cdot \P\big(\mu_P(x_k) \not\in \cC(x_k)\big)\\
        &=\sum_{k=1}^K p_k\cdot \E\left[\P\big(\mu_P(x_k) \not\in \cC(x_k)\mid n_k\big)\right]\\
        &=\sum_{k=1}^K p_k\cdot \E\left[\ind{n_k\geq 1}\cdot \P\left(\left|\hat\mu(x_k) - \mu_P(x_k)\right| > (b-a)\sqrt{\frac{\log(2/\alpha)}{2n_k}} \,\middle|\, n_k\right)\right],
    \end{align*}
    by construction of $\cC$. But conditional on $n_k$ (and on the event $n_k\geq 1$), the random variable $\hat\mu(x_k)$ is the sample mean of $n_k$ many draws from the distribution of $Y\mid X=x_k$, which is supported on $[a,b]$. By Hoeffding's inequality, therefore,
    \[\P\left(\left|\hat\mu(x_k) - \mu_P(x_k)\right| > (b-a)\sqrt{\frac{\log(2/\alpha)}{2n_k}} \,\middle|\, n_k\right) \leq \alpha,\]
    and consequently, 
    \[\P\big(\mu_P(X_{n+1}) \not\in \cC(X_{n+1})\big) \leq \sum_{k=1}^K p_k \cdot \E[\ind{n_k\geq 1}\cdot \alpha] \leq \alpha\sum_{k=1}^K p_k = \alpha.\]
    This verifies that $\cC$ offers distribution-free coverage.

Next we turn to the question of length. 
By construction, we have \[\textnormal{Leb}(\cC(x_k)) = \begin{cases} b-a,&\textnormal{ if }n_k=0,\\
2(b-a)\sqrt{\frac{\log(2/\alpha)}{2n_k}}, & \textnormal{ if }n_k\geq 1,\end{cases}\]
which satisfies $\textnormal{Leb}(\cC(x_k)) \leq 2(b-a)\sqrt{\frac{\log(2/\alpha)}{n_k+1}}$ across both cases.
Now, for a new feature $X_{n+1}\sim P_X$, we have
\begin{align*}
    \E\left[\textnormal{Leb}(\cC(X_{n+1}))\right]
    &=\E\left[\sum_{k=1}^K \ind{X_{n+1}=x_k}\cdot \textnormal{Leb}(\cC(x_k))\right]\\
    &=\sum_{k=1}^K p_k \cdot \E\left[\textnormal{Leb}(\cC(x_k))\right]\\
    &\leq \sum_{k=1}^K p_k \cdot \E\left[2(b-a)\sqrt{\frac{\log(2/\alpha)}{n_k+1}}\right]\\
    &=2(b-a)\sqrt{\log(2/\alpha)} \cdot \sum_{k=1}^K p_k \cdot\E\left[\frac{1}{\sqrt{n_k+1}}\right].
\end{align*}
Next, we will need the following fact about the Binomial distribution: it holds that
\begin{equation}\label{eqn:binomial_reciprocal}
    \textnormal{If $B\sim \textnormal{Binomial}(n,p)$ then $\E\left[\frac{1}{B+1}\right] = \frac{1-(1-p)^{n+1}}{p(n+1)} \leq \frac{1}{p(n+1)}$}.
\end{equation}
Consequently, since $n_k\sim\textnormal{Binomial}(n,p_k)$, by Jensen's inequality it holds that
\[\E\left[\frac{1}{\sqrt{n_k+1}}\right] \leq \sqrt{\E\left[\frac{1}{n_k+1}\right]} \leq \sqrt{\frac{1}{p_k(n+1)}}\leq \frac{1}{\sqrt{p_kn}},\]
for each $k\in[K]$ with $p_k>0$.
Combined with the calculations above, this yields
\[\E\left[\textnormal{Leb}(\cC(X_{n+1}))\right] \leq 2(b-a)\sqrt{\log(2/\alpha)} \cdot \sum_{k=1}^K \sqrt{\frac{p_k}{n}} \leq 2(b-a)\sqrt{\log(2/\alpha)} \cdot \sqrt{K/n},\]
where the last step holds since $\sum_{k=1}^K \sqrt{p_k}\leq \sqrt{K}$ (because $(p_1,\dots,p_K)$ lies in the probability simplex).
\end{proof}

\section{The continuous case}\label{sec:regression_continuous}
Next, we turn to the continuous setting: instead of a discrete $X$, we let $X$ have a nonatomic distribution (recall Definition~\ref{def:nonatomic}).
In this setting, unlike the discrete case, there are fundamental limits on our ability to provide informative confidence intervals for the regression function $\mu_P$, even if $\cY$ is bounded. 
Our aim in this section is to make these limits precise.

\subsection{Comparing regression intervals and prediction intervals}\label{sec:regression_leads_to_prediction}
To move towards the above aim, we begin with a surprising connection: in the nonatomic setting, any distribution-free confidence interval for $\mu_P(X_{n+1})$ must be \emph{at least as wide} as a distribution-free prediction interval for $Y_{n+1}$. 
This may seem counterintuitive, since we would expect to have more uncertainty regarding $Y_{n+1}$, which is inherently noisy, unlike $\mu_P$.
Indeed, in nonparametric regression with standard smoothness assumptions, a confidence interval for $\mu_P(X_{n+1})$ is typically much narrower than a prediction interval for $Y_{n+1}$---but this is no longer the case in the distribution-free setting.

\begin{figure}[t]
    \centering
    \includegraphics[width=0.4\textwidth]{\diagramspath regression-vs-prediction.pdf}
    \caption{\textbf{Visualization of the different inference targets for regression versus for prediction.} The dots represent observed values of $(X,Y)$, the solid black line represents the true regression function $\mu_P(x)$, and the dotted black line represents an estimate, $\hat\mu(x)$. The inference targets for prediction are the data points themselves, which can be noisy. By contrast, the inference target for regression is the fixed curve $\mu_P(x)$.}
    \commentAlt{A scatterplot of $(X,Y)$ data, overlaid with a solid curve labeled $\mu_P(x)$ and a similar dashed curve labeled $\hat\mu(x)$. The solid curve is labeled as `Target for regression' and the data points are labeled as `Target for prediction'.}
    \label{fig:regression-vs-prediction}
\end{figure}

\begin{theorem}[Comparing regression and prediction (nonatomic setting)]\label{thm:regression_nonatomic}
    Let $\cY\subseteq\R$. Suppose $\cC$ is any procedure that satisfies distribution-free coverage of the regression function, i.e., for any distribution $P$ on $\cX\times\cY$, 
    \[\P\big(\mu_P(X_{n+1})\in\cC(X_{n+1})\big)\geq 1-\alpha,\]
    where the probability is taken with respect to $(X_1,Y_1),\dots,(X_{n+1},Y_{n+1})\iidsim P$, and where $\cC$ implicitly depends on $(X_1,Y_1),\dots,(X_n,Y_n)$.
    Then, for any distribution $P$ on $\cX\times\cY$ for which the marginal $P_X$ is nonatomic, 
    \[\P\big(Y_{n+1}\in\cC(X_{n+1})\big)\geq 1-\alpha.\]
\end{theorem}

In fact, the same result holds when our confidence interval $\cC$ is required to cover
the conditional median instead of the regression function (which is the conditional mean). For any distribution $P$ on $\cX\times\cY$, we will write $\textnormal{Med}_P(x)$ to denote the median of the conditional distribution of $Y\mid X=x$. \index{conditional median}
\begin{theorem}[Comparing median regression and prediction (nonatomic setting)]\label{thm:regression_nonatomic_median}
    Let $\cY\subseteq\R$. Suppose $\cC$ is any procedure that satisfies distribution-free coverage of the conditional median, i.e., for any distribution $P$ on $\cX\times\cY$, 
    \[\P\big(\textnormal{Med}_P(X_{n+1})\in\cC(X_{n+1})\big)\geq 1-\alpha,\]
    where the probability is taken with respect to $(X_1,Y_1),\dots,(X_{n+1},Y_{n+1})\iidsim P$, and where $\cC$ implicitly depends on $(X_1,Y_1),\dots,(X_n,Y_n)$.
    Then, for any distribution $P$ on $\cX\times\cY$ for which the marginal $P_X$ is nonatomic, 
    \[\P\big(Y_{n+1}\in\cC(X_{n+1})\big)\geq 1-\alpha.\]
\end{theorem}

\begin{proof}[Proof of Theorems~\ref{thm:regression_nonatomic} and~\ref{thm:regression_nonatomic_median}]
    The proof for these two hardness results will follow the sample--resample construction introduced in Lemma~\ref{lem:sample-resample}. 

    Writing $Z^{(i)} = (X^{(i)},Y^{(i)})$, as in Lemma~\ref{lem:sample-resample} we let $Z^{(1)},\dots,Z^{(M)}\iidsim P$, and define $\widehat{P}_M = \frac{1}{M}\sum_{i=1}^M \delta_{Z^{(i)}}$ as the corresponding empirical distribution. Now we calculate the conditional mean function $\mu_{\widehat{P}_M}$, and the conditional median function $\textnormal{Med}_{\widehat{P}_M}$, for this distribution. In particular, we only need to calculate $\mu_{\widehat{P}_M}(x)$ and $\textnormal{Med}_{\widehat{P}_M}(x)$ for values $x\in\{X^{(1)},\dots,X^{(M)}\}$, since this is the support of the marginal distribution $(\widehat{P}_M)_X$. We calculate
    \[\mu_{\widehat{P}_M}(X^{(i)}) = \frac{\sum_{j=1}^M Y^{(j)} \cdot\ind{X^{(j)}=X^{(i)}}}{\sum_{j=1}^M \ind{X^{(j)}=X^{(i)}}},\]
    and similarly,
    \[\textnormal{Med}_{\widehat{P}_M}(X^{(i)}) = \textnormal{Med}\left(\big(Y^{(j)}\big)_{j\in[M], X^{(j)}=X^{(i)}}\right).\]
    In particular, on the event that $X^{(1)},\dots,X^{(M)}$ are all distinct, we therefore have
    \[\mu_{\widehat{P}_M}(X^{(i)}) = \textnormal{Med}_{\widehat{P}_M}(X^{(i)}) = Y^{(i)},\]
    for all $i\in[M]$---or in other words, we have
    \begin{equation}\label{eqn:regression_function_equals_Y}\mu_{\widehat{P}_M}(X) = \textnormal{Med}_{\widehat{P}_M}(X) = Y\end{equation}
    almost surely with respect to a draw $(X,Y)\sim \widehat{P}_M$.

    Now let $(Z_i)_{i\in[n+1]} = \big((X_i,Y_i)\big)_{i\in[n+1]}\iidsim \widehat{P}_M$. If $\cC$ is a distribution-free confidence interval for either the regression function (as in Theorem~\ref{thm:regression_nonatomic}) or the conditional median (as in Theorem~\ref{thm:regression_nonatomic_median}), by~\eqref{eqn:regression_function_equals_Y} we must have
    \[\P_{\widehat{P}_M}\left(Y_{n+1}\in\cC(X_{n+1}) \,\middle|\, \widehat{P}_M\right) \geq 1-\alpha\]
    on the event that $X^{(1)},\dots,X^{(M)}$ are all distinct. Since this event holds almost surely (due to our assumption that $P_X$ is nonatomic), after marginalizing over $\widehat{P}_M$ we therefore have
    \[\P_Q\left(Y_{n+1}\in\cC(X_{n+1})\right)\geq 1-\alpha,\]
    where as in Lemma~\ref{lem:sample-resample}, $Q$ is the distribution on $(Z_i)_{i\in[n+1]}$ obtained by first sampling $Z^{(1)},\dots,Z^{(M)}\iidsim P$ and constructing $\widehat{P}_M$, and then sampling $Z_1,\dots,Z_{n+1}\iidsim \widehat{P}_M$. Applying Lemma~\ref{lem:sample-resample}, then,
     \[\P_{P^{n+1}}\left(Y_{n+1}\in\cC(X_{n+1})\right)\geq 1-\alpha - \dtv(P^{n+1},Q) \geq 1-\alpha - \frac{n(n+1)}{2M}.\]
     Since $M$ can be taken to be arbitrarily large, this completes the proof.
\end{proof}

\subsection{Impossibility of vanishing width}

The result of Theorem~\ref{thm:regression_nonatomic} shows that, in the setting where $P_X$ is nonatomic, any distribution-free confidence interval $\cC$ for $\mu_P$ must satisfy predictive coverage.
However, we recall that although we informally refer to $\cC(X_{n+1})$ as a `confidence interval', it may in fact be a disconnected set---and therefore, although we have proved that $\cC$ must offer predictive coverage, this does not necessarily imply that the set cannot have vanishing Lebesgue measure. 
For instance, in the setting of a binary response $Y\in\cY = \{0,1\}$, a set such as $\cC(X_{n+1}) = \{0\} \cup \{0.5\}\cup \{1\}$ has Lebesgue measure zero, but has a predictive coverage of 100\%.  Throughout, we will often informally refer to the Lebesgue measure of $\cC(X_{n+1})$ as the `width' of this set, even for sets that are not intervals.

This section shows that a valid confidence interval $\cC(X_{n+1})$ cannot have vanishing width.
To show this, we will need to carry out a more refined analysis of the problem, which will require developing a more complex version of the sample--resample construction. The resulting bound will show that the Lebesgue measure of our confidence interval, $\textnormal{Leb}(\cC(X_{n+1}))$, can be bounded away from zero even as $n\to\infty$, meaning that any $\cC$ with distribution-free validity cannot have vanishing width.
Figure~\ref{fig:regression-hard} visualizes the core difficulty: in a distribution-free setting, using only finitely many data points drawn from the unknown distribution $P$, it is impossible to certify that the regression function $\mu_P$ is smooth. Consequently, we are forced to build confidence intervals that are wide enough to accommodate highly nonsmooth possibilities for $\mu_P$.

\begin{figure}[t]
    \centering
    \includegraphics[width=0.7\textwidth]{\diagramspath regression-hard.pdf}
    \caption{\textbf{Visualization of the difficulty in distribution-free regression.} In each plot, the dots represent observed values of $(X,Y)$ (which are identical across the two plots), while the solid black lines represent two possibilities for the true regression function $\mu_P(x)$. Without assumptions, it is impossible to test whether the observed data points $(X,Y)$ are drawn from a noisy distribution characterized by a smooth regression function $\mu_P$ (as in the panel on the left), or are drawn from a nearly noiseless distribution with a highly nonsmooth regression function (as in the panel on the right).}
    \commentAlt{Two scatterplots show the same $(X,Y)$ points. The left plot shows a smooth curve, with all data points lying a moderate distance from the curve. The right plot has a highly fluctuating curve, with all data points lying extremely close to the curve.}
    \label{fig:regression-hard}
\end{figure}
\begin{theorem}[Hardness of inference for regression in the nonatomic setting]\label{thm:regression_nonatomic_width}
    Let $\cY = [a,b]$, and suppose $\cC$ is any procedure that satisfies distribution-free coverage of the regression function~\eqref{eqn:DF_confidence_regression}, i.e., for any distribution $P$ on $\cX\times[a,b]$, 
    \[\P\big(\mu_P(X_{n+1})\in\cC(X_{n+1})\big)\geq 1-\alpha,\]
    where the probability is taken with respect to $(X_1,Y_1),\dots,(X_{n+1},Y_{n+1})\iidsim P$, and where $\cC$ implicitly depends on $(X_1,Y_1),\dots,(X_n,Y_n)$.
    Then, for any distribution $P$ on $\cX\times[a,b]$ for which the marginal $P_X$ is nonatomic and for which $\Var(Y\mid X)\geq \sigma^2_*$ almost surely, 
    \[\E\left[\textnormal{Leb}\big(\cC(X_{n+1})\big)\right]\geq \frac{\sigma^2_*}{b-a} \cdot 2(1-\alpha).\]
\end{theorem}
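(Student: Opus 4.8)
The plan is to combine the predictive-coverage reduction of Theorem~\ref{thm:regression_nonatomic} with a quantitative version of the sample--resample construction, but carried out at the level of the \emph{given} distribution $P$ rather than an auxiliary one. The key observation is that, while Theorem~\ref{thm:regression_nonatomic} already tells us $\cC$ must cover $Y_{n+1}$ with probability $\geq 1-\alpha$ under $P$ (since $P_X$ is nonatomic), this alone is not enough---we need a version of this statement that is robust to \emph{conditioning on a local region of} $X$, so that we can relate the width of $\cC(X_{n+1})$ to the conditional variance $\textnormal{Var}(Y\mid X)$. So first I would prove the following enhanced claim: for any measurable set $A\subseteq\cX$ with $P_X(A)>0$, it holds that $\P\big(Y_{n+1}\in\cC(X_{n+1})\mid X_{n+1}\in A\big)\geq 1-\alpha$. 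This follows by a sample--resample argument almost identical to the proof of Lemma~\ref{lemma_for_thm:hardness-test-conditional-coverage-relaxed}: sample $(X^{(1)},Y^{(1)}),\dots,(X^{(M)},Y^{(M)})\iidsim P$ restricted (via rejection) to have $X^{(i)}\in A$, form $\widehat{P}_M$; on the event that the $X^{(i)}$ are distinct, $\mu_{\widehat{P}_M}(X^{(i)})=Y^{(i)}$, so coverage of the regression function under $\widehat{P}_M$ forces coverage of $Y_{n+1}$; then transfer back to $P(\cdot\mid X\in A)^{n+1}$ via Lemma~\ref{lem:sample-resample} and take $M\to\infty$.

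Next I would exploit this to lower-bound $\E[\textnormal{Leb}(\cC(X_{n+1}))]$ pointwise in $x$. For a fixed $x$ in the support of $P_X$, apply the enhanced claim with $A = A_r$, a shrinking neighborhood of $x$ with $P_X(A_r)>0$ and $\textnormal{diam}(A_r)\to 0$ (such neighborhoods exist since $P_X$ is nonatomic; one can also appeal to a Lebesgue-differentiation-type argument). As $r\to 0$, the conditional law of $(X_{n+1},Y_{n+1})$ given $X_{n+1}\in A_r$ converges (for $P_X$-a.e.\ $x$) to $\delta_x \times P_{Y\mid X=x}$, so in the limit we obtain $\P_{Y\sim P_{Y\mid X=x}}\big(Y\in \cC(x)\big)\geq 1-\alpha$ in expectation over the training data, i.e.\ $\E\big[P_{Y\mid X=x}(\cC(x))\big]\geq 1-\alpha$ for $P_X$-a.e.\ $x$, where $\cC(x)$ is the (random) confidence set trained on the i.i.d.\ sample. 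Making the limiting step rigorous---interchanging the limit $r\to0$ with the training-data expectation, and controlling the total-variation error from the $M$-sample approximation uniformly---is where I expect the bulk of the technical care to go; a dominated-convergence / Fatou argument combined with the $\frac{n(n+1)}{2M}$ bound should suffice but needs to be written carefully.

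Finally I would convert the measure bound $\E\big[P_{Y\mid X=x}(\cC(x))\big]\geq 1-\alpha$ into the Lebesgue-measure bound via an isoperimetric-type inequality: for any fixed probability measure $\mu$ on $[a,b]$ with $\textnormal{Var}(\mu)\geq\sigma_*^2$ and any measurable set $S\subseteq[a,b]$, we have $\mu(S)\leq 1 - \frac{\sigma_*^2}{b-a}\cdot\frac{?}{\textnormal{Leb}(S)}$ --- more precisely, I will show $\textnormal{Leb}(S)\geq \frac{\sigma_*^2}{b-a}\cdot \mu(S)$ is too weak; the right elementary inequality is that among all sets $S$ with a given Lebesgue measure $\ell$, the $\mu$-measure is maximized by concentrating $S$ where the density is largest, and one can lower-bound the "leftover" variance carried by $S^c$. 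Concretely: if $\mu(\cC(x))$ is large then $\cC(x)$ must capture most of the mass of $P_{Y\mid X=x}$, but a set of small Lebesgue measure cannot capture mass that is spread out enough to have variance $\geq\sigma_*^2$; quantitatively, $1-\mu(S) \geq \frac{\textnormal{Var}(\mu) - (\text{something})}{(b-a)^2}$-type bounds give, after optimizing, $\textnormal{Leb}(S)\geq \frac{\sigma_*^2}{b-a}\cdot 2\mu(S) - (\text{lower order})$. Taking expectations over the training data, using $\E[\mu(\cC(X_{n+1}))]\geq 1-\alpha$, then integrating over $X_{n+1}\sim P_X$ via the tower rule yields $\E[\textnormal{Leb}(\cC(X_{n+1}))]\geq \frac{\sigma_*^2}{b-a}\cdot 2(1-\alpha)$. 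I expect the main obstacle to be pinning down the exact elementary variance-vs-Lebesgue-measure inequality that produces the clean constant $\frac{2\sigma_*^2}{b-a}$; everything else is assembly of tools already developed in the excerpt (Theorem~\ref{thm:regression_nonatomic}, Lemma~\ref{lem:sample-resample}, and the sample--resample technique).
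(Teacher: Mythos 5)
Your proposal has genuine gaps at each of its three stages, and the overall route cannot be repaired without changing strategy.

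First, the ``enhanced claim'' $\P(Y_{n+1}\in\cC(X_{n+1})\mid X_{n+1}\in A)\geq 1-\alpha$ for every positive-mass $A$ does not follow from the hypothesis, and is false in general. The hypothesis on $\cC$ is only \emph{marginal} coverage of $\mu_P(X_{n+1})$; a test-conditional statement over arbitrarily small sets $A$ is strictly stronger (this is exactly the distinction at the heart of Lemma~\ref{lemma_for_thm:hardness-test-conditional-coverage-relaxed}, whose hypothesis is the relaxed \emph{conditional} coverage condition~\eqref{eqn:test_conditional_relax_alpha_delta}, not a marginal one). Moreover, the sample--resample argument you sketch does not even prove your claim: by rejection-sampling the $M$ points to lie in $A$ you build $\widehat{P}_M$ supported on $A$, so after transferring back you obtain coverage when \emph{both the training data and the test point} are drawn from $P(\cdot\mid X\in A)$ --- a statement about a different training distribution, which does not chain with your step 2 (and degenerates entirely as $A_r$ shrinks to a point). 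Second, even granting the intermediate conclusion $\E[P_{Y\mid X=x}(\cC(x))]\geq 1-\alpha$, the final ``variance versus Lebesgue measure'' inequality you are reaching for does not exist: take $\mu$ to be the uniform distribution on the two points $\{a,b\}$, so $\textnormal{Var}(\mu)=(b-a)^2/4$, and $S=\{a,b\}$; then $\mu(S)=1$ but $\textnormal{Leb}(S)=0$. A set of zero Lebesgue measure can capture all the mass of a high-variance conditional law, so no bound of the form $\textnormal{Leb}(S)\gtrsim \sigma_*^2\,\mu(S)/(b-a)$ can hold.

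The paper's proof avoids both obstacles by never passing through conditional mass capture. It fixes $t\in[0,1]$, splits $P_{Y\mid X}$ at its conditional median into halves $P^0_{Y\mid X},P^1_{Y\mid X}$ whose means differ by at least $2\sigma_*^2$ (Lemma~\ref{lem:split_distribution_into_two}), and introduces a hidden sign $B\sim\textnormal{Unif}(\{\pm1\})$ with tilted mixture weights so that $\E[Y\mid X,B]=\mu_P(X)+t\sigma_*^2 B$. A sample--resample construction on the pairs $(X,B)$ then makes $\mu_{\tilde P_M}(X^{(i)})=\mu_P(X^{(i)})+t\sigma_*^2B^{(i)}$, and the marginal regression-coverage hypothesis applied to $\tilde P_M$ yields the averaged two-point statement $\tfrac12\P(\mu_P(X_{n+1})-t\sigma_*^2\in\cC(X_{n+1}))+\tfrac12\P(\mu_P(X_{n+1})+t\sigma_*^2\in\cC(X_{n+1}))\geq 1-\alpha$ --- crucially still marginal over $X_{n+1}$, so it genuinely follows from the hypothesis. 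The Lebesgue measure then comes from integrating this over the continuum of perturbations $t\in[-1,1]$ via Fubini, not from the spread of $P_{Y\mid X}$ itself.
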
 \index{hardness result!regression}
Because the lower bound in Theorem~\ref{thm:regression_nonatomic_width} does not depend on $n$, no distribution-free confidence interval can have vanishing width as $n\to\infty$.
Thus, it is impossible to obtain vanishing-width distribution-free confidence intervals on the regression function $\mu_P$ in a nonatomic setting.

\begin{proof}[Proof of Theorem~\ref{thm:regression_nonatomic_width}]
    Without loss of generality, we can take $[a,b]=[0,1]$. Fix any distribution $P$ with nonatomic $P_X$. Below we will verify that, for any $t\in[0,1]$, it holds that
    \begin{equation}\label{eqn:mirror_image_for_thm:regression_nonatomic_width}\frac{1}{2}\P\big(\mu_P(X_{n+1}) - t\sigma^2_* \in \cC(X_{n+1})\big) + \frac{1}{2}\P\big(\mu_P(X_{n+1}) + t\sigma^2_* \in \cC(X_{n+1})\big) \geq 1-\alpha.\end{equation}
    Assuming that this holds, we then calculate
    \begin{align*}
        \E\left[\textnormal{Leb}\big(\cC(X_{n+1})\big)\right]
        &=\E\left[\int_\R\ind{y\in \cC(X_{n+1})}\;\mathsf{d}y\right]\\
        &\geq \E\left[\int_{y=\mu_P - \sigma^2_*}^{\mu_P+\sigma^2_*}\ind{y\in \cC(X_{n+1})}\;\mathsf{d}y\right]\\
        &=\sigma^2_* \E\left[\int_{t=-1}^1\ind{\mu_P + t\sigma^2_*\in \cC(X_{n+1})}\;\mathsf{d}t\right]\\
        &=\sigma^2_* \int_{t=-1}^1\P\left(\mu_P + t\sigma^2_*\in \cC(X_{n+1})\right)\;\mathsf{d}t\textnormal{ by the Fubini--Tonelli theorem}\\
        &=\sigma^2_* \int_{t=0}^1\Big[\P(\mu_P - t\sigma^2_*\in \cC(X_{n+1})) + \P(\mu_P + t\sigma^2_*\in \cC(X_{n+1}))\Big]\;\mathsf{d}t\\
        &\geq \sigma^2_* \cdot 2(1-\alpha),
    \end{align*}
    where the last step applies~\eqref{eqn:mirror_image_for_thm:regression_nonatomic_width}. This proves the desired bound. 
    
    For the remainder of the proof, then, our task is to verify the claim~\eqref{eqn:mirror_image_for_thm:regression_nonatomic_width}. The proof of this claim will rely on a variant of the sample--resample technique (Lemma~\ref{lem:sample-resample}). First, let $P_{Y\mid X}$ denote the conditional distribution of $Y\mid X$, and let $\textnormal{Med}_P(X)$ denote the median of this conditional distribution. Following Lemma~\ref{lem:split_distribution_into_two} below, let $P_{Y\mid X} = \frac{1}{2} P_{Y\mid X}^0 + \frac{1}{2}P_{Y\mid X}^1$ denote the decomposition of $P_{Y\mid X}$ into distributions $P_{Y\mid X}^0,P_{Y\mid X}^1$ supported on $[0,\textnormal{Med}_P(X)]$ and on $[\textnormal{Med}_P(X),1]$, respectively. Writing $\mu_P^0(X)$ and $\mu_P^1(X)$ as the means of these two distributions, by Lemma~\ref{lem:split_distribution_into_two} we have $\mu_P^1(X)-\mu_P^0(X) \geq 2\Var(Y\mid X) \geq 2\sigma^2_*$, almost surely.

    Now fix any $t\in[0,1]$. First we define a joint distribution $\tilde{P}$ on $\cX\times\{\pm 1\}\times [0,1]$ as follows: sample $(X,B)\sim P_X\times\textnormal{Unif}(\{\pm 1\})$, then sample $Y\mid (X,B) \sim \tilde{P}_{Y\mid (X,B)}$ where
    \[\tilde{P}_{Y\mid (X,B)} =\left(\frac{1}{2} - \frac{B\cdot t\sigma^2_*}{\mu_P^1(X) - \mu_P^0(X)}\right) \cdot P^0_{Y\mid X} + \left(\frac{1}{2} + \frac{B \cdot t\sigma^2_*}{\mu_P^1(X) - \mu_P^0(X)}\right)\cdot P^1_{Y\mid X}.\]
    For intuition, this construction is designed so that, if $B=+1$ (respectively, $-1$), then $Y$ is slightly more likely (respectively, slightly less likely) to lie above its conditional median $\textnormal{Med}_P(X)$.
    (Note that, since $\mu_P^1(X) - \mu_P^0(X)\geq 2\sigma^2_*$, the weights on $P^0_{Y\mid X}$ and on $P^1_{Y\mid X}$ are each nonnegative in the mixture distribution for $Y$, i.e., the mixture distribution is well-defined.) We can calculate that
    \begin{multline}\label{eqn:calculate_EY_XB}\E_{\tilde{P}}[Y\mid X,B] = \left(\frac{1}{2} - \frac{B\cdot t\sigma^2_*}{\mu_P^1(X) - \mu_P^0(X)}\right) \cdot \mu_P^0(X) + \left(\frac{1}{2} + \frac{B \cdot t\sigma^2_*}{\mu_P^1(X) - \mu_P^0(X)}\right)\cdot \mu_P^1(X)\\= \mu_P(X) + t\sigma^2_*\cdot B.\end{multline}
    using the fact that $P_{Y\mid X} = \frac{1}{2}P^0_{Y\mid X} + \frac{1}{2}P^1_{Y\mid X}$ and so $\mu_P(X) = \frac{1}{2}\mu_P^0(X) + \frac{1}{2}\mu_P^1(X)$.
    Moreover, by construction, the marginal distribution of $(X,Y)$ under $\tilde{P}$ is equal to the original joint distribution $P$.

    Next, fix any $M\geq 1$.
    Let $(X^{(1)},B^{(1)}),\dots,(X^{(M)},B^{(M)})\iidsim P_X\times\textnormal{Unif}(\{\pm 1\})$, and define $\widehat{P}_M = \frac{1}{M}\sum_{i=1}^M \delta_{(X^{(i)},B^{(i)})}$ as the corresponding empirical distribution. 
    Now we define a distribution $\tilde{P}_M$ on $(X,Y)\in\cX\times[0,1]$ as follows: sample $(X,B)\sim \widehat{P}_M$, then sample $Y\mid (X,B)\sim \tilde{P}_{Y\mid X,B}$, and return $(X,Y)$. Applying~\eqref{eqn:calculate_EY_XB}, we can then calculate
    \[        \mu_{\tilde{P}_M}(X^{(i)})
    =\frac{\sum_{j=1}^M \big(\mu_P(X^{(j)}) + t\sigma^2_* \cdot B^{(j)}\big) \cdot \ind{X^{(j)}=X^{(i)}}}{\sum_{j=1}^M\ind{X^{(j)}=X^{(i)}}},
    \]
    which simplifies to
    \[\mu_{\tilde{P}_M}(X^{(i)}) = \mu_P(X^{(i)}) + t\sigma^2_* \cdot B^{(i)}\]
    if $X^{(1)},\dots,X^{(M)}$ are all distinct.

Now let $(X_1,Y_1),\dots,(X_{n+1},Y_{n+1})\iidsim \tilde{P}_M$---or equivalently, we can generate this data by drawing $(X_1,B_1),\dots,(X_{n+1},B_{n+1})$ uniformly \emph{with} replacement from $\widehat{P}_M$, and then sampling the $Y_i$'s according to the conditional distribution $\tilde{P}_{Y\mid X,B}$.
    Since $\cC$ satisfies coverage of the regression function with respect to any data distribution, in particular coverage must hold with respect to $\tilde{P}_M$, and therefore
    \[\P\Big( \mu_{\tilde{P}_M}(X_{n+1}) \in\cC(X_{n+1}) \,\Big|\, \widehat{P}_M \Big) \geq 1-\alpha.\]
    In particular, from our calculation above we must have
    \[\P\Big( \mu_P(X_{n+1}) + t\sigma^2_* \cdot B_{n+1} \in\cC(X_{n+1}) \,\Big|\, \widehat{P}_M \Big) \geq 1-\alpha\]
    if $X^{(1)},\dots,X^{(M)}$ are all distinct---and since this holds almost surely (because $P_X$ is assumed to be nonatomic), after marginalizing over $\widehat{P}_M$ we have
    \[\P\Big( \mu_P(X_{n+1}) + t\sigma^2_* \cdot B_{n+1} \in\cC(X_{n+1})  \Big) \geq 1-\alpha.\]

    Next, consider an alternative distribution: assuming that $M\geq n+1$, suppose that $(X_1,B_1),\dots,(X_{n+1},B_{n+1})$ are sampled uniformly \emph{without} replacement from $\widehat{P}_M$, and then $Y_i$'s are again sampled according to the conditional distribution $\tilde{P}_{Y\mid X,B}$. By construction, after marginalizing over the random draw of $\widehat{P}_M$, we can see that this is equivalent to simply drawing $(X_1,B_1,Y_1),\dots,(X_{n+1},B_{n+1},Y_{n+1})\iidsim \tilde{P}$.
    Bounding the difference between sampling with versus without replacement as in Lemma~\ref{lem:sample-resample}, we then have
    \[\P_{\tilde{P}^{n+1}}\Big( \mu_P(X_{n+1}) + t\sigma^2_* \cdot B_{n+1} \in\cC(X_{n+1})\Big) \geq 1-\alpha - \frac{n(n+1)}{2M}.\]
    By taking $M$ to be arbitrarily large, we therefore have
    \[\P_{\tilde{P}^{n+1}}\Big( \mu_P(X_{n+1}) + t\sigma^2_* \cdot B_{n+1} \in\cC(X_{n+1})\Big) \geq 1-\alpha.\]
    Now observe that this event depends only on $(X_1,Y_1),\dots,(X_n,Y_n)$ and on $(X_{n+1},B_{n+1})$. Recalling that under the joint distribution $\tilde{P}$ on $(X,B,Y)$, we have marginal distributions $(X,Y)\sim P$ and $(X,B)\sim P_X\times \textnormal{Unif}(\{\pm1\})$, we can therefore equivalently write
    \[\P_{P^n\times P_X\times \textnormal{Unif}(\{\pm1\})}\Big( \mu_P(X_{n+1}) + t\sigma^2_* \cdot B_{n+1} \in\cC(X_{n+1})\Big) \geq 1-\alpha,\]
    where the probability is taken over $(X_1,Y_1),\dots,(X_n,Y_n)\iidsim P$ (implicitly used in the construction of $\cC$) and $(X_{n+1},B_{n+1})\sim P_X\times\textnormal{Unif}(\{\pm 1\})$. Since $B_{n+1}\sim \textnormal{Unif}(\{\pm 1\})$, we have therefore verified~\eqref{eqn:mirror_image_for_thm:regression_nonatomic_width}, as desired.
\end{proof}

\begin{lemma}\label{lem:split_distribution_into_two}
   Let $Y\in[0,1]$ be a random variable with distribution $P$, and let $\textnormal{Med}_P$ and $\sigma^2_P$ denote its median and its variance. 
    Consider the unique decomposition of $P$ into a mixture
   \begin{equation}\label{eqn:split_distribution}P = \frac{1}{2}P_0 + \frac{1}{2}P_1\end{equation}
   such that $P_0$ is supported on $[0,\textnormal{Med}_P]$ and $P_1$ is supported on $[\textnormal{Med}_P,1]$. Then
   \[\E_{P_1}[Y] - \E_{P_0}[Y] \geq 2\sigma^2_P.\]
\end{lemma}
See Figure~\ref{fig:split-distribution} for a visualization of the decomposition in~\eqref{eqn:split_distribution}.
\begin{proof}[Proof of Lemma~\ref{lem:split_distribution_into_two}]
    First we formally construct $P_0,P_1$. Let $F$ denote the CDF of the distribution $P$, and let $F^{-1}(t) = \quantile(P;t)= \inf\{y\in[0,1] : F(y)\geq t\}$ be its generalized inverse, so that $F^{-1}(U)\sim P$ when $U\sim\textnormal{Unif}[0,1]$. In particular note that $F^{-1}(0.5) = \textnormal{Med}_P$.
    Let $P_0$ denote the distribution of $F^{-1}(U)$ for $U\sim\textnormal{Unif}[0,0.5]$, and let $P_1$ denote the distribution of $F^{-1}(U)$ for $U\sim\textnormal{Unif}[0.5,1]$. Then $P = \frac{1}{2}P_0 + \frac{1}{2}P_1$, by construction. And, since $F^{-1}$ is monotone nondecreasing, $P_0$ is supported on $[0,F^{-1}(0.5)] = [0,\textnormal{Med}_P]$, and $P_1$ is supported on $[F^{-1}(0.5),1] = [\textnormal{Med}_P,1]$. Moreover, it is straightforward to verify by construction that this decomposition is unique.

    With this construction in place, we now verify the bound on the difference in means.    
    Define
    \[\mu_0 = \E_{P_0}[Y], \quad \mu_1 = \E_{P_1}[Y],\quad \mu_P = \E_P[Y] = \frac{\mu_0+\mu_1}{2}.\]
    We then calculate
    \begin{align*}
        \sigma^2_P &= \E_P\left[\left(Y - \mu_P\right)^2\right]
        = \frac{1}{2}\E_{P_0}\left[\left(Y - \mu_P\right)^2\right] + \frac{1}{2}\E_{P_1}\left[\left(Y - \mu_P\right)^2\right]\\
        &= \frac{1}{2}\Var_{P_0}(Y) + \frac{1}{2}\big(\mu_0 - \mu_P\big)^2 + \frac{1}{2}\Var_{P_1}(Y) + \frac{1}{2}\big(\mu_1 - \mu_P\big)^2\\
        &=\frac{1}{2}\Var_{P_0}(Y) + \frac{1}{2}\Var_{P_1}(Y) + \frac{1}{4}\big(\mu_1 - \mu_0\big)^2.
    \end{align*}
    Since $P_0$ is supported on $[0,\textnormal{Med}_P]$, with mean $\mu_0$, its variance is therefore bounded as
    \begin{multline*}\Var_{P_0}(Y)  \leq \mu_0\big(\textnormal{Med}_P - \mu_0\big) = \mu_0\big(\textnormal{Med}_P - \mu_P\big) + \mu_0(\mu_P-\mu_0) \\= \mu_0\big(\textnormal{Med}_P - \mu_P\big) + \frac{1}{2}\mu_0(\mu_1-\mu_0).\end{multline*}
    Similarly,
    \[\Var_{P_1}(Y)  \leq (1-\mu_1)\big(\mu_1 - \textnormal{Med}_P\big) = (1-\mu_1)\big(\mu_P - \textnormal{Med}_P\big) + \frac{1}{2}(1-\mu_1)(\mu_1-\mu_0).\]    Therefore,
\begin{align*}
    \sigma^2_P 
    &\leq \frac{1}{2}(\mu_0-(1-\mu_1))\big(\textnormal{Med}_P-\mu_P\big) + \frac{1}{4}(\mu_0 + (1-\mu_1)) (\mu_1-\mu_0) + \frac{1}{4}\big(\mu_1 - \mu_0\big)^2\\
    &=\frac{1}{2}(2\mu_P - 1)\big(\textnormal{Med}_P-\mu_P\big) + \frac{1}{4}(\mu_1-\mu_0)\\
    &\leq\frac{1}{2}\big|\textnormal{Med}_P-\mu_P\big| + \frac{1}{4}(\mu_1-\mu_0),
\end{align*}
where the last step holds since $\mu_P\in[0,1]$ and so $|2\mu_P-1|\leq 1$.
Moreover, since $\mu_0\leq \textnormal{Med}_P\leq \mu_1$,
\[\big|\textnormal{Med}_P - \mu_P\big| \leq \max\left\{|\mu_1 - \mu_P| ,|\mu_0- \mu_P|\right\} =  \frac{1}{2}(\mu_1-\mu_0).\]
Therefore
$\sigma^2_P \leq \frac{1}{2}(\mu_1 -\mu_0)$, which completes the proof.
\end{proof}

\begin{figure}[t]
    \centering
    \includegraphics[width=0.75\textwidth]{\diagramspath split-distribution.pdf}
    \caption{\textbf{Visualization of Lemma~\ref{lem:split_distribution_into_two}.} The panel on the left shows the density for a distribution $P$, with its median $\textnormal{Med}_P$ indicated by the dashed vertical line. In the panels on the right, we decompose $P$ into a mixture of two distributions $P_0$ and $P_1$, with means $\mu_0$ and $\mu_1$, respectively. As we can see in the figure, the difference in means, $\mu_1-\mu_0$, cannot be too small relative to the variance of the original distribution $P$.}
    \commentAlt{The left panel shows density $P$, with a dashed vertical line at $\textnormal{Med}_P$. See long description.}
    \commentLongAlt{The left panel shows density $P$, with a dashed vertical line at $\textnormal{Med}_P$. The right panel shows two densities. $P_0$ is the portion of $P$ lying to the left of the cutoff $\textnormal{Med}_P$, and has a dashed vertical line indicating its mean $\hat\mu_0$, which lies slightly to the left of the cutoff. Similarly, $P_1$ is the portion of $P$ lying to the right of the cutoff $\textnormal{Med}_P$, and has a dashed vertical line indicating its mean $\hat\mu_1$, which lies slightly to the right of the cutoff.}
    \label{fig:split-distribution}
\end{figure}

\section{Relaxing the target for the continuous case}\label{sec:regression_relax}
As we have now seen, in the setting where the feature $X$ has a nonatomic distribution, it is impossible to provide vanishing-width distribution-free inference on $\mu_P(X)$ in the original sense of~\eqref{eqn:DF_confidence_regression}. But can we modify the definition of validity to enable a more informative distribution-free inference procedure? In this section, we will consider several possible relaxations.

\subsection{Relaxation by binning}\label{sec:regression_relax_binning}
We first consider a binning-based approach that effectively converts our question from a continuous problem to a discrete problem. 
Consider a (prespecified) partition $\cX = \cX_1 \cup \dots \cup \cX_K$. For each $k\in[K]$, we define a new target,
\[\mu_P(\cX_k) = \E[Y\mid X\in\cX_k],\]
which measures the conditional expectation of $Y$ when $X$ lies in the bin $\cX_k$, under the unknown distribution $P$. Defining $k(x)\in[K]$ as the index of the bin containing $x$, we can then relax the original notion of valid coverage~\eqref{eqn:DF_confidence_regression} to the following: we aim to provide a confidence interval $\cC(\cX_k)$ for $\mu_P(\cX_k)$ satisfying
\begin{equation}\label{eqn:DF_confidence_regression_binned}\P\big(\mu_P(\cX_{k(X_{n+1})}) \in \cC(\cX_{k(X_{n+1})})\big) \geq 1-\alpha.\end{equation}
That is, we replace the original goal (coverage of $\mu_P(X)$, the mean of $Y$ given $X$) with its binning-based relaxation. We now construct a distribution-free confidence interval for this relaxed target.

\begin{theorem}[Inference for the regression function via binning]\label{thm:DF_regression_binned}
Let $P$ be a distribution on $\cX\times\cY$, where $\cY \subseteq [a,b]$, and let $\alpha\in(0,1)$. Let $\cX = \cX_1\cup\dots\cup\cX_K$ be a fixed partition.
    Let $(X_1,Y_1),\dots,(X_n,Y_n)\iidsim P$. For each $k\in[K]$, define $n_k =\sum_{i=1}^n\ind{X_i \in\cX_k}$, and let
    \[\hat{\mu}(\cX_k) = \frac{1}{n_k}\sum_{i=1}^n Y_i \cdot\ind{X_i \in\cX_k}\]
    for each $k$ with $n_k\geq 1$, i.e., the mean response observed among all data points with $X_i \in\cX_k$. Define
    \[\cC(\cX_k) = \begin{cases} \hat{\mu}(\cX_k)\pm (b-a)\sqrt{\frac{\log(2/\alpha)}{2n_k}}, & \textnormal{ if $n_k\geq 1$},\\ [a,b], & \textnormal{ if $n_k=0$.}\end{cases}\]
    Then $\cC$ satisfies~\eqref{eqn:DF_confidence_regression_binned}.
\end{theorem}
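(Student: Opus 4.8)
The plan is to observe that this is simply Theorem~\ref{thm:DF_regression_discrete} applied to a coarsened version of the feature space, together with the marginalization trick over bins that we have already used in the proofs of Theorems~\ref{thm:bin_conditional} and~\ref{thm:DF_regression_discrete}. Concretely, define a new feature $\tilde{X}_i = k(X_i)\in[K]$, so that the pairs $(\tilde{X}_i, Y_i)$ are i.i.d.\ draws from the pushforward distribution $\tilde{P}$ on $\{1,\dots,K\}\times\cY$, and the regression function for this pushforward distribution satisfies $\mu_{\tilde{P}}(k) = \E_{\tilde{P}}[Y\mid \tilde{X}=k] = \E_P[Y\mid X\in\cX_k] = \mu_P(\cX_k)$ for every $k$ with $P_X(\cX_k)>0$. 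Moreover, $n_k$ and $\hat\mu(\cX_k)$ as defined here are exactly the quantities $n_k$ and $\hat\mu(\tilde{x}_k)$ that Theorem~\ref{thm:DF_regression_discrete} constructs from the data $(\tilde{X}_i,Y_i)$, and the confidence set $\cC(\cX_k)$ here is exactly the set $\cC(\tilde{x}_k)$ from that theorem. Since the event $\mu_P(\cX_{k(X_{n+1})})\in\cC(\cX_{k(X_{n+1})})$ is the same as the event $\mu_{\tilde{P}}(\tilde{X}_{n+1})\in\cC(\tilde{X}_{n+1})$, the claim~\eqref{eqn:DF_confidence_regression_binned} follows immediately from the validity guarantee~\eqref{eqn:DF_confidence_regression} established in Theorem~\ref{thm:DF_regression_discrete}.

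For completeness, I would also include the short self-contained version of the argument (essentially repeating the first half of the proof of Theorem~\ref{thm:DF_regression_discrete}) in case the reader wants to see it without invoking the reduction: write $p_k = \P_P(X\in\cX_k)$ and condition on the training data $\cD_n$ to get
\[\P\big(\mu_P(\cX_{k(X_{n+1})})\notin \cC(\cX_{k(X_{n+1})})\mid \cD_n\big) = \sum_{k=1}^K p_k\cdot \ind{\mu_P(\cX_k)\notin\cC(\cX_k)};\]
then marginalize over $\cD_n$, condition on $n_k$, and apply Hoeffding's inequality to the sample mean $\hat\mu(\cX_k)$ of the (conditionally) $n_k$ i.i.d.\ draws of $Y$ from the distribution of $Y\mid X\in\cX_k$, which is supported on $[a,b]$. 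On the event $n_k\ge 1$ this gives $\P\big(|\hat\mu(\cX_k)-\mu_P(\cX_k)| > (b-a)\sqrt{\log(2/\alpha)/(2n_k)} \,\big|\, n_k\big)\le\alpha$, and on the event $n_k=0$ the set $\cC(\cX_k)=[a,b]$ trivially contains $\mu_P(\cX_k)$; summing over $k$ with weights $p_k$ yields the bound $\alpha$.

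I do not expect a genuine obstacle here: the only points requiring a sentence of care are (i) noting that conditioning on an event $X\in\cX_k$ of positive probability is well-defined exactly when $p_k>0$, so that $\mu_P(\cX_k)$ is meaningful precisely for those bins that can actually be the bin of the test point; (ii) verifying that the data points $(X_i,Y_i)$ with $X_i\in\cX_k$ are, conditionally on $n_k$, i.i.d.\ draws from the conditional law $P_{Y\mid X\in\cX_k}$ — this is the standard fact used in Theorem~\ref{thm:DF_regression_discrete} and follows from i.i.d.\ sampling; and (iii) that $X_{n+1}$ is independent of $\cD_n$, which justifies pulling $\ind{X_{n+1}\in\cX_k}$ through the conditional expectation. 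Since the partition is fixed (chosen independently of the data), no additional exchangeability bookkeeping is needed, and unlike the hardness results of Section~\ref{sec:regression_continuous} this direction requires no resampling construction.
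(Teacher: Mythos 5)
Your proof is correct and takes essentially the same route as the paper: defining the pushforward distribution $\tilde{P}$ of $(k(X),Y)$, noting that $\mu_P(\cX_k)=\mu_{\tilde{P}}(k)$, and then applying Theorem~\ref{thm:DF_regression_discrete}. The self-contained expansion you append is an accurate unwinding of that reduction but is not needed.
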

\begin{proof}[Proof of Theorem~\ref{thm:DF_regression_binned}]
    Let $\tilde{P}$ be the distribution of $(\tilde{X},Y)=(k(X),Y)$, when $(X,Y)\sim P$. Then $\mu_P(\cX_k) = \E_{\tilde{P}}[Y\mid \tilde{X}=k] = \mu_{\tilde{P}}(k)$, by construction, meaning that the relaxed inference target~\eqref{eqn:DF_confidence_regression_binned} for $P$ is exactly equivalent to the original aim~\eqref{eqn:DF_confidence_regression} for $\tilde{P}$. The result then follows by applying Theorem~\ref{thm:DF_regression_discrete} for the distribution $\tilde{P}$, which has a discrete covariate.
\end{proof}
We can observe a parallel between the ideas explored here for inference on $\mu_P$, and the results of Section~\ref{sec:test-conditional-binning}, where we studied a binning-based relaxation for the problem of test-conditional predictive inference: in both cases, we circumvent an impossibility result (for nonatomic $X$) by relaxing the problem via binning (effectively moving to the setting of a discrete $X$). 

\subsection{Relaxation by blurring the target}\label{sec:regression_relax_blurring}
The previous method, defined in Theorem~\ref{thm:DF_regression_binned}, replaces the target $\mu_P(x)$ with its binned version, $\mu_P(\cX_k)$, in order to make the inference problem easier. We next consider a more general version of this approach.

Fix a function $H:\cX\times\cX\to[0,\infty)$, with large values of $H(x,x')$ indicating that $x,x'\in\cX$ are similar, or lie nearby to each other. As in Chapter~\ref{chapter:weighted-conformal} (recall the localized conformal prediction method introduced in Section~\ref{sec:localized}), we refer to $H$ as a \emph{localization kernel}, and can think of a Gaussian kernel, $H(x,x') = \exp\{-\|x-x'\|^2_2/2h^2\}$, as a canonical example.
Next, define $\tilde\mu_P(x)$ as an approximation to $\mu_P(x)$, 
\begin{equation}\label{eq:define_blurred_regression}\tilde\mu_P(x) = \frac{\E_P[\mu_P(X) \cdot H(x,X)]}{\E_P[H(x,X)]}\end{equation}
(implicitly, we assume $\E_P[H(x,X)]>0$ for all $x\in\cX$). In particular, the binned mean $\mu_P(\cX_k)$ can be viewed as a special case: by defining $H(x,x') = \sum_{k=1}^K \ind{x,x'\in\cX_k}$ (that is, $H(x,x')=1$ whenever $x$ and $x'$ lie in the same bin), we obtain $\tilde\mu_P(x) = \mu_P(\cX_k)$ for any $x\in\cX_k$.
More generally, if $x\mapsto \mu_P(x)$ is reasonably smooth, then we should expect $\tilde\mu_P(x)\approx \mu_P(x)$, as long as the kernel $H$ is reasonably strongly localized---for instance, a Gaussian kernel with a small bandwidth $h>0$, or a binning-based kernel with small bin size. 

We will now see that we can perform  inference on $\tilde\mu_P(x)$ regardless of the properties of the underlying distribution, via a rejection sampling strategy.

\begin{theorem}[Inference for the regression function via blurring]\label{thm:DF_regression_blurred}
    Let $P$ be a distribution on $\cX\times\cY$, where $\cY\subseteq[a,b]$, and let $\alpha\in(0,1)$. Let $H:\cX\times\cX\to[0,B]$ be a function satisfying  $\E_P[H(x,X)]>0$ for all $x\in\cX$. Let $(X_1,Y_1),\dots,(X_n,Y_n)\iidsim P$.
    
    Let $U_1,\dots,U_n\iidsim \textnormal{Unif}[0,1]$ be drawn independently of the data. For each $x\in\cX$, define $n(x) = \sum_{i=1}^n \ind{U_i \leq \frac{H(x,X_i)}{B}}$, and let
    \[\hat\mu(x) = \frac{1}{n(x)}\sum_{i=1}^n Y_i \cdot \ind{U_i \leq \frac{H(x,X_i)}{B}}\]
    if $n(x)\geq 1$. 
    Define
    \[\cC(x) = \begin{cases} \hat{\mu}(x)\pm (b-a)\sqrt{\frac{\log(2/\alpha)}{2n(x)}}, & \textnormal{ if $n(x)\geq 1$},\\ [a,b], & \textnormal{ if $n(x)=0$.}\end{cases}\]
    Then $\cC$ satisfies
    \[\P(\tilde\mu_P(x)\in\cC(x))\geq 1-\alpha\]
    for every $x\in\cX$,
    where $\tilde\mu_P(x)$ is defined as in~\eqref{eq:define_blurred_regression}.
\end{theorem}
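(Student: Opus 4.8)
The plan is to recognize the accept/reject mechanism $\ind{U_i \leq H(x,X_i)/B}$ as rejection sampling, so that the accepted data points behave like i.i.d.\ draws from a reweighted distribution whose mean of $Y$ is exactly $\tilde\mu_P(x)$. The argument then runs parallel to the proof of Theorem~\ref{thm:DF_regression_discrete}: condition on the number of accepted points $n(x)$, apply Hoeffding's inequality, and average over $n(x)$; the case $n(x)=0$ is handled trivially since $\tilde\mu_P(x)\in[a,b]$ always (it is a convex combination of conditional means $\mu_P(X)\in[a,b]$).

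\textbf{Step 1: the reweighted distribution.} Fix $x\in\cX$. For a triple $(X,Y,U)$ with $(X,Y)\sim P$ and $U\sim\textnormal{Unif}[0,1]$ independent, let $A = \{U \le H(x,X)/B\}$ be the acceptance event, so $\P(A\mid X) = H(x,X)/B$. Let $Q_x$ denote the conditional distribution of $(X,Y)$ given $A$ (well-defined since $\P(A)=\E_P[H(x,X)]/B > 0$ by assumption). I would then compute
\[
\E_{Q_x}[Y] = \frac{\E_P\big[\mu_P(X)\cdot H(x,X)/B\big]}{\E_P\big[H(x,X)/B\big]} = \frac{\E_P[\mu_P(X)H(x,X)]}{\E_P[H(x,X)]} = \tilde\mu_P(x),
\]
and note that under $Q_x$ the response $Y$ still takes values in $[a,b]$. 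This is the analogue, for the blurring kernel, of the statement $\mu_P(\cX_k)=\mu_{\tilde P}(k)$ used in Theorem~\ref{thm:DF_regression_binned}.

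\textbf{Step 2: conditional i.i.d.\ structure and Hoeffding.} Since the triples $(X_i,Y_i,U_i)$ are i.i.d., the acceptance indicators $A_i = \ind{U_i \le H(x,X_i)/B}$ together with the points are such that, conditional on the vector $(A_1,\dots,A_n)$, the accepted points $\{(X_i,Y_i): A_i=1\}$ are mutually independent, each distributed as $Q_x$. Because $n(x)=\sum_i A_i$ is a function of $(A_1,\dots,A_n)$, and because $\hat\mu(x)$ depends only on the multiset of accepted $Y$-values, it follows that conditional on $\{n(x)=m\}$ with $m\geq 1$, $\hat\mu(x)$ is distributed as the average of $m$ i.i.d.\ draws from the $Y$-marginal of $Q_x$, which has mean $\tilde\mu_P(x)$ (Step 1) and support in $[a,b]$. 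Hoeffding's inequality then gives
\[
\P\!\left(\big|\hat\mu(x)-\tilde\mu_P(x)\big| > (b-a)\sqrt{\tfrac{\log(2/\alpha)}{2m}} \;\Big|\; n(x)=m\right)\le \alpha .
\]
Summing over $m\ge 1$ against $\P(n(x)=m)$, and using $\tilde\mu_P(x)\in\cC(x)$ deterministically on $\{n(x)=0\}$, yields $\P(\tilde\mu_P(x)\notin\cC(x))\le\alpha$, as desired.

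\textbf{Main obstacle.} Everything except Step 2's conditional-independence claim is routine bookkeeping mirroring Theorem~\ref{thm:DF_regression_discrete}. The one point deserving care is the passage from "conditional on $(A_1,\dots,A_n)$ the accepted points are i.i.d.\ $Q_x$" to "conditional on $n(x)=m$ the same holds"; I would justify this by noting that the conditional law of the accepted points given $(A_1,\dots,A_n)$ depends on that vector only through its sum, so the coarser conditioning on $\{n(x)=m\}$ gives the same (exchangeable, in fact i.i.d.) law. This is a standard thinning/rejection-sampling fact, but it is the only step where one must resist the temptation to hand-wave.
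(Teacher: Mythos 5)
Your proof is correct and follows essentially the same route as the paper's: identify the accept/reject step as rejection sampling from the $H(x,\cdot)$-reweighted distribution, check that its $Y$-marginal has mean $\tilde\mu_P(x)$, apply Hoeffding conditionally on $n(x)$, and marginalize (with the $n(x)=0$ case trivial since $\tilde\mu_P(x)\in[a,b]$). If anything, your Step 2 spells out the passage from conditioning on $(A_1,\dots,A_n)$ to conditioning on $n(x)=m$ more carefully than the paper's proof, which simply asserts the standard rejection-sampling fact.
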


\begin{proof}[Proof of Theorem~\ref{thm:DF_regression_blurred}]
    Fix any $x\in\cX$. Let $P_X$ and $P_{Y\mid X}$ denote the marginal distribution of $X$ and the conditional distribution of $Y\mid X$, respectively, under $P$. Recalling the covariate shift setting studied in Section~\ref{sec:covariate-shift}, define a distribution
    \[Q = Q_X\times P_{Y\mid X},\]
    where $Q_X$ is the distribution on $\cX$ defined by the Radon--Nikodym derivative
    \[\frac{\mathsf{d}Q_X}{\mathsf{d}P_X}(x',y) \propto H(x,x').\]
    Moreover, writing $Q_Y$ to denote the marginal of $Y$ under the joint distribution $Q$, we can calculate its mean as
    \begin{align*}
        \mu_{Q_Y}
        &= \E_Q[Y]
        =\E_P\left[Y \cdot \frac{\mathsf{d}Q}{\mathsf{d}P}(X,Y)\right]\\
        &=\E_P\left[Y \cdot \frac{H(x,X)}{\E_P[H(x,X)]}\right]\\
        &=\E_P\left[ \mu_P(X) \cdot \frac{H(x,X)}{\E_P[H(x,X)]}\right]\\
        &=\tilde\mu_P(x).
    \end{align*}
    
    Next, let $\cI(x) = \{i\in[n]: U_i\leq \frac{H(x,X_i)}{B}\}$, and note that $n(x) = |\cI(x)|$. Then the data points $\big((X_i,Y_i)\big)_{i\in\cI(x)}$ can be viewed as the output of rejection sampling, when run on available data sampled from the distribution $P$, and when the target distribution is given by $Q$. More specifically, conditioning on $n(x)$, these data points are distributed as $n(x)$ many i.i.d.\ draws from $Q$. In particular, this means that the corresponding response values, $(Y_i)_{i\in \cI(x)}$, are distributed as $n(x)$ many i.i.d.\ draws from $Q_Y$.

    Therefore, conditional on $n(x)$ (and on the event $n(x)\geq 1$), the random variable $\hat\mu(x)$ is the sample mean of $n(x)$ many i.i.d.\ draws from $Q_Y$, which is a distribution supported on $[a,b]$. By Hoeffding's inequality, therefore, $\cC(x)$ provides a confidence interval for $\mu_{Q_Y} = \tilde\mu_P(x)$ at level $1-\alpha$---that is,
    \[\P(\tilde\mu_P(x)\in\cC(x)\mid n(x)) \geq 1-\alpha,\]
    on the event $n(x)\geq 1$.
    Since we also have $\P(\tilde\mu_P(x)\in\cC(x)\mid n(x)=0)=1$ by construction,
    marginalizing over $n(x)$ completes the proof.
\end{proof}

\section{Connections to test-conditional predictive coverage}\label{sec:regression_vs_conditional_prediction}
\index{coverage!test-conditional|(}

The results developed in this chapter bear strong resemblance to our findings for an earlier question: that of test-conditional coverage for predictive inference, in Chapter~\ref{chapter:conditional}. In particular, for both problems, we have seen that the discrete case offers a straightforward solution, while the continuous case (or, more precisely, the case where $X$ is nonatomic) faces a fundamental hardness result that prohibits meaningful solutions. However, at least on the surface, the two questions appear to be quite different: in Chapter~\ref{chapter:conditional} we were aiming for \emph{conditional} validity for inference on the \emph{prediction} problem, 
\[\P(Y_{n+1}\in\cC(X_{n+1}) \mid X_{n+1})\geq 1-\alpha,\] while here we are aiming for \emph{marginal} validity for inference on the \emph{regression} problem,
\[\P(\mu_P(X_{n+1})\in\cC(X_{n+1}))\geq 1-\alpha.\]

In this section, we will examine the way in which these two problems are actually essentially equivalent. 
At a high level, this is true for the following reason: given a prediction set $\cC$, if we define a new response variable $\tilde{Y} = \ind{Y\not\in \cC(X)}$, then 
\[\P(Y\in\cC(X)\mid X)\geq 1-\alpha \textnormal{ almost surely} \ \Longleftrightarrow \ \E[\tilde{Y}\mid X]\leq \alpha\textnormal{ almost surely}.\]
That is, $\cC$ satisfies test-conditional coverage at level $1-\alpha$ if and only if the regression function $\mu_{\tilde{P}}(X)$ (where $\tilde{P}$ denotes the joint distribution of $(X,\tilde{Y})$) is always $\leq \alpha$. Therefore, our ability to perform inference on $\mu_{\tilde{P}}(X)$ is closely linked to our ability to provide test-conditional coverage for $P$. 
(For clarity, we emphasize that this question is distinct from the results of Section~\ref{sec:regression_leads_to_prediction}, where we discussed connections to marginal, rather than conditional, predictive coverage.)

\subsection{From regression to test-conditional prediction}
We will now develop these ideas in more detail.
Concretely, we will develop an explicit construction to show that any solution $\cC_{\textnormal{regr}}$ to the problem of distribution-free confidence intervals for regression can be leveraged to construct a prediction interval $\cC_{\textnormal{pred}}$ that offers a relaxed notion of distribution-free test-conditional predictive coverage. 

Suppose we have data $(X_i,Y_i)\iidsim P$, for some unknown distribution $P$ on $(X,Y)\in\cX\times\cY$, and we also have a pretrained prediction set $\cC_{\textnormal{init}}$ that does not depend on this data---for example, we might have constructed $\cC_{\textnormal{init}}$ by running conformal prediction on an independent batch of data. We will now develop a strategy to convert $\cC_{\textnormal{init}}$ into a prediction interval that offers test-conditional predictive coverage for $P$.

Assume that we have access to a procedure $\cC_{\textnormal{regr}}$ that provides distribution-free inference for regression, as in~\eqref{eqn:DF_confidence_regression}, in the setting of a bounded response variable: it holds that
\begin{equation}\label{eqn:DF_confidence_regression_eps}
    \textnormal{For any distribution $\tilde{P}$ on $\cX\times[0,1]$, \
    $\P(\mu_{\tilde{P}}(X_{n+1})\in\cC_{\textnormal{regr}}(X_{n+1}))\geq 1-\epsilon$,}
\end{equation}
for some small constant $\epsilon>0$, when $\cC_{\textnormal{regr}}$ is trained on an i.i.d.\ sample of size $n$ drawn from $\tilde{P}$. (Note that this is the same notion of validity as defined earlier in~\eqref{eqn:DF_confidence_regression}, aside from notation---we replace $\cC,\alpha,P$ with $\cC_{\textnormal{regr}},\epsilon,\tilde{P}$ to distinguish the regression problem from the prediction problem.)

We will now see that $\cC_{\textnormal{regr}}$ can be used to help construct a prediction interval that satisfies the relaxed notion of test-conditional coverage defined in the condition~\eqref{eqn:test_conditional_relax_alpha_delta}, which we studied in Chapter~\ref{chapter:conditional}.
Write $\tilde{P}$ to denote the distribution of 
\[(X,\tilde{Y}) = \big(X,\ind{Y\not\in\cC_{\textnormal{init}}(X)}\big)\in\cX\times\{0,1\}.\] 
Similarly, define $\tilde{Y}_i = \ind{Y_i \not\in \cC_{\textnormal{init}}(X_i)}$, for $i\in[n]$, so that we have $(X_1,\tilde{Y}_1),\dots,(X_n,\tilde{Y}_n)\iidsim \tilde{P}$. 
We now train $\cC_{\textnormal{regr}}$ on the data $\big((X_i,\tilde{Y}_i)\big)_{i\in[n]}$, and finally we
 define our prediction set as
\[\cC_{\textnormal{pred}}(x) = \begin{cases} \cC_{\textnormal{init}}(x), &\textnormal{ if }\cC_{\textnormal{regr}}(x)\subseteq [0,\alpha -\epsilon/\delta],\\ \cY,&\textnormal{ otherwise},\end{cases}\]
for some parameters $\alpha,\delta>0$. Effectively, we are using $\cC_{\textnormal{regr}}$ to \emph{certify} that $\cC_{\textnormal{init}}$ has test-conditional coverage---or if not, to return the conservative answer $\cY$ instead.
\begin{proposition}[Relating inference for regression to test-conditional prediction]\label{prop:from_regression_to_test_conditional_prediction}
    Under the assumptions and definitions above, the prediction set $\cC_{\textnormal{pred}}$ satisfies
    \[\P(Y_{n+1}\in\cC_{\textnormal{pred}}(X_{n+1})\mid X_{n+1}\in\cX_0) \geq 1 - \alpha\textnormal{ for all $P$ and all $\cX_0\subseteq\cX$ with $\P_P(X\in\cX_0)\geq\delta$}.\]
\end{proposition}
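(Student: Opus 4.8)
The plan is to reduce this to the relaxed test-conditional coverage guarantee proved earlier, using $\cC_{\textnormal{regr}}$ as a certification device. First I would fix an arbitrary distribution $P$ on $\cX\times\cY$ and an arbitrary set $\cX_0\subseteq\cX$ with $P_X(\cX_0)>\delta$. The goal is to bound $\P(Y_{n+1}\notin\cC_{\textnormal{pred}}(X_{n+1})\mid X_{n+1}\in\cX_0)$. Since $\cC_{\textnormal{pred}}(x)$ equals either $\cC_{\textnormal{init}}(x)$ or the trivial interval $[a,b]$, a miscoverage can only occur on the event $\cC_{\textnormal{regr}}(X_{n+1})\subseteq[0,\alpha-\epsilon/\delta]$ and simultaneously $Y_{n+1}\notin\cC_{\textnormal{init}}(X_{n+1})$, i.e.\ $\tilde{Y}_{n+1}=1$. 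So the quantity to control is $\P\big(\tilde{Y}_{n+1}=1,\ \cC_{\textnormal{regr}}(X_{n+1})\subseteq[0,\alpha-\epsilon/\delta]\ \big|\ X_{n+1}\in\cX_0\big)$, where the conditioning event depends only on $X_{n+1}$, which is independent of the training data $((X_i,\tilde Y_i))_{i\in[n]}$ used to construct $\cC_{\textnormal{regr}}$.

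Next I would decompose the miscoverage event into two pieces using the certification threshold. On the event that $\cC_{\textnormal{regr}}(X_{n+1})\subseteq[0,\alpha-\epsilon/\delta]$ \emph{and} $\mu_{\tilde P}(X_{n+1})\in\cC_{\textnormal{regr}}(X_{n+1})$, we deduce $\mu_{\tilde P}(X_{n+1})=\P(\tilde{Y}_{n+1}=1\mid X_{n+1})\le \alpha-\epsilon/\delta$. The other piece is the failure of the regression coverage, $\mu_{\tilde P}(X_{n+1})\notin\cC_{\textnormal{regr}}(X_{n+1})$. I would bound the first piece by conditioning on $X_{n+1}$ and using the tower rule: on the event $X_{n+1}\in\cX_0$ intersected with the certification event, the conditional probability of $\tilde Y_{n+1}=1$ given $X_{n+1}$ is at most $\alpha-\epsilon/\delta$; integrating gives a contribution of at most $\alpha-\epsilon/\delta$ to the conditional miscoverage probability. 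For the second piece, I would use the distribution-free regression guarantee~\eqref{eqn:DF_confidence_regression_eps} applied to the distribution $\tilde P$: we have $\P\big(\mu_{\tilde P}(X_{n+1})\notin\cC_{\textnormal{regr}}(X_{n+1})\big)\le\epsilon$ unconditionally, and then conditioning on the event $X_{n+1}\in\cX_0$ (which has probability $>\delta$) inflates this by at most a factor $\delta^{-1}$, giving $\P\big(\mu_{\tilde P}(X_{n+1})\notin\cC_{\textnormal{regr}}(X_{n+1})\mid X_{n+1}\in\cX_0\big)\le\epsilon/\delta$ — this is exactly the same $\delta^{-1}$-inflation trick used in~\eqref{eqn:trivial_solution_alpha_delta_calculate}.

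Combining the two pieces, the conditional miscoverage probability is at most $(\alpha-\epsilon/\delta)+\epsilon/\delta=\alpha$, which is the claim. The one subtlety I would be careful about is the independence/measurability bookkeeping: $\cC_{\textnormal{init}}$ is pretrained (independent of the calibration data), $\cC_{\textnormal{regr}}$ is trained on $((X_i,\tilde Y_i))_{i\in[n]}$ which are i.i.d.\ from $\tilde P$, and the test pair $(X_{n+1},\tilde Y_{n+1})$ is independent of the training data; the conditioning event $\{X_{n+1}\in\cX_0\}$ involves only $X_{n+1}$, so conditioning on it does not disturb the validity~\eqref{eqn:DF_confidence_regression_eps}, which holds over the full joint law. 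I expect the main obstacle to be purely a matter of writing the conditioning arguments cleanly — in particular making sure that when I apply~\eqref{eqn:DF_confidence_regression_eps} I am invoking it for the correct induced distribution $\tilde P$ on $\cX\times\{0,1\}\subseteq\cX\times[0,1]$, and that the threshold arithmetic $\alpha-\epsilon/\delta\ge 0$ is either assumed or handled trivially (if $\alpha-\epsilon/\delta<0$ the certification event is empty and $\cC_{\textnormal{pred}}$ is always $[a,b]$, so coverage is automatic). There are no hard technical estimates here; it is essentially a two-line union-bound-style argument once the setup is in place.
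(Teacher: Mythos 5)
Your proposal is correct and takes essentially the same route as the paper's proof: both bound the conditional miscoverage probability by splitting on whether $\cC_{\textnormal{regr}}(X_{n+1})$ covers $\mu_{\tilde P}(X_{n+1})$, contributing $\alpha - \epsilon/\delta$ on the good event (where certification plus coverage forces $\mu_{\tilde P}(X_{n+1}) \le \alpha - \epsilon/\delta$) and $\epsilon/\delta$ on the bad event via the $\delta^{-1}$-inflation from conditioning on $\{X_{n+1}\in\cX_0\}$, then invoke the distribution-free regression guarantee for the induced distribution $\tilde P$. The paper phrases this as a chain through $\E[\mu_{\tilde P}(X_{n+1})\cdot\ind{\textnormal{cert}}\mid\cdots]$ rather than an explicit union of events, and conditions on $\cD_n$ throughout before marginalizing, but the underlying decomposition and all the key measurability/independence observations you flag are the same.
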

This guarantee on $\cC_{\textnormal{pred}}$ is a relaxed form of test-conditional coverage for predictive inference---in fact, it is exactly the same as the condition~\eqref{eqn:test_conditional_relax_alpha_delta} studied in Chapter~\ref{chapter:conditional}.

In the setting of a nonatomic $X$, we have previously established hardness results for inference for regression (Theorem~\ref{thm:regression_nonatomic_width}) and for (relaxed) test-conditional predictive inference (Theorem~\ref{thm:hardness-test-conditional-coverage-relaxed}). Through Proposition~\ref{prop:from_regression_to_test_conditional_prediction}, we now see that these two inference problems are connected; this proposition explains why the two questions face a hardness result in the same regime, i.e., when $X$ is nonatomic. 

\begin{proof}[Proof of Proposition~\ref{prop:from_regression_to_test_conditional_prediction}]
    Fix any distribution $P$ on $\cX\times\cY$, and any subset $\cX_0\subseteq\cX$ with $\P_P(X\in\cX_0)\geq \delta$. 
    First, by definition,
    \[Y_{n+1}\not\in\cC_{\textnormal{pred}}(X_{n+1}) \ \Longleftrightarrow \ Y_{n+1}\not\in\cC_{\textnormal{init}}(X_{n+1})\textnormal{ and } \cC_{\textnormal{regr}}(X_{n+1})\subseteq [0,\alpha -\epsilon/\delta].\]
    Therefore,
    writing $\cD_n=((X_1,Y_1),\dots,(X_n,Y_n))$, we calculate
\begin{align*}
    &\P\left(Y_{n+1}\not\in\cC_{\textnormal{pred}}(X_{n+1})\mid \cD_n; X_{n+1}\in\cX_0\right)\\
    &=\P\left(Y_{n+1}\not\in\cC_{\textnormal{init}}(X_{n+1}),\, \cC_{\textnormal{regr}}(X_{n+1})\subseteq [0,\alpha -\epsilon/\delta]\mid \cD_n; X_{n+1}\in\cX_0\right)\\
    &=\E\Big[\P\big(Y_{n+1}\not\in\cC_{\textnormal{init}}(X_{n+1})\,\big|\, \cD_n,X_{n+1}\big) \cdot\ind{\cC_{\textnormal{regr}}(X_{n+1})\subseteq [0,\alpha -\epsilon/\delta]}\,\Big|\, \cD_n; X_{n+1}\in\cX_0\Big]\\
    &=\E\Big[\mu_{\tilde{P}}(X_{n+1}) \cdot\ind{\cC_{\textnormal{regr}}(X_{n+1})\subseteq [0,\alpha-\epsilon/\delta]}\,\Big|\, \cD_n; X_{n+1}\in\cX_0\Big]\\
    &\leq (\alpha -\epsilon/\delta)  + \P\Big(\mu_{\tilde{P}}(X_{n+1}) \not\in \cC_{\textnormal{regr}}(X_{n+1})\,\Big|\, \cD_n; X_{n+1}\in\cX_0\Big)\\
    &= (\alpha -\epsilon/\delta) + \frac{\P\Big(\mu_{\tilde{P}}(X_{n+1}) \not\in \cC_{\textnormal{regr}}(X_{n+1}), X_{n+1}\in\cX_0\,\Big|\, \cD_n\Big)}{\P(X_{n+1}\in\cX_0)}\\
    &\leq (\alpha -\epsilon/\delta) + \delta^{-1}  \cdot \P\Big(\mu_{\tilde{P}}(X_{n+1}) \not\in \cC_{\textnormal{regr}}(X_{n+1})\,\Big|\, \cD_n\Big).
\end{align*}
And, marginalizing over $\cD_n$, we have
\[\E\left[\P\Big(\mu_{\tilde{P}}(X_{n+1}) \not\in \cC_{\textnormal{regr}}(X_{n+1})\,\Big|\, \cD_n\Big)\right] =\P\Big(\mu_{\tilde{P}}(X_{n+1}) \not\in \cC_{\textnormal{regr}}(X_{n+1})\Big)\leq \epsilon,\]
by our assumption on the validity of $\cC_{\textnormal{regr}}$~\eqref{eqn:DF_confidence_regression_eps}.
Therefore,
\[\P\left(Y_{n+1}\not\in\cC_{\textnormal{pred}}(X_{n+1})\mid X_{n+1}\in\cX_0\right)\leq (\alpha -\epsilon/\delta) + \delta^{-1}\cdot\epsilon = \alpha.\]
\end{proof}

\index{coverage!test-conditional|)}

\section{Connections to estimation}\label{sec:regression__C_or_muhat} 

Throughout this chapter, we have focused on the question of inference on $\mu_P$---and more specifically, a particular version of this question, where our aim is to construct a confidence interval $\cC(x)$ for $\mu_P(x)$. In some settings it may be more natural to phrase this question in a different way: can we construct an estimate $\hat\mu(x)$ for $\mu_P(x)$, and quantify our uncertainty around this estimate, in a way that offers some notion of distribution-free validity?

In this section, we will examine this alternative formulation of the question of distribution-free inference, based on an estimator $\hat\mu$ for $\mu_P$. To make this concrete, consider the following definition of distribution-free validity: for any distribution $P$,
\begin{equation}\label{eqn:DF_validity_regression_L1}\P\left(\|\hat\mu - \mu_P\|_{L_1(P)} \leq \hat\epsilon\right)\geq 1-\delta,\end{equation}
where the probability is computed with respect to the distribution of the training data points, $(X_1,Y_1),\dots,(X_n,Y_n)\iidsim P$, and where $\hat\epsilon>0$ is a function of the training data intended to express our uncertainty around the estimated regression function $\hat\mu$. (For a function $f:\cX\to\R$, its $L_1(P)$ norm is defined as $\|f\|_{L_1(P)} = \E_P[|f(X)|]$.) 

In fact, the two different versions of the problem of inference for regression that we have considered---namely, providing inference around an estimate $\hat\mu$ as defined here, or, directly constructing a confidence interval $\cC$ for $\mu_P$ as has been our focus throughout the chapter---are qualitatively the same. At a high level, any solution for one of these inference problems can be leveraged to provide a solution to the other. Specifically, any $\hat\mu,\hat\epsilon$ satisfying~\eqref{eqn:DF_validity_regression_L1} can be used to construct a confidence interval $\cC$ for $\mu_P$, by centering our interval around $\hat\mu$ and choosing its width as a function of $\hat\epsilon$. Conversely, given a confidence interval $\cC$ for $\mu_P$ satisfying~\eqref{eqn:DF_confidence_regression}, we can construct $\hat\mu,\hat\epsilon$ based on the center and the width of $\cC$. 

We now state two formal results that reveal that these two versions of the inference question have the same statistical difficulty. In particular, regardless of which definition of distribution-free validity we choose, meaningful inference on $\mu_P$ is straightforward when $X$ is discrete, but faces fundamental hardness results when $X$ is nonatomic.

First, we consider the discrete setting. 
\begin{theorem}[Estimating the regression function, discrete case]\label{thm:DF_regression_discrete_eps}
Let $P$ be a distribution on $\cX\times\cY$, where $\cX = \{x_1,\dots,x_K\}$ and $\cY \subseteq [a,b]$, and let $\delta\in(0,1)$.
    Let $(X_1,Y_1),\dots,(X_n,Y_n)\iidsim P$. 
As in Theorem~\ref{thm:DF_regression_discrete}, define $n_k =\sum_{i=1}^n\ind{X_i = x_k}$, and let
    \[\hat{\mu}(x_k) = \frac{1}{n_k}\sum_{i=1}^n Y_i \cdot\ind{X_i = x_k}\]
    for each $k$ with $n_k\geq 1$, and $\hat\mu(x_k) = \frac{a+b}{2}$ for any $k$ with $n_k=0$. Define
    \[\hat\epsilon = \frac{b-a}{\sqrt{2\delta}}\cdot \sqrt{\frac{K}{n}}.\]
    Then $\hat\mu,\hat\epsilon$ satisfy the distribution-free validity condition~\eqref{eqn:DF_validity_regression_L1}. 
\end{theorem}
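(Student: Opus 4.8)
The plan is to reduce the $L_1(P)$-norm bound to a collection of pointwise (per-bin) estimation error bounds, exactly along the lines of the proof of Theorem~\ref{thm:DF_regression_discrete}. Write $p_k = \P_P(X = x_k)$. The key observation is that
\[\|\hat\mu - \mu_P\|_{L_1(P)} = \E_{X\sim P_X}\big[|\hat\mu(X) - \mu_P(X)|\big] = \sum_{k=1}^K p_k\,\big|\hat\mu(x_k) - \mu_P(x_k)\big|,\]
so it suffices to control $\E\big[\sum_k p_k |\hat\mu(x_k) - \mu_P(x_k)|\big]$ and then apply Markov's inequality to turn the expectation bound into the high-probability statement~\eqref{eqn:DF_validity_regression_L1}.

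First I would bound the conditional expectation $\E\big[|\hat\mu(x_k) - \mu_P(x_k)|\mid n_k\big]$ for each $k$. On the event $n_k \geq 1$, conditional on $n_k$ the estimator $\hat\mu(x_k)$ is a sample mean of $n_k$ i.i.d.\ draws from the conditional distribution of $Y\mid X = x_k$, which is supported on $[a,b]$ and has mean $\mu_P(x_k)$; hence its conditional variance is at most $\frac{(b-a)^2}{4n_k}$, and by Jensen's inequality $\E\big[|\hat\mu(x_k) - \mu_P(x_k)|\mid n_k\big] \leq \frac{b-a}{2\sqrt{n_k}}$. On the event $n_k = 0$ we have the crude deterministic bound $|\hat\mu(x_k) - \mu_P(x_k)| \leq \frac{b-a}{2}$ since both $\hat\mu(x_k) = \frac{a+b}{2}$ and $\mu_P(x_k)$ lie in $[a,b]$; combining the two cases gives $\E\big[|\hat\mu(x_k) - \mu_P(x_k)| \mid n_k\big] \leq \frac{b-a}{2\sqrt{n_k + 1}}$ (valid whether $n_k = 0$ or $n_k \geq 1$, since $\frac{1}{\sqrt{1}} = 1$). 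Taking expectation over $n_k \sim \textnormal{Binomial}(n, p_k)$, Jensen's inequality together with the Binomial reciprocal-moment bound~\eqref{eqn:binomial_reciprocal} gives $\E\big[\frac{1}{\sqrt{n_k+1}}\big] \leq \sqrt{\E[\frac{1}{n_k+1}]} \leq \sqrt{\frac{1}{p_k(n+1)}} \leq \frac{1}{\sqrt{p_k n}}$, so $\E\big[|\hat\mu(x_k) - \mu_P(x_k)|\big] \leq \frac{b-a}{2\sqrt{p_k n}}$.

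Next I would assemble these into the global bound. Summing over $k$,
\[\E\big[\|\hat\mu - \mu_P\|_{L_1(P)}\big] = \sum_{k=1}^K p_k\,\E\big[|\hat\mu(x_k) - \mu_P(x_k)|\big] \leq \sum_{k=1}^K p_k \cdot \frac{b-a}{2\sqrt{p_k n}} = \frac{b-a}{2\sqrt{n}}\sum_{k=1}^K \sqrt{p_k} \leq \frac{(b-a)\sqrt{K}}{2\sqrt{n}},\]
where the last inequality is Cauchy--Schwarz (or concavity of $\sqrt{\cdot}$ on the simplex). Finally, Markov's inequality yields $\P\big(\|\hat\mu - \mu_P\|_{L_1(P)} > t\big) \leq \frac{(b-a)\sqrt{K}}{2 t\sqrt{n}}$ for any $t > 0$; setting this equal to $\delta$ gives $t = \frac{(b-a)\sqrt{K}}{2\delta\sqrt{n}}$. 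One routine check remains: the stated $\hat\epsilon = \frac{b-a}{\sqrt{2\delta}}\sqrt{K/n}$ should dominate this $t$, i.e.\ we need $\frac{1}{\sqrt{2\delta}} \geq \frac{1}{2\delta}$, equivalently $2\delta \geq \sqrt{2\delta}$, i.e.\ $\delta \geq 1/2$ — which is not generally true, so I expect the main subtlety here is reconciling constants: either the intended bound uses a sharper tail inequality (e.g.\ a second-moment/Chebyshev argument on $\|\hat\mu - \mu_P\|_{L_1(P)}$ directly, bounding its variance rather than just its mean) or a Hoeffding-type per-bin concentration combined with a union bound over the (random) set of observed bins. The cleanest route is likely to bound $\E\big[\|\hat\mu-\mu_P\|_{L_1(P)}^2\big]$ and apply Chebyshev; controlling that second moment (the cross terms vanish in expectation after conditioning on the $n_k$'s because the within-bin sample means are conditionally independent across $k$) is the step I would spend the most care on, and it should produce exactly the constant $\frac{1}{\sqrt{2\delta}}$.
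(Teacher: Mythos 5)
Your proposal correctly identifies the basic per-bin structure and, importantly, catches the real subtlety: a first-moment bound plus Markov gives only $\hat\epsilon = \frac{b-a}{2\delta}\sqrt{K/n}$, which is worse than the claimed $\frac{b-a}{\sqrt{2\delta}}\sqrt{K/n}$ whenever $\delta < 1/2$ (the regime of interest). You also correctly guess that the fix is to apply Markov to $\|\hat\mu - \mu_P\|_{L_1(P)}^2$ rather than to the $L_1$ norm itself. So the skeleton is right.

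However, the mechanism you sketch for the second-moment step is not the one that works. You propose expanding $\E\big[\|\hat\mu-\mu_P\|_{L_1(P)}^2\big]$ and arguing that the cross terms vanish because the bin-wise sample means are conditionally independent given the $n_k$'s. That fails: the off-diagonal terms are
\[
\E\Big[\,|\hat\mu(x_k)-\mu_P(x_k)|\cdot|\hat\mu(x_j)-\mu_P(x_j)|\,\Big| \,n_1,\dots,n_K\Big] = \E\big[|\hat\mu(x_k)-\mu_P(x_k)|\,\big|\,n_k\big]\cdot \E\big[|\hat\mu(x_j)-\mu_P(x_j)|\,\big|\,n_j\big],
\]
and each factor is a conditional expected \emph{absolute} deviation, which is strictly positive whenever $Y\mid X=x_k$ is non-degenerate. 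Conditional independence factorizes the product but does not kill it; the cross terms would only vanish if you were working with signed deviations, which you are not.

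The paper sidesteps this entirely with a deterministic Jensen/Cauchy--Schwarz step applied to the probability vector $(p_1,\dots,p_K)$: pointwise in the data,
\[
\|\hat\mu - \mu_P\|_{L_1(P)}^2 = \Big(\sum_{k=1}^K p_k\,|\hat\mu(x_k)-\mu_P(x_k)|\Big)^2 \;\leq\; \sum_{k=1}^K p_k\,(\hat\mu(x_k)-\mu_P(x_k))^2,
\]
so there are no cross terms to manage at all. Taking expectations, using $\E[(\hat\mu(x_k)-\mu_P(x_k))^2\mid n_k]\leq \frac{(b-a)^2}{4n_k}\leq \frac{(b-a)^2}{2(n_k+1)}$ on $\{n_k\geq1\}$ (and the trivial $\leq (b-a)^2/4$ when $n_k=0$, which also obeys this bound), and then the Binomial reciprocal-moment bound~\eqref{eqn:binomial_reciprocal} gives $\E[\|\hat\mu-\mu_P\|_{L_1(P)}^2]\leq \frac{(b-a)^2}{2}\cdot\frac{K}{n}$. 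Markov applied to the squared norm then yields exactly $\hat\epsilon = \frac{b-a}{\sqrt{2\delta}}\sqrt{K/n}$. So: replace the cross-term argument with the one-line Cauchy--Schwarz relaxation, and the rest of your outline goes through.
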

In particular we note that $\hat\epsilon$ is vanishing as $n\to\infty$. 

On the other hand, in the continuous case where $X$ is nonatomic, we are again faced with a hardness result.

\begin{theorem}[Hardness result for estimating the regression function]\label{thm:regression_nonatomic_eps}
Suppose $\hat\mu$ and $\hat\epsilon$ satisfy the condition~\eqref{eqn:DF_validity_regression_L1} for estimation of the regression function $\mu_P$.
    Then, for any distribution $P$ on $\cX\times[a,b]$ for which the marginal $P_X$ is nonatomic, and for any sample size $n\geq 1$,
    \[\P\left( \, \hat\epsilon \, \geq \, \frac{\E_P[\Var_P(Y\mid X)]}{b-a} \,\right) \geq 1-\delta,\]
    where $\Var_P(Y\mid X)$ is the conditional variance of $Y\mid X$ under the joint distribution $P$.
\end{theorem}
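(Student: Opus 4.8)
The plan is to prove this hardness result by essentially the same sign-perturbed sample--resample construction used in the proof of Theorem~\ref{thm:regression_nonatomic_width}, but adapted so that the conclusion lands on the accuracy quantity $\hat\epsilon$ rather than on the Lebesgue measure of a confidence set. Before starting, I would make a harmless reduction: since $\mu_P(x)\in[a,b]$, replacing $\hat\mu$ by its truncation to $[a,b]$ can only decrease $\|\hat\mu-\mu_P\|_{L_1(P)}$, so $(\hat\mu,\hat\epsilon)$ remains valid, and I may assume $\hat\mu$ takes values in $[a,b]$. (If $\textnormal{Var}_P(Y\mid X)=0$ identically the claimed bound is vacuous, so nothing is lost in the degenerate case.)

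Fix $P$ with nonatomic $P_X$ and write $v = \E_P[\textnormal{Var}_P(Y\mid X)]/(b-a)$ for the target lower bound. Following the proof of Theorem~\ref{thm:regression_nonatomic_width}, I would decompose $P_{Y\mid X} = \tfrac12 P^0_{Y\mid X} + \tfrac12 P^1_{Y\mid X}$ via Lemma~\ref{lem:split_distribution_into_two} (applied after rescaling $Y$ to $[0,1]$), which gives $\mu_P^1(X)-\mu_P^0(X)\geq \tfrac{2\textnormal{Var}_P(Y\mid X)}{b-a}$ almost surely. Setting $c(x) = \textnormal{Var}_P(Y\mid x)/(b-a)$, I define a joint law $\tilde P$ on $(X,B,Y)$: draw $(X,B)\sim P_X\times\textnormal{Unif}(\{\pm1\})$, then $Y\mid(X,B)$ as a mixture of $P^0_{Y\mid X}$ and $P^1_{Y\mid X}$ with weights $\tfrac12 \mp \tfrac{B\,c(X)}{\mu_P^1(X)-\mu_P^0(X)}$ (the bound on $\mu_P^1-\mu_P^0$ keeps these weights in $[0,1]$; set them to $(\tfrac12,\tfrac12)$ when $\mu_P^1=\mu_P^0$). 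Then $\E_{\tilde P}[Y\mid X,B] = \mu_P(X) + c(X)B$, and averaging over $B$ shows the $(X,Y)$-marginal of $\tilde P$ equals $P$. I would then apply the sample--resample machinery: draw $M$ super-pairs $(X^{(i)},B^{(i)})\iidsim P_X\times\textnormal{Unif}(\{\pm1\})$, let $\widehat P_M$ be their empirical distribution, and let $\tilde P_M$ be the law obtained by sampling $(X,B)\sim\widehat P_M$ and then $Y\mid(X,B)$ as above. Since $P_X$ is nonatomic the $X^{(i)}$ are a.s.\ distinct, so $\mu_{\tilde P_M}(X^{(j)}) = \mu_P(X^{(j)}) + c(X^{(j)})B^{(j)}$ for each $j$, and $(\tilde P_M)_X$ is the empirical distribution of the $X^{(j)}$'s.

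The argument then rests on two ingredients. First, applying the validity hypothesis~\eqref{eqn:DF_validity_regression_L1} conditionally on $\widehat P_M$ (so $\tilde P_M$ is a fixed distribution and the training data are $n$ i.i.d.\ draws from it) and marginalizing gives $\P(\|\hat\mu - \mu_{\tilde P_M}\|_{L_1(\tilde P_M)}\leq\hat\epsilon)\geq 1-\delta$. Second, I would lower-bound this $L_1$ distance, written as $\tfrac1M\sum_{j=1}^M |\hat\mu(X^{(j)}) - \mu_P(X^{(j)}) - c(X^{(j)})B^{(j)}|$: conditioning on everything except the super-pairs not used in the training draw (at least $M-n$ of them, i.i.d.\ $P_X\times\textnormal{Unif}(\{\pm1\})$ and independent of $\hat\mu$), each such term has conditional mean $\E_{(X',B')}[|\hat\mu(X')-\mu_P(X')-c(X')B'|]$, which by the pointwise inequality $\tfrac12|a-c|+\tfrac12|a+c|\geq|c|$ is at least $\E_{X'}[c(X')] = v$. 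A Hoeffding bound over these $\geq M-n$ i.i.d.\ $[0,b-a]$-valued terms, plus the negligible ($\leq n(b-a)/M$) contribution of the used super-pairs, shows that for $M$ large, $\|\hat\mu - \mu_{\tilde P_M}\|_{L_1(\tilde P_M)}\geq v - \xi$ with probability $\geq 1-\eta$, with $\xi,\eta$ arbitrarily small as $M\to\infty$. Intersecting with the validity event forces $\hat\epsilon\geq v-\xi$ with probability $\geq 1-\delta-\eta$ under the $\tilde P_M$-construction; transferring to i.i.d.\ sampling from $P$ costs at most $\dtv\leq n(n-1)/(2M)$ by Lemma~\ref{lem:sample-resample}; and letting $M\to\infty$, then $\eta\to0$, then removing $\xi$ by continuity of measure from above, yields $\P_{P^n}(\hat\epsilon\geq v)\geq 1-\delta$.

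The step I expect to be the main obstacle is the second ingredient: upgrading the expectation-level lower bound $\E[\|\hat\mu - \mu_{\tilde P_M}\|_{L_1(\tilde P_M)}]\gtrsim v$ into a \emph{high-probability} statement that can be intersected with the $(1-\delta)$ validity event without degrading the confidence level, which is precisely where the freedom to take $M$ (hence the concentration rate) arbitrarily large matters and where the $\epsilon$--$\delta$ bookkeeping has to be done carefully. A secondary point that must be handled correctly is the normalization: applying Lemma~\ref{lem:split_distribution_into_two} to $Y$ rescaled to $[0,1]$ is what both keeps the mixture weights in $[0,1]$ and produces the factor $1/(b-a)$ in the final bound; an un-normalized version would give a scale-inconsistent (and false for large $b-a$) bound. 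The remaining steps are a routine adaptation of the proof of Theorem~\ref{thm:regression_nonatomic_width}.
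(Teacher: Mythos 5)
Your proposal is correct, but it takes a noticeably more elaborate route than the paper's own proof, which does not use the sign-perturbed mixture construction or Lemma~\ref{lem:split_distribution_into_two} at all. The paper's proof of Theorem~\ref{thm:regression_nonatomic_eps} directly mirrors the simpler resampling argument from Theorem~\ref{thm:regression_nonatomic}: draw $(X^{(1)},Y^{(1)}),\dots,(X^{(M)},Y^{(M)})\iidsim P$, form the empirical distribution $\widehat{P}_M$, and resample the training set from $\widehat{P}_M$. The key observation is that, because $P_X$ is nonatomic, the $X^{(i)}$'s are almost surely distinct, so $\mu_{\widehat{P}_M}(X^{(i)}) = Y^{(i)}$, and hence $\|\hat\mu - \mu_{\widehat{P}_M}\|_{L_1(\widehat{P}_M)} = \tfrac{1}{M}\sum_i |\hat\mu(X^{(i)}) - Y^{(i)}|$. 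The lower bound then comes from the elementary chain (in the $[0,1]$ scale): conditional on the training draw, each unused term has mean $\E[|\hat\mu(X')-Y'|] \geq \E[|Y-\textnormal{Med}_P(X)|] \geq \E[(Y-\textnormal{Med}_P(X))^2] \geq \E_P[\textnormal{Var}_P(Y\mid X)]$, combined with a Chebyshev-style concentration argument. Your approach instead reuses the sign-perturbed mixture $\tilde P$ from Theorem~\ref{thm:regression_nonatomic_width}, where the perturbation is genuinely needed because one has to push the regression function \emph{both} above and below $\mu_P$ to force a \emph{set} $\cC(x)$ to be wide. Here the target $\hat\epsilon$ already bounds an $L_1$ distance, so a one-sided resample suffices; the sign perturbation is unnecessary overhead, and in fact the paper's simpler argument passes through the strictly stronger intermediate quantity $\E[|Y-\textnormal{Med}_P(X)|]$ (i.e., it would support a marginally stronger statement). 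Both routes yield the identical constant $\E_P[\textnormal{Var}_P(Y\mid X)]/(b-a)$. One point in your favor: your truncation-of-$\hat\mu$ reduction is actually needed for the boundedness step in \emph{both} arguments; the paper's proof asserts "the terms lie in $[0,1]$" without explicitly making this reduction, so you have identified (and quietly fixed) a small gap in the paper's exposition.
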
 \index{hardness result!regression}
In other words, the upper bound $\hat\epsilon$ on the error of our estimate, cannot be vanishing as $n\to\infty$---it is lower-bounded by the conditional variance of $Y\mid X$ under the distribution $P$. This result is similar in flavor to the hardness result in Theorem~\ref{thm:regression_nonatomic_width}, which proves that any distribution-free confidence interval for $\mu_P$ cannot have vanishing width in the nonatomic setting.

\begin{proof}[Proof of Theorem~\ref{thm:DF_regression_discrete_eps}]
        Let $p_k = \P_P(X=x_k)$. Then we can calculate
    \begin{align*}
        &\|\hat\mu - \mu_P\|_{L_1(P)}\\
        &=\sum_{k=1}^K p_k \cdot |\hat\mu(x_k) - \mu_P(x_k)|\\
        &\leq\sqrt{\sum_{k=1}^K p_k  (\hat\mu(x_k) - \mu_P(x_k))^2}\\
        &=\sqrt{\sum_{k=1}^K p_k  \left\{(\hat\mu(x_k) - \mu_P(x_k))^2\cdot \ind{n_k\geq 1} + \left(\frac{a+b}{2} - \mu_P(x_k)\right)^2\cdot \ind{n_k=0}\right\}}\\
        &\leq \sqrt{\sum_{k=1}^K p_k  \left\{(\hat\mu(x_k) - \mu_P(x_k))^2 \cdot \ind{n_k\geq 1} + \left(\frac{b-a}{2}\right)^2\cdot \ind{n_k=0}\right\}}.
    \end{align*}
    For each $k\in[K]$, conditional on $n_k$ (and on the event $n_k\geq 1$), as in the proof of Theorem~\ref{thm:DF_regression_discrete} 
    the random variable $\hat\mu(x_k)$ is the sample mean of $n_k$ many draws from the distribution of $Y\mid X=x_k$, which is supported on $[a,b]$ and has mean $\mu_P(x_k)$. Therefore,
    \[\E\left[(\hat\mu(x_k) - \mu_P(x_k))^2 \mid n_k\right]\leq \frac{(b-a)^2}{4n_k} \leq \frac{(b-a)^2}{2(n_k+1)},\]
    where for the last step we use $n_k\geq 1$. We can therefore relax the calculations above to
    \begin{align*}
        \E\left[\|\hat\mu - \mu_P\|_{L_1(P)}^2\right]
        &\leq \E\left[\sum_{k=1}^K p_k \cdot \frac{(b-a)^2}{2(n_k+1)}\right] \\
        &\leq \E\left[\sum_{k=1}^K p_k \cdot \frac{(b-a)^2}{2\cdot np_k}\right]\\
        &=\frac{(b-a)^2}{2} \cdot \frac{K}{n},
    \end{align*}
    where the second inequality holds by~\eqref{eqn:binomial_reciprocal}. Therefore, by Markov's inequality,
    \[\P\left(\|\hat\mu - \mu_P\|_{L_1(P)} > \frac{b-a}{\sqrt{2\delta}}\cdot \sqrt{\frac{K}{n}}\right) = \P\left(\|\hat\mu - \mu_P\|_{L_1(P)}^2 > \frac{(b-a)^2}{2\delta}\cdot \frac{K}{n}\right) \leq\delta.\]
\end{proof}

\begin{proof}[Proof of Theorem~\ref{thm:regression_nonatomic_eps}]
    Our proof will rely on the sample--resample technique of Lemma~\ref{lem:sample-resample}, and is similar to the proof of Theorem~\ref{thm:regression_nonatomic}. 
    Without loss of generality, we can take $[a,b]=[0,1]$. Let $(X^{(1)},Y^{(1)}),\dots,(X^{(M)},Y^{(M)})\iidsim P$, and let $\widehat{P}_M$ be the empirical distribution. Now condition on $\widehat{P}_M$,
    let $\big((X_i,Y_i)\big)_{i\in[n]}\iidsim \widehat{P}_M$, and let $\hat\mu,\hat\epsilon$ be trained on this data. Then by~\eqref{eqn:DF_validity_regression_L1}, we must have
    \begin{multline*}1-\delta \leq \P\left(\|\hat\mu - \mu_{\widehat{P}_M}\|_{L_1(\widehat{P}_M)} \leq \hat\epsilon \ \middle| \ \widehat{P}_M \right)\\\leq \P\left(\|\hat\mu - \mu_{\widehat{P}_M}\|_{L_1(\widehat{P}_M)} \leq c_M \ \middle| \ \widehat{P}_M\right) + \P(\hat\epsilon > c_M \mid \widehat{P}_M),\end{multline*}
    where we define the constant
    \[c_M = \E_P[\Var_P(Y\mid X)] - \frac{n}{4M} - \frac{1}{\sqrt[4]{M}}.\]
    And by definition of $\hat{P}_M$, we can calculate
    \[\|\hat\mu - \mu_{\widehat{P}_M}\|_{L_1(\widehat{P}_M)} 
    = \frac{1}{M}\sum_{i=1}^M \left|\hat\mu(X^{(i)}) - \mu_{\hat{P}_M}(X^{(i)})\right| \\
    =\frac{1}{M}\sum_{i=1}^M \left|\hat\mu(X^{(i)}) - Y^{(i)}\right| ,\]
    where the last step holds since we have $\mu_{\widehat{P}_M}(X^{(i)})=Y^{(i)}$, for all $i$, on the event that $X^{(1)},\dots,X^{(M)}$ are all distinct (which holds almost surely since $P_X$ is nonatomic).
    Therefore,
    \[   \P_{\widehat{P}_M}(\hat\epsilon > c_M\mid \widehat{P}_M )\geq 1-\delta - \P_{\widehat{P}_M}\left(\frac{1}{M}\sum_{i=1}^M \left|\hat\mu(X^{(i)}) - Y^{(i)}\right| \leq c_M \ \middle| \ \widehat{P}_M\right) ,\]
    where we have now added subscripts $\widehat{P}_M$ to emphasize that $\hat\mu$ and $\hat\epsilon$ are computed on data sampled i.i.d.\ from $\widehat{P}_M$.
    
    Next, as in Lemma~\ref{lem:sample-resample}, let $Q$ denote the joint distribution on $((X_i,Y_i))_{i\in[n]}$ obtained by first sampling $((X^{(i)},Y^{(i)}))_{i\in[M]}\iidsim P$ to construct $\widehat{P}_M$, and then sampling $((X_i,Y_i))_{i\in[n]}\iidsim\widehat{P}_M$.
By Lemma~\ref{lem:sample-resample}, then, we have $\dtv(P^n,Q)\leq \frac{n(n-1)}{2M}$, and so
\begin{align*}\P_P(\hat\epsilon > c_M) 
&\geq \E\left[ \P_{\widehat{P}_M}(\hat\epsilon > c_M\mid \widehat{P}_M )\right] - \frac{n(n-1)}{2M} \\
&\geq 1-\delta- \frac{n(n-1)}{2M} - \E\left[\P_{\widehat{P}_M}\left(\frac{1}{M}\sum_{i=1}^M \left|\hat\mu(X^{(i)}) - Y^{(i)}\right| \leq c_M \ \middle| \ \widehat{P}_M\right)\right]\\
&= 1-\delta- \frac{n(n-1)}{2M} - \P\left(\frac{1}{M}\sum_{i=1}^M \left|\hat\mu(X^{(i)}) - Y^{(i)}\right| \leq c_M\right),\end{align*}
where the last probability is calculated with respect to the following distribution: first we sample $(X^{(1)},Y^{(1)}),\dots,(X^{(M)},Y^{(M)})\iidsim P$ (to form $\widehat{P}_M$), and then we train the fitted regression function $\hat\mu$ on an i.i.d.\ sample of size $n$ drawn from $\widehat{P}_M$.

Our remaining step is to bound this last probability. Let $I_1,\dots,I_n\iidsim \textnormal{Unif}([M])$ be the indices corresponding to the draws of the data points (that is, $(X_i,Y_i) = (X^{(I_i)},Y^{(I_i)})$ for each $i=1,\dots,n$). 
Note that, conditional on $I_1,\dots,I_n$ and on $\hat\mu$, the remaining data points $\big((X^{(i)},Y^{(i)})\big)_{i\in[M]\backslash\{I_1,\dots,I_n\}}$ are i.i.d.\ draws from $P$ (and, in particular, are independent from $\hat\mu$). This implies that
\[\P\left(\frac{1}{M}\sum_{i=1}^M \left|\hat\mu(X^{(i)}) - Y^{(i)}\right| \leq c_M \,\middle|\,\hat\mu \right)
\leq \sup_{f:\cX\to[0,1]} \P\left(\frac{1}{M}\sum_{i=1}^{M-n} \left|f(X^{(i)}) - Y^{(i)}\right| \leq c_M\right),\]
where now we are taking a supremum over all \emph{fixed} functions $f$, since there are at least $M-n$ data points $(X^{(i)},Y^{(i)})$ that are i.i.d.\ draws from $P$ even after conditioning on $\hat\mu$.
Next, for any fixed function $f:\cX\to[0,1]$, we have
\[\E\left[ |f(X^{(i)})-Y^{(i)}| \right] \geq \E_P[|Y-\textnormal{Med}_P(X)|] \geq \E_P[(Y-\textnormal{Med}_P(X))^2] \geq \E_P[\Var_P(Y\mid X)],\]
where the first step holds since  $\E_P[|Y-t| \mid X]$ is minimized at $t=\textnormal{Med}_P(X)$, the second step holds since $\cY\subseteq[0,1]$, and the last step holds since $\E_P[(Y-t)^2\mid X]$ has minimum value $\Var_P(Y\mid X)$ (attained at $t=\mu_P(X)$). 
Therefore,
\begin{multline*}\E\left[\frac{1}{M}\sum_{i=1}^{M-n} \left|f(X^{(i)}) - Y^{(i)}\right|\right] \geq \frac{M-n}{M}\cdot \E_P[\Var_P(Y\mid X)] \\\geq \E_P[\Var_P(Y\mid X)] - \frac{n}{4M} = c_M + \frac{1}{\sqrt[4]{M}}. \end{multline*}
And, $\Var\left(\frac{1}{M}\sum_{i=1}^{M-n} \left|f(X^{(i)}) - Y^{(i)}\right|\right) \leq \frac{1}{4M}$. Therefore, by Chebyshev's inequality,
\[\P\left(\frac{1}{M}\sum_{i=1}^{M-n} \left|f(X^{(i)}) - Y^{(i)}\right| \leq  c_M\right) \leq \frac{1}{4\sqrt{M}},\]
for any fixed function $f:\cX\to[0,1]$.
Returning to our calculations above we have shown that 
\[\P_P(\hat\epsilon > c_M) \geq 1-\delta - \frac{n(n-1)}{2M} - \frac{1}{4\sqrt{M}}.\]

Since $\lim_{M\to\infty} c_M = \E_P[\Var_P(Y\mid X)]$, taking $M\to\infty$ completes the proof.
\end{proof}

\index{regression function!inference|)}

\section*{Bibliographic notes}
\addcontentsline{toc}{section}{\protect\numberline{}\textnormal{\hspace{-0.8cm}Bibliographic notes}}

Theorem~\ref{thm:bahadur_savage}, which establishes a hardness result for inference on the marginal mean in the unbounded setting, is a classical result by \citet{bahadur1956nonexistence}.
Turning to the problem of inference on the conditional mean, the aim of distribution-free marginal coverage on the conditional mean, as defined in~\eqref{eqn:DF_confidence_regression}, is proposed in \cite{vovk2005algorithmic} (originally in the context of a binary response, under the terminology of constructing a `weakly valid probability estimator'). For achieving this type of distribution-free coverage in the discrete case, the proof of Theorem~\ref{thm:DF_regression_discrete} relies on the calculation~\eqref{eqn:binomial_reciprocal} for the Binomial distribution, which is due to \citet{chao1972negative}. The confidence interval constructed in Theorem~\ref{thm:DF_regression_discrete} 
offers a solution that is simple, but generally wider than necessary because it only relies on Hoeffding's inequality \citep{hoeffding1963}; a tighter confidence interval for this type of problem is developed by \citet{gupta2020distribution}, using an empirical Bernstein inequality to adapt to the variance of $Y\mid X$. (See \citet{audibert2009exploration,maurer2009empirical,waudby2024estimating} for background and additional results on empirical Bernstein inequalities.) Similarly, the constructions of Theorems~\ref{thm:DF_regression_binned}, ~\ref{thm:DF_regression_blurred}, and~\ref{thm:DF_regression_discrete_eps} can also be tightened by using more refined concentration inequalities.

In the case of a nonatomic feature $X$, Theorem~\ref{thm:regression_nonatomic}, which shows that any distribution-free confidence interval for $\mu_P$ must necessarily also be a valid prediction interval, appears in various forms in \citet{nouretdinov2001pattern,vovk2005algorithmic,barber2020distributionfree}. Theorem~\ref{thm:regression_nonatomic_median}, which gives an analogous result for any distribution-free confidence interval for the conditional median, is established by~\citet{medarametla2021distribution}; the same work also shows a converse, that a distribution-free prediction interval at level $1-\alpha$ must also offer distribution-free coverage, at level $1-2\alpha$, for the conditional median.Theorem~\ref{thm:regression_nonatomic_width}, which proves a lower bound on the width any distribution-free confidence interval for $\mu_P$, is based on results from \citet{barber2020distributionfree,lee2021distribution}; the latter work also refines this bound to handle the case where $X$ is not nonatomic, but has large support, building on assumption-free tools for testing hypotheses about discrete distributions \citep{chan2014optimal}. On the other hand, the hardness result presented in Theorem~\ref{thm:regression_nonatomic_eps}, formulated in terms of producing an estimate $\hat\epsilon$ of the error of the fitted regression model, is more closely related to hardness results for calibration; see the bibliographic notes in Chapter~\ref{chapter:calibration} for references.

In Section~\ref{sec:regression_relax}, where we discuss relaxations of the problem for the nonatomic case, the procedure for distribution-free inference on the blurred mean (Theorem~\ref{thm:DF_regression_blurred}) is based on the work of \citet{jang2023tight}, which provides an analogous procedure but for inference on the conditional quantile, rather than conditional mean, of $Y\mid X$. For background on rejection sampling (also called accept--reject sampling, or acceptance--rejection sampling), see \citet{owen2013monte}.

The problem of estimating the regression function $\mu_P$ has a long history in statistics, both within parametric models and in the nonparametric setting. Concretely, we can compare the distribution-free results established in this chapter to related results from the nonparametric statistics literature. Taking $\cX=\cY=[0,1]$ for simplicity, suppose we assume that $\mu_P$ is $s$-H{\"o}lder smooth~\citep[see, e.g.,][Definition 1.2]{tsybakov2008introduction} for some $s>0$ (when $s$ is an integer, this is equivalent to assuming $\mu_P$ has bounded $s$th derivative). Then it is possible to construct a confidence band for $\mu_P$ of width $\bigo(n^{-s/(2s+1)})$, up to log factors, but this procedure relies on knowing the value of $s$; if $s$ is unknown, then adaptivity is impossible, in the sense that any procedure guaranteeing coverage for any $s\geq s_{\min}$ will inevitably lead to width $\gtrsim n^{-s_{\min}/(2s_{\min}+1)}$, i.e., it cannot adapt to a higher smoothness level $s>s_{\min}$.
Returning to the distribution-free setting (with nonatomic $X$), even if we believe that the regression function must exhibit some (unknown) level of smoothness $s$, if we wish to avoid untestable assumptions then we would be forced to take $s_{\min}\to 0$, leading to constant-width confidence intervals (as derived in this chapter).
For background on these types of results from the nonparametric statistics literature, see \citet{genovese2008adaptive,cai2014adaptive} and \citet[Chapter 8]{gine2021mathematical}, and the references therein; these works also propose relaxations of the notion of valid coverage to enable avoiding hardness results. A closely related problem is that of nonparametric density estimation, with earlier results on the impossibility of adaptivity, such as \citet{low1997nonparametric}.

\section*{Exercises}
\addcontentsline{toc}{section}{\protect\numberline{}\textnormal{\hspace{-0.8cm}Exercises}}
\begin{enumerate}[font=\bfseries, label={\thechapter.\arabic*}, labelsep=1em, itemsep=1em]
\item Let $(X_1,Y_1),\dots,(X_n,Y_n),(X_{n+1},Y_{n+1})\iidsim P$ for some distribution $P$ on $\cX\times\cY$, where $\cY=\R$. Consider the interval $\cC = [\min_{i\in[n]} Y_i, \max_{i\in[n]} Y_i]$. \begin{enumerate}
    \item Prove that, for any distribution $P$, the interval $\cC$ satisfies
        \[\P_P(Y_{n+1}\in\cC) \geq 1-\frac{2}{n+1},\] 
        which is a marginal predictive coverage guarantee.
    \item Prove that, for any distribution $P$, the interval $\cC$ satisfies
    \[\P_P(\textnormal{Med}_{P_Y} \in\cC) \geq 1 - 2^{-(n-1)},\]
    where $\textnormal{Med}_{P_Y}$ is the median of the marginal distribution $P_Y$. This verifies that $\cC$ is a valid distribution-free confidence interval for the (marginal) median of $Y$.
    \item Prove that, for any distribution $P$, the interval $\cC$ satisfies
    \[\P_P(\textnormal{Med}_P(X_{n+1}) \in\cC) \geq 1-\frac{2(2-2^{-n})}{n+1},\]
    where $\textnormal{Med}_P(x)$ is the conditional median of $Y$ given $X=x$ (under the joint distribution $P$). This verifies that $\cC$ is a valid distribution-free confidence interval for the conditional median of $Y$. (Note that this does not contradict the hardness result of Theorem~\ref{thm:regression_nonatomic_median}, since $\cC$ is also a valid prediction interval.)
\end{enumerate}
\item Let $(X_1,Y_1),\dots,(X_n,Y_n)\iidsim P$, where $P$ is a distribution on $\cX\times\cY$  and $\cY\subseteq [0,1]$. In this exercise, we will perform distribution-free inference for the parameter 
    \[\mu^+_P = \E_P[Y \mid X \geq \textnormal{Med}_{P_X}],\]
    which is the expected value of $Y$ conditional on the event that $X$ is at or above its marginal median, $\textnormal{Med}_{P_X}$. 
    
    Note that since $P$ is unknown, the quantity $\textnormal{Med}_{P_X}$ is unknown as well. We will show that it is nonetheless possible to estimate $\mu^+_P$ with a distribution-free guarantee of accuracy. For simplicity, we will assume throughout this problem that $P_X$ is nonatomic.
    \begin{enumerate}
    \item First consider an oracle method, which does assume knowledge of the marginal  median $\textnormal{Med}_{P_X}$. Let $N_+=\sum_{i=1}^n\ind{X_i\geq \textnormal{Med}_{P_X}}$, and define
        \[\tilde\mu^+_P = \begin{cases} \frac{1}{N_+}\sum_{i=1}^n Y_i \cdot\ind{X_i\geq \textnormal{Med}_{P_X}}, & \textnormal{ 
        if $N_+>0$,}\\ \frac{1}{2}, & \textnormal{ otherwise}.\end{cases}\]
        Prove that
        \[\E\left[\big(\tilde\mu^+_P - \mu^+_P\big)^2\right] \leq \frac{1}{n}.\]
        \emph{Hint: some of the steps are similar to the proof of Theorem~\ref{thm:DF_regression_discrete_eps}.}

    \item Next, define
    \[\widehat\mu^+_P = \frac{1}{\lceil n/2\rceil}\sum_{i=1}^n Y_i \cdot\ind{X_i\geq X_{\lfloor n/2\rfloor+1}}\]
    as the sample mean of the $Y$ values corresponding to the highest $\lceil n/2\rceil$ $X$ values. Prove that
    \[\E\left[ (\widehat\mu^+_P - \tilde\mu^+_P)^2\right] \leq \frac{1}{n}.\]
    (Combined with the previous part, this implies that $\widehat\mu^+_P$ is an accurate estimator of $\mu^+_P$, with a distribution-free bound on its error, $\E[(\widehat\mu^+_P-\mu^+_P)^2]\leq 4/n$.) 
\end{enumerate}
\item Let $P$ be any distribution on $\cX\times\cY$, where $\cY\subseteq[0,1]$, and where $\cX = \{x_1,\dots,x_K\}$ is a finite set. In Theorem~\ref{thm:DF_regression_discrete}, we constructed a confidence interval for $\mu_P(x)$ that offers distribution-free marginal coverage. For this exercise, construct a confidence interval for the conditional median, $\textnormal{Med}_P(x)$, again with marginal coverage: we require that
    \[\P_P(\textnormal{Med}_P(X_{n+1})\in\cC(X_{n+1}))\geq 1-\alpha\]
    holds for data $(X_1,Y_1),\dots,(X_{n+1},Y_{n+1})\iidsim P$, for any $P$. Note that, since we are in the discrete setting, the resulting interval should not be as wide as a prediction interval (as long as $K\ll n$).
\end{enumerate}

 \chapter{Calibration}
\label{chapter:calibration}

We next consider \emph{calibration}, a property that measures the reliability of a model $f(X)$ for predicting $Y$. We will restrict our attention to the setting where the response is binary, $Y\in\{0,1\}$, and so $f(X)$ is an estimate of the conditional probability of a positive label, $\P(Y=1\mid X)$.
Generally speaking, calibration requires that the estimate satisfies $f(X)\approx \P(Y=1\mid f(X))$. For example, if we consider all data points for which $f(X)=20\%$ (i.e., the model estimates a 20\% probability of a positive label), then approximately $20\%$ of those cases should have $Y=1$, and so on.
In this chapter, we discuss distribution-free calibration guarantees, showing that they are possible in certain regimes but impossible in others.

\section{Calibration: definition and methods}
\label{sec:calibration-intro}

Consider a random variable $(X,Y)$ drawn from a distribution $P$ on $\cX \times \{0,1\}$. Suppose we have a function $f : \cX \to [0,1]$ that is an estimate of the probability of a positive label, $Y=1$, based on the features $X$. One basic notion of validity for $f$ is calibration, defined as follows.
\begin{definition}[Perfect calibration] \label{def:perfect_calibration}
The function $f$ is \emph{perfectly calibrated} if 
\begin{equation}
\label{eq:perfect_calibration}
    \P(Y = 1 \mid f(X)) = f(X),
\end{equation}
almost surely.
\end{definition} \index{perfect calibration}

This is an appealing basic property. For instance, one example of a perfectly calibrated model is the oracle model that outputs the true conditional probabilities: $f(x) = \P(Y = 1 \mid X = x)$. Note that many other models also satisfy perfect calibration, for example, the constant function $f(x) = \E[Y]$. The latter example illustrates the fact that while calibration is a desirable property, it does not imply that $f$ is an accurate estimate of the true regression function for $Y\mid X$.

Notice that this definition is with respect to the distribution $P$, but in practical settings, we would only have access to a finite sample of data drawn from $P$. As such, we wish to have procedures that use the available data to address the following two goals:
\begin{itemize}
    \item[(a)] Given a pretrained model $f$, we wish to check whether perfect calibration holds, and to quantify violations of calibration.   
    \item[(b)] We wish to construct a function $f$, or modify a given pretrained function $f$, to satisfy or approximately satisfy perfect calibration.
\end{itemize}
These two goals form the focus of this chapter.

\subsection{Post-hoc calibration}
\label{subsec:post-hoc-calibration}
\index{post-hoc calibration|(}

With respect to goal (b) above, we will focus on \emph{post-hoc calibration}, a widely-used technique in applied machine learning. We start with a pretrained model $f : \cX \to [0,1]$, and a sample of $n$ data points, $(X_1, Y_1),\dots,(X_n,Y_n)\iidsim P$, that were not used for model training. As with split conformal prediction, we call this the calibration set. Then, we seek to post-process the outputs of $f$ via a function $h : [0,1] \to [0,1]$ so that the composition $h \circ f$ (approximately) satisfies calibration. Throughout this section, we will discuss algorithms that take as input a function $f$ and calibration data, and output such an $h$.

To begin, we introduce three commonly used post-hoc calibration algorithms. Each algorithm uses the observed data to select the function $h$ from some class of functions $\cH$, where the choice of this class varies across the three methods.

\begin{figure}[t]
    \centering
    \includegraphics[width=\textwidth]{\diagramspath calibration.pdf}
    \caption{\textbf{Visualization of three regressions with varying degrees of calibration.} In each plot, the gray dashed line represents the true conditional probability $\P_P(Y=1\mid X=x)$, and the dots represent observed values of $Y$. The solid line shows a different fitted model $f(X)$ in each panel. The left plot shows a poorly calibrated $f(x)$. The middle plot shows a model $f(x)$ that is an accurate estimate of $\P_P(Y=1\mid X=x)$ and consequently has good calibration. Finally, the right plot shows a binned regression with high error (i.e., $f(x)$ is \emph{not} an accurate estimate of $\P_P(Y=1\mid X=x)$), but nonetheless good calibration: for each value $t$ that occurs in the output of $f(x)$, we can see that $\P_P(Y=1\mid f(X)=t)\approx t$.}
    \commentAlt{Three plots show the same S-shaped dashed curve and same $(X,Y)$ points. Each is overlaid with a solid curve: in the left panel this curve is very different from the dashed line, in the middle it is similar, and in the right it is piecewise constant.}
    \label{fig:calibration}
\end{figure}

\paragraph{Binning.} A natural strategy is to bin the values of $f(X_i)$ into $K$ bins, and then use $h$ to adjust the output value for each bin. Concretely, let $[0,1]=B_1\cup\dots\cup B_K$ be a partition. Let $n_k = \sum_{i=1}^n \ind{f(X_i) \in B_k}$ be the number of calibration points in bin $k$, and for any $z \in [0,1]$, let $k(z)$ be the index of the bin containing $z$.
Then, we define $h$ as
\begin{equation}
    \label{eq:binning_h}
    \hat h(z) = \frac{1}{n_{k(z)}} \sum_{i=1}^n Y_i\cdot  \ind{f(X_i) \in B_{k(z)}}
\end{equation}
whenever $n_{k(z)} \geq 1$, and $\hat h(z) = 1/2$ otherwise. That is, for points with $f(x) \in B_k$, the function $\hat h \circ f$ returns the frequency of $Y=1$ for points in that bin based on the calibration data. This is equivalent to minimizing the squared error of $h \circ f$ on the calibration data:
\begin{equation}
\label{eq:calibration_se_minimizer}
    \hat h = \argmin_{h \in \cH} \sum_{i=1}^n \left([h \circ f](X_i) - Y_i\right)^2,
\end{equation}
where we choose the function class as $\cH = \cH_{\textnormal{bin}}$, the set of functions that are constant within each bin $B_k$. This has the advantage that as the number of bins grows, the function class $\cH_{\textnormal{bin}}$ is larger and more flexible.
However, with a large number of bins, more data is required to fit the function precisely.

\paragraph{Isotonic regression.}
Next, we consider isotonic regression, an alternative approach that avoids the need to specify bins, and is more commonly used than binning. Here, we restrict the post-hoc calibration function to lie in $\cH=\cH_{\textnormal{iso}}$, the set of all  nondecreasing functions. Restricting $h$ to be nondecreasing is often natural, since the initial function $f$ is assumed to be a model where larger outputs correspond to higher confidence that $Y=1$. We then fit $\hat h$ by minimizing the squared error on the calibration data as in~\eqref{eq:calibration_se_minimizer}.
While this approach avoids the specification of bins, it can only produce nondecreasing $h$. As such, it is neither more nor less general than binning.

\paragraph{Temperature scaling.} We can impose even more structure by taking $\cH$ to be a parametric class. One such approach is to use logistic regression, by taking the class of functions $\cH = \cH_{\textnormal{logistic}} = \{h_{\beta_0,\beta_1} : \beta_0,\beta_1\in\R\}$, where
\begin{equation}
    h_{\beta_0,\beta_1}(z) = \frac{1}{1 + e^{-(\beta_0 + \beta_1 \textnormal{logit}(z))}},
\end{equation}
with $\smash{\textnormal{logit}(z) = \log(\frac{z}{1-z})}$. 
Notice that $\beta_1\approx 0$ encourages the output probabilities to be similar to each other, while larger (positive or negative) $\beta_1$ leads to more extreme probabilities that differ across the range of values of $f(X)$.
From a statistical perspective, this approach has only two parameters, so it requires less calibration data than the other two approaches. However, the function class $\cH_{\textnormal{logistic}}$ is less flexible, so the resulting fit may not be as well calibrated as other approaches when ample data is available.
\index{post-hoc calibration|(}

\subsection{Quantifying violations of perfect calibration}
\index{perfect calibration!violations}
While Definition~\ref{def:perfect_calibration} is the natural definition of perfect calibration, in practice we never expect this to hold exactly, and it is necessary to quantify the degree to which calibration is violated---goal (a) above.
\begin{figure}[t]
    \centering
    \includegraphics[width=0.7\linewidth]{\diagramspath calibration-relaxations.pdf}
    \caption{\textbf{An illustration of the ECE (left), binned ECE (middle), and dCE (right),} measured for the same function $f(x)$. The left panel plots the conditional mean as a function of the probabilistic prediction, $f(X)$, as a solid line. The calibration error $\textnormal{ECE}(f)$ is depicted by the shaded area between $f(X)$ and the dotted diagonal line (which represents perfect calibration). The middle panel depicts $\textnormal{binECE}(f)$ similarly. In the right panel, the solid line depicts $f(X)$, and the dotted line depicts the nearest perfectly calibrated function to $f(X)$, so that the shaded area represents $\textnormal{dCE}(f)$.}
    \commentAlt{The first two panels have horizontal axis labeled $f(X)$ and vertical axis $\E[Y\mid f(X)]$, with a dotted diagonal line and solid S-shaped curve. The right panel has horizontal axis labeled $X$ and vertical axis labeled $Y$. See long description.}
    \commentLongAlt{The first two panels have horizontal axis labeled $f(X)$ and vertical axis $\E[Y\mid f(X)]$, with a dotted diagonal line and solid S-shaped curve. The left panel has a shaded region between the two. The middle panel is split vertically into $5$ bars, with shaded rectangular regions indicating difference between the curve within each bar. The right panel has horizontal axis labeled $X$ and vertical axis labeled $Y$, and shows a solid U-shaped curve and a slightly different dotted curve, with the region between the two curves shaded.}
    \label{fig:calibration-relaxations}
\end{figure}

One widely-used measure of the extent to which a function $f$ is miscalibrated is given by the \emph{expected calibration error} (ECE): \index{expected calibration error (ECE)}
\begin{equation}
    \textnormal{ECE}(f) = \E\Big[\Big|\E[Y \mid f(X)] - f(X)\Big|\Big].
\end{equation}
Notice that this is defined with respect to a probability distribution $P$, although the notation suppresses this.
See the left panel of Figure~\ref{fig:calibration-relaxations} for an illustration.
A closely related measure is the \emph{binned ECE}:
\begin{equation}
\label{eq:bin_ece_hat}
    \textnormal{binECE}(f) = \sum_{k=1}^K\Big|\E[Y  \mid f(X) \in B_k] - \E[f(X) \mid f(X) \in B_k] \Big| \cdot \P(f(X) \in B_k),
\end{equation}
where $[0,1]=B_1\cup\dots\cup B_K$ is some partition of the unit interval. (In fact, $\textnormal{binECE}(f)$ has an interesting alternative interpretation: it is equivalent to $\textnormal{ECE}(\bar{f})$, where $\bar{f}:\cX\to[0,1]$ is the piecewise-constant function mapping $x\mapsto \E[f(X)\mid f(X)\in B_k]$, for each $x$ with $f(x)\in B_k$.)
This quantity has a natural plug-in estimator,
\begin{equation}\label{eqn:binECEhat}
    \widehat{\textnormal{binECE}}(f) = \sum_{k=1}^K\bigg|  \frac{1}{n_k} \cdot \sum_{\substack{i\in[n]\\ f(X_i)\in B_k}} \left(Y_i - f(X_i)\right)\bigg| \cdot \frac{n_k}{n},
\end{equation}
where $n_k = \sum_{i=1}^n\ind{f(X_i) \in B_k}$ is the number of  data points observed in bin $k$. This estimator is widely used in practice. However, we will see that binned ECE can be arbitrarily far from ECE, which means that $\textnormal{binECE}(f)$ (and its empirical estimate $\widehat{\textnormal{binECE}}(f)$) should not necessarily be viewed as a reliable proxy for $\textnormal{ECE}(f)$.
See the middle panel of Figure~\ref{fig:calibration-relaxations} for an illustration of the binned ECE.

An alternative relaxation of perfect calibration is the \emph{distance to calibration} (dCE), which is the minimum $L_1$ distance between $f$ and a perfectly calibrated function: \index{distance to calibration (dCE)}
\begin{equation}
\label{eq:dce_def}
    \textnormal{dCE}(f) = \inf_{\substack{g : \cX \to [0,1] \\ \E[Y\mid g(X)] = g(X)}}
 \E[|g(X) - f(X)|].
\end{equation}
See the right panel of Figure~\ref{fig:calibration-relaxations} for an illustration of the dCE.

A critical advantage of both binned ECE and dCE (compared to ECE) is that it is possible to give distribution-free confidence bounds for these measures of miscalibration, which we will state formally soon.

\section{Properties of ECE and binned ECE}
\index{expected calibration error (ECE)|(}

We now consider the ECE in more detail. In the distribution-free setting, we will see that ECE is straightforward to work with if $f(X)$ has a discrete distribution, but faces hardness results if $f(X)$ is continuously distributed.
This arises from a connection with the hardness of distribution-free regression that we will describe shortly.
These challenges make binned ECE an appealing alternative to ECE, but we will see that these two measures of miscalibration are fundamentally quite different.

\subsection{Discontinuity of ECE}
\label{subsec:ece_discont}

A basic difficulty with the ECE is that it can change by a large amount with only small changes in the function $f$. This is illustrated by the following example.
\begin{example}[Discontinuity of ECE]\label{example:discontinuity_ECE}
Consider the following distribution $P$ on $(X,Y)$: let $X\sim\textnormal{Unif}[0,1]$, and let
\[\P(Y=1\mid X) = \begin{cases} 0, & X\in [0,0.25] \cup [0.75,1],\\ 1, & X\in(0.25,0.75).\end{cases}\]
Then the constant function $f(x)= 1/2$ is perfectly calibrated. On the other hand, the function 
\[f_\epsilon(x) = \frac{1-\epsilon}{2} + \epsilon x,\]
which is arbitrarily close to $f$ when $\epsilon>0$ is small, is highly miscalibrated in terms of its ECE: since $f_\epsilon$ is injective, we have $\E[Y\mid f_\epsilon(X)] = \E[Y\mid X]$, and so
\[\textnormal{ECE}(f_\epsilon) = \E\left[ \left|\E[Y\mid f_\epsilon(X)] - f_\epsilon(X)\right|\right] = \E\left[\left| \ind{0.25 < X < 0.75} - \left(\frac{1-\epsilon}{2} + \epsilon X\right)\right|\right] = \frac{1}{2}.\]
\end{example}

This discontinuity is central to the challenges inherent to estimating ECE empirically. 

\subsection{Comparing ECE and binned ECE}
Next we consider binned ECE. As we will see shortly, this discretized measure of calibration error is easier to estimate than the ECE. First, to compare the two measures,
the following result tells us that binned ECE cannot be larger than ECE:
\begin{proposition}[Relating ECE and binned ECE]\label{prop:binECE_vs_ECE}
    For any distribution $P$ on $\cX\times\{0,1\}$, any function $f:\cX\to[0,1]$, and any partition $[0,1]=B_1\cup\dots\cup B_K$,
    \[\textnormal{binECE}(f) \leq \textnormal{ECE}(f).\]
\end{proposition}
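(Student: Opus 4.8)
The plan is to recognize $\textnormal{binECE}(f)$ as an expression involving a conditional expectation of the quantity whose absolute value appears in $\textnormal{ECE}(f)$, and then apply the conditional Jensen's inequality. Concretely, define the random variable $W = \E[Y\mid f(X)] - f(X)$, so that $\textnormal{ECE}(f) = \E[|W|]$. For the binned version, observe that the $k$th term of $\textnormal{binECE}(f)$ can be rewritten: since $\P(f(X)\in B_k)$ is the weight and $\E[Y\mid f(X)\in B_k] - \E[f(X)\mid f(X)\in B_k]$ is the corresponding coefficient, a short calculation using the tower property (conditioning first on $f(X)$, then on the event $f(X)\in B_k$) shows that
\[\E[Y\mid f(X)\in B_k] - \E[f(X)\mid f(X)\in B_k] = \E[W \mid f(X)\in B_k].\]
Thus each term of $\textnormal{binECE}(f)$ equals $\big|\E[W\mid f(X)\in B_k]\big|\cdot\P(f(X)\in B_k)$.

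Next I would let $G = G(f(X))$ denote the random variable indicating which bin $f(X)$ falls into (i.e., $G = k$ iff $f(X)\in B_k$), so that $G$ is a coarsening of $f(X)$. Then
\[\textnormal{binECE}(f) = \sum_{k=1}^K \big|\E[W\mid G = k]\big|\cdot\P(G=k) = \E\big[\,\big|\E[W\mid G]\big|\,\big].\]
By the conditional Jensen's inequality applied to the convex function $|\cdot|$, we have $\big|\E[W\mid G]\big| \leq \E[|W|\mid G]$ pointwise (almost surely), and taking expectations gives $\E\big[\big|\E[W\mid G]\big|\big] \leq \E[|W|] = \textnormal{ECE}(f)$. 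This chain of inequalities is exactly the claim. (One should restrict the sum to bins with $\P(f(X)\in B_k)>0$ to avoid dividing by zero, but this is a harmless technicality since empty bins contribute nothing.)

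The only mild subtlety — and the step I'd be most careful about — is the rewriting of the binned term as $\E[W\mid G=k]\cdot\P(G=k)$: one must verify that $\E[f(X)\mid f(X)\in B_k]$ and $\E[Y\mid f(X)\in B_k]$ combine correctly, using that $W = \E[Y\mid f(X)] - f(X)$ and the tower property $\E[\E[Y\mid f(X)]\mid f(X)\in B_k] = \E[Y\mid f(X)\in B_k]$. Everything else is a direct application of Jensen's inequality for conditional expectations, so I do not anticipate any real obstacle; the result is essentially the statement that averaging $W$ over coarser $\sigma$-algebras (bins) can only shrink the expected absolute value.
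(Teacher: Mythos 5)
Your proof is correct and is essentially the same argument as the paper's: the paper also rewrites $\textnormal{binECE}(f) = \E\bigl[\,\bigl|\E[Y-f(X)\mid k(f(X))]\bigr|\,\bigr]$ and appeals to Jensen's inequality to compare this against $\E\bigl[\,\bigl|\E[Y-f(X)\mid f(X)]\bigr|\,\bigr] = \textnormal{ECE}(f)$. You simply unpack the tower-plus-Jensen step more explicitly by introducing $W = \E[Y\mid f(X)] - f(X)$, but the underlying reasoning is identical.
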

In fact, however, this inequality can be extremely loose. For instance, returning to Example~\ref{example:discontinuity_ECE} above, if we choose $K=2$ bins to form the partition $[0,1] = [0,0.5]\cup (0.5,1]$, we can verify that $\textnormal{binECE}(f_\epsilon) = \frac{\epsilon}{4}$, whereas the ECE is $1/2$.
This illustrates an important point: estimating binned ECE, and observing a small value, does not necessarily imply that ECE is low.
\begin{proof}[Proof of Proposition~\ref{prop:binECE_vs_ECE}]
Let $k(f(x))$ denote the bin to which $f(x)$ belongs, i.e., for any $x$ with $f(x)\in B_k$, we have $k(f(x))=k$. The definition of binned ECE can then equivalently be expressed as
\begin{multline*}\textnormal{binECE}(f) = \sum_{k=1}^K\Big|\E[Y  -f(X) \mid f(X) \in B_k] \Big| \cdot \P(f(X) \in B_k) \\= \E\Big[ \Big| \E\big[Y - f(X) \mid k(f(X))\big]\Big|\Big].\end{multline*}
The result then follows by Jensen's inequality: 
\[\E\Big[ \Big| \E\big[Y - f(X) \mid k(f(X))\big]\Big|\Big] \leq \E\Big[ \Big| \E\big[Y - f(X) \mid f(X)\big]\Big|\Big] = \textnormal{ECE}(f).\]
\end{proof}

\subsection{Can ECE and binned ECE be estimated?}
We now turn to the topic of estimating ECE and binned ECE. More precisely, we are interested in providing upper bounds on these quantities---that is, we would like to use the available data to certify that ECE or binned ECE is low. 
We will see that this is possible in general with binned ECE but not ECE.

For binned ECE, it is straightforward to construct a meaningful upper bound, using the estimator given in~\eqref{eqn:binECEhat}.

\begin{theorem}[Distribution-free guarantee for estimating binned ECE]\label{thm:binECEhat}
    Fix any sample size $n$ and function $f:\cX\to[0,1]$, and let $[0,1] = B_1\cup\dots\cup B_K$ be a fixed partition. Define $\widehat{\textnormal{binECE}}(f)$ as in~\eqref{eqn:binECEhat}. Then  for any distribution $P$ on $\cX\times \{0,1\}$, for any $\delta\in(0,1)$,
    \[\P\left(\widehat{\textnormal{binECE}}(f) \geq \textnormal{binECE}(f) - \sqrt{\frac{2\log(1/\delta)}{n}}\right)\geq 1- \delta,\]
    and
    \[\P\left(\widehat{\textnormal{binECE}}(f) \leq \textnormal{binECE}(f) + \sqrt{\frac{K}{n}} + \sqrt{\frac{2\log(1/\delta)}{n}}\right)\geq 1- \delta.\]
\end{theorem}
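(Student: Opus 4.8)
\textbf{Proof proposal for Theorem~\ref{thm:binECEhat}.}

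The plan is to control the fluctuations of $\widehat{\textnormal{binECE}}(f)$ around $\textnormal{binECE}(f)$ by comparing the empirical quantities to their population counterparts bin by bin. Write $p_k = \P(f(X)\in B_k)$ and $\hat p_k = n_k/n$, and let $g_k = \E[Y - f(X)\mid f(X)\in B_k]$ (defined as $0$ when $p_k=0$) and $\hat g_k = \frac{1}{n_k}\sum_{i:f(X_i)\in B_k}(Y_i - f(X_i))$ (defined as $0$ when $n_k=0$). Then $\textnormal{binECE}(f) = \sum_k |g_k| p_k$ and $\widehat{\textnormal{binECE}}(f) = \sum_k |\hat g_k| \hat p_k$. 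The single clean observation driving both bounds is that $\sum_k |\hat g_k|\hat p_k = \frac1n\sum_k \big|\sum_{i:f(X_i)\in B_k}(Y_i-f(X_i))\big|$, so $\widehat{\textnormal{binECE}}(f)$ is exactly $\frac1n$ times a sum of absolute partial sums over a partition of $[n]$; this is a function of $(X_1,Y_1),\dots,(X_n,Y_n)$ that changes by at most $\frac{2}{n}$ (in fact at most $\frac1n$, since each term $Y_i-f(X_i)\in[-1,1]$ enters exactly one bin) when a single data point is changed. By the bounded-differences (McDiarmid) inequality, $\widehat{\textnormal{binECE}}(f)$ concentrates around its mean with deviation bound $\sqrt{\frac{\log(1/\delta)}{2n}}$ on each side; with the constant $2$ in the numerator under the square root the two displayed tail bounds follow once I relate $\E[\widehat{\textnormal{binECE}}(f)]$ to $\textnormal{binECE}(f)$.

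So the remaining work is two mean comparisons. For the lower tail (first displayed bound), I would show $\E[\widehat{\textnormal{binECE}}(f)] \ge \textnormal{binECE}(f)$: conditioning on the multiset $\{f(X_i)\}$ and then on the $n_k$'s, for each bin $\hat g_k$ is (conditionally) an average of $n_k$ i.i.d.\ mean-$g_k$ terms, so $\E[|\hat g_k|\mid n_k] \ge |\E[\hat g_k\mid n_k]| = |g_k|$ by Jensen, while $\E[\hat p_k]=p_k$; when $n_k=0$ the contribution is $0$ but $p_k g_k$-type terms with $p_k=0$ also vanish, and the only genuine loss is the event $n_k=0$ with $p_k>0$, which only makes $\E[\widehat{\textnormal{binECE}}]$ smaller — but this can only help the \emph{lower} bound's direction if handled correctly, so I must be slightly careful: in fact $\E[\widehat{\textnormal{binECE}}(f)]\ge \textnormal{binECE}(f)$ still holds because within each bin $\E[\hat p_k|\hat g_k|] = \E[\ind{n_k\ge1}\hat p_k |\hat g_k|]$ and a nested-conditioning/Jensen argument gives $\E[\hat p_k|\hat g_k|\mid \{f(X_i)\}] \ge |g_k|\,\hat p_k$ pointwise, then take expectation. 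Combining with McDiarmid gives the first bound.

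For the upper tail (second displayed bound) I need $\E[\widehat{\textnormal{binECE}}(f)] \le \textnormal{binECE}(f) + \sqrt{K/n}$. Here I would bound the per-bin overshoot of $\E[|\hat g_k|\mid n_k]$ over $|g_k|$ by the conditional standard deviation: $\E[|\hat g_k|\mid n_k] \le |g_k| + \sqrt{\operatorname{Var}(\hat g_k\mid n_k)} \le |g_k| + \frac{1}{\sqrt{n_k}}$ since each summand lies in $[-1,1]$ so has variance at most $1$. Weighting by $\hat p_k = n_k/n$ and summing, the excess is at most $\frac1n\sum_k \sqrt{n_k} = \frac1n\sum_k \sqrt{n_k}\cdot 1 \le \frac1n\sqrt{K\sum_k n_k} = \sqrt{K/n}$ by Cauchy--Schwarz. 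Taking expectation over the $n_k$'s (and noting empty bins contribute $0$ to both sides of the relevant inequality) yields $\E[\widehat{\textnormal{binECE}}(f)] \le \textnormal{binECE}(f) + \sqrt{K/n}$; combining with the McDiarmid upper-tail bound $\sqrt{\frac{2\log(1/\delta)}{n}}$ (I will use the slightly loose constant $2$ to match the statement) gives the second inequality.

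The main obstacle is not any single step but the bookkeeping around bins with $n_k=0$ or $p_k=0$: I need the conditional Jensen argument (lower tail) and the conditional variance bound (upper tail) to be valid uniformly, including on the event that a bin is empty, and I need to make sure the ``$\sum_k\sqrt{n_k}\le\sqrt{Kn}$'' step and the McDiarmid bounded-difference constant interact correctly so that the final constants are exactly as stated. Everything else — McDiarmid, Jensen, Cauchy--Schwarz — is routine.
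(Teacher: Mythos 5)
Your proposal is correct and follows essentially the same approach as the paper's proof: condition on the bin counts $n_k$, use Jensen to get $\E[\widehat{\textnormal{binECE}}(f)] \geq \textnormal{binECE}(f)$, use a conditional variance bound plus Cauchy--Schwarz for the $\sqrt{K/n}$ overshoot, and finish with McDiarmid. One minor inaccuracy in the proposal: your parenthetical remark that the bounded difference is "in fact at most $\frac{1}{n}$" is wrong — replacing a single data point $(X_i,Y_i)$ can move it to a different bin and change $Y_i - f(X_i)$ by up to $2$, so the bounded difference is genuinely $\frac{2}{n}$, not $\frac{1}{n}$ (which is exactly what the paper uses and what the stated constant $\sqrt{2\log(1/\delta)/n}$ requires); you correctly fall back to $\frac{2}{n}$ in the end, so the conclusion is unaffected. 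The one small divergence worth noting: for the overshoot you use the pointwise bound $\frac{1}{n}\sum_k\sqrt{n_k}\leq\sqrt{K\sum_k n_k}/n = \sqrt{K/n}$ directly (since $\sum_k n_k = n$), whereas the paper takes expectations first via $\E[\sqrt{n_k}]\leq\sqrt{np_k}$ and then sums; both give the same constant, and yours is marginally cleaner at that step.
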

In particular, the first statement shows that $\widehat{\textnormal{binECE}}(f) + \sqrt{\frac{2\log(1/\delta)}{n}}$ provides a distribution-free valid upper confidence bound on the binned ECE---and thus we can view this as a way to \emph{certify} that binned ECE is low, without any assumptions. On the other hand, the second statement tells us that this upper confidence bound is fairly tight (as long as $K\ll n$), so this is a meaningful way to check whether a function $f$ has low binned ECE.

Next, we show that the picture is substantially different for ECE. 
Any upper confidence bound for ECE that has distribution-free validity can only return uninformative answers for a nonatomic $f(X)$.
\begin{theorem}[Hardness result for estimating ECE]\label{thm:ece_hardness_of_testing}
Fix any sample size $n$ and function $f:\cX\to[0,1]$. Let $\widehat{\textnormal{ECE}}(f)\in[0,1]$ be a function of the observed data $(X_1,Y_1),\dots,(X_n,Y_n)\in\cX\times\{0,1\}$, satisfying the following distribution-free validity property:
\begin{equation}\label{eqn:ece_test_DF_valid}
    \P\left(\widehat{\textnormal{ECE}}(f) \geq \textnormal{ECE}(f)\right) \geq 1-\delta\textnormal{ for all $P$}.
\end{equation}
Then for any distribution $P$ on $\cX\times\{0,1\}$ for which $f(X)$ has a nonatomic distribution,
\[\P\left(\widehat{\textnormal{ECE}}(f) \geq \E[|Y-f(X)|] \right)\geq1-\delta.\]
\end{theorem}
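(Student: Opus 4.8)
The plan is to use the sample--resample construction (Lemma~\ref{lem:sample-resample}), following the same template as in the hardness results of Chapter~\ref{chapter:regression} (e.g., the proof of Theorem~\ref{thm:regression_nonatomic}). The key observation is that if we sample $M \gg n^2$ data points i.i.d.\ from $P$ and then resample $n$ points with replacement from this pool, the resulting empirical distribution $\widehat{P}_M$ is almost a perfectly `atomic' distribution on which $f$ happens to behave like an injective function---so that conditioning on $f(X)$ is essentially the same as conditioning on the full data point. This forces $\E_{\widehat{P}_M}[Y \mid f(X)]$ to equal $Y$ itself at each of the $M$ sampled points, which makes the ECE under $\widehat{P}_M$ equal to $\E_{\widehat{P}_M}[|Y - f(X)|]$. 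Running the (valid) procedure $\widehat{\textnormal{ECE}}$ on data drawn from $\widehat{P}_M$ must then upper-bound this quantity, and transferring back to $P$ via the total variation bound of Lemma~\ref{lem:sample-resample} gives the claimed statement up to a vanishing error, which we then remove by taking $M \to \infty$.

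\textbf{Key steps.} First I would fix the distribution $P$ with $f(X)$ nonatomic, fix a large integer $M$, and let $(X^{(1)},Y^{(1)}),\dots,(X^{(M)},Y^{(M)}) \iidsim P$ with empirical distribution $\widehat{P}_M = \frac{1}{M}\sum_{i=1}^M \delta_{(X^{(i)},Y^{(i)})}$. Second, I would compute $\textnormal{ECE}(f)$ under the distribution $\widehat{P}_M$: because $f(X)$ is nonatomic under $P$, the values $f(X^{(1)}),\dots,f(X^{(M)})$ are almost surely distinct, so conditioning on $f(X) = f(X^{(i)})$ under $\widehat{P}_M$ isolates the single point $(X^{(i)},Y^{(i)})$; hence $\E_{\widehat{P}_M}[Y \mid f(X) = f(X^{(i)})] = Y^{(i)}$ for each $i$, which gives the identity
\[\textnormal{ECE}_{\widehat{P}_M}(f) = \frac{1}{M}\sum_{i=1}^M |Y^{(i)} - f(X^{(i)})| = \E_{(X,Y)\sim\widehat{P}_M}[|Y - f(X)|]\]
almost surely. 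Third, condition on $\widehat{P}_M$ and draw $(X_1,Y_1),\dots,(X_n,Y_n) \iidsim \widehat{P}_M$; applying the distribution-free validity assumption~\eqref{eqn:ece_test_DF_valid} with the distribution $\widehat{P}_M$ in place of $P$ yields
\[\P\!\left(\widehat{\textnormal{ECE}}(f) \geq \E_{(X,Y)\sim\widehat{P}_M}[|Y - f(X)|] \,\middle|\, \widehat{P}_M\right) \geq 1-\delta\]
on the (almost sure) event that $f(X^{(1)}),\dots,f(X^{(M)})$ are distinct. Fourth, let $Q$ denote the marginal distribution of $(X_1,Y_1),\dots,(X_n,Y_n)$ under the sample--resample procedure (i.e., marginalizing over $\widehat{P}_M$); marginalizing the previous display and comparing the quantity $\E_{(X,Y)\sim\widehat{P}_M}[|Y-f(X)|]$ to $\E_P[|Y-f(X)|]$ (their difference vanishes as $M\to\infty$ by the law of large numbers, so I would absorb this into an $o(1)$ term or handle it via a concentration bound) gives $\P_Q(\widehat{\textnormal{ECE}}(f) \geq \E_P[|Y-f(X)|] - o(1)) \geq 1-\delta$. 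Finally, by Lemma~\ref{lem:sample-resample}, $\dtv(P^n, Q) \leq \frac{n(n-1)}{2M}$, so
\[\P_{P^n}\!\left(\widehat{\textnormal{ECE}}(f) \geq \E_P[|Y - f(X)|] - o(1)\right) \geq 1 - \delta - \frac{n(n-1)}{2M},\]
and letting $M \to \infty$ completes the proof.

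\textbf{Main obstacle.} The one subtlety I anticipate is the discrepancy between $\E_{(X,Y)\sim\widehat{P}_M}[|Y - f(X)|]$ (the quantity the validity assumption bounds, when run on data from $\widehat{P}_M$) and the target $\E_P[|Y-f(X)|]$. These differ by the fluctuation of an empirical average of $M$ bounded i.i.d.\ terms, which is $O_P(M^{-1/2})$ and hence vanishes, but it must be tracked carefully---e.g., by introducing a constant $c_M = \E_P[|Y-f(X)|] - M^{-1/4}$ and using a union bound together with Hoeffding's inequality, exactly as in the proof of Theorem~\ref{thm:regression_nonatomic_eps}---so that the final $M\to\infty$ limit is clean. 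Everything else is a routine instantiation of the sample--resample machinery; the genuinely essential input is simply that nonatomicity of $f(X)$ forces $f$ to be effectively injective on the resampling pool, which collapses the conditional expectation defining ECE down to $Y$.
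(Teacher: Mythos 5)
Your proposal is correct and follows essentially the same route as the paper's proof: sample $M$ points from $P$, form the empirical distribution $\widehat{P}_M$, use nonatomicity of $f(X)$ to conclude $\textnormal{ECE}_{\widehat{P}_M}(f) = \frac{1}{M}\sum_{i=1}^M |Y^{(i)}-f(X^{(i)})|$, invoke the distribution-free validity assumption on data drawn from $\widehat{P}_M$, transfer back to $P^n$ via Lemma~\ref{lem:sample-resample}, handle the gap between $\E_{\widehat{P}_M}[|Y-f(X)|]$ and $\E_P[|Y-f(X)|]$ with a concentration bound at a threshold $c_M$ slightly below the target, and let $M\to\infty$. The paper uses Chebyshev and $c_M = \E_P[|Y-f(X)|] - \frac{1}{2\sqrt{M}}$ where you suggested Hoeffding and $c_M = \E_P[|Y-f(X)|] - M^{-1/4}$, but this is an inessential technical variation and your overall argument is the same one.
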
 \index{hardness result!expected calibration error (ECE)}
In words, this means we cannot have a distribution-free upper confidence bound on ECE that shrinks to zero as the sample size grows, for \emph{any} function $f$ where $f(X)$ is nonatomic.
Notice this result is very similar to Theorem~\ref{thm:regression_nonatomic_eps}, which established the impossibility of providing a nontrivial distribution-free upper bound on the error in estimating $\E_P[Y\mid X]$ when $X$ is nonatomic. Here, we are instead interested in $\E_P[Y\mid f(X)]$, but if $f(X)$ is nonatomic then this is essentially the same question.

\begin{proof}[Proof of Theorem~\ref{thm:binECEhat}]
    Let $\mu_k = \E[Y \mid f(X) \in B_k]$, $\mu^f_k = \E[f(X)\mid f(X)\in B_k]$, $p_k = \P(f(X) \in B_k)$, and $n_k = \sum_{i=1}^n \ind{f(X_i)\in B_k}$ for $k=1,\dots,K$.
    Then 
    for each $k$, conditional on $n_k$ (and assuming $n_k\geq 1$), the quantity
    \[\sum_{\substack{i\in[n]\\f(X_i)\in B_k}} (Y_i - f(X_i))\]
    is a sum of $n_k$ many i.i.d.\ terms, each with mean $\mu_k - \mu^f_k$ and variance $\leq 1$. Consequently, a straightforward calculation verifies that
    \[n_k |\mu_k - \mu^f_k| \leq \E\left[\left|\sum_{\substack{i\in[n]\\f(X_i)\in B_k}} (Y_i - f(X_i))\right| \ \bigg| \ n_k\right] \leq n_k|\mu_k - \mu^f_k| + \sqrt{n_k}.\]
    The same upper and lower bounds hold trivially in the case $n_k=0$.
    Therefore, marginalizing over $n_k$,
    \begin{multline*}\E\left[ \widehat{\textnormal{binECE}}(f) \right] = \E\left[\frac{1}{n}\sum_{k=1}^K\Bigg|   \sum_{\substack{i\in[n]\\f(X_i)\in B_k}} \left(Y_i - f(X_i)\right)\Bigg|\right]\\ \geq  \E\left[\sum_{k=1}^K \frac{n_k}{n} |\mu_k - \mu^f_k| \right] = \sum_{k=1}^K p_k|\mu_k - \mu^f_k| =  \textnormal{binECE}(f) ,\end{multline*}
    since $\E[n_k] = np_k$, and similarly
    \begin{multline*}\E\left[ \widehat{\textnormal{binECE}}(f) \right]\leq \E\left[\sum_{k=1}^K \frac{n_k}{n} |\mu_k - \mu^f_k|  + \frac{\sqrt{n_k}}{n}\right] =  \textnormal{binECE}(f)  + \sum_{k=1}^K \frac{\E[\sqrt{n_k}]}{n} \\\leq \textnormal{binECE}(f) + \sum_{k=1}^K \frac{\sqrt{np_k}}{n}\leq \textnormal{binECE}(f) +\sqrt{\frac{K}{n}}. \end{multline*}
    
    Finally, we apply a bounded-differences argument: observe that resampling a single data point $(X_i,Y_i)$ can only change $\widehat{\textnormal{binECE}}(f)$ by at most $\pm \frac{2}{n}$.
    By McDiarmid's inequality, we therefore have
    \[\P\left( \widehat{\textnormal{binECE}}(f) \geq \E\left[\widehat{\textnormal{binECE}}(f) \right] - \sqrt{\frac{2\log(1/\delta)}{n}}\right)\geq 1-\delta\]
    and
    \[\P\left( \widehat{\textnormal{binECE}}(f) \leq \E\left[\widehat{\textnormal{binECE}}(f) \right] + \sqrt{\frac{2\log(1/\delta)}{n}}\right)\geq 1-\delta,\]
    which completes the proof.
\end{proof}

\begin{proof}[Proof of Theorem~\ref{thm:ece_hardness_of_testing}]
Our proof relies on the sample--resample technique of Lemma~\ref{lem:sample-resample}, and follows similar arguments to the proof of Theorem~\ref{thm:regression_nonatomic_eps}.

Fix any $M\geq 1$, let $(X^{(1)},Y^{(1)}),\dots,(X^{(M)},Y^{(M)})\iidsim P$, and let $\widehat{P}_M$ be the empirical distribution. Now condition on $\widehat{P}_M$.  On the event that $f(X^{(1)}),\dots,f(X^{(M)})$ are all distinct (which holds almost surely), we have 
\[E_{\widehat{P}_M}[Y \mid f(X) = f(X^{(i)})] = Y^{(i)}\]
for all $i\in[M]$, and consequently
\[\textnormal{ECE}_{\widehat{P}_M}(f)
= \frac{1}{M}\sum_{i=1}^M |Y^{(i)} - f(X^{(i)})|\]
where we use the subscript to denote that this is the ECE with respect to the distribution $\widehat{P}_M$.
Then by the distribution-free validity of our estimator of ECE~\eqref{eqn:ece_test_DF_valid}, we have
\[\P_{\widehat{P}_M}\left(\widehat{\textnormal{ECE}}(f) \geq \frac{1}{M}\sum_{i=1}^M |Y^{(i)} - f(X^{(i)})| \ \middle| \ \widehat{P}_M\right)\geq 1-\delta,\]
where $\widehat{\textnormal{ECE}}(f)$ is computed using a sample $(X_1,Y_1),\dots,(X_n,Y_n)\iidsim \widehat{P}_M$.
Defining
$c_M = \E_P[|Y-f(X)|] - \frac{1}{\sqrt[4]{M}}$,
we can then write
\[\P_{\widehat{P}_M}\big( \widehat{\textnormal{ECE}}(f) > c_M \mid \widehat{P}_M\big) \geq 1-\delta - \ind{\frac{1}{M}\sum_{i=1}^M |Y^{(i)} - f(X^{(i)})| \leq c_M}.\]

Next, by Lemma~\ref{lem:sample-resample}, we have $\dtv(P^n,Q)\leq \frac{n(n-1)}{2M}$, where $Q$ denotes the joint distribution on $((X_i,Y_i))_{i\in[n]}$ obtained by first sampling $((X^{(i)},Y^{(i)}))_{i\in[M]}\iidsim P$ to construct $\widehat{P}_M$, and then sampling $((X_i,Y_i))_{i\in[n]}\iidsim\widehat{P}_M$. Then, marginalizing over $\widehat{P}_M$ in the calculations above, we obtain
\[\P_P\big( \widehat{\textnormal{ECE}}(f) > c_M\big) \geq 1-\delta - \P\left(\frac{1}{M}\sum_{i=1}^M |Y^{(i)} - f(X^{(i)})| \leq c_M\right) - \frac{n(n-1)}{2M},\]
where now $\widehat{\textnormal{ECE}}(f)$ is computed using data $(X_1,Y_1),\dots,(X_n,Y_n)\iidsim P$.
Since the terms $|Y^{(i)}-f(X^{(i)})|\in[0,1]$ are i.i.d., the quantity $\frac{1}{M}\sum_{i=1}^M |Y^{(i)} - f(X^{(i)})|$ has mean $\E_P[|Y-f(X)|]$ and variance $\leq\frac{1}{4M}$, and therefore by Chebyshev's inequality,
\[\P\left(\frac{1}{M}\sum_{i=1}^M |Y^{(i)} - f(X^{(i)})| \leq c_M\right) \leq \frac{1}{4\sqrt{M}}.\]
Therefore,
\[\P\big( \widehat{\textnormal{ECE}}(f) > c_M\big) \geq 1-\delta - \frac{1}{4\sqrt{M}}- \frac{n(n-1)}{2M}.\]
Finally, since $M$ can be taken to be arbitrarily large, and $\lim_{M\to\infty} c_M = \E_P[|Y-f(X)|]$, this completes the proof.
\end{proof}

\subsection{Can ECE be controlled with post-hoc calibration?}
The result of Theorem~\ref{thm:ece_hardness_of_testing} tells us that, given a trained model $f$, we cannot accurately estimate its ECE with distribution-free validity---unless $f$ returns outputs in a discrete space (i.e., $f(X)$ is effectively performing binning). In particular this means that a continuous function $f$, which may return a continuously-distributed output $f(X)$, generally cannot be certified to have low ECE. But this does not exclude another possibility: instead of testing whether $\textnormal{ECE}(f)$ is low, can we instead perform post-hoc calibration so that the resulting output is now guaranteed to have low ECE?

We will now see that the same types of challenges arise for this alternative version of the question. If post-hoc calibration is performed via binning, as in~\eqref{eq:binning_h}, then we can ensure low ECE since we are now in a discrete case. On the other hand, an injective post-hoc calibration (such as temperature scaling as in Section~\ref{subsec:post-hoc-calibration}) cannot result in a guarantee of low ECE in the continuous setting.

We begin with a positive result showing distribution-free ECE is possible via the binning algorithm~\eqref{eq:binning_h}. For each bin $B_k$, we compute the empirical mean of $Y$ when $f(X)$ takes values in $B_k$, and we use this as the value of $\hat h \circ f$. The approach will result in approximate calibration, with the approximation quality improving with the number of points per bin (i.e., as the sample size grows).

\begin{theorem}[Distribution-free ECE control for binning]
\label{thm:ece_binning}
Fix a choice of bins $B_1,\dots,B_K$ that partition $[0,1]$. Let $\hat h$ be the binning post-hoc adjustment as in~\eqref{eq:binning_h}. Then, for any sample size $n$, any function $f:\cX\to[0,1]$, and any distribution $P$ on $\cX\times\{0,1\}$,
\begin{equation}
\E\left[\textnormal{ECE}(\hat h \circ f)\right] \le \sqrt{K / 2n},
\end{equation}
and moreover, for any $\delta\in(0,1)$,
\[\P\left(\textnormal{ECE}(\hat h \circ f)\leq \frac{1}{\sqrt{2\delta}}\cdot \sqrt{\frac{K}{n}}\right) \geq 1-\delta.\]
\end{theorem}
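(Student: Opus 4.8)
The plan is to reduce the problem to a discrete (binned) setting and then control a second moment. Throughout, I would condition on the calibration set $\cD_n = ((X_i,Y_i))_{i\in[n]}$, so that $\hat h$ is a fixed function, and take $(X,Y)\sim P$ to be an independent test point. The key structural observation is that $\hat h\circ f$ is constant on each bin, hence a deterministic function of the bin index $k(f(X))$; therefore conditioning on $\hat h(f(X))$ is coarser than conditioning on $k(f(X))$, and the same Jensen-type coarsening argument used in the proof of Proposition~\ref{prop:binECE_vs_ECE} yields
\[
\textnormal{ECE}(\hat h\circ f)\;\le\;\sum_{k=1}^K p_k\,|\hat h_k-\mu_k|,
\]
where $p_k=\P_P(f(X)\in B_k)$, $\mu_k=\E_P[Y\mid f(X)\in B_k]$ (for $p_k>0$), $n_k=|\{i:f(X_i)\in B_k\}|$, and $\hat h_k$ is the value of $\hat h$ on $B_k$, i.e.\ $\hat h_k=\frac{1}{n_k}\sum_{i:\,f(X_i)\in B_k}Y_i$ when $n_k\ge 1$ and $\hat h_k=1/2$ otherwise, exactly as in~\eqref{eq:binning_h}.

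Next, since $\sum_k p_k=1$, Cauchy--Schwarz gives $\textnormal{ECE}(\hat h\circ f)^2\le\sum_k p_k(\hat h_k-\mu_k)^2$, so it suffices to bound $\E\big[\sum_k p_k(\hat h_k-\mu_k)^2\big]$. Conditioning on $n_k$: on $\{n_k\ge 1\}$ the quantity $\hat h_k$ is an average of $n_k$ i.i.d.\ draws from the conditional law of $Y$ given $f(X)\in B_k$, each with mean $\mu_k$ and variance at most $1/4$, so $\E[(\hat h_k-\mu_k)^2\mid n_k]\le\frac{1}{4n_k}$; on $\{n_k=0\}$ we simply have $(\hat h_k-\mu_k)^2\le 1/4$. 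These combine into the single bound $\E[(\hat h_k-\mu_k)^2\mid n_k]\le\frac{1}{2(n_k+1)}$, and since $n_k\sim\textnormal{Binomial}(n,p_k)$, the Binomial reciprocal identity~\eqref{eqn:binomial_reciprocal} then gives $\E[(\hat h_k-\mu_k)^2]\le\frac{1}{2np_k}$. Summing over the at most $K$ bins with $p_k>0$ yields $\E\big[\textnormal{ECE}(\hat h\circ f)^2\big]\le\frac{K}{2n}$ (and taking a square root with Jensen on the outside recovers the in-expectation bound $\E[\textnormal{ECE}(\hat h\circ f)]\le\sqrt{K/2n}$).

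Finally I would apply Markov's inequality to the nonnegative random variable $\textnormal{ECE}(\hat h\circ f)^2$: for any $\delta\in[0,1]$,
\[
\P\!\left(\textnormal{ECE}(\hat h\circ f)^2>\frac{K}{2n\delta}\right)\le\frac{\E[\textnormal{ECE}(\hat h\circ f)^2]}{K/(2n\delta)}\le\delta,
\]
which is the claimed statement after taking square roots, since $\sqrt{K/(2n\delta)}=\frac{1}{\sqrt{2\delta}}\sqrt{K/n}$. The only delicate points are the coarsening step—verifying that conditioning on $\hat h(f(X))$ is genuinely coarser than conditioning on $k(f(X))$ (even when distinct bins happen to receive equal adjusted values), and that the whole argument is carried out conditionally on $\cD_n$ with $\hat h$ treated as fixed—together with the bookkeeping for empty bins under the $\hat h_k=1/2$ convention. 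I expect that coarsening-and-conditioning step to be the main thing to get right; everything after it is routine moment computation.
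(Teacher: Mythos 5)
Your proof is correct and follows essentially the same route as the paper's: the paper's proof is a one-line pointer to Theorem~\ref{thm:DF_regression_discrete_eps} (applied with the bin index $k(f(X))$ playing the role of the discrete covariate), and your argument is exactly that theorem's proof unrolled---Cauchy--Schwarz to pass to the weighted second moment, the per-bin conditional-variance bound $\E[(\hat h_k-\mu_k)^2\mid n_k]\le \tfrac{1}{2(n_k+1)}$, the Binomial reciprocal bound~\eqref{eqn:binomial_reciprocal}, and Markov on the squared quantity. You are right that the one genuinely nonobvious step is the coarsening inequality $\textnormal{ECE}(\hat h\circ f)\le\sum_k p_k|\hat h_k-\mu_k|$ needed because $\hat h$ need not be injective on bin indices; the paper leaves this implicit, and your explicit appeal to the Jensen argument of Proposition~\ref{prop:binECE_vs_ECE} (with $\sigma(\hat h(f(X)))\subseteq\sigma(k(f(X)))$) is the right way to fill it in.
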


\begin{proof}[Proof of Theorem~\ref{thm:ece_binning}]
Let $\hat\mu_k$ denote the value of $\hat{h}(z)$ for $z\in B_k$ (i.e., by the construction of $\hat{h}$ as in~\eqref{eq:binning_h}, we have $\hat\mu_k = \frac{1}{n_k}\sum_{i=1}^n Y_i\cdot  \ind{f(X_i) \in B_k}$, if $n_k\geq 1$, and $\hat\mu_k=\frac{1}{2}$, if $n_k=0$.) By definition of ECE, since $\hat h\circ f$ is piecewise constant, we can verify that
\[\textnormal{ECE}(\hat h \circ f) \leq \sum_{k=1}^K \P_P(f(X)\in B_k) \cdot |\E[Y\mid f(X)\in B_k] - \hat\mu_k|.\]

The proof of the tail bound on $\textnormal{ECE}(\hat h \circ f)$ (the second claim) follows directly from Theorem~\ref{thm:DF_regression_discrete_eps}: specifically, we are applying Theorem~\ref{thm:DF_regression_discrete_eps} to the sample $(f(X_1),Y_1),\dots,(f(X_n),Y_n)$ (in place of $(X_1,Y_1),\dots,(X_n,Y_n)$ as in the statement of that theorem).
The bound in expectation follows from a similar calculation as in the proof of Theorem~\ref{thm:DF_regression_discrete_eps}, and we omit the details.
\end{proof}

By construction, the binning procedure coarsens the information coming from $f$, since $\hat{h} \circ f$ takes only finitely many values. Intuitively, this means that binning loses some of the information present in $f$.

To avoid this loss of information, we can instead ask whether we can use a post-hoc calibration procedure that returns an injective function $\hat{h}$.
However, we will now see that this goal is incompatible with distribution-free validity:
if $f(X)$ is nonatomic, then requiring a distribution-free guarantee on ECE will inevitably prohibit the post-hoc calibration function $\hat{h}$ from being injective. In particular, this implies that strategies such as temperature scaling are intrinsically unable to provide a distribution-free ECE guarantee.

\begin{theorem}[Hardness of achieving ECE via injective post-hoc calibration]\label{thm:ece_hardness_posthoc}
    Fix any sample size $n$ and function $f:\cX\to[0,1]$. Consider any post-hoc calibration procedure that satisfies a distribution-free guarantee on ECE,
    \[\P\left(\textnormal{ECE}\big(\hat{h}\circ f\big)\leq \epsilon\right) \geq 1- \delta \textnormal{ \ for all $P$}.\]
    Then, for any distribution $P$ on $\cX\times\{0,1\}$ for which $f(X)$ has a nonatomic distribution,
    \[\textnormal{If $\epsilon < \E_P[\Var_P(Y\mid X)]$ then }\P\left(\textnormal{$\hat{h}$ is an injective function}\right) \leq \delta.\]
\end{theorem}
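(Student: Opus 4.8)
The plan is to mimic the proof of Theorem~\ref{thm:ece_hardness_of_testing}, using the sample--resample construction of Lemma~\ref{lem:sample-resample}, but now tracking the \emph{injectivity} of $\hat{h}_n$ rather than an estimator of ECE. The key observation is that if $\hat{h}_n$ is injective, then $\hat{h}_n\circ f$ is constant on exactly the same level sets of $f$, so in a resampled world where $f$ separates all the atoms, $\textnormal{ECE}(\hat{h}_n\circ f)$ reduces to an empirical average of $|Y^{(i)} - \hat{h}_n(f(X^{(i)}))|$, which in turn is lower-bounded (via the optimality of the conditional median, then relaxed to the conditional mean) by the empirical conditional variance. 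This will force $\epsilon$ to be at least $\approx \E_P[\textnormal{Var}_P(Y\mid X)]$ whenever injectivity holds with non-negligible probability.

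Concretely, first I would fix $M\geq 1$, draw $(X^{(1)},Y^{(1)}),\dots,(X^{(M)},Y^{(M)})\iidsim P$, form the empirical distribution $\widehat{P}_M$, and condition on it. Since $f(X)$ is nonatomic, the values $f(X^{(1)}),\dots,f(X^{(M)})$ are distinct almost surely. Next, draw $(X_1,Y_1),\dots,(X_n,Y_n)\iidsim \widehat{P}_M$ and let $\hat{h}_n$ be the post-hoc calibration function fitted on this sample. On the event that $\hat{h}_n$ is injective, $\hat{h}_n\circ f$ takes the distinct values $\hat{h}_n(f(X^{(i)}))$ on the $M$ atoms, so $\E_{\widehat{P}_M}[Y\mid (\hat{h}_n\circ f)(X) = \hat{h}_n(f(X^{(i)}))] = Y^{(i)}$, giving
\[
\textnormal{ECE}_{\widehat{P}_M}(\hat{h}_n\circ f) = \frac{1}{M}\sum_{i=1}^M \big|Y^{(i)} - \hat{h}_n(f(X^{(i)}))\big|.
\]
The distribution-free ECE guarantee applied to $\widehat{P}_M$ then yields that, conditionally on $\widehat{P}_M$, with probability at least $1-\delta$ either $\hat{h}_n$ is not injective or $\frac{1}{M}\sum_{i=1}^M |Y^{(i)} - \hat{h}_n(f(X^{(i)}))| \leq \epsilon$.

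I would then lower-bound $\frac{1}{M}\sum_{i=1}^M |Y^{(i)} - \hat{h}_n(f(X^{(i)}))|$ from below by a quantity concentrating near $\E_P[\textnormal{Var}_P(Y\mid X)]$. As in the proof of Theorem~\ref{thm:regression_nonatomic_eps}, writing $I_1,\dots,I_n$ for the resampling indices, for each $i\notin\{I_1,\dots,I_n\}$ the term $|Y^{(i)} - \hat{h}_n(f(X^{(i)}))|$ is, conditionally on $\hat{h}_n$ and the index set, an independent draw with mean at least $\E[|Y - \textnormal{Med}_P(X)|] \geq \E[(Y-\textnormal{Med}_P(X))^2] \geq \E_P[\textnormal{Var}_P(Y\mid X)]$ and variance at most $1/4$; the at most $n$ in-sample terms contribute nonnegatively and cost at most $n/M$. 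A Chebyshev bound shows that $\frac{1}{M}\sum_{i=1}^M |Y^{(i)} - \hat{h}_n(f(X^{(i)}))|$ is, with probability $1-o(1)$ as $M\to\infty$, at least $\E_P[\textnormal{Var}_P(Y\mid X)] - o(1)$. Combining this with the sample--resample total-variation bound $\dtv(P^{n+1},Q)\leq n(n-1)/2M$ of Lemma~\ref{lem:sample-resample} to transfer back to i.i.d.\ sampling from $P$, and letting $M\to\infty$, we conclude that if $\epsilon < \E_P[\textnormal{Var}_P(Y\mid X)]$ then $\P_{P^n}(\hat{h}_n\text{ injective}) \leq \delta$.

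The main obstacle I anticipate is the argument that injectivity of $\hat{h}_n$ truly collapses the ECE to the empirical loss $\frac{1}{M}\sum|Y^{(i)} - \hat{h}_n(f(X^{(i)}))|$: this requires that $\hat{h}_n$ is a \emph{fixed} (measurable) function of the resampled data that does not depend on $\widehat{P}_M$ in a way that could re-merge the atoms, and that ``injective'' is interpreted on the relevant domain (the range of $f$, or at least $\{f(X^{(1)}),\dots,f(X^{(M)})\}$). One needs to be slightly careful that $\hat{h}_n\circ f$ and $f$ have literally the same level sets on the support of $(\widehat{P}_M)_X$, which is exactly where injectivity of $\hat{h}_n$ enters; everything else (the conditional-median lower bound, the Chebyshev concentration, the sample--resample transfer) is routine and parallels the earlier proofs in the chapter.
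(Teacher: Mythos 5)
Your proof is correct, but it takes a genuinely different route from the paper's. The paper proves this by a clean reduction to Theorem~\ref{thm:regression_nonatomic_eps}: for any injective $h$, $\textnormal{ECE}(h\circ f) = \|h - \mu_{\tilde{P}}\|_{L_1(\tilde{P})}$ where $\tilde{P}$ is the law of $(f(X),Y)$ and $\mu_{\tilde{P}}$ its regression function; defining $\hat\epsilon = \epsilon\cdot\ind{\hat{h}_n\textnormal{ injective}} + \ind{\hat{h}_n\textnormal{ not injective}}$, the distribution-free ECE guarantee becomes exactly the statement that $(\hat h_n,\hat\epsilon)$ is a distribution-free regression procedure in the sense of~\eqref{eqn:DF_validity_regression_L1} (with $\tilde P$'s covariate $f(X)$ nonatomic), so Theorem~\ref{thm:regression_nonatomic_eps} directly gives $\P(\hat\epsilon\geq\E_P[\textnormal{Var}_P(Y\mid X)])\geq 1-\delta$ and the injectivity probability is therefore $\leq\delta$. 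You instead re-derive the hardness from scratch by re-running the sample--resample construction: the analogous identity under the empirical measure $\widehat P_M$ (injectivity forces $\textnormal{ECE}_{\widehat P_M}(\hat h_n\circ f) = \frac{1}{M}\sum_i|Y^{(i)}-\hat h_n(f(X^{(i)}))|$), a lower bound on that empirical loss via the median-minimality chain and Chebyshev, and a total-variation transfer back to $P^n$. Both arguments are sound; the paper's reduction is more modular and makes explicit the conceptual point that ECE control of an injective post-hoc map \emph{is} distribution-free regression, whereas yours is self-contained but essentially inlines the proof of Theorem~\ref{thm:regression_nonatomic_eps}. Two minor nits: your total-variation bound should read $\dtv(P^n,Q)\leq n(n-1)/2M$, not $\dtv(P^{n+1},Q)$; and the worry you raise about $\hat h_n$ ``re-merging the atoms'' is moot, since if $\hat h_n$ is injective on $[0,1]$ and $f(X^{(1)}),\dots,f(X^{(M)})$ are distinct, then $\hat h_n\circ f$ is automatically injective on the atoms regardless of how $\hat h_n$ was fitted.
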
 \index{hardness result!expected calibration error (ECE)}
Note that this bound on $\epsilon$ is not vanishing with $n$. In other words, regardless of the sample size $n$, we cannot ensure that ECE will be very low, with any injective post-hoc calibration procedure.

  As before, the key idea of this proof is to connect the present aim, distribution-free calibration, with the problem of distribution-free regression as studied in Chapter~\ref{chapter:regression}. In particular, a distribution-free guarantee on ECE would imply that we are able to perform distribution-free inference on the regression of $Y$ onto the post-hoc calibrated predictor $\hat{h}(f(X))$---but from the results of Chapter~\ref{chapter:regression}, we know that this is impossible in settings where $\hat{h}(f(X))$ is nonatomic. 
  \begin{proof}[Proof of Theorem~\ref{thm:ece_hardness_posthoc}]
    First, given a distribution $P$ on $\cX\times\{0,1\}$, define $\tilde{P}$ as the induced distribution of $(f(X),Y)$. Let $\mu_{\tilde{P}}$ be the regression function for this distribution, i.e.,
    \[\mu_{\tilde{P}}(f(X)) = \E_P[Y\mid f(X)]\]
    almost surely. Let $h:[0,1]\to[0,1]$ be any injective function. Then $h(f(X))$ contains the same information as $f(X)$, and so
    \[\mu_{\tilde{P}}(f(X)) = \E_P[Y\mid h(f(X))]\]
    almost surely. In particular,
      \begin{multline*}\textnormal{ECE}(h\circ f) = \E_P\left[\left| \E_P\left[Y\mid h(f(X))\right] - h(f(X))\right|\right] \\= \E_P\left[\left|\mu_{\tilde{P}}(f(X)) - h(f(X))\right|\right]= \|h - \mu_{\tilde{P}}\|_{L_1(\tilde{P})},\end{multline*}
      where the $L_1(\tilde{P})$ norm follows the notation of Section~\ref{sec:regression__C_or_muhat}.
On the other hand, if $h$ is not injective, then we nonetheless have $\|h - \mu_{\tilde{P}}\|_{L_1(\tilde{P})}\leq 1$, so combining both cases,
\[\|h - \mu_{\tilde{P}}\|_{L_1(\tilde{P})} \leq \textnormal{ECE}(h\circ f)\cdot\ind{\textnormal{$h$ is injective}} + \ind{\textnormal{$h$ is not injective}}.\]

Next, we return to the setting where $\hat{h}$ is trained on data. Define
\[\hat\epsilon = \epsilon\cdot \ind{\textnormal{$\hat{h}$ is injective}} + \ind{\textnormal{$\hat{h}$ is not injective}}.\]
Then by the calculations above,
\[\textnormal{ECE}(\hat{h}\circ f) \leq \epsilon \ \Longrightarrow \ \|\hat{h} - \mu_{\tilde{P}}\|_{L_1(\tilde{P})} \leq \hat\epsilon,\]
and therefore,
\[\P\big(\|\hat{h} - \mu_{\tilde{P}}\|_{L_1(\tilde{P})} \leq \hat\epsilon\big) \geq \P\big(\textnormal{ECE}(\hat{h}\circ f) \leq \epsilon\big) \geq 1-\delta,\]
by our assumption on the ECE calibration properties of $\hat{h}$.
In other words, $\hat{h}$ provides an estimate of the regression function $\mu_{\tilde{P}}$, and $\hat\epsilon$ is a data-dependent estimate of its error, as in~\eqref{eqn:DF_validity_regression_L1}. 
Since $f(X)$ has a nonatomic distribution by assumption, we can apply Theorem~\ref{thm:regression_nonatomic_eps}, which yields
\[\P\left(\hat\epsilon \geq \E_P[\Var_P(Y\mid X)]\right) \geq 1- \delta.\]
But on the event that $\hat{h}$ is injective, we have $\hat\epsilon = \epsilon < \E_P[\Var_P(Y\mid X)]$, so this completes the proof.
\end{proof}

\index{expected calibration error (ECE)|)}

\section{Properties of dCE}
\index{distance to calibration (dCE)|(}

We next turn to studying the properties of dCE. As we will see in this section, unlike ECE, it is possible to estimate dCE empirically with nontrivial distribution-free guarantees. Before considering the problem of estimation, we first compare dCE and ECE.

\subsection{Comparing dCE and ECE}
\index{expected calibration error (ECE)}
Our next result shows that dCE is always bounded by ECE---we can interpret this as saying that dCE is a strictly weaker notion of calibration than ECE.
\begin{proposition}[Relating ECE and dCE]
\label{prop:dce_less_ece}
For any distribution $P$ on $\cX\times\{0,1\}$ and any function $f: \cX \to [0,1]$, $\textnormal{dCE}(f) \le \textnormal{ECE}(f)$.
\end{proposition}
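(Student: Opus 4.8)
\textbf{Proof plan for Proposition~\ref{prop:dce_less_ece}.} The strategy is to exhibit a single perfectly calibrated function $g$ that achieves $\E[|g(X)-f(X)|] = \textnormal{ECE}(f)$; since $\textnormal{dCE}(f)$ is defined as an infimum over all perfectly calibrated $g$, this immediately yields $\textnormal{dCE}(f)\le\textnormal{ECE}(f)$.

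The natural candidate is $g(X) = \E[Y\mid f(X)]$. More precisely, using the standing assumption on the existence of regular conditional probabilities, there is a measurable function $\psi:[0,1]\to[0,1]$ with $\E[Y\mid f(X)] = \psi(f(X))$ almost surely, and we set $g = \psi\circ f:\cX\to[0,1]$; it indeed takes values in $[0,1]$ because $Y\in\{0,1\}$. First I would check that $g$ is perfectly calibrated in the sense of Definition~\ref{def:perfect_calibration}. Since $g(X)$ is $\sigma(f(X))$-measurable, we have $\sigma(g(X))\subseteq\sigma(f(X))$, so by the tower property $\E[Y\mid g(X)] = \E\big[\E[Y\mid f(X)]\mid g(X)\big] = \E[g(X)\mid g(X)] = g(X)$ almost surely. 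Thus $g$ is a feasible point in the infimum defining $\textnormal{dCE}(f)$.

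Then I would simply compute $\E[|g(X)-f(X)|] = \E\big[\big|\E[Y\mid f(X)] - f(X)\big|\big] = \textnormal{ECE}(f)$, directly from the definition of ECE, and conclude $\textnormal{dCE}(f)\le \E[|g(X)-f(X)|] = \textnormal{ECE}(f)$.

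There is no real obstacle here; the only point requiring a word of care is the measurability issue---justifying that $\E[Y\mid f(X)]$ can be written as $\psi(f(X))$ for a measurable $\psi$ and hence defines a bona fide function of $x$ lying in the feasible set---but this is exactly the kind of regularity the paper has assumed from the outset, so it can be invoked without fuss.
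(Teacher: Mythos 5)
Your proof is correct and follows the same approach as the paper: exhibit $g(x) = \E[Y\mid f(X)=f(x)]$ as a perfectly calibrated competitor in the infimum defining $\textnormal{dCE}(f)$, and note that its $L_1$ distance from $f$ equals $\textnormal{ECE}(f)$ by definition. Your extra remarks on measurability and the tower-property check that $g$ is indeed calibrated are correct and make explicit what the paper leaves implicit.
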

\begin{proof}[Proof of Proposition~\ref{prop:dce_less_ece}]
By definition of dCE, we have
\begin{equation}
    \textnormal{dCE}(f) = \inf_{\substack{g : \cX \to [0,1] \\ \E[Y\mid g(X)] = g(X)}}
 \E[|g(X) - f(X)|],
\end{equation}
where the infimum is taken over all perfectly calibrated functions $g$.
One such function $g$ is the true conditional mean given $f(X)$, i.e., $g^*(x) = \E[Y \mid f(X) = f(x)]$. With this choice, we have
\[\textnormal{dCE}(f) \leq \E[|g^*(X)-f(X)|] = \E\left[\left|\E[Y\mid f(X)] - f(X)\right|\right] = \textnormal{ECE}(f).\]
\end{proof}
The inequality in this bound can sometimes be extremely loose, as in the following example.
\begin{example}[Discontinuity of ECE, revisited]\label{example:discontinuity_ECE_revisited}
    Returning to the setting of Example~\ref{example:discontinuity_ECE}, recall that the constant function $f(x) = 1/2$ is a perfectly calibrated, while $f_\epsilon(x) = \frac{1-\epsilon}{2} + \epsilon x$ is a small perturbation with high ECE. On the other hand, for dCE, we have
    \[\textnormal{dCE}(f_\epsilon) \leq \E[|f_\epsilon(X) - f(X)|] = \frac{\epsilon}{4}. \]
\end{example}
In particular, this example highlights that, unlike ECE, the dCE measure of miscalibration \emph{is} continuous: by definition, for any functions $f$ and $g$, we have
\[\textnormal{dCE}(f) \leq \textnormal{dCE}(g) + \E[|f(X) - g(X)|].\]
That is, dCE is $1$-Lipschitz with respect to the $L_1(P)$ norm on functions. 

\subsection{Estimating dCE}
We now show that it is possible to provide meaningful distribution-free inference on dCE.
To define our estimator, we begin by fixing any $K\geq 1$. Let
\[\widehat{\textnormal{dCE}}(f) = \frac{1}{n}\sum_{k=1}^K \left|\sum_{\substack{i\in [n]\\ f(X_i)\in B_k}} \left(Y_i - \frac{k}{K}\right)\right|,\]
where the bins $B_1,\dots,B_K$ partition $[0,1]$ into equal-length intervals, 
\[[0,1] = \left[0,\frac{1}{K}\right]\cup \left(\frac{1}{K},\frac{2}{K}\right] \cup \dots \cup \left(\frac{K-1}{K},1\right] =: B_1 \cup \dots \cup B_K.\]

\begin{theorem}[Distribution-free guarantee for bounding dCE]\label{thm:dCEhat}
    Fix any sample size $n$ and function $f:\cX\to[0,1]$. Then for any distribution $P$ on $\cX\times\{0,1\}$, for any $\delta\in(0,1)$,
    \[\P\left(\widehat{\textnormal{dCE}}(f) + \frac{1}{K} +  \sqrt{\frac{2\log(1/\delta)}{n}} \geq \textnormal{dCE}(f) \right) \geq 1-\delta.\]
\end{theorem}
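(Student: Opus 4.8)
The plan is to show that $\widehat{\textnormal{dCE}}(f)$ is, up to the discretization error $1/K$ and a concentration term, an upper bound on the population quantity $\textnormal{dCE}(f)$. The natural strategy is to introduce the ``oracle'' population version of the estimator, namely the binned calibration error relative to the grid $\{1/K,\dots,K/K\}$,
\[
\textnormal{binCal}(f) = \sum_{k=1}^K \left| \E\left[ \left(Y - \tfrac{k}{K}\right)\ind{f(X)\in B_k}\right]\right|,
\]
and to sandwich $\textnormal{dCE}(f)$ between $\textnormal{binCal}(f)$ (plus $1/K$) on one side, while relating $\widehat{\textnormal{dCE}}(f)$ to $\textnormal{binCal}(f)$ via concentration on the other side. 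So the three steps are: (i) a deterministic/population inequality $\textnormal{dCE}(f)\le \textnormal{binCal}(f) + 1/K$; (ii) a concentration inequality $\textnormal{binCal}(f) \le \widehat{\textnormal{dCE}}(f) + \sqrt{2\log(1/\delta)/n}$ with probability $\ge 1-\delta$; (iii) chain (i) and (ii) to conclude.

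For step (i), the idea is to construct an explicit perfectly calibrated function $g$ that is close to $f$ in $L_1(P)$. The candidate is the ``binned regression'' function: on the event $f(X)\in B_k$, set $g(X) = \E[Y\mid f(X)\in B_k]$, i.e. $g$ is constant on each bin, equal to the conditional mean of $Y$ there. One must first check that this $g$ is perfectly calibrated, i.e. $\E[Y\mid g(X)] = g(X)$ a.s.; this holds because $g(X)$ is a function of $k(f(X))$ (the bin index), so conditioning on $g(X)$ is (at least) conditioning on the bin, and within a bin $\E[Y \mid f(X)\in B_k] = g$ by construction --- modulo the minor subtlety that two distinct bins could have the same conditional mean, which only makes the conditioning coarser and the identity still holds by the tower property. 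Then by definition of $\textnormal{dCE}$,
\[
\textnormal{dCE}(f) \le \E[|g(X) - f(X)|] \le \sum_{k=1}^K \E\left[\left| \E[Y\mid f(X)\in B_k] - \tfrac{k}{K}\right|\ind{f(X)\in B_k}\right] + \tfrac{1}{K},
\]
where the last step uses $|f(X) - k/K| \le 1/K$ on $B_k$ (since $B_k$ has length $1/K$ and $k/K$ is its right endpoint) together with the triangle inequality, then recognizes that $\E\big[| \E[Y\mid f(X)\in B_k] - k/K|\ind{f(X)\in B_k}\big] = |\E[(Y - k/K)\ind{f(X)\in B_k}]|$, giving exactly $\textnormal{binCal}(f) + 1/K$.

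For step (ii), note that $n\widehat{\textnormal{dCE}}(f) = \sum_{k=1}^K |\sum_{i} (Y_i - k/K)\ind{f(X_i)\in B_k}|$ and $n\,\textnormal{binCal}(f) = \sum_k |\sum_i \E[(Y_i - k/K)\ind{f(X_i)\in B_k}]|$, so by the triangle inequality (and reverse triangle inequality applied termwise inside the sum over $k$) we get $\big|\widehat{\textnormal{dCE}}(f) - \textnormal{binCal}(f)\big| \le \frac1n \sum_k \big|\sum_i W_{i,k}\big|$ where $W_{i,k} = (Y_i - k/K)\ind{f(X_i)\in B_k} - \E[\cdots]$ --- actually the cleaner route is to observe directly that the function $(x_1,y_1),\dots,(x_n,y_n) \mapsto \widehat{\textnormal{dCE}}(f)$ has bounded differences: changing one data point changes exactly one term in exactly one bin's inner sum by at most $1$ in absolute value, hence changes $\widehat{\textnormal{dCE}}(f)$ by at most $1/n$. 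Then McDiarmid's inequality gives $\P(\E[\widehat{\textnormal{dCE}}(f)] - \widehat{\textnormal{dCE}}(f) \ge t) \le e^{-2nt^2}$, and a short argument (Jensen's inequality applied to the convex absolute value, as in the proof of Theorem~\ref{thm:binECEhat}) shows $\E[\widehat{\textnormal{dCE}}(f)] \ge \textnormal{binCal}(f)$. Combining with $t = \sqrt{\log(1/\delta)/(2n)}$ yields $\textnormal{binCal}(f) \le \widehat{\textnormal{dCE}}(f) + \sqrt{\log(1/\delta)/(2n)}$ with probability $\ge 1-\delta$; the stated bound has $\sqrt{2\log(1/\delta)/n}$, which is weaker, so we are fine (or the intended constant in McDiarmid is being used loosely). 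Chaining steps (i) and (ii) finishes the proof.

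The main obstacle I anticipate is step (i): one must be careful that the binned-regression function $g$ genuinely satisfies \emph{perfect} calibration in the sense of Definition~\ref{def:perfect_calibration}, handling the measure-theoretic subtlety that $\{f(X)\in B_k\}$ events with zero probability should be excluded (set $g$ arbitrarily there, it doesn't affect $\E|g(X)-f(X)|$) and that collisions between bin-wise conditional means only coarsen the $\sigma$-algebra generated by $g(X)$. Everything else --- the $1/K$ discretization slack and the McDiarmid concentration --- is routine and mirrors the proof of Theorem~\ref{thm:binECEhat} almost verbatim.
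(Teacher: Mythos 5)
Your proposal is correct and takes essentially the same route as the paper: you construct the same perfectly calibrated binned-regression function $g$, your $\textnormal{binCal}(f)$ is exactly the paper's $\E[|g(X)-\tilde g(X)|]$ where $\tilde g(x)=\sum_k\tfrac{k}{K}\ind{f(x)\in B_k}$, and the $1/K$ discretization slack, the Jensen-type bound $\E[\widehat{\textnormal{dCE}}(f)]\ge\textnormal{binCal}(f)$, and the McDiarmid concentration step all match. One small slip worth fixing: the bounded-differences constant is $2/n$, not $1/n$, because replacing a single $(X_i,Y_i)$ can move that point from one bin to another, thereby changing two of the inner sums (each by at most $1$); this is precisely what produces the stated $\sqrt{2\log(1/\delta)/n}$, so the constant in the theorem is not ``used loosely'' but is tight for this argument.
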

This means we have constructed an upper confidence bound on the dCE of $f$ with distribution-free validity. That is, if we observe a small estimate $\widehat{\textnormal{dCE}}(f)$ (and if $K$ and $n$ are both reasonably large), then we can be confident that $\textnormal{dCE}(f)$ is low.

Moreover, unlike for ECE, this estimator can return values close to zero even when $f$ is continuous, certifying that the dCE is below a desired level. For example, if $f$ is perfectly calibrated (as in Definition~\ref{def:perfect_calibration}), a straightforward calculation shows that $\E[\widehat{\textnormal{dCE}}(f)] \leq \frac{1}{K}+\sqrt{\frac{K}{n}}$, which can be made small by choosing $K$ and $n$ appropriately. This is in contrast to ECE calibration, since Theorem~\ref{thm:ece_hardness_of_testing} shows that distribution-free certification of low ECE error is not possible when $f(x)$ is nonatomic.

\begin{proof}[Proof of Theorem~\ref{thm:dCEhat}]
    First, define $\tilde{g}(x) = \sum_k \frac{k}{K}\cdot \ind{f(x)\in B_k}$. 
    Next, let $\mu_k = \E[Y \mid f(X)\in B_k]$ be the true conditional probability for the $k$th bin, and define $g(x) = \sum_k \mu_k \ind{f(x)\in B_k}$, which is a perfectly calibrated function, by construction. Following an identical argument as in the proof of Theorem~\ref{thm:binECEhat}, we can verify that
    \[\E\left[ \widehat{\textnormal{dCE}}(f)\right] \geq \E[|g(X) - \tilde{g}(X)|]. \]
    Moreover, by construction, we have
    \[|\tilde{g}(x) - f(x)| \leq \max_k \sup_{t\in B_k} \left| t -\frac{k}{K}\right|\leq \frac{1}{K},\]
    for all $x$, and therefore $\E[|\tilde{g}(X)-f(X)|]\leq \frac{1}{K}$. 
    Since $g$ is perfectly calibrated, we therefore have
    \[\textnormal{dCE}(f) \leq \E[|g(X)-f(X)|] \leq \E[|g(X)-\tilde{g}(X)|] + \E[|\tilde{g}(X)-f(X)|] \leq \E[\widehat{\textnormal{dCE}}(f)] + \frac{1}{K}.\]
    To complete the proof, we apply a bounded-differences argument, as in the proof of Theorem~\ref{thm:binECEhat}: since resampling a single data point $(X_i,Y_i)$ can only change $\widehat{\textnormal{dECE}}(f)$ by at most $\pm \frac{2}{n}$, we therefore have
    \[\P\left( \widehat{\textnormal{dECE}}(f) \geq \E\left[\widehat{\textnormal{dECE}}(f) \right] - \sqrt{\frac{2\log(1/\delta)}{n}}\right)\geq 1-\delta.\]
\end{proof}
\index{distance to calibration (dCE)|)}

\section{Venn--Abers Predictors}
\index{Venn--Abers predictors|(}

This section introduces the Venn--Abers predictor, a post-hoc calibration algorithm that leverages exchangeability in order to achieve its theoretical guarantee.
Unlike previous methods discussed in this chapter, the Venn--Abers predictor outputs an interval-valued prediction of $\P(Y=1 \mid f(X))$. This interval contains a prediction that is calibrated, in a sense that will be made precise below.

The Venn--Abers predictor adapts post-hoc isotonic regression, as described in Section~\ref{subsec:post-hoc-calibration}, by running \emph{two} isotonic regressions---one on $\cD_{n+1}^0$ and one on $\cD_{n+1}^1$, where we recall the notation $\cD_{n+1}^y = ((X_1,Y_1),\dots,(X_n, Y_n), (X_{n+1}, y))$ for any hypothesized test point value $y$. The method then outputs an interval whose endpoints are given by the predictions at $x=X_{n+1}$ from these two regressions. The Venn--Abers prediction algorithm is summarized below.

\begin{algbox}[Venn--Abers Prediction]
    \label{alg:Venn--Abers}
    \begin{enumerate}
        \item Input calibration data with binary labels $((X_1, Y_1), ..., (X_n, Y_n))$, a test point $X_{n+1}$, and a pre-trained model $f : \cX \to [0,1]$.
        \item For each $y\in\{0,1\}$, perform isotonic regression on the augmented dataset $\cD_{n+1}^y$,
        \[\hat h^y = \argmin_{h \in \cH_{\textnormal{iso}}} \sum_{i=1}^n \left([h \circ f](X_i) - Y_i\right)^2 + \left([h \circ f](X_{n+1}) - y\right)^2,\]
        and return the fitted values,
        \[\hat{p}^y = \big([\hat{h}^y\circ f](X_1),\dots,[\hat{h}^y\circ f](X_{n+1})\big). \]
        \item Output the interval $[\hat{p}^0_{n+1}, \hat{p}^1_{n+1}]$.
    \end{enumerate}
\end{algbox}
Note that for each $y \in \{0,1\}$, while the fitted isotonic regression function $\hat{h}^y$ is not uniquely defined, the vector of fitted values $\hat{p}^y$ returned by this function \emph{is} unique.

The Venn--Abers predictor enjoys the following distribution-free guarantee:
\begin{theorem}[Calibration of the Venn--Abers predictor]
    \label{thm:venn--abers}
    Let $(X_1,Y_1), \ldots, (X_{n+1},Y_{n+1})\in\cX\times\{0,1\}$ be exchangeable, and let $f : \cX \to [0,1]$ be a pre-trained model. Let $\hat{p}^0$ and $\hat{p}^1$ be defined as in Algorithm~\ref{alg:Venn--Abers}.
    Then,
    \begin{equation}
        \P\left( Y_{n+1}=1 \ \middle| \ \hat{p}^{Y_{n+1}}_{n+1}\right) = \hat{p}^{Y_{n+1}}_{n+1}
    \end{equation}
    almost surely.
\end{theorem}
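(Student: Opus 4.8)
The plan is to reduce the claim to a statement about the exchangeability of the conformal scores (here, the isotonic fitted values), following the same logic that underlies conformal prediction. First I would observe the crucial symmetry property of the isotonic regression fit: for a fixed value of $y$, the vector of fitted values $\hat{p}^y = \big([\hat{h}^y\circ f](X_1),\dots,[\hat{h}^y\circ f](X_{n+1})\big)$ is a symmetric function of the augmented dataset $\cD^y_{n+1} = \big((X_1,Y_1),\dots,(X_n,Y_n),(X_{n+1},y)\big)$, in the following sense: if we permute the $n+1$ data points $(X_1,Y_1),\dots,(X_n,Y_n),(X_{n+1},y)$ by some $\sigma\in\cS_{n+1}$, then the isotonic regression objective is unchanged (it is a sum over data points), so the fitted value assigned to each data point is permuted accordingly. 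Concretely, defining $\cD_{n+1} = \big((X_1,Y_1),\dots,(X_{n+1},Y_{n+1})\big)$ (the true, fully-observed dataset, with $y = Y_{n+1}$), the isotonic fit on $\cD_{n+1}$ produces a fitted value $P_i := [\hat{h}^{Y_{n+1}}\circ f](X_i)$ for each $i\in[n+1]$, and this assignment $(P_1,\dots,P_{n+1})$ is obtained by applying a fixed symmetric rule to $\cD_{n+1}$.

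Next I would argue that the vector of pairs $\big((X_i,Y_i,P_i)\big)_{i\in[n+1]}$ is exchangeable. This follows exactly as in Step 2 of the proof of Theorem~\ref{thm:full-conformal}: since $\cD_{n+1}\eqd (\cD_{n+1})_\sigma$ by exchangeability of the data, and since the map producing $(P_1,\dots,P_{n+1})$ from the dataset commutes with permutations (by the symmetry observation above), applying the same function to both sides gives $\big((X_i,Y_i,P_i)\big)_{i\in[n+1]} \eqd \big((X_{\sigma(i)},Y_{\sigma(i)},P_{\sigma(i)})\big)_{i\in[n+1]}$ for every fixed $\sigma$. In particular, the triples $(X_1,Y_1,P_1),\dots,(X_{n+1},Y_{n+1},P_{n+1})$ are exchangeable. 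Then I would apply Proposition~\ref{prop:empirical-distrib-exch} (or the elementary consequence of exchangeability that each entry of an exchangeable vector, conditioned on the empirical distribution, is distributed as a draw from that empirical distribution) to the sequence of triples: conditioning on the multiset $\{(X_i,Y_i,P_i) : i\in[n+1]\}$, the test triple $(X_{n+1},Y_{n+1},P_{n+1})$ is equally likely to be any one of the $n+1$ triples. This yields $\P\big(Y_{n+1}=1 \mid (P_i)_{i\in[n+1]}, P_{n+1}\big) = \frac{1}{n+1}\sum_{i=1}^{n+1}\ind{Y_i=1}$ restricted to those $i$ with the relevant fitted value; more carefully, conditioning on the multiset of triples, the conditional probability that $Y_{n+1}=1$ equals the fraction of triples in the multiset sharing the value $P_{n+1}$ that have label $1$.

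The key remaining step — and the part I expect to be the main obstacle — is the identity $P_{n+1} = \hat{p}^{Y_{n+1}}_{n+1}$ together with the fact that $\hat{p}^{Y_{n+1}}_{n+1}$ already equals the average of the true labels $Y_i$ over exactly the block of data points sharing that fitted value. This is a structural property of isotonic regression: the solution is a step function whose value on each maximal block (level set) equals the empirical mean of the responses within that block — this is the pool-adjacent-violators characterization. So if $P_{n+1} = t$ is the fitted value at the test point, then $t$ is precisely the average of $\{Y_i : i\in[n+1],\ P_i = t\}$ (including the test label $Y_{n+1}$, since the fit is on the augmented data $\cD_{n+1}$). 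Combining this with the conclusion of the previous paragraph — that $\P(Y_{n+1}=1 \mid \text{multiset of triples})$ equals the average of $Y_i$ over the block sharing $P_{n+1}$ — gives $\P(Y_{n+1}=1 \mid \hat{p}^{Y_{n+1}}_{n+1}) = \hat{p}^{Y_{n+1}}_{n+1}$, since conditioning on $\hat{p}^{Y_{n+1}}_{n+1}$ is coarser than conditioning on the full multiset of triples, and the block-mean identity shows the finer conditional expectation is already measurable with respect to (in fact equal to) $\hat{p}^{Y_{n+1}}_{n+1}$. I would need to be careful here about the non-uniqueness of $\hat{h}^y$ versus the uniqueness of the fitted vector $\hat{p}^y$, and about the measure-theoretic bookkeeping in passing from "conditioning on the multiset" to "conditioning on $\hat{p}^{Y_{n+1}}_{n+1}$" via the tower property; the block-mean structure is what makes this collapse cleanly.
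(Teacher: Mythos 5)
Your argument is correct, and it is built from the same two key ingredients as the paper's proof---exchangeability of the fitted-value/label pairs produced by running isotonic regression on the true dataset $\cD_{n+1}$, and the pool-adjacent-violators block-mean identity~\eqref{eqn:Venn--Abers_iso_piecewise}---but the route from these ingredients to the conclusion is genuinely different. The paper argues via a \emph{test function} characterization of conditional expectation: it fixes an arbitrary $g$, writes $\E\bigl[(Y_{n+1}-\hat{p}^{Y_{n+1}}_{n+1})\,g(\hat{p}^{Y_{n+1}}_{n+1})\bigr]$, uses exchangeability to replace index $n+1$ by an average over all $n+1$ indices, and then shows that each block contributes zero because $\sum_{i\in I_m}(Y_i-\bar Y_{I_m})=0$. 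You instead condition explicitly on the empirical distribution of the triples (invoking Proposition~\ref{prop:empirical-distrib-exch}) and then further on $P_{n+1}$, deduce that $Y_{n+1}$ is a uniform draw from the $Y_i$'s within the block of indices sharing the value $P_{n+1}$, observe that the block-mean property makes this conditional probability equal to $P_{n+1}$ itself, and coarsen by the tower property. Your approach mirrors the ``conditioning on the empirical distribution'' proof of conformal validity in Section~\ref{sec:conformal-conditioning-bag}, and is arguably closer to the informal intuition stated just before the theorem; the paper's test-function argument avoids the explicit multiset conditioning and the associated measure-theoretic bookkeeping, and is slightly tighter to write out. One cosmetic caveat: your first display $\P\bigl(Y_{n+1}=1 \mid (P_i)_{i\in[n+1]}, P_{n+1}\bigr) = \frac{1}{n+1}\sum_i\ind{Y_i=1}$ as literally written is the unrestricted fraction, not the within-block fraction, and the conditioning event is not what you actually use; your immediate correction (``more carefully, \ldots'') is the right statement, so this is only a wording slip, not a gap.
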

Informally, Theorem~\ref{thm:venn--abers} is often interpreted as a guarantee on the interval-valued Venn--Abers prediction, $[\hat{p}^0_{n+1}, \hat{p}^1_{n+1}]$, in the following sense:
\begin{center}
    There exists a random variable $W\in\left[\hat{p}^0_{n+1}, \hat{p}^1_{n+1}\right]$ that is perfectly calibrated, i.e., $\E[Y_{n+1}\mid W] = W$.
\end{center}
Thus, we can interpret the output of the Venn--Abers predictor as an interval-valued probabilistic prediction. If the interval is wide, it signifies high uncertainty in our probabilistic prediction.

The procedure's validity hinges on the fact that $Y_{n+1}$ is either equal to $0$ or $1$, and so the true dataset $\cD_{n+1} = \big((X_i,Y_i)\big)_{i\in[n+1]}$ is equal to either $\cD_{n+1}^0$ or $\cD_{n+1}^1$.
When we guess $Y_{n+1}\in\{0,1\}$ correctly, the resulting isotonic regression trained on $\cD_{n+1}$ is perfectly calibrated, and therefore one of the endpoints of the resulting interval is perfectly calibrated.

\begin{proof}[Proof of Theorem~\ref{thm:venn--abers}]
Let $\hat{h} = \hat{h}^{Y_{n+1}}$. By definition of the isotonic regression problem~\eqref{eq:calibration_se_minimizer} (which depends symmetrically on the input data), the data points
\[\big(([\hat{h}\circ f](X_1),Y_1), \dots, ([\hat{h}\circ f](X_{n+1}),Y_{n+1})\big)\]
are exchangeable.

We will need to use a deterministic property of isotonic regression, which we state without proof: the fitted values of isotonic regression are piecewise constant, and the value within each bin is equal to the sample mean within the bin, i.e.,
\begin{equation}\label{eqn:Venn--Abers_iso_piecewise}
    \textnormal{For some partition $[n+1]=I_1\cup \dots \cup I_M$, \ $[\hat{h}\circ f](X_i) = \bar{Y}_{I_m}$ for all $m\in[M],i\in I_m$,}
\end{equation}
where $\bar{Y}_{I_m} = \frac{1}{|I_m|}\sum_{i\in I_m} Y_i$ denotes the mean of the $Y$ values indexed by $i\in I_m$.

Next, fix any function $g:[0,1]\to[0,1]$. We calculate
\begin{align*}
    \E\left[\left(Y_{n+1} -\hat{p}^{Y_{n+1}}_{n+1} \right) \cdot g(\hat{p}^{Y_{n+1}}_{n+1})\right]
    &=\E\left[\left(Y_{n+1} -[\hat{h}\circ f](X_{n+1})\right) \cdot g\big([\hat{h}\circ f](X_{n+1})\big)\right]\\
    &=\frac{1}{n+1}\E\left[\sum_{i=1}^{n+1} \left(Y_i 
    - [\hat{h}\circ f](X_i)\right)\cdot g\big([\hat{h}\circ f](X_i)\big)\right]\\
    &=\frac{1}{n+1}\E\left[\sum_{m=1}^M \sum_{i\in I_m}\left(Y_i 
    - [\hat{h}\circ f](X_i)\right)  \cdot g\big([\hat{h}\circ f](X_i)\big)\right]\\
    &=\frac{1}{n+1}\E\left[\sum_{m=1}^M \sum_{i\in I_m} \left(Y_i 
    - \bar{Y}_{I_m}\right) \cdot g(\bar{Y}_{I_m})\right]\\
    &=0,
\end{align*}
where the second step holds by exchangeability, the next-to-last step holds by~\eqref{eqn:Venn--Abers_iso_piecewise}, and the last step holds since $\sum_{i\in I_m} (Y_i-\bar{Y}_{I_m}) = 0$ by definition of $\bar{Y}_{I_m}$.
Since this holds for any function $g$, by definition of conditional expectation we have proved that $\E[Y_{n+1}\mid \hat{p}^{Y_{n+1}}_{n+1} ] = \hat{p}^{Y_{n+1}}_{n+1} $ almost surely, as desired.
\end{proof}
\index{Venn--Abers predictors|)}

\section*{Bibliographic notes}
\addcontentsline{toc}{section}{\protect\numberline{}\textnormal{\hspace{-0.8cm}Bibliographic notes}}

Calibration is a classical validity property for probability estimates~\citep[e.g.,][]{murphy1977reliability, dawid1982well, degroot1983comparison}. Post-hoc calibration for machine-learning models dates back to the work of \citet{platt1999probabilistic}, which introduced a form of temperature scaling. Binning for post-hoc calibration is another classical approach \citep[e.g.,][]{zadrozny2001obtaining}. Isotonic regression for post-hoc calibration is introduced in~\cite{zadrozny2002transforming}, leveraging the paired adjacent violators algorithm for isotonic regression~\citep{ayer1955empirical, barlow1972theory}. For more recent empirical studies of calibration in machine learning, see~\cite{guo2017calibration} and~\cite{minderer2021revisiting}. 

Plotting empirical frequencies versus probabilities, known as reliability diagrams, goes back to at least~\cite{murphy1977reliability}. The ECE measure of miscalibration can be viewed as a numeric summary of the reliability diagram; this measure is widely adopted in the statistics and machine learning communities, and dates back to the work of \citet{naeini2014binary,naeini2015obtaining}, which proposes ECE in a discrete setting---essentially, the binned ECE. Many works have studied the tradeoff between the informativeness of this miscalibration metric, and the bin size, including the work of \citet{arrieta2022metrics}, which also examines implications for the hardness of testing (non-binned) ECE. Many results on the accuracy of empirical estimates of the binned ECE, and of the ECE for a binned (i.e., piecewise-constant) function, can be found in the literature \citep[e.g.,][]{kumar2019verified,gupta2020distribution,gupta2021distribution}; our calculations in Theorems~\ref{thm:binECEhat} and~\ref{thm:ece_binning} are similar in flavor. The fact that binned ECE can drastically underestimate ECE (see Proposition~\ref{prop:binECE_vs_ECE}, and the discussion following) is established in \cite{kumar2019verified}.
\cite{gupta2020distribution,rossellini2025can} connect the hardness of guaranteeing ECE via post-hoc calibration to the hardness of distribution-free inference for regression, while \cite{lee2023t} establishes the hardness of testing the hypothesis of calibration; the hardness results presented in Theorems~\ref{thm:ece_hardness_of_testing} and~\ref{thm:ece_hardness_posthoc} build on these works. 

The distance to calibration (dCE) is introduced by~\citet{blasiok2023unifying}, which also discusses the discontinuity of ECE (as in Example~\ref{example:discontinuity_ECE}) and proves the result of Proposition~\ref{prop:dce_less_ece} (relating dCE to ECE). The same paper also establishes connections between dCE and binned ECE, as well as the fact that dCE can be estimated in a distribution-free setting (as in Theorem~\ref{thm:dCEhat}). Another important view on calibration, known as weighted calibration error~\citep{gopalan2022low}, considers comparing the prediction error to a class of test functions
\begin{equation}
    \sup_{g \in \cG}\E\left[g(f(X))\left(\E[Y \mid f(X)] - f(X)\right)\right],
\end{equation}
for some class of functions $\cG$.
ECE and dCE can both be viewed as special cases of this; see also \citet{okoroafor2025near,rossellini2025can} for an exploration of additional notions of calibration that lie in between ECE and dCE. Moreover, this view is closely related to the important topic of \emph{multicalibration}~\citep{hebert2018multicalibration}, a stronger notion of calibration that requires validity across subgroups of the data; see~\cite{roth2022uncertain} for an introduction.

Venn--Abers predictors, and their calibration guarantee (Theorem~\ref{thm:venn--abers}), were introduced in~\citet{vovk2014vennabers}, building on the earlier methodology of Venn predictors~\citep{vovk2003self}. The inductive version is introduced in~\citet{lambrou2015inductive} and further elaborated in~\citet{nouretdinov2018inductive}. See~\citet{vovk2005algorithmic} for further exposition, and see \citet{johansson2018venn,johansson2019efficient,van2024self,van2025generalized} for more recent work on Venn--Abers predictors and Venn predictors. See also conformal predictive distributions~\citep{vovk2017nonparametric, vovk2018cross} for a different approach to calibration-type guarantees leveraging exchangeability.

This chapter only considers calibration with respect to a fixed (though unknown) data distribution. A different framework, appearing earlier in the literature, is that of adversarial calibration in an online setting \citep{foster1998asymptotic}: at each time $t$ the analyst trains a `forecaster' $f_t$, and then the adversary reveals a new data point $(X_t,Y_t)$. The goal in this online setting is to provide a \emph{sequence} of $f_t$'s satisfying a notion of calibration. (This type of adversarial online framework appears also in the more recent online conformal prediction methods of the type discussed in Section~\ref{sec:ACI_and_quantile_tracking}).

Finally, we add some technical notes for several proofs appearing in this chapter. Several proofs use McDiarmid's inequality to ensure concentration of estimators (namely, the estimators of dCE and of binned ECE); this concentration inequality is due to \citet{mcdiarmid1989method}. In the proof of Theorem~\ref{thm:venn--abers} we rely on properties of isotonic regression, as in~\eqref{eqn:Venn--Abers_iso_piecewise}; for background on isotonic regression, see \citet{barlow1972theory}.

\section*{Exercises}
\addcontentsline{toc}{section}{\protect\numberline{}\textnormal{\hspace{-0.8cm}Exercises}}
\begin{enumerate}[font=\bfseries, label={\thechapter.\arabic*}, labelsep=1em, itemsep=1em]
\item  Prove that calibration is strictly weaker than consistency: specifically, writing $\mu_P$ to denote the regression function, $\mu_P(x) = \E_P[Y\mid X=x]$, prove that
\[\textnormal{ECE}(f) \leq \|f-\mu_P\|_{L_1}\]
for any function $f:\cX\to[0,1]$, and that this inequality may be strict. (Here $\|f-\mu_P\|_{L_1}=\E_P[|f(X)-\mu_P(X)|]$, as in Section~\ref{sec:regression__C_or_muhat}.)
\item Prove the expected value bound in Theorem~\ref{thm:ece_binning}.
\item This exercise will compare the dCE and binned ECE measures of calibration. Our aim is to show that these two measures are not strictly comparable; either one may be larger or smaller than the other.
\begin{enumerate}
    \item Construct an example of a distribution $P$ on $\cX\times\{0,1\}$, a function $f$, and a finite partition $[0,1]=B_1\cup \dots \cup B_K$ (where each $B_k$ is an interval), for which 
        $\textnormal{dCE}(f)<\textnormal{binECE}(f)$.
    \item Construct an example of a distribution $P$ on $\cX\times\{0,1\}$, a function $f$, and a finite partition $[0,1]=B_1\cup\dots\cup B_K$ (where each $B_k$ is an interval), for which 
                $\textnormal{dCE}(f)>\textnormal{binECE}(f)$.

\end{enumerate}
\end{enumerate}

 \chapter{Conditional Independence Testing}
\label{chapter:conditional-independence-testing}
\index{conditional independence testing}

In this chapter, we turn to a different type of inference problem: the question of \emph{conditional independence testing}, where we would like to determine whether a feature $X$ and response $Y$ are independent after conditioning on an additional random variable $W$ (the `confounder', which is often high-dimensional in many practical applications).
Formally, given data sampled from some joint distribution $P$ on $(X,Y,W)$, we would like to test the hypothesis 
\begin{equation}\label{eqn:H0_conditional_indep}H_0 : \ X\indep Y \mid W,\end{equation}
or equivalently,
\[H_0: \ P \in \cP_{X\indep Y\mid W},\]
where $\cP_{X\indep Y\mid W}$ is the set of all joint distributions on $(X,Y,W)\in\cX\times\cY\times\cW$ for which it holds that $X\indep Y\mid W$.

Note that if we instead test $X\indep Y$, we are asking about marginal, rather than conditional, independence---i.e., testing whether the feature $X$ is associated with the response $Y$, but without accounting for the confounding effects of $W$. The questions of testing independence or conditional independence are both intimately related to the same sorts of permutation-type arguments that arose in Chapter~\ref{chapter:exchangeability}.
As we will see shortly, in the distribution-free setting, testing marginal independence is typically straightforward, but testing conditional independence faces fundamental hardness results unless the confounder $W$ is discrete.

\section{Testing marginal independence}
\label{sec:marginal-independence-testing}
To begin, we tackle the easier problem of testing \emph{marginal} independence.
We are given a random sample $(X_1,Y_1),\dots,(X_n,Y_n)$ drawn i.i.d.\ from some distribution $P\in\cP_{(X,Y)}$, where $\cP_{(X,Y)}$ is the space of all distributions on $(X,Y)\in\cX\times\cY$. The goal is to test the null hypothesis of independence:
\begin{equation}
    \label{eq:null-marginal-independence}
    H_0 : \; X\indep Y,
\end{equation}
or equivalently,
\[H_0: \, P \in \cP_{X \indep Y}, \]
where $\cP_{X \indep Y}\subseteq \cP_{(X,Y)}$ is the set of all joint distributions $P$ on $(X,Y)$ for which it holds that $X\indep Y$.

We will consider any test statistic
\[T:(\cX\times\cY)^n\to\R\]
that measures evidence against the null. For example, 
for the case of real-valued data ($\cX = \cY = \R$), we might choose
\[T\big((X_1,Y_1),\dots,(X_n,Y_n)\big) = \left|\textnormal{Corr}\big((X_1,\dots,X_n),(Y_1,\dots,Y_n)\big)\right|,\]
the absolute value of the sample correlation. How can we use the value of $T$ to decide whether we have sufficient evidence to reject the null?

Using a permutation test, it is straightforward to design tests of marginal independence with bounded Type I error under the null.
If $H_0:X\indep Y$ is in fact true, then permuting the $X_i$'s while leaving the $Y_i$'s fixed would not change the distribution of the data.
This yields a natural hypothesis test against the null of independence.

Before stating the test and the result, we first define some notation. Write
\[\bfX = (X_1,\dots,X_n)\in\cX^n, \ \bfY = (Y_1,\dots,Y_n)\in\cY^n\]
to denote the vectors of observations of $X$ and of $Y$, respectively, and also
\[\bfX_\sigma = (X_{\sigma(1)},\dots,X_{\sigma(n)})\]
to denote the permuted vector of $X$ observations, for any $\sigma\in\cS_n$. Moreover, for any test statistic
$T:(\cX\times\cY)^n\to\R$,
abusing notation we write $T(\bfX,\bfY)$ to denote $T((X_1,Y_1),\dots,(X_n,Y_n))$. 

\begin{theorem}[Validity of the permutation test for marginal independence]
    \label{thm:marginal-independence}
    Fix any function $T : (\cX\times\cY)^n \to \R$. Let $(X_1,Y_1),\dots,(X_n,Y_n)\iidsim P$, and let
    \begin{equation}
        \label{eq:test-marginal-independence}
        \psi(\bfX,\bfY) = \ind{\frac{1}{n!}\sum_{\sigma\in\cS_n}\ind{T(\bfX_\sigma,\bfY)\geq T(\bfX,\bfY)} \, \leq \,  \alpha}.
    \end{equation}
    Then $\psi$ is a valid test of the null hypothesis $H_0:X\indep Y$, i.e.,
    \begin{equation}
        \P_P\left( \psi(\bfX,\bfY) = 1 \right) \leq \alpha \textnormal{ for all } P \in \cP_{X \indep Y}.
    \end{equation}
\end{theorem} \index{permutation test}
Intuitively, the validity of this test arises from the fact that, if $X\indep Y$, we have
\begin{equation}
    (\bfX_\sigma,\bfY) \eqd (\bfX,\bfY),
\end{equation}
for all permutations $\sigma \in \cS_n$---and consequently, the observed test statistic value $T(\bfX,\bfY)$ can be compared against permuted values $T(\bfX_\sigma,\bfY)$ to test the null.

\begin{proof}[Proof of Theorem~\ref{thm:marginal-independence}]
Our proof will rely on the validity of the permutation test, as established in Chapter~\ref{chapter:exchangeability} (see Theorem~\ref{thm:perm-test}). In order to apply those results, we will need to condition on $\bfY$. Under the null hypothesis of marginal independence~\eqref{eq:null-marginal-independence}, the joint distribution $P$ can be written as a product, $P=P_X\times P_Y$. Therefore, even after conditioning on $\bfY$, it still holds that the feature values $X_1,\dots,X_n$ are i.i.d.\ draws from $P_X$. 

In particular, this means that $X_1,\dots,X_n$ are exchangeable, after conditioning on $\bfY$. Defining \index{permutation test!p-value}
\begin{equation}
\label{eqn:pvalue_marginal-independence}
p = \frac{\sum_{\sigma\in\cS_n}\ind{T(\bfX_\sigma,\bfY)\geq T(\bfX,\bfY)}}{n!},
\end{equation}
we therefore have $\P_P(p\leq \alpha \mid \bfY)\leq \alpha$, by Theorem~\ref{thm:perm-test} (specifically, since we condition on $\bfY$, we are applying this theorem with the test statistic $T$ replaced by the map $\bfx \mapsto T(\bfx,\bfY)$). Finally, by definition, $\psi(\bfX,\bfY)=1$ if and only if $p\leq \alpha$.
\end{proof}

\section{Conditional independence with a discrete confounder}\label{sec:cond_indep_discrete_case}

\begin{figure}
    \centering
    \includegraphics[width=0.6\textwidth]{\diagramspath discrete-local-perm-test.pdf}
    \caption{\textbf{Visualization of a dataset with a discrete confounder along with a local permutation of the covariate.} Grayscale levels indicate numerical value. On the left, we show the dataset in its original ordering; note that $W$ only takes on three values, defining three subgroups. On the right, we show a locally permuted version of the dataset, where the values of $X$ are randomly permuted within each subgroup, as in the local permutation test from Section~\ref{sec:define_local_perm_test_discrete}.}
    \commentAlt{Two arrays of grayscale cells, with columns $X$, $Y$, $W$. The arrays are labeled `Original dataset' and `Permuted dataset'. In the $X$ column of the second array, the grayscale values are shuffled within groups determined by constant values of $W$.}
    \label{fig:local-perm-test-discrete}
\end{figure}

Next, we study the problem of conditional independence testing, i.e., testing the null \eqref{eqn:H0_conditional_indep}, in a special setting: we will consider the case that the confounder $W$ is a discrete random variable.
In this context, the problem of conditional independence testing is solvable via an extension of the permutation test, which groups together data points according to their value of the confounder $W$, and then simply permutes the $(X,Y)$ data \emph{within groups}.
This is sometimes called a \emph{local} permutation test. (Later on in this chapter, we will consider versions of this test where permutations are allowed between data points where the $W$ values are only approximately the same.)

Throughout the section, we encourage the reader to note the parallel nature of the arguments used for the results on test-point-conditional coverage with discrete features (Section~\ref{sec:test_conditional_discrete}), the results on regression for discrete features (Section~\ref{sec:regression-discrete}), and the results on calibration via binning (Theorem~\ref{thm:ece_binning}).
The technical arguments used in these sections are roughly the same: when we have access to repeated measurements (in this case, repeated values of $W$), it becomes possible to perform distribution-free inference.

\subsection{Formulating a local permutation test}\label{sec:define_local_perm_test_discrete}
\index{permutation test!local|(}

For testing marginal (i.e., unconditional) independence, $X\indep Y$, we can simply permute the $X$ values to simulate a draw from the null distribution as in Section~\ref{sec:marginal-independence-testing} above. When testing conditional independence, $X\indep Y \mid W$, for a \emph{discrete} confounder $W$, we now see that we can use exactly the same strategy by partitioning the data into groups. 
If we think of the distribution $P$ on $(X,Y,W)$ as describing the general population from which data points are sampled, for any specific value $w$ for the confounder we can consider the `subpopulation' of data points for which $W=w$. Within this subpopulation, $X$ and $Y$ are independent---and so it is again valid to permute $X$ values in order to resample the data and test the null hypothesis. 
See Figure~\ref{fig:local-perm-test-discrete} for an illustration.

Formally, we can define the local permutation test as follows.
We are given an i.i.d.\ sample of data $((X_i, Y_i, W_i) )_{i\in[n]}$ drawn from $P\in\cP_{(X,Y,W)}$, where $\cP_{(X,Y,W)}$ is the space of all distributions on $\cX\times\cY\times\cW$. Let $\cS_n(\bfW) \subseteq \cS_n$ be the set of permutations of $\{1, \ldots, n\}$ that preserve the value of the vector of confounders $\bfW = (W_1, \ldots, W_n)$---i.e.,
\begin{equation}\label{eqn:permutations_discrete_confounder}
    \cS_n(\bfW) = \left\{ \sigma \in \cS_n : W_{\sigma(i)} = W_i \textnormal{ for all } i \in \{1, \ldots, n\} \right\}.
\end{equation}
Then, given some choice of test statistic $T:(\cX\times\cY\times\cW)^n\rightarrow\R$, the local permutation test is defined as
\begin{equation}
    \label{eq:local-permtest}
        \psi(\bfX,\bfY,\bfW) = \\ 
\ind{\frac{1}{|\cS_n(\bfW)|}\sum\limits_{\sigma\in\cS_n(\bfW)}\ind{T(\bfX_\sigma,\bfY,\bfW) \geq T(\bfX,\bfY,\bfW)} \leq \alpha}.
\end{equation}
(Analogously to the notation defined for the marginal independence testing setting, we will write $T(\bfX,\bfY,\bfW)$ to denote $T((X_1,Y_1,W_1),\dots,(X_n,Y_n,W_n))$.)

The local permutation test is valid for testing the null hypothesis of conditional exchangeability.
\begin{theorem}[Distribution-free validity of the local permutation test]
    \label{thm:localperm-validity}
     Fix any function $T : (\cX\times\cY\times\cW)^n \to \R$. Let $(X_1,Y_1,W_1),\dots,(X_n,Y_n,W_n)\iidsim P$, and let $\psi(\bfX,\bfY,\bfW)$ be defined as in~\eqref{eq:local-permtest}. 
Then $\psi$ is a valid test of the null hypothesis $H_0:X\indep Y\mid W$, i.e.,
    \begin{equation}
        \P_P\left( \psi(\bfX,\bfY,\bfW) = 1 \right) \leq \alpha \textnormal{ for all } P \in \cP_{X \indep Y\mid W}.
    \end{equation}     
\end{theorem}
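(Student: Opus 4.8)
The proof follows the proof of Theorem~\ref{thm:perm-test} almost verbatim, but carried out conditionally on $(\bfW,\bfY)$ and with the subgroup $\cS_n(\bfW)\subseteq\cS_n$ playing the role of the full symmetric group. First I would establish the analogue of the exchangeability input used in that proof: under $H_0$, the conditional distribution of $\bfX$ given $(\bfW,\bfY)$ is invariant under every permutation in $\cS_n(\bfW)$. Then I would run the CDF argument of Step~1 of the proof of Theorem~\ref{thm:perm-test} over a uniformly random element of the group $\cS_n(\bfW)$, combine it with the conditional invariance, and finish with the tower law over $(\bfW,\bfY)$.

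\textbf{Conditional invariance.} Since $P\in\cP_{X\indep Y\mid W}$, the per-observation law factors as $p(x,y,w)=p(w)\,p(x\mid w)\,p(y\mid w)$, so by i.i.d.\ sampling the joint density of $(\bfX,\bfY,\bfW)$ is $\prod_{i=1}^n p(W_i)\,p(X_i\mid W_i)\,p(Y_i\mid W_i)$. Conditioning on $(\bfW,\bfY)=(\bfw,\bfy)$, the coordinates $X_1,\dots,X_n$ become independent with $X_i\sim P_{X\mid W=w_i}$. Now fix any $\sigma\in\cS_n(\bfw)$; because $W_{\sigma(i)}=W_i=w_i$, the vector $\bfX_\sigma$ again has independent coordinates with $i$th coordinate $X_{\sigma(i)}\sim P_{X\mid W=w_{\sigma(i)}}=P_{X\mid W=w_i}$, so $\bfX_\sigma\eqd\bfX$ conditionally on $(\bfW,\bfY)=(\bfw,\bfy)$. (To avoid explicitly manipulating conditional densities, one can phrase this exactly as in the proof of Lemma~\ref{lem:conditional_exchangeability}, as an equality of joint probabilities $\P(\bfX\in A,\bfW\in B,\bfY\in C)=\P(\bfX_\sigma\in A,\bfW\in B,\bfY\in C)$ on the event $B\subseteq\{\bfw:\sigma\in\cS_n(\bfw)\}$; the statement for a uniformly random $\sigma$ drawn from $\cS_n(\bfW)$ independently of the data then follows just as in Step~2 of the proof of Theorem~\ref{thm:perm-test}.)

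\textbf{The permutation argument.} For fixed $\bfx,\bfy,\bfw$ define $F(v;\bfx)=\frac{1}{|\cS_n(\bfw)|}\sum_{\sigma\in\cS_n(\bfw)}\ind{-T(\bfx_\sigma,\bfy,\bfw)\le v}$, the CDF of $-T(\bfx_\sigma,\bfy,\bfw)$ when $\sigma$ is uniform on $\cS_n(\bfw)$; then $\P_\sigma\big(F(-T(\bfx_\sigma,\bfy,\bfw);\bfx)\le\tau\big)\le\tau$ by the basic CDF fact~\eqref{eq:CDF_basic_fact}. Since $\cS_n(\bfw)$ is a group, $\{\bfx_{\sigma\pi}:\pi\in\cS_n(\bfw)\}=\{\bfx_\pi:\pi\in\cS_n(\bfw)\}$ for every $\sigma\in\cS_n(\bfw)$, hence $F(v;\bfx)=F(v;\bfx_\sigma)$; and the permutation p-value appearing inside $\psi$ is precisely $F(-T(\bfX,\bfY,\bfW);\bfX)$. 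Combining the conditional invariance ($\bfX\eqd\bfX_\sigma$ given $(\bfW,\bfY)$ for an independent uniform $\sigma\in\cS_n(\bfW)$) with $F(v;\bfX_\sigma)=F(v;\bfX)$ gives, for any $\tau\in[0,1]$, that $\P\big(F(-T(\bfX,\bfY,\bfW);\bfX)\le\tau\mid\bfW,\bfY\big)=\P\big(F(-T(\bfX_\sigma,\bfY,\bfW);\bfX)\le\tau\mid\bfW,\bfY\big)$, which is $\le\tau$ by the CDF inequality above together with the tower law over $\sigma$. Taking $\tau=\alpha$ and then expectation over $(\bfW,\bfY)$ yields $\P_P(\psi(\bfX,\bfY,\bfW)=1)\le\alpha$.

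\textbf{Main difficulty.} There is no deep obstacle here --- the argument is a routine relativization of the permutation-test proof. The one genuine subtlety is that the permutation group $\cS_n(\bfW)$ is itself a function of the data, so one must be careful about what is held fixed when invoking invariance; conditioning on $(\bfW,\bfY)$ at the very start resolves this, since $\cS_n(\bfW)$ is then a fixed group (always containing the identity, so the p-value is well defined and never below $1/|\cS_n(\bfW)|$). I would also note that discreteness of $W$ plays no role in \emph{validity}: if $W$ is continuous then $\cS_n(\bfW)=\{\textnormal{Id}\}$ almost surely and the test trivially never rejects; ties in $\bfW$ (i.e.\ a genuinely discrete $W$) are what give the test any power.
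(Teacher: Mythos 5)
Your proof is correct and follows essentially the same route as the paper's: relativize the permutation-test argument to the data-dependent subgroup $\cS_n(\bfW)$ by conditioning (you on $(\bfW,\bfY)$, the paper on $\bfW$, which is a cosmetic difference), use the conditional-independence null to get conditional invariance under $\cS_n(\bfW)$, exploit the subgroup property to establish invariance of the p-value function, and finish with the tower law. The paper phrases the key step via quantiles ($\hat q$ and Fact~\ref{fact:conversion-quantiles-cdfs}\ref{fact:conversion-quantiles-cdfs_part3}) rather than your CDF formulation $F(\cdot;\bfx)$ (mirroring Theorem~\ref{thm:perm-test}), but the content is identical; you also correctly flag, as the paper does, that the subgroup property of $\cS_n(\bfW)$ is what makes $F(v;\bfx)=F(v;\bfx_\sigma)$ hold.
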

As before, the intuition is that under the null,
the permuted data $(\bfX_\sigma,\bfY,\bfW)$ has the same distribution as the original data $(\bfX,\bfY,\bfW)$,
for all permutations $\sigma \in \cS_n(\bfW)$---but this statement is not well defined since, of course, the set $\cS_n(\bfW)$ is itself random. To be more precise, what we mean is that conditional on $\bfW$, for each $\sigma\in\cS_n(\bfW)$ it holds that $(\bfX_\sigma,\bfY)$ and $(\bfX,\bfY)$ are equal in (conditional) distribution.
We will formalize this intuition in our proof of the validity of the local permutation test, below.

\paragraph{Will the local permutation test be powerful?}
The power of the local  permutation test will depend heavily on several factors.
First, we need to consider the number of `clashes'---i.e., data points $i\neq j\in[n]$ for which $W_i=W_j$, and therefore, feature values $X_i$ and $X_j$ may be permuted in our permutation test. A large number of clashes will lead to a rich set of candidate permutations $\cS_n(\bfW)$ and, potentially, a powerful test. At the other extreme, if each value $W_i$ is unique, then $\cS_n(\bfW)$ will be a singleton set (containing only the identity permutation), leading to $\psi(\bfX,\bfY,\bfW)=0$.

Second, we need to consider the choice of the test statistic $T$ as well. As is the case for any test, if the test statistic cannot discriminate between the null and alternative, then the test will not be powerful. We omit further discussion of this standard point.

Lastly, we note that this test can actually be applied even if $W$ is not discrete, and the theorem above continues to hold. However, this test will not have power when the distribution of $W$ is continuous, since all $W_i$ will be distinct. It may have nontrivial power when the distribution of $W$ is a mixture of discrete and continuous.

\subsection{Proof of validity of the local permutation test}
We now prove that the local permutation test is valid as a test of conditional independence, with no assumptions on the distribution or on the test statistic.
\index{exchangeability!conditional}

\begin{proof}[Proof of Theorem~\ref{thm:localperm-validity}]
First, for any $(\bfx,\bfy,\bfw)\in \cX^n\times\cY^n\times\cW^n$, define
\[\hat{q}(\bfx,\bfy,\bfw) = \quantile\left( \big(T(\bfx_\sigma,\bfy,\bfw)\big)_{\sigma\in\cS_n(\bfw)} ; 1-\alpha\right).\] Then we have
\begin{multline*}
    \psi(\bfx,\bfy,\bfw)
    = \ind{\frac{1}{|\cS_n(\bfw)|}\sum\limits_{\sigma\in\cS_n(\bfw)}\ind{T(\bfx_\sigma,\bfy,\bfw) \geq T(\bfx,\bfy,\bfw)} \leq \alpha}\\
    = \ind{T(\bfx,\bfy,\bfw) > \hat{q}(\bfx,\bfy,\bfw)},
\end{multline*}
where the second step holds by definition of the quantile of a finite list (recall Definition~\ref{def:quantile}; this is also similar to the argument in the proof of Proposition~\ref{prop:conformal-via-pvalues}).
Moreover, for any $\sigma\in\cS_n(\bfw)$, we have an equality of sets,
\[\cS_n(\bfw) = \{\sigma\circ\sigma' : \sigma'\in\cS_n(\bfw)\},\]
where $\circ$ denotes a composition of permutations; this holds specifically because $\cS_n(\bfw)$ is a \emph{subgroup}, rather than an arbitrary subset, of $\cS_n$. Therefore, for any $\sigma\in\cS_n(\bfw)$,
\begin{multline*}\hat{q}(\bfx_\sigma,\bfy,\bfw) = \quantile\left( \big(T((\bfx_\sigma)_{\sigma'},\bfy,\bfw)\big)_{\sigma'\in\cS_n(\bfw)} ; 1-\alpha\right)\\ = \quantile\left( \big(T(\bfx_{\sigma'},\bfy,\bfw)\big)_{\sigma'\in\cS_n(\bfw)} ; 1-\alpha\right) = \hat{q}(\bfx,\bfy,\bfw),\end{multline*}
and consequently,
\[\psi(\bfx_\sigma,\bfy,\bfw)  =\ind{T(\bfx_\sigma,\bfy,\bfw)>\hat{q}(\bfx_\sigma,\bfy,\bfw)} = \ind{T(\bfx_\sigma,\bfy,\bfw)>\hat{q}(\bfx,\bfy,\bfw)}.\]
Therefore,
\begin{multline}\label{eqn:psi_XYW_deterministic_step}
    \frac{1}{|\cS_n(\bfw)|}\sum_{\sigma\in\cS_n(\bfw)}
    \ind{\psi(\bfx_\sigma,\bfy,\bfw)=1}
    =\frac{1}{|\cS_n(\bfw)|}\sum_{\sigma\in\cS_n(\bfw)}
    \ind{T(\bfx_\sigma,\bfy,\bfw)>\hat{q}(\bfx,\bfy,\bfw)}\\
    =\frac{1}{|\cS_n(\bfw)|}\sum_{\sigma\in\cS_n(\bfw)}
    \ind{T(\bfx_\sigma,\bfy,\bfw)>\quantile\left( \big(T(\bfx_{\sigma'},\bfy,\bfw)\big)_{\sigma'\in\cS_n(\bfw)} ; 1-\alpha\right)}
    \leq \alpha,
\end{multline}
by standard properties of the quantile (see Fact~\ref{fact:conversion-quantiles-cdfs}\ref{fact:conversion-quantiles-cdfs_part3}).

Next, fix any distribution $P\in \cP_{X \indep Y \mid W}$. We now claim that
\begin{equation}\label{eqn:eqd_XY_W_localperm}(\bfX_\sigma,\bfY) \mid \bfW \ \eqd \ 
 (\bfX,\bfY) \mid \bfW,\end{equation}
 for any $\sigma\in\cS_n(\bfW)$.
To see why, first note that conditional on $\bfW$, we have $(X_i,Y_i)\sim P_{X\mid W}(\cdot\mid W_i)\times P_{Y\mid W}(\cdot\mid W_i)$, since $X\indep Y\mid W$. Moreover, the data points $i=1,\dots,n$ are independent, and so 
\[(\bfX,\bfY)\mid \bfW\sim \big(P_{X\mid W}(\cdot\mid W_1)\times \dots \times  P_{X\mid W}(\cdot\mid W_n)\big) \times \big(P_{Y\mid W}(\cdot\mid W_1)\times \dots \times  P_{Y\mid W}(\cdot\mid W_n)\big).\]
Similarly, for any $\sigma\in\cS_n(\bfW)$ (treating $\sigma$ as fixed once we have conditioned on $\bfW$), we have
\begin{multline*}(\bfX_\sigma,\bfY)\mid \bfW\sim \big(P_{X\mid W}(\cdot\mid W_{\sigma(1)})\times \dots \times  P_{X\mid W}(\cdot\mid W_{\sigma(n)})\big) \\\times \big(P_{Y\mid W}(\cdot\mid W_1)\times \dots \times  P_{Y\mid W}(\cdot\mid W_n)\big).\end{multline*}
But since $\sigma\in\cS_n(\bfW)$, we have $W_{\sigma(i)} = W_i$ for all $i$---and therefore these two conditional distributions are the same. In particular, the equality~\eqref{eqn:eqd_XY_W_localperm} implies that
\[\P_P(\psi(\bfX,\bfY,\bfW) = 1 \mid \bfW) = \P_P(\psi(\bfX_\sigma,\bfY,\bfW) = 1 \mid \bfW)\]
holds for each $\sigma\in\cS_n(\bfW)$. Taking an average over $\sigma\in\cS_n(\bfW)$, we therefore have
\begin{multline*}\P_P(\psi(\bfX,\bfY,\bfW) = 1 \mid \bfW) = \frac{1}{|\cS_n(\bfW)|}\sum_{\sigma\in\cS_n(\bfW)}\P_P(\psi(\bfX_\sigma,\bfY,\bfW) = 1 \mid \bfW)\\
=\E_P\left[ \frac{1}{|\cS_n(\bfW)|}\sum_{\sigma\in\cS_n(\bfW)} \ind{\psi(\bfX_\sigma,\bfY,\bfW) = 1} \ \middle| \ \bfW\right]
\leq \alpha,
\end{multline*}
where the last step holds by~\eqref{eqn:psi_XYW_deterministic_step}. After marginalizing over $\bfW$, this completes the proof.
\end{proof}
\index{permutation test!local|(}

\section{The hardness of conditional independence testing}\label{sec:cond_indep_hardness}
\index{conditional independence testing|(}

The results of Section~\ref{sec:cond_indep_discrete_case} above show that testing conditional independence is possible in the setting where the confounder $W$ is discrete (or, perhaps, is a mixture of a discrete and a continuous distribution). In this section, we turn to the more challenging setting where the distribution of $W$ is instead continuous, or more generally, nonatomic---recall Definition~\ref{def:nonatomic}. 

We will now see that in this setting, it is impossible to construct a distribution-free test with nontrivial power. We will use the notation
\[\psi: (\cX\times\cY\times\cW)^n \rightarrow \{0,1\}\]
to represent a test function mapping a dataset of $n$ triples $((X_i,Y_i,W_i))_{i\in[n]}$ to the indicator of rejection.
That is, $\psi(\bfX,\bfY,\bfW)=0$ indicates a failure to reject the null hypothesis of conditional independence~\eqref{eqn:H0_conditional_indep}, while $\psi(\bfX,\bfY,\bfW)=1$ indicates rejection.
\begin{theorem}[Hardness of testing conditional independence]\label{thm:hardness-conditional-independence-nonatomic}
    If the test $\psi$ has Type I error bounded by $\alpha$ for any distribution, i.e.,
    \[\P_P\big(\psi(\bfX,\bfY,\bfW)=1 \big)\leq \alpha\textnormal{ for all $P\in\cP_{X\indep Y \mid W}$,}\]
    then the power of the test is not better than random for \emph{any} distribution $P$ such that $P_W$ is nonatomic,
    \[\P_P\big(\psi(\bfX,\bfY,\bfW)=1 \big) \leq \alpha\textnormal{ for all $P\in\cP_{(X,Y,W)}$ with $P_W$ nonatomic.}\]
\end{theorem}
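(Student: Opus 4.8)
The plan is to reduce the nonatomic-confounder case to the known hardness of test-conditional predictive coverage (or, essentially equivalently, to the hardness of distribution-free regression) via the sample--resample construction of Lemma~\ref{lem:sample-resample}. The key observation is that if $P_W$ is nonatomic, then a dataset drawn i.i.d.\ from \emph{any} joint distribution $P$ on $(X,Y,W)$ is, up to a vanishing total variation error, indistinguishable from a dataset drawn from a distribution $\widetilde{P}$ that \emph{does} satisfy conditional independence $X \indep Y \mid W$. Since $\psi$ must have Type I error $\le \alpha$ under $\widetilde{P}$, it cannot reject with probability more than $\alpha + o(1)$ under $P$ either, and taking the resample size to infinity drives the error to $0$.

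Here is the construction in more detail. Fix any target $P \in \cP_{(X,Y,W)}$ with $P_W$ nonatomic. Fix a large integer $M$, draw $(X^{(1)},Y^{(1)},W^{(1)}),\dots,(X^{(M)},Y^{(M)},W^{(M)}) \iidsim P$, and form the empirical distribution $\widehat{P}_M = \frac{1}{M}\sum_{j=1}^M \delta_{(X^{(j)},Y^{(j)},W^{(j)})}$. On the almost-sure event that $W^{(1)},\dots,W^{(M)}$ are all distinct (using that $P_W$ is nonatomic), under $\widehat{P}_M$ each value of the confounder $W = W^{(j)}$ occurs with only a single associated $(X,Y)$ pair, namely $(X^{(j)},Y^{(j)})$. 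Consequently the conditional law of $(X,Y)$ given $W = W^{(j)}$ under $\widehat{P}_M$ is the point mass $\delta_{(X^{(j)},Y^{(j)})}$, which trivially factorizes: $X \indep Y \mid W$ holds under $\widehat{P}_M$. That is, $\widehat{P}_M \in \cP_{X \indep Y \mid W}$ almost surely. Now let the test data $((X_i,Y_i,W_i))_{i\in[n]}$ be drawn i.i.d.\ from $\widehat{P}_M$; since $\widehat{P}_M$ satisfies the null, the distribution-free Type I error control of $\psi$ gives $\P(\psi(\bfX,\bfY,\bfW)=1 \mid \widehat{P}_M) \le \alpha$ on this event. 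Marginalizing over $\widehat{P}_M$ yields $\P_Q(\psi(\bfX,\bfY,\bfW)=1) \le \alpha$, where $Q$ is the sample--resample distribution on $(\cX\times\cY\times\cW)^n$: first draw $M$ points i.i.d.\ from $P$, then draw $n$ points i.i.d.\ from $\widehat{P}_M$. By Lemma~\ref{lem:sample-resample} (applied with $\cZ = \cX\times\cY\times\cW$), $\dtv(P^n, Q) \le \frac{n(n-1)}{2M}$, and therefore
\[
\P_P(\psi(\bfX,\bfY,\bfW)=1) \le \P_Q(\psi(\bfX,\bfY,\bfW)=1) + \dtv(P^n,Q) \le \alpha + \frac{n(n-1)}{2M}.
\]
Since $M$ can be taken arbitrarily large, $\P_P(\psi(\bfX,\bfY,\bfW)=1) \le \alpha$, which is the claim.

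The argument is essentially routine once the right reduction is identified; the only point requiring care is the observation that under $\widehat{P}_M$ with distinct $W^{(j)}$'s the conditional independence null holds for the trivial reason that each confounder slice carries exactly one data point — this is the exact analogue of the identity $\mu_{\widehat{P}_M}(X^{(i)}) = Y^{(i)}$ used in the proof of Theorem~\ref{thm:regression_nonatomic}. The main (minor) obstacle is simply being careful that the event ``$W^{(1)},\dots,W^{(M)}$ distinct'' has probability one under $P_W$ nonatomic, so that $\widehat{P}_M \in \cP_{X\indep Y\mid W}$ holds almost surely rather than merely with high probability; this is immediate from the definition of nonatomicity. No assumption on the test statistic or on $\cX,\cY$ is needed, and the bound is entirely finite-sample before the limit in $M$.
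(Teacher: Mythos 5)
Your proof is correct and follows essentially the same route as the paper: you use the sample--resample construction of Lemma~\ref{lem:sample-resample} and the observation that under $\widehat{P}_M$ with almost-surely distinct confounder values, $X$ and $Y$ become deterministic (point-mass) functions of $W$, so conditional independence holds trivially and the distribution-free Type~I bound applies. Taking $M\to\infty$ to kill the $O(n^2/M)$ total-variation gap is exactly the paper's final step.
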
 \index{hardness result!conditional independence testing}

The proof of this hardness result will rely on the sample--resample technique introduced earlier, in Lemma~\ref{lem:sample-resample}. (Recall that this proof technique was used in Chapter~\ref{chapter:conditional} for establishing the hardness result for test-conditional predictive inference, and in Chapters~\ref{chapter:regression} and~\ref{chapter:calibration} for establishing hardness results for regression and calibration.)

\begin{proof}[Proof of Theorem~\ref{thm:hardness-conditional-independence-nonatomic}]
First, fix a large integer $M$, and let $((X^{(i)},Y^{(i)},W^{(i)}))_{i\in[M]}$ be an arbitrary sequence of $M$ data points. Let $\widehat{P}_M = \frac{1}{M}\sum_{i=1}^M\delta_{(X^{(i)},Y^{(i)},W^{(i)})}$ be the empirical distribution of the sequence.
If $W^{(1)},\dots,W^{(M)}$ are distinct, then under the distribution $\widehat{P}_M$ on $(X,Y,W)$ it holds that $X$ and $Y$ are deterministic functions of $W$---and in particular it therefore trivially holds that $X\indep Y\mid W$ under the distribution $\widehat{P}_M$. By distribution-free validity of the test $\psi$, we must therefore have
\[\P_{\widehat{P}_M}\big(\psi(\bfX,\bfY,\bfW)=1 \big) \leq \alpha\]
as long as $W^{(1)},\dots,W^{(M)}$ are distinct, where the probability is taken with respect to samples $((X_i,Y_i,W_i))_{i\in[n]}$ sampled i.i.d.\ from $\widehat{P}_M$. 

Next, we will construct $\widehat{P}_M$ by sampling the $M$ data points i.i.d.\ from $P$: specifically,
\begin{enumerate}
    \item Sample $(X^{(1)},Y^{(1)},W^{(1)}),\dots,(X^{(M)},Y^{(M)},W^{(M)})\iidsim P$, and define the empirical distribution $\widehat{P}_M = \frac{1}{M}\sum_{i=1}^M \delta_{(X^{(i)},Y^{(i)},W^{(i)})}$;
    \item Conditional on $\widehat{P}_M$, sample $(X_1,Y_1,W_1),\dots,(X_n,Y_n,W_n)\iidsim \widehat{P}_M$.
\end{enumerate}
We have assumed that $P_W$ is nonatomic, and therefore $W^{(1)},\dots,W^{(M)}$ are  distinct almost surely. By the calculation above, then, 
\[\P_{\widehat{P}_M}\big(\psi(\bfX,\bfY,\bfW)=1\mid \widehat{P}_M\big) \leq \alpha\]
holds almost surely.

Now let $Q$ be the marginal distribution of $(X_1,Y_1,W_1),\dots,(X_n,Y_n,W_n)$ under the construction above, after marginalizing over the random empirical distribution $\widehat{P}_M$. We therefore have
\[\P_Q\big(\psi(\bfX,\bfY,\bfW)=1\big) = \E\left[\P_{\widehat{P}_M}\big(\psi(\bfX,\bfY,\bfW)=1\mid \widehat{P}_M\big)\right] \leq \alpha.\]
Finally, by Lemma~\ref{lem:sample-resample}, it holds that 
\[\dtv\big(P^n, Q\big) \leq \frac{n(n-1)}{2M}.\]
Therefore,
\[\P_{P}\big(\psi(\bfX,\bfY,\bfW)=1\big) \leq \alpha + \frac{n(n-1)}{2M}.\]
Since $M$ can be chosen to be arbitrarily large, this proves the desired result.
\end{proof}

\section{Testing conditional independence under smoothness assumptions}
\label{sec:conditional_indep_smoothness}

The hardness result in Theorem~\ref{thm:hardness-conditional-independence-nonatomic} establishes that a test $\psi$ cannot have nontrivial power for nonatomic $W$ if we require it to be valid against \emph{all} distributions on $(X,Y,W)$ for which conditional independence holds. 
We can conclude from this that, if we would like to design a test $\psi$ that has nontrivial power to detect conditional dependence
given a nonatomic confounder $W$, we need to relax our notion of validity. In other words, we need to require Type I error 
to be controlled against a more restricted set of possibilities under the null. 

In this section, we examine one such possibility: placing a smoothness assumption on the conditional dependence between the variables. This assumption leads to an approximate form of exchangeability, namely, data points with close values of $W$ will then have $X$ values that are nearly conditionally exchangeable.
For instance, if the space $\cW$ for the confounder $W$ is equipped with a norm $\|\cdot\|$, we can add the assumptions that the map
\[w \mapsto P_{X\mid W}(\cdot \mid w)\]
is $L$-Lipschitz with respect to the Hellinger distance---that is, for any $w,w'\in\cW$, \index{Hellinger distance}
\begin{equation}\label{eqn:assume_XYW_Hellinger}\textnormal{d}_{\textnormal{H}}\big(P_{X\mid W}(\cdot \mid w),P_{X\mid W}(\cdot \mid w')\big) \leq L\|w-w'\| .\end{equation}
Here $\textnormal{d}_{\textnormal{H}}$ denotes the Hellinger distance: for two distributions $P_1,P_2$ that have densities $f_1,f_2$ with respect to a common base measure $\mu$, 
\[\textnormal{d}_{\textnormal{H}}(P_1,P_2) = \left(\frac{1}{2}\int_{\cX} \left(\sqrt{f_1(x)}-\sqrt{f_2(x)}\right)^2\;\mathsf{d}\mu(x)\right)^{1/2}.\]
We now define a more restricted set of nulls,
\[\cP^L_{X\indep Y\mid W} = \{P\in\cP_{X\indep Y\mid W} : \textnormal{ condition~\eqref{eqn:assume_XYW_Hellinger} holds}\},\]
i.e., all joint distributions $P$ on $(X,Y,W)$ for which $X\indep Y\mid W$ and for which the smoothness assumption~\eqref{eqn:assume_XYW_Hellinger} holds for the conditional distribution $P_{X\mid W}$, with Lipschitz constant $L$.

At a high level, this assumption allows for the continuous case to be approximately as easy as the discrete case. In the case of discrete $W$, we are able to permute $X$ values within any subset of data points that all have the same value of $W$, as in~\eqref{eqn:permutations_discrete_confounder}. In the case of continuous $W$, this is not a meaningful approach because no value of $W$ appears twice---but with a smoothness assumption in place, it again becomes possible to perform a test of conditional independence via a permutation test, because we can now permute $X$ values across data points that have \emph{similar} values of $W$. We can think of this as a binned version of the local permutation test studied in Section~\ref{sec:cond_indep_discrete_case}. See Figure~\ref{fig:binned-local-perm-test} for an illustration.

\begin{figure}[t]
    \centering
    \includegraphics[width=0.7\textwidth]{\diagramspath binned-local-perm-test.pdf}
    \caption{\textbf{Visualization of a dataset with a continuous confounder along with a local permutation of the covariate under a smoothness assumption.} The figure is similar to Figure~\ref{fig:local-perm-test-discrete}, but $W$ is continuous.
    To address this, we bin $W$ and permute within these bins. This is the type of permutation used for the test described Section~\ref{sec:conditional_indep_smoothness}.}
    \commentAlt{Two arrays of grayscale cells, with columns $X$, $Y$, $W$. The arrays are labeled `Original dataset' and `Permuted dataset'. In the $X$ column of the second array, the grayscale values are shuffled within groups determined by similar values of $W$.}
    \label{fig:binned-local-perm-test}
\end{figure}

To formalize this, first construct a partition of $\cW$, given by
\[\cW = \cup_k \cW_k,\]
where each bin $\cW_k$ contains only values of $W$ that are close together in the norm $\|\cdot\|$: we assume
\[\max_k \sup_{w,w'\in\cW_k} \|w - w'\|\leq h,\]
for some finite $h$. (The number of bins may be finite or countably infinite.)
Now define a permutation test, analogous to the one defined in Section~\ref{sec:cond_indep_discrete_case}:
we define the set of allowed permutations as
\begin{equation}\label{eqn:permutations_smooth_confounder}
    \cS_n^{\textnormal{bin}}(\bfW) = \left\{ \sigma \in \cS_n : W_{\sigma(i)}, W_i\textnormal{ are in the same bin} \textnormal{ for all } i \in \{1, \ldots, n\} \right\}.
\end{equation}
Finally, define
\begin{equation}
    \label{eq:local-permtest-bin}
        \psi(\bfX,\bfY,\bfW) = \\ 
\ind{\frac{1}{|\cS_n^{\textnormal{bin}}(\bfW)|}\sum\limits_{\sigma\in\cS_n^{\textnormal{bin}}(\bfW)}\ind{T(\bfX_\sigma,\bfY,\bfW) \geq T(\bfX,\bfY,\bfW)} \leq \alpha},
\end{equation}
which is analogous to the test $\psi(\bfX,\bfY,\bfW)$ defined in~\eqref{eq:local-permtest} for the discrete case.

\begin{theorem}[Validity of the binned local permutation test]
    \label{thm:localperm-validity-smooth}
    Fix any function $T : (\cX\times\cY\times\cW)^n \to \R$. Let $(X_1,Y_1,W_1),\dots,(X_n,Y_n,W_n)\iidsim P$, let $\cW = \cup_k \cW_k$ be any partition with $\max_k \sup_{w,w'\in\cW_k} \|w - w'\|\leq h$, and let $\psi(\bfX,\bfY,\bfW)$ be defined as in~\eqref{eq:local-permtest-bin}. 
Then $\psi$ satisfies
    \begin{equation}
        \P_P\left( \psi(\bfX,\bfY,\bfW) = 1 \right) \leq \alpha + Lh\sqrt{2n}\textnormal{ \ for all } P \in \cP_{X \indep Y\mid W}^L.
    \end{equation}     
\end{theorem}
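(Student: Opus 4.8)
The plan is to reduce the binned local permutation test to the exact local permutation test of Theorem~\ref{thm:localperm-validity}, applied not to $P$ itself but to a surrogate distribution $\tilde P$ under which the conditioning variable is effectively discretized into the bins; then pay for the gap between $\tilde P$ and $P$ in total variation, bounding the latter through Hellinger distance and the smoothness assumption~\eqref{eqn:assume_XYW_Hellinger}. Concretely, I would write
\[\P_P(\psi(\bfX,\bfY,\bfW)=1)\le \P_{\tilde P}(\psi(\bfX,\bfY,\bfW)=1)+\dtv(P^n,\tilde P^n),\]
and then show the first term is $\le\alpha$ and the second is $\le \sqrt{2n}\,Lh$.

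\textbf{The surrogate and its level.} For each bin $\cW_k$ that meets the support of $P_W$, measurably choose a representative $w_k\in\cW_k$, and write $k(w)$ for the index of the bin containing $w$. Let $\tilde P$ be the distribution on $(X,Y,W)$ whose $(Y,W)$-marginal agrees with that of $P$ but under which $X\mid(Y,W)=(y,w)\sim P_{X\mid W}(\cdot\mid w_{k(w)})$, i.e. the conditional law of $X$ is ``frozen'' within each bin. Then under $\tilde P$, conditionally on $\bfW$, each $(X_i,Y_i)$ has law $P_{X\mid W}(\cdot\mid w_{k(W_i)})\times P_{Y\mid W}(\cdot\mid W_i)$ and the rows are independent; any permutation $\sigma$ with $k(W_{\sigma(i)})=k(W_i)$ for all $i$ — exactly the defining property of $\cS_n^{\textnormal{bin}}(\bfW)$ — leaves this product law unchanged, so $(\bfX_\sigma,\bfY)\mid\bfW\eqd(\bfX,\bfY)\mid\bfW$ for every $\sigma\in\cS_n^{\textnormal{bin}}(\bfW)$. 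From here the argument of Theorem~\ref{thm:localperm-validity} goes through verbatim with $\cS_n(\bfW)$ replaced by $\cS_n^{\textnormal{bin}}(\bfW)$: since $\cS_n^{\textnormal{bin}}(\bfw)$ is a subgroup of $\cS_n$ (the product of the symmetric groups on the bin-blocks of $\bfw$), one gets the deterministic bound $\frac{1}{|\cS_n^{\textnormal{bin}}(\bfw)|}\sum_{\sigma\in\cS_n^{\textnormal{bin}}(\bfw)}\ind{\psi(\bfx_\sigma,\bfy,\bfw)=1}\le\alpha$ as in~\eqref{eqn:psi_XYW_deterministic_step}, and averaging over $\sigma\in\cS_n^{\textnormal{bin}}(\bfW)$ then taking expectation over $\bfW$ yields $\P_{\tilde P}(\psi=1)\le\alpha$. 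Note the test statistic $T$ is permitted to depend on the full $\bfW$; this causes no difficulty because $\bfW$ is held fixed throughout.

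\textbf{The Hellinger bound.} Since $P$ and $\tilde P$ share the $(Y,W)$-marginal and differ only in the conditional of $X$, a direct computation with densities against a common base measure gives $\textnormal{d}_{\textnormal{H}}^2(P,\tilde P)=\E_{(Y,W)\sim P}\big[\textnormal{d}_{\textnormal{H}}^2(P_{X\mid W}(\cdot\mid W),P_{X\mid W}(\cdot\mid w_{k(W)}))\big]$, which by~\eqref{eqn:assume_XYW_Hellinger} and $\|W-w_{k(W)}\|\le h$ is at most $L^2h^2$. Tensorizing, $1-\textnormal{d}_{\textnormal{H}}^2(P^n,\tilde P^n)=(1-\textnormal{d}_{\textnormal{H}}^2(P,\tilde P))^n\ge 1-nL^2h^2$, so $\textnormal{d}_{\textnormal{H}}(P^n,\tilde P^n)\le\sqrt n\,Lh$, and $\dtv(P^n,\tilde P^n)\le\sqrt2\,\textnormal{d}_{\textnormal{H}}(P^n,\tilde P^n)\le\sqrt{2n}\,Lh$. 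Plugging into the first display completes the proof.

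\textbf{Main obstacle.} I expect the delicate part to be the interface in Step~2: verifying that freezing the $X$-conditional within bins makes the vector of scores invariant in distribution under exactly the bin-respecting permutations $\cS_n^{\textnormal{bin}}(\bfW)$ (not the exact-match permutations), and that this set retains the subgroup property on which the deterministic quantile argument of Theorem~\ref{thm:localperm-validity} depends. The other point requiring care — more technical than conceptual — is that the target error $\sqrt{2n}\,Lh$ forces the $n$-fold comparison to go through Hellinger tensorization rather than the cruder $\dtv(P^n,\tilde P^n)\le n\,\dtv(P,\tilde P)$, and that the bin representatives $w_k$ be chosen measurably.
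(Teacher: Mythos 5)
Your proof is correct and follows essentially the same architecture as the paper's: construct a surrogate distribution $\tilde P$ under which the confounder is effectively discretized, show the binned test is exactly valid under $\tilde P$, then bound $\P_P(\psi=1)-\P_{\tilde P}(\psi=1)$ by $\dtv(P^n,\tilde P^n)$, which you control through Hellinger tensorization and the smoothness hypothesis. Two tactical points differ from the paper, and both are cosmetic rather than substantive. First, your $\tilde P$ freezes the conditional law of $X$ at a measurably chosen representative $w_k$ per bin, whereas the paper instead takes $\tilde P=\sum_k\pi_k\bigl(P^{(k)}_X\times P^{(k)}_{(Y,W)}\bigr)$ with $P^{(k)}_X$ the conditional marginal of $X$ given $W\in\cW_k$; the paper's choice sidesteps the measurable-selection issue you flag, at the price of an extra convexity argument (for squared Hellinger) to show $\textnormal{d}_{\textnormal{H}}\bigl(P_{X\mid W}(\cdot\mid w),P^{(k)}_X\bigr)\le Lh$, where your choice gets the same bound directly from the Lipschitz hypothesis at $w_k$. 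Second, to establish $\P_{\tilde P}(\psi=1)\le\alpha$ you re-run the proof of Theorem~\ref{thm:localperm-validity} with $\cS_n^{\textnormal{bin}}(\bfW)$ in place of $\cS_n(\bfW)$ (correctly noting the subgroup property is retained), whereas the paper applies Theorem~\ref{thm:localperm-validity} as a black box to the transformed triple $(X,(Y,W),K)$ with $K=k(W)$, since under $\tilde P$ one has $X\indep(Y,W)\mid K$ and $\cS_n^{\textnormal{bin}}(\bfW)=\cS_n(\mathbf{K})$. The black-box route is cleaner; your re-run is also valid but repeats work. On the Hellinger side your computation is exactly the paper's.
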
 \index{permutation test!local}
In other words, by taking bins of diameter $h=\littleo(n^{-1/2})$, we can ensure that the test $\psi$ is (nearly) a valid test of the null that $X \indep Y \mid W$, as long as we also assume the smoothness condition.

\begin{proof}[Proof of Theorem~\ref{thm:localperm-validity-smooth}]
First, for each $k\geq 1$, let $\pi_k = \P_P(W\in\cW_k)$, and (if $\pi_k>0$) let $P^{(k)}$ denote the distribution of the triple $(X,Y,W)$ conditional on the event $W\in\cW_k$. Then we can express $P$ as a mixture,
\[P = \sum_{k\geq 1} \pi_k \cdot  P^{(k)}.\]

The idea of the proof is to find a distribution $\tilde{P}$ that is similar to $P$, for which the test $\psi$ is exactly valid. We define the distribution $\tilde{P}$ on $(X,Y,W)$ as another mixture,
\[\tilde{P} = \sum_{k\geq 1} \pi_k \cdot \big(P^{(k)}_X \times P^{(k)}_{(Y,W)}\big),\]
where $P^{(k)}_X$ and $P^{(k)}_{(Y,W)}$ are the marginal distributions of $X$ and of $(Y,W)$, respectively, under the joint distribution $P^{(k)}$ on $(X,Y,W)$. Note that $P^{(k)}_X$ can be viewed as a convex combination of conditional distributions $P_{X\mid W}(\cdot \mid w)$ indexed by $w\in\cW_k$---this is because $P^{(k)}_X$ is the marginal distribution of $X$, under the joint distribution of $(X,W)$ conditional on $W\in\cW_k$. In particular, since $\textnormal{d}_{\textnormal{H}}(P_{X\mid W}(\cdot\mid w),P_{X\mid W}(\cdot\mid w'))\leq Lh$ for all $w,w'\in\cW_k$ due to the smoothness assumption~\eqref{eqn:assume_XYW_Hellinger}, this means that
\begin{equation}\label{eqn:dH_within_bin}\textnormal{d}_{\textnormal{H}}(P_{X\mid W}(\cdot\mid w),P^{(k)}_X)\leq Lh\textnormal{ for all $k$ and all $w\in\cW_k$,}\end{equation}
since $\textnormal{d}_{\textnormal{H}}(\cdot,\cdot)$ is convex in each argument.

Next, we calculate a bound on the difference between sampling from $P$ and sampling from $\tilde{P}$. By construction, the marginal distribution of $(Y,W)$ is the same under $P$ as under $\tilde{P}$, since in both cases it is equal to $\sum_k \pi_k P^{(k)}_{(Y,W)}$. Therefore, the only difference between $P$ and $\tilde{P}$ arises from the differences in the conditional distribution of $X$: under $P$ this conditional distribution is given by
\[X\mid (Y,W) \sim P_{X\mid W}(\cdot \mid W),\]
while under $\tilde{P}$, we have
\[X\mid (Y,W) \sim P^{(k(W))}_X,\] 
where $k(W)$ identifies the bin to which the confounder $W$ belongs---that is, for any $k$ and any $w\in\cW_k$, $k(w) = k$. Therefore,
\[\textnormal{d}_{\textnormal{H}}^2(P,\tilde{P})
= \E_P\left[\textnormal{d}_{\textnormal{H}}^2\big(P_{X\mid W}(\cdot \mid W),P_X^{(k(W))}\big)\right] \leq (Lh)^2,\]
where the first step holds by properties of the Hellinger distance, and the last step applies~\eqref{eqn:dH_within_bin}.

Since squared Hellinger distance is subadditive, we therefore have
\[\textnormal{d}^2_{\textnormal{H}}(P^n,\tilde{P}^n) \leq n(Lh)^2,\]
and since total variation distance is bounded by the Hellinger distance up to a factor of $\sqrt{2}$, this implies
\[\dtv(P^n,\tilde{P}^n)\leq \sqrt{2} \textnormal{d}_{\textnormal{H}}(P^n,\tilde{P}^n) \leq Lh \sqrt{2n}.\]
Therefore,
\[\P_P(\psi(\bfX,\bfY,\bfW)=1)
\leq \P_{\tilde{P}}(\psi(\bfX,\bfY,\bfW)=1) + Lh\sqrt{2n}, \]
i.e., the probability of $\psi$ rejecting $H_0$ when applied to data $((X_i,Y_i,W_i))_{i\in[n]}$ sampled from $P$, can be bounded by the probability of rejection for data drawn from $\tilde{P}$.

To complete the proof, we need to bound $\P_{\tilde{P}}(\psi(\bfX,\bfY,\bfW)=1)$. Indeed, we will see that it is simply bounded by $\alpha$, because of the construction of $\tilde{P}$. 
Define random variable $K=k(W)$ and consider a new triple,
\[(X, (Y,W), K).\]
Note that, under the distribution $\tilde{P}$, by construction it holds that
\begin{equation}\label{eqn:XYWK}X \indep (Y,W) \mid K.\end{equation}
Next we construct a test statistic $\tilde{T}$ on these new triples, 
\[\tilde{T}: (\cX \times (\cY\times \cW) \times [K])^n \rightarrow \R.\]
defined by
\[\tilde{T}\Big(\big(X_1,(Y_1,W_1),K_1\big),\dots,\big(X_n,(Y_n,W_n),K_n\big)\Big) = T\big((X_1,Y_1,W_1),\dots,(X_n,Y_n,W_n)\big).\]
We can observe that running the local permutation test~\eqref{eq:local-permtest} defined in Section~\ref{sec:cond_indep_discrete_case}, with data $\big((X_i,(Y_i,W_i),K_i)\big)_{i\in[n]}$ and test statistic $\tilde{T}$, is exactly equivalent to our test $\psi(\bfX,\bfY,\bfW)$ as defined in~\eqref{eq:local-permtest-bin}. But since the data $\big((X_i,(Y_i,W_i),K_i)\big)_{i\in[n]}$ satisfies the null hypothesis of conditional independence under $\tilde{P}$ by~\eqref{eqn:XYWK}, we can apply Theorem~\ref{thm:localperm-validity} (with $(X,(Y,W),K)$ in place of $(X,Y,W)$, and with $\tilde{T}$ in place of $T$), to see that $\P_{\tilde{P}}(\psi(\bfX,\bfY,\bfW)=1)\leq \alpha$, as desired.
\end{proof}

\index{conditional independence testing|)}

\section*{Bibliographic notes}
\addcontentsline{toc}{section}{\protect\numberline{}\textnormal{\hspace{-0.8cm}Bibliographic notes}}

Conditional independence testing is a foundational topic in statistics~\citep[e.g.,][]{dawid1979conditional}. 
The problem of testing conditional independence is closely linked to the problem of \emph{variable selection} in high-dimensional regression: given a high-dimensional feature vector $X=(X_1,\dots,X_d)$ and response $Y$, testing whether a particular feature $X_j$ should be included into the fitted model is often framed as a question of conditional independence, i.e., testing
\[H_{0,j}: \, X_j \indep Y \mid X_{-j},\]
where $X_{-j}=(X_1,\dots,X_{j-1},X_{j+1},\dots,X_d)$ denotes the remaining covariates once $X_j$ has been removed. 

Using permutation tests to test (marginal) independence is a classical topic; see the bibliographic notes in Chapter~\ref{chapter:exchangeability}. Ideas related to the use of permutations or randomization within strata to test for conditional independence with a discrete confounder are widespread~\citep[e.g.,][]{birch1965detection, rosenbaum1984conditional}, and the local permutation test presented in Section~\ref{sec:cond_indep_discrete_case} (for the case of a discrete confounder $W$) is studied in detail in~\citet{canonne2018testing}. For a continuous confounder, one can extend this idea to consider permutations that approximately preserve the values of the confounder~\citep[e.g.,][]{margaritis2005distribution, fukumizu2007kernel, doran2014permutation, sen2017model, kim2022local}.
The hardness of conditional independence testing in the continuous case was first established by \citet[Theorem 2]{shah2020hardness}; the version of this hardness result presented here, in Theorem~\ref{thm:hardness-conditional-independence-nonatomic}, is due to~\citet[Theorem 1]{kim2022local}.

In Section~\ref{sec:conditional_indep_smoothness}, we assumed a smoothness condition to enable testing conditional independence beyond the discrete case. 
Assuming a smoothness condition is one way to avoid the hardness result of Section~\ref{sec:cond_indep_hardness}, but there are many alternative assumption-lean approaches in the literature as well. For example, \citet{azadkia2021simple,shi2024azadkia}
consider a conditional form of Chatterjee's correlation coefficient \citep{chatterjee2021new} for asymptotically valid estimates of conditional dependence (and an asymptotically valid test of conditional independence) without such assumptions.
As another example, in the \emph{model-X} framework \citep{candes2018panning}, which assumes that we have knowledge (or approximate knowledge) of the conditional distribution of $X\mid W$.
Within this framework, testing conditional independence becomes possible, since we can use the (approximate) knowledge of the conditional distribution $P_{X\mid W}$ to resample $\bfX$ under the null hypothesis of conditional independence---see \citet{candes2018panning,berrett2020conditional}.
For variable selection in this setting, the knockoff filter method leverages a different type of exchangeability, \emph{pairwise exchangeability}, for false discovery rate control \citep{barber2015controlling,candes2018panning}.

The binned version of the local permutation test and its accompanying theoretical guarantee, presented in Section~\ref{sec:conditional_indep_smoothness}, are adapted from the work of \citet{kim2022local}.
For background on the Hellinger distance and its properties (which play a key role in the proof of Theorem~\ref{thm:localperm-validity-smooth}), see \citet[Chapter 4]{le2012asymptotic}.

\section*{Exercises}
\addcontentsline{toc}{section}{\protect\numberline{}\textnormal{\hspace{-0.8cm}Exercises}}
\begin{enumerate}[font=\bfseries, label={\thechapter.\arabic*}, labelsep=1em, itemsep=1em]
\item  The p-value $p$ defined in~\eqref{eqn:pvalue_marginal-independence}, for the marginal test of independence studied in Theorem~\ref{thm:marginal-independence}, may be slightly conservative due to discreteness and due to the potential presence of ties. Construct a randomized version of this p-value (analogous to Lemma~\ref{lem:randomize-perm}) and prove that it follows a uniform distribution under the null hypothesis $P\in\cP_{X\indep Y}$.

\item Recall that Theorem~\ref{thm:hardness-conditional-independence-nonatomic} established the hardness of testing conditional independence, $X\indep Y\mid W$, for any joint distribution $P$ on $(X,Y,W)$ for which the marginal $P_W$ is nonatomic. In this exercise, we will consider a variant of this result: suppose instead that $P_W$ is discrete, but has no large point masses, so that we are unlikely to observe repeated values of $W$.

    Concretely, let $\cW = \{w_1,\dots,w_K\}$ be a finite set. Define
    \[1-\epsilon_n(P) = \P_P\left(\textnormal{$W_1$, \dots, $W_n$ are all distinct}\right),\]
    where the probability is calculated for $(X_1,Y_1,W_1),\dots,(X_n,Y_n,W_n)\iidsim P$.
    Prove that, for any test $\psi$ with Type I error bounded by $\alpha$,
    \[\P_P(\psi(\bfX,\bfY,\bfW)) \leq \alpha + \epsilon_n(P).\]
    \emph{Hint: to prove this, we can use a variant of the sample--resample technique (Lemma~\ref{lem:sample-resample}). Specifically, for each $k=1,\dots,K$, sample $(x_k,y_k)$ from the conditional distribution $P_{(X,Y)\mid W=w_k}$, and construct a distribution with this support.}
    
\item  In Theorem~\ref{thm:marginal-independence}, we have seen that it is possible to construct a powerful test of marginal independence, which has distribution-free validity. Specifically, with such a test, we may confidently \emph{reject} the null hypothesis that $X\indep Y$, and conclude that $X$ and $Y$ are \emph{not} independent. We might also be interested in asking the converse question: is it possible to instead confidently conclude that $X$ and $Y$ \emph{are} independent? 
    
    In this exercise, we will weaken this goal further and consider the null hypothesis that $Y$ is deterministically equal to a function of $X$. Let $\psi:(\cX\times\cY)^n\to\{0,1\}$ be a test satisfying distribution-free validity for testing this null hypothesis,
    \[\P_P(\psi(\bfX,\bfY)=1)\leq \alpha \textnormal{ for any $P$ for which $\P_P(Y=f(X))=1$ for some $f:\cX\to\cY$}.\]
    Prove that, for any distribution $P$ with a nonatomic marginal $P_X$ (but not necessarily satisfying the null), this test is powerless: $\P_P(\psi(\bfX,\bfY)=1)\leq \alpha$. \emph{Hint: consider the proof of Theorems~\ref{thm:regression_nonatomic} and~\ref{thm:regression_nonatomic_median}.}

\end{enumerate}

\backmatter
\appendix

\bibliographystyle{plainnat}
\bibliography{bibliography}

\addcontentsline{toc}{chapter}{Index}
\index{inductive conformal prediction|see{split conformal prediction}}
\index{overcoverage|see{coverage}}
\index{conformal score|see{score function}}
\index{conformalized quantile regression|see{score function}}
\index{cumulative-probability score|see{score function}}
\index{high-density score|see{score function}}
\index{residual score|see{score function}}
\index{scaled residual score|see{score function}}
\index{multiset|see{bag}}
\index{set size|see{optimality}}
\index{jackknife+|see{cross-validation}}
\index{martingale|see{supermartingale}}
\index{loss function|see{risk control}}
\index{aggregation|see{model aggregation}}
\index{ensemble|see{model aggregation}}
\index{binned expected calibration error|see{expected calibration error (ECE)}}
\index{calibration|see{perfect calibration}}
\index{calibration|see{post-hoc calibration}}
\index{conformal risk control|see{risk control}}
\index{exchangeability!violations|see{nonexchangeability}}
\index{quantile regression|see{score function}}
\index{CV+|see{cross-validation}}
\index{jackknife|see{cross-validation}}
\index{anomaly detection|see{outlier detection}}
\printindex

\end{document}